\documentclass{amsart}
\usepackage[toc,page]{appendix}
\usepackage[headheight=12.1pt]{geometry}
\usepackage[all,cmtip]{xy}
\usepackage{amsmath}
\usepackage{amsfonts}
\usepackage{amssymb}
\usepackage{mathrsfs}   
\usepackage{amsthm}
\usepackage{xcolor}
\usepackage{cite}
\usepackage{stmaryrd}
\usepackage{graphicx}
\usepackage{pgf}
\usepackage{eepic}
\usepackage{pstricks}
\usepackage{epsfig}
\usepackage{fancyhdr}
\usepackage{tikz,caption,float}
\usepackage[backref=page]{hyperref}

\usepackage{amsbsy}

\numberwithin{equation}{subsection}

\let\realequation\equation
\def\equation{\setcounter{equation}{\arabic{subsubsection}}%
   \refstepcounter{subsubsection}%
   \realequation}

\usepackage{xr,mathtools}

\hypersetup{
     colorlinks   = true,
     citecolor    = red,
     linkcolor = blue,
     urlcolor = purple
}
    
\setlength{\footskip}{30pt}

\setcounter{tocdepth}{1}

\geometry{
 a4paper,
 left=30mm,
 right=30mm,
 top=40mm,
 bottom=40mm,
 }
%\usepackage{mathptmx}  
%\linespread{1.2}
 
  \newcommand\imCMsym[4][\mathord]{%
  \DeclareFontFamily{U} {#2}{}
  \DeclareFontShape{U}{#2}{m}{n}{
    <-6> #25
    <6-7> #26
    <7-8> #27
    <8-9> #28
    <9-10> #29
    <10-12> #210
    <12-> #212}{}
  \DeclareSymbolFont{CM#2} {U} {#2}{m}{n}
  \DeclareMathSymbol{#4}{#1}{CM#2}{#3}
}
\newcommand\alsoimCMsym[4][\mathord]{\DeclareMathSymbol{#4}{#1}{CM#2}{#3}}

\imCMsym{cmmi}{124}{\CMjmath}
\imCMsym[\mathop]{cmsy}{113}{\CMamalg}
\imCMsym[\mathop]{cmex}{96}{\CMcoprod}
\alsoimCMsym[\mathop]{cmex}{97}{\CMbigcoprod}

 \pagestyle{fancy}
 \fancyhf{}

\swapnumbers
\theoremstyle{plain}
\newtheorem*{theoremu}{Theorem}
\newtheorem{theorem}[subsubsection]{Theorem}

\newtheorem{proposition}[subsubsection]{Proposition}

\newtheorem{corollary}[subsubsection]{Corollary}

\newtheorem*{corollaryu}{Corollary}
\newtheorem{lemma}[subsubsection]{Lemma}

\theoremstyle{definition}

\newtheorem{definition}[subsubsection]{Definition}

\newtheorem{setup}[subsubsection]{Setup}

\theoremstyle{remark}
\newtheorem{remark}[subsubsection]{Remark}

\newtheorem{example}[subsubsection]{Example}

\newcommand{\N}{{\mathbb N}}
\newcommand{\Z}{{\mathbb Z}}
\newcommand{\Q}{{\mathbb Q}}

\newcommand{\C}{{\mathbb C}}

\renewcommand{\P}{{\mathbb P}}
\newcommand{\A}{{\mathbb A}}
\newcommand{\D}{{\mathbb D}}

\newcommand{\sH}{\mathscr{H}}
\newcommand{\sF}{\mathscr{F}}
\newcommand{\sG}{\mathscr{G}}
\newcommand{\sI}{\mathscr{I}}

\newcommand{\sD}{\mathscr{D}}
\newcommand{\scr}[1]{\mathscr{#1}}

\newcommand{\cO}{\mathcal{O}}

\newcommand{\cI}{\mathcal{I}}
\newcommand{\cM}{\mathcal{M}}
\newcommand{\cN}{\mathcal{N}}
\newcommand{\cB}{\mathcal{B}}

\newcommand{\cV}{\mathcal{V}}
\newcommand{\cH}{\mathcal{H}}
\newcommand{\ca}[1]{\mathcal{#1}}

\newcommand{\fP}{\mathfrak{P}}
\newcommand{\fX}{\mathfrak{X}}
\newcommand{\fY}{\mathfrak{Y}}
\newcommand{\fQ}{\mathfrak{Q}}
\newcommand{\fr}[1]{\mathfrak{#1}}

\newcommand{\LD}{\underrightarrow{{\bf LD}}}
\newcommand{\bD}{{\bf D}}
\newcommand{\bL}{{\bf L}}
\newcommand{\bR}{{\bf R}}

\newcommand{\colim}[1]{\underset{#1}{\mathrm{colim}}\,}
\renewcommand{\lim}[1]{\underset{#1}{\mathrm{lim}}\,}

\newcommand{\id}{\mathrm{id}}
\newcommand{\isomto}{\overset{\cong}{\longrightarrow}}
\newcommand{\isomfrom}{\overset{\cong}{\longleftarrow}}
\newcommand{\from}{\colon}
\renewcommand{\to}{\rightarrow}
\newcommand{\ot}{\leftarrow}
\newcommand{\hto}{\hookrightarrow}
\newcommand{\lto}{\longrightarrow}
\renewcommand{\-}{\text{-}}

\newcommand{\an}{\mathrm{an}}

\newcommand{\cons}{\mathrm{cons}}
\newcommand{\hol}{\mathrm{hol}}
\newcommand{\Isoc}{\mathbf{Isoc}}
\newcommand{\coh}{\mathrm{coh}}
\newcommand{\Hol}{\mathbf{Hol}}
\newcommand{\Loc}{\mathbf{Loc}}
\newcommand{\Con}{\mathbf{Con}}
\newcommand{\DCon}{\mathbf{DCon}}
\newcommand{\wotimes}{\widetilde{\otimes}}
\newcommand{\Coh}{\mathbf{Coh}}
\newcommand{\Mod}{\mathbf{Mod}}
\newcommand{\Sh}{\mathbf{Sh}}
\newcommand{\Perv}{\mathbf{Perv}}

\newcommand{\pow}[1]{\llbracket #1 \rrbracket}

\newcommand{\weak}[1]{\langle #1\rangle^\dagger}
\newcommand{\tate}[1]{\langle #1 \rangle}
\newcommand{\open}[1]{\left\{ #1 \right\}}

\newcommand{\spec}[1]{\mathrm{Spec}\left(#1\right)}
\newcommand{\spa}[1]{\mathrm{Spa}\left(#1\right)}
\newcommand{\spf}[1]{\mathrm{Spf}\left(#1\right)}

\newcommand{\Proj}[2]{\mathbf{Proj}_{#1}\left(#2\right)}

\newcommand{\norm}[1]{\left\vert#1\right\vert}
\newcommand{\tube}[1]{\left]#1\right[}

\newcommand{\rig}{\mathrm{rig}}
\newcommand{\dR}{\mathrm{dR}}

\newcommand{\et}{\mathrm{\acute{e}t}}

\newcommand{\qc}{\mathrm{qc}}

\newcommand{\Tr}{\mathrm{Tr}}
\renewcommand{\sp}{\mathrm{sp}}
\newcommand{\sep}{\mathrm{sep}}

\usepackage{euscript}

\title{Rigidification of arithmetic $\pmb{\sD}$-modules and an overconvergent Riemann--Hilbert correspondence}

\usepackage{amsbsy}

\SetSymbolFont{stmry}{bold}{U}{stmry}{m}{n}
\usepackage{bm}

\author{Christopher Lazda}
       \address{Department of Mathematics\\ Harrison Building \\ Streatham Campus
 \\ University of Exeter \\ North Park Road \\ Exeter  \\ EX4 4QF \\ United Kingdom }
\email{c.d.lazda@exeter.ac.uk}

\fancyhead[RO]{C. Lazda}
\fancyhead[LE]{Rigidification of arithmetic $\sD$-modules}
\fancyfoot[C]{\thepage}

\begin{document}

\begin{abstract} In this article, I define triangulated categories of constructible isocrystals on varieties over a perfect field of positive characteristic, in which Le Stum's abelian category of constructible isocrystals sits as the heart of a natural t-structure. I then prove a Riemann--Hilbert correspondence, showing that, for objects admitting some (unspecified) Frobenius, this triangulated category is equivalent to the triangulated category of overholonomic $\sD^\dagger$-modules in the sense of Caro. I also show that the cohomological functors $f^!,f_+$ and $\otimes$ defined for $\sD^\dagger$-modules have natural interpretations on the constructible side of this correspondence. Finally, I use this to prove that, for any variety $X$ admitting an immersion into a smooth and proper formal scheme, rigid cohomology (with lisse coefficients) agrees with cohomology defined using arithmetic $\sD$-modules.
\end{abstract}

\maketitle 

\tableofcontents

\section*{Introduction}

Let $X/\C$ be a smooth algebraic variety, and $\sD_X$ the ring of differential operators on $X$. One of the (many!) forms of the Riemann--Hilbert correspondence states that there is an equivalence of categories
\[ \bD^b_c(X^{\an},\C) \leftrightarrows \bD^b_{\rm rh}(\sD_X) \]
between the bounded derived category of (algebraically) constructible sheaves of $\C$-modules on $X^{\an}$, and the bounded derived category of regular holonomic $\sD_X$-modules. Moreover, this equivalence matches up various natural cohomological operations on each side, namely the `six functors' of usual and extraordinary pushforward and pullback, duality, and tensor product.

From a differential geometer's perspective, this gives a way of understanding differential equations on $X$ by studying their associated constructible sheaves. But from an algebro-geometric point of view, it shows that the cohomology theory of constructible sheaves on $X^\an$ can be reconstructed completely algebraically, via the theory of regular holonomic $\sD$-modules on $X$. It therefore provides a template for understanding the cohomology of algebraic varieties in situations where taking the complex analytification is not possible, for example when working in characteristic $p$. 

This idea has been the one of the main driving forces behind two different approaches to studying the $p$-adic cohomology of varieties in characteristic $p$, both developed by Berthelot. The first of these to be introduced was the theory of rigid cohomology \cite{Ber96b}, generalising earlier work of Monsky--Washnitzer \cite{MW68}. Rigid cohomology works as a version of de\thinspace Rham cohomology on $p$-adic analytic varieties, and its coefficient objects, called overconvergent $F$-isocrystals, are therefore direct analogues of vector bundles with integrable connection. Such `lisse' coefficient objects cannot support a good formalism of cohomological operations, and so Berthelot introduced in \cite{Ber02} a theory of arithmetic $\sD$-modules on mixed characteristic formal schemes. This is analogous to the theory of regular holonomic $\sD$-modules on complex varieties, and was intended to play the same role in rigid cohomology as that played by the theory of constructible sheaves in $\ell$-adic \'etale cohomology. At least for objects admitting a Frobenius structure, it was proved by Caro--Tsuzuki in \cite{CT12} that the category of overholonomic $\sD^\dagger$-modules does indeed support a formalism of Grothendieck’s six operations.

Despite the fact that both approaches were inspired by the classical Riemann--Hilbert correspondence, they have a rather different flavour. The theory of rigid cohomology is based upon the de\thinspace Rham cohomology of $p$-adic analytic varieties in characteristic $0$, whereas arithmetic $\sD$-modules live on $p$-adic formal schemes of mixed characteristic $(0,p)$. This may seem like a fairly minor distinction, but it does significantly complicate comparisons between the two. For example, even though the theory of arithmetic $\sD$-modules was explicitly introduced in order to provide rigid cohomology with a `six operations' formalism, it has still remained open in general whether or not the rigid cohomology groups of a variety, with coefficients in an overconvergent $F$-isocrystal, coincide with the analogous cohomology groups computed using the theory of arithmetic $\sD$-modules. 

The two theories also have somewhat complementary strengths. It is rigid cohomology (and it's earlier version, Monsky--Washnitzer cohomology) that is generally more computable, and actually appears in algorithms \cite{Lau06,ABCMT19} used to calculate zeta functions of algebraic varieties. On the other hand, it is the theory of arithmetic $\sD$-modules that has a good cohomological formalism. It is therefore an important problem to relate these two approaches, and combine the strengths of both into one unified overconvergent cohomology theory.

A major step in this direction was taken by Le Stum, who in \cite{LS16} defined a category of `constructible isocrystals' on a variety in characteristic $p$. These are \emph{direct} generalisations of overconvergent $F$-isocrystals, and like them, live on $p$-adic analytic varieties in characteristic $0$. The definition is also very closely analogous to that of the category of constructible $\ell$-adic \'etale sheaves. He then conjectured that there should be a Deligne--Kashiwara correspondence between his category of constructible isocrystals and a certain category of `perverse arithmetic $\sD$-modules', and proved this conjecture in the case of smooth and proper curves \cite{LS14}, at least for objects which are of `Frobenius type'. One of the main results in this article proves Le Stum's conjecture for arbitrary smooth formal schemes, albeit with a slightly stronger `Frobenius type' hypothesis.

In fact, my approach will be to first establish a Riemann--Hilbert type correspondence on the level of derived categories, and then compare t-structures on both sides to deduce an equivalence between the respective hearts. The first goal of this article is therefore to define a derived analogue of Le Stum's category of constructible isocrystals. To explain the construction, let $\cV$ be a complete DVR with fraction field $K$ of characteristic $0$ and perfect residue field $k$ of characteristic $p>0$. The definition is the most na\"ive one: if $\fP$ is a smooth formal scheme over $\cV$, with generic fibre $\fP_K$, then a constructible complex on $\fP$ is a bounded complex of modules over the ring $\sD_{\fP_K}$ of (algebraic) differential operators on $\fP_K$, whose cohomology sheaves are constructible isocrystals in the sense of Le Stum. The category $\bD^b_{\cons}(\fP)$ of these objects is then viewed simply as a full subcategory of $\bD(\sD_{\fP_K})$. The hard work then consists of showing that this gives rise to a reasonable theory in characteristic $p$. Concretely, this amounts to showing that if $X\hookrightarrow \fP$ is a locally closed immersion, and $\fP$ is proper, then the full subcategory of $\bD^b_\cons(\fP)$ consisting of objects supported on $\tube{X}_\fP$ is independent of $\fP$.\footnote{Since I will be working with adic spaces throughout, the tube $\tube{X}_\fP$ considered here behaves very much like the tubes considered in Berkovich geometry, and in particular the formalism of overconvergent sections is in some sense absorbed into the topology of $\tube{X}_\fP$.} More generally, I show that this is the case provided that $\fP$ is smooth in a neighbourhood of $X$, and the closure of $X$ in $P$ is proper over $k$. This gives rise to a derived category $\bD^b_\cons(X)$ of constructible isocrystals on $X$, which is therefore a reasonable candidate for admitting a six operations formalism (at least for objects which are of Frobenius type).

The main goal of this article is then to prove an `overconvergent Riemann--Hilbert correspondence', showing that $\bD^b_\cons(X)$ is equivalent to the derived category $\bD^b_\hol(X)$ of overholonomic $\sD^\dagger$-modules on $X$ in the sense of Caro and Abe--Caro \cite{AC18a} (again, this will only hold for objects of Frobenius type). In fact, since I will be working with constructible objects it suffices to do this on the level of the smooth formal scheme $\fP$ in which $X$ has been embedded. 

In the classical Riemann--Hilbert correspondence, the functor from $\sD$-modules to constructible sheaves simply takes the (shifted) de\thinspace Rham complex of an algebraic $\sD$-module, but in the $p$-adic world even constructing the appropriate functor takes a little bit of work. In our previous article \cite{AL22} we constructed a functor $\sp_!$ \emph{from} constructible isocrystals \emph{to} overholonomic $\sD^\dagger$-modules. This was a rather complicated version of pushforward along $\sp\from \fP_K\to \fP$, and only worked on the level of abelian categories. Here, I construct a functor $\sp^!$ going in the other direction, which is a kind of completed pullback along $\sp$, shifted by the dimension of $\fP$ (the reason for this shift will be discussed below). Conceptually, the functor $\sp^!$ is much simpler than $\sp_!$, and works perfectly well on the level of derived categories. The main result of this article is then the following.
 
\begin{theoremu}[\ref{theo: riemann-hilbert}] Let $\fP$ be a smooth formal scheme over $\cV$, $\bD^b_{\cons,F}(\fP)\subset \bD^b_\cons(\fP)$ the full subcategory of objects of Frobenius type,\footnote{For the purposes of this article, I will take `of Frobenius type' to mean objects which are iterated extensions of those admitting some unspecified $p^n$-power Frobenius structure (where $n$ is allowed to vary). This coincides with the terminology from \cite{AL22}, though not with that from \cite{LS14}. It is what has previously been called `$F$-able' in the literature.} and $\bD^b_{\hol,F}(\fP)\subset \bD^b_\coh(\sD^\dagger_\fP)$ the full subcategory of overholonomic complexes of Frobenius type. Then $\sp^!$ induces an equivalence of categories
\[  \sp^!\from \bD^b_{\hol,F}(\fP) \isomto \bD^b_{\cons,F}(\fP). \]
\end{theoremu}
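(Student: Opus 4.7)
The plan is to reduce the equivalence, via a devissage argument, to the essentially tautological \emph{lisse case} --- that overconvergent $F$-isocrystals on a smooth formal scheme correspond to coherent $\sD^\dagger_\fP$-modules with integrable connection, up to the shift by $\dim\fP$. The key technical input will be compatibility of $\sp^!$ with pushforward along closed immersions.

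I would first verify that $\sp^!$ is well-defined on the stated categories, in the sense that it sends overholonomic complexes of Frobenius type to complexes whose cohomology is constructible in Le Stum's sense. Given $\cM \in \bD^b_{\hol,F}(\fP)$, Caro--Abe--Caro-type structure results produce a stratification of $P$ on whose open strata $Y$ the cohomology sheaves of $\cM$ are (modules associated with) overconvergent $F$-isocrystals; applying $\sp^!$ then yields overconvergent isocrystals on the tubes $\tube{Y}_\fP$, which realise $\sp^!\cM$ as a constructible complex on $P$. In the lisse case itself --- when $\cM$ is a coherent $\sD^\dagger_\fP$-module with integrable connection --- the functor $\sp^![{-}\dim\fP]$ is, by the very definition of $\sp^!$ as a completed pullback along $\sp$, the usual specialisation equivalence that is built into the definition of an overconvergent $F$-isocrystal.

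The heart of the proof is a base-change compatibility: for a closed immersion $i\from \fX \hookrightarrow \fP$ of smooth formal schemes, I would establish a canonical isomorphism
\[ \sp^! \circ i_+ \;\cong\; i_*^{\cons} \circ \sp^!, \]
where $i_+$ is Berthelot's $\sD^\dagger$-module pushforward and $i_*^{\cons}$ is the pushforward on constructible isocrystals built earlier in the paper (using the identification of $\bD^b_\cons(X)$ for $X$ locally closed in $\fP$). Proving this amounts to a careful analysis of how the completed pullback along $\sp$ interacts with the transfer bimodule defining $i_+$; this, together with the analogous (and easier) compatibility under restriction to open formal subschemes, is where I would expect the bulk of the technical work to concentrate. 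I expect that carrying this out cleanly will require carefully tracking the tubular neighbourhoods used to define $\sp^!$ against the closed immersion of analytic generic fibres $i_K\from \fX_K \hookrightarrow \fP_K$.

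Granted the above, the proof concludes by standard devissage. Both $\bD^b_{\hol,F}(\fP)$ and $\bD^b_{\cons,F}(\fP)$ are generated as triangulated categories by objects of the form $i_+ \EU{E}$, with $i\from \fY \hookrightarrow \fP$ the closed immersion of a smooth formal subscheme and $\EU{E}$ an overconvergent $F$-isocrystal on a dense open of $\fY$; this follows on the $\sD^\dagger$-side from Caro's stratification theorem for overholonomic $F$-complexes, and on the constructible side from the corresponding result of Le Stum. Combining the lisse case with the pushforward compatibility yields full faithfulness of $\sp^!$ on such generators; an induction on filtration length then propagates this to the whole derived category, and essential surjectivity is symmetric. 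The Frobenius-type hypothesis enters throughout because overholonomicity, and the analogous constructibility-preservation property for Le Stum's category, are only known to be stable under the operations used in the devissage for $F$-complexes.
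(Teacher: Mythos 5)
Your proposal correctly identifies that the proof goes by d\'evissage and that compatibility of $\sp^!$ with pushforward is needed, but it misjudges where the real difficulty lies, and this leaves a genuine gap at the base case of the d\'evissage.

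The generators you describe are objects $i_+\EU{E}$ with $\EU{E}$ an overconvergent $F$-isocrystal on a \emph{dense open} $X$ of (the special fibre of) $\fY$. For such $\EU{E}$, the associated overholonomic module $\sp_{X+}\EU{E}$ (or its dual $\sp_{X!}\EU{E}$) is \emph{not} a coherent $\cO_{\fP\Q}$-module with integrable connection --- it has support on all of $Y$ but is only $\cO$-coherent over the tube of $X$. So your ``lisse case'' (coherent $\sD^\dagger$-module with integrable connection $\leftrightarrow$ convergent isocrystal on $\fP$) simply does not cover the generators, and in particular the statement that ``$\sp^!$ is, by definition, the usual specialisation equivalence'' does not apply to them. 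The $\bR\mathrm{Hom}$ comparison for these partially overconvergent objects --- i.e.\ showing
$\bR\mathrm{Hom}_{\sD^\dagger_{\fP\Q}}(\sp_{X!}\sF,\cN)\isomto\bR\mathrm{Hom}_{\sD_{\fP_K}}(i_!\sF,\sp^!\cN)$
for $\sF\in\Isoc_F(X,Y)$ with $X\subsetneq Y$ and $\cN$ arbitrary dual constructible --- is precisely the hard part of the theorem, and your sketch treats it as tautological.

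The paper's proof of this base case is not elementary: after localising, it uses Kedlaya's semistable reduction theorem and finite \'etale descent to reduce to the situation where $Y$ is smooth, $D=Y\setminus X$ is a strict normal crossings divisor, and $\sF$ extends to a locally free \emph{log} convergent $F$-isocrystal $\sF^\sharp$ on $(Y,M(D))$. One then compares $\cM=\sp_{\fP*}j_*\sF[d]$ with the twist $\cM^\sharp(\fr D)$ of the log module, using the Caro--Tsuzuki isomorphism $\sD^\dagger_{\fP\Q}\otimes^\bL_{\sD^\dagger_{\fP^\sharp\Q}}\cM^\sharp(\fr D)\isomto\cM$. The log comparison is handled by the ($\cO$-coherent) log analogue of the base-change result, and the remaining discrepancy localises on the tubes of the components of $D$, where it is killed by the vanishing Theorem~\ref{theo: vanishing in log-rigid cohom}. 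That vanishing is exactly where the Frobenius-type hypothesis does its work: the Frobenius structure forces the residues of $\sF^\sharp$ along $D$ to be nilpotent, so the residues of the twist $\sF^\sharp(\fr D_K)$ become isomorphisms, which produces the required vanishing of relative log de~Rham cohomology of open polydiscs. Your proposal invokes Frobenius-type only to ensure stability of overholonomicity under the six operations; it entirely misses this second, decisive role. Also, your d\'evissage should work at the level of locally closed subvarieties $X\subset P$ whose closures $Y$ may be singular --- handling such $Y$ is exactly why alterations and descent are needed before one can lift to a smooth formal subscheme.

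The pushforward compatibility you flag as the ``bulk of the technical work'' is indeed needed, but it is comparatively routine (it is Proposition~\ref{prop: base change sp^* u_* de Rham} and its consequences, established before the main theorem). You have effectively inverted the difficulty profile of the proof.
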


The very similar notation $\bD^b_{\hol,F}(\fP)$ and $ \bD^b_{\cons,F}(\fP)$ hides the fact that objects in the two categories are of a very different nature. On the one hand, we have overholonomic complexes of $\sD^\dagger$-modules on $\fP$, on the other we have complexes of modules with integrable connection on $\fP_K$ whose cohomology sheaves are constructible.
%
%This theorem immediately implies a similar result for varieties $X$ over $k$ which are embeddable inside smooth and proper formal $\cV$-schemes (I call varieties such `strongly realisable'). Given what is already known for overconvergent $F$-isocrystals, proving this theorem amounts to showing that $\sp^!$ induces an isomorphism on mapping complexes. I can then use Kedlaya's semistable reduction theorem \cite{Ked11} to reduce to a certain vanishing result in log rigid cohomology.

As in the case of varieties over $\C$, it is important not just to have such an equivalence, but to know how it behaves with respect to the natural cohomological operations on each side. Out of the six functors $f_+,f^+,f_!,f^!,\otimes$ and $\bD$ defined for overholonomic $\sD^\dagger$-modules, I give natural interpretations of $f^!,f_+$ and $\otimes$ as functors on the derived category of constructible complexes $\bD^b_{\cons}(X)$. The extraordinary pullback $f^!$ and tensor product $\otimes$ are easy: after taking an embedding $X\hookrightarrow \fP$ they just correspond to ordinary pullback of $\sD_{\tube{X}_\fP}$-modules, and ordinary tensor product over $\cO_{\tube{X}_\fP}$. For a smooth and proper morphism $u\from \fP\to \fQ$ of formal schemes, the functor $u_+$ for overholonomic $\sD^\dagger_{\fP\Q}$-modules corresponds (up to a shift) with the higher de \thinspace Rham pushfoward of constructible isocrystals. This has the following important consequence. 

\begin{corollaryu}[\ref{cor: smooth and proper preserves cons}] Let $u\from \fP\to \fQ$ be a smooth and proper morphism of smooth formal schemes. Then the functor $\bR u_{\dR*}$ maps $\bD^b_{\cons,F}(\fP)$ into $\bD^b_{\cons,F}(\fQ)$. 
\end{corollaryu}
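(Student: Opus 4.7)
The plan is to reduce the statement to the analogous stability property on the $\sD^\dagger$-module side, via the Riemann--Hilbert equivalence of the preceding theorem. Given $M \in \bD^b_{\cons,F}(\fP)$, I would first invoke the theorem to write $M \cong \sp^!\sN$ for some $\sN \in \bD^b_{\hol,F}(\fP)$. Since $u$ is smooth and proper, the stability of overholonomic complexes of Frobenius type under $u_+$ (part of the six-operations formalism of Caro--Tsuzuki \cite{CT12}) then guarantees that $u_+\sN$ lies in $\bD^b_{\hol,F}(\fQ)$.

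Next, I would apply the compatibility between $\sp^!$ and the two pushforwards advertised immediately before the corollary: namely, that $u_+$ on overholonomic complexes corresponds, up to a shift $n$, to $\bR u_{\dR *}$ on constructible isocrystals. This yields a natural isomorphism
\[ \bR u_{\dR *}M \;\cong\; \bR u_{\dR *}\sp^!\sN \;\cong\; \sp^!(u_+\sN)[n], \]
and a further application of the Riemann--Hilbert theorem places the right-hand side in $\bD^b_{\cons,F}(\fQ)$, giving the desired conclusion.

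The main obstacle, and the real content, is the compatibility $\sp^!\circ u_+ \cong \bR u_{\dR *}\circ \sp^![n]$ for smooth proper $u$. This is non-trivial because $u_+$ is defined for $\sD^\dagger$-modules over mixed-characteristic formal schemes whereas $\bR u_{\dR *}$ is a construction on the characteristic-zero generic fibres; bridging the two requires the careful comparison developed in the preceding sections. However, once this compatibility and Caro's stability of overholonomicity under $u_+$ are in place, the corollary is a formal consequence of the derived Riemann--Hilbert equivalence, with no further analytic input required.
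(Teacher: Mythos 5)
Your proposal is correct and takes essentially the same approach as the paper: the paper proves this corollary precisely by combining the derived Riemann--Hilbert equivalence (Theorem \ref{theo: riemann-hilbert}) with the compatibility $\sp_\fQ^!\circ u_+\cong \bR u_{\dR*}[2d]\circ \sp_\fP^!$ of Proposition \ref{prop: base change sp^* u_* de Rham}, together with Caro's stability of $\bD^b_{\hol,F}$ under $u_+$ for proper $u$. Your unspecified shift $n$ is $-2d$ where $d$ is the relative dimension of $u$, but since $\bD^b_{\cons,F}(\fQ)$ is a triangulated subcategory this has no bearing on the conclusion.
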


For varieties $X/k$ embedded inside formal schemes, the correct interpretation of $f_+$ on the `constructible' side turns out to use the compactly supported de\thinspace Rham pushforwards introduced in \cite{AL20} (see \S\ref{sec: coh ops for vars} for details). Given the usual duality relations amongst the six functors, the only thing missing from a complete six functor formalism on the constructible side of the correspondence is a suitable interpretation of the duality functor, and I don't currently have a good candidate for this.

One can also also ask about the effect of $\sp^!$ on t-structures. The category $\bD^b_{\cons,F}(\fP)$ admits a natural t-structure coming from the inclusion $\bD^b_{\cons,F}(\fP)\to \bD(\sD_{\fP_K})$. On the other hand, there are three distinct t-structures on $\bD^b_{\hol,F}(\fP)$ that have been previously studied in the literature. The first is just the obvious one coming from the inclusion $\bD^b_{\hol,F}(\fP)\subset \bD^b_\coh(\sD^\dagger_\fP)$. The second is what is called the `constructible' t-structure. Namely, it is the analogue of the t-structure on $\bD^b_{\rm rh}(\sD_X)$ given by transporting the ordinary t-structure on $\bD^b_c(X^{\an},\C)$ along the classical Riemann--Hilbert correspondence. It is characterised by the fact that $f^+$ is t-exact. The third is what I call the `dual constructible' t-structure, and is defined as the Verdier dual of the constructible t-structure. It is therefore characterised by the fact that $f^!$ is t-exact. It turns out that $\sp^!$ matches up the ordinary t-structure on $\bD^b_{\cons,F}(\fP)$ with the dual constructible t-structure on $\bD^b_{\hol,F}(\fP)$.\footnote{This is the reason for the shift in the definition of $\sp^!$.} Denoting the heart of the dual constructible t-structure by ${\bf DCon}_F(\fP)\subset \bD^b_{\hol,F}(\fP)$, and the abelian category of constructible isocrystals on $\fP$ (of Frobenius type) by $\Isoc_{\cons,F}(\fP)$, I therefore deduce the following version of Le Stum's conjecture. 

\begin{corollaryu}[\ref{cor: del kash}] Let $\fP$ be a smooth formal scheme over $\cV$. Then $\sp^!$ induces an equivalence of categories
\[ {\bf DCon}_F(\fP)\isomto \Isoc_{\cons,F}(\fP).\] 
\end{corollaryu}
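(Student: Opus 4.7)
The plan is to deduce this corollary directly from Theorem \ref{theo: riemann-hilbert} by restricting to hearts. Since the natural t-structure on $\bD^b_{\cons,F}(\fP)$ is induced from the standard t-structure on $\bD(\sD_{\fP_K})$, its heart is precisely $\Isoc_{\cons,F}(\fP)$ by the definition of ``constructible complex''. So it suffices to check that $\sp^!$ is t-exact when the source $\bD^b_{\hol,F}(\fP)$ is equipped with the dual constructible t-structure; the restriction to hearts then yields the desired equivalence ${\bf DCon}_F(\fP) \isomto \Isoc_{\cons,F}(\fP)$ automatically.

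For the t-exactness, I would use the characterisation of the dual constructible t-structure as the unique t-structure on $\bD^b_{\hol,F}(\fP)$ making every extraordinary pullback $f^!$ t-exact. Transporting the natural t-structure from $\bD^b_{\cons,F}(\fP)$ back along $\sp^{!-1}$ produces a candidate t-structure on $\bD^b_{\hol,F}(\fP)$. Under the Riemann--Hilbert equivalence, $f^!$ on the holonomic side corresponds to ordinary pullback of $\sD$-modules on the constructible side (as described in the introduction's discussion of the cohomological operations), and ordinary pullback is visibly t-exact for the standard t-structure on $\bD(\sD_{\fP_K})$. Hence the transported t-structure has the defining property of the dual constructible t-structure, forcing the two to coincide.

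The one point requiring genuine care is the shift by $\dim \fP$ built into $\sp^!$: this shift is precisely what arranges for $\sp^!$ to exchange the natural t-structure on the constructible side with the dual constructible (rather than the constructible) t-structure on the holonomic side. Verifying that the degree-counting works out --- for instance, by computing $\sp^!$ on the constant module $\cO_\fP$ or on structure sheaves of closed smooth formal subschemes, where the extraordinary pullback picks up the expected shift --- is really the only obstacle, and once handled the equivalence of hearts is immediate.
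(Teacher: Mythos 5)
Your overall framing is right: given that $\sp^!$ is a $t$-exact equivalence (dual constructible $t$-structure on the source, natural $t$-structure on the target), the equivalence of hearts follows at once, and the heart of the natural $t$-structure on $\bD^b_{\cons,F}(\fP)$ is indeed $\Isoc_{\cons,F}(\fP)$ by construction. But the way you propose to \emph{obtain} $t$-exactness is both more roundabout than what the paper does and not fully watertight as written.

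In the paper, the $t$-exactness of $\sp^!$ is established directly as Theorem \ref{theo: sp^! cons} (restated as a corollary in \S\ref{subsec: consequences}), \emph{before} Theorem \ref{theo: riemann-hilbert}; indeed the proof of the Riemann--Hilbert theorem itself relies on that $t$-exactness, so deducing $t$-exactness \emph{from} the equivalence runs against the grain of the paper's logical development. Theorem \ref{theo: sp^! cons} is proved by d\'evissage (Proposition \ref{prop: overhol complexes generically isocrystals}), de\thinspace Jong's alterations to reduce to $(X,Y)$ log-smooth, and the explicit computation of Lemma \ref{lemma: sp_* and sp^! for locally free isocrystals} — not by a uniqueness argument for $t$-structures. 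Once Theorem \ref{theo: riemann-hilbert} and the $t$-exactness corollary are both in hand, Corollary \ref{cor: del kash} is a one-line restriction to hearts, which is exactly how the paper treats it.

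Your proposed alternative route — transport the natural $t$-structure back along $\sp^!$, verify it makes $f^!$ t-exact, and invoke uniqueness — could in principle be made rigorous, but two steps are glossed over. First, the ``$f^!$ is $t$-exact characterises the dual constructible $t$-structure'' statement is only a remark in the introduction; to make it usable you would have to prove a genuine uniqueness theorem, most naturally via conservativity of extraordinary stalks (Proposition \ref{prop: i^! conservative}) plus the agreement of all $t$-structures over $\spf{\cV'}$. Second, to know the transported $t$-structure makes $f^!$ $t$-exact you need the compatibility $f^*\circ\sp^! \cong \sp^!\circ f^!$, which is Proposition \ref{prop: u^!, u^* and sp^!}; that is a nontrivial result that appears later in \S\ref{sec: cohomological operations} and whose proof is a separate piece of work (though it is not logically dependent on the $t$-exactness, so there is no outright circularity). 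In short: your plan is structurally sound but replaces a one-line deduction from already-established results with an argument that would still need two additional intermediate results proved.
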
 

This immediately implies a similar result for varieties over $k$. 

Finally, I use the equivalence $\sp^!$ to prove that rigid cohomology (with coefficients) coincides with its $\sD^\dagger$-module counterpart. Recall that in \cite[\S3]{Abe19}, Abe defines a functor $\rho_X\from \Isoc_F^\dagger(X)\to \bD^b_{\hol,F}(X)$ from the category of overconvergent isocrystals of Frobenius type on a variety $X$, to the category of overholonomic complexes on $X$, building on previous work of Caro. 

\begin{corollaryu}[\ref{theo: rigid and D-module cohomology}] Let $X$ be a strongly realisable variety, $f\from X\to \spec{k}$ the structure morphism, and $\sF$ an overconvergent isocrystal on $X$ of Frobenius type. Then $\sp^!$ induces an isomorphism
\[ f_+\rho_X(\sF) \isomto \bR\Gamma_{\rig}(X,\sF) \]
in $\bD^b(K)$. 
\end{corollaryu}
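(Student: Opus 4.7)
The plan is to apply the Riemann--Hilbert equivalence $\sp^!$ to $f_+\rho_X(\sF)$ and recognise the result as rigid cohomology. Since $X$ is strongly realisable, choose a locally closed immersion $X\hookrightarrow \fP$ into a smooth proper formal scheme over $\cV$, with projection $g\from \fP \to \spf{\cV}$. Through the usual support conditions, the object $\rho_X(\sF)\in \bD^b_{\hol,F}(X)$ lives inside $\bD^b_{\hol,F}(\fP)$ supported on $\tube{X}_\fP$, and $f_+\rho_X(\sF)$ is computed as the pushforward of $\rho_X(\sF)$ along $g$ (combined with the appropriate localisation along the complement of $X$ in its closure).

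The first substantive step is to identify $\sp^!\rho_X(\sF)\in \bD^b_{\cons,F}(X)$. The functor $\rho_X$ from \cite{Abe19} builds the overholonomic $\sD^\dagger$-module attached to an overconvergent isocrystal by first forming the associated $\sD^\dagger$-module on a smooth compactification, then localising via $j^\dagger$ and applying local cohomology along the boundary. Since $\sp^!$ is essentially the completed pullback along the specialisation map $\sp\from \fP_K\to \fP$ (shifted by $\dim \fP$), compatibility of $\sp^!$ with each of these operations should identify $\sp^!\rho_X(\sF)$ with the constructible isocrystal on $\tube{X}_\fP$ coming from $\sF$, up to the dimension shift. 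This ultimately reduces to the case of a lisse coefficient on a smooth formal scheme, where it is essentially the definition of $\sp^!$.

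I then invoke the compatibility of $\sp^!$ with $f_+$ proved in \S\ref{sec: coh ops for vars}, which translates $f_+$ on the $\sD^\dagger$-side into the compactly supported de\thinspace Rham pushforward of \cite{AL20} on the constructible side. Applied to the structure morphism $f\from X\to \spec{k}$, this gives
\[ \sp^!\bigl(f_+\rho_X(\sF)\bigr) \cong \bR\Gamma_{\dR,c}\bigl(\tube{X}_\fP,\sF\bigr), \]
and this last object is by construction $\bR\Gamma_{\rig}(X,\sF)$: since $\fP$ is proper, the overconvergent formalism is absorbed into the topology of the tube, so the compactly supported de\thinspace Rham cohomology of $\tube{X}_\fP$ with coefficients in $\sF$ coincides with rigid cohomology. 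Because $\sp^!$ is an equivalence on the level of points, this isomorphism in $\bD^b(K)$ is exactly the one in the statement.

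The hard part will be the identification of $\sp^!\rho_X(\sF)$. Caro's construction of $\rho_X$ is built up from several layers, and each needs to be shown compatible with $\sp^!$; the various shift conventions must also be tracked carefully. In particular, because $\sp^!$ matches the standard t-structure on constructible isocrystals with the \emph{dual constructible} t-structure on overholonomic complexes, the $\dim \fP$ shift built into $\sp^!$ must cancel with a corresponding shift in the definition of $\rho_X$ so that the final comparison in $\bD^b(K)$ contains no extraneous degree shift. Once that bookkeeping is in place, the identification of de\thinspace Rham cohomology of the tube with rigid cohomology is essentially formal from \cite{AL20}.
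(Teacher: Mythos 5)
Your proposal takes the ``direct'' route: apply the compatibility of $\sp^!$ with $f_+$ to get from $f_+\rho_X(\sF)$ to a compactly supported de\thinspace Rham cohomology group, and then identify that with rigid cohomology. The paper explicitly warns that this is \emph{not} the way to go, and argues via duality instead: it rewrites $\bR\Gamma_{\rig}(X,\sF)$ as $\bR\mathrm{Hom}_{\sD_{\tube{X}_\fP}}(\sF^\vee,\cO_{\tube{X}_\fP})$, transports along the Riemann--Hilbert equivalence via $\sp^!_\fP(\bD_X\cM)=i_!\sF^\vee$, and then unwinds the adjunctions $(f^+,f_+)$ on the $\sD^\dagger$-module side. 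These are genuinely different strategies, and yours has a gap that the paper's route is precisely designed to avoid.

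The gap is in your ``first substantive step,'' identifying $\sp^!\rho_X(\sF)$. Recall that $\rho_X=\sp_{X+}$, and by definition $\sp_{X+}\sF=\bD_X(\sp_{X!}\sF^\vee)$. Corollary~\ref{cor: sp^! sp_! locally free isocrystals} (together with its non-smooth extension in \S\ref{sec: rigid cohomology}) tells you what $\sp^!$ does to $\sp_{X!}\sF^\vee$, namely it produces $i_!\sF^\vee$. But to pass from that to $\sp^!\rho_X(\sF)$ you must commute $\sp^!$ past $\bD_X$, and \S\ref{sec: cohomological operations} explicitly leaves open the problem of interpreting $\bD_X$ on the constructible side --- indeed it demonstrates that no local description can exist. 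When $X$ is smooth you can bypass this because $\sp_{X+}\sF$ happens to be a shift $\sp_{X!}\sF[-2\dim X]$, giving $\sp^!_X\rho_X(\sF)=\sF[-2\dim X]$; but for singular $X$ (which the statement allows --- strongly realisable is not smooth) the relationship between $\sp_{X+}$ and $\sp_{X!}$ is genuinely a duality, not a shift, and your identification collapses.

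Even in the smooth case, your intermediate isomorphism $\sp^!(f_+\rho_X(\sF))\cong\bR\Gamma_{\dR,c}(\tube{X}_\fP,\sF)$ is off by a shift: following through the conventions, $\sp^!_X\rho_X(\sF)=\sF[-2\dim X]$, and $\bR f_\flat=\bR\!\tube{f}_{\dR!}[2\dim\fP]$, so you actually get $\bR\Gamma_{\dR,c}(\tube{X}_\fP,\sF)[2(\dim\fP-\dim X)]$. This extra shift does not cancel inside the definition of $\rho_X$ (as your last paragraph hopes); it cancels against the open polydisc fibration of $\tube{X}_\fP$ over a lift of $X$, and establishing that rigorously is essentially a Poincar\'e-duality computation --- precisely the kind of thing the paper's duality argument packages cleanly. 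So the approach may be salvageable for smooth $X$ with substantial extra work, but for the general case it requires a compatibility of $\sp^!$ with Verdier duality that the paper does not have and explicitly records as missing.
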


Let me now give an outline of contents the article. In \S\ref{sec: prelim} I recall various preliminary results that I need in the theory of analytic geometry and rigid cohomology. In particular, I state the main results that I will need on compactly supported cohomology of analytic varieties from \cite{AL20}. In \S\ref{sec: abelian constructible isocrystals} I carefully define the (abelian) category of constructible isocrystals on varieties and pairs, following Le Stum. Then in \S\ref{sec: derived constructible isocrystals} I define the analogous triangulated category, and prove that it satisfies all the expected invariance and functoriality properties. In \S\ref{sec: D-modules} I recall how the theory of overholonomic $\sD^\dagger$-modules on varieties and pairs works, and define the three natural t-structures that exist on the derived categories of these objects. In \S\ref{sec: rigidification of O-modules} I construct the `rigidification' functor $\bL\hat{\sp}^*$ on the level of $\cO$-modules, which is a kind of completed pullback along the specialisation morphism $\sp\from \fP_K\to \fP$ associated to a flat formal scheme $\fP$ over $\cV$. Then in \S\ref{sec: rigidification of D-modules} I upgrade this functor to include $\sD$-module structures, giving rise to the functor $\sp^!$ from overholonomic complexes of $\sD^\dagger$-modules to (complexes of) constructible isocrystals. I prove that this functor is t-exact for the dual constructible t-structure on the source and the natural t-structure on the target. In \S\ref{sec: logarithmic} I pave the way for proving the Riemann--Hilbert correspondence by establishing a key vanishing result in log rigid cohomology, and then in \S\ref{sec: oc RH} I prove my main result, that $\sp^!$ is an equivalence on objects of Frobenius type. Then in \S\ref{sec: cohomological operations} I describe the compatibility of $\sp^!$ on the various cohomological operations defined for constructible isocrystals and overholonomic $\sD^\dagger$-modules, and finally in \S\ref{sec: rigid cohomology} I prove the comparison theorem between rigid and $\sD^\dagger$-module cohomology.

\subsection*{Acknowledgements} This article has benefited enormously from conversations with Tomoyuki Abe, and could not have been written without his help. In particular, I learned from him the rigidification construction described in \S\ref{sec: rigidification of O-modules} below. I would also like to thank Bernard Le Stum for many helpful comments on this and earlier articles, as well as Atsushi Shiho for answering some of my questions about his work. Further acknowledgements to be added after the referee process. 

\subsection*{Notation and conventions}

\begin{itemize}
\item I will denote by $K$ a complete, discretely valued field of characteristic $0$, whose residue field $k$ is perfect of characteristic $p$. While the general formalism of rigid cohomology does not require $K$ to be discretely valued, or $k$ to be perfect, the theory of arithmetic $\mathscr{D}^\dagger$-modules is generally developed under these assumptions.\footnote{Although the requirement that $k$ is perfect has recently been lifted by \cite{Car19}.} Since my main point of interest is comparisons between the two, I will impose this hypothesis from the beginning. The absolute value on $K$ (or any valued extension thereof) will be normalised  so that $\norm{p}=p^{-1}$. 
\item I will write $\mathcal{V}$ for the ring of integers of $K$, $\mathfrak{m}$ for its maximal ideal, and $\varpi$ for a choice of uniformiser. I will fix a power $q=p^a$ of $p$, and assume that $K$ admits a lift $\sigma$ of the $q$ Frobenius on $k$. Frobenius will always mean the $q$-power Frobenius.
\item A \emph{variety} will mean a separated and finite type $k$-scheme, a \emph{formal scheme} will mean a separated and (topologically) finite type formal scheme over $\spf{\mathcal{V}}$, and an \emph{analytic variety} will mean an adic space, separated and locally of finite type over $\spa{K,\cV}$. Given a formal scheme $\fr{P}$, its generic fibre $\fP_K$ is therefore an analytic variety, and its special fibre $\fP_k$ is a variety. I will use fraktur letters to denote formal schemes, and the corresponding roman letters for their special fibres, for example $P=\fr{P}_k$.
\item Adjectives such as flat or smooth, when applied to varieties or formal schemes, should be understood to apply to the structure morphism to $\spec{k}$ or $\spf{\cV}$. Thus a smooth variety will be a variety that is smooth over $k$, and a flat formal scheme will be a formal scheme that is flat over $\cV$. Similar adjectives applied to analytic varieties should be understood to apply to the structure morphism to $\spa{K,\cV}$.
\item If $\rho\in\sqrt{\norm{K}}$, I will denote by $\D^d_K(0;\rho)$ the closed polydisc of radius $\rho$ over $K$,\footnote{via the normalisation $\norm{p}=p^{-1}$} and by $\D^d_K(0;\rho^-)$ the open polydisc of radius $\rho$. If $\scr{X}$ is an analytic variety, and $\#\in \{\emptyset,-\}$, I will denote by $\D^d_\scr{X}(0;\rho^{\#})$ the fibre product $\D^d_K(0;\rho^{\#})\times_{\spa{K,\cV}}\scr{X}$.
\item If $X$ is a topological space, I will denote by $\Sh(X)$ the category of (abelian) sheaves on $X$. If $\cO_X$ is a sheaf of (not necessarily commutative) rings on $X$, I will denote the category of coherent $\cO_X$-modules by $\Coh(\cO_X)$.\footnote{Recall that  an $\cO_X$-module $\sF$ is called \emph{coherent} if it is locally finitely generated, and the kernel of any map $\cO_{U}^{\oplus r} \to \sF\mid_U$ on any open subset $U\subset X$ is also locally finitely generated.}
\item If $\mathcal{A}$ is an abelian category I will denote by ${\bf D}^{\#}(\mathcal{A})$ the derived category with boundedness condition $\#\in \{\emptyset,+,-,b\}$. If $\mathcal{A}=\Sh(X)$ for some topological space $X$, I will usually write $\mathbf{Ch}^{\#}(X)$ and ${\bf D}^{\#}(X)$ instead, and if $\mathcal{A}$ is the category of $\mathcal{O}_X$-modules on a ringed space $(X,\mathcal{O}_X)$, I will write $\mathbf{Ch}^{\#}(\mathcal{O}_X)$ and ${\bf D}^{\#}(\mathcal{O}_X)$.
\item  If $A$ is an abelian group (or more generally, a sheaf of abelian groups on a topological space), I will write $A_{\Q}$ for $A\otimes_{\Z}\Q$. If $\mathcal{C}$ is an additive category, I will denote by $\mathcal{C}_{\Q}$ the corresponding isogeny category. Thus the objects of $\ca{C}_\Q$ are the same as $\ca{C}$, but the hom sets have been tensored with $\Q$: ${\rm Hom}_{\ca{C}_\Q}(X,Y)={\rm Hom}_{\ca{C}}(X,Y)\otimes_{\Z}\Q$.
\item For a morphism $f\from(X,\cO_X)\to (Y,\cO_Y)$ of ringed spaces (or more generally, ringed sites) I will use the formalism of $K$-injective and $K$-flat resolutions from \cite{Spa88} to define the functors $\bR f_*$ and $\bL f^*$ on unbounded derived categories of $\cO_X$ and $\cO_Y$-modules. In order for this construction to be well-behaved, the sites $X$ and $Y$ under consideration will need to have bases for their topologies which are of finite cohomological dimension, this will be the case for all sites considered in this article. In particular, if $Y$ is a point, then this gives the derived global sections functor $\bR\Gamma(X,-)$ for unbounded complexes. The internal hom functor
\[ \bR \underline{\rm Hom}_{\cO_X}(-,-) \from \bD(\cO_X)^{\rm op}\times \bD(\cO_X)\to \bD(\cO_X) \]
can be defined similarly. By then applying $\bR\Gamma(X,-)$, so can the functor $\bR{\rm Hom}_{\cO_X}(-,-)$ taking values in $\bD(\Gamma(X,\cO_X))$. 
\item I will use the notions of partition and stratification as defined in \cite[\href{https://stacks.math.columbia.edu/tag/09XY}{Tag 09XY}]{stacks}, although with slightly different terminology. Thus a partition of Noetherian topological space $X$ is a finite decomposition $X=\bigsqcup_{\alpha\in A }X_\alpha$ into locally closed subsets $X_\alpha$. It is called a stratification if $X_\alpha\cap \overline{X}_\beta \neq \emptyset \implies X_\alpha\subset \overline{X}_\beta$. This is in fact called a \emph{good} stratification in \cite[\href{https://stacks.math.columbia.edu/tag/09XY}{Tag 09XY}]{stacks}. Every partition can be refined to a stratification. 
\end{itemize}

\section{Preliminaries} \label{sec: prelim}

In this section I recall some general results and constructions I will need, mostly in rigid analytic geometry and rigid cohomology.

\subsection{Adic spaces} \label{sec: adic spaces}

In this article, analytic varieties will always be considered as adic spaces. I will therefore write $\mathbf{An}_K$ for the category of adic spaces separated and locally of finite type over $\spa{K,\mathcal{V}}$, and refer to such objects as \emph{analytic varieties} (over $K$). By \cite[\S1.1.11]{Hub96} there is an equivalence of categories
\begin{align*}
 (-)_0\colon\mathbf{An}_K &\rightarrow \mathbf{Rig}_K \\
 \mathscr{X} &\mapsto \mathscr{X}_0
\end{align*}
between $\mathbf{An}_K$ and the category of separated rigid analytic spaces over $K$ in the sense of Tate \cite{Tat71}. Denote a quasi-inverse to this functor by $r(-)$. If $\mathscr{X}_{\mathrm{an}}$ denotes the analytic site of $\mathscr{X}$ (that is, the category of open subsets of $\mathscr{X}$ equipped with its canonical topology), and $\mathscr{X}_{0,G}$ the $G$-site of $\mathscr{X}_0$ (that is, the category of admissible opens equipped with the topology of admissible open coverings), then the functor
\begin{align*}
\mathscr{X}_{\mathrm{an}} &\leftarrow \mathscr{X}_{0,G} \\
r(U) &\mapsfrom U
\end{align*}
induces an equivalence of toposes
\begin{align*}
 \mathbf{Sh}(\mathscr{X}) &\isomto \mathbf{Sh}_G(\mathscr{X}_0) \\
 \mathscr{F} &\mapsto \mathscr{F}_0,
\end{align*}
which is natural in $\mathscr{X}$ \cite[\S1.1.11]{Hub96}. In particular, it induces isomorphisms in cohomology
\[ {\rm H}^q(\mathscr{X},\mathscr{F}) \isomto {\rm H}^q(\mathscr{X}_0,\mathscr{F}_0)\]
for any abelian sheaf $\mathscr{F}$. Since $\left(\mathcal{O}_{\mathscr{X}}\right)_0\isomto \mathcal{O}_{\mathscr{X}_0}$, it also induces an equivalence of categories
\[ \mathbf{Coh}(\cO_\mathscr{X}) \isomto \mathbf{Coh}(\cO_{\mathscr{X}_0})\]
between coherent sheaves on $\mathscr{X}$ and $\mathscr{X}_0$.

\subsection{Frames and tubes} \label{subsec: frames and tubes}

The basic objects of rigid cohomology are frames and tubes. Since the theory is generally phrased in the language of rigid analytic spaces, I will briefly discuss here the changes that need to be made when using adic spaces instead.

\begin{definition} \begin{enumerate} 
\item A pair $(X,Y)$ consists of an open immersion $X\hookrightarrow Y$ of varieties.\footnote{Implicitly, these are always $k$-varieties, and hence a pair might be more precisely termed a `pair over $k$'.} 
\item A frame $(X,Y,\fr{P})$ consists of a pair $(X,Y)$ together with a closed immersion $Y\hto \fr{P}$ of formal schemes, such that $\fr{P}$ is flat.\footnote{Again, this flatness is over $\cV$, and a more precise term for a frame would be a `frame over $\cV$'.}
\end{enumerate}
There is an obvious notion of a morphism of pairs or of frames.
\end{definition}

The two formalisms of rigid cohomology and arithmetic $\sD$-modules for pairs and varieties work under slightly different hypotheses on the frames involved.

\begin{definition} Let $(X,Y,\fP)$ be a frame.
\begin{enumerate}
\item $\fP$ is smooth around $X$ if there exists an open subscheme $\fr{U}\subset \fP$, containing $X$, which is smooth over $\cV$.
\item $(X,Y,\fP)$ is an \emph{l.p. frame} if $\fP$ is smooth, and admits a locally closed immersion into a smooth and proper formal $\cV$-scheme. 
\end{enumerate}
\end{definition}

This leads to two different notions of `realisability' for pairs or varieties. 

\begin{definition} \label{defn: realisable} \begin{enumerate}
\item A pair $(X,Y)$ is said to be weakly realisable if there exists a frame $(X,Y,\fP)$ with $\fP$ smooth around $X$. 
\item A pair $(X,Y)$ is said to be strongly realisable if there exists an l.p. frame $(X,Y,\fP)$.
\item A variety $X$ is said to be weakly realisable if there exists a frame $(X,Y,\fP)$ with $Y$ proper and $\fP$ smooth around $X$. 
\item A variety $X$ is said to be strongly realisable if there exists an l.p. frame $(X,Y,\fP)$ with $Y$ proper.\footnote{Equivalently, $X$ admits a locally closed immersion into a smooth and proper formal scheme.}
\end{enumerate}
\end{definition}

If $\fP$ is a formal scheme, there is a continuous specialisation map
\[ \sp=\mathrm{sp}_\fP\from \colon\fr{P}_K \rightarrow \fr{P}\cong P. \]
Write $[\fP_K]$ for the separated quotient of $\fP_K$ in the sense of \cite[Chapter 0, \S2.3]{FK18}, this is the set of maximal points of $\fP_K$ equipped with the quotient topology via the map
\[ \sep=\sep_{\fP_K} \from \fP_K\to [\fP_K]\]
taking a point $x$ to its maximal generalisation $[x]$. Define a (non-continuous!) map
\[ [\sp]=[\sp_\fP]\from \fP_K \to P \]
as the composite 
\[ \fP_K \overset{\sep}{\lto} [\fP_K] \overset{\sp}{\lto} P \]
where the second map is the restriction of $\sp$ to the subset $[\fP_K]\subset \fP_K$. The map $[\sp]$ is in fact anti-continuous: the preimage of a closed subset is open and vice versa.

Recall that a subset $S\subset T$ of a topological space $T$ is called \emph{constructible} if it lies in the Boolean algebra generated by the open subsets $U$ of $T$ for which the inclusion $U\to T$ is a quasi-compact morphisms. In particular, if $T=P$ is a variety over $k$, then the quasi-compactness condition is automatically satisfied.

\begin{definition} For any constructible subset $S\subset P$, define the tube $\tube{S}_\fP:=[\sp]^{-1}(S)\subset \fP_K$.
\end{definition}

If $S$ is closed, then $\tube{S}_\fP$ is an open subspace of $\fP_K$, and if $S$ is open, then $\tube{S}_\fP$ is a closed subset of $\fP_K$. Since an implicit assumption on $\fP$ is that it is quasi-compact, it follows that $\tube{S}_\fP$ will be a finite union of locally closed subsets of $\fP_K$. However, it will not be constructible. For example, if $\fP=\widehat{\A}^1_\cV$ and $S=\{0\}\subset \A^1_k$, then $\tube{0}_{\widehat{\A}^1_\cV} \subset \D^1_K(0;1)$ is a non-quasi-compact open, and is therefore not constructible.

In general, $\tube{S}_\fP$ won't admit any natural structure as an adic space, unless $S$ is a closed subset of $P$. I can, however, always consider it as a locally ringed space by equipping it with the restriction
\[ \cO_{\tube{S}_\fP}:= \cO_{\fP_K}\!\!\mid_{\tube{S}_\fP} \]
of the structure sheaf on $\fP_K$. If $i \colon S\hookrightarrow S'$ is an inclusion of constructible subsets of $P$, I will generally abuse notation and also write $i \colon \tube{S}_\fr{P}\rightarrow \tube{S'}_\fr{P}$ for the induced morphism of tubes. This is then naturally a morphism of locally ringed spaces, and if $i\from S\hto S'$ is a locally closed immersion, then $i\from \tube{S}_\fP\to \tube{S'}_{\fP'}$ is topologically the inclusion of a locally closed subset. In this case the functor
\[ i_!\from \Sh(\tube{S}_\fP)\to \Sh(\tube{S'}_\fP)\]
of extension by zero along $i$ is defined purely topologically. A sheaf on $\tube{S'}_\fP$ is said to be \emph{supported on $\tube{S}_\fP$} iff it is in the essential image of this functor (note that we are not necessarily insisting on $\tube{S}_\fP$ being closed in $\tube{S'}_\fP$ when we say that a sheaf is supported on $\tube{S}_\fP$).  

Now, suppose that
\[ \xymatrix{ X'\ar[r]\ar[d]^f & Y' \ar[r]\ar[d]^g \ar[r]\ar[d] & \fP' \ar[d]^u \\ X \ar[r]& Y \ar[r]& \fP } \]
is a morphism of frames. Then there is an induced morphism
\[ \tube{f}_u \from\tube{X'}_{\fP'} \to \tube{X}_\fP \]
of locally ringed spaces, with associated module pullback functor $\tube{f}_u^*$.

\begin{remark} \label{rem: notation abuse morphisms of tubes} When $u=\mathrm{id}$ and $f$ is a locally closed immersion, this will often be denoted $f\from \tube{X'}_{\fP} \to \tube{X}_\fP$ instead of $\tube{f}_\mathrm{id}$. In general I will often simply write it as $u \from \tube{X'}_{\fP'} \to \tube{X}_\fP$, in a (possibly doomed) attempt to keep notational complexity within reasonable bounds. I will try my best to do this only when there is no possible scope for confusion.
\end{remark}

By definition of the structure sheaves on $\tube{X'}_\fP$ and $\tube{X}_\fP$, if $u=\mathrm{id}$ and $f$ is a locally closed immersion, the module pullback and sheaf pullback coincide: $f^*=f^{-1}$. The extension by zero functor $f_!\from {\bf Sh}(\tube{X'}_\fP)\to {\bf Sh}(\tube{X'}_\fP)$ can also be upgraded to a functor of $\cO$-modules, which coincides with the functor on underlying sheaves. The following simple `base change' result will be used constantly.

\begin{lemma} \label{lemma: u^*i_!} Let $(f,g,u):(X',Y',\fr{P}')\rightarrow (X,Y,\fr{P})$ be a morphism of frames, and $i:S\rightarrow X$ a locally closed immersion. Set $S':=f^{-1}(S)$, and write $i':S'\rightarrow X'$ for the induced locally closed immersion. Consider the (topologically Cartesian) diagram
\[ \xymatrix{ \tube{S'}_{\fr{P}} \ar[r]^-{i'}\ar[d]_{\tube{f}_u} & \tube{X'}_{\fr{P}'} \ar[d]^{\tube{f}_u} \\ \tube{S}_\fr{P} \ar[r]^-i & \tube{X}_\fr{P} } \]
of tubes. Then, for any $\mathcal{O}_{\tube{S}_\fr{P}}$-module $\mathscr{F}$, the base change map
\[ \tube{f}_u^*i_*\mathscr{F}\rightarrow i'_*\tube{f}_u^*\mathscr{F}\]
induces an isomorphism
\[ \tube{f}_u^*i_!\mathscr{F}\overset{\cong}{\longrightarrow} i'_!\tube{f}_u^*\mathscr{F}. \]
\end{lemma}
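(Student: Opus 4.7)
The plan is to separate the claim into its purely topological content---that extension by zero commutes with inverse image for topologically Cartesian squares of continuous maps---and its $\cO$-module content, which is handled by the projection formula for extension by zero along a locally closed immersion.

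First, I would verify that the square of tubes displayed in the statement is Cartesian in the category of topological spaces. This is a set-theoretic check: since $S' = f^{-1}(S)$ and the map of sets $[\sp]$ is natural in the morphism of frames, one has $\tube{S'}_{\fP'} = \tube{f}_u^{-1}(\tube{S}_\fP)\cap \tube{X'}_{\fP'}$, and the Cartesianness on topological spaces follows from the fact that each tube carries the subspace topology from its ambient analytic generic fibre. Write $h\colon \tube{S'}_{\fP'}\to \tube{S}_{\fP}$ for the induced continuous map. Because $i$ and $i'$ are locally closed immersions (topologically), and $i_!$ is characterized stalk-wise---it is the identity on $S$ and zero elsewhere---the standard topological base change produces a natural isomorphism
\[ \tube{f}_u^{-1}i_!\mathscr{G}\;\cong\; i'_!h^{-1}\mathscr{G} \]
for every sheaf of abelian groups $\mathscr{G}$ on $\tube{S}_{\fP}$. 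One can either factor a locally closed immersion as an open followed by a closed one and verify each case separately, or simply compare stalks directly on both sides.

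To upgrade this to $\cO$-modules, use the definition $\tube{f}_u^*(-) = \tube{f}_u^{-1}(-)\otimes_{\tube{f}_u^{-1}\cO_{\tube{X}_{\fP}}} \cO_{\tube{X'}_{\fP'}}$ and combine with the topological base change above to obtain
\[ \tube{f}_u^*(i_!\mathscr{F}) \;\cong\; i'_!(h^{-1}\mathscr{F}) \otimes_{\tube{f}_u^{-1}\cO_{\tube{X}_{\fP}}} \cO_{\tube{X'}_{\fP'}}. \]
The projection formula for $i'_!$ (again a stalk-wise check, using that $i'_!$ of anything vanishes outside $\tube{S'}_{\fP'}$ and is the identity on stalks inside) rewrites this as
\[ i'_!\Bigl( h^{-1}\mathscr{F}\otimes_{i'^{-1}\tube{f}_u^{-1}\cO_{\tube{X}_{\fP}}} i'^{-1}\cO_{\tube{X'}_{\fP'}}\Bigr). \]
Since the structure sheaves on the tubes are by definition restrictions of the structure sheaves on the ambient analytic varieties, one has $i'^{-1}\cO_{\tube{X'}_{\fP'}}=\cO_{\tube{S'}_{\fP'}}$ and $i'^{-1}\tube{f}_u^{-1}\cO_{\tube{X}_{\fP}} = h^{-1}\cO_{\tube{S}_{\fP}}$, so the bracketed term is precisely $h^*\mathscr{F}$, i.e.\ $\tube{f}_u^*\mathscr{F}$ under the notational conventions of Remark~\ref{rem: notation abuse morphisms of tubes}. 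A final bookkeeping step checks that the composite isomorphism obtained this way agrees with the base change morphism named in the statement, which is a direct verification via the defining adjunctions.

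There is no serious obstacle here; the content of the lemma is really just that extension by zero along a locally closed immersion is determined stalk-wise, so any pullback---topological or $\cO$-linear---and any tensor product automatically commute with it. The main source of friction, such as it is, lies in keeping the various restrictions of the structure sheaves aligned correctly, which is the reason I would resist the temptation to collapse $h$ and $\tube{f}_u$ notationally until after the formal manipulations are complete.
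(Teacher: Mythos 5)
Your argument is correct, but it runs along a different track from the one in the paper. You split the problem into a purely topological base-change statement for extension by zero and a module-theoretic step handled by the projection formula, which is conceptually clean but forces the final ``bookkeeping'' identification: having built an isomorphism $\tube{f}_u^*i_!\mathscr{F}\cong i'_!\tube{f}_u^*\mathscr{F}$ out of abstract ingredients, you still have to unwind the adjunctions to see it coincides with the base change map named in the statement. The paper instead works with that named map from the start: writing $T=X\setminus S$, $T'=X'\setminus S'$ and $j,j'$ for the inclusions, it observes that $j'^{-1}\tube{f}_u^*i_!\mathscr{F}=\tube{f}_u^*j^{-1}i_!\mathscr{F}=0$, concludes that the composite $\tube{f}_u^*i_!\mathscr{F}\to\tube{f}_u^*i_*\mathscr{F}\to i'_*\tube{f}_u^*\mathscr{F}$ factors through the subsheaf $i'_!\tube{f}_u^*\mathscr{F}$, and then checks that the resulting map is an isomorphism by restricting to $\tube{S'}_{\fP'}$ (where it is the identity) and to $\tube{T'}_{\fP'}$ (where both sides vanish). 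The two proofs exploit the same underlying fact---that $i'_!$ is determined stalkwise---but the paper's version has no identification step left over, while yours makes the topological and module-theoretic contributions more visible and would transport more readily to a general six-functor setting.
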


\begin{proof}
Set $T=X\setminus S$ and $T'=X'\setminus S'=f^{-1}(T)$ and let $j:T\rightarrow X$, $j':T'\rightarrow X'$ be the natural inclusions (of constructible subsets). Then the commutativity of the diagram
\[\xymatrix{\tube{T'}_{\fr{P}'} \ar[r]^-{j'}\ar[d]_{\tube{f}_u} & \tube{X'}_{\fr{P}'} \ar[d]^{\tube{f}_u} \\ \tube{T}_\fr{P}\ar[r]^-j & \tube{X}_\fr{P} } \]
shows that $j'^{-1}\tube{f}_u^*i_!\mathscr{F}=\tube{f}_u^*j^{-1}i_!\mathscr{F}=0$. First of all, this implies that the canonical map $\tube{f}_u^*i_!\mathscr{F}\rightarrow \tube{f}_u^*i_*\mathscr{F}\rightarrow i'_*\tube{f}_u^*\mathscr{F}$ factors through the subsheaf $i'_!\tube{f}_u^*\mathscr{F}\subset i'_*\tube{f}_u^*\mathscr{F}$. Secondly, since $j'^{-1}i'_!\tube{f}_u^*\mathscr{F}=0$ trivially, we can see that to prove
\[ \tube{f}_u^*i_!\mathscr{F}\rightarrow i'_!\tube{f}_u^*\mathscr{F} \]
is an isomorphism, it suffices to do so after restricting to $\tube{S'}_{\fr{P}'}$. But now we have
\[ i'^{-1}\tube{f}_u^*i_!\mathscr{F}=\tube{f}_u^*i^{-1}i_!\mathscr{F}=\tube{f}_u^*\mathscr{F},\;\;\;\;i'^{-1}i'_!\tube{f}_u^*\mathscr{F}=\tube{f}_u^*\mathscr{F},\]
and the morphism
\[ \tube{f}_u^*i_!\mathscr{F}\rightarrow i'_!\tube{f}_u^*\mathscr{F}  \]
restricts to the identity on $\tube{S'}_{\fr{P}'}$.
\end{proof}

The tube of a subset can be used to formulate the following rather weak notion of smoothness for frames, which will occasionally be useful.

\begin{definition} If $(X,Y,\fP)$ is a frame, I will say that $\fP$ is rig-smooth around $X$ if $\tube{X}_\fP$ admits an open neighbourhood $\tube{X}_\fP\subset V \subset \fP_K$ which is smooth over $K$. 
\end{definition}

The non-smooth locus of a morphism of analytic varieties is the subspace defined by a coherent ideal sheaf. Thus, if $\fP$ is smooth around $X$, then it is rig-smooth around $X$. It is easy to provide counter-examples to the converse statement.

Finally, as well as the open tube $\tube{Y}_\fr{P}$ of a closed subscheme $Y\hookrightarrow P$ defined above, I will also need the variants $\left[Y\right]_{\mathfrak{P}\eta}$ and $\tube{Y}_{\mathfrak{P}\eta}$, which are defined for $\eta<1$ sufficiently close to $1$. When $\fr{P}$ is affine, and $f_1,\ldots,f_r\in \Gamma(\fr{P},\mathcal{O}_\fr{P})$ are such that $Y=V(\varpi,f_1,\ldots,f_r)$, then
 \[ \tube{Y}_\fr{P} = \left\{\left. x\in \fr{P}_K \right\vert v_{[x]}(f_i)<1 \;\forall i \right\} \]
by \cite[Proposition II.4.2.11]{FK18}. Berthelot therefore defines the closed and open tubes
 \begin{align*} \left[Y\right]_{\mathfrak{P}\eta}&:= \left\{\left. x\in \fr{P}_K \right\vert v_{x}(f_i)\leq \eta \;\forall i \right\} \\  
 \tube{Y}_{\mathfrak{P}\eta}&:= \left\{\left. x\in \fr{P}_K \right\vert v_{[x]}(f_i)< \eta \;\forall i \right\}
 \end{align*}
of radius $\eta$. When $\norm{\varpi}<\eta<1$ these do not depend on the choice of the $f_i$, and hence glue together over an affine covering of $\fr{P}$, see \cite[\S1.1.8]{Ber96b}.

\subsection{Overconvergence}

Berthelot's functors $j^\dagger$ and $\underline{\Gamma}^\dagger$ of overconvergent sections and sections with support have a very natural interpretation  in the world of adic spaces. Let $(X,Y,\fP)$ be a frame, and write $j\from X\to Y$ for the given open immersion. Let $i\from Z\to Y$ be a complementary closed immersion.

\begin{definition} Define endofunctors
\begin{align*}
j_X^\dagger \colon \mathbf{Sh}(\tube{Y}_\fr{P}) &\rightarrow \mathbf{Sh}(\tube{Y}_\fr{P}) \\
\underline{\Gamma}^\dagger_Z \colon \mathbf{Sh}(\tube{Y}_\fr{P}) &\rightarrow \mathbf{Sh}(\tube{Y}_\fr{P})
\end{align*}
by $j_X^\dagger:=j_{*}j^{-1}$ and $\underline{\Gamma}^\dagger_Z:=i_{!}i^{-1}$.
\end{definition}

These are both exact, and there is an exact sequence
\[ 0\rightarrow \underline{\Gamma}^\dagger_Z \rightarrow \mathrm{id} \rightarrow j_X^\dagger  \rightarrow 0 \]
of endofunctors of $\mathbf{Sh}(\tube{Y}_\fr{P})$. More generally, if $j'\from U\to X\ot T\from i'$ are complementary open and closed immersions, then we have functors 
\begin{align*}
j_U^\dagger &:= j'_{*}j'^{-1} \from \Sh(\tube{X}_\fP) \to\Sh(\tube{X}_\fP) \\
\underline{\Gamma}^\dagger_T &:= i'_{!}i'^{-1} \from \Sh(\tube{X}_\fP) \to\Sh(\tube{X}_\fP)
\end{align*}  
sitting in a short exact sequence
\[  0\rightarrow \underline{\Gamma}^\dagger_T \rightarrow \mathrm{id} \rightarrow j_U^\dagger  \rightarrow 0. \]
To compare these with the original definitions of Berthelot, let $\mathfrak{P}_{K0}$ denote the rigid analytic generic fibre of $\fr{P}$ in the sense of \cite[\S0.2]{Ber96b}. In the notation of \S\ref{sec: adic spaces} above this is the rigid analytic space $(\fr{P}_K)_0$. Let $\tube{Y}_{\fr{P}0}\subset \fr{P}_{K0}$ denote the rigid analytic tube in the sense of Berthelot \cite[\S1.1]{Ber96b}, and $j_X^\dagger$, $\underline{\Gamma}^\dagger_Z$ the corresponding endofunctors of $\mathbf{Sh}_G(\tube{Y}_{\fr{P}0})$ as defined in \cite[\S2.1]{Ber96b}.

\begin{proposition} \label{prop: adic rig} There is a unique isomorphism $\left(\tube{Y}_\fr{P}\right)_0 \isomto  \tube{Y}_{\fr{P}0}$ of rigid analytic spaces over $K$, compatible with the natural open immersions of both sides into $\fP_{K0}$. Moreover, the diagrams
 \[ \xymatrix{ \mathbf{Sh}(\tube{Y}_\fr{P}) \ar[r]^-{(-)_0} \ar[d]_{j_X^\dagger} & \mathbf{Sh}_G(\tube{Y}_{\fr{P}0}) \ar[d]^{j_X^\dagger} & & \mathbf{Sh}(\tube{Y}_\fr{P}) \ar[r]^-{(-)_0} \ar[d]_{\underline{\Gamma}^\dagger_Z} & \mathbf{Sh}_G(\tube{Y}_{\fr{P}0}) \ar[d]^{\underline{\Gamma}^\dagger_Z}  \\ \mathbf{Sh}(\tube{Y}_\fr{P}) \ar[r]^-{(-)_0} & \mathbf{Sh}_G(\tube{Y}_{\fr{P}0}) & & \mathbf{Sh}(\tube{Y}_\fr{P}) \ar[r]^-{(-)_0} & \mathbf{Sh}_G(\tube{Y}_{\fr{P}0}) } \]
commute up to natural isomorphism.
\end{proposition}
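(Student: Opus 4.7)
The plan is to prove the first claim by a local analysis using the explicit description of tubes on affine charts, and to then deduce the diagrammatic compatibilities from general naturality of the topos equivalence $(-)_0$.

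For the isomorphism of rigid spaces, I first recall that the underlying set of $\fP_{K0}$ is naturally identified with the separated quotient $[\fP_K]$, and under this identification Berthelot's rigid specialisation map is the restriction of $[\sp_\fP]$ to the maximal points. Since $Y$ is closed in $P$, the subset $\tube{Y}_\fP = [\sp_\fP]^{-1}(Y)$ is open in $\fP_K$ and therefore inherits the structure of an analytic variety, so $(\tube{Y}_\fP)_0$ is an admissible open of $\fP_{K0}$. I would then work locally: on an affine chart $\fP=\spf{A}$ with $Y=V(\varpi,f_1,\ldots,f_r)$, \cite[Proposition II.4.2.11]{FK18} gives
\[ \tube{Y}_\fP = \{ x\in\fP_K : v_{[x]}(f_i)<1 \text{ for all } i \}, \]
whose set of maximal points is exactly $\{x\in\fP_{K0} : v_x(f_i)<1 \text{ for all }i\} = \tube{Y}_{\fP 0}$. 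The adic tube is covered by the quasi-compact opens $[Y]_{\fP\eta}$ for $\eta\in\sqrt{\norm{K}}\cap(\norm{\varpi},1)$, whose associated rigid spaces are Berthelot's closed tubes $[Y]_{\fP 0\eta}$; these form an admissible covering of $\tube{Y}_{\fP 0}$. This matches the admissible structures on both sides and yields a canonical local isomorphism, which glues globally. Uniqueness is immediate since any isomorphism compatible with the inclusions into $\fP_{K0}$ must be the identity on underlying sets.

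For the commutation of the $\underline{\Gamma}^\dagger_Z$ diagram, observe that $Z$ is closed in $P$, so the induced map $i\colon \tube{Z}_\fP \to \tube{Y}_\fP$ is an open immersion of analytic varieties. The equivalence $(-)_0\colon \mathbf{Sh}(\mathscr{X})\isomto \mathbf{Sh}_G(\mathscr{X}_0)$ is natural in $\mathscr{X}$, which implies that it intertwines the functors $i^{-1}$ and $i_!$ on the adic side with their counterparts on the rigid side. Combined with the first part, this yields a natural isomorphism $(\underline{\Gamma}^\dagger_Z\sF)_0\cong \underline{\Gamma}^\dagger_Z(\sF_0)$.

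For the $j^\dagger_X$ diagram, I would avoid working directly with colimits over strict neighbourhoods on the rigid side, and instead use the short exact sequence $0\to \underline{\Gamma}^\dagger_Z \to \id \to j^\dagger_X \to 0$ on both sides---on the adic side it was noted above, while on the rigid side it is classical and appears in \cite[\S2.1]{Ber96b}. Since $(-)_0$ is an exact equivalence that commutes with both $\id$ and $\underline{\Gamma}^\dagger_Z$ (by the previous step), it automatically commutes with $j^\dagger_X$. The main obstacle in this plan is the careful local identification of admissible structures in the first step; once that is in hand, the diagrammatic compatibilities drop out by general functoriality of the topos equivalence.
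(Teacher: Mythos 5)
Your plan is close to the paper's in its overall structure, but there are two issues worth flagging: one imprecision and one genuine gap.

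For the identification of tubes, your opening claim that ``the underlying set of $\fP_{K0}$ is naturally identified with the separated quotient $[\fP_K]$'' is incorrect. The separated quotient $[\fP_K]$ consists of \emph{all} maximal points (those with no proper generalisation), whereas $\fP_{K0}$ consists only of the \emph{rigid} points, i.e.\ those whose residue field is finite over $K$. Over any non-trivially valued base the rigid points form a strict subset of the maximal points. What you actually need is the weaker observation, which the paper records, that rigid points $x$ satisfy $x=[x]$, so that the condition $v_{[x]}(f_i)<1$ defining $\tube{Y}_\fP$ restricts to $v_x(f_i)<1$ on the rigid locus; this gives $\tube{Y}_\fP\cap\fP_{K0}=\tube{Y}_{\fP0}$. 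The rest of this step---working locally and invoking the affine description---is fine, and is essentially the paper's argument, though the paper sidesteps your matching of admissible coverings by citing the general bijection between admissible opens of $\fP_K$ and of $\fP_{K0}$ (\cite[Definition II.B.1.1, Proposition II.B.1.7]{FK18}).

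The more serious issue is in the second half. You handle $\underline{\Gamma}^\dagger_Z$ directly via topos naturality, then deduce $j^\dagger_X$ from the exact sequence $0\to\underline{\Gamma}^\dagger_Z\to\id\to j^\dagger_X\to 0$. But what you actually show by naturality of $(-)_0$ is that the \emph{topos-theoretic} functor $i_!i^{-1}$ on the adic side corresponds to the \emph{topos-theoretic} functor $i_!i^{-1}$ on the rigid side. The proposition, by contrast, asserts compatibility with \emph{Berthelot's} functors $\underline{\Gamma}^\dagger_Z$ and $j^\dagger_X$ as defined in \cite[\S2.1]{Ber96b}, where $j^\dagger_X$ is a filtered colimit of $j_{V*}j_V^{-1}$ over strict neighbourhoods $V$, and $\underline{\Gamma}^\dagger_Z$ is its kernel. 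Identifying Berthelot's strict-neighbourhood construction with the topos-theoretic $j_*j^{-1}$ (equivalently, Berthelot's $\underline{\Gamma}^\dagger_Z$ with $i_!i^{-1}$) is precisely the nontrivial content that the paper imports from \cite[Proposition 5.3]{LS07} and \cite[Proposition 5.1.12]{LS07}. Without that input, your argument is circular: you need the comparison of $j^\dagger_X$ with $j_*j^{-1}$ to know that Berthelot's $\underline{\Gamma}^\dagger_Z$ is $i_!i^{-1}$, but you propose to deduce the $j^\dagger_X$ compatibility from the $\underline{\Gamma}^\dagger_Z$ compatibility. The fix is simple---cite the Le Stum result---but as written the proof has a gap. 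Once that citation is in place, doing $\underline{\Gamma}^\dagger_Z$ first and $j^\dagger_X$ second is a perfectly valid reordering of the paper's argument (which reduces to $j^\dagger_X$ first, then treats it directly via the same Le Stum reference).
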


\begin{proof}
Note that $\mathfrak{P}_{K0}$ can be identified with the set of \emph{rigid} points of $\mathfrak{P}_{K}$, and the functor $U\mapsto U\cap \mathfrak{P}_{K0}$ gives a one-to-one correspondence between admissible open subsets of $\mathfrak{P}_{K}$ (in the sense of \cite[Definition II.B.1.1]{FK18}) and admissible open subsets of $\mathfrak{P}_{K0}$ (in the sense of the $G$-topology). Since tube open subsets of $\fr{P}$ are admissible by \cite[Proposition II.B.1.7]{FK18}, the first claim is reduced to showing that $\tube{Y}_\fr{P} \cap \fr{P}_{K0}=\tube{Y}_{\fr{P}0}$ as subsets of $\fr{P}$. The question is now local on $\fr{P}$, which we may assume to be affine. Let $f_1,\ldots,f_r\in \Gamma(\fr{P},\mathcal{O}_\fr{P})$ be such that $Y=V(\varpi,f_1,\ldots,f_r)$. Then by \cite[Proposition II.4.2.11]{FK18} we can identify
 \[ \tube{Y}_\fr{P} = \left\{\left. x\in \fr{P}_K \right\vert v_{[x]}(f_i)<1 \;\forall i \right\}, \]
where $[x]$ denote the maximal generalisation of $x$. Since rigid points $x$ satisfy $x=[x]$, the claim now reduces to \cite[Proposition 1.1.1]{Ber96b}. For the second claim, there are exact sequences
\[ 0\rightarrow \underline{\Gamma}^\dagger_Z \rightarrow \mathrm{id} \rightarrow j_X^\dagger  \rightarrow 0 \]
 of functors on both $\mathbf{Sh}(\tube{Y}_\fr{P})$ and $\mathbf{Sh}_G(\tube{Y}_{\fr{P}0})$, it therefore suffices to consider $j_X^\dagger$. In this case, since the topos of an analytic variety is equivalent to that of the associated rigid space, the claim fact follows from the alternative definition of $j_X^\dagger$ given in \cite[Proposition 5.3]{LS07} (for the equivalence with Berthelot's definition, see \cite[Proposition 5.1.12]{LS07}).
\end{proof}

On the level of $\cO$-modules, Proposition \ref{prop: adic rig} shows that:
\begin{enumerate}
\item there is a canonical isomorphism $\left( j_X^\dagger\mathcal{O}_{\tube{Y}_\fr{P}}\right)_0 \cong j_X^\dagger\mathcal{O}_{\tube{Y}_{\fr{P}0}}$;
\item the functor $\mathscr{F}\mapsto \mathscr{F}_0$ induces an equivalence of categories
\[ \mathbf{Mod}(j_X^\dagger\mathcal{O}_{\tube{Y}_\fr{P}}) \isomto  \mathbf{Mod}(j_X^\dagger\mathcal{O}_{\tube{Y}_{\fr{P}0}}),\] 
preserving the full subcategories of coherent modules. 
\end{enumerate}

We can then use this to transport all results proved for overconvergent sheaves in the language of rigid analytic spaces into the adic context, for example the following.

\begin{proposition}[Proposition 2.1.10, \cite{Ber96b}] \label{prop: coherent sheaves colimit neighbourhoods} The inverse image functor induces an equivalence of categories
\[ \varinjlim_V \mathbf{Coh}(\mathcal{O}_V) \isomto \mathbf{Coh}(\mathcal{O}_{\tube{X}_\fr{P}})   \]
where $V$ ranges over all open neighbourhoods of $\tube{X}_\fr{P}$ in $\tube{Y}_\fr{P}$.  
\end{proposition}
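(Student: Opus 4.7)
The plan is to deduce this from Berthelot's original rigid analytic version of the statement, \cite[Proposition 2.1.10]{Ber96b}, by transferring across the equivalence between adic tubes and rigid tubes established in Proposition \ref{prop: adic rig}.

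First, by Proposition \ref{prop: adic rig} (and the ensuing remarks) the functor $\mathscr{F}\mapsto \mathscr{F}_0$ induces an equivalence $\mathbf{Coh}(\mathcal{O}_{\tube{X}_\fP})\isomto \mathbf{Coh}(\mathcal{O}_{\tube{X}_{\fP 0}})$, and more generally, for any open neighbourhood $V$ of $\tube{X}_\fP$ inside $\tube{Y}_\fP$ with image $V_0 := V\cap \fP_{K0}\subset \tube{Y}_{\fP 0}$, an equivalence $\mathbf{Coh}(\mathcal{O}_V)\isomto \mathbf{Coh}(\mathcal{O}_{V_0})$. These equivalences are manifestly compatible with restriction, so the whole statement reduces to showing that the indexing category of open neighbourhoods $V$ of $\tube{X}_\fP$ in $\tube{Y}_\fP$ is cofinally equivalent, via $V\mapsto V_0$, to the indexing category appearing in Berthelot's statement (strict neighbourhoods of $\tube{X}_{\fP 0}$ in $\tube{Y}_{\fP 0}$).

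The standard cofinal family on both sides consists of the tubes $\tube{X}_{\fP\eta}$ (resp.\ $\tube{X}_{\fP 0,\eta}$) for $\norm{\varpi}<\eta<1$, so the key point to check is that the adic tubes $\tube{X}_{\fP\eta}$ are cofinal among \emph{all} open neighbourhoods of $\tube{X}_\fP$ in $\tube{Y}_\fP$. This is local on $\fP$, so I may assume $\fP$ affine and pick $f_1,\ldots,f_r\in\Gamma(\fP,\cO_\fP)$ such that the complement $Z=Y\setminus X$ is cut out by $(\varpi,f_1,\ldots,f_r)$. Then $\tube{X}_{\fP\eta}=\{x\in\tube{Y}_\fP: v_{[x]}(f_i)<\eta\ \forall i\}$, and $\tube{X}_\fP=\bigcup_{\eta<1}\tube{X}_{\fP\eta}$. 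Given an open neighbourhood $V\supset \tube{X}_\fP$, the complement $\tube{Y}_\fP\setminus V$ is closed in the quasi-compact space $\tube{Y}_\fP$, hence itself quasi-compact, and is covered by the opens $\{x:v_x(f_i)>\eta\}$ for $i=1,\ldots,r$ and $\eta<1$; a finite subcover produces a single $\eta<1$ with $\tube{X}_{\fP\eta}\subset V$.

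With cofinality established on the adic side, and the analogous (and standard) cofinality on the rigid side, the bijection $V\mapsto V_0$ between admissible opens in $\fP_{K0}$ and open subsets of $\fP_K$ (used already in the proof of Proposition \ref{prop: adic rig}) matches the two cofinal systems term by term. Passing to the filtered $2$-colimit of the coherent-module equivalences and invoking \cite[Proposition 2.1.10]{Ber96b} on the rigid side then gives the desired equivalence. The only non-formal ingredient is the quasi-compactness argument establishing cofinality of the adic tubes, which is where the topological distinction between the adic and rigid settings could have caused trouble; everything else is a mechanical transfer.
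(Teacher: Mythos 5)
Your overall strategy matches the paper's (which essentially just says the result transports across the adic/rigid equivalence of Proposition \ref{prop: adic rig} and cites \cite{Ber96b} directly), but there is a genuine error in your cofinality argument. You claim that $\tube{Y}_\fP$ is quasi-compact so that $\tube{Y}_\fP\setminus V$ is quasi-compact. This is false in general: $\tube{Y}_\fP$ is an open subspace of $\fP_K$, and open subspaces of quasi-compact adic spaces need not be quasi-compact. The paper itself flags exactly this phenomenon when discussing tubes: for $\fP=\widehat{\A}^1_\cV$ and $Y=\{0\}$, the tube $\tube{Y}_\fP$ is the open unit disc, which is not quasi-compact. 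So the finite-subcover step does not go through as written. (There is also a typo in your formula $\tube{X}_{\fP\eta}=\{x: v_{[x]}(f_i)<\eta\ \forall i\}$, which describes a shrunken tube of $Z=Y\setminus X$, not a neighbourhood of $\tube{X}_\fP$; the inequality should involve something like $\max_i v_x(f_i)>\eta$.)

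Moreover, the entire cofinality detour is unnecessary. The bijection $V\mapsto V_0$, which you already invoke, identifies open subsets $V$ of $\tube{Y}_\fP$ with admissible opens $V_0$ of $\tube{Y}_{\fP 0}$, and $V\supset\tube{X}_\fP$ is equivalent to the open cover $\{V,\tube{Z}_\fP\}$ of $\tube{Y}_\fP$ being a genuine cover, which translates precisely into $\{V_0,\tube{Z}_{\fP 0}\}$ being an admissible cover of $\tube{Y}_{\fP 0}$, i.e.\ into $V_0$ being a strict neighbourhood. So $V\mapsto V_0$ is already an \emph{equivalence} of indexing categories (not merely a cofinal comparison), and Berthelot's Proposition 2.1.10 transfers term-by-term without any further topological input. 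If you do want to keep the cofinality of the concrete tube-neighbourhoods $V_\eta$, either derive it from the corresponding fact on the rigid side (where it is part of the standard theory of strict neighbourhoods), or run the compactness argument against the genuinely quasi-compact space $\fP_K$ rather than $\tube{Y}_\fP$, noting that $\fP_K\setminus V$ is closed in $\fP_K$ and disjoint from $\tube{X}_\fP$.
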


In particular, this implies that $\mathcal{O}_{\tube{X}_\fr{P}}$ and $j_X^\dagger\cO_{\tube{Y}_\fr{P}}$ are coherent sheaves of rings on $\tube{X}_\fP$ and $\tube{Y}_\fP$ respectively. Since $j\from \tube{X}_\fP\to \tube{Y}_\fP$ is a closed immersion on the underling topological spaces, the following lemma is then elementary.

\begin{lemma} The functors 
\[ j_{*}\colon \mathbf{Coh}(\mathcal{O}_{\tube{X}_\fr{P}}) \leftrightarrows \mathbf{Coh}(j_X^\dagger\mathcal{O}_{\tube{Y}_\fr{P}}) \colon j^{-1} \]
are inverse equivalences of categories. If $\mathscr{E}$ is a coherent $j_X^\dagger\mathcal{O}_{\tube{Y}_\fr{P}}$-module, then
\[ {\rm H}^*(\tube{Y}_\fr{P},\mathscr{E}) \isomto {\rm H}^*(\tube{X}_\fr{P},j^{-1}\mathscr{E}). \]
\end{lemma}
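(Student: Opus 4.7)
The plan is to exploit the fact that $j \colon \tube{X}_\fr{P} \hookrightarrow \tube{Y}_\fr{P}$ is \emph{topologically} a closed inclusion (since the complementary closed immersion $i\colon Z\to Y$ has $\tube{Z}_\fP$ open in $\tube{Y}_\fP$, and $\tube{X}_\fP$ is its complement). Once this observation is made, the statement reduces to completely standard facts about closed embeddings of topological spaces, combined with the coherence of the two rings involved. I would split the argument into three short steps.

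First I would establish the equivalence at the level of all $\mathcal{O}$-modules. Because $j$ is a closed inclusion topologically, $j_*$ is exact, $j^{-1}j_* = \mathrm{id}$, and the counit $j_* j^{-1} \mathscr{E} \to \mathscr{E}$ is an isomorphism precisely when $\mathscr{E}$ is supported on $\tube{X}_\fP$. Now $j_X^\dagger \mathcal{O}_{\tube{Y}_\fP} = j_* \mathcal{O}_{\tube{X}_\fP}$ vanishes on the open subset $\tube{Z}_\fP$, so every $j_X^\dagger \mathcal{O}_{\tube{Y}_\fP}$-module is automatically supported on $\tube{X}_\fP$. Hence $j_*$ and $j^{-1}$ restrict to inverse equivalences between $\mathbf{Mod}(\mathcal{O}_{\tube{X}_\fP})$ and $\mathbf{Mod}(j_X^\dagger \mathcal{O}_{\tube{Y}_\fP})$.

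Next I would check that coherence is preserved in both directions. Here I use the fact already noted, namely that both $\mathcal{O}_{\tube{X}_\fP}$ and $j_X^\dagger \mathcal{O}_{\tube{Y}_\fP}$ are coherent sheaves of rings (consequences of Proposition \ref{prop: coherent sheaves colimit neighbourhoods}). If $\mathscr{F}$ is a coherent $\mathcal{O}_{\tube{X}_\fP}$-module, then locally it fits in a presentation $\mathcal{O}_{\tube{X}_\fP}^a \to \mathcal{O}_{\tube{X}_\fP}^b \to \mathscr{F} \to 0$; applying the exact functor $j_*$ gives a presentation $(j_X^\dagger \mathcal{O}_{\tube{Y}_\fP})^a \to (j_X^\dagger \mathcal{O}_{\tube{Y}_\fP})^b \to j_*\mathscr{F} \to 0$, so $j_*\mathscr{F}$ is locally finitely generated over a coherent ring, hence coherent. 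The converse direction for $j^{-1}$ is analogous, using that $j^{-1}$ is exact.

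Finally, for the cohomological statement, since $j$ is a closed immersion of topological spaces we have $\mathrm{R}^q j_* = 0$ for all $q>0$, so by the Leray spectral sequence
\[ \mathrm{H}^*(\tube{Y}_\fP, j_*\mathscr{F}) \isomto \mathrm{H}^*(\tube{X}_\fP, \mathscr{F}) \]
for any abelian sheaf $\mathscr{F}$ on $\tube{X}_\fP$; applying this with $\mathscr{F} = j^{-1}\mathscr{E}$ and invoking the isomorphism $j_* j^{-1}\mathscr{E} \cong \mathscr{E}$ from the first step yields the claim. There is no serious obstacle here—as the author remarks, once the topological picture is clear the lemma is genuinely elementary.
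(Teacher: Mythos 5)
Your proof is correct and follows exactly the route the paper has in mind: the remark preceding the lemma ("Since $j\colon \tube{X}_\fP\to\tube{Y}_\fP$ is a closed immersion on the underlying topological spaces, the following lemma is then elementary") is precisely the observation you open with, and the three steps you carry out — equivalence on all modules, preservation of coherence via local presentations over the two coherent rings, and degeneration of Leray since $\bR^q j_* = 0$ — are the standard unwindings the paper leaves to the reader. One small slip in terminology: for the closed inclusion $j$, the relevant natural transformation comparing $\sE$ and $j_* j^{-1}\sE$ is the \emph{unit} $\sE \to j_* j^{-1}\sE$ of the adjunction $j^{-1}\dashv j_*$ (not a counit, and the arrow goes the other way); this map is an isomorphism exactly when $\sE$ is supported on $\tube{X}_\fP$, which is what your argument actually uses, so nothing downstream is affected.
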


\subsection{Germs and compactly supported de\thinspace Rham cohomology}

If $(X,Y,\fP)$ is a frame, then the tube $\tube{X}_\fP$ is not an adic space, but a closed subset of the adic space $\tube{Y}_\fP$. Moreover, it has the property that it is stable under generalisation inside $\tube{Y}_\fP$, in other words it is an \emph{overconvergent} closed subset. In our previous work on rigid cohomology and arithmetic $\sD$-modules \cite{AL20,AL22}, we found it useful to formalise this in the notion of an \emph{overconvergent germ}. 

\begin{definition} \begin{enumerate}
\item A pre-germ is a pair $(S,\mathscr{X})$ where $\mathscr{X}$ is an analytic variety, and $S\subset \mathscr{X}$ is a closed subset. 
\item A morphism $f\colon(S,\mathscr{X})\rightarrow (T,\mathscr{Y})$ of pre-germs is a morphism $f\colon\mathscr{X}\rightarrow \mathscr{Y}$ of analytic varieties such that $f(S)\subset T$.
\item A morphism $f\colon (S,\mathscr{X})\rightarrow (T,\mathscr{Y})$ of pre-germs is called a strict neighbourhood if $f\colon\mathscr{X}\hookrightarrow \mathscr{Y}$ is an open immersion inducing a homeomorphism $f\colon S\isomto T$.
\item The category of germs of analytic varieties is the localisation of the category of pre-germs at the class of strict neighbourhoods.
\item A germ $(S,\scr{X})$ is called \emph{overconvergent} if $S$ is stable under generalisation in $\scr{X}$. 
\end{enumerate} 
\end{definition}

I will generally suppress the ambient adic space $\scr{X}$ from the notation, and write a germ $(S,\scr{X})$ simply as $S$. Thus the motivating example $(\tube{X}_\fP,\tube{Y}_\fP)$ of an overconvergent germ will be written simply as $\tube{X}_\fP$. The category of germs admits fibre products, and contains the category of analytic varieties as a full subcategory. Thus it is possible to form spaces such as the relative (open or closed) polydisc $\D^r_S(0;\rho^{(-)})$ over a germ $S$. For example, in the language of germs, Berthelot's strong fibration theorem \cite{Ber96b} has the following form. 

\begin{theorem}[Berthelot] \label{theo: strong fibration theorem} Let $(f,g,u)\from (X',Y',\fP')\to (X,Y,\fP)$ be a morphism of frames, such that $f$ is an isomorphism, $g$ is proper and $u$ is smooth in a neighbourhood of $X'$, of relative dimension $d$. 
\begin{enumerate}
\item If $g$ is also an isomorphism, then, locally on $X$ and on $\fr{P}$, there exists an isomorphism
\[ \tube{X'}_{\fr{P}'} \isomto \tube{X}_\fr{P} \times_K \D^d_K(0;1^-) \]
of germs, identifying $\tube{f}_u$ with the first projection. 
\item If $d=0$, then
\[ \tube{f}_u\colon\tube{X'}_{\fr{P}'} \isomto \tube{X}_\fr{P} \]
is an isomorphism of germs.
\end{enumerate}
\end{theorem}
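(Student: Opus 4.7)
The plan is to reduce this to Berthelot's original strong fibration theorem in the rigid analytic setting, and then translate the statement through the equivalence between adic spaces and rigid analytic spaces established in \S\ref{sec: adic spaces} and Proposition \ref{prop: adic rig}.

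First, I would apply Proposition \ref{prop: adic rig} to both frames $(X,Y,\fP)$ and $(X',Y',\fP')$, identifying the ambient adic tubes $\tube{Y}_\fP$ and $\tube{Y'}_{\fP'}$ with their classical counterparts $\tube{Y}_{\fP 0}$ and $\tube{Y'}_{\fP' 0}$, and the closed subsets $\tube{X}_\fP$ and $\tube{X'}_{\fP'}$ with the corresponding rigid tubes via the set-theoretic identification of rigid points inside maximal points. Note that since an admissible open $V_0$ of $\tube{Y}_{\fP 0}$ containing the tube $\tube{X}_{\fP 0}$ determines (and is determined by) an open subset $V\subset \tube{Y}_\fP$ containing $\tube{X}_\fP$, strict neighbourhoods in the sense of germs correspond bijectively to strict neighbourhoods in the sense of \cite{Ber96b}.

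Second, I would invoke Berthelot's strong fibration theorem \cite[Théorème 1.3.7]{Ber96b} in the rigid analytic category. In case (1), this yields, locally on $X$ and $\fP$, a fundamental system of strict neighbourhoods $V'_0$ of $\tube{X'}_{\fP' 0}$ in $\tube{Y'}_{\fP' 0}$, a fundamental system of strict neighbourhoods $V_0$ of $\tube{X}_{\fP 0}$ in $\tube{Y}_{\fP 0}$, and compatible isomorphisms $V'_0 \cong V_0 \times_K \D^d_K(0;1^-)_0$ identifying the induced map with the first projection. In case (2) the same theorem gives, for sufficiently small strict neighbourhoods, an isomorphism $V'_0 \isomto V_0$ covering $\tube{f}_u$.

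Finally, I would translate back to the adic world. By the equivalence of toposes cited in Proposition \ref{prop: adic rig}, each $V_0$ (respectively $V'_0$) corresponds to a unique open subset $V \subset \tube{Y}_\fP$ (respectively $V' \subset \tube{Y'}_{\fP'}$), and the isomorphism of rigid analytic spaces promotes to an isomorphism of adic spaces since the functor $\scr{X} \mapsto \scr{X}_0$ is itself an equivalence. Passing to the induced morphism of germs then gives the desired isomorphism $\tube{X'}_{\fP'} \isomto \tube{X}_\fP \times_K \D^d_K(0;1^-)$ in case (1), and $\tube{X'}_{\fP'} \isomto \tube{X}_\fP$ in case (2).

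The main (and essentially only) obstacle is bookkeeping: one must check that the dictionary between germs and rigid analytic strict neighbourhoods really matches up Berthelot's hypotheses with ours, and in particular that overconvergent germs (i.e.\ closed subsets stable under generalisation) are the natural adic incarnation of the rigid analytic tubes. Once this is set up, the result is essentially a restatement of Berthelot's theorem, with no new geometric content.
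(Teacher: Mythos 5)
Your proposal is correct and matches the paper's (implicit) approach: the paper states this theorem as a direct restatement of Berthelot's strong fibration theorem from \cite{Ber96b} in the language of germs, citing the original without further argument, and your translation through Proposition \ref{prop: adic rig} supplies exactly the unstated dictionary. The one slight imprecision is that only admissible (in particular quasi-compact) opens of $\tube{Y}_\fP$ arise from admissible opens of $\tube{Y}_{\fP 0}$ under the equivalence, not arbitrary opens, but these are cofinal among all open neighbourhoods of the closed (hence quasi-compact) subset $\tube{X}_\fP$ of the quasi-compact space $\tube{Y}_\fP$, so the germ-level conclusion is unaffected.
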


As with the case of tubes, any germ $S$ can be viewed as a ringed space by equipping it with the restriction
\[ \mathcal{O}_S:= \mathcal{O}_{\mathscr{X}}\!|_S\]
of the structure sheaf from its ambient adic space. Similarly, if $f\colon S\rightarrow T$ is a morphism in $\mathbf{Germ}_K$, the relative de\thinspace Rham complex $\Omega^\bullet_{S/T}$ is defined via restriction from the ambient adic space.

I will occasionally talk about morphisms of germs being smooth or partially proper, or other similar adjectives familiar from the theory of schemes and/or adic spaces. This should always be understood in the sense of \cite[\S1.10]{Hub96}. For example, being smooth means that there exists a representative $f\colon(S,\mathscr{X}) \rightarrow (T,\mathscr{Y})$ at the level of pre-germs such that $f\colon \mathscr{X}\rightarrow \mathscr{Y}$ is smooth and $S=f^{-1}(T)$. Partial properness is slightly more involved, see \cite[Definition 1.10.15]{Hub96}. In fact, more important for us than partial properness is what might be called partial properness in the sense of Kiehl. This is a morphism $f\from S\to T$ which, locally on the source and target, factors through a closed immersion $S \to \D^d_T(0;1^-)$ for some $d$. For a more detailed discussion of this property, see \cite[\S4]{AL20}.  

Note that being smooth in this sense is a very strong condition on a morphism of germs. For example, if $S$ is a germ which is \emph{not} an open subset of its ambient adic space, then the structure morphism $f\from S\to \spa{K,\cV}$ cannot be smooth. In particular, if $(X,Y,\fP)$ is a frame, then $\tube{X}_\fP$ cannot be smooth over $K$ if $X$ is not closed in $Y$. It will therefore be helpful to make the following weakening of the definition.

\begin{definition} A morphism $f\from (S,\scr{X})\to (T,\scr{Y})$ of germs is \emph{quasi-smooth} if there exists an open neighbourhood $S\subset V\subset \scr{X}$ which is smooth over $\scr{Y}$. 
\end{definition}

Note that this only depends on the induced morphism of germs, and implies that $\Omega_{S/T}$ is a locally finite free $\cO_S$-module. 

\begin{example}\label{exa: properties of morphisms between tubes} If $(X,Y,\fP)$ is a frame, then $\tube{X}_\fP$ is quasi-smooth over $K$ if and only if $\fP$ is rig-smooth around $X$.

More generally, let $(f,g,u)\from (X',Y',\fP')\to (X,Y,\fP)$ be a morphism of frames. If $g$ is proper, then $\tube{f}_u\from \tube{X'}_{\fP'} \to \tube{X}_\fP$ is partially proper in the sense of Kiehl. Indeed, the morphism $\tube{Y'}_{\fP'}\to \tube{Y}_\fP$ is a partially proper morphism of analytic varieties, and is hence partially proper in the sense of Kiehl by \cite[Remark 1.3.19]{Hub96}, and $\tube{X'}_{\fP'} \to \tube{Y'}_{\fP'}$ is a closed immersion. 

If $u$ is smooth in a neighbourhood of $X'$, then $\tube{f}_u$ is quasi-smooth, because the non-smooth locus of $\tube{g}_u$ is a closed analytic subspace of $\tube{Y'}_{\fP'}$. If, moreover, the given square
\[ \xymatrix{ X'\ar[r]\ar[d]_f & Y'\ar[d]^g \\ X\ar[r] & Y  } \]
of varieties is Cartesian, then $\tube{X'}_{\fP'}=\tube{g}_u^{-1}(\tube{X}_\fP)$, and so $\tube{f}_u$ is smooth. Up to possibly replacing $Y'$ by a closed subscheme containing $X'$, the above square will be Cartesian whenever both $g$ and $f$ are proper.
\end{example}

Quasi-smooth germs also satisfy analytic continuation.

\begin{lemma} \label{lemma: smooth connected injective} Let $S$ be a connected, quasi-smooth germ, and $\mathscr{F}$ a locally finite free $\mathcal{O}_{S}$-module. Then, for any non-empty open subset $U\subset S$, the natural map
\[ \Gamma(S,\mathscr{F})\rightarrow \Gamma(U,\mathscr{F}) \]
is injective. 
\end{lemma}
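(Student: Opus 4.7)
The plan is to show that the support $T := \{x \in S : s_x \neq 0\}$ of any $s \in \Gamma(S,\mathscr{F})$ satisfying $s|_U = 0$ is both open and closed in $S$; the connectedness of $S$ together with the nonemptiness of $U$ will then force $T = \emptyset$, i.e.\ $s = 0$. The subset $T \subset S$ is automatically closed (being the support of a section of a sheaf), so the main task is to show that $T$ is also open.

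For openness of $T$ at a point $x \in T$, I would use local freeness of $\mathscr{F}$ to choose an open neighbourhood $W$ of $x$ in $S$ with $\mathscr{F}|_W \cong \cO_W^n$, so that $s|_W$ corresponds to a tuple $(f_1,\dots,f_n)\in \cO_S(W)^n$ in which some $(f_j)_x$ is nonzero in $\cO_{S,x} = \cO_{\scr{X},x}$. Quasi-smoothness provides an open neighbourhood $S\subset V \subset \scr{X}$ which is smooth over $K$, and the identification $\cO_S = \cO_V|_S$ lets me lift $f_j$ to a section $\tilde f_j \in \Gamma(W_V,\cO_V)$ over some open $W_V\subset V$ containing $W$. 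After shrinking $W_V$ to a connected open neighbourhood of $x$ (possible because smooth analytic varieties over $K$ are locally connected), we have $\tilde f_j \neq 0$ in $\cO_V(W_V)$, since its stalk at $x$ is nonzero. The identity theorem on the smooth connected analytic variety $W_V$ then implies that $\tilde f_j$ vanishes on no nonempty open subset of $W_V$, hence has nonzero stalk at every $y \in W_V$. Thus $s_y \neq 0$ for all $y \in W_V \cap S$, which is an open neighbourhood of $x$ contained in $T$.

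Once $T$ is clopen in the connected space $S$, either $T = \emptyset$ or $T = S$. The hypothesis $s|_U = 0$ forces $U \cap T = \emptyset$, so $T \neq S$ and hence $T = \emptyset$; this means every stalk of $s$ vanishes, so $s = 0$ by the sheaf axioms. The only nontrivial input is the identity theorem for holomorphic functions on smooth connected rigid/adic varieties over $K$, which I would invoke as a standard fact: via \'etale-local structure it reduces to the vanishing of a convergent power series all of whose Taylor coefficients are zero.
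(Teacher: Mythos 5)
Your argument is correct and uses the same essential input as the paper's proof: the identity theorem for smooth connected analytic varieties over $K$ (\cite[Proposition 0.1.13]{Ber96b}), applied after lifting to a connected open of the smooth neighbourhood $V\supset S$ supplied by quasi-smoothness. You merely package the connectedness step as ``the support of $s$ is clopen'' rather than the paper's ``take $U$ maximal and cover $S$ by trivializing connected opens extending to the ambient''; these are the same bootstrap. One small tidiness point: rather than asking for $W_V$ to contain $W$, shrink $W_V$ so that $W_V\cap S\subset W$ and $\tilde f_j$ restricts to $f_j$ on $W_V\cap S$ (both achievable after shrinking to a small connected open around $x$), so that $\tilde f_{j,y}\neq 0$ for $y\in W_V\cap S$ really does give $s_y\neq 0$ through the fixed trivialization of $\mathscr{F}$ on $W$.
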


\begin{proof}
Fix a smooth ambient analytic space $\mathscr{X}$ for $S$, and let $s\in \Gamma(S,\mathscr{F})$ be a section. If $U,V\subset S$ are open subsets such that $s|_U=0$ and $s|_V=0$, then $s|_{U\cup V}=0$. We may therefore take $U$ maximal with the property that $s|_U=0$. 

Suppose for contradiction that $U\neq S$. Consider the collection $\mathcal{C}$ of all connected open subsets $V\subset S$ such that:
\begin{itemize}
\item $V= \bm{V}\cap S$ for a connected open $\bm{V}\subset \mathscr{X}$;
\item $\mathscr{F}|_V\cong \mathcal{O}_V^{\oplus n}$ where $n=\mathrm{rank}\,\mathscr{F}$;
\item the section $s|_V$ of $\mathscr{F}|_V\cong \mathcal{O}_V^{\oplus n}$ extends to a section of $\Gamma(\bm{V},\mathcal{O}_{\bm{V}}^{\oplus n})$.
\end{itemize}
Note that every point $x\in S$ admits a cofinal set of neighbourhoods satisfying the first and second of these, any sufficiently small element of which also satisfies the third. Hence $x$ at least one neighbourhood satisfying all three. Therefore, such open sets cover $S$, and since $S$ is connected, we deduce that there exists some $V\in \mathcal{C}$ such that $V\not\subset U$ and $U\cap V \neq\emptyset$. To contradict the maximality of $U$, it therefore suffices to show that $s|_V = 0$. But we know that $s|_{U\cap V}=0$, and since $s|_V$ extends to $\bm{V}$, it suffices to show that
\[ \Gamma(\bm{V},\mathcal{O}_{\bm{V}})\rightarrow \Gamma(U\cap V,\mathcal{O}_{\bm{V}}).\]
If we pick any point $v\in U\cap V$, it therefore suffices to show that
\[  \Gamma(\bm{V},\mathcal{O}_{\bm{V}}) \rightarrow \mathcal{O}_{\bm{V},v}\]
is injective. Since $\bm{V}$ is smooth and connected, this follows from \cite[Proposition 0.1.13]{Ber96b}.
\end{proof}

For any partially proper morphism of germs $f\colon S\rightarrow T$, we defined in \cite{AL20} a functor
\[ \mathbf{R}f_!\colon {\bf D}^+(S) \rightarrow  {\bf D}^+(T), \] 
having the following properties:
\begin{enumerate}
\item $f_!:=\mathcal{H}^0(\mathbf{R}f_!)$ is the functor of sections with proper support;
\item $\mathbf{R}f_!$ is the total derived functor of $f_!$;
\item there is a natural isomorphism $\mathbf{R}g_!\circ \mathbf{R}f_! \isomto \mathbf{R}(g\circ f)_!$ whenever $f,g$ are composable partially proper morphisms; 
\item $\mathbf{R}f_!=\mathbf{R}f_*$ whenever $f$ is proper;
\item $\mathbf{R}f_!= f_!$ is the usual extension by zero functor whenever $f$ is a (partially proper) locally closed immersion. 
\end{enumerate}
If $f:S \rightarrow \spa{K,\mathcal{V}}$ is the structure map of a germ, I will write $\mathbf{R}\Gamma_c(S,-)$ for $\mathbf{R}f_!$, and ${\rm H}^i_c(S,-)$ for its cohomology groups. 

The functor $\mathbf{R}f_!$ preserves module structures, and there is the following version of the projection formula.

\begin{lemma}[\cite{AL20}, Corollary 3.8.2] \label{lemma: proj form} Let $f:S\rightarrow T$ be a partially proper morphism of germs. For any locally free $\mathcal{O}_{T}$-module $\mathscr{F}$ of finite rank, and any complex $\mathscr{G}$ of $\mathcal{O}_{S}$-modules, there is a natural isomorphism
\[ \mathscr{F}\otimes_{\mathcal{O}_T}  \mathbf{R}f_! \mathscr{G} \isomto \mathbf{R}f_!(f^*\mathscr{F}\otimes_{\mathcal{O}_S} \mathscr{G})  \]
in $\bD^+(\cO_T)$.
\end{lemma}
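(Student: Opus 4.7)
The plan is to first construct the natural comparison morphism, then check it is an isomorphism before derivation by working locally, and finally lift the result to derived categories using flatness of $\mathscr{F}$.

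First, I would construct the underived map $\mathscr{F}\otimes_{\cO_T} f_!\mathscr{G} \to f_!(f^*\mathscr{F}\otimes_{\cO_S}\mathscr{G})$ for any single sheaf $\mathscr{G}$. Given a section $s\otimes t$ of $\mathscr{F}\otimes f_!\mathscr{G}$ over an open $V\subset T$, the section $t$ has support proper over $V$, so $f^*s\otimes t|_{f^{-1}(V)}$ also has support proper over $V$ (the support of a tensor being contained in the intersection of supports, and that of $f^*s$ trivially mapping into that of $s$). This defines a natural $\cO_T$-linear map on the level of presheaves, which sheafifies to give the comparison morphism.

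Next, to check this is an isomorphism, the question is local on $T$, so I may assume $\mathscr{F}\cong \cO_T^{\oplus n}$ is free. Both sides then become $f_!\mathscr{G}^{\oplus n}$, since $f_!$ commutes with finite direct sums, and the map is visibly the identity. This establishes the projection formula at the level of abelian categories.

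To pass to derived categories, I would pick a $K$-injective resolution $\mathscr{G}\to \mathscr{I}^\bullet$ such that $\mathbf{R}f_!\mathscr{G}$ is computed by $f_!\mathscr{I}^\bullet$ (using the machinery from \cite{AL20} that defines $\mathbf{R}f_!$ via appropriate acyclic resolutions compatible with module structures). Since $\mathscr{F}$ is locally free of finite rank over $\cO_T$, it is flat, and similarly $f^*\mathscr{F}$ is flat over $\cO_S$; hence $\mathscr{F}\otimes_{\cO_T}(-)$ and $f^*\mathscr{F}\otimes_{\cO_S}(-)$ are already exact and compute their own derived functors without further resolution. Applying the underived projection isomorphism term by term to $\mathscr{I}^\bullet$ then yields the desired quasi-isomorphism
\[ \mathscr{F}\otimes_{\cO_T} f_!\mathscr{I}^\bullet \isomto f_!(f^*\mathscr{F}\otimes_{\cO_S}\mathscr{I}^\bullet), \]
which represents the stated isomorphism in $\bD^+(\cO_T)$.

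The main subtle point is the last step, namely ensuring that $f_!\mathscr{I}^\bullet$ really computes $\mathbf{R}f_!\mathscr{G}$ and that $f^*\mathscr{F}\otimes_{\cO_S}\mathscr{I}^\bullet$ is an $f_!$-acyclic (in the appropriate sense) resolution of $f^*\mathscr{F}\otimes_{\cO_S}\mathscr{G}$. This reduces to checking that tensoring with a locally free sheaf of finite rank preserves whatever class of resolutions was used to construct $\mathbf{R}f_!$ in \cite{AL20}; since this class is stable under finite direct sums and the question is local, flatness and local freeness suffice. Beyond this technical bookkeeping, the statement is essentially formal.
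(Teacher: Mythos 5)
The paper cites this lemma to \cite[Corollary 3.8.2]{AL20} and does not reproduce a proof, so there is no in-paper argument to compare yours against directly. Your route is reasonable and correct in outline: the underived construction of the comparison map, the local reduction to $\mathscr{F}\cong\cO_T^{\oplus n}$, and the use of flatness of $\mathscr{F}$ and $f^*\mathscr{F}$ to avoid deriving the tensor products are all fine.

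The one place you move too fast is the final paragraph. You need $f^*\mathscr{F}\otimes_{\cO_S}\mathscr{I}^\bullet$ to be $f_!$-acyclic, and you argue this follows "since this class is stable under finite direct sums and the question is local." Stability under finite sums is immediate, but the locality claim is precisely the content that needs justification: it amounts to asserting that the sheaves $\mathbf{R}^q f_!$ commute with restriction to opens $V\subset T$, i.e.\ open base change for $\mathbf{R}f_!$. Being locally a finite direct sum of $f_!$-acyclic sheaves does not automatically give global $f_!$-acyclicity without this. In the framework used here, open base change does hold, because an open subset $V\subset T$ is stable under generalisation and so Proposition \ref{prop: proper base change}(1) applies; but you should invoke this explicitly, as otherwise the acyclicity step is a genuine gap, and it is exactly the point at which this sort of projection formula can fail in other sheaf-theoretic contexts.

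It is also worth comparing with the paper's proof of the more general projection formula for constructible modules, Proposition \ref{prop: weak proj formula for cons mod}(\ref{num: weak prof formula for cons mod ii}). There the comparison map is constructed without any acyclicity claim, by factoring through a further injective resolution $\mathscr{J}$ of $f^*\mathscr{F}\otimes\mathscr{I}$, and the isomorphy is then checked afterwards by d\'evissage and Proposition \ref{prop: proper base change}. That route is a little longer but is more robust (it works even when $\mathscr{F}$ is not flat); your route is shorter but leans essentially on local freeness of $\mathscr{F}$, which is exactly what the more general statement cannot assume.
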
 

In general, the proper base change theorem for $\mathbf{R}f_!$ fails, but there is at least the following partial result. 

\begin{proposition}[\cite{AL20}, Lemma 3.5.2] \label{prop: proper base change} Let
\[ \xymatrix{ S' \ar[r]^{g'} \ar[d]_{f'} & S \ar[d]^f \\ T' \ar[r]^g & T } \]
by a Cartesian diagram of germs, such that $f$ is partially proper, and $g$ is one of the following:
\begin{enumerate}
\item a locally closed immersion onto a subspace which is closed under generalisation;
\item the inclusion of a maximal point of $T$.
\end{enumerate}
Then, for any $\mathscr{F}\in {\bf D}^+(S)$, the base change map
\[ g^{-1}\mathbf{R}f_!\mathscr{F} \rightarrow \mathbf{R}f'_! g'^{-1}\mathscr{F} \]
is an isomorphism.  
\end{proposition}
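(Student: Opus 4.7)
The plan is to reduce to the two separate base-change questions for the extension-by-zero and proper pushforward parts of the partially proper structure morphism, and then to exploit the hypotheses on $g$ to see that the generalisation-stability structure needed for the first is preserved under base change.

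First I would use the definition of $\mathbf{R}f_!$ for a partially proper morphism: working locally on $S$ and $T$, one has a factorisation $f = \bar{f} \circ j$, where $j \colon S \hookrightarrow \bar{S}$ is an open immersion onto a subset stable under generalisation, $\bar{f} \colon \bar{S} \to T$ is proper, and $\mathbf{R}f_! = \mathbf{R}\bar{f}_* \circ j_!$. Base-changing the factorisation along $g$ produces a diagram
\[
\xymatrix{
S' \ar[r]^{j'} \ar[d]_{\bar{g}'} & \bar{S}' \ar[r]^{\bar{f}'} \ar[d]_{\bar{g}} & T' \ar[d]^{g} \\
S \ar[r]^{j} & \bar{S} \ar[r]^{\bar{f}} & T
}
\]
with both squares Cartesian, and the base-change map in question factors as $g^{-1}\mathbf{R}\bar{f}_* j_! \mathscr{F} \to \mathbf{R}\bar{f}'_* \bar{g}^{-1} j_! \mathscr{F} \to \mathbf{R}\bar{f}'_* j'_! \bar{g}'^{-1}\mathscr{F}$. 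The first arrow is an isomorphism by Huber's proper base change theorem for adic spaces, so the whole question reduces to checking the topological base-change isomorphism $\bar{g}^{-1}j_!\mathscr{F} \isomto j'_!\bar{g}'^{-1}\mathscr{F}$ for the open immersion $j$.

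For this reduction I would argue purely on underlying topological spaces, noting that the sheaf $j_!\mathscr{F}$ is supported on the (not necessarily open) subset $S \subset \bar{S}$ and has zero stalks on $\bar{S} \setminus S$. The map $\bar{g}^{-1}j_!\mathscr{F} \to j'_!\bar{g}'^{-1}\mathscr{F}$ is always injective, and the question of surjectivity reduces to checking, at each point $x \in \bar{S}'$, that $x \in S'$ if and only if $\bar{g}(x) \in S$. The ``only if'' direction is automatic from the Cartesian square. The reverse is exactly where the hypothesis on $g$ enters: since $S \subset \bar{S}$ is stable under generalisation, if $\bar{g}(x) \in S$ and every generalisation of $x$ (inside $\bar{S}'$) specialises into $S'$, one gets $x \in S'$. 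In case~(1), $g$ being a locally closed immersion onto a generalisation-stable subset means that any generalisation of $x$ in $\bar{S}'$ maps into a generalisation of $\bar{g}(x)$ in $\bar{S}$, hence into $S$; in case~(2), $g(t)$ is a maximal point of $T$, so the preimage in $\bar{S}'$ consists of maximal points of the relevant fibre, and the generalisation condition is vacuous. In either case one concludes $x \in S'$, giving the desired equality of supports.

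The hard part will be handling case~(2) cleanly: one must make sure that identifying the fibre $\bar{S}'$ over a maximal point with the corresponding sub-adic space of $\bar{S}$ is compatible with specialisation in a way that guarantees $j'$ really does arise as the Cartesian pullback of $j$ on the level of topological spaces (and not merely as an inclusion of a subset). Once this is established, combining the topological base-change for $j_!$ with Huber's proper base change for $\mathbf{R}\bar{f}_*$ yields the claimed isomorphism $g^{-1}\mathbf{R}f_!\mathscr{F} \isomto \mathbf{R}f'_!g'^{-1}\mathscr{F}$, and the local construction globalises because both sides are compatible with restriction to open covers of $S$ and $T$.
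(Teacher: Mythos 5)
The central issue is that you have placed the role of the hypotheses on $g$ in the wrong part of the argument, and the tool you invoke for the proper piece is not the right one.

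Your factorisation should be via a (partially proper) \emph{locally closed} immersion $j$, not an open immersion: Kiehl partial properness gives locally a closed immersion $S\hookrightarrow \D^d_T(0;1^-)$, and after embedding the open polydisc into a proper compactification (say $\widehat{\P}^{d,{\rm an}}_T$) one obtains $f=\bar{f}\circ j$ with $\bar{f}$ proper and $j$ a locally closed immersion onto a generalisation-stable subset. That part is a repairable slip. The more serious problem is this: once the factorisation is in place, the square involving $j$ is topologically Cartesian \emph{automatically}, because $S'=S\times_T T'=S\times_{\bar{S}}\bar{S}'=\bar{g}^{-1}(S)$; the base-change isomorphism $\bar{g}^{-1}j_!\cong j'_!\bar{g}'^{-1}$ therefore holds for \emph{any} $g$ (exactly as in Lemma~\ref{lemma: u^*i_!} of this paper), and the hypotheses on $g$ play no role here. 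Your discussion of where the hypotheses ``enter'' is thus misplaced.

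Where the hypotheses actually matter is in the other half, $g^{-1}\bR\bar{f}_*\to\bR\bar{f}'_*\bar{g}^{-1}$, which you dispose of by citing ``Huber's proper base change theorem.'' That theorem is an \'etale-cohomology result and does not apply here; in the analytic (Zariski) topology of adic spaces there is no unconditional proper base change for arbitrary abelian sheaves. The relevant statement is the one quoted in the proof of Proposition~\ref{prop: weak proj formula for cons mod}: for a coherent morphism $\bar f$ of locally coherent sober spaces and $t\in T$, $(\bR^q\bar{f}_*\sG)_t\cong\mathrm{H}^q(\bar{f}^{-1}(G(t)),\sG)$, where $G(t)$ is the set of generalisations of $t$ (cf.\ \cite[Ch.~0, Prop.~3.1.19]{FK18}). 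This is precisely where the two alternative hypotheses on $g$ are used: if $T'\subset T$ is locally closed and generalisation-stable then $G_{T'}(t')=G_T(t')$ for $t'\in T'$, so the stalk formula gives the same answer computed in $T$ or in $T'$; if $t$ is a maximal point then $G(t)=\{t\}$, and the stalk formula reduces to cohomology of the honest fibre. Without this spectral-space continuity result, the argument does not go through, and invoking ``Huber's proper base change'' does not supply it.

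So the overall strategy (factorise $\bR f_!$, handle extension-by-zero and proper pushforward separately) is sound and quite possibly close to what \cite{AL20} does, but as written the proof contains a genuine logical gap: the step that actually requires the hypotheses on $g$ is justified by an inapplicable theorem, while the hypotheses are invoked where they are not needed.
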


If $f\from S\to T$ is a quasi-smooth morphism of germs, de\thinspace Rham pushforwards with or without proper supports can be defined in the usual way. Namely, if $\sF$ is an $\cO_S$-module with integrable connection, then we form the de\thinspace Rham complex $\Omega^\bullet_{S/T}\otimes_{\cO_S} \sF$ and set
\begin{align*}
\bR f_{\dR*}\sF &:=\bR f_*(\Omega^\bullet_{S/T}\otimes_{\cO_S} \sF) \\
\bR f_{\dR!}\sF &:=\bR f_!(\Omega^\bullet_{S/T}\otimes_{\cO_S} \sF).
\end{align*}
as objects of $\bD^+(\cO_T)$. If, moreover, $T$ is quasi-smooth over $K$, then these can be upgraded to (complexes of) modules over the ring of differential operators on $T$. Indeed, if $\scr{X}$ and $\scr{Y}$ are ambient analytic varieties for $S$ and $T$ respectively, then we set
\[ \sD_T:=\sD_\scr{Y}\!\!\mid_T,\;\;\;\;\sD_S:=\sD_{\scr{X}}\!\!\mid_S.  \]
The transfer bimodule $\sD_{T\ot S}:= \sD_{\scr{Y}\ot\scr{X}}\!\!\mid_S$ is also defined via restriction, and there is the usual identification
\[ \Omega^\bullet_{S/T}\otimes_{\cO_S} \sF \isomto \sD_{T\ot S}\otimes^\bL_{\sD_S} \sF[-\dim f]. \]
The left $\sD_T$-module structure on $\sD_{T\ot S}$ therefore upgrades $\bR f_{\dR*}\sF$ and $\bR f_{\dR!}\sF$ to complexes of $\sD_T$-modules.

\subsection{The trace morphism}

Now suppose that $f\colon S\rightarrow T$ is a smooth morphism of germs,\footnote{recall that this is a very strong condition on $f$!} of relative dimension $d$, and partially proper in the sense of Kiehl. If $T$ is overconvergent, we constructed in \cite[\S5]{AL20} a trace map
\[ \mathrm{Tr}\colon \mathbf{R}^{2d}f_{\dR!} \cO_S  \to \mathcal{O}_{T}\]
having the following properties:
\begin{enumerate} 
\item \label{num: residue} if $T=\spa{R,R^+}$ is affinoid, and $S=\D^d_Y(0;1^-)$ is the relative open unit disc over $Y$, with co-ordinates $z_1,\ldots,z_d$, then $\Tr$ is induced by the residue map
\begin{align*} {\rm H}^d_c(S/T,\omega_{S/T}) = R\weak{z_1^{-1},\ldots,z_d^{-1}} \;d\log z_1\wedge \ldots \wedge d\log z_d &\rightarrow R \\ \sum_{i_1,\ldots,i_d\geq 0} r_{i_1,\ldots,i_d} z_1^{-i_1}\ldots z_d^{-i_d} \;d\log z_1\wedge \ldots \wedge d\log z_d&\mapsto r_{0,\ldots,0};  \end{align*}
\item \label{num: iso} whenever $S$ is locally either a $\D^d(0;1^-)$-bundle or an $\A^{d,\mathrm{an}}$-bundle over $T$, $\Tr$ is an isomorphism.
\end{enumerate} 
Moreover, we showed that $\mathbf{R}^qf_{\dR!}\cO_S=0$ if $q>2d$, and hence the trace map can be viewed as a morphism
\[ \mathrm{Tr}\colon \mathbf{R}f_{\dR!} \cO_S [2d] \rightarrow \mathcal{O}_{T}. \]
The projection formula then gives rise to a trace map
\[  \mathrm{Tr}_{\mathscr{F}}\colon \mathbf{R}f_{\dR!}f^*\sF [2d] = \mathbf{R}f_{\dR!}\cO_S \otimes_{\cO_T}\sF [2d] \to \mathscr{F} \]
for any finite locally free $\mathcal{O}_{T}$-module $\mathscr{F}$. Whenever $\mathscr{F}$ has the structure of a $\mathscr{D}_{T}$-module, this map is $\mathscr{D}_{T}$-linear, and whenever $S$ is locally either a $\D^d(0;1^-)$-bundle or an $\A^{d,\mathrm{an}}$-bundle over $T$, this map is an isomorphism.

\begin{corollary} \label{cor: trace isom from strong fibration}Let $(f,g,u)\from (X',Y',\fP')\to (X,Y,\fP)$ be a morphism of frames, such that $f$ is an isomorphism, $g$ is proper and $u$ is smooth in a neighbourhood of $X'$, of relative dimension $d$. Then the trace map 
\[  \Tr\colon\mathbf{R}\!\tube{f}_{\dR!}\cO_{\tube{X'}_{\fP'}}[2d] \rightarrow \mathcal{O}_{\tube{X}_\fr{P}} \]
is an isomorphism in $\bD^b(\sD_{\tube{X}_\fr{P}})$.
\end{corollary}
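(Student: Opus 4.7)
My plan is to reduce to the two special cases covered by the strong fibration theorem (Theorem~\ref{theo: strong fibration theorem}) via a local factorisation of the frame morphism, and then invoke property~(\ref{num: iso}) of the trace map.

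Since $\bR\tube{f}_{\dR!}$ is compatible with restriction to opens of the base (by Proposition~\ref{prop: proper base change} applied to open immersions, which are closed under generalisation, together with the locality of $\Omega^\bullet$ and of the tensor product), and since the construction of the trace is itself local on the target, the statement is local on $X$ and on $\fP$. Along the way I may harmlessly replace $Y'$ by the scheme-theoretic closure of $X'$ in $Y'$: this leaves both $\tube{X'}_{\fP'}$ and $\tube{f}_u$ unchanged, but has the effect of making the induced map of (closures of) $Y$-schemes proper and birational, and in particular ensures that $Y'$ has the expected codimension in $\fP'$ along $X'$.

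Next, working locally around each point of $X'$, I would factor the morphism of frames as
\[ (X', Y', \fP') \xrightarrow{\alpha} (X, Y, \fP\times_\cV \widehat{\A}^d_\cV) \xrightarrow{\beta} (X, Y, \fP), \]
in which $Y$ is embedded into $\fP\times_\cV \widehat{\A}^d_\cV$ by the zero section, $\alpha$ has an étale third component in a neighbourhood of $X'$ (so relative dimension $0$), and $\beta$ has $f_\beta=g_\beta=\id$ together with third component the smooth projection $\pi_1$ of relative dimension $d$. Part~(2) of the strong fibration theorem applied to $\alpha$ yields that $\tube{\alpha}$ is an isomorphism of germs, while part~(1) applied to $\beta$ produces, locally, an isomorphism
\[ \tube{X}_{\fP\times_\cV\widehat{\A}^d_\cV}\cong \tube{X}_\fP\times_K \D^d_K(0;1^-) \]
under which $\tube{\beta}$ is identified with the first projection. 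Composing, $\tube{f}_u\cong \tube{\beta}\circ\tube{\alpha}$ is locally an open unit polydisc bundle over $\tube{X}_\fP$, and property~(\ref{num: iso}) gives that $\Tr$ is an isomorphism on underlying $\cO_{\tube{X}_\fP}$-modules; the $\sD_{\tube{X}_\fP}$-linearity is built into the construction of the trace, upgrading this to an isomorphism in $\bD^b(\sD_{\tube{X}_\fP})$.

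The main obstacle is the local construction of the étale component of $\alpha$, which amounts to choosing local sections $t_1,\ldots,t_d$ of the ideal of $Y'$ in $\fP'$ whose differentials form a basis of $\Omega^1_{\fP'/\fP}$ at every point of $X'$. After the closure-replacement above, the codimension of $Y'$ in $\fP'$ exceeds that of $Y$ in $\fP$ by exactly the relative dimension $d$ along $X'$, which provides the required $d$ independent directions in the relative cotangent bundle and allows the étale map $\fP'\to \fP\times_\cV\widehat{\A}^d_\cV$ to be built from $u$ and the $t_i$.
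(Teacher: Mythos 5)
Your approach is genuinely different from the paper's, and closer to a ``bare hands'' construction of the \'etale reduction than the route actually taken. The paper first proves the case $g$ an isomorphism, observes that this makes the claim depend only on the morphism of pairs $(X,Y')\to(X,Y)$, reduces to $g$ projective via Chow's lemma and a two-out-of-three argument, and then invokes \cite[Lemma~6.5.1]{LS07} (after localising on $X$) to replace $\fP'$ by a formal scheme that is \'etale over $\fP$. Your idea of producing the \'etale factor directly from $d$ sections $t_1,\ldots,t_d$ of the ideal of $Y'$ is attractive, but the justification via codimension is not a proof: codimension being right does not by itself give a regular sequence with independent relative differentials. The correct argument is to observe that the conormal exact sequence for $Y'\hookrightarrow u^{-1}(Y)$ over $Y$ identifies the cokernel of $I(Y')/I(Y')^2\to\Omega^1_{\fP'/\fP}\!\mid_{Y'}$ with $\Omega^1_{Y'/Y}$, whose restriction to $X'$ equals $\Omega^1_{X'/X}=0$ precisely because $f$ is an isomorphism; surjectivity (and hence the $t_i$) is a consequence of this, not of the dimension count.

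The more serious gap is the localisation. You establish that the trace question is local on $\tube{X}_\fP$, i.e.\ local on $X$ and on $\fP$; but ``working locally around each point of $X'$'' means shrinking $\fP'$ to a Zariski open $\fr{U}'$ on which the $t_i$ are defined, and this replaces the frame $(X',Y',\fP')$ by $(X'\cap\fr{U}',\,Y'\cap\fr{U}',\,\fr{U}')$. The new second component $Y'\cap\fr{U}'\to Y$ is in general not proper (it is so only if $Y'\cap\fr{U}'$ is closed in $Y'$, e.g.\ if $Y'\subset\fr{U}'$, and since the conormal surjectivity is only guaranteed along $X'$ rather than along all of $\overline{X'}$, one cannot always arrange this). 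Without $g$ proper, Theorem~\ref{theo: strong fibration theorem}(2) does not apply to $\alpha$, so the reduction fails at exactly the point where you need it. This is precisely the difficulty that the paper's detour through Chow's lemma is designed to avoid: once $g$ is projective, the embedding of $Y'$ into some $\P^N_Y$ supplies the required transverse directions globally, and \cite[Lemma~6.5.1]{LS07} packages this into a replacement of the ambient $\fP'$ by one that is \'etale over $\fP$ on a neighbourhood of all of $X$, without breaking properness.
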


\begin{proof}
This follows from Theorem \ref{theo: strong fibration theorem} in exactly the same way as the proof that
\[ \mathcal{O}_{\tube{X}_\fr{P}}\rightarrow \mathbf{R}\!\tube{f}_{\dR*}\cO_{\tube{X'}_{\fP'}}  \]
is an isomorphism, see for example \cite[\S6.5]{LS07}. The first step is to show the result is true when $g$ is an isomorphism, which is a fairly direct consequence of Theorem \ref{theo: strong fibration theorem} and the above properties of the trace map. This then implies that the claim only depends on the given morphism of pairs $(X,Y')\to (X,Y)$ and not on the ambient morphism $u$ of formal schemes. 

The question is then local on $Y$, which can therefore be assumed quasi-projective, and one can use Chow's lemma to find a projective morphism $Y''\to Y'$ such that $Y''\to Y$ is also projective. Via a two-out-of-three argument, it therefore suffices to prove the result for both $(X,Y'') \to (X,Y')$ and $(X,Y'')\to (X,Y)$. This therefore reduces to the case when $g\from Y'\to Y$ is projective.  

Proposition \ref{prop: proper base change} together with Berthelot's resolution \cite[Proposition 2.1.8]{Ber96b} then implies that the question is local on $X$, and thanks to \cite[Lemma 6.5.1]{LS07} one can then assume that $u$ is actually \'etale in a neighbourhood of $X$. Finally, this case follows directly from Theorem \ref{theo: strong fibration theorem}.
\end{proof}

\subsection{Sheaves on diagrams of spaces} \label{subsec: diagram sheaves}

To make precise certain constructions, involving derived functors applied to diagrams of complexes, I will need to use the formalism of diagram toposes. So let $I$ be a category, and $X\from I\to {\bf Top}$ a diagram in the category of topological spaces. Then there is a natural site $X_{\rm Zar}$ associated to $X$ whose objects are pairs $(i,U)$ where $i\in I$ and $U\subset X_i$ is an open subset, and morphisms $(i,U)\to (j,V)$ consist of morphisms $\alpha\from i\to j$ such that the morphism $\alpha\from X_i\to X_j$ satisfies $U\subset \alpha^{-1}(V)$. A covering family is just a covering family on a single $X_i$. Sheaves on this site have the usual description as sheaves $\sF_i$ on each $X_i$, together with transition maps $\alpha^{-1}\sF_j\to\sF_i$ for any $\alpha\from i\to j$ in $I$, satisfying the usual compatibility condition for commutative triangles in $I$. The sheaf $\sF_i$ is called the  restriction of $\sF$ to $X_i$. I will usually abuse terminology and talk above sheaves on the diagram $X$ itself.

\begin{example} \begin{enumerate}
\item If $X$ is a diagram of ringed spaces, then $X_{\rm Zar}$ is naturally ringed by the sheaf of rings $\cO_X$ whose restriction to each $X_i$ is $\cO_{X,i}:=\cO_{X_i}$. 
\item If $X$ is a constant diagram, then the category of sheaves on $X$ is equivalent to the category of $I^{\rm op}$-shaped diagrams of sheaves on the single topological space $X$.
\end{enumerate} 
\end{example}

Now suppose that $Y\from J\to {\bf Top}$ is another diagram of topological spaces. Suppose furthermore that there is a functor $f^{-1}\from J\to I$, together with a morphism $f \from X\circ f^{-1} \to Y$ of $J$-shaped diagrams in ${\bf Top}$. There is then a natural functor
\begin{align*}
f^{-1} \from Y_{\rm Zar} &\to X_{\rm Zar} \\
(j,V) &\mapsto (f^{-1}(j), f^{-1}(V)).
\end{align*} 
Explicitly, if $(j,V)\in Y_{\rm Zar}$, with $V\subset Y_j$, then $f \from X_{f^{-1}(j)}\to Y_j$, and thus $f^{-1}(V)\subset X_{f^{-1}(j)}$.

\begin{lemma}\label{lemma: morphism of sites} Assume that for every $i\in I$, the category $i/J$ is cofiltered. Then $f^{-1}$ induces a morphism of sites $f\from X_{\rm Zar}\to Y_{\rm Zar}$. 
\end{lemma}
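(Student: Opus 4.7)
The plan is to verify the two standard conditions on the functor $u := f^{-1}\from Y_{\rm Zar}\to X_{\rm Zar}$ that guarantee a morphism of sites $f\from X_{\rm Zar}\to Y_{\rm Zar}$: first, that $u$ is \emph{continuous}, in that it sends covering families to covering families and preserves the fibre products appearing in the definition of continuity; and second, that the associated direct image functor $f_* := u_s$ on sheaves admits an exact left adjoint.

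Continuity is essentially formal. A covering family in $Y_{\rm Zar}$ consists of an ordinary open covering $\{(j,V_\alpha)\to (j,V)\}$ at a single index $j\in J$, with $V=\bigcup_\alpha V_\alpha$ in $Y_j$. Applying $u$ yields $\{(f^{-1}(j),f^{-1}(V_\alpha))\to (f^{-1}(j),f^{-1}(V))\}$, and continuity of the component map $f\from X_{f^{-1}(j)}\to Y_j$ forces $f^{-1}(V)=\bigcup_\alpha f^{-1}(V_\alpha)$, so this is again a cover. Likewise, fibre products in $Y_{\rm Zar}$ over $(j,V)$ of $(j,V_1)$ and $(j,V_2)$ reduce to $(j,V_1\cap V_2)$, which $u$ sends to $(f^{-1}(j),f^{-1}(V_1)\cap f^{-1}(V_2))$, the expected fibre product in $X_{\rm Zar}$.

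The substance of the argument is the existence of an exact left adjoint. The standard strategy is to exhibit the presheaf-level left adjoint $u_p$ as a colimit indexed by certain comma categories and to show that these categories are cofiltered, so that the colimits are filtered and therefore commute with finite limits; sheafification then preserves this exactness. Concretely, for each $(i,U)\in X_{\rm Zar}$ I would introduce the comma category $\ca{I}_{(i,U)}$ whose objects are triples $((j,V),\alpha\from i\to f^{-1}(j))$ satisfying $U\subset \alpha^{-1}(f^{-1}(V))$, and whose morphisms $((j_1,V_1),\alpha_1)\to ((j_2,V_2),\alpha_2)$ are maps $\beta\from j_1\to j_2$ in $J$ with $V_1\subset \beta^{-1}(V_2)$ and $\alpha_2=f^{-1}(\beta)\circ\alpha_1$. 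The forgetful functor $\ca{I}_{(i,U)}\to i/J$ sending $((j,V),\alpha)\mapsto (j,\alpha)$ then lets us transfer the hypothesis. Given two objects, cofilteredness of $i/J$ produces a common predecessor $(j_0,\alpha_0)$ with maps $\beta_k\from j_0\to j_k$, and $V_0:=\beta_1^{-1}(V_1)\cap \beta_2^{-1}(V_2)$ yields an object of $\ca{I}_{(i,U)}$ dominating both, because $\alpha_0^{-1}(f^{-1}(V_0))=\alpha_1^{-1}(f^{-1}(V_1))\cap \alpha_2^{-1}(f^{-1}(V_2))\supset U$. Equalization of parallel morphisms runs along the same lines, restricting $V$ to the preimage of a suitable equalizer in $i/J$; non-emptiness is clear since $(j,\alpha)\in i/J$ yields $((j,Y_j),\alpha)\in\ca{I}_{(i,U)}$.

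The main (minor) obstacle is essentially notational bookkeeping: aligning the covariances between the $I$-diagram, the $J$-diagram, the comma categories, and the Zariski sites, and checking that cofilteredness of $\ca{I}_{(i,U)}$ is indeed the precise condition needed for exactness of $u_p$ (a standard fact about left Kan extensions into presheaves). Beyond that, the content is simply that filtered intersections of opens commute with $f^{-1}$.
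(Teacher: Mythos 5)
Your proof is correct, and the core idea — exactness of $u_p$ because the relevant colimits are filtered — is exactly what drives the paper's argument. The execution, though, is somewhat different in where the work happens, and the difference is worth noting.

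You verify continuity, then show directly that for each object $(i,U)$ of $X_{\rm Zar}$ the comma category $\ca{I}_{(i,U)}$ (of pairs $((j,V),\alpha)$ with $\alpha\from i\to f^{-1}(j)$ in $I$ and $U\subset\alpha^{-1}(f^{-1}(V))$) is cofiltered, and then invoke the general criterion that $u_p$ commutes with finite limits precisely when these comma categories are cofiltered. Your verification of cofilteredness via the forgetful functor $\ca{I}_{(i,U)}\to i/J$ is correct; the only computation that actually needs to be checked is that $\alpha_0^{-1}(f^{-1}(V_0))=\alpha_1^{-1}(f^{-1}(V_1))\cap\alpha_2^{-1}(f^{-1}(V_2))$, which follows from the naturality squares $f_{j_k}\circ f^{-1}(\beta_k)=\beta_k\circ f_{j_0}$ of the morphism of diagrams.

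The paper instead factors the colimit: it computes that the restriction of $f^{-1}\sF$ to a single $X_i$ is $\colim_{(i/J)^{\rm op}} f_j^{-1}\sF_j$, where each $f_j^{-1}$ is ordinary sheaf pullback along a continuous map of topological spaces. Exactness is then immediate: each $f_j^{-1}$ is exact (a standard fact on a topological space, not needing any hypothesis), and $(i/J)^{\rm op}$ is filtered. This buries the "opens $V\subset Y_j$ vary" part of your comma category inside the already-known exactness of $f_j^{-1}$, so the only cofilteredness one needs to see is that of $i/J$ itself, which is the stated hypothesis. Both approaches are sound; the paper's is a bit more economical because it delegates part of the cofilteredness check to the classical topological fact, whereas yours carries the full comma category along and appeals to the general site-theoretic criterion. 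Either way the key input, cofilteredness of $i/J$, is the same.
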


\begin{proof} The functor $f^{-1}$ induces, by the usual formulae, a functor $f^{-1}$ from sheaves on $Y_{\rm Zar}$ to those on $X_{\rm Zar}$, and the content of the lemma is that this functor is exact. 

Explicitly, if  $\sF$ is a sheaf on $Y_{\rm Zar}$, and $i\in I$, then for any $j\in i/J$ there is the given morphism $f_j\from X_i\to Y_j$, and the restriction of $f^{-1}\sF$ to $X_i$ is given explicitly by $\colim{(i/J)^{\rm op}} f_j^{-1}\sF_j$. (This can be seen by observing that the functor thus defined is adjoint to $f_*$.) If each $(i/J)^{\rm op}$ is filtered, then colimits of diagrams indexed by $(i/J)^{\rm op}$ are exact, and thus $f^{-1}$ is exact as required.
\end{proof}

Of course, if $X$ and $Y$ are diagrams of ringed spaces, and $f\from X\circ f^{-1} \to Y$ is a morphism of diagrams of ringed spaces, then $f$ can be naturally promoted to a morphism of ringed sites (at least under the assumptions of Lemma \ref{lemma: morphism of sites}. 

\begin{example} Suppose that $I=\{*\}$ is the category with a single object and morphism. Then a morphism $f \from X\to Y$ amounts to a compatible system of morphisms $f_j\from X\to Y_j$ for each $j$. The condition of Lemma \ref{lemma: morphism of sites} amounts to saying that $J$ is cofiltered. In this case, if $\sF=\{\sF_j\}_{j\in J}$ is a sheaf on $Y$, then $f^{-1}\sF=\colim{J^{\rm op}} f_j^{-1}\sF_j$.
\end{example}

It is also possible to form morphisms of toposes by taking limits, although of course this doesn't fit into the general formalism of Lemma \ref{lemma: morphism of sites}. To describe this construction, let $X$ be a topological space, $I$ a filtered category, and $X_I\from I\to {\bf Top}$ the constant diagram with value $X$. Thus a sheaf on $X_I$ is just an $I^{\rm op}$-shaped diagram of sheaves on $X$. The functor
\begin{align*}  \Sh(X_I) &\to \Sh(X)  \\
\{\sF_i\}_{i\in I} &\mapsto \lim{I}\sF_i 
\end{align*}
is then the pushforward for a morphism of toposes $X_I\to X$. The adjoint pullback functor
\begin{align*}  \Sh(X) &\to \Sh(X_I)  
\end{align*}
is, of course, the `constant diagram' functor.

\subsection{Limits and colimits}

For want of a better place to put them, I prove here a couple of simple lemmas on limits and colimits that will be useful later on. 

\begin{lemma} \label{lemma: inj prod colim} Let $A,B:I\rightarrow \mathbf{Ab}$ be filtered diagrams of abelian groups, and write $B^\N$ for the diagram given by $i\mapsto B(i)^{\N}:=\prod_{n\in \N} B(i)$. Suppose that we are given a morphism
\[ F: A\rightarrow B^{\N} \]
of diagrams. Assume that:
\begin{itemize}
\item $F$ is level-wise injective;
\item for each $i\in I$, the kernels of the transition maps $B(i)\rightarrow B(j)$ for $j\geq i$ eventually stabilise.\footnote{More formally, for each $i\in I$, there exists some $j_0\geq i$, such that for all $j\geq j_0$, the inclusion $\ker\left(B(i)\to B(j_0)\right)\to \ker\left(B(i)\to B(j)\right)$ is an equality.}
\end{itemize}
Then the natural map
\[ \mathrm{colim}_IA \rightarrow (\mathrm{colim}_I B)^\N\]
is also injective.
\end{lemma}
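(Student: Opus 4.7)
The plan is to chase an element directly, using the stabilisation hypothesis to replace the ill-behaved interaction between filtered colimits and infinite products. The key observation is that filtered colimits do commute with countable products as soon as we have sufficient control on when things vanish; the stabilisation condition on the kernels provides exactly this control.

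Concretely, suppose $a\in A(i)$ represents an element of $\colim{I} A$ that maps to zero in $(\colim{I} B)^\N$. Write $F(a)=(b_n)_{n\in\N}\in B(i)^\N$. The vanishing assumption means that each coordinate $b_n$ vanishes in $\colim{I} B$, so for each $n$ there exists some $j_n\geq i$ with $b_n\mapsto 0$ in $B(j_n)$. The obstacle to concluding directly is that there is no reason a priori for the $j_n$ to admit a common upper bound within $I$: this is precisely the failure of filtered colimits to commute with infinite products.

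This is where the stabilisation hypothesis is brought to bear. By assumption, I may choose a single $j_0\geq i$ such that $\ker(B(i)\to B(j_0))=\ker(B(i)\to B(j))$ for every $j\geq j_0$. Then, for each $n$, I pick $j_n'\in I$ dominating both $j_0$ and $j_n$ (using that $I$ is filtered). Since $b_n$ maps to zero in $B(j_n)$, and hence also in $B(j_n')$, it lies in $\ker(B(i)\to B(j_n'))=\ker(B(i)\to B(j_0))$, so $b_n$ already vanishes in $B(j_0)$. As this holds for every $n$, the image of $F(a)$ in $B(j_0)^\N$ is zero.

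To conclude, let $a'\in A(j_0)$ denote the image of $a$ under the transition map. Since $F$ is a morphism of diagrams, $F(a')$ equals the image of $F(a)$ in $B(j_0)^\N$, which I have just shown to be zero. Level-wise injectivity of $F$ at index $j_0$ then forces $a'=0$, so $a$ represents the zero class in $\colim{I} A$, as desired. The only substantive step is the passage from ``each $b_n$ dies eventually'' to ``all $b_n$ die at a common stage,'' and the stabilisation hypothesis is tailored exactly to make this uniformisation possible.
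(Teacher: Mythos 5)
Your proof is correct and is essentially the same argument as the paper's: the paper first factors the map as $\mathrm{colim}_I A \to \mathrm{colim}_I(B^\N) \to (\mathrm{colim}_I B)^\N$, handling the first arrow via exactness of filtered colimits and the second via the stabilisation hypothesis, whereas you perform a single direct element chase; but the substantive step — using the filteredness of $I$ to push each $j_n$ past $j_0$ and thereby collapse all the coordinate-wise kernel memberships to the single stage $j_0$ — is identical in both.
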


\begin{proof}
Since $F$ is levelwise injective, we see that the map
\[ \mathrm{colim}_IA \rightarrow \mathrm{colim}_I(B^{\N}) \]
is injective, it therefore suffices to prove that the natural map
\[ \mathrm{colim}_I(B^{\N}) \rightarrow (\mathrm{colim}_{I} B)^{\N} \]
is injective. An element of $\mathrm{colim}_I(B^{\N})$ is represented by an element $i\in I$ and a collection $(b_n)\in B(i)^{\N}$. Such an element mapping to zero in $(\mathrm{colim}_{I} B)^{\N}$ says that for each $n$, there exists $i_n\geq i$ such that $b_n\in \ker\left(B(i)\rightarrow B(i_n)\right)$. The hypothesis that the kernels eventually stabilise means that I can pick a single $j\in I$ such that $b_n\in \ker\left(B(i)\rightarrow B(j)\right)$ for all $n$, thus $(b_n)\in \ker\left(B(i)^{\N}\rightarrow B(j)^{\N} \right)$. This implies that $(b_n)$ represents the zero element in $\mathrm{colim}_I(B^{\N})$, and so
\[ \mathrm{colim}_I(B^{\N}) \rightarrow (\mathrm{colim}_{I} B)^{\N} \]
is injective as claimed.
\end{proof}

\begin{lemma} \label{lemma: pro zero} Let $X$ be a topological space, and $\mathscr{K}_n$ a uniformly bounded $\N$-indexed projective system of complexes on $X$. Suppose that for all $q\in \Z, n\in \N$, the transition maps in cohomology $\mathcal{H}^q(\mathscr{K}_{n+1})\rightarrow \mathcal{H}^q(\mathscr{K}_n)$ are zero. Then $\mathbf{R}\lim{n} \mathscr{K}_n=0$. 
\end{lemma}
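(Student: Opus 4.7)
The plan is to proceed by induction on the (uniform) amplitude of the $\mathscr{K}_n$, reducing to the case of a tower of single sheaves with zero transition maps, and then applying the standard Milnor fibre sequence for $\bR\lim$.

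Suppose $\mathscr{K}_n$ is concentrated in degrees $[a,b]$ for all $n$. The canonical truncation triangles
\[ \tau_{\leq b-1}\mathscr{K}_n \to \mathscr{K}_n \to \mathcal{H}^b(\mathscr{K}_n)[-b] \]
are functorial in $n$, so they assemble into a distinguished triangle of projective systems. Applying $\bR\lim{n}$ yields a distinguished triangle in $\bD(X)$. The cohomology sheaves of $\tau_{\leq b-1}\mathscr{K}_n$ agree with those of $\mathscr{K}_n$ in degrees $<b$ (so the transition maps are still zero) and vanish in higher degrees, so by the induction hypothesis on $b-a$ the left-hand term vanishes, reducing the problem to showing that $\bR\lim{n}\mathcal{H}^b(\mathscr{K}_n) = 0$.

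This base case is a projective system $(\mathscr{F}_n)$ of sheaves concentrated in a single degree, with every transition $\mathscr{F}_{n+1}\to \mathscr{F}_n$ literally zero (not merely zero on cohomology). Here I would invoke the Milnor distinguished triangle
\[ \bR\lim{n} \mathscr{F}_n \to \bR\prod_n \mathscr{F}_n \xrightarrow{1-\sigma} \bR\prod_n \mathscr{F}_n \]
in $\bD(X)$, where $\sigma$ is the shift map induced by the transitions. Because the transitions are identically zero as sheaf morphisms, so is $\sigma$; hence $1-\sigma$ is the identity and therefore an isomorphism, forcing the homotopy fibre to vanish. This gives $\bR\lim{n}\mathscr{F}_n=0$ and completes the induction.

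The main technical point I expect to need care is the validity of the Milnor triangle in the derived category of abelian sheaves, where ordinary products need not be exact (AB4$^*$ may fail). The cleanest workaround is to apply the Godement flabby resolution to $(\mathscr{F}_n)$ levelwise: by functoriality, the resulting tower of flabby complexes still has vanishing transition maps, and $\bR\lim{n}$ of such a tower can be computed as the ordinary termwise limit. Its sections over any open $U\subset X$ form the $\bR\lim$ of a tower of abelian groups with zero transitions, for which both $\mathrm{lim}$ and $\mathrm{lim}^1$ vanish classically and higher derived limits are zero for $\N$-indexed systems. Sheafifying then gives the required vanishing.
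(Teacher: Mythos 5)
Your proof is correct, and the reduction to a tower concentrated in a single degree (via the truncation triangles) is the same d\'evissage the paper performs, just phrased as an induction on amplitude. Where you differ is the base case. The paper observes that the zero transitions make the natural map of towers $\mathscr{K}_{\bullet+1}\to \mathscr{K}_{\bullet}$ factor through zero, while restriction along the cofinal shift $\N\to\N$, $n\mapsto n+1$, induces an isomorphism on $\mathbf{R}\lim$; the composite is the identity on $\mathbf{R}\lim \mathscr{K}_\bullet$, so it vanishes. Your route goes instead through the Milnor triangle
\[ \mathbf{R}\lim{n}\mathscr{F}_n \to \mathbf{R}\!\prod_n \mathscr{F}_n \xrightarrow{\ 1-\sigma\ } \mathbf{R}\!\prod_n \mathscr{F}_n, \]
which is just as valid. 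However, the worry about AB4$^*$ that you raise afterwards is not actually an issue: the product appearing in the Milnor triangle for a homotopy limit in $\mathbf{D}(\mathbf{Sh}(X))$ is the product in the \emph{derived} category, i.e.\ precisely the derived product $\mathbf{R}\!\prod$ that you already wrote, and this exists for any Grothendieck abelian category regardless of whether the underlying abelian products are exact. Since $\sigma$ is the image under the additive functor $\mathbf{R}\!\prod$ of the shift morphism of towers, which is literally zero, $1-\sigma$ is the identity and you are done with no further input. The Godement ``workaround'' is therefore unnecessary, and as stated it is also incomplete: flabbiness of the levels does not by itself imply that the naive termwise $\lim$ of a tower of sheaves computes $\mathbf{R}\lim$ (the usual criteria involve surjectivity of transition maps, not flabbiness, and Mittag--Leffler arguments that work for towers of abelian groups do not automatically transfer to towers of sheaves). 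Fortunately you do not need that detour. Both your Milnor-triangle argument and the paper's shift/cofinality trick are clean; the paper's is perhaps a touch shorter because it avoids invoking the derived product at all.
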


\begin{proof}
By writing $\mathscr{K}_\bullet$ as an iterated extension of its cohomology sheaves, I can assume that in fact $\mathscr{K}_\bullet$ is concentrated in a single degree. The assumption that the transition maps $\mathscr{K}_{n+1}\rightarrow \mathscr{K}_{n}$ are zero implies that the natural morphism of projective systems
\[ \mathscr{K}_{\bullet+1}\to \scr{K}_{\bullet} \]
factors through the zero projective system. It follows that  the identity map on $\mathbf{R}\lim{n}\mathscr{K}_n$ factors through the zero map, and so $\mathbf{R}\lim{n}\mathscr{K}_n=0$. 
\end{proof}

\section{Constructible isocrystals} \label{sec: abelian constructible isocrystals}

Constructible isocrystals on varieties were introduced in \cite{LS16}, and their study continued in \cite{LS17,LS22}, where most results are phrased in terms of Le Stum's overconvergent site. It will be helpful to have a version of this formalism in the language of \emph{realisations} of constructible isocrystals on tubes. In this section, the aim is therefore to restate many of the foundational results of \cite{LS16}, but in the context of realisations.

For a frame $(X,Y,\fP)$, I want to be able to work locally on $\fP$, which makes it important to work without properness hypotheses on $Y$. This makes it necessary to develop a theory of constructible isocrystals not just for varieties, but for pairs $(X,Y)$ consisting of a variety $X$ and a partial compactification $Y$. In fact, this case is implicitly dealt with in \cite{LS16}, however, it will be more convenient here to phrase everything from the beginning in terms of realisations.

\subsection{Constructible modules}

I start by introducing the $\cO$-modules underlying constructible isocrystals. 

\begin{definition} Let $(X,Y,\fP)$ be a frame, and let $\sF$ be an $\cO_{\tube{X}_\fP}$-module. Then $\sF$ is called \emph{constructible} if there exists a stratification $\{i_\alpha \from X_\alpha \to X\}_{\alpha\in A}$ such that, for each $\alpha\in A$, $i_\alpha^{-1}\sF$ is a coherent $\cO_{\tube{X_\alpha}}$-module. It is called \emph{constructible locally free} if a stratification can be chosen such that each $i_\alpha^{-1}\sF$ is a locally finite free $\cO_{\tube{X_\alpha}_\fP}$-module.
\end{definition}

The following facts about constructible and constructible locally free modules can be verified immediately. 

\begin{enumerate}
\item If $\sF$ is an $\cO_{\tube{X}_\fP}$-module, and $\{i_\alpha \from X_\alpha \to X\}_{\alpha\in A}$ is a stratification of $X$, then $\sF$ is constructible (resp. constructible locally free) if and only if each $i_\alpha^{-1}\sF$ is.
\item If $i\from Z\to X$ is a locally closed immersion, and $\sF$ is an $\cO_{\tube{Z}_\fP}$-module, then $\sF$ is constructible (resp. constructible locally free) if and only if $i_!\sF$ is constructible (resp. constructible locally free) as an $\cO_{\tube{X}_\fP}$-module. The exact functors
\[ i_!\from {\bf Mod}(\cO_{\tube{Z}_\fP}) \leftrightarrows {\bf Mod}(\cO_{\tube{X}_\fP})\from i^{-1}  \]
induce an equivalence of categories between constructible (resp. constructible locally free) $\cO_{\tube{Z}_\fP}$-modules, and constructible (resp. constructible locally free) $\cO_{\tube{X}_\fP}$-modules which are supported on $\tube{Z}_\fP$. 
\item \label{num: cons properties 3} If $i\from Z\to X$ is a closed immersion, with open complement $j\from U\to X$, then every constructible (resp. constructible locally free) $\cO_{\tube{X}_\fP}$-module $\sF$ sits in a short exact sequence
\[ 0\to i_!i^{-1}\sF \to\sF\to j_*j^{-1}\sF\to 0 \]
where $i^{-1}\sF$ and $j^{-1}\sF$ are constructible (resp. constructible locally free) modules on $\tube{Z}_\fP$ and $\tube{U}_\fP$ respectively.
\item \label{num: cons properties 4} Every constructible (resp. constructible locally free) $\cO_{\tube{X}_\fP}$-module $\sF$ admits a finite composition  series
\[ 0 =\sF_0 \subset \sF_1 \subset \ldots \subset \sF_n =\sF, \]
such that, for each $1\leq \alpha \leq n$, there exists a locally closed subscheme $i_\alpha\from Z_\alpha\to X$, a coherent (resp. locally finite free) $\cO_{\tube{Z_\alpha}_\fP}$-module $\sG_\alpha$, and an isomorphism
\[ \sF_{\alpha}/\sF_{\alpha-1}\isomto i_{\alpha!}\sG_\alpha.  \]
\item The category of constructible (resp. constructible locally free) $\cO_{\tube{X}_\fP}$-module is an abelian subcategory of ${\bf Mod}(\cO_{\tube{X}_\fP})$, closed under extensions. In particular, every $\cO_{\tube{X}_\fP}$-module admitting a filtration as in (\ref{num: cons properties 4}) is constructible (resp. constructible locally free if each $\sG_\alpha$ is locally free). 
\item If $\sF$ and $\sG$ are constructible (resp. constructible locally free) then so is $\sF\otimes_{\cO_{\tube{X}_\fP}}\sG$.
\end{enumerate}

\begin{remark} Note that the functors $i_!i^{-1}$ and $j_*j^{-1}$ appearing in (\ref{num: cons properties 3}) are the functors $\underline{\Gamma}^\dagger_Z$ and $j_U^\dagger$ introduced in \S\ref{subsec: frames and tubes} above.
\end{remark}

\begin{lemma} Any constructible locally free $\cO_{\tube{X}_\fP}$-module is flat.
\end{lemma}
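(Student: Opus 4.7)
The plan is to leverage the structural description of constructible locally free sheaves given in item (4) of the list just above the statement: any such $\sF$ admits a finite filtration
\[ 0 = \sF_0 \subset \sF_1 \subset \ldots \subset \sF_n = \sF \]
whose successive quotients $\sF_\alpha/\sF_{\alpha-1}$ are of the form $i_{\alpha!}\sG_\alpha$, with $i_\alpha\from Z_\alpha \hto X$ a locally closed immersion and $\sG_\alpha$ a locally finite free $\cO_{\tube{Z_\alpha}_\fP}$-module. Since flatness of modules over a ringed space is preserved under extensions (a standard snake-lemma / diagram chase), it suffices to show that each $i_{\alpha!}\sG_\alpha$ is a flat $\cO_{\tube{X}_\fP}$-module.

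For this, I would check flatness stalkwise. Recall that flatness of an $\cO_{\tube{X}_\fP}$-module can be tested by the flatness of its stalks as modules over the corresponding local rings. Now $i\from \tube{Z}_\fP \to \tube{X}_\fP$ is a locally closed immersion of topological spaces with $i^{-1}\cO_{\tube{X}_\fP} = \cO_{\tube{Z}_\fP}$, and by definition the structure sheaves on $\tube{Z}_\fP$ and $\tube{X}_\fP$ are both obtained by restriction from the ambient adic space. Hence at any point $x \in \tube{Z}_\fP \subset \tube{X}_\fP$ we have $\cO_{\tube{Z}_\fP, x} = \cO_{\tube{X}_\fP,x}$. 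The stalk of $i_!\sG$ is then
\[ (i_!\sG)_x = \begin{cases} \sG_x & x \in \tube{Z}_\fP, \\ 0 & x \notin \tube{Z}_\fP, \end{cases} \]
and in either case this is a free (hence flat) module over $\cO_{\tube{X}_\fP,x}$, because $\sG$ was assumed locally finite free on $\tube{Z}_\fP$.

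Alternatively, one could argue directly from the projection formula: for any $\cO_{\tube{X}_\fP}$-module $\sM$, Lemma \ref{lemma: u^*i_!} (or its module-theoretic analogue) gives a natural isomorphism
\[ i_!\sG \otimes_{\cO_{\tube{X}_\fP}} \sM \;\cong\; i_!\!\left(\sG \otimes_{\cO_{\tube{Z}_\fP}} i^{-1}\sM\right), \]
and then exactness of $i^{-1}$, flatness of the locally free $\sG$, and exactness of the topological extension-by-zero functor $i_!$ together imply that tensoring with $i_!\sG$ preserves injections. I don't expect any real obstacle here: the only thing to take care of is the bookkeeping that the three exactness properties involved (of $i_!$, $i^{-1}$, and tensoring with a locally free module) combine correctly, which is immediate since $i$ is a locally closed immersion at the level of ringed spaces with trivially restricted structure sheaf.
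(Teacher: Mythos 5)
Your proof is correct, and the essential mechanism — checking flatness stalkwise and using that the structure sheaves of the tubes are restrictions, so stalks match — is the same as the paper's. However, you take a slightly longer route: the paper works directly from the stratification $\{i_\alpha\from X_\alpha\to X\}$ (the tubes $\tube{X_\alpha}_\fP$ cover $\tube{X}_\fP$, and at any $x$ the stalk $\sF_x$ equals $(i_\alpha^{-1}\sF)_x$ over the same local ring, which is free), whereas you first pass to the filtration by extensions of $i_{\alpha!}\sG_\alpha$ and invoke stability of flatness under extensions, and only then argue stalkwise. The extension step is harmless but unnecessary — once you're checking on stalks anyway, you may as well apply the stalk computation directly to $\sF$ rather than to the associated graded pieces. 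Your alternative argument via the projection formula from Lemma \ref{lemma: u^*i_!} and exactness of $i_!$, $i^{-1}$, and $\sG\otimes-$ is also valid and arguably cleaner than the filtration reduction, though it is again a step removed from the paper's one-line stalk observation.
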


\begin{proof}
Let $\{i_\alpha \from X_\alpha \to X\}_{\alpha\in A}$ be a stratification such that each $i_\alpha^{-1}\sF$ is locally finite free. Note that the tubes $\tube{X_\alpha}_\fP$ cover $\tube{X}_\fP$, and, by definition, we have $\cO_{\tube{X_\alpha}_\fP}=i_\alpha^{-1}\cO_{\tube{X}_\fP}$. Since flatness can be checked on stalks, we can therefore replace $X$ by $X_\alpha$, and thus assume that $\sF$ is a locally finite free $\cO_{\tube{X}_\fP}$-module. In this case the claim is clear.
\end{proof}

\begin{remark} In general, constructible $\cO_{\tube{X}_\fP}$-modules which are not constructible locally free need not be flat.
\end{remark}

\begin{lemma} \label{lemma: cons Omod open} Suppose that $\sF\in \Mod(\cO_{\tube{X}_\fP})$. \begin{enumerate}
\item Let $\{\fr{P}_\alpha\}_{\alpha\in A}$ be a finite open cover of $\fr{P}$, and $X_\alpha:=X\cap \fP_\alpha$. Then $\sF$ is constructible (resp. constructible locally free) if and only if each $\sF|_{\tube{X_\alpha}_{\fr{P}_\alpha}}$ is. 
\item Let $\{X_\alpha\}_{\alpha\in A}$ be a finite open cover of $X$. Then $\sF$ is constructible (resp. constructible locally free) if and only if each $\sF|_{\tube{X_\alpha}_{\fr{P}}}$ is.
\end{enumerate}
\end{lemma}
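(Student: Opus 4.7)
The plan is to prove both parts by combining local stratifications into a single global one, and checking that coherence (resp. the locally finite free property) is preserved under the relevant restrictions of tubes.

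The forward directions are immediate in both parts: a stratification $\{X_\beta\}$ witnessing constructibility of $\sF$ restricts to a stratification $\{X_\beta \cap X_\alpha\}_\beta$ of $X_\alpha$, and each $\sF|_{\tube{X_\beta \cap X_\alpha}_{\fP_{(\alpha)}}}$ is the restriction of the coherent (resp.\ locally finite free) module $\sF|_{\tube{X_\beta}_\fP}$ to a sub-tube with the inherited structure sheaf, which preserves those properties.

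For the backward directions, choose for each $\alpha$ a stratification $\{X_{\alpha,\gamma}\}_\gamma$ of $X_\alpha$ witnessing constructibility of $\sF|_{\tube{X_\alpha}_{\fP_{(\alpha)}}}$. Viewing each $X_{\alpha,\gamma}$ as a locally closed subset of $X$, the finite collection $\{X_{\alpha,\gamma}\}_{\alpha,\gamma}$ generates a finite Boolean algebra of locally closed subsets of $X$, which determines a partition of $X$ that can be refined to a stratification $\{Z_\delta\}$. By construction, each $Z_\delta$ is contained in some $X_{\alpha(\delta),\gamma(\delta)}$. It remains to verify that $\sF|_{\tube{Z_\delta}_\fP}$ is coherent (resp.\ locally finite free) for every $\delta$.

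In part~(2), one has $Z_\delta \subset X_{\alpha(\delta),\gamma(\delta)} \subset X$ and hence $\tube{Z_\delta}_\fP \subset \tube{X_{\alpha(\delta),\gamma(\delta)}}_\fP$ as subsets of $\fP_K$ with compatible structure sheaves, so the claim follows by restricting the known coherent (resp.\ locally finite free) module. In part~(1), the key observation is that $\tube{X_\alpha}_{\fP_\alpha} = \tube{X}_\fP \cap \fP_{\alpha,K}$: indeed, for $x\in\fP_{\alpha,K}$ the specialisations $[\sp_{\fP_\alpha}](x)$ and $[\sp_\fP](x)$ agree because $\fP_\alpha$ is open in $\fP$. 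Consequently $\{\tube{X_\alpha}_{\fP_\alpha}\}_\alpha$ is an open cover of $\tube{X}_\fP$, and the coherence of $\sF|_{\tube{Z_\delta}_\fP}$ can be checked locally on this cover: on $\tube{Z_\delta}_\fP\cap \fP_{\alpha,K} = \tube{Z_\delta\cap X_\alpha}_{\fP_\alpha}$ it agrees with the restriction of $\sF|_{\tube{X_{\alpha,\gamma'}}_{\fP_\alpha}}$ for the (unique) $\gamma'$ with $Z_\delta\cap X_\alpha\subset X_{\alpha,\gamma'}$.

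The main technical point I expect to be delicate is the claim that coherence and the locally finite free property are stable under restriction to an arbitrary locally closed sub-tube equipped with the inherited structure sheaf; this is ultimately a local statement about finite presentation, but requires care because the sub-tube is typically neither open nor closed in the ambient tube. Part~(1) also requires the verification that $\tube{X_\alpha}_{\fP_\alpha}$ really is the intersection of $\tube{X}_\fP$ with $\fP_{\alpha,K}$, which is what allows us to replace a cover by formal subschemes with a genuine open cover of the tube.
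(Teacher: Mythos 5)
Your proof is correct. For part (1) it coincides with the paper's argument: build a common refinement of the local stratifications and then check coherence locally on the open cover $\{\tube{X_\alpha}_{\fP_\alpha}\}$ of $\tube{X}_\fP$; your identification $\tube{X_\alpha}_{\fP_\alpha}=\tube{X}_\fP\cap\fP_{\alpha,K}$ is exactly what makes this a genuine open cover. For part (2), however, you take a genuinely different and slicker route. The paper only arranges that the restriction of the global stratification to each $X_\alpha$ refines the local one, which still leaves a nontrivial gluing step over the \emph{closed} tubes $\tube{X_\alpha}_\fP$; it then invokes the resolution $0\to\sF\to\bigoplus_\alpha j^\dagger_{X_\alpha}\sF\to\bigoplus_{\alpha,\beta}j^\dagger_{X_\alpha\cap X_\beta}\sF$ from \cite[Proposition 2.1.8]{Ber96b} together with Le Stum's gluing result \cite[Proposition 5.4.12]{LS07}. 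You instead make the refinement so fine (atoms of the generated algebra, further decomposed into locally closed pieces and refined to a stratification) that every stratum is contained in a single $X_{\alpha,\gamma}$, so that only restriction is needed and the \v{C}ech-type argument is avoided entirely. Both routes rely on the fact you correctly flag as delicate: restriction of a coherent (resp.\ locally finite free) $\cO_{\tube{T}_\fP}$-module to $\tube{Z}_\fP$ for $Z\subset T$ locally closed remains coherent (resp.\ locally finite free). This is not automatic from the abstract sheaf-theoretic definition of coherence (a morphism $\cO_V^n\to\sF|_V$ on an open of the sub-tube need not extend to the ambient space), but it does follow from Proposition~\ref{prop: coherent sheaves colimit neighbourhoods}: extend to a coherent module on an open neighbourhood $V$ in $\tube{\overline{T}}_\fP$, restrict to the open $V\cap\tube{\overline{Z}}_\fP$ of $\tube{\overline{Z}}_\fP$, and then back to $\tube{Z}_\fP$. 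One small terminological caveat: the atoms you form are constructible but in general not locally closed (locally closed sets are not stable under complementation), so an extra decomposition into locally closed pieces is required before appealing to the existence of a refining stratification.
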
 

\begin{proof}
In both cases the `only if' direction is clear, I will prove the `if' direction.
\begin{enumerate}
\item Given stratifications of each $X_\alpha$, there exists a stratification of $X$ whose restriction to each $X_\alpha$ is a refinement of the given one of $X$. Working on each of the given strata, the question therefore amounts to showing that if $\sF$ is an $\cO_{\tube{X}_\fP}$-module, and each $\sF|_{\tube{X_\alpha}_{\fr{P}_\alpha}}$ is coherent (resp. locally finite free), then $\sF$ is coherent (resp. locally finite free). In this case, the tubes $\tube{X_\alpha}_{\fr{P}_\alpha}=\tube{X}_\fr{P}\cap \fr{P}_{\alpha K}$ form an open cover of $\tube{X}_{\fr{P}}$, and the claim then follows.
\item Arguing similarly, I need to show that if $\sF$ is an $\cO_{\tube{X}_\fP}$-module, and each $\sF|_{\tube{X_\alpha}_{\fr{P}}}$ is coherent (resp. locally finite free), then $\sF$ is coherent (resp. locally finite free). Thanks to \cite[Proposition 2.1.8]{Ber96b}, $\sF$ sits in an exact sequence
\[ 0 \to \sF\to \bigoplus_{\alpha\in A} j_{X_\alpha}^\dagger \sF \to \bigoplus_{\alpha,\beta} j_{X_\alpha\cap X_\beta}^\dagger \sF, \]
and applying \cite[Proposition 5.4.12]{LS07} shows that the unique such $\cO_{\tube{X}_\fP}$-module $\sF$ has to be coherent. If each $j_{X_\alpha}^\dagger \sF$ is moreover locally free, then following through the \emph{proof} of \cite[Proposition 5.4.12]{LS07}, which is essentially based upon Proposition \ref{prop: coherent sheaves colimit neighbourhoods} quoted above, proves that in fact $\sF$ is locally free. \qedhere
\end{enumerate}
\end{proof}

If $u\from (X',Y',\fP')\to (X,Y,\fP)$ is a morphism of frames, it is a straightforward check to show that the functor
\[ u^* \from {\bf Mod}(\cO_{\tube{X}_\fP})\rightarrow  {\bf Mod}(\cO_{\tube{X'}_{\fP'}}) \]
preserves both constructible and constructible locally free modules. I don't know in what generality, the projection formula holds for constructible $\cO_{\tube{X}_\fP}$-modules, but the following special cases will suffice.

\begin{proposition} \label{prop: weak proj formula for cons mod} Let $\fP$ be a flat formal scheme, $X\hookrightarrow \fP$ a locally closed immersion, $\sF$ a constructible locally free $\cO_{\tube{X}_\fP}$-module, and $f\from V\to \tube{X}_\fP$ a morphism of germs.
\begin{enumerate}
\item \label{num: weak prof formula for cons mod i} If $f$ is quasi-compact, then the map
\[ \bR f_* \cO_{V} \otimes^{\bL}_{\cO_{\tube{X}_\fP}} \sF \to \bR f_*\bL f^*\sF \]
is an isomorphism in $\bD(\cO_{\tube{X}_\fP})$. 
\item \label{num: weak prof formula for cons mod ii} If $f$ is partially proper, then there is a natural (in $\sF$) isomorphism
\[ \bR f_! \cO_{V} \otimes^{\bL}_{\cO_{\tube{X}_\fP}} \sF \isomto \bR f_!\bL f^*\sF \]
in $\bD^+(\cO_{\tube{X}_\fP})$. 
\end{enumerate}
\end{proposition}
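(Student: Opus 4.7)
The plan is to use dévissage on $\sF$ via its composition series. By property (4) of constructible locally free modules, $\sF$ admits a finite filtration $0 = \sF_0 \subset \sF_1 \subset \dots \subset \sF_n = \sF$ whose graded pieces have the form $i_{\alpha!}\sG_\alpha$ for a locally closed immersion $i_\alpha\from Z_\alpha \hookrightarrow X$ and a locally finite free $\cO_{\tube{Z_\alpha}_\fP}$-module $\sG_\alpha$. Since every term is flat, $\bL f^*$ coincides with $f^*$ and both sides of the candidate map produce exact triangles out of the filtration; by the five lemma, it suffices to treat the case $\sF = i_!\sG$.

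For that case, a routine extension of Lemma \ref{lemma: u^*i_!} to morphisms of germs gives $\bL f^*(i_!\sG) = i'_!f'^*\sG$, where $f'\from f^{-1}\tube{Z}_\fP \to \tube{Z}_\fP$ and $i'\from f^{-1}\tube{Z}_\fP \to V$ fit into a topologically Cartesian square. Meanwhile, the projection formula for the locally closed immersion $i$ (valid because $\cO_{\tube{Z}_\fP} = i^{-1}\cO_{\tube{X}_\fP}$) rewrites the left-hand side as
\[ \bR f_*\cO_V \otimes^{\bL}_{\cO_{\tube{X}_\fP}} i_!\sG \;\cong\; i_!\bigl( i^{-1}\bR f_*\cO_V \otimes^{\bL}_{\cO_{\tube{Z}_\fP}} \sG \bigr), \]
and similarly with $\bR f_!$ in place of $\bR f_*$. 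Combined with the composition identity $\bR f_! \circ i'_! = i_! \circ \bR f'_!$, which follows from property (3) of $\bR f_!$ applied to $f \circ i' = i \circ f'$ together with the fact that $\bR i_! = i_!$ for locally closed $i$, the problem is reduced to the base change for the structure sheaf along $i$ and the ordinary projection formula for $f'$.

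For part (ii), the required base change $i^{-1}\bR f_!\cO_V \isomto \bR f'_!\cO_{f^{-1}\tube{Z}_\fP}$ is given by Proposition \ref{prop: proper base change}: here the key observation is that $\tube{Z}_\fP$ is stable under generalization in $\tube{X}_\fP$, an immediate consequence of the anti-continuity of $[\sp]$ recorded in \S\ref{subsec: frames and tubes}. Combining this with Lemma \ref{lemma: proj form} applied to the locally free $\sG$ yields $\bR f'_!\cO_{f^{-1}\tube{Z}_\fP} \otimes^{\bL} \sG \isomto \bR f'_!(f'^*\sG)$, and assembling the identities above completes (ii).

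Part (i) follows the same template with $\bR f_*$ in place of $\bR f_!$. The principal obstacle is that $\bR f_*$ does not satisfy an unrestricted base change along closed immersions, so the corresponding step requires a more hands-on analysis: using the quasi-compactness of $f$ to commute $\bR f_*$ with filtered limits in stalk computations, and exploiting the fact that $i'_!f'^*\sG$ is supported on the generalization-stable subspace $f^{-1}\tube{Z}_\fP \subset V$ to reduce to a computation where the standard projection formula for $\bR f_*$ with locally free coefficients applies. I expect verifying this base change identity to be the main technical hurdle; everything else is formal dévissage and bookkeeping.
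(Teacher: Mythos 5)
Your approach (dévissage on $\sF$ via the filtration with graded pieces $i_{\alpha!}\sG_\alpha$) is genuinely different from the paper's, which instead restricts the entire candidate map to the strata $\tube{X_\alpha}_\fP$ of a single stratification (these cover $\tube{X}_\fP$, so it suffices to check the map is an isomorphism on each), and on each stratum invokes base change plus the classical projection formula for the now locally free $\sF|_{\tube{X_\alpha}_\fP}$. The two routes require essentially the same topological input — the base change isomorphism $i^{-1}\bR g_* \isomto \bR g'_* i'^{-1}$ for coherent $g$ and inclusions $i$ of generalization-stable subsets, which the paper deduces from [FK18, Ch.\ 0, Prop.\ 3.1.19]; for part (ii) this is encapsulated in Proposition~\ref{prop: proper base change}. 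But the paper's restriction argument is cleaner: it avoids having to commute $\bR f_*$ (resp.\ $\bR f_!$) past the extension-by-zero $i'_!$, which in your version is the most delicate point. That commutation does work out (a support argument shows $\bR f_* i'_!$ lands in sheaves supported on $\tube{Z}_\fP$, and then base change along the generalization-stable $\tube{Z}_\fP$ finishes; for $\bR f_!$ one uses composability of $!$-pushforwards, which requires observing that the tube inclusion $i$ is partially proper in the sense of Kiehl, per Example~\ref{exa: properties of morphisms between tubes}), but you should say all of this explicitly rather than defer it.

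The one real gap is in part (ii): you invoke the five lemma applied to the filtration, which presupposes that the candidate map $\bR f_!\cO_V\otimes^{\bL}\sF \to \bR f_!\bL f^*\sF$ already exists as a natural transformation in $\sF$. But unlike the $\bR f_*$ case, $\bR f_!$ is not a right adjoint of $f^*$ and there is no off-the-shelf projection formula morphism; the paper devotes a paragraph (equation~\ref{eqn: proj form cons}) to constructing this map by hand, using a $K$-flatness observation, two injective resolutions, and a containment of subsheaves. Without that construction your dévissage has nothing to devisser, so this step cannot be omitted or waved away.
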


\begin{proof}
The proofs in the two cases both ultimately rely on the same result in general topology: if $g\from S\to T$ is a coherent\footnote{quasi-compact and quasi-separated} morphism of locally coherent\footnote{coherent = quasi-compact, quasi-separated, and with a basis of quasi-compact opens} and sober\footnote{every irreducible subset has a unique generic point} topological spaces, $\sG$ is a sheaf on $S$, $t\in T$, and $G(t)\subset T$ is the set of generalisations of $t$, then
\[ (\bR^qg_*\sG)_t \isomto {\rm H}^q(g^{-1}(G(t)),\sG)\]
for all $q\geq 0$. This can be seen, for example, by taking a cofinal system $\{U_i\}_{i\in I}$ of neighbourhoods of $t$ in $T$, and applying \cite[Chapter 0, Proposition 3.1.19]{FK18} to the inverse system $g^{-1}(U_i)$. It then follows, by passing to stalks, that if $i\from T'\to T$ is the inclusion of a locally closed subset which is stable under generalisation, then the base change map
\[ i^{-1}\bR g_*\to \bR g'_* i'^{-1} \]
associated to the Cartesian square
\[ \xymatrix{ S'\ar[r]^-{i'} \ar[d]_{g'} & S \ar[d]^g \\ T'\ar[r]^-{i} & T } \]
is an isomorphism.

In case (\ref{num: weak prof formula for cons mod i}), the dependence on this base change result is explicit in the proof. In this case, since constructible locally free modules are flat, the tensor products and pullbacks in the statement above are really underived, thus I need to show that the map
\[  \bR f_* \cO_{V} \otimes_{\cO_{\tube{X}_\fP}} \sF \to \bR f_*f^*\sF \]
is an isomorphism. Choose a stratification $\left\{i_\alpha\from X_\alpha\to X\right\}_{\alpha \in A}$ such that each $\sF\!\mid_{\tube{X_\alpha}_\fP}$ is locally finite free. Letting
\[ f_\alpha\from V_\alpha:= f^{-1}\left(\tube{X_\alpha}_{\fP}\right) \to \tube{X_\alpha}_{\fP} \]
denote the pullback, the classical projection formula shows that
\[ \bR f_{\alpha*}\cO_{V_\alpha} \otimes_{\cO_{\tube{X_\alpha}_\fP}} (\sF\!\mid_{\tube{X_\alpha}_\fP}) \to \bR f_{\alpha*}f_\alpha^*(\sF\!\mid_{\tube{X_\alpha}_\fP})  \]
is an isomorphism. Hence, arguing via stalks, it suffices to show that for each $\alpha\in A$, the base change map
\[ (\bR f_*f^*\sF)\!\mid_{\tube{X_\alpha}_\fP} \to \bR f_{\alpha*}f_\alpha^*(\sF\!\mid_{\tube{X_\alpha}_\fP})  \]
is an isomorphism, together with the analogous result with $\sF$ replaced by $\cO_{\tube{X}_\fP}$. Since each $\tube{X_\alpha}_\fP\subset \tube{X}_\fP$ is stable under generalisation, this follows from the general topological result above.

In case (\ref{num: weak prof formula for cons mod ii}), the dependence on the above-mentioned `base change' result is hidden in the proof of Proposition \ref{prop: proper base change}. In this case, I first construct a morphism
\begin{equation} \label{eqn: proj form cons}\mathscr{F}\otimes_{\cO_{\tube{X}_\fr{P}}} \mathbf{R}f_{!}\cO_{V}\rightarrow \mathbf{R}f_{!}f^*\mathscr{F}.
\end{equation}
To do this, choose an injective resolution $\cO_{V}\rightarrow \mathscr{I}$. Then flatness of $\mathscr{F}$ over $\cO_{\tube{X}_\fr{P}}$ implies that $f^*\mathscr{F}\rightarrow f^*\mathscr{F}\otimes_{\cO_{V}} \mathscr{I}$ is a quasi-isomorphism. Next, choose an injective resolution $f^*\mathscr{F}\otimes_{\cO_{V}} \mathscr{I} \rightarrow \mathscr{J}$, so that $\mathscr{J}$ is also an injective resolution of $f^*\mathscr{F}$. It is therefore enough to construct a map
\[ \mathscr{F}\otimes_{\cO_{\tube{X}_{\fr{P}}}} f_{!}\mathscr{I} \rightarrow f_{!}\mathscr{J}. \]
But now, such a map can be found factoring through the map
\[ f_{!}(f^*\mathscr{F}\otimes_{\cO_{V}} \mathscr{I}) \rightarrow f_{!}\mathscr{J}\]
simply by noting that the map
\[  \mathscr{F}\otimes_{\cO_{\tube{X}_{\fr{P}}}} f_{*}\mathscr{I} \rightarrow f_{*}(f^*\mathscr{F}\otimes_{\cO_{V}} \mathscr{I})  \]
coming from adjunction sends 
\[ \mathscr{F}\otimes_{\cO_{\tube{X}_{\fr{P}}}} f_{!}\mathscr{I}\subset \mathscr{F}\otimes_{\cO_{\tube{X}_{\fr{P}}}} f_{*}\mathscr{I}\]
into
\[ f_{!}(f^*\mathscr{F}\otimes_{\cO_{V}} \mathscr{I})\subset  f_{*}(f^*\mathscr{F}\otimes_{\cO_{V}} \mathscr{I}).\]
The proof that (\ref{eqn: proj form cons}) is an isomorphism now proceeds exactly as in case (\ref{num: weak prof formula for cons mod i}), using d\'evissage and Proposition \ref{prop: proper base change} to reduce the projection formula for locally free sheaves, that is, to Lemma \ref{lemma: proj form}. 
%I can reduce, by d\'evissage to the case when there exists a locally closed subscheme $i\colon Z\rightarrow X$ and a locally free $\cO_{\tube{Z}_\fP}$-module $\mathscr{G}$ such that $\mathscr{F}=i_!\mathscr{G}$. I now consider the diagram
%\[ \xymatrix{ \tube{Z}_{\fP'} \ar[r]^-{i'} \ar[d]_{u'} & \tube{X}_{\fP'} \ar[d]^u \\ \tube{Z}_\fP \ar[r]^-{i} & \tube{X}_\fP } \]
%of germs. The LHS of (\ref{eqn: proj form cons}) can then be identified with
%\begin{align*}
%i_!\mathscr{G}\otimes_{\cO_{\tube{X}_\fr{P}}} \mathbf{R}u_{!}\cO_{\tube{X}_{\fr{P}'}} &= i_!\left(\mathscr{G}\otimes_{\cO_{\tube{Z}_\fr{P}}} \mathbf{R}u_{!}\cO_{\tube{X}_{\fr{P}'}}\!\mid_{\tube{Z}_\fP}  \right)\\
%&= i_!\left(\mathscr{G}\otimes_{\cO_{\tube{Z}_\fr{P}}} \mathbf{R}u'_{!}\cO_{\tube{Z}_{\fr{P}'}} \right)
%\end{align*}  
%by Proposition \ref{prop: proper base change}, and the RHS with 
%\begin{align*}
%\mathbf{R}u_{!}u^*i_!\mathscr{G} &= \mathbf{R}u_{!}i'_!u'^*\mathscr{G} \\
%&= i_!\mathbf{R}u'_{!}u'^*\mathscr{G}
%\end{align*}
%by Lemma \ref{lemma: u^*i_!}. Thus (\ref{eqn: proj form cons}) becomes
%\[ i_!\left(\mathscr{G}\otimes_{\cO_{\tube{Z}_\fr{P}}} \mathbf{R}u'_{!}\cO_{\tube{Z}_{\fr{P}'}} \right) \to i_!\mathbf{R}u'_{!}u'\mathscr{G}. \]
%In other words, replacing $X$ by $Z$, I can reduce to the case when $Z=X$ and $\mathscr{F}$ itself is locally free. In this case, the fact that (\ref{eqn: proj form cons}) is an isomorphism follows from the usual projection formula Lemma \ref{lemma: proj form}. 
\end{proof}

\subsection{Convergent stratifications and connections}

I now introduction convergent stratifications on $\cO_{\tube{X}_\fP}$-modules. If $(X,Y,\fP)$ is a frame, then embedding $Y$ in $\fP^2$ via the diagonal gives rise to a frame $(X,Y,\fP^2)$. Consider the two morphisms of frames
\[ p_0,p_1 \from (X,Y,\fP^2) \to (X,Y,\fP) \]
coming from the projection maps $p_i\from \fP^2\to \fP$, as well as the morphism
\[ \Delta\from (X,Y,\fP)\to (X,Y,\fP^2) \]
induced by the diagonal map $\Delta\from\fP\to\fP^2$, which is a common section to the $p_i$. As in Remark \ref{rem: notation abuse morphisms of tubes}, I will write
\[p_i\from \tube{X}_{\fP^2}\to\tube{X}_\fP,\;\;\;\;\Delta\from \tube{X}_\fP\to \tube{X}_{\fP^2} \]
for the induced morphisms of tubes.

\begin{definition} Let $(X,Y,\fP)$ be a frame with $\fP$ smooth around $X$, and $\sF$ an $\cO_{\tube{X}_\fP}$-module. Then a convergent stratification on $\sF$ is an isomorphism
\[ \epsilon \from p_1^*\sF \to p_0^*\sF \]
of $\cO_{\tube{X}_{\fP^2}}$-modules, restricting to the identity along $\Delta$, and satisfying the cocycle condition on $\tube{X}_{\fP^3}$.
\end{definition}

\begin{remark} \begin{enumerate}
\item Of course, the definition makes sense without any smoothness hypothesis on $\fP$, but will only be a reasonable one when $\fP$ is smooth around $X$.
\item Note that what I have called `convergent' here might more properly be termed `overconvergent along $Y\setminus X$'. Given the wide range of different kinds of isocrystals that will appear in this article, I have chosen the slightly simpler terminology to avoid multiplying adjectives beyond necessity. I will try to avoid any potential confusion this might cause. 
\end{enumerate} 
\end{remark}

Now let $\fP^{(n)}$ denote the $n$th infinitesimal neighbourhood of $\fP$ inside $\fP^2$. The restriction of $\epsilon$ to each $\tube{X}_{\fP^{(n)}}$ gives rise to a stratification on $\sF$ in the usual sense. Since $\fP$ is smooth around $X$, there exists an open neighbourhood $\tube{X}_\fP\subset V\subset \tube{Y}_\fP$ which is smooth, and hence any such stratification can be viewed as an integrable connection. This gives rise to a faithful (but not in general full) functor from modules with convergent stratification on $\tube{X}_{\fP}$ to modules with integrable connection on $\tube{X}_\fP$.

\begin{definition} \label{defn: isoc cons} A constructible isocrystal on $(X,Y,\fP)$ is a constructible $\cO_{\tube{X}_\fP}$-module $\sF$ together with a convergent stratification $\epsilon$ on $\sF$. The category of constructible isocrystals on $(X,Y,\fP)$ is denoted $\Isoc_\cons(X,Y,\fP)$. 
\end{definition}

In the special case that $X=Y=P$ (and therefore $\fP$ is everywhere smooth) I will generally write $\Isoc_\cons(\fP)$ instead of $\Isoc_\cons(P,P,\fP)$, and call these objects constructible isocrystals on $\fP$. 

\begin{lemma} \label{lemma: isoc loc free} The $\cO_{\tube{X}_\fP}$-module underlying any $\sF\in \Isoc_\cons(X,Y,\fP)$ is constructible locally free, and in particular, flat.
\end{lemma}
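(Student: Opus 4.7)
The plan is to reduce to a single smooth stratum and then apply the classical fact that a coherent module with integrable connection on a smooth rigid analytic space is locally free. By Lemma~\ref{lemma: cons Omod open} the assertion is local on both $\fP$ and $X$, so I may assume $\fP$ is affine. By the definition of constructibility I can pick a stratification $\{i_\alpha\from X_\alpha\to X\}_{\alpha\in A}$ such that each $i_\alpha^{-1}\sF$ is coherent on $\tube{X_\alpha}_\fP$. Since $k$ is perfect, every reduced $k$-variety has a dense smooth open subset, so I may refine this stratification by iteratively taking smooth loci of closures and passing to the singular complement, until each $X_\alpha$ is smooth over $k$. Because the full subcategory of constructible locally free modules is closed under extensions and under $i_!$ for locally closed immersions, it then suffices to prove that every pullback $i_\alpha^{-1}\sF$ is locally finite free on $\tube{X_\alpha}_\fP$.

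Fix $\alpha$, and write $i = i_\alpha$, $Z = X_\alpha$. The morphism of frames $(Z,\overline{Z}\cap Y,\fP)\to(X,Y,\fP)$ induces compatible maps on the tubes of $\fP$, $\fP^2$ and $\fP^3$ that commute with the projections and diagonal, so the convergent stratification $\eps$ on $\sF$ pulls back to a convergent stratification on $i^{-1}\sF$. Since $\fP$ is smooth around $Z$, there is an open neighbourhood $V\subset\fP_K$ of $\tube{Z}_\fP$ that is smooth over $K$, and taking the first-order component of the pulled-back stratification yields an integrable connection on $i^{-1}\sF$ with values in the restriction of $\Omega^1_{V/K}$ to $\tube{Z}_\fP$. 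By Proposition~\ref{prop: coherent sheaves colimit neighbourhoods}, after possibly shrinking $V$ the coherent module $i^{-1}\sF$ extends to a coherent $\cO_V$-module $\widetilde{\sF}$; applying the same proposition to infinitesimal neighbourhoods of the diagonal inside $V\times V$ then allows me to extend the stratification, and hence the connection, to $\widetilde{\sF}$ (again after shrinking $V$ if necessary).

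Once this extension is in place, it is enough to show that $\widetilde{\sF}$ is locally finite free on the smooth rigid space $V$, as the restriction to $\tube{Z}_\fP\subset V$ will then inherit local finite freeness. This is the classical statement that a coherent $\cO$-module equipped with an integrable connection on a smooth rigid analytic variety is locally free: parallel transport forces the rank function $v\mapsto\dim_{\kappa(v)}(\widetilde{\sF}\otimes_{\cO_V}\kappa(v))$ to be locally constant, and a coherent module of locally constant rank on a reduced regular analytic space is locally free. The flatness assertion of the lemma then follows immediately from the preceding statement on constructible locally free modules. I expect the main obstacle to be the extension step, namely verifying cleanly that the convergent stratification on the germ $\tube{Z}_\fP$ genuinely propagates to an integrable connection on the smooth analytic neighbourhood $V$; once this is in hand, the local freeness conclusion is essentially a standard application of the connection preserving ranks.
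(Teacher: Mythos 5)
Your proof is correct and follows essentially the same route as the paper: pass to a stratum where $\sF$ is coherent, extend the coherent module and its stratification to a smooth open neighbourhood $V$ of the tube via Proposition~\ref{prop: coherent sheaves colimit neighbourhoods} (applied also on the tubes in the infinitesimal neighbourhoods $\fP^{(n)}$), and invoke the classical fact that a coherent module with integrable connection on a smooth analytic variety is locally free. The one extra step you insert — refining the stratification until each $X_\alpha$ is smooth over $k$ — is an unnecessary detour: what the argument uses is that $\fP$ is smooth around $X_\alpha$, and this holds automatically once $\fP$ is smooth around $X$, regardless of whether $X_\alpha$ itself is smooth.
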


\begin{proof}
Since $\fP$ is smooth around $X$, it is also smooth around any locally closed subscheme of $X$. After possibly passing to a suitable stratification of $X$, I may assume that $\sF$ itself is coherent.

Applying Proposition \ref{prop: coherent sheaves colimit neighbourhoods} I can then choose an open neighbourhood $V$ of $\tube{X}_\fP$ such that $\sF$ extends to a coherent $\cO_V$-module. Since $\tube{X}_{\fP^{(n)}}$ has the same underlying topological spaces as $\tube{X}_\fP$, applying Proposition \ref{prop: coherent sheaves colimit neighbourhoods} on $\tube{X}_{\fP^{(n)}}$ shows that, after possibly shrinking $V$ further, the stratification (and hence the integrable connection) on $\sF$ also extends to $V$.

It therefore suffices to show that coherent $\cO_V$-modules admitting integrable connections are locally finite free. Since $V$ is smooth over $K$, this is well-known, see for example \cite[Lemma 3.3.4]{Ked06a}. 
\end{proof}

Inside $\Isoc_\cons(X,Y,\fP)$ there is a natural full subcategory consisting of objects whose underlying $\cO_{\tube{X}_\fP}$-module is coherent, or equivalently locally free. I will denote this subcategory by
\[ \Isoc(X,Y,\fP) \subset  \Isoc_\cons(X,Y,\fP) \]
and refer to its objects as locally free isocrystals on $(X,Y,\fP)$. Again, these objects have traditionally been called `partially overconvergent isocrystals', and I will try to avoid any confusion that may arise from the more simplistic terminology I've chosen to use here. 

\begin{proposition} \label{prop: full faithful conv strat} The forgetful functor from $\Isoc_\cons(X,Y,\fP)$ to $\cO_{\tube{X}_\fP}$-modules with integrable connection is fully faithful.
\end{proposition}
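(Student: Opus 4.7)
The plan is as follows. \emph{Faithfulness} will be tautological: a morphism in $\Isoc_\cons(X,Y,\fP)$ is by definition a morphism of $\cO_{\tube{X}_\fP}$-modules satisfying the convergent stratification compatibility, while a morphism of modules with integrable connection is merely compatibility with the induced connections; the forgetful functor is then the inclusion of one subset of $\mathrm{Hom}_{\cO_{\tube{X}_\fP}}(\sF,\sG)$ into another, hence injective on morphism sets.

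For \emph{fullness}, I would start by fixing $\sF,\sG\in\Isoc_\cons(X,Y,\fP)$ with convergent stratifications $\epsilon_\sF,\epsilon_\sG$, and a morphism $f\colon\sF\to\sG$ of $\cO_{\tube{X}_\fP}$-modules compatible with the induced integrable connections. The task is then to show that
\[ \delta := \epsilon_\sG \circ p_1^*f - p_0^*f \circ \epsilon_\sF \colon p_1^*\sF \to p_0^*\sG \]
vanishes on $\tube{X}_{\fP^2}$. Compatibility of $f$ with the connections, combined with their integrability, is equivalent to the vanishing of $\delta$ on every infinitesimal neighbourhood $\tube{X}_{\fP^{(n)}}$ of the diagonal $\tube{X}_\fP \hookrightarrow \tube{X}_{\fP^2}$, i.e.\ to the vanishing of the Taylor expansion of $\delta$ along $\tube{X}_\fP$ to all orders.

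Next I would reduce to the locally free case. By Lemma \ref{lemma: isoc loc free} there exists a stratification $X = \bigsqcup X_\alpha$ on which both $\sF$ and $\sG$ become locally finite free, and since the $\tube{X_\alpha}_{\fP^2}$ partition $\tube{X}_{\fP^2}$, checking $\delta=0$ on each stratum independently reduces the problem to the case where $\sF$ and $\sG$ are (Zariski-locally on $\tube{X}_\fP$) finite free. After trivialising, $\delta$ becomes a matrix whose entries are sections of $\cO_{\tube{X}_{\fP^2}}$ vanishing to all orders along the diagonal.

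Finally, I would conclude by analytic continuation. Since $\fP$ is smooth around $X$, the product $\fP^2$ is smooth around the diagonal image of $X$, so Proposition \ref{prop: coherent sheaves colimit neighbourhoods} allows me to extend each matrix entry to a section on a smooth open neighbourhood $V\subset\tube{Y}_{\fP^2}$ of $\tube{X}_{\fP^2}$; Lemma \ref{lemma: smooth connected injective} then forces the extension to vanish on every connected component of $V$ meeting $\tube{X}_\fP$. The main obstacle will be verifying that every point of $\tube{X}_{\fP^2}$ lies in such a component, which I would handle by working locally on $\fP$ in \'etale coordinates: over each closed point $x\in X$ the tube $\tube{X}_{\fP^2}$ restricts to the product $\tube{x}_\fP\times\tube{x}_\fP$ of open polydiscs, hence is connected and contains the diagonal point, and the general case follows by gluing these local pictures over $X$.
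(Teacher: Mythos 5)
The paper does not prove this statement itself; it simply cites \cite[Proposition 4.8]{LS16}, so you cannot be matching the paper's proof. The overall strategy of your proposal — reduce fullness to showing the defect $\delta = \epsilon_\sG\circ p_1^*f - p_0^*f\circ\epsilon_\sF$ vanishes, observe that compatibility with integrable connections gives vanishing to all orders along the diagonal, stratify to the locally free case, and then apply analytic continuation — is the right idea and is the same shape of argument that underlies the paper's own (more delicate) Lemma \ref{lemma: inj cons formal} and Proposition \ref{prop: cons isoc strat}. However, as written there are two genuine gaps.

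First, the appeal to Lemma \ref{lemma: smooth connected injective} is not correct as cited: that lemma asserts injectivity of restriction to a non-empty \emph{open} subset $U\subset S$, whereas the diagonal $\tube{X}_\fP \hookrightarrow \tube{X}_{\fP^2}$ is a closed subspace of positive codimension, so its preimage in the extending open $V$ is not open. What you actually need is the sharper identity theorem: on a smooth connected analytic space, a section of a vector bundle that vanishes to all orders at a single point is identically zero. This does follow from the ingredient inside the proof of Lemma \ref{lemma: smooth connected injective}, namely injectivity of $\Gamma(\bm{V},\cO_{\bm{V}})\to \cO_{\bm{V},v}$ (\cite[Proposition~0.1.13]{Ber96b}) combined with Krull's intersection theorem in the Noetherian local ring $\cO_{\bm{V},v}$, but that is a different statement from the lemma you cite, and the distinction is precisely where the content of the argument lies. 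Second, the connectedness claim — that every point of $\tube{X_\alpha}_{\fP^2}$ lies in a component of $V$ meeting the diagonal — is handled only by a gesture at closed points, which is insufficient: points of $\tube{X_\alpha}_{\fP^2}$ need not specialise to closed points of $X_\alpha$, and the phrase \textquotedblleft the general case follows by gluing these local pictures\textquotedblright\ does not constitute an argument. The clean way to close both gaps simultaneously is to invoke the weak fibration theorem (Theorem~\ref{theo: strong fibration theorem}(1)): locally on $X_\alpha$ and on $\fP$, one has $\tube{X_\alpha}_{\fP^2}\cong \D^d_{\tube{X_\alpha}_\fP}(0;1^-)$ with the diagonal identified with the zero section and $p_0$ with the projection. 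Then the fibers over $\tube{X_\alpha}_\fP$ are connected, every component manifestly meets the diagonal, and after restricting to closed polydiscs $\D^d(0;\rho)$, $\rho<1$, the vanishing of all Taylor coefficients gives vanishing directly, without any reference to Lemma \ref{lemma: smooth connected injective} at all — this is exactly the mechanism used in the proof of Lemma \ref{lemma: inj cons formal}, and you would do well to mirror that argument.
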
 

\begin{proof}
This is \cite[Proposition 4.8]{LS16}.
\end{proof}

Thus $\Isoc_\cons(X,Y,\fP)$ can be viewed as a full subcategory of $\Mod(\sD_{\tube{X}_\fP})$. It is in fact an abelian subcategory by \cite[Proposition 4.3]{LS16}. It also follows relatively quickly from the definitions that if $\sF$ and $\sG$ are constructible isocrystals, then $\sF\otimes_{\cO_{\tube{X}_\fP}}\sG$ can be endowed with the structure of a constructible isocrystal, by Proposition \ref{prop: full faithful conv strat} this is uniquely determined by the fact that this is compatible (via the natural forgetful functor) with the tensor product (over $\cO_{\tube{X}_\fP}$) of $\sD_{\tube{X}_\fP}$-modules. Similarly, if 
\[ \xymatrix{ X'\ar[r]\ar[d] & Y' \ar[r]\ar[d] & \fP'\ar[d]\\ X\ar[r] & Y\ar[r] & \fP } \]
is a morphism of frames, with $\fP'$ smooth around $X'$ and $\fP$ smooth around $X$, then the pullback functor
\[ \tube{f}_u^*\from \Mod(\sD_{\tube{X}_\fP})\to \Mod(\sD_{\tube{X'}_{\fP'}})\]
sends $\Isoc_\cons(X,Y,\fP)$ into $\Isoc_\cons(X',Y',\fP')$, essentially because $\tube{f}_u^*$ commutes with the maps $p_i^*$ used to construct convergent stratifications. Of course, pullback commutes with tensor product in the obvious sense.

The analogues for constructible isocrystals of each of the `elementary facts' about constructible modules are true, but some of them are rather harder to prove. We start with the following.

\begin{lemma} \label{lemma: cons isoc locally closed} Let $i\from Z\to X$ be a locally closed immersion, and $\overline{Z}$ the closure of $Z$ in $P$. Then the exact functors
\[ i_!\from \Mod(\sD_{\tube{Z}_\fP}) \leftrightarrows \Mod(\sD_{\tube{X}_\fP}) \from i^{-1} \]
preserve constructible isocrystals, and induce an equivalence of categories between $\Isoc_\cons(Z,\overline{Z},\fP)$ and the full subcategory of $\Isoc_\cons(X,Y,\fP)$ consisting of objects supported on $\tube{Z}_\fP$.  
\end{lemma}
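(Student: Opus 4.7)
The plan is to upgrade the already-known equivalence at the level of constructible $\cO_{\tube{X}_\fP}$-modules to convergent stratifications. The underlying statement for constructible $\cO$-modules has already been recorded in the elementary properties (\ref{num: cons properties 3})--(\ref{num: cons properties 4}) above, so the essential content of the lemma is that the exact functors $i_!$ and $i^{-1}$ are compatible with convergent stratifications, and that the unit/counit isomorphisms $i^{-1}i_!\cong\id$ and $i_!i^{-1}\cong\id$ (the latter on sheaves supported on $\tube{Z}_\fP$) respect them.

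First I would handle $i^{-1}$, which is the easy direction. Given $(\sF,\epsilon)\in\Isoc_\cons(X,Y,\fP)$, I would apply $i^{-1}$ along the locally closed immersion $i\from\tube{Z}_{\fP^2}\to\tube{X}_{\fP^2}$ induced by $i\from Z\to X$ (using the evident compatibility of the diagonal embeddings and of the projections $p_0,p_1$). Since $i^{-1}$ commutes with pullback along the projections, the restriction $i^{-1}\epsilon$ provides an isomorphism $p_1^*(i^{-1}\sF)\isomto p_0^*(i^{-1}\sF)$ that clearly restricts to the identity along $\Delta$ and satisfies the cocycle condition on $\tube{Z}_{\fP^3}$. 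Note that $\fP$ is automatically smooth around $Z$ since it is smooth around $X\supset Z$, so the result is indeed an object of $\Isoc_\cons(Z,\overline{Z},\fP)$.

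For $i_!$, the key tool is Lemma \ref{lemma: u^*i_!}. Given $(\sG,\eta)\in\Isoc_\cons(Z,\overline{Z},\fP)$, I would apply the base change isomorphism $p_j^*\,i_!\sG\isomto i_!\,p_j^*\sG$ on $\tube{X}_{\fP^2}$ for $j=0,1$ (with $i$ now denoting $\tube{Z}_{\fP^2}\to\tube{X}_{\fP^2}$). Conjugating $i_!\eta\from i_!\,p_1^*\sG\isomto i_!\,p_0^*\sG$ by these base change maps produces the desired convergent stratification on $i_!\sG$. The identity-along-$\Delta$ condition follows from applying Lemma \ref{lemma: u^*i_!} to the diagonal inclusion, and the cocycle condition follows from the analogous base change statement on $\tube{X}_{\fP^3}$ applied to the cocycle condition for $\eta$. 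The main obstacle here is purely bookkeeping: one must verify that the base change maps of Lemma \ref{lemma: u^*i_!} are suitably compatible with the projection maps $p_i,p_{ij}\from\fP^3\to\fP,\fP^2$ so that the diagrams encoding the identity and cocycle conditions commute; this is a routine diagram chase given the naturality of Lemma \ref{lemma: u^*i_!}.

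Finally, to conclude the equivalence of categories, I would combine the two constructions. At the level of underlying $\cO$-modules, $i^{-1}i_!\cong\id$ and $i_!i^{-1}\cong\id$ on sheaves supported on $\tube{Z}_\fP$ are elementary from the topological interpretation of $i_!$ as extension by zero. By the compatibility just established, these isomorphisms are compatible with the convergent stratifications produced on both sides, and so they upgrade to isomorphisms in $\Isoc_\cons(Z,\overline{Z},\fP)$ and $\Isoc_\cons(X,Y,\fP)$ respectively. Full faithfulness of the forgetful functor (Proposition \ref{prop: full faithful conv strat}) makes this automatic, once one knows that the stratifications correspond. This yields the claimed equivalence between $\Isoc_\cons(Z,\overline{Z},\fP)$ and the full subcategory of $\Isoc_\cons(X,Y,\fP)$ consisting of objects whose underlying $\cO_{\tube{X}_\fP}$-module is supported on $\tube{Z}_\fP$.
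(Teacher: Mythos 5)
Your proof is correct and follows the same approach as the paper: both reduce the preservation of convergent stratifications under $i^{-1}$ and $i_!$ to the commutation of these functors with the projections $p_j^*$, invoking Lemma~\ref{lemma: u^*i_!} for the $i_!$ direction, and then deduce the equivalence from the underlying one for constructible $\cO$-modules together with full faithfulness of the forgetful functor. You have simply made explicit the diagram checks (identity along $\Delta$, cocycle on $\tube{X}_{\fP^3}$) that the paper leaves implicit in the phrase ``follows from the fact that each commutes with $p_i^*$''.
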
 

\begin{remark} \label{rem: support Z ]Z[} In this situation, I will generally refer to constructible isocrystals being supported on $Z$, rather than being supported on $\tube{Z}_\fP$.
\end{remark}

\begin{proof}
The fact that each of $i^{-1}$ and $i_!$ preserve constructible isocrystals follows from the fact that each commutes with $p_i^*$. For $i^{-1}$ this is pretty much immediate from the definitions, for $i_!$ this is the content of Lemma \ref{lemma: u^*i_!}. The statement on the equivalence of categories then follows. 
\end{proof}

\begin{corollary} Let $i\from Z\to X$ be a closed immersion, with open complement $j\from U\to X$, and let $\overline{Z}$ be the closure of $Z$ inside $P$. Then every constructible isocrystal $\sF\in\Isoc_\cons(X,Y,\fP)$ sits in a short exact sequence
\[ 0\to i_!i^{-1}\sF \to\sF\to j_*j^{-1}\sF\to 0 \]
where $i^{-1}\sF\in \Isoc_\cons(Z,\overline{Z},\fP)$ and $j^{-1}\sF\in \Isoc_\cons(U,Y,\fP)$. 
\end{corollary}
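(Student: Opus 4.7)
The plan is to reduce the claim to Lemma~\ref{lemma: cons isoc locally closed} together with the analogous short exact sequence for the underlying constructible $\cO_{\tube{X}_\fP}$-modules, item (3) in the elementary list of properties given just before Lemma~\ref{lemma: cons Omod open}. The geometric setup needs a little unpacking first. Since $Z$ is closed in $X$, which is open in $Y$, which is closed in $P$, the subscheme $Z$ is locally closed in $P$; its closure $\overline{Z}$ sits inside $Y$ and satisfies $Z=X\cap \overline{Z}$, so $Z$ is open in $\overline{Z}$ and $(Z,\overline{Z},\fP)$ is a frame with $\fP$ smooth around $Z$. Likewise $U$ is open in $Y$, so $(U,Y,\fP)$ is a frame. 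The maps $i$ and $j$ thereby upgrade to morphisms of frames $(Z,\overline{Z},\fP)\to(X,Y,\fP)$ and $(U,Y,\fP)\to(X,Y,\fP)$.

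First I would confirm that $i^{-1}\sF\in\Isoc_\cons(Z,\overline{Z},\fP)$ and $j^{-1}\sF\in\Isoc_\cons(U,Y,\fP)$. The former is one half of Lemma~\ref{lemma: cons isoc locally closed} applied to $i$; the latter follows from the functoriality of convergent stratifications under pullback along morphisms of frames, discussed just after Proposition~\ref{prop: full faithful conv strat}.

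Next I would produce the sequence. Topologically, $\tube{Z}_\fP=\tube{\overline{Z}}_\fP\cap \tube{X}_\fP$ is open in $\tube{X}_\fP$ (since $\tube{\overline{Z}}_\fP$ is open in $\fP_K$) with closed complement $\tube{U}_\fP$, so the standard open/closed decomposition yields the short exact sequence
\[ 0 \to i_!i^{-1}\sF \to \sF \to j_*j^{-1}\sF \to 0 \]
in $\Mod(\cO_{\tube{X}_\fP})$. By Lemma~\ref{lemma: cons isoc locally closed}, $i_!i^{-1}\sF$ has a canonical lift to $\Isoc_\cons(X,Y,\fP)$, and the counit $i_!i^{-1}\sF\to\sF$ is a morphism there whose underlying $\cO$-module map is precisely the injection in the displayed sequence (the adjunction between $i_!$ and $i^{-1}$ being defined already on sheaves of abelian groups).

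To finish, I would invoke the fact that $\Isoc_\cons(X,Y,\fP)$ is an abelian subcategory of $\Mod(\sD_{\tube{X}_\fP})$: the cokernel of $i_!i^{-1}\sF\to\sF$ computed in $\Isoc_\cons(X,Y,\fP)$ then agrees with the cokernel in $\Mod(\sD_{\tube{X}_\fP})$, and hence with the $\cO$-module cokernel $j_*j^{-1}\sF$. This endows $j_*j^{-1}\sF$ with a canonical structure of constructible isocrystal on $(X,Y,\fP)$, whose restriction under $j^{-1}$ recovers the object $j^{-1}\sF$ already identified. The only genuine subtlety, and the place where one should be careful, is the bookkeeping needed to confirm that the canonical map $i_!i^{-1}\sF\to\sF$ supplied by the adjoint equivalence in Lemma~\ref{lemma: cons isoc locally closed} is literally the same morphism of $\cO$-modules as the injection in the standard open/closed sequence, so that taking cokernels in the two ambient categories yields matching answers.
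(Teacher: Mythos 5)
Your proof is correct, and it is essentially the argument the paper intends (the paper gives no separate proof, presenting this as an immediate corollary of the preceding lemma). You correctly note the key topological inversion — $\tube{Z}_\fP$ is \emph{open} in $\tube{X}_\fP$ and $\tube{U}_\fP$ is \emph{closed}, because $[\sp]$ is anti-continuous — so that $i_!$ is extension by zero from an open subset and $j_*$ is pushforward from the closed complement, giving the standard sequence of underlying $\cO$-modules; you then upgrade the outer terms to constructible isocrystals via Lemma~\ref{lemma: cons isoc locally closed} (for $i^{-1},i_!$) and functoriality of pullback along the morphism of frames $(U,Y,\fP)\to(X,Y,\fP)$ (for $j^{-1}$), and you close using that $\Isoc_\cons(X,Y,\fP)$ is an abelian subcategory of $\Mod(\sD_{\tube{X}_\fP})$. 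The ``bookkeeping subtlety'' you flag at the end is a non-issue: the functors $i_!$ and $i^{-1}$ in Lemma~\ref{lemma: cons isoc locally closed} are defined purely topologically (the paper says so explicitly), so the counit $i_!i^{-1}\sF\to\sF$ is the same morphism of underlying sheaves whether you compute it in $\Sh$, $\Mod(\cO)$, $\Mod(\sD)$, or $\Isoc_\cons$; no identification needs to be checked.
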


\begin{corollary} \label{cor: cons isoc iterated extension} Let $\sF\in \Isoc_\cons(X,Y,\fP)$. Then $\sF$ admits a finite composition  series
\[ 0 =\sF_0 \subset \sF_1 \subset \ldots \subset \sF_n =\sF, \]
such that for each $1\leq \alpha \leq n$, there exists a locally closed subscheme $i_\alpha\from X_\alpha\to X$, with closure $\overline{X}_\alpha$ in $P$, a locally free isocrystal $\sG_\alpha\in \Isoc(X_\alpha,\overline{X}_\alpha,\fP)$, and an isomorphism
\[ \sF_{\alpha}/\sF_{\alpha-1}\isomto i_{\alpha!}\sG_\alpha.  \]
\end{corollary}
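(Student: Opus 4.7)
The plan is to induct on the size of a stratification witnessing the constructible local freeness of $\sF$. By Lemma~\ref{lemma: isoc loc free} we may choose a stratification $\{i_\alpha\from X_\alpha\to X\}_{\alpha\in A}$ such that each pullback $i_\alpha^{-1}\sF$ is a locally finite free $\cO_{\tube{X_\alpha}_\fP}$-module. Each $i_\alpha$ extends to a morphism of frames $(X_\alpha,\overline{X_\alpha},\fP)\to (X,Y,\fP)$ (any neighbourhood of $X$ in $\fP$ on which $\fP$ is smooth also contains $X_\alpha$), and since such pullbacks preserve constructible isocrystals, the restriction $i_\alpha^{-1}\sF$ actually lies in $\Isoc(X_\alpha,\overline{X_\alpha},\fP)$. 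I now induct on $|A|$; when $|A|=1$ the trivial filtration $0\subset \sF$ does the job with $i_1=\mathrm{id}_X$.

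For $|A|>1$, pick a stratum $Z=X_{\alpha_0}$ that is closed in $X$ (which exists for any finite stratification of a Noetherian space), let $i\from Z\to X$ be the inclusion, and let $j\from U\to X$ denote the complementary open immersion. The preceding corollary yields a short exact sequence
\[ 0\to i_!i^{-1}\sF \to \sF \to j_*j^{-1}\sF \to 0 \]
in $\Isoc_\cons(X,Y,\fP)$, and since $Z$ is one of the strata, $i^{-1}\sF$ is already locally free on $\tube{Z}_\fP$, so it lies in $\Isoc(Z,\overline{Z},\fP)$; this will serve as the bottom piece of the filtration. The remaining strata $\{X_\alpha\}_{\alpha\neq\alpha_0}$ partition $U$ and exhibit $j^{-1}\sF\in\Isoc_\cons(U,Y,\fP)$ as constructible locally free with a stratification of size $|A|-1$. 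The inductive hypothesis then produces a filtration of $j^{-1}\sF$ in $\Isoc_\cons(U,Y,\fP)$ whose graded pieces are $k_{\beta!}\sG_\beta$ for locally closed immersions $k_\beta\from X_\beta\to U$ and $\sG_\beta\in\Isoc(X_\beta,\overline{X_\beta},\fP)$.

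To conclude, apply $j_*$: since $j\from \tube{U}_\fP\to\tube{X}_\fP$ is topologically the inclusion of a closed subset, $j_*$ is exact and coincides with $j_!$ on sheaves, so by functoriality of extension by zero along composites of locally closed immersions we have $j_*k_{\beta!}\sG_\beta=(j\circ k_\beta)_!\sG_\beta$ for the locally closed immersion $j\circ k_\beta\from X_\beta\to X$. Each such object lies in $\Isoc_\cons(X,Y,\fP)$ by Lemma~\ref{lemma: cons isoc locally closed} applied to $j\circ k_\beta$, so the induced filtration of $j_*j^{-1}\sF$ sits in the abelian category $\Isoc_\cons(X,Y,\fP)$; pulling it back to $\sF$ via the surjection $\sF\to j_*j^{-1}\sF$ and prepending $i_!i^{-1}\sF$ yields the desired composition series. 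The main subtlety throughout is bookkeeping: one has to check at every stage that the subquotients extracted really live in $\Isoc_\cons$ rather than in the larger category $\Mod(\sD_{\tube{X}_\fP})$, and that the filtration of the quotient $j_*j^{-1}\sF$ can be lifted intact to $\sF$. Both points are handled by the combination of Lemma~\ref{lemma: cons isoc locally closed}, the abelianness of $\Isoc_\cons(X,Y,\fP)$, and the identification $j_!=j_*$ on tubes.
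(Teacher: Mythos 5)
Your proof is correct and follows what the paper clearly intends (the paper states the corollary without proof, treating it as an immediate consequence of Lemma~\ref{lemma: cons isoc locally closed} and the unnumbered corollary preceding it). You have correctly identified the key mechanism: take a closed stratum $Z$ (which exists since a minimal stratum in a finite stratification of a Noetherian space is closed), split via the localisation sequence, apply the inductive hypothesis to $j^{-1}\sF$ on $U=X\setminus Z$ with a stratification of strictly smaller size, and note that since $\tube{U}_\fP$ is topologically closed in $\tube{X}_\fP$, the functor $j_*$ is exact and coincides with extension by zero, so the pushed-forward filtration has the required form. One small remark: you write that the pulled-back filtration of $\sF$ ``sits in the abelian category $\Isoc_\cons(X,Y,\fP)$'', but verifying that the preimages $\pi^{-1}(\sG_\beta'')$ are themselves constructible isocrystals would require stability under extensions (Corollary~\ref{cor: cons isoc weak Serre}), which the paper only establishes later. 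Fortunately this is not needed: the statement only demands that the successive quotients be of the prescribed form, and your construction already delivers that regardless of whether the intermediate filtration steps lie in $\Isoc_\cons$.
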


\begin{remark} I will call objects of the form $i_{\alpha!}\sG_\alpha$ `locally free isocrystals supported on $X_\alpha$'. Thus a more succinct expression of the corollary is that every constructible isocrystal on $X$ is an iterated extension of locally free isocrystals supported on locally closed subschemes of $X$.
\end{remark}

\begin{lemma} \label{lemma: cons isoc open} Suppose that $\sF\in \Mod(\sD_{\tube{X}_\fP})$. \begin{enumerate}
\item Let $\{\fr{P}_\alpha\}_{\alpha\in A}$ be a finite open cover of $\fr{P}$, and set $X_\alpha=X\cap \fP_\alpha$. Then $\sF$ is a constructible isocrystal if and only if each $\sF|_{\tube{X_\alpha}_{\fr{P}_\alpha}}$ is.
\item Let $\{X_\alpha\}_{\alpha\in A}$ be a finite open cover of $X$. Then $\sF$ is a constructible isocrystal if and only if each $\sF|_{\tube{X_\alpha}_{\fr{P}}}$ is.
\end{enumerate}
\end{lemma}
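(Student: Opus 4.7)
The plan is to handle both parts simultaneously by the same gluing strategy. In each case, the \emph{only if} direction is immediate: restriction to an open subset of $\fP$ (resp.\ of $X$) preserves both the constructibility of the underlying $\cO$-module (trivially, or by Lemma \ref{lemma: cons Omod open}) and the convergent stratification (which is pulled back along the induced morphism of frames). So the substance lies in the \emph{if} direction.

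For this, the first step is to apply Lemma \ref{lemma: cons Omod open} to deduce that the underlying $\cO_{\tube{X}_\fP}$-module of $\sF$ is already constructible. It therefore remains only to upgrade the integrable connection on $\sF$, coming from its $\sD_{\tube{X}_\fP}$-module structure, to a convergent stratification $\epsilon\colon p_1^*\sF \isomto p_0^*\sF$ on $\tube{X}_{\fP^2}$. In part~(1), the open subsets $\tube{X_\alpha}_{\fP_\alpha^2}\subset \tube{X}_{\fP^2}$ form an open cover, with pairwise overlaps $\tube{X_\alpha\cap X_\beta}_{(\fP_\alpha\cap \fP_\beta)^2}$; in part~(2), one uses instead the cover of $\tube{X}_{\fP^2}$ by $\tube{X_\alpha}_{\fP^2}$, with overlaps $\tube{X_\alpha\cap X_\beta}_{\fP^2}$. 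In either case, the hypothesis furnishes local convergent stratifications $\epsilon_\alpha$ on each piece of the cover, each compatible with the restriction of the given integrable connection.

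The crucial point is then that $\epsilon_\alpha$ and $\epsilon_\beta$ must agree on the overlap. This follows from the full faithfulness of the forgetful functor in Proposition \ref{prop: full faithful conv strat}: both restrictions are convergent stratifications on $\sF$ restricted to the intersection, both inducing the same integrable connection, and hence they coincide. The local stratifications therefore glue to a global isomorphism $\epsilon\colon p_1^*\sF \isomto p_0^*\sF$ on $\tube{X}_{\fP^2}$. The cocycle condition on $\tube{X}_{\fP^3}$ and the condition that $\Delta^*\epsilon=\id$ are both equalities of morphisms of sheaves on suitable tubes, and can therefore be checked on an open cover of those tubes; the corresponding cover pulled back from our chosen cover reduces these to the analogous conditions for each $\epsilon_\alpha$, which hold by hypothesis.

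The one non-routine ingredient is the identification of $\tube{X}_{\fP^2}$ (and similarly $\tube{X}_{\fP^3}$) as the union of the sets $\tube{X_\alpha}_{\fP_\alpha^2}$ (or $\tube{X_\alpha}_{\fP^2}$): a point of $\tube{X}_{\fP^2}$ specializes to a point of $X$ sitting diagonally, and that point lies in some $X_\alpha$, placing the given point in $\fP_\alpha^2$ and hence in $\tube{X_\alpha}_{\fP_\alpha^2}$. This is really the only step requiring a small calculation, and I expect it to be the main obstacle only in the sense of bookkeeping; the conceptual content of the lemma is entirely contained in the uniqueness provided by Proposition \ref{prop: full faithful conv strat}.
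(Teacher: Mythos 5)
Your treatment of part (1) is fine and matches the paper's: the subsets $\tube{X_\alpha}_{\fr{P}_\alpha^2}=\tube{X}_{\fr{P}^2}\cap \fr{P}_{\alpha K}^2$ really do form an open cover of $\tube{X}_{\fr{P}^2}$, and gluing an isomorphism of sheaves over an open cover via the uniqueness from Proposition~\ref{prop: full faithful conv strat} is exactly what the paper does.

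For part (2), however, there is a genuine gap. You write that ``one uses instead the cover of $\tube{X}_{\fr{P}^2}$ by $\tube{X_\alpha}_{\fr{P}^2}$'' and then invoke the same gluing argument, but these are \emph{not} open subsets. Since each $X_\alpha$ is \emph{open} in $X$, the tube $\tube{X_\alpha}_{\fr{P}}$ is a \emph{closed} subset of $\tube{X}_{\fr{P}}$ (the paper notes explicitly in \S\ref{subsec: frames and tubes} that tubes of open subschemes are closed subsets), and likewise $\tube{X_\alpha}_{\fr{P}^2}$ is closed in $\tube{X}_{\fr{P}^2}$. Gluing sheaf morphisms over a finite closed cover is not a general sheaf-theoretic operation; it only works here because of the specific Berthelot resolution
\[
0\to\sF\to\bigoplus_\alpha j_{X_\alpha}^\dagger\sF\to\bigoplus_{\alpha,\beta}j^\dagger_{X_\alpha\cap X_\beta}\sF
\]
from \cite[Proposition~2.1.8]{Ber96b}. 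You never invoke this, so the key step ``the local stratifications therefore glue'' is unjustified as written. Your closing remark that the only non-routine point is the bookkeeping of the cover misidentifies where the difficulty actually lies.

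The paper's proof of part (2) in fact sidesteps the gluing of $\epsilon$ entirely: using the resolution above, it writes $\sF$ as the kernel of a map between $\bigoplus_\alpha j_{X_\alpha}^\dagger\sF$ and $\bigoplus_{\alpha,\beta}j^\dagger_{X_\alpha\cap X_\beta}\sF$, observes that each term is a constructible isocrystal (by the hypothesis on the $\sF|_{\tube{X_\alpha}_{\fr{P}}}$ together with Lemma~\ref{lemma: cons isoc locally closed}), and concludes that $\sF$ is a constructible isocrystal because $\Isoc_\cons(X,Y,\fr{P})$ is an abelian subcategory of $\Mod(\sD_{\tube{X}_\fr{P}})$. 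This is cleaner than a direct gluing, and it simultaneously establishes both the constructibility of the underlying module and the convergence of the stratification. You should either adopt that route, or, if you keep the gluing strategy, explicitly cite the Čech-type resolution and explain why it lets you glue sheaf maps over this particular closed cover.
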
 

\begin{proof}
As in the proof of Lemma \ref{lemma: cons Omod open}, in both cases the `only if' direction is clear, and I will concentrate on the `if' direction. In both cases, the constructibility was proved in \ref{lemma: cons Omod open}, so what is left to prove is the convergence of the stratification. The case of an open cover of $\fP$ is straightforward, since this induces an open cover of $\tube{X}_{\fr{P}^2}$ over which the convergent stratification $\epsilon$ can be constructed (that they coincide on the intersections follows from the full faithfulness result Proposition \ref{prop: full faithful conv strat}).

For the case of an open cover of $X$, I argue as in Lemma \ref{lemma: cons Omod open}, and write $\sF$ as the kernel of the map
\begin{equation}\label{eqn: cons isoc open}
\bigoplus_\alpha j_{X_\alpha}^\dagger \sF \rightarrow \bigoplus_{\alpha,\beta} j^\dagger_{X_\alpha\cap X_\beta}\sF.
\end{equation}
The fact that each $\sF|_{\tube{X_\alpha}_{\fr{P}}}$ is a constructible isocrystal, together with Lemma \ref{lemma: cons isoc locally closed}, implies that both terms in (\ref{eqn: cons isoc open}) are constructible isocrystals. Hence I can conclude using the fact that $\Isoc_\cons(X,Y,\fP)$ is an abelian subcategory of $\Mod(\sD_{\tube{X}_\fP})$.
\end{proof}

The final two properties to establish are the facts that constructible isocrystals can be detected over a stratification of $X$, and are stable under extensions. The proof of the latter of these depends on the former, which is a lot involved than the properties we have established so far, and is in fact the first result in this section that genuinely goes beyond what was proved in \cite{LS16}. The key calculation that we need in the proof (this calculation is Lemma \ref{lemma: inj cons formal} below) applies in the following situation.

\begin{setup} \label{setup: inj cons formal} Suppose that $(X,Y,\fr{P})$ is a frame with $\fP$ rig-smooth around $X$, and consider the morphism of frames \[ \xymatrix{  & & \widehat{\A}^d_\fP  \ar[d]^\pi \\ X \ar[r] & Y \ar[r]\ar[ur] & \fr{P}  } \]
where $Y$ is embedded into $\widehat{\A}^d_\fP$ via the zero section, and $\pi$ is the projection. Let $\fr{O}^{(n)}_\fP\subset \widehat{\A}^d_\fP$ denote the $n$th infinitesimal neighbourhood of the zero section, and write $\pi_n:\fr{O}^{(n)}_\fP \rightarrow \fr{P}$ for the projection. Let 
\[ i\from Z \to X \ot U\from j \]
be a complementary closed and open immersion. If $T=X,U$ or $Z$ we write
\begin{align*}
\pi &\from \tube{T}_{\widehat{\A}^d_\fP}\rightarrow \tube{T}_\fr{P} \\
\pi_n&\from \tube{T}_{\fr{O}_\fP^{(n)}} \rightarrow \tube{T}_\fr{P}
\end{align*} for the projections. Similarly, if $\fr{T}=\fr{P}$, $\widehat{\A}^d_\fP$ or $\fr{O}^{(n)}_\fr{P}$ we write
\[ i \from \tube{Z}_\fr{T} \to \tube{X}_\fr{T} \ot \tube{U}_\fr{T} \from j\]
for the immersions. 
\end{setup}

\begin{lemma} \label{lemma: inj cons formal} Consider Setup \ref{setup: inj cons formal}, and assume that, locally on $Y$, $U\subset Y$ is the complement of a hypersurface. Let $\sF$ be a coherent $\cO_{\tube{U}_{\widehat{\A}^d_\fP}}$-module, and $\mathscr{G}$ a constructible locally free $\mathcal{O}_{\tube{Z}_\fr{P}}$-module. Then the natural map
\[ {\rm Hom}_{\cO_{\tube{U}_{\widehat{\A}^d_\fr{P}}}}(\sF,j^{-1}i_{*}\pi^*\mathscr{G}) \rightarrow  \lim{n}{\rm Hom}_{\cO_{\tube{U}_{\fr{O}^{(n)}_\fr{P}}}}(\sF,j^{-1}i_{*}\pi^*\mathscr{G}) \]
is injective.
\end{lemma}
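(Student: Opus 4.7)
The goal is to show that a morphism $\varphi\colon\sF\to\cN:=j^{-1}i_{*}\pi^{*}\sG$ whose restrictions $\iota_{n}^{*}\varphi$ (where $\iota_{n}\colon\tube{U}_{\fr{O}^{(n)}_{\fP}}\to\tube{U}_{\widehat{\A}^{d}_{\fP}}$) vanish for every $n$ must itself be zero. Since the question is local on $Y$ and $\fP$, I would begin by assuming $\fP=\spf{A}$ is affine and $U=Y\setminus V(g)$ for some $g\in A$, so that $\tube{Z}_{\fP}=\{|g|<1\}\cap\tube{X}_{\fP}$ is open in $\tube{X}_{\fP}$ and $\tube{U}_{\fP}=\{|g|=1\}\cap\tube{X}_{\fP}$ is closed. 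Coherence of $\sF$ together with a finite local presentation reduces the problem to showing that a section $s\in\Gamma(V,\cN)$ with $\iota_{n}^{*}s=0$ for all $n$ must vanish; after further refining the stratification of $Z$, I may also assume that $\sG$ is locally finite free on $\tube{Z}_{\fP}$.

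Next, I would unwind the definitions using the product decomposition $\widehat{\A}^{d}_{\fP,K}\cong\fP_{K}\times_{K}\D^{d}_{K}(0;1^{-})$: topologically $i$ is an \emph{open} immersion (since $Z$ is closed in $X$) while $j$ is a \emph{closed} immersion, and therefore by the definition of $j^{-1}i_{*}$ a section of $\cN$ over $V$ is represented, up to the usual colimit over shrinking neighbourhoods, by a section of $\pi^{*}\sG$ on a set of the form $(V_{U}\cap\tube{Z}_{\fP})\times W_{t}$, where $V_{U}\subset\tube{X}_{\fP}$ is an open neighbourhood of the $\tube{U}$-projection of $V$ and $W_{t}\subset\D^{d}(0;1^{-})$ is an open neighbourhood of the $t$-projection. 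Since $\pi^{*}\sG$ is locally finite free on $\tube{Z}_{\widehat{\A}^{d}_{\fP}}$ and $W_{t}$ may be chosen to contain an open polydisc around the origin, such a section admits a convergent power series expansion
\[ s=\sum_{I\in\N^{d}}g_{I}\,t^{I},\qquad g_{I}\in\Gamma(V_{U}\cap\tube{Z}_{\fP},\sG), \]
and the restriction $\iota_{n}^{*}s$ corresponds precisely to the truncated tuple of coefficients $(g_{I})_{|I|\leq n}$, considered up to further restriction of $V_{U}$.

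The hypothesis $\iota_{n}^{*}s=0$ for every $n$ therefore says that for each multi-index $I$ there is some $V_{U}^{(I)}\subset V_{U}$ on which $g_{I}$ vanishes, whereas to conclude $s=0$ in $\Gamma(V,\cN)$ we need a \emph{single} open $W\subset V_{U}$ on which every $g_{I}$ vanishes simultaneously. This is exactly the situation covered by Lemma \ref{lemma: inj prod colim}, applied to the filtered system of section groups $\Gamma(V_{U}\cap\tube{Z}_{\fP},\sG)$ (as $V_{U}$ shrinks) and the levelwise-injective ``coefficient'' map $s\mapsto(g_{I})_{I}$. To verify the stabilisation-of-kernels hypothesis one uses the hypersurface assumption on $U$: cofinally one may take $V_{U}=\{|g|>1-\eps\}\cap\tube{X}_{\fP}$, for which $V_{U}\cap\tube{Z}_{\fP}=\{1-\eps<|g|<1\}\cap\tube{X}_{\fP}$ is a quasi-smooth annular germ, and combining with the local freeness of $\sG$ one then invokes Lemma \ref{lemma: smooth connected injective} (component by component, after further shrinking if necessary) to show that the restriction maps on $\sG$ between such germs are injective for $\eps$ small enough. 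Consequently the kernels stabilise at $0$, Lemma \ref{lemma: inj prod colim} applies, and $s=0$.

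The main obstacle is this last step: one must describe the connected components of the shrinking annular germs $\{1-\eps<|g|<1\}\cap\tube{X}_{\fP}$ with enough care that Lemma \ref{lemma: smooth connected injective} can be applied uniformly in the relevant filtered colimit. Here both the local hypersurface description of $U$ and the local freeness of $\sG$ on the strata play essential roles; without either one, the stabilisation hypothesis of Lemma \ref{lemma: inj prod colim} would not be available.
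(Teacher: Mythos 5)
Your approach is essentially the paper's: exploit the product $\widehat{\A}^d_{\fP,K}\cong\fP_K\times_K\D^d_K(0;1^-)$, identify restriction to $\fr{O}^{(n)}_\fP$ with truncated Taylor expansion, and combine Lemma \ref{lemma: inj prod colim} with Lemma \ref{lemma: smooth connected injective}, using the hypersurface hypothesis to produce a cofinal affinoid system of neighbourhoods. However, there is a gap in the reduction step and a more serious one in the final injectivity argument.

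On the reduction: ``after further refining the stratification of $Z$, I may also assume $\sG$ is locally finite free on $\tube{Z}_\fP$'' is not correct as stated --- refining a stratification never turns a constructible locally free module into a locally free one. What does work is to use the left exactness of $\mathrm{Hom}(\sF,-)$, $j^{-1}$, $i_*$, $\pi^*$ and $\lim{n}$ to make the claim additive over short exact sequences in $\sG$, and hence (via the composition series) to reduce to $\sG = a_!\sH$ with $a\colon T\to Z$ locally closed and $\sH$ locally free on $\tube{T}_\fP$, after which one replaces $a_!$ by $a_*$, again by left exactness. It is the tube of this single stratum $T$, not $Z$, that should carry the coefficients $g_I$.

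On the coefficient map: you take the levelwise injectivity of $s\mapsto(g_I)_I$ (for a fixed neighbourhood $V_\lambda$) for granted, leaving only the stabilisation hypothesis of Lemma \ref{lemma: inj prod colim} to verify. But the levelwise injectivity is itself the hard part: for a fixed affinoid $V_\lambda$, the intersection $V_\lambda\cap\tube{T}_\fP$ is not quasi-compact, and sections of $\sH$ over it are again a filtered colimit over strict neighbourhoods $W_\delta$ inside the closed tube $[\overline{T}]_{\fP\eta}$. The paper must therefore apply Lemma \ref{lemma: inj prod colim} a \emph{second} time, now in the $\delta$-direction, and only at that innermost level does levelwise injectivity become the elementary statement that $R\tate{\rho^{-1}\bm{z}}\otimes_R M\hookrightarrow R\pow{\bm{z}}\otimes_R M$ for $R$ affinoid and $M$ finite projective. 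Your closing paragraph senses that something delicate happens at the end, but attributes it to controlling the connected components of the shrinking annuli; that count feeds the \emph{easy} hypothesis of Lemma \ref{lemma: inj prod colim} (via Lemma \ref{lemma: smooth connected injective}), whereas it is the iterated-colimit structure of the levelwise injectivity that actually demands another pass through Lemma \ref{lemma: inj prod colim}.
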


\begin{proof}
The question is local on $\fP$ (and hence $Y$), so we may assume that $\fP$ is affine, and that $U\subset Y$ is the complement of a hypersurface. The functors ${\rm Hom}(\sF,-)$, $j^{-1}$, $i_{*}$, $\pi^*$, $\lim{n}$, $\pi_n^*$ are all left exact, and so the claim is additive over short exact sequences in $\mathscr{G}$. I can therefore assume that there exists a locally closed immersion $a\from T\rightarrow Z$ and a locally free $\mathcal{O}_{\tube{T}_\fr{P}}$-module $\mathscr{H}$ such that $\mathscr{G}=a_!\mathscr{H}$. I will continue to use $\pi$ and $\pi_n$ to denote the natural maps
\begin{align*}
\pi &\from \tube{T}_{\widehat{\A}^d_\fr{P}}\rightarrow \tube{T}_\fr{P} \\
\pi_n &\from \tube{T}_{\fr{O}^{(n)}_\fr{P}}\rightarrow \tube{T}_\fr{P}.
\end{align*}
Since $a_!$ commutes with $\pi^*$ by Lemma \ref{lemma: u^*i_!}, the natural map $a_!\rightarrow a_*$ is injective, and $i_*$, $j^{-1}$ are left exact, there is a natural injective map
\[ j^{-1}i_{*}\pi^*\mathscr{G}=j^{-1}i_{*}\pi^*a_!\mathscr{H}=j^{-1}i_{*}a_!\pi^*\mathscr{H}\rightarrow j^{-1}i_{*}a_*\pi^*\mathscr{H}.\]
Similarly, there is an injective map
\[ j^{-1}i_{*}\pi_n^*\mathscr{G} \rightarrow j^{-1}i_{*}a_*\pi_n^*\mathscr{H}.\]
Again appealing to left exactness of ${\rm Hom}$ and ${\rm lim}$, it suffices to show that the map
\begin{equation} \label{eqn: inj open disc formal}
{\rm Hom}_{\cO_{\tube{U}_{\widehat{\A}^d_\fr{P}}}}(\sF,j^{-1}i_{*}a_*\pi^*\mathscr{H}) \rightarrow  \lim{n}{\rm Hom}_{\cO_{\tube{U}_{\fr{O}^{(n)}_\fr{P}}}}(\sF, j^{-1}i_{*}a_*\pi_n^*\mathscr{H}) 
\end{equation} 
is injective. Next, let $i'\from Z\rightarrow Y$ and $j':U\rightarrow Y$ denote the inclusions into $Y$, thus
\[ j'^{-1}i'_{*}=j^{-1}i_{*}. \]
This shows that I can replace $j^{-1}i_{*}a_*\pi^*\mathscr{H}$ with $j'^{-1}i'_{*}a_*\pi^*\mathscr{H}$, and similarly $j^{-1}i_{*}a_*\pi_n^*\mathscr{H}$ with $j'^{-1}i'_{*}a_*\pi_n^*\mathscr{H}$.

Now, the given claim is local on the tube $\tube{Y}_\fr{P}$ of $Y$, so replacing $\fr{P}$ by a formal model for the closed tubes $[Y]_{\fr{P}\eta}$ for varying $\eta$, I can assume that $Y=P$. The LHS of (\ref{eqn: inj open disc formal}) is now identified with
\[ {\rm Hom}_{\cO_{\D^d_K(0;1^-)\times_K\tube{U}_\fr{P}}}(\sF,j'^{-1}i'_{*}a_*\pi^*\mathscr{H} )=\lim{\rho<1} {\rm Hom}_{\cO_{\D^d_K(0;\rho)\times_K\tube{U}_\fr{P}}}(\sF,j'^{-1}i'_{*}a_*\pi^*\mathscr{H} ),  \]
so it is enough to show that the natural map
\[ {\rm Hom}_{\cO_{\D^d_K(0;\rho)\times_K\tube{U}_\fr{P}}}(\sF,j'^{-1}i'_{*}a_*\pi^*\mathscr{H} ) \rightarrow \lim{n}{\rm Hom}_{\cO_{\tube{U}_{\fr{O}^{(n)}_\fr{P}}}}(\sF, j^{-1}i_{*}a_*\pi_n^*\mathscr{H})  \]
is injective, for each $\rho<1$.

Let $\{V_\lambda\}_{\lambda \geq \lambda_0}$ be a cofinal sequence of neighbourhoods of $\tube{U}_\fr{P}$ in $\tube{Y}_\fr{P}=\fr{P}_K$. Since $\fP$ is affine, and $U$ is the complement of a hypersurface in $Y=P$, we can take each $V_\lambda$ to be affinoid. 

By quasi-compactness, $\D^d_K(0;\rho)\times_K V_\lambda$ (resp. $\fr{O}^{(n)}_{\fP,K} \times_{\fP_K} V_\lambda$) is a cofinal system of neighbourhoods of $\D^d_K(0;\rho)\times_K \tube{U}_\fr{P}$ (resp. $\fr{O}^{(n)}_{\fP,K}\times_{\fP_K}\tube{U}_\fr{P}$) inside $\D^d_K(0;\rho)\times_K \fr{P}_K$ (resp. $\fr{O}^{(n)}_{\fP,K}$). Thus, for all sufficiently large $\lambda$, $\sF$ extends to a coherent sheaf on $\D^d_K(0;\rho)\times_K V_\lambda$ by Proposition \ref{prop: coherent sheaves colimit neighbourhoods}. Since $\D^d_K(0;\rho)\times_K V_\lambda$ is affinoid, the extension of $\sF$ to $\D^d_K(0;\rho)\times_K V_\lambda$ is generated by finitely many global section. Hence $\sF$ is generated by finitely many global sections as a coherent $\cO_{\D^d_K(0;\rho)\times_K \tube{U}_\fr{P}}$-module. Thus choosing a surjection
\[ \cO_{\D^d_K(0;\rho)\times_K \tube{U}_\fr{P}}^{\oplus m} \twoheadrightarrow \sF,   \]
and once again appealing to left exactness, I can reduce to the case $\sF=\cO_{\D^d_K(0;\rho)\times_K \tube{U}_\fr{P}}$, in other words to showing that
\[ \Gamma(\D^d_K(0;\rho)\times_K\tube{U}_\fr{P},j'^{-1}i'_{*}a_*\pi^*\mathscr{H} ) \rightarrow \lim{n}\Gamma(\fr{O}_{\fP,K}\times_{\fP_K} \tube{U}_{\fr{P}}, j^{-1}i_{*}a_*\pi_n^*\mathscr{H})  \]
is injective. Writing things out explicitly, this is the map
\[ \colim{\lambda} \Gamma(\D^d_K(0;\rho) \times_K (V_\lambda\cap \tube{T}_\fr{P}),\pi^*\mathscr{H})\rightarrow \lim{n}\colim{\lambda} \Gamma(\fr{O}^{(n)}_{\fP,K} \times_{\fP_K} (V_\lambda\cap \tube{T}_\fr{P}),\pi_n^*\mathscr{H}). \]
By an explicit calculation, I can identify 
\[
\Gamma(\fr{O}^{(n)}_{\fP,K} \times_{\fP_K} (V_\lambda\cap \tube{T}_\fr{P}),\pi_n^*\mathscr{H}) =\Gamma(V_\lambda\cap \tube{T}_\fr{P},\mathscr{H}) \otimes_K \frac{K[z_1,\ldots,z_d]}{(z_1,\ldots,z_d)^{n+1}}. \]
Thus, by applying Lemma \ref{lemma: inj prod colim}, it suffices to show that:
\begin{enumerate}
\item for each sufficiently large $\lambda$, the map
\[  \Gamma(\D^d_K(0;\rho) \times_K (V_\lambda\cap \tube{T}_\fr{P}),\pi^*\mathscr{H})\rightarrow \lim{n}  \Gamma(\fr{O}^{(n)}_{\fP,K}\times_{\fP_K} (V_\lambda\cap \tube{T}_\fr{P}),\pi_n^*\mathscr{H}) \]
is injective;
\item for each $\lambda$, the kernels of the transition maps
\[ \Gamma(V_\lambda\cap \tube{T}_\fr{P},\mathscr{H})\rightarrow \Gamma(V_{\lambda'}\cap \tube{T}_\fr{P},\mathscr{H}) \]
eventually stabilise.
\end{enumerate}
Since each $V_\lambda\cap \tube{T}_\fr{P}$ has only finitely many connected components, the second of these follows from Lemma \ref{lemma: smooth connected injective}. 

For the first, note that the claim can be checked over an open cover of $\tube{T}_\fr{P}$. I am  therefore again free to replace the (non-quasi-compact) open tube $\tube{T}_\fr{P}$ by the (quasi-compact) closed tube $\tube{T}_\fr{P}\cap [\overline{T}]_{\fr{P}\eta}$, where $\overline{T}$ is the closure of $T$ in $Y$. 

Now choose a cofinal sequence $\{W_\delta\}_{\delta\geq \delta_0}$ of open neighbourhoods of $\tube{T}_\fr{P}\cap [\overline{T}]_{\fr{P}\eta}$ in $[\overline{T}]_{\fr{P}\eta}$. Then the spaces $\D^d_K(0;\rho)\times_K (V_\lambda \cap W_\delta)$ and $\fr{O}^{(n)}_{\fP,K}\times_{\fP_K} (V_\lambda \cap W_\delta)$ form cofinal sequences of neighbourhoods of
\[ \D^d_K(0;\rho)\times_K  (V_\lambda \cap \tube{T}_\fr{P}\cap [\overline{T}]_{\fr{P}\eta}) \text{ in }\D^d_K(0;\rho)\times_K  (V_\lambda \cap [\overline{T}]_{\fr{P}\eta})\]
and 
\[ \fr{O}^{(n)}_{\fP,K}\times_{\fP_K}  (V_\lambda \cap \tube{T}_\fr{P}\cap [\overline{T}]_{\fr{P}\eta}) \text{ in }\fr{O}^{(n)}_{\fP,K}\times_{\fP_K}  (V_\lambda \cap[\overline{T}]_{\fr{P}\eta}) \]
respectively. I may also assume, by increasing $\delta_0$ if necessary, that $\mathscr{H}$ extends to a locally free sheaf on $W_{\delta_0}$, and therefore on all $W_{\delta}$. I thus need to show that the map
\[
 \colim{\delta}\Gamma(\D^d_K(0;\rho) \times_K (V_\lambda\cap W_\delta),\pi^*\mathscr{H} )\rightarrow \lim{n}  \colim{\delta}\Gamma(\fr{O}^{(n)}_{\fP,K} \times_{\fP_K} (V_\lambda\cap W_\delta),\pi_n^*\mathscr{H} ) 
\]
is injective, at least for large enough $\lambda$. Again appealing to Lemma \ref{lemma: inj prod colim}, it is enough to prove that:
\begin{enumerate}
\item[(A)] the map 
\begin{equation} \label{eqn: second inj} \Gamma(\D^d_K(0;\rho) \times_K (V_\lambda\cap W_\delta),\pi^*\mathscr{H} )\rightarrow \lim{n} \Gamma(\fr{O}^{(n)}_{K} \times_K (V_\lambda\cap W_\delta),\pi_n^*\mathscr{H} )  \end{equation}
is injective;
\item[(B)] for each $\eta$, the kernels of the transition maps
\[ \Gamma(V_\lambda\cap W_\delta,\mathscr{H}) \rightarrow \Gamma(V_{\lambda}\cap W_{\delta'},\mathscr{H}) \]
eventually stabilise. 
\end{enumerate}
Again, the second of these follows from Lemma \ref{lemma: smooth connected injective}, since each  $V_{\lambda}\cap W_{\delta}$ has only finitely many connected components. For the first, take an open affinoid $\spa{R,R^+}\subset V_\lambda\cap W_\delta$, and set $M=\Gamma(\spa{R,R^+},\mathscr{H})$. Thus $M$ is a finite projective $R$-module. The given map can be identified with the map
\[ R\tate{\rho^{-1}\bm{z}}\otimes_R M \rightarrow R\pow{\bm{z}} \otimes_R M \]
from the restricted power series ring to the full power series ring, tensored with $M$. Since this map is injective for each such $\spa{R,R^+}$, the map \eqref{eqn: second inj} is injective as required.
\end{proof}

I can now show that constructible isocrystals can be detected over a stratification. 

\begin{proposition} \label{prop: cons isoc strat} Let $\{i_\alpha:X_\alpha\rightarrow X\}_{\alpha\in A}$ be a stratification of $X$, and let $\sF\in \mathrm{Mod}(\mathscr{D}_{\tube{X}_\fr{P}})$. Then $\sF$ is a constructible isocrystal if and only if, for each $\alpha\in A$, $i_\alpha^{-1}\mathscr{F}\in \mathrm{Mod}(\mathscr{D}_{\tube{X_\alpha}_\fr{P}})$ is a constructible isocrystal.
\end{proposition}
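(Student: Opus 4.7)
The plan is to handle the ``only if'' direction by functoriality of pullback, and the ``if'' direction by local reduction, induction on the number of strata, and a gluing argument controlled by Lemma \ref{lemma: inj cons formal}.

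The ``only if'' direction is immediate: each $i_\alpha$ extends to a morphism of frames (taking the closure $\overline{X_\alpha}$ in $P$), and pullback preserves both the convergent stratification (since it commutes with the projections $p_0,p_1$ from $\tube{X}_{\fP^2}$, as built into the definition of constructible isocrystals) and the constructibility of the underlying $\cO$-module.

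For the ``if'' direction, the underlying $\cO_{\tube{X}_\fP}$-module of $\sF$ is constructible by the stratification characterisation for $\cO$-modules (with Lemma \ref{lemma: isoc loc free} providing local freeness on each stratum). By Proposition \ref{prop: full faithful conv strat} any convergent stratification $\epsilon\colon p_1^*\sF\isomto p_0^*\sF$ on $\tube{X}_{\fP^2}$, if it exists, is automatically unique and cocycle-compatible, so I only need to produce it. Via Lemma \ref{lemma: cons isoc open} the question is local on $\fP$ and on $X$: I would assume $\fP$ is affine and \'etale over $\widehat{\A}^d_\cV$ so that $\tube{X}_{\fP^2}$ locally takes the form of the disc bundle appearing in Setup \ref{setup: inj cons formal}, and that, locally on $X$, any open stratum is the complement of a hypersurface. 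Inducting on the length of the stratification via the iterated extension of Corollary \ref{cor: cons isoc iterated extension} and the short exact sequence $0\to (i_Z)_!i_Z^{-1}\sF\to \sF\to (i_U)_*i_U^{-1}\sF\to 0$ attached to a closed stratum $Z$ with open complement $U$, the problem reduces to the two-stratum case. Pulling back by $p_0,p_1$ and applying the base change of Lemma \ref{lemma: u^*i_!}, we obtain short exact sequences
\[ 0\to (i_Z)_!p_i^*(i_Z^{-1}\sF) \to p_i^*\sF \to (i_U)_*p_i^*(i_U^{-1}\sF) \to 0 \]
on $\tube{X}_{\fP^2}$ whose outer terms carry the stratum-wise convergent stratifications $\epsilon_Z,\epsilon_U$, while the integrable connection on $\sF$ supplies a compatible system of formal stratifications $\epsilon^{(n)}\colon p_1^*\sF|_{\tube{X}_{\fP^{(n)}}}\isomto p_0^*\sF|_{\tube{X}_{\fP^{(n)}}}$ on every infinitesimal neighbourhood.

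The main obstacle is the final step of assembling these into a single morphism on $\tube{X}_{\fP^2}$: on the ``open'' piece $\tube{Z}_{\fP^2}$ we are given $\epsilon_Z$, but extending across the ``closed'' piece $\tube{U}_{\fP^2}$ requires showing that the formal system $\{\epsilon^{(n)}\}$ converges to a genuine morphism of $\cO$-modules which both restricts to $\epsilon_U$ on $\tube{U}_{\fP^2}$ and matches $\epsilon_Z$ along the boundary. This is precisely the content of Lemma \ref{lemma: inj cons formal}: after transferring the question to the standard disc bundle $\widehat{\A}^d_\fP$ via the \'etale coordinates, the injectivity of ``morphisms on the full tube'' into ``the inverse system of morphisms on infinitesimal neighbourhoods'' forces the various candidate convergent stratifications to agree, and hence produces the desired $\epsilon$ on $\tube{X}_{\fP^2}$.
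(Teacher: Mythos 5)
Your overall architecture matches the paper's: the ``only if'' direction by functoriality, the ``if'' direction by localisation on $\fP$ and $X$, induction to the two-stratum case, and then a gluing argument controlled by Lemma \ref{lemma: inj cons formal}. You correctly identify the key technical input. However, your description of how the final step works is conceptually off in a way worth clarifying.

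You write that the problem is ``showing that the formal system $\{\epsilon^{(n)}\}$ converges to a genuine morphism.'' This is not what is happening, and if that were really what was required, the injectivity lemma would not suffice --- injectivity never produces convergence, and in general a compatible system of morphisms on the $\tube{X}_{\fP^{(n)}}$ does \emph{not} lift to $\tube{X}_{\fP^2}$. The actual situation is that both $\epsilon_U$ (on $\tube{U}_{\fP^2}$) and $\epsilon_Z$ (on $\tube{Z}_{\fP^2}$) already exist as genuine morphisms by hypothesis; no convergence is required to obtain them. The only thing that needs to be checked is the \emph{gluing compatibility}: a morphism on $\tube{X}_{\fP^2}$ is equivalent to a pair of morphisms on the open piece $\tube{Z}_{\fP^2}$ and the closed piece $\tube{U}_{\fP^2}$ together with compatibility of the induced adjunction morphisms, which amounts to commutativity of the square with horizontal maps $j^{-1}p_l^*\sF\to j^{-1}i_*i^{-1}p_l^*\sF$ and vertical maps $\epsilon_U$ and $j^{-1}i_*\epsilon_Z$. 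Since this square commutes on each $\tube{X}_{\fP^{(n)}}$ (all the $\epsilon$'s restrict to the Taylor isomorphisms there), the injectivity of
\[
\mathrm{Hom}_{\cO_{\tube{U}_{\fP^2}}}(j^{-1}p_1^*\sF,\; j^{-1}i_*i^{-1}p_0^*\sF) \longrightarrow \lim{n}\,\mathrm{Hom}_{\cO_{\tube{U}_{\fP^{(n)}}}}(\cdots)
\]
forces the difference of the two composite maps to vanish. So the lemma is used to prove a \emph{uniqueness/vanishing} statement, not to construct $\epsilon$ out of formal data. Keeping this distinction straight matters, because the injectivity cannot supply what you claim to need.

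Two smaller points. First, the reduction to Setup \ref{setup: inj cons formal} is done in the paper by applying Berthelot's strong fibration theorem to replace the projection $p_0\colon (X,Y,\fP^2)\to(X,Y,\fP)$ by $\pi\colon(X,Y,\widehat{\A}^d_\fP)\to(X,Y,\fP)$ after localising; simply assuming $\fP$ is \'etale over $\widehat{\A}^d_\cV$ gives you local coordinates, but it is the fibration theorem that identifies $\tube{X}_{\fP^2}$ with the relative open unit polydisc over $\tube{X}_\fP$. Second, the claim that the cocycle condition for $\epsilon$ is ``automatic by Proposition \ref{prop: full faithful conv strat}'' needs a word of justification (one checks the cocycle identity by the same injectivity argument applied on $\tube{X}_{\fP^3}$, or deduces it from the analogous identity on each $\tube{X}_{\fP^{(n)}}$ together with the full faithfulness of restriction to infinitesimal neighbourhoods).
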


\begin{proof}
The `only if' direction has already been proved. For the `if' direction, by induction on $\norm{A}$, I may assume that there are precisely two strata $\{U,Z\}$ consisting of an open subscheme $j:U\rightarrow X$ and a closed complement $i:Z\rightarrow X$. Thanks to Lemma \ref{lemma: cons isoc open} the property of being a constructible isocrystal is local on both $\fr{P}$ and $X$, so I may assume that both are affine. Moreover, I can appeal to Noetherian induction to shrink $U$ if required. I can therefore assume that $U$ is the complement of a hypersurface in $Y$, and that $j^{-1}\mathscr{F}$ is a locally free isocrystal on $(U,Y,\fP)$.

In this case, I will follow closely the proof of \cite[Proposition 5.11]{LS14}. If $T=X,U$ or $Z$, I will write
\begin{align*}
p_i &\from \tube{T}_{\fr{P}^2}\rightarrow \tube{T}_\fr{P} \\ 
p_i^{(n)} &\from \tube{T}_{\fr{P}^{(n)}} \rightarrow \tube{T}_\fr{P}
\end{align*} for the projections, and if $\fr{T}=\fr{P},\fr{P}^2$ or $\fr{P}^{(n)}$, I will write
\[ i\from\tube{Z}_\fr{T}\to \tube{X}_\fr{T}\ot \tube{U}_\fr{T}\from j \]
for the immersions. There are therefore isomorphisms
\[ \epsilon_U:j^{-1}p_1^*\mathscr{F}\isomto j^{-1}p_0^*\mathscr{F},\;\;\;\; \epsilon_Z:i^{-1}p_1^*\mathscr{F}\isomto i^{-1}p_0^*\mathscr{F}\]
extending the Taylor isomorphisms on each $\tube{X}_{\fr{P}^{(n)}}$, and I need to show that these glue to an isomorphism $p_1^*\mathscr{F}\rightarrow p_2^*\mathscr{F}$ on $\tube{X}_{\fr{P}^2}$.

To do this, I will use the fact that that sheaves on $\tube{X}_{\fr{P}^2}$ are determined by their restriction to each of $\tube{U}_{\fr{P}^2}$ and $\tube{Z}_{\fr{P}^2}$, together with the natural adjunction morphism between then. Moreover, this construction gives rise to an equivalence of categories. Thus $\epsilon_U$ and $\epsilon_Z$ glue to give the required stratification if and only if the diagram
\[ \xymatrix{ j^{-1}p_1^*\mathscr{F}\ar[d]_{\epsilon_U} \ar[r] & j^{-1}i_{*}i^{-1}p_1^*\mathscr{F} \ar[d]^{\epsilon_Z} \\  j^{-1}p_0^*\mathscr{F} \ar[r] & j^{-1}i_{*}i^{-1}p_0^*\mathscr{F}  } \]
commutes. Equivalently, this happens if and only if the difference between the two maps
\[ j^{-1}p_1^*\mathscr{F} \rightarrow j^{-1}i_{*}i^{-1}p_0^*\mathscr{F} \]
is zero. Note that the analogous diagram
\[ \xymatrix{ j^{-1}p_1^{(n)*}\mathscr{F}\ar[d]_{\epsilon_U} \ar[r] & j^{-1}i_{*}i^{-1}p_1^{(n)*}\mathscr{F} \ar[d]^{\epsilon_Z} \\  j^{-1}p_0^{(n)*}\mathscr{F} \ar[r] & j^{-1}i_{*}i^{-1}p_0^{(n)*}\mathscr{F}  } \]
commutes, for all $n$, so it suffices to show that the natural map
\begin{equation}
\mathrm{Hom}_{\mathcal{O}_{\tube{U}_{\fr{P}^2}}}(j^{-1}p_1^*\mathscr{F} , j^{-1}i_{*}i^{-1}p_0^*\mathscr{F}) \rightarrow \lim{n}\mathrm{Hom}_{\mathcal{O}_{\tube{U}_{\fr{P}^{(n)}}}}(j^{-1}p_1^{(n)*}\mathscr{F} , j^{-1}i_{*}i^{-1}p_0^{(n)*}\mathscr{F})
\end{equation} 
is injective. Since $\fr{P}$ is smooth over $\mathcal{V}$ in a neighbourhood of $X$, I can now appeal to Berthelot's strong fibration theorem (see in particular the form stated in Theorem \ref{theo: strong fibration theorem} above). This says that, after localising on $X$ and $\fP$, I can replace the morphism of frames
\[ p_0:(X,Y,\fr{P}^2)\rightarrow (X,Y,\fr{P})\]
by the natural projection	
\[ \pi:(X,Y,\widehat{\A}^d_\fr{P})\rightarrow (X,Y,\fr{P}),\]
where $d$ is the relative dimension of $\fr{P}$ over $\mathcal{V}$ around $X$, and $Y$ is embedded into $\widehat{\A}^d_\fr{P}$ via the zero section. Since $\fr{P}^{(n)}$ now gets identified with the $n$th infinitesimal neighbourhood of the zero section $\fr{P}\rightarrow \widehat{\A}^d_\fr{P}$, the claim is now precisely the one proved in Lemma \ref{lemma: inj cons formal} above. 
\end{proof}

\begin{corollary} \label{cor: cons isoc weak Serre} The subcategory $\Isoc_\cons(X,Y,\fP)\subset \Mod(\sD_{\tube{X}_\fP})$ is closed under extensions.
\end{corollary}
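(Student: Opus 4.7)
Given an extension $0\to\sF'\to\sF\to\sF''\to 0$ in $\Mod(\sD_{\tube{X}_\fP})$ with $\sF',\sF''\in\Isoc_\cons(X,Y,\fP)$, the underlying $\cO_{\tube{X}_\fP}$-module of $\sF$ is automatically constructible, as constructible modules are closed under extensions. What remains is to upgrade the integrable connection on $\sF$ coming from the $\sD$-module structure to a convergent stratification $\epsilon\from p_1^*\sF\isomto p_0^*\sF$ on $\tube{X}_{\fP^2}$ compatible with the convergent stratifications $\epsilon',\epsilon''$ on $\sF'$ and $\sF''$.

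The first step is to reduce to the case where $\sF'$ and $\sF''$ are locally free isocrystals. Corollary \ref{cor: cons isoc iterated extension} supplies composition series on each of $\sF'$ and $\sF''$ with graded pieces of the form $i_{\alpha!}\sG_\alpha$ for locally free isocrystals $\sG_\alpha$ on suitable subframes $(X_\alpha,\overline{X}_\alpha,\fP)$. Taking a common refinement of the resulting partitions of $X$ to a stratification $\{i_\alpha\from X_\alpha\to X\}_{\alpha\in A}$, the restrictions $i_\alpha^{-1}\sF'$ and $i_\alpha^{-1}\sF''$ become locally free isocrystals. Since $i_\alpha^{-1}$ is exact and Proposition \ref{prop: cons isoc strat} checks the property of being a constructible isocrystal stratum by stratum, the problem reduces to showing that on a frame $(X,Y,\fP)$ with $\fP$ smooth around $X$, any $\sD_{\tube{X}_\fP}$-module extension of locally free isocrystals is again a (locally free) isocrystal.

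In this reduced situation, the $\sD$-module structure on $\sF$ supplies Taylor isomorphisms $\epsilon_n\from p_1^{(n)*}\sF\isomto p_0^{(n)*}\sF$ on each infinitesimal neighbourhood $\tube{X}_{\fP^{(n)}}$, fitting into commutative squares with $\epsilon'$ and $\epsilon''$. The goal is to glue this compatible system into a single $\epsilon$ on the full tube $\tube{X}_{\fP^2}$, mirroring the strategy of the proof of Proposition \ref{prop: cons isoc strat}. Since $\sF''$ is locally free, the surjection $\sF\twoheadrightarrow\sF''$ admits local $\cO$-linear splittings, so the obstruction to producing an $\cO$-linear $\epsilon$ making the outer diagram commute can be packaged as a section of a $\mathrm{Hom}$-sheaf on $\tube{X}_{\fP^2}$, and this section vanishes on every $\tube{X}_{\fP^{(n)}}$. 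Applying Theorem \ref{theo: strong fibration theorem} to replace the morphism of frames $p_0\from(X,Y,\fP^2)\to(X,Y,\fP)$ with the standard projection $\widehat{\A}^d_\fP\to\fP$, this is precisely the situation in which Lemma \ref{lemma: inj cons formal} gives the required injectivity into the inverse limit over infinitesimal neighbourhoods. One therefore obtains a unique $\cO$-linear $\epsilon$, which is an isomorphism by the five lemma, while the identity-along-$\Delta$ and cocycle conditions follow by uniqueness from their counterparts at each finite order.

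The main obstacle is arranging the gluing obstruction so that Lemma \ref{lemma: inj cons formal} applies directly. The local splittings of $\sF\twoheadrightarrow\sF''$ are the key tool for converting the Ext-theoretic obstruction into a Hom-sheaf calculation, but some care is needed to verify that this sheaf is of the form handled by Lemma \ref{lemma: inj cons formal} after the change-of-frame reduction, and to check that the comparison between different local splittings does not introduce any additional obstruction on overlaps.
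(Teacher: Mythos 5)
Your first reduction step is exactly the one the paper uses: take a common refinement of stratifications on which $\sF'$ and $\sF''$ become locally free isocrystals, and use Proposition~\ref{prop: cons isoc strat} to reduce the extension problem to the stratum-wise locally free case. At that point the paper simply invokes the known result for locally free isocrystals (\cite[Proposition~1.2.2]{CLS99a}), whereas you attempt to reprove that case from scratch by imitating the proof of Proposition~\ref{prop: cons isoc strat}.

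This is where your argument breaks down. Lemma~\ref{lemma: inj cons formal} is an \emph{injectivity} statement: it lets you deduce that a morphism of sheaves vanishes because all of its restrictions to the infinitesimal neighbourhoods $\tube{X}_{\fP^{(n)}}$ vanish. In the proof of Proposition~\ref{prop: cons isoc strat}, both candidate stratifications $\epsilon_U$ and $\epsilon_Z$ already exist on the full tubes $\tube{U}_{\fP^2}$ and $\tube{Z}_{\fP^2}$; the only thing to check is that a compatibility diagram commutes, i.e.\ that a certain \emph{difference} of maps is zero, which is precisely the kind of vanishing question injectivity can answer. In the extension setting the situation is different: after fixing a local $\cO$-linear splitting $\sF\cong\sF'\oplus\sF''$, the sought-for $\epsilon$ has block form $\bigl(\begin{smallmatrix}\epsilon' & \tau \\ 0 & \epsilon''\end{smallmatrix}\bigr)$, and the finite-order Taylor data of $\sF$ determines a compatible sequence $\tau_n\in\mathrm{Hom}(p_1^{(n)*}\sF'',p_0^{(n)*}\sF')$. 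The problem is to \emph{produce} a $\tau$ on all of $\tube{X}_{\fP^2}$ lifting the $\tau_n$, i.e.\ a \emph{surjectivity} onto the inverse limit. Injectivity of $\mathrm{Hom}\to\lim_n\mathrm{Hom}$ gives you uniqueness of such a $\tau$, and hence lets you verify the cocycle and identity-on-$\Delta$ conditions \emph{once $\tau$ exists}, but it does not give existence. Indeed, this map is very far from surjective: $\tube{X}_{\fP^2}$ is fibre-wise an open polydisc over $\tube{X}_\fP$, and functions on an open polydisc form a strictly smaller ring than the $\fr{m}$-adic completion, so an arbitrary formal power series need not converge. The content of \cite[Proposition~1.2.2]{CLS99a} is exactly that the particular formal $\tau$ arising from an extension \emph{does} converge on a disc of overconvergent radius, and this is proved by an explicit estimate on Taylor coefficients (the off-diagonal block of the Taylor series is a convolution of the bounded off-diagonal connection term with the Taylor series of $\sF'$ and $\sF''$). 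That analytic estimate is the missing ingredient in your argument; it cannot be replaced by the formal injectivity statement of Lemma~\ref{lemma: inj cons formal}.
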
 

\begin{proof}
Let 
\[ 0 \to \sF\to \sG\to \mathscr{H}\to 0 \]
be a short exact sequence of $\sD_{\tube{X}_\fP}$-modules, with both $\sF,\mathscr{H}\in \Isoc_\cons(X,Y,\fP)$. Thanks to Proposition \ref{prop: cons isoc strat}, it suffices to find a stratification of $X$ over which $\mathscr{G}$ becomes a constructible isocrystal. To do so, choose stratifications of $X$ over which $\mathscr{F}$ and $\mathscr{H}$ become locally free isocrystals, and take a common refinement. This gives a stratification over which both $\mathscr{F}$ and $\mathscr{H}$ become locally free isocrystals, in which case it is proved in \cite[Proposition 1.2.2]{CLS99a} that $\mathscr{G}$ is a locally free isocrystal.
\end{proof}

I leave for now the question of to what extent the category $\Isoc_\cons(X,Y,\fP)$ is independent of $Y$ and $\fP$, since I shall shortly deduce it from a more general invariance result for the derived analogue of these categories.

\section{The derived category of constructible isocrystals}
\label{sec: derived constructible isocrystals}

In \S\ref{sec: abelian constructible isocrystals} above I introduced abelian categories of constructible isocrystals. The next task is to establish the basic properties of the analogous triangulated categories.

\subsection{Triangulated categories of constructible isocrystals} \label{sec: disoc triangle}

Let $(X,Y,\fr{P})$ be a frame, with $\fr{P}$ smooth around $X$. The following is the derived analogue of Definition \ref{defn: isoc cons}.

\begin{definition} \label{defn: db cons} A constructible complex on $(X,Y,\fr{P})$ is a complex $\mathscr{K} \in {\bf D}^b(\mathscr{D}_{\tube{X}_{\fr{P}}})$ whose cohomology sheaves are constructible isocrystals. We define ${\bf D}^b_\mathrm{cons}(X,Y,\fr{P})$ to be the corresponding full subcategory of ${\bf D}^b(\mathscr{D}_{\tube{X}_{\fr{P}}})$.
\end{definition}

\begin{remark} \begin{enumerate}
\item  Note that if $\fP$ is everywhere smooth, and $i:X\rightarrow P$ is the given immersion, then the extension by zero functor 
\[ i_!:{\bf D}^b(\mathscr{D}_{\tube{X}_{\fr{P}}})\rightarrow {\bf D}^b(\mathscr{D}_{\fr{P}_K})\]
is fully faithful, with essential image those constructible complexes supported on $X$ (see Remark \ref{rem: support Z ]Z[}). 
\item Again, if $X=Y=P$, I will generally write $\bD^b_\cons(\fP)$ instead of $\bD^b_\cons(P,P,\fP)$.
\end{enumerate}
\end{remark}

\begin{lemma} \label{lemma: basic properties Db cons} The subcategory $\bD^b_\cons(X,Y,\fP)\subset \bD^b(\sD_{\tube{X}_\fP})$ is triangulated. Moreover, it is the smallest triangulated subcategory containing all locally free isocrystals supported on locally closed subschemes of $X$. 
\end{lemma}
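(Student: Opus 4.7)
The plan is to verify the two assertions in sequence. To see that $\bD^b_\cons(X,Y,\fP)$ is a triangulated subcategory of $\bD^b(\sD_{\tube{X}_\fP})$, I would check the standard criterion: it is clearly stable under the shift functor, so it suffices to show that whenever $\sF\to \sG\to \sH\to \sF[1]$ is a distinguished triangle with $\sF,\sG\in \bD^b_\cons(X,Y,\fP)$, the third vertex $\sH$ is also in $\bD^b_\cons(X,Y,\fP)$. Boundedness of $\sH$ is automatic; for the cohomology sheaves, the long exact sequence
\[ \cdots \to \cH^{i-1}(\sH) \to \cH^i(\sF) \to \cH^i(\sG)\to \cH^i(\sH)\to \cH^{i+1}(\sF)\to \cdots  \]
exhibits each $\cH^i(\sH)$ as an extension in $\Mod(\sD_{\tube{X}_\fP})$ of $\ker\bigl(\cH^{i+1}(\sF)\to \cH^{i+1}(\sG)\bigr)$ by $\mathrm{coker}\bigl(\cH^i(\sF)\to \cH^i(\sG)\bigr)$. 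Since constructible isocrystals form an abelian subcategory of $\Mod(\sD_{\tube{X}_\fP})$, both the kernel and cokernel terms are constructible isocrystals, and then Corollary \ref{cor: cons isoc weak Serre} (closure under extensions) shows $\cH^i(\sH)$ is a constructible isocrystal as well.

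For the second claim, denote by $\scr{T}\subset \bD^b(\sD_{\tube{X}_\fP})$ the smallest triangulated subcategory containing every object of the form $i_!\sG$ with $i\from Z\hto X$ a locally closed immersion and $\sG$ a locally free isocrystal on $(Z,\overline Z,\fP)$. The inclusion $\scr{T}\subset \bD^b_\cons(X,Y,\fP)$ is immediate, since such $i_!\sG$ are in $\Isoc_\cons(X,Y,\fP)$ by Lemma \ref{lemma: cons isoc locally closed}. For the reverse inclusion, I would take an arbitrary $\sK\in \bD^b_\cons(X,Y,\fP)$ and induct on the amplitude of $\sK$ using the canonical truncation triangles
\[ \tau^{\leq n-1}\sK \to \tau^{\leq n}\sK \to \cH^n(\sK)[-n]\to \tau^{\leq n-1}\sK[1]. \]
This reduces the statement to the case of a constructible isocrystal concentrated in a single degree, which by a shift reduces further to showing that every object of $\Isoc_\cons(X,Y,\fP)$, viewed in degree $0$, lies in $\scr{T}$. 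This last step is exactly Corollary \ref{cor: cons isoc iterated extension}: every such object admits a finite filtration whose graded pieces are of the form $i_{\alpha!}\sG_\alpha$ with $\sG_\alpha$ locally free, and so it is built from the generating objects by iterated cones (cofibre sequences), placing it in $\scr{T}$.

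The only delicate point is the appeal to Corollary \ref{cor: cons isoc weak Serre} in the first part, which in turn rests on Proposition \ref{prop: cons isoc strat} and the key injectivity Lemma \ref{lemma: inj cons formal}; everything else is a formal consequence of the abelian-categorical results already established in \S\ref{sec: abelian constructible isocrystals}, combined with standard truncation arguments in triangulated categories.
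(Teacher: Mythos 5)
Your proof is correct and takes essentially the same approach as the paper, which simply cites Corollaries \ref{cor: cons isoc iterated extension} and \ref{cor: cons isoc weak Serre} without elaboration; you have spelled out the standard long exact sequence and truncation arguments that those citations implicitly invoke.
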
 

\begin{proof}
This follows from Corollaries \ref{cor: cons isoc iterated extension} and \ref{cor: cons isoc weak Serre}. 
\end{proof}

Since constructible isocrystals are $\mathcal{O}_{\tube{X}_\fr{P}}$-flat by Lemma \ref{lemma: isoc loc free}, and stable under tensor product over $\cO_{\tube{X}_\fP}$, this tensor product descends to a t-exact functor
\[ -\otimes_{\mathcal{O}_{\tube{X}_\fr{P}}}-:{\bf D}^b_\mathrm{cons}(X,Y,\fr{P})\times {\bf D}^b_\mathrm{cons}(X,Y,\fr{P}) \rightarrow {\bf D}^b_\mathrm{cons}(X,Y,\fr{P}). \]
Similarly, if $(f,g,u):(X',Y',\fr{P}')\rightarrow (X,Y,\fr{P})$ is a morphism of frames, the pullback functor $\tube{f}_u^*$ from $\mathscr{D}_{\tube{X}_\fr{P}}$-modules to $\mathscr{D}_{\tube{X'}_{\fr{P}'}}$-modules preserves constructible isocrystals and derives trivially to give a t-exact functor
\[ \tube{f}_u^*:{\bf D}^b_\mathrm{cons}(X,Y,\fr{P}) \rightarrow {\bf D}^b_\mathrm{cons}(X',Y',\fr{P}'). \]
Pullback and tensor product commute in the obvious sense.

\subsection{Independence of the frame} \label{subsec: independence of the frame}

The key point will be to show that ${\bf D}^b_\mathrm{cons}(X,Y,\fr{P})$ enjoys the same functorial properties as classical categories of (locally free) isocrystals $\Isoc(X,Y,\fP)$. As in that case, the crucial invariance result is the following.

\begin{theorem}  \label{theo: disoc indep} Let
\[ \xymatrix{ & Y' \ar[r] \ar[d]^g & \fr{P}' \ar[d]^u \\ X \ar[ur]	 \ar[r] & \ar[r]  Y  & \fr{P} }\]
be a morphism of frames, such that $g$ is proper, and both $\fr{P}$ and $u$ are smooth around $X$. Let $d$ be the relative dimension of $u$, and write $u=\tube{\id}_u$ for the induced morphism $\tube{X}_{\fP'}\to \tube{X}_\fP$. Then the functor
\[ u^*\from {\bf D}(\mathscr{D}_{\tube{X}_\fr{P}})\to {\bf D}(\mathscr{D}_{\tube{X}_{\fr{P}'}}) \]
induces an t-exact equivalence of categories
\[ {\bf D}^b_\mathrm{cons}(X,Y,\fr{P}) \rightarrow {\bf D}^b_\mathrm{cons}(X,Y,\fr{P}'). \]
The functors
\begin{align*}
\mathbf{R}u_{\dR!}[2d] &\from {\bf D}(\mathscr{D}_{\tube{X}_{\fr{P}'}}) \to {\bf D}(\mathscr{D}_{\tube{X}_{\fr{P}}}) \\
\mathbf{R}u_{\dR*} &\from {\bf D}(\mathscr{D}_{\tube{X}_{\fr{P}'}}) \to {\bf D}(\mathscr{D}_{\tube{X}_{\fr{P}}})
\end{align*}
both induce quasi-inverses to $u^*$.
\end{theorem}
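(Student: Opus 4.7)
My plan is to construct $\bR u_{\dR!}[2d]$ and $\bR u_{\dR*}$ as left and right quasi-inverses to $u^*$, proceeding by d\'evissage using Lemma~\ref{lemma: basic properties Db cons}. The t-exactness of $u^*$ is essentially immediate: it is a restriction of the module pullback along a morphism of locally ringed spaces, and constructible isocrystals are flat over $\cO_{\tube{X}_\fP}$ by Lemma~\ref{lemma: isoc loc free}, so $u^*$ derives trivially and preserves constructibility object by object. The real content lies in showing that the unit $\sF \to \bR u_{\dR*} u^* \sF$ and the counit $\bR u_{\dR!}[2d]\, u^* \sF \to \sF$ are isomorphisms for $\sF \in \bD^b_\cons(X,Y,\fP)$, and that every object of $\bD^b_\cons(X,Y,\fP')$ lies in the essential image of $u^*$.

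For the unit, I would reduce to the case $\sF = i_!\sG$ where $i \from Z \hto X$ is locally closed (with closure $\overline{Z}$ in $P$) and $\sG \in \Isoc(Z,\overline{Z},\fP)$, since such objects generate $\bD^b_\cons(X,Y,\fP)$ as a triangulated category. Base change (Lemma~\ref{lemma: u^*i_!}) applied to the Cartesian squares of tubes coming from $i$, combined with Proposition~\ref{prop: proper base change} to commute $i_!$ past $\bR u_{\dR*}$, reduces the claim to the statement that $\sG \to \bR u_{\dR*} u^* \sG$ is an isomorphism on $\tube{Z}_\fP$. Since $\sG$ is a constructible locally free $\cO_{\tube{Z}_\fP}$-module and $\tube{u}_g$ is quasi-compact (as $g$ is proper, cf.\ Example~\ref{exa: properties of morphisms between tubes}), the projection formula of Proposition~\ref{prop: weak proj formula for cons mod}(\ref{num: weak prof formula for cons mod i}) further reduces to the case $\sG = \cO_{\tube{Z}_\fP}$. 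This is the classical vanishing of relative rigid cohomology in higher degrees under Berthelot's strong fibration theorem, proved in essentially the same way as Corollary~\ref{cor: trace isom from strong fibration}. For the counit, an entirely parallel d\'evissage reduces to showing that $\bR u_{\dR!}\cO_{\tube{Z}_{\fP'}}[2d] \to \cO_{\tube{Z}_\fP}$ is an isomorphism, but here I invoke the partially-proper projection formula (Proposition~\ref{prop: weak proj formula for cons mod}(\ref{num: weak prof formula for cons mod ii}), applicable because $\tube{u}_g$ is partially proper in the sense of Kiehl by Example~\ref{exa: properties of morphisms between tubes}) together with Corollary~\ref{cor: trace isom from strong fibration} itself.

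For essential surjectivity, I would d\'evissage on the target: any $\mathscr{K}\in \bD^b_\cons(X,Y,\fP')$ is obtained from objects of the form $i_!\sG'$ with $\sG' \in \Isoc(Z,\overline{Z}_{Y'},\fP')$, where $\overline{Z}_{Y'}$ is the closure of $Z$ in $Y'$. Since $g$ restricts to a proper morphism $\overline{Z}_{Y'} \to \overline{Z}$ and $u$ is smooth around $Z$, Berthelot's classical invariance theorem shows that $u^*$ induces an equivalence $\Isoc(Z,\overline{Z},\fP) \isomto \Isoc(Z,\overline{Z}_{Y'},\fP')$ on the abelian categories of locally free isocrystals. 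Thus each generator of the target lifts; combined with closure of the essential image under triangles and the full faithfulness established above, this yields the equivalence. That the two candidate quasi-inverses both work then follows because $\bR u_{\dR!}[2d]$ and $\bR u_{\dR*}$, once known to land in $\bD^b_\cons(X,Y,\fP)$, agree on $u^*\bD^b_\cons(X,Y,\fP) = \bD^b_\cons(X,Y,\fP')$.

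The main obstacle I anticipate is the bookkeeping needed to commute $\bR u_{\dR*}$ and $\bR u_{\dR!}$ past extension-by-zero $i_!$ along locally closed immersions of tubes, where the usual proper base change fails. This is where Proposition~\ref{prop: proper base change}, applied to the (anti-continuous) inclusions $\tube{Z}_\fP \hto \tube{X}_\fP$ whose images are stable under generalisation inside the respective ambient analytic varieties, becomes essential. A secondary technical point is confirming that the pushforward functors preserve constructibility with the correct t-amplitude, but this is a byproduct of the d\'evissage itself: the isomorphisms $\sF \isomto \bR u_{\dR*} u^*\sF$ on generators force $\bR u_{\dR*}$ to send constructibles to constructibles, and likewise for $\bR u_{\dR!}[2d]$, so no separate argument is required.
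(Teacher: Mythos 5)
Your d\'evissage strategy is a plausible alternative to the paper's argument, and some pieces (the essential surjectivity via Berthelot's abelian-level invariance, the counit via the trace isomorphism and the partially-proper projection formula) line up correctly with what the paper does. However, there are two concrete gaps in the argument for the unit $\sF \to \bR u_{\dR*}u^*\sF$ that I don't think can be patched as stated.

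First, the morphism $\tube{\id}_u \colon \tube{X}_{\fP'} \to \tube{X}_\fP$ is \emph{not} quasi-compact when $g$ is proper; Example~\ref{exa: properties of morphisms between tubes} asserts it is \emph{partially proper in the sense of Kiehl}, which is a very different (and for present purposes complementary) property. Locally, by the strong fibration theorem, this morphism looks like the projection $\D^d_S(0;1^-) \to S$ from an \emph{open} polydisc bundle, which manifestly fails to be quasi-compact. So Proposition~\ref{prop: weak proj formula for cons mod}(\ref{num: weak prof formula for cons mod i}) does not apply directly, and you cannot use it to peel $\sG$ off and reduce to $\sG = \cO_{\tube{Z}_\fP}$. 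The paper handles exactly this difficulty in Proposition~\ref{prop: equiv rhom}(\ref{num: equiv rhom 1}): it writes $\bR\pi_{\dR*}\pi^*\sF$ as $\bR\lim_\rho \bR\pi_{\rho\dR*}\pi_\rho^*\sF$ over projections from \emph{closed} subdiscs of radius $\rho<1$ (which \emph{are} quasi-compact), applies the projection formula to each, and then observes that the resulting pro-system becomes constant after the Poincar\'e lemma, so the $\bR\lim$ is harmless. That interchange does not come for free, and is precisely the content you would need to supply.

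Second, the step where you ``commute $i_!$ past $\bR u_{\dR*}$'' cites Proposition~\ref{prop: proper base change}, but that proposition is a base change statement for $\bR f_!$, not for $\bR f_*$. A base change for $\bR u_*$ along a locally closed, generalisation-stable inclusion does exist (it is established inside the proof of Proposition~\ref{prop: weak proj formula for cons mod}(\ref{num: weak prof formula for cons mod i})), but it again requires $u$ to be quasi-compact, so you run into the same obstruction. The paper sidesteps both problems by never trying to commute $i_!$ with $\bR u_{\dR*}$: it reduces the derived $\bR\mathrm{Hom}$ comparison to the case $X = Y = P$ (working on the full adic space $\fP_K$, where $i_!$ disappears), using the $\bR\mathrm{Hom}$ adjunctions and Berthelot's \v{C}ech-type resolution \cite[Proposition 2.1.8]{Ber96b} to localise on $X$, then invokes the strong fibration theorem to reduce to the open polydisc bundle and Proposition~\ref{prop: equiv rhom}. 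Your $\bR u_{\dR!}$ half is fine as written, since there the relevant projection formula (Proposition~\ref{prop: weak proj formula for cons mod}(\ref{num: weak prof formula for cons mod ii})) and base change (Proposition~\ref{prop: proper base change}) genuinely work for partially proper morphisms. I would suggest either mirroring the paper's reduction to $X=Y=P$ for the $\bR u_{\dR*}$ half, or spelling out the pro-quasi-compact approximation argument in full.
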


To begin the proof of Theorem \ref{theo: disoc indep}, I make the following simple observation. 

\begin{lemma} \label{lemma: disoc indep prelim 1} The functor $u^*$ sends ${\bf D}^b_\mathrm{cons}(X,Y,\fr{P})$ into ${\bf D}^b_\mathrm{cons}(X,Y',\fr{P}')$, and the essential image contains all locally free isocrystals supported on locally closed subschemes of $X$.
\end{lemma}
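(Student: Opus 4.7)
The plan is to address the two assertions in turn. For the preservation claim, I would observe that constructible isocrystals are flat $\cO_{\tube{X}_\fP}$-modules by Lemma \ref{lemma: isoc loc free}, so on $\Isoc_\cons(X,Y,\fP)$ we have $\bL u^*=u^*$ and this functor is exact. Since, as noted in the discussion after Proposition \ref{prop: full faithful conv strat}, $u^*$ carries $\Isoc_\cons(X,Y,\fP)$ into $\Isoc_\cons(X,Y',\fP')$ at the abelian level, it follows that $\cH^i(u^*\scr{K}) \cong u^*\cH^i(\scr{K})$ for any $\scr{K}\in \bD^b_\cons(X,Y,\fP)$, which shows that $u^*$ preserves the derived subcategory of constructible complexes.

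For the essential image statement, let $i\from Z\hto X$ be a locally closed immersion, let $\overline{Z}$ and $\overline{Z}'$ be the closures of $Z$ in $Y$ and $Y'$ respectively, and let $\sF\in \Isoc(Z,\overline{Z}',\fP')$ be a locally free isocrystal, so that $i_!\sF$ is the generic object to be produced as an image of $u^*$. Because $g$ is proper, it restricts to a proper morphism $\overline{Z}'\to \overline{Z}$, yielding a morphism of frames $(Z,\overline{Z}',\fP')\to (Z,\overline{Z},\fP)$ satisfying exactly the hypotheses of the theorem, but now with the identity on the first component.

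The crux is then the classical invariance of the category of locally free isocrystals: for such a morphism of frames, $u^*$ induces an equivalence $\Isoc(Z,\overline{Z},\fP)\isomto \Isoc(Z,\overline{Z}',\fP')$. This is ultimately a consequence of the strong fibration theorem (Theorem \ref{theo: strong fibration theorem}) and is well-documented in the literature. Granting it, we obtain $\sF_0\in \Isoc(Z,\overline{Z},\fP)$ with $u^*\sF_0\cong \sF$. Then $i_!\sF_0$ is a locally free isocrystal on $(X,Y,\fP)$ supported on $Z$, and Lemma \ref{lemma: u^*i_!} gives $u^*i_!\sF_0\cong i_!u^*\sF_0\cong i_!\sF$ in $\bD^b_\cons(X,Y',\fP')$, which is the required preimage.

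The main obstacle is locating or rederiving the classical invariance of locally free isocrystals under proper change of frame; once that is in hand, the remainder is a formal manipulation using flatness of constructible isocrystals together with the base change result of Lemma \ref{lemma: u^*i_!}.
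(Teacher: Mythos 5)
Your proof is correct and follows the same strategy as the paper: the preservation claim via flatness of constructible isocrystals (Lemma \ref{lemma: isoc loc free}), which is exactly what the paper means by ``$\tube{f}_u^*$ derives trivially'' in \S\ref{sec: disoc triangle}, and the essential image claim via the classical equivalence of categories of locally free isocrystals under change of frame combined with the base-change Lemma \ref{lemma: u^*i_!}. The paper cites \cite[Theorem 7.1.8]{LS07} for the classical equivalence, whereas you appeal to the strong fibration theorem as its ultimate source, but that is the same result.
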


\begin{proof}
The first claim has already been observed in \S\ref{sec: disoc triangle} above. The second claim is then a consequence of the following classical result, proved for example in \cite[Theorem 7.1.8]{LS07}: for any locally closed subscheme $i:Z\hookrightarrow X$, with closures $\overline{Z}$ in $Y$ and $\overline{Z}'$ in $Y'$, the pullback functor
\[ u^*:\Isoc(Z,\overline{Z},\fr{P})\rightarrow  \Isoc(Z,\overline{Z}',\fr{P}') \]
induced by the (abusively denoted) morphism
\[ u\from \tube{Z}_{\fP'}\to \tube{Z}_\fP \]
on locally free isocrystals is an equivalence of categories. Indeed, if $\mathscr{F}$ is a locally free isocrystal on $(Z,\overline{Z}',\fr{P}')$, then $\mathscr{F}=u^*\mathscr{G}$ for some locally free isocrystal on $(Z,\overline{Z},\fr{P})$. It then follows from Lemma \ref{lemma: u^*i_!} that $i_!\mathscr{F}=i_!u^*\mathscr{G}=u^*i_!\mathscr{G}$ is in the essential image of $u^*$.
\end{proof}

Proving Theorem \ref{theo: disoc indep} eventually boils down to a local calculation in the same situation as that considered in Lemma \ref{lemma: inj cons formal}.

\begin{proposition} \label{prop: equiv rhom}
Consider Setup \ref{setup: inj cons formal}, and suppose that $\mathscr{F},\mathscr{G}\in\Mod(\mathscr{D}_{\tube{X}_\fr{P}})$ are constructible as $\mathcal{O}_{\tube{X}_\fr{P}}$-modules. 
\begin{enumerate}
\item \label{num: equiv rhom 1} The natural map $\mathscr{F}\rightarrow \mathbf{R}\pi_{\dR *}\pi^*\mathscr{F}$ is an isomorphism in ${\bf D}(\mathscr{D}_{\tube{X}_\fr{P}})$.
\item The natural map
\[ \mathbf{R}\mathrm{Hom}_{\mathscr{D}_{\tube{X}_\fr{P}}}(\mathscr{F},\mathscr{G})\rightarrow \mathbf{R}\mathrm{Hom}_{\mathscr{D}_{\tube{X}_{\widehat{\A}^d_\fr{P}}}}(\pi^*\mathscr{F},\pi^*\mathscr{G}) \]
is an isomorphism in ${\bf D}(K)$. 
\end{enumerate}
\end{proposition}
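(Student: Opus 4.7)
Both parts reduce to the relative Poincar\'e lemma for the open unit polydisc: for the projection $\pi\from \D^d_T(0;1^-) \to T$ one has $\mathbf{R}\pi_{\dR*}\cO \cong \cO_T$ concentrated in degree zero. A preliminary observation is that any $\cO$-constructible $\mathscr{D}_{\tube{X}_\fr{P}}$-module is automatically $\cO$-constructible locally free: after stratifying to the $\cO$-coherent case, the induced integrable connection from a rig-smooth ambient neighbourhood forces local freeness, exactly as in the proof of Lemma~\ref{lemma: isoc loc free}. In particular $\mathscr{F}$ and $\mathscr{G}$ are $\cO$-flat.

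For~(\ref{num: equiv rhom 1}), the global coordinates $z_1,\dots,z_d$ on $\widehat{\A}^d_\fr{P}$ trivialise each $\Omega^p_\pi$ as a free $\cO_{\tube{X}_{\widehat{\A}^d_\fr{P}}}$-module of rank $\binom{d}{p}$. The relative open polydisc being (relatively) Stein, the higher direct images $\mathbf{R}^q\pi_*\cO_{\tube{X}_{\widehat{\A}^d_\fr{P}}}$ vanish for $q>0$; combined with flatness of $\mathscr{F}$, the hypercohomology spectral sequence for the de Rham complex degenerates to give
\[ \mathbf{R}\pi_{\dR*}\pi^*\mathscr{F} \;\cong\; \pi_*\Omega^\bullet_\pi \otimes_{\cO_{\tube{X}_\fr{P}}} \mathscr{F}. \]
The complex $\pi_*\Omega^\bullet_\pi$ is the classical relative de Rham complex of the open unit polydisc, whose cohomology is $\cO_{\tube{X}_\fr{P}}$ concentrated in degree zero. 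We conclude $\mathbf{R}\pi_{\dR*}\pi^*\mathscr{F} \cong \mathscr{F}$, and the identification manifestly matches the adjunction unit, hence is $\mathscr{D}_{\tube{X}_\fr{P}}$-linear.

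For~(2), the internal-Hom formalism reduces the claim to~(\ref{num: equiv rhom 1}). Setting $\mathscr{E}:=\mathcal{H}om_{\cO_{\tube{X}_\fr{P}}}(\mathscr{F},\mathscr{G})$, which is a $\cO$-constructible $\mathscr{D}$-module since $\mathscr{F}$ is $\cO$-locally free of finite rank, one has
\[ \mathbf{R}\mathrm{Hom}_{\mathscr{D}_{\tube{X}_\fr{P}}}(\mathscr{F},\mathscr{G}) \;\cong\; \mathbf{R}\Gamma(\tube{X}_\fr{P},\;\Omega^\bullet_{\tube{X}_\fr{P}}\otimes \mathscr{E}), \]
and analogously for the pulled-back side using $\pi^*\mathscr{E}$ and the absolute de Rham complex on $\tube{X}_{\widehat{\A}^d_\fr{P}}$. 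The short exact sequence $0 \to \pi^*\Omega^1_{\tube{X}_\fr{P}} \to \Omega^1_{\tube{X}_{\widehat{\A}^d_\fr{P}}} \to \Omega^1_\pi \to 0$ filters the absolute de Rham complex with graded pieces $\pi^*\Omega^p_{\tube{X}_\fr{P}}\otimes\Omega^\bullet_\pi$; pushing forward via $\pi$ and invoking part~(\ref{num: equiv rhom 1}) applied to $\mathscr{E}$ collapses each piece to $\Omega^p_{\tube{X}_\fr{P}}\otimes \mathscr{E}$, and the Leray spectral sequence then identifies the two sides via the natural map. The main obstacle is keeping careful track of the two spectral sequences and their compatibility with the $\mathscr{D}$-module structures, as well as justifying the vanishing of $\mathbf{R}^q\pi_*\cO$ for $q>0$ on the non-quasi-compact projection $\pi$.
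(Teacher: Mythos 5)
Your preliminary observation — that an $\cO$-constructible $\sD_{\tube{X}_\fP}$-module is automatically $\cO$-constructible locally free — is correct and indeed implicitly used in the paper's own proof (which invokes Proposition~\ref{prop: weak proj formula for cons mod}, stated only for constructible \emph{locally free} modules).

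However, there is a genuine gap in your argument for part~(\ref{num: equiv rhom 1}), precisely at the step you flagged as the "main obstacle" but did not resolve. You want to deduce
\[ \mathbf{R}\pi_{\dR*}\pi^*\mathscr{F} \;\cong\; \pi_*\Omega^\bullet_\pi \otimes_{\cO_{\tube{X}_\fr{P}}} \mathscr{F} \]
from the vanishing $\mathbf{R}^q\pi_*\cO=0$ plus flatness of $\sF$. This requires an (underived) projection formula $\pi_*(\Omega^p_\pi\otimes_\cO \pi^*\sF)\cong \pi_*\Omega^p_\pi\otimes_\cO\sF$, and that is exactly what is \emph{not} available here: $\pi$ is a projection with open-polydisc fibres, so it is neither quasi-compact nor partially proper, which are the only two settings in which the paper establishes a projection formula for constructible modules (Proposition~\ref{prop: weak proj formula for cons mod}). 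Flatness of $\sF$ is not enough — $\sF$ is a constructible, not globally locally free, $\cO$-module, and pushing forward along a non-quasi-compact map does not commute with tensoring by it. The paper sidesteps this by writing $\mathbf{R}\pi_{\dR*}\pi^*\sF$ as $\mathbf{R}\lim_\rho \mathbf{R}\pi_{\rho\dR*}\pi_\rho^*\sF$ over the closed polydisc projections $\pi_\rho$, which \emph{are} quasi-compact, applies Proposition~\ref{prop: weak proj formula for cons mod}(\ref{num: weak prof formula for cons mod i}) to each, and only then reduces to the relative Poincar\'e lemma $\cO_{\fP_K}\isomto \mathbf{R}\pi_{\rho^-\dR*}\cO$ on open polydiscs. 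Your argument needs this intermediate $\mathbf{R}\lim$ step (or some other device) to be valid; as written, the projection formula you use is unjustified. (Your worry about the vanishing of $\mathbf{R}^q\pi_*\cO$ is a secondary issue; locally on an affinoid open of the target the fibre products are quasi-Stein, so that part can be patched.)

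For part~(2), your route via the Koszul filtration of the absolute de Rham complex by $\pi^*\Omega^p_{\tube{X}_\fP}\otimes\Omega^\bullet_\pi$ is workable but much heavier than what the paper actually does, which is the one-line adjunction $\mathbf{R}\mathrm{Hom}(\pi^*\sF,\pi^*\sG)=\mathbf{R}\mathrm{Hom}(\sF,\mathbf{R}\pi_{\dR*}\pi^*\sG)=\mathbf{R}\mathrm{Hom}(\sF,\sG)$ using part~(\ref{num: equiv rhom 1}). Your filtration argument would, once you quote the projection formula for the genuinely locally free sheaves $\Omega^p_{\tube{X}_\fP}$ (Lemma~\ref{lemma: proj form}), also reduce to part~(\ref{num: equiv rhom 1}), so it buys nothing over the adjunction and multiplies the bookkeeping; worse, it inherits the same unresolved issue with the non-quasi-compactness of $\pi$.
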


\begin{remark} Note that the hypotheses of Theorem \ref{theo: disoc indep} have been weakened slightly here. First of all, I only require that $\fr{P}$ is rig-smooth, rather than smooth, in a neighbourhood of $X$. Secondly, I only require that $\mathscr{F}$ and $\mathscr{G}$, are constructible, and not necessarily convergent. This implies that the full faithfulness part of Theorem \ref{theo: disoc indep} holds under similarly weaker hypotheses. The essential surjectivity, of course, does not.
\end{remark}

\begin{proof}
\begin{enumerate}
\item First of all, the claim is local on $\tube{Y}_\fr{P}$, and so, replacing $\fr{P}$ by a formal model for the closed tube $[Y]_{\fr{P}\eta}$, I can assume that $Y=P$. Letting $j\from X\rightarrow P$ denote the given open immersion, the claim for $\mathscr{F}$ on $\tube{X}_\fr{P}$ is equivalent to the claim for $j_*\mathscr{F}$ on $\fr{P}_K$ by Lemma \ref{lemma: u^*i_!}, so I can also assume that $X=P$.

Now, for any $\rho<1$, let $\pi_\rho:\D^d_{\fP_K}(0;\rho)\rightarrow \fr{P}_K$ denote the projection, thus
\[ \mathbf{R}\pi_{\dR*}\pi^*\mathscr{F} \isomto \mathbf{R}\lim{\rho}\bR\pi_{\rho\dR*}\pi_\rho^*\mathscr{F}. \]
Since $\pi_\rho$ is quasi-compact, it follows from Proposition \ref{prop: weak proj formula for cons mod} that
\[ \bR\pi_{\rho\dR*}\cO_{\D^d_{\fP_K}(0;\rho)} \otimes_{\cO_{\fP_K}}\mathscr{F} \isomto \bR\pi_{\rho\dR*}\pi_\rho^*\mathscr{F}, \]
thus 
\[ \mathbf{R}\pi_{\dR*}\pi^*\mathscr{F} \isomto \mathbf{R}\lim{\rho}\left(\bR\pi_{\rho\dR*}\cO_{\D^d_{\fP_K}(0;\rho)} \otimes_{\cO_{\fP_K}}\mathscr{F} \right). \]
Next, let $\pi_{\rho^-}:\D^d_{\fP_K}(0;\rho^-)\rightarrow \fr{P}_K$ denote the projection from the \emph{open} disc, thus
\[ \mathbf{R}\lim{\rho}\left(\bR\pi_{\rho\dR*}\cO_{\D^d_{\fP_K}(0;\rho)} \otimes_{\cO_{\fP_K}}\mathscr{F} \right) \isomto \mathbf{R}\lim{\rho}\left(\bR\pi_{\rho^-\dR*}\cO_{\D^d_{\fP_K}(0;\rho^-)} \otimes_{\cO_{\fP_K}}\mathscr{F} \right). \]
It therefore suffices to show that
\[ \cO_{\fP_K}\isomto \bR\pi_{\rho^-\dR*}\cO_{\D^d_{\fP_K}(0;\rho^-)}, \]
which is a well-known calculation in rigid analytic geometry.  
\item This is a simple consequence of adjunction and part (\ref{num: equiv rhom 1}):
\[ \mathbf{R}\mathrm{Hom}_{\mathscr{D}_{\tube{X}_{\widehat{\A}^d_\fr{P}}}}(\pi^*\mathscr{F},\pi^*\mathscr{G})=\mathbf{R}\mathrm{Hom}_{\mathscr{D}_{\tube{X}_{\fr{P}}}}(\mathscr{F},\mathbf{R}\pi_{\dR*}\pi^*\mathscr{G})=\mathbf{R}\mathrm{Hom}_{\mathscr{D}_{\tube{X}_{\fr{P}}}}(\mathscr{F},\mathscr{G}).  \qedhere \]
\end{enumerate}
\end{proof}

\begin{proof}[Proof of Theorem \ref{theo: disoc indep}] First consider the claim that $u^*$ induces an equivalence of categories 
\[ \bD^b_\cons(X,Y,\fP)\to \bD^b_\cons(X,Y',\fP'). \]
To prove this, it suffices to show that the map
\begin{equation} \label{eqn: rhom isom} \mathbf{R}\mathrm{Hom}_{\mathscr{D}_{\tube{X}_{\fr{P}}}}(\mathscr{F},\mathscr{G}) \rightarrow \mathbf{R}\mathrm{Hom}_{\mathscr{D}_{\tube{X}_{\fr{P}'}}}(u^*\mathscr{F},u^*\mathscr{G})
\end{equation}
is an isomorphism. Indeed, this immediately implies that $u^*$ is fully faithful, but in conjunction with Lemmas \ref{lemma: basic properties Db cons} and \ref{lemma: disoc indep prelim 1}, it also implies that $u^*$ is essentially surjective, since the essential image is triangulated and contains all locally free isocrystals supported on locally closed subschemes of $X$. 

To prove this `derived full faithfulness', I can clearly localise on $\fr{P}$, and I claim that I can also localise on $X$. To see this, cover $X$ by open subspaces $\{X_i\}_{1\leq i\leq n}$, and for any $I\subset \{1,\ldots,n\}$ set $X_I=\cap_{i\in I} X_i$. Then, as in \cite[Proposition 2.1.8]{Ber96b}, any sheaf $\mathscr{E}$ on $\tube{X}_\fr{P}$ has a canonical resolution 
\[ 0 \rightarrow \mathscr{E} \rightarrow \bigoplus_{i=1}^n j_{X_i}^\dagger\mathscr{E} \rightarrow \ldots \rightarrow j^\dagger_{X_{\{1,\ldots,n\}}}\mathscr{E}\rightarrow 0. \] 
Applying this to the complexes $\mathscr{F}$ and $\mathscr{G}$, it follows that we can deduce the isomorphy of (\ref{eqn: rhom isom}) from the isomorphy of 
\begin{equation}
\mathbf{R}\mathrm{Hom}_{\mathscr{D}_{\tube{X}_{\fr{P}}}}(j_{X_I}^\dagger\mathscr{F},j_{X_J}^\dagger\mathscr{G}) \rightarrow \mathbf{R}\mathrm{Hom}_{\mathscr{D}_{\tube{X}_{\fr{P}'}}}(u^*j_{X_I}^\dagger\mathscr{F},u^*j_{X_J}^\dagger\mathscr{G}) \label{eqn: rhom isom 2}
\end{equation}  
for any pair of non-empty open subset $I,J\subset \{1,\ldots,n\}$. Set $\mathscr{F}'=(j_{X_I}^\dagger\mathscr{F})|_{\tube{X_J}_\fr{P}}$, and $\mathscr{G}'=\mathscr{G}|_{\tube{X_J}_\fr{P}}$, these are constructible complexes on $(X_J,Y,\fr{P})$. Applying Lemma \ref{lemma: u^*i_!} identifies (\ref{eqn: rhom isom 2}) with the map
\[ \mathbf{R}\mathrm{Hom}_{\mathscr{D}_{\tube{X_J}_{\fr{P}}}}(\mathscr{F}',\mathscr{G}') \rightarrow \mathbf{R}\mathrm{Hom}_{\mathscr{D}_{\tube{X_J}_{\fr{P}'}}}(u^*\mathscr{F}',u^*\mathscr{G}'). \]
Hence the claim for each $X_J$ implies the claim for $X$.

If $g$ is an isomorphism, then by Theorem \ref{theo: strong fibration theorem}, we can see that, after localising on $X$ and $\fP$, there exists an isomorphism
\[ \tube{X}_{\fr{P}'}\isomto \D^d_K(0;1^-)\times_K \tube{X}_\fr{P} \]
of germs identifying $u$ with the second projection. Hence applying Proposition \ref{prop: equiv rhom} shows that (\ref{eqn: rhom isom}) is an isomorphism.

In general I now argue as in \cite[Theorem 7.1.8]{LS07}. Indeed, it follows from what I have already proved that if $(X,Y)$ is a weakly realisable pair (in the sense of Definition \ref{defn: realisable}), and $(X,Y,\fP)$ is a frame with $\fP$ smooth around $X$, then $\bD^b_\cons(X,Y,\fP)$ only depends on $(X,Y)$ and not on $\fP$, and is moreover functorial in the pair $(X,Y)$. I can therefore denote $\bD^b_\cons(X,Y,\fP)$ by $\bD^b_\cons(X,Y)$, and the claim amounts to showing that if $({\rm id},g)\from (X,Y')\to(X,Y)$ is a morphism of weakly realisable pairs, with $g$ proper, then $g^*\from \bD^b_\cons(X,Y)\to \bD^b_\cons(X,Y')$ is an equivalence, or indeed `derived fully faithful' in the sense that the analogue of (\ref{eqn: rhom isom}) is an isomorphism. The question is local on $Y$, which I may therefore assume to be quasi-projective, and thus by Chow's lemma there exists a projective morphism $g'\from Y''\to Y'$ such that $g\circ g'$ is also projective. Thus I can reduce to the case that $g$ is projective. Hence by \cite[Lemma 6.5.1]{LS07} $({\rm id},g)$ extends to a morphism of frames
\[ \xymatrix{ & Y'\ar[r]\ar[d]^g & \fP'\ar[d]^u \\ X\ar[ur]\ar[r] & Y \ar[r] & \fP } \]
such that $u$ is \'etale around $X$. Hence $u\from \tube{X}_{\fP'}\to \tube{X}_\fP$ is an isomorphism by Theorem \ref{theo: strong fibration theorem}, and so $u^*$ is trivially `derived fully faithful'.

Next, consider the claims that both $\bR u_{\dR*}$ and $\bR u_{\dR!}[2d]$ are quasi-inverses to $u^*$. To begin with,
\[ \bR u_{\dR*}\from \bD^b(\sD_{\tube{X}_{\fP'}}) \to \bD^b(\sD_{\tube{X}_{\fP}})\]
is right adjoint to $u^*$. Thus I can argue exactly as above (that is, following the proof of \cite[Theorem 7.1.8]{LS07}) to show that the map
\[ \mathrm{id}\isomto \bR u_{\dR*}u^* \]
is an isomorphism on objects of $\bD^b_\cons(X,Y,\fP)$, by reducing to Proposition \ref{prop: equiv rhom}(\ref{num: equiv rhom 1}). It thus follows that $\bR u_{\dR*}$ does indeed descend to a quasi-inverse
\[ \bD^b_\cons(X,Y',\fP')\to \bD^b_\cons(X,Y,\fP) \]
to $u^*$.

To prove the same is true for $\mathbf{R}u_{\dR!}[2d]$, Proposition \ref{prop: weak proj formula for cons mod} shows that there is, for any $\mathscr{F}\in {\bf D}^b_\mathrm{cons}(X,Y,\fr{P})$, an isomorphism
\begin{equation} \mathscr{F}\otimes_{\cO_{\tube{X}_\fr{P}}} \mathbf{R}u_{\dR!}\cO_{\tube{X}_{\fr{P}'}}\isomto \mathbf{R}u_{\dR!}u^*\mathscr{F}
\end{equation}
in ${\bf D}^b(\mathscr{D}_{\tube{X}_{\fr{P}}})$. It follows from Corollary \ref{cor: trace isom from strong fibration} that the trace map gives an isomorphism
\[ \Tr\colon \mathbf{R}u_{\dR!}\cO_{\tube{X}_{\fr{P}'}}[2d] \rightarrow \cO_{\tube{X}_{\fr{P}}}, \]
and therefore an isomorphism
\[ \mathbf{R}u_{\dR!}u^*\mathscr{F}[2d]\isomto \mathscr{F} \]
for any $\mathscr{F}\in {\bf D}^b_\mathrm{cons}(X,Y,\fr{P})$. This completes the proof. 
\end{proof}

\begin{remark} \label{rem: v^* quasi-inverse to u^*} In the situation of Theorem \ref{theo: disoc indep}, suppose in addition that the morphism of frames $u\from (X,Y',\fP')\to (X,Y,\fP)$ admits a section $v \from (X,Y,\fP)\to (X,Y',\fP')$. Then $v^*$ is a left inverse to $u^*$, and hence an inverse to $u^*$. In particular, it follows that
\[ v^*\cong \mathbf{R}u_{\dR*}\cong \bR u_{\dR!}[2d] \]
as functors $\bD^b_{\cons}(X,Y',\fP') \to \bD^b_{\cons}(X,Y,\fP)$.
\end{remark}

\begin{corollary} In the situation of Theorem \ref{theo: disoc indep}, the functor
\[ u^*\from \Isoc_\cons(X,Y,\fP)\rightarrow \Isoc_\cons(X,Y',\fP')\]
is an equivalence of categories, with quasi-inverse given by either $\bR u_{\dR*}$ or $\bR u_{\dR!}[2d]$.
\end{corollary}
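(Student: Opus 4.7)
The plan is to deduce this corollary directly from Theorem \ref{theo: disoc indep}, by restricting the derived equivalence to hearts. The theorem already provides a t-exact equivalence
\[ u^* \from \bD^b_\cons(X,Y,\fP) \isomto \bD^b_\cons(X,Y',\fP') \]
with quasi-inverses $\bR u_{\dR*}$ and $\bR u_{\dR!}[2d]$, so essentially all the real work has been done.

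First I would identify the hearts of the natural t-structures on both sides. By Definition \ref{defn: db cons}, the objects of $\bD^b_\cons(X,Y,\fP)$ are by definition those complexes in $\bD^b(\sD_{\tube{X}_\fP})$ whose cohomology sheaves are constructible isocrystals; since $\Isoc_\cons(X,Y,\fP)$ is an abelian subcategory of $\Mod(\sD_{\tube{X}_\fP})$ closed under extensions (Corollary \ref{cor: cons isoc weak Serre}), the standard t-structure on $\bD^b(\sD_{\tube{X}_\fP})$ restricts to a t-structure on $\bD^b_\cons(X,Y,\fP)$ whose heart is exactly $\Isoc_\cons(X,Y,\fP)$, and similarly on the other side.

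Next I would invoke the formal observation that a quasi-inverse to a t-exact equivalence of triangulated categories is again t-exact: if $\mathscr{F}\in \bD^b_\cons(X,Y',\fP')$ lies in $\bD^{\leq 0}$ (resp. $\bD^{\geq 0}$), then $u^*\bR u_{\dR*}\mathscr{F}\cong \mathscr{F}$ does too, and t-exactness of $u^*$ together with the fact that $u^*$ detects the t-structure (being an equivalence) forces $\bR u_{\dR*}\mathscr{F}$ to lie in $\bD^{\leq 0}$ (resp. $\bD^{\geq 0}$) on the source side; the same argument works for $\bR u_{\dR!}[2d]$. Consequently both quasi-inverses restrict to functors between the hearts $\Isoc_\cons(X,Y',\fP') \to \Isoc_\cons(X,Y,\fP)$.

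Finally, restricting $u^*$ and its two quasi-inverses to the hearts yields the claimed equivalence $u^*\from \Isoc_\cons(X,Y,\fP) \isomto \Isoc_\cons(X,Y',\fP')$ together with the two descriptions of its quasi-inverse. There is no real obstacle to overcome: the argument is purely formal given Theorem \ref{theo: disoc indep}, and the only check required is the trivial verification that quasi-inverses of t-exact equivalences are t-exact.
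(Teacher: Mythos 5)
Your proof is correct and matches the approach the paper implicitly takes (the paper states the corollary without proof, treating it as an immediate consequence of Theorem \ref{theo: disoc indep}). The only substantive point you needed to check — that the heart of the restricted t-structure on $\bD^b_\cons(X,Y,\fP)$ is $\Isoc_\cons(X,Y,\fP)$, and that a quasi-inverse of a t-exact equivalence is again t-exact — is exactly what you spell out, and both steps are sound.
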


\begin{corollary} The categories $\Isoc_\cons(X,Y,\fP)$ and $\bD^b_\cons(X,Y,\fP)$ are independent of $\fr{P}$ up to canonical equivalence, and functorial in $(X,Y)$. 
\end{corollary}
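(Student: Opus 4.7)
The plan is to deduce this from Theorem \ref{theo: disoc indep} by the standard device of forming products of frames. Given two frames $(X,Y,\fP_1)$ and $(X,Y,\fP_2)$ with each $\fP_i$ smooth around $X$, consider the product frame $(X,Y,\fP_1\times_\cV \fP_2)$, where $Y$ is embedded diagonally. Since each $\fP_i$ is flat over $\cV$, so is the product. The projections $p_i\from \fP_1\times_\cV\fP_2\to \fP_i$ give morphisms of frames
\[ p_i \from (X,Y,\fP_1\times_\cV\fP_2) \to (X,Y,\fP_i) \]
in which the map on $Y$ is the identity (hence proper), and $p_i$ is smooth around $X$ precisely because the other factor $\fP_j$ ($j\neq i$) is smooth around $X$. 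Theorem \ref{theo: disoc indep} then yields t-exact equivalences $p_i^*$, and composing gives a canonical equivalence
\[ \Phi_{12}:=(p_2^*)^{-1}\circ p_1^*\from \bD^b_\cons(X,Y,\fP_1)\isomto \bD^b_\cons(X,Y,\fP_2). \]
Passing to hearts for the natural t-structures recovers the corresponding statement for the abelian category $\Isoc_\cons(X,Y,\fP)$.

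To check that these equivalences are canonical (i.e. satisfy a cocycle condition), I would apply the same product construction to three frames $(X,Y,\fP_i)$, $i=1,2,3$. The triple product $(X,Y,\fP_1\times\fP_2\times\fP_3)$ admits projections to each pairwise product and to each $(X,Y,\fP_i)$, and one checks via Theorem \ref{theo: disoc indep} that all three of the two-factor projections induce equivalences. A straightforward diagram chase, using the compatibility of pullbacks along the various projections, then gives $\Phi_{23}\circ \Phi_{12}=\Phi_{13}$. This justifies identifying all $\bD^b_\cons(X,Y,\fP)$ for varying $\fP$ into a single category $\bD^b_\cons(X,Y)$.

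For functoriality in $(X,Y)$, suppose $(f,g)\from (X',Y')\to (X,Y)$ is a morphism of weakly realisable pairs, and choose any frames $(X,Y,\fP)$ and $(X',Y',\fP')$. Form the auxiliary frame $(X',Y',\fP\times_\cV\fP')$, where $Y'$ is embedded via $(g,\text{given})$. There are natural morphisms of frames
\[ (X,Y,\fP) \xleftarrow{(f,g,p_1)} (X',Y',\fP\times_\cV\fP') \xrightarrow{(\id,\id,p_2)} (X',Y',\fP'), \]
the second of which satisfies the hypotheses of Theorem \ref{theo: disoc indep} (since $\fP$ is smooth around $X$, which maps into the image of $X'$). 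Hence $p_2^*$ is an equivalence, and I define the pullback functor by $(f,g)^*:=(p_2^*)^{-1}\circ (f,g,p_1)^*$. Compatibility with composition of morphisms of pairs, and independence of this construction from the choice of auxiliary frames, is again verified by forming appropriate multi-factor products and invoking Theorem \ref{theo: disoc indep}.

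The main (and essentially only) obstacle is the bookkeeping to verify all the compatibilities -- that the equivalences $\Phi_{ij}$ satisfy the cocycle condition, that pullback along $(f,g)$ is independent of the auxiliary frames, and that composition of morphisms of pairs is respected. All of these reduce, via the product construction and Theorem \ref{theo: disoc indep}, to the assertion that pullbacks along different morphisms of frames with identical underlying morphism of pairs give canonically isomorphic functors; but this is already built into the proof of Theorem \ref{theo: disoc indep}, which shows that $\bD^b_\cons(X,Y,\fP)$ depends only on $(X,Y)$ and is functorial in it.
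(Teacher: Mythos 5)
Your proposal is correct and takes essentially the same approach the paper relies on (implicitly): the product-of-frames device is the standard way to promote the invariance statement of Theorem \ref{theo: disoc indep} to a canonical identification of the categories $\bD^b_\cons(X,Y,\fP)$ for varying $\fP$, and the paper itself points to this reduction (via \cite[Theorem 7.1.8]{LS07}) inside the proof of the theorem. One small nit: in the functoriality step the phrase ``$\fP$ is smooth around $X$, which maps into the image of $X'$'' should read that the image of $X'$ under $X'\to X\to\fP$ lies in $X$, and $\fP$ is smooth around $X$; the substance, namely that $p_2\from \fP\times_\cV\fP'\to\fP'$ is smooth in a neighbourhood of (the diagonal image of) $X'$, is correct and is exactly what Theorem \ref{theo: disoc indep} requires.
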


I may therefore denote then by $\Isoc_\cons(X,Y)$ and $\bD^b_\cons(X,Y)$ respectively. Similarly there is the full subcategory $\Isoc(X,Y)\subset \Isoc_\cons(X,Y)$ of locally free isocrystals.

\begin{corollary} If $(\mathrm{id},g)\from (X,Y')\to (X,Y)$ is a morphism of pairs, with $g$ proper, then
\[ g^*\from \bD^b_\cons(X,Y)\to \bD^b_\cons(X,Y') \]
is a t-exact equivalence of categories. It follows that if $Y$ is proper, then $\Isoc_\cons(X,Y)$ and $\bD^b_\cons(X,Y)$ are independent of $Y$ up to canonical equivalence, and functorial in $X$.
\end{corollary}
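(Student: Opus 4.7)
The plan is to reduce both statements directly to Theorem \ref{theo: disoc indep}, using the preceding corollary which establishes that $\bD^b_\cons(X,Y)$ is well-defined for weakly realisable pairs.

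For the first claim, I would prove derived full faithfulness of $g^*$; essential surjectivity then follows from Lemma \ref{lemma: basic properties Db cons} together with the observation (as in Lemma \ref{lemma: disoc indep prelim 1}) that the essential image contains all locally free isocrystals supported on locally closed subschemes of $X$. Derived full faithfulness can be checked locally on $Y$, so I may assume $Y$ is quasi-projective. Applying Chow's lemma to $g$, there exists a projective morphism $Y''\to Y'$ such that the composite $Y''\to Y$ is also projective, and a two-out-of-three argument reduces to the case when $g$ itself is projective. In that case, picking any frame $(X,Y,\fP)$ with $\fP$ smooth around $X$, the morphism of pairs can be extended via \cite[Lemma 6.5.1]{LS07} to a morphism of frames $(X,Y',\fP')\to (X,Y,\fP)$ with $u\from \fP'\to \fP$ smooth around $X$, and Theorem \ref{theo: disoc indep} then delivers the equivalence. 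The t-exactness is already recorded at the frame level in \S\ref{sec: disoc triangle}.

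For the second claim, given two proper compactifications $(X,Y)$ and $(X,Y')$ of the same $X$, I would form $Y''$ as the scheme-theoretic closure of the diagonal image $X\hookrightarrow Y\times Y'$. Then $Y''$ is proper (as a closed subscheme of $Y\times Y'$), the variety $X$ embeds as an open subscheme of $Y''$, and both projections give proper morphisms of pairs $(X,Y'')\to (X,Y)$ and $(X,Y'')\to (X,Y')$. Applying the first claim twice produces canonical equivalences $\bD^b_\cons(X,Y)\simeq \bD^b_\cons(X,Y'')\simeq \bD^b_\cons(X,Y')$, and these descend to $\Isoc_\cons$ by t-exactness. Functoriality in $X$ is then an easy check using the universal property of the closure construction to extend morphisms between open parts to morphisms between compactifications.

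There is no essentially new obstacle here; this corollary is really a rephrasing of the end of the proof of Theorem \ref{theo: disoc indep} in the frame-independent language of pairs that the previous corollary has made available. The only step that requires any care is the reduction to a projective $g$ and its lifting to a smooth morphism of frames, both of which are handled exactly as in the proof of that theorem.
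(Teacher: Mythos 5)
Your proof is correct and essentially matches the paper's approach: the first claim is already established inside the proof of Theorem \ref{theo: disoc indep}, where the reduction via Chow's lemma to projective $g$, the extension to a morphism of frames via \cite[Lemma 6.5.1]{LS07}, and the appeal to the strong fibration theorem all appear verbatim. Your handling of the second claim by passing through the closure of the diagonal in $Y\times Y'$ is the standard argument the paper leaves implicit, and is the right one.
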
 

I will therefore denote these categories by $\Isoc_\cons(X)$ and $\bD^b_\cons(X)$ respectively, again there is the full subcategory $\Isoc(X)\subset \Isoc_\cons(X)$ of locally free isocrystals. Note that these are all categories of \emph{overconvergent} objects on $X$. In particular, what I write as $\Isoc(X)$ is the category of \emph{overconvergent} isocrystals on $X$, and is more commonly written $\Isoc^\dagger(X)$. In my notation, the category of \emph{convergent} isocrystals on $X$ is written $\Isoc(X,X)$. \footnote{Another natural choice for the category of convergent isocrystals on $X$ might be $\Isoc^\circ(X)$, using the fact that passing from $\Isoc(X)$ to $\Isoc^\circ(X)$ involves restricting to the \emph{interior} of $\tube{X}_\fP$ (as a subset of $\tube{Y}_\fP$). I will use this notation later on for log convergent isocrystals.} Unfortunately, this does slightly clash with the notation $\Isoc_\cons(\fP)$ used above for smooth formal schemes $\fP$, thus
\[ \Isoc_\cons(\fP)=\Isoc_\cons(P,P)\neq \Isoc_\cons(P).\]
The reader should therefore bear in mind that when $\fP$ is a smooth formal scheme, $\Isoc_{(\cons)}(\fP)$ means the category of \emph{convergent} (constructible) isocrystals on $\fP$ (or equivalently $P$), whereas when $X$ is a variety, $\Isoc_{(\cons)}(X)$ will mean the category of \emph{overconvergent} (constructible) isocrystals on $X$.

Anyway, if $(f,g)\from (X',Y')\to (X,Y)$ is a morphism of pairs, I will usually write
\begin{align*}
 f^* \from &\Isoc_\cons(X,Y)\to \Isoc_\cons(X',Y') \\
  f^* \from &\bD^b_\cons(X,Y)\to \bD^b_\cons(X',Y')
\end{align*}
for the pullback functors, and similarly for morphisms of varieties over $k$. If $g$ is a closed immersion, I will also write
\begin{align*}
 f_!\from &\Isoc_\cons(X',Y')\to \Isoc_\cons(X,Y) \\
  f_!\from &\bD^b_\cons(X',Y')\to \bD^b_\cons(X,Y)
\end{align*}
for the (t-)exact functors which, on realisations, is the extension by zero along the inclusion $\tube{X'}_{\fP}\rightarrow \tube{X}_\fP$ of the locally closed subspace $\tube{X'}_{\fP}$ of $\tube{X}_{\fP}$. Note that when $f$ is an open immersion, these are right adjoint to $f^*$, and when $f$ is a closed immersion, they are left adjoint to $f^*$.

It is a straightforward check that the tensor product $\otimes_{\cO_{\tube{X}_\fP}}$ descends to a well-defined functor on either of $\bD^b_{\cons}(X,Y)$ or $\bD^b_\cons(X)$, which I will denote by $\otimes_{\cO_{X}}$.

\subsection{Frobenius structures}

Functoriality in $(X,Y)$ now means that it makes sense to talk about Frobenius structures on constructible isocrystals and complexes. Indeed, the absolute Frobenius morphism $F\from (X,Y)\to (X,Y)$ induces a $\sigma$-linear pullback functor
\[ F^*\from \Isoc_\cons(X,Y,\fP)\to \Isoc_\cons(X,Y,\fP). \] 

\begin{definition} \label{defn: F type}A Frobenius structure on an object $\sF\in \Isoc_\cons(X,Y,\fP)$ is an isomorphism
\[ \varphi\from F^{n*} \sF\isomto \sF\]
for some $n$. An object $\sF\in \Isoc_\cons(X,Y,\fP)$ is said to be of Frobenius type if it is an iterated extension of objects admitting a Frobenius structure. 
\end{definition}

The reader should be warned that again, this terminology is not completely standard, and what I have termed `of Frobenius type' has been previously referred to as `$F$-able' in the literature. The terminology here matches that used in \cite{AL22}, however, is different from that used in \cite{LS14}. Anyway, I will write $\Isoc_{\cons,F}(X,Y,\fP)\subset \Isoc_\cons(X,Y,\fP)$ for the full subcategory consisting of objects which are of Frobenius type. There are similarly defined categories $\Isoc_{\cons,F}(X,Y)$, $\Isoc_{\cons,F}(X)$ and $\Isoc_{\cons,F}(\fP)$, as well as the obvious analogues for categories of locally free isocrystals.

\begin{definition} A complex $\mathscr{K}\in \bD^b_\cons(X,Y,\fP)$ is said to be of Frobenius type if all of its cohomology sheaves are. I will denote by
\[ \bD^{b}_{\cons,F}(X,Y,\fP)\subset  \bD^b_\cons(X,Y,\fP) \]
the full subcategory consisting of objects of Frobenius type.
\end{definition}

I will similarly write $\bD^{b}_{\cons,F}(X,Y)$, $\bD^{b}_{\cons,F}(X)$ and $\bD^b_{\cons,F}(\fP)$ for the analogous categories associated to pairs, varieties, and formal schemes.

\subsection{Finite \'etale pushfowards and pullbacks}

There is one last general result I will need on constructible isocrystals, and that is a version of Theorem \ref{theo: disoc indep} in a slightly more general setting. Instead of having two different frames enclosing a single variety $X$, I will need to consider a morphism of frames inducing a finite \'etale cover of $X$. That is, a morphism
\[ \xymatrix{ X'\ar[r]\ar[d]^f & Y'\ar[r]\ar[d]^g & \fP'\ar[d]^u \\ X \ar[r] & Y \ar[r] & \fP } \]
such that $\fP$ is smooth around $X$, $u$ is smooth around $X'$ of relative dimension $d$, $g$ is proper, and $f$ is finite \'etale. For locally free isocrystals, the result I require can be proved rather abstractly.

\begin{proposition} \label{prop: finite etale special cons} The functor
\[ f^*\from \Isoc(X,Y)\rightarrow \Isoc(X',Y') \]
admits a simultaneous left and right adjoint $f_*$. Moreover, for any $\sF\in \Isoc(X,Y)$, $\sG\in \Isoc(X',Y')$ the compositions
\begin{align*}
&\sF \rightarrow f_*f^*\sF \rightarrow \sF \\
&\sG \rightarrow f^*f_*\sG \to \sG
\end{align*}
are the identity maps of $\sF$ and $\sG$ respectively.. 
\end{proposition}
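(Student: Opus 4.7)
The strategy is to reduce, via the invariance result Theorem \ref{theo: disoc indep}, to a frame where the morphism between tubes is literally finite \'etale, and then to construct $f_*$ as the direct image, using characteristic zero to normalise the unit and counit.

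First, since $\fP$ is smooth around $X$ and $f\from X'\to X$ is finite \'etale, the formal \'etale lifting theorem gives, locally on $X$, a finite \'etale morphism $f_\fQ\from \fQ \to \fP$ of formal schemes whose special fibre restricts to $f$ along $X \hookrightarrow P$. Letting $X'_\fQ$ denote the canonical locally closed embedding of $X'$ in $Q$, this produces a new frame $(X', X'_\fQ, \fQ)$, and crucially the induced morphism $\tube{f_\fQ}\from \tube{X'}_\fQ\to \tube{X}_\fP$ is literally finite \'etale as a morphism of analytic germs (in particular of relative dimension zero). I would then compare $(X',Y',\fP')$ and $(X', X'_\fQ, \fQ)$ by forming the frame $(X', Y'\times_Y X', \fP'\times_\fP \fQ)$ (the middle term embedded via the graph of the composition $Y'\to Y\leftarrow X'$, and the last term via the product), which dominates both the original frame and the lifted one via morphisms satisfying the hypotheses of Theorem \ref{theo: disoc indep}. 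Two applications of that theorem yield a canonical equivalence $\Isoc(X', Y', \fP') \simeq \Isoc(X', X'_\fQ, \fQ)$ intertwining $f^*$.

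Second, on the frame $(X', X'_\fQ, \fQ)$ I define $f_* := \tube{f_\fQ}_*$. Since $\tube{f_\fQ}$ is finite, $\tube{f_\fQ}_* = \tube{f_\fQ}_!$, and this functor is simultaneously left and right adjoint to $\tube{f_\fQ}^*$ on $\cO$-modules. Because $\tube{f_\fQ}$ is \'etale, it preserves integrable connections, so $f_*$ preserves locally free isocrystal structures and descends to a functor $f_*\from \Isoc(X',Y')\to \Isoc(X,Y)$. The adjunctions can be checked on underlying modules, since the forgetful functor from $\Isoc$ to modules is faithful.

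Third, to obtain the identity compositions I use that $\tube{f_\fQ}_*\cO_{\tube{X'}_\fQ}$ is a finite \'etale $\cO_{\tube{X}_\fP}$-algebra of some locally constant degree $n$; since $K$ has characteristic zero, the trace map $\tfrac{1}{n}\mathrm{tr}\from \tube{f_\fQ}_*\cO_{\tube{X'}_\fQ}\to \cO_{\tube{X}_\fP}$ splits the unit, yielding a canonical direct summand decomposition $\tube{f_\fQ}_*\tube{f_\fQ}^*\sF \cong \sF \oplus \sF'$ natural in $\sF$. Taking the unit of $f^*\dashv f_*$ to be the inclusion into the first summand and the counit of $f_*\dashv f^*$ to be the projection onto it pins down the adjunctions uniquely and makes the composition $\sF\to f_*f^*\sF\to \sF$ tautologically the identity; an analogous decomposition of $f^*f_*\sG$ handles the composition involving $\sG$.

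\textbf{Main obstacle.} The bulk of the work is in step one: carefully constructing the intermediate frame and verifying that the chain of equivalences from Theorem \ref{theo: disoc indep} intertwines the original $f^*$ with the finite \'etale pullback $\tube{f_\fQ}^*$, and then gluing the locally defined $f_*$ across a covering of $X$ over which the formal \'etale lift $\fQ$ exists. The normalisation step is conceptually clean but only works because we are in characteristic zero, so one must track that everything stays in the $K$-linear setting.
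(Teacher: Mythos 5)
Your first step does not go through: there is in general no finite \'etale morphism $\fQ\to\fP$ of formal schemes whose special fibre restricts to $f$ over $X$, even locally on $X$ or $\fP$. The obstruction is that $X$ is merely locally closed in $P$, and a finite \'etale cover of $X$ need not extend to a finite \'etale cover of any neighbourhood of $X$ in $P$. For a concrete failure take $\fP=\widehat{\A}^1_\cV$, $Y=P=\A^1_k$, $X=\mathbb{G}_{m,k}$, and let $f\from X'\to X$ be the degree $n$ Kummer cover for some $n>1$ prime to $p$: the normalisation of $\A^1_k$ in the function field of $X'$ is ramified over the origin, so no finite \'etale $Q\to\A^1_k$ (hence no finite \'etale $\fQ\to\widehat{\A}^1_\cV$) restricts to $f$ over $X$. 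Shrinking $\fP$ does not help, since once $X$ becomes closed in $P$ you have replaced the pair $(X,Y)$ by $(X,X)$ and thereby changed the overconvergence condition built into $\Isoc(X,Y)$. This is precisely what the paper's proof is designed to avoid: it constructs no geometric model for $f_*$ at this stage. Instead it takes a Galois closure $X''\to X'\to X$ with groups $H\leq G$, invokes effective descent of locally free isocrystals along the proper, finite \'etale morphism of pairs $(X'',Y'')\to(X,Y)$ (using \cite{Laz22}), identifies $\Isoc(X,Y)$ and $\Isoc(X',Y')$ with $G$- and $H$-equivariant objects in $\Isoc(X'',Y'')$, and defines $f_*$ purely formally as induction $\mathrm{Ind}_H^G$; the adjunction and the identity compositions are then elementary representation theory, not geometry.

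The geometric picture you propose, with $f_*$ realised as a direct image along a morphism of germs that is \'etale of relative dimension zero, does appear in the paper, but only as Theorem \ref{theo: finite etale general cons}, under the extra hypotheses that $\fP$ is affine and $X$ is the complement of a hypersurface in $Y$; the required morphism of formal schemes is then constructed by a global-complete-intersection argument and is proper and \'etale near $X'$ rather than finite. Crucially, that theorem takes Proposition \ref{prop: finite etale special cons} as an input (to identify the abstractly-defined adjoint on locally free objects), so an argument along your lines would be circular. Your normalised-trace device for the unit and counit is fine in spirit and does use characteristic zero in the same way as the paper, but it presupposes the geometric model, which is the missing ingredient.
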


\begin{proof}
Let $(X'',Y'')\to (X',Y')$ be a morphism of frames with $Y''\to Y'$ proper and $X''\to X'$ a Galois closure of $f$. Let $G$ be the Galois group of $X''/X$, and $H\leq G$ that of $X''/X'$. The morphism $(f,g)\from (X'',Y'')\rightarrow (X,Y)$ of pairs is then one of effective descent for locally free isocrystals: indeed, thanks to \cite[Theorem 4.1]{Laz22} this can be proved word for word the same as the corresponding result \cite[Theorem 5.1]{Laz22} when $Y$ is proper. 

It therefore follows in the usual manner that $\Isoc(X,Y)$ is equivalent to the category of $G$-equivariant objects in $\Isoc(X'',Y'')$. Similarly, $\Isoc(X',Y')$ is equivalent to the category of $H$-equivariant objects in $\Isoc(X'',Y'')$. I now define $f_*$ to be the composite
\[ \Isoc(X',Y')\isomto H\-\Isoc(X'',Y'')\overset{\mathrm{Ind}_H^G}{\lto}  G\-\Isoc(X',Y') \isomfrom \Isoc(X,Y). \]
The verification that it satisfies the claimed properties is a straightforward exercise in representation theory.
\end{proof}

As I said, this result was proved rather abstractly. With additional hypothesis on $(X,Y,\fP)$, however, it is possible to describe $f_*$ in the expected explicit manner, and generalise from locally free to constructible isocrystals. 

\begin{theorem} \label{theo: finite etale general cons} Assume that $\fP$ is affine, and that $X$ is the complement of a hypersurface in $Y$. 
\begin{enumerate}\item \label{num: fegc i} The functors
\[ \bR\!\tube{f}_{u\dR*},\bR\!\tube{f}_{u\dR!}[2d]\from \bD^b(\sD_{\tube{X'}_{\fP'}})\to \bD^b(\sD_{\tube{X}_{\fP}}) \]
induce isomorphic t-exact functors
\[ \bD^b_\cons(X',Y',\fP')\to \bD^b_\cons(X,Y,\fP),\]
which send $\Isoc(X',Y',\fP')$ into $\Isoc(X,Y,\fP)$.
\item \label{num: fegc ii} These functors are both left and right adjoints to
\[ \tube{f}_u^*\from \bD^b_\cons(X,Y,\fP)\to \bD^b_\cons(X',Y',\fP').\]
\item \label{num: fegc iii} For any objects 
\[ \sF\in \bD^b_\cons(X,Y,\fP),\;\;\;\; \sG\in \bD^b_\cons(X',Y',\fP'),\]
the compositions
\begin{align*}
&\sF \rightarrow \mathbf{R}\!\tube{f}_{u\dR*}\tube{f}_u^*\sF \cong \mathbf{R}\!\tube{f}_{u\dR!}\tube{f}_u^*\sF[2d] \rightarrow \sF \\
&\sG \rightarrow \tube{f}_u^*\mathbf{R}\!\tube{f}_{u\dR!}\sG[2d]\cong\tube{f}_u^*\mathbf{R}\!\tube{f}_{u\dR*}\sG  \to \sG
\end{align*}
are the identity maps.
\end{enumerate}
\end{theorem}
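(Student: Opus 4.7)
The plan is to combine the abstract two-sided adjoint existence in Proposition \ref{prop: finite etale special cons} (for locally free isocrystals) with the invariance result Theorem \ref{theo: disoc indep} and a geometric lifting argument that uses the specific hypotheses.  The abstract structure of (ii) comes for free: $\bR\!\tube{f}_{u\dR*}$ is right adjoint to $\tube{f}_u^*$ on the full derived categories of $\sD$-modules by general principles, while the smoothness of $u$ around $X'$ together with the propriety of $g$ makes $\tube{f}_u$ quasi-smooth of relative dimension $d$ and partially proper in the sense of Kiehl, so the trace formalism developed earlier in this section produces a counit $\bR\!\tube{f}_{u\dR!}\tube{f}_u^*\sF[2d] \to \sF$ exhibiting $\bR\!\tube{f}_{u\dR!}[2d]$ as a left adjoint of $\tube{f}_u^*$.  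This handles the adjunction statements of (ii) modulo the preservation of constructibility in (i).

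The key geometric step is to exploit the hypotheses that $\fP$ is affine and $X$ is a hypersurface complement in $Y$ to lift $f$ to a finite \'etale morphism $\widetilde u : \widetilde\fP \to \fP$ of smooth formal schemes whose special fibre contains $X'$ and restricts to $f$.  By the topological invariance of the \'etale site applied on an affine open neighbourhood of $X$ in $\fP$, such a lift exists, and the universal property of the resulting cover produces a morphism $v : \fP' \to \widetilde\fP$ over $\fP$ restricting to the identity on $X'$ (by lifting the canonical diagonal section of the base change $\widetilde\fP \times_\fP \fP' \to \fP'$ across the \'etale cover).  One checks that $v$ is smooth around $X'$ of relative dimension $d$ and gives a morphism of frames $(\id, g, v) : (X', Y', \fP') \to (X', Y_1, \widetilde\fP)$ for a suitable partial compactification $Y_1$ of $X'$ inside $\widetilde\fP_k$.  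Theorem \ref{theo: disoc indep} then yields a t-exact equivalence $v^* : \bD^b_\cons(X', Y_1, \widetilde\fP) \isomto \bD^b_\cons(X', Y', \fP')$, with both $\bR v_{\dR*}$ and $\bR v_{\dR!}[2d]$ as quasi-inverses, so that both candidate functors factor via $\tube{f}_u = \widetilde u \circ v$ through the pushforward along the finite \'etale morphism $\tube{\widetilde u}$.  Since $\tube{\widetilde u}$ is topologically finite, its ordinary pushforward is exact, preserves locally free isocrystals, and coincides with both the $\bR *$ and $\bR !$ versions; this proves (i) on locally free isocrystals and identifies both candidates up to canonical isomorphism.

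To extend (i) to arbitrary constructible complexes, I would use the devissage of Corollary \ref{cor: cons isoc iterated extension}: every object of $\bD^b_\cons(X', Y', \fP')$ is an iterated extension of $i_{\alpha !}\sG_\alpha$ for locally closed immersions $i_\alpha : Z_\alpha \hookrightarrow X'$ and locally free isocrystals $\sG_\alpha$, and the base change compatibility between $i_!$ and $\tube{f}_u$ (Lemma \ref{lemma: u^*i_!} combined with Proposition \ref{prop: proper base change}) reduces everything to the locally free case on smaller frames.  Part (iii) is then formal: the unit and counit produced above correspond, under the identification of both pushforwards with the abstract two-sided adjoint $f_*$ of Proposition \ref{prop: finite etale special cons}, to the ones produced in that proposition, which already satisfy the required splitting.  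I expect the main obstacle to be arranging the partial compactification $Y_1$ and the morphism of frames $(\id, g, v)$ carefully enough that the hypotheses of Theorem \ref{theo: disoc indep} are met (in particular the propriety of the middle map), together with the matching of the trace-based counit with the abstract one of Proposition \ref{prop: finite etale special cons}; both should be tractable after some local-on-$\fP$ manoeuvring.
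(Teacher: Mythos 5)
Your key geometric step --- replacing $f$ by a finite \'etale morphism $\widetilde u\from\widetilde{\fP}\to\fP$ of smooth formal schemes restricting to $f$ over $X$ --- does not exist in general. The hypotheses only give you that $X$ is the complement of a hypersurface \emph{in $Y$}, and $Y$ is a closed subscheme of $P$ which may have positive codimension. For instance with $P=\A^2$, $Y=V(x)$ and $X=Y\setminus V(y)$, there is no open formal subscheme $\fr{U}\subset\fP$ whose special fibre equals $X$, so topological invariance of the \'etale site (which works for nilpotent and formal thickenings) cannot be invoked to extend the \'etale cover $X'\to X$ over $\fr{U}$ or over any open of $\fP$. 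What the hypotheses actually buy is something strictly weaker: since $X$ is affine and $f$ is finite \'etale, $X'$ is a global complete intersection in some $\P^n_X$ (this is the Stacks Project argument recalled in the paper), and one can homogenize, clear denominators using a defining equation of the hypersurface $Y\setminus X$, and lift the resulting sections to $\widehat{\P}^n_\fP$. The upshot is a \emph{proper} morphism $u'\from\fP''\to\fP$ (closed in $\widehat{\P}^n_\fP$) which is \'etale only in a neighbourhood of $X'$, not everywhere. This is the reduction the paper establishes in the first lemma of the proof.

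Once you only have a proper-but-locally-\'etale lift, the identification of $\bR\!\tube{f}_{u\dR!}$ with $\bR\!\tube{f}_{u\dR*}$ is no longer automatic from finiteness of the map of tubes; instead the paper proves it in Lemma~\ref{lemma: forget supports for fegc} by showing $\tube{X'}_{\fP'}$ is both open and closed in $u^{-1}\tube{X}_\fP$ (using that $X'$ is open and closed in $u^{-1}(X)$ since $X'\to u^{-1}(X)$ is an \'etale closed immersion), so the forget-supports map is an isomorphism because $u$ is proper. Your proposal skips this and simply asserts that the pushforward along a finite map is exact and preserves local freeness; but the paper still has to do real work here --- showing that $\bR\!\tube{f}_{u*}\cO_{\tube{X'}_{\fP'}}$ has coherent cohomology, that the higher direct images vanish (this uses Proposition~\ref{prop: coherent sheaves colimit neighbourhoods} and a stalk argument on the interior), and that the resulting connection is overconvergent --- none of which follows formally from "topological finiteness". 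The d\'evissage and the matching with the abstract adjoint of Proposition~\ref{prop: finite etale special cons} in parts (ii) and (iii) of your proposal are broadly in line with the paper, but they rest on the faulty lift, so the argument as written does not go through.
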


\begin{remark} The additional hypotheses here on $\fP$ and $X$ are surely unnecessary. However, it seems rather difficult to construct a global comparison morphism between the functors $\bR\!\tube{f}_{u\dR*}$ and $\bR\!\tube{f}_{u\dR!}[2d]$ in full generality, and these hypotheses provide a way of doing this. 
\end{remark}

The proof of Theorem \ref{theo: finite etale general cons} will be in several stages. 

\begin{lemma} It suffices to prove Theorem \ref{theo: finite etale general cons} under the additional assumptions that $u$ is proper, and \'etale around $X'$ (i.e. $d=0$).
\end{lemma}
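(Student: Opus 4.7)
The strategy is to interpose auxiliary frames between $(X',Y',\fP')$ and $(X,Y,\fP)$ in order to reduce to a morphism in which $u$ is finite étale. Since $\fP$ is affine and $f\from X'\to X$ is finite étale, the standard finite étale lifting produces a finite étale morphism $v\from\fr{Q}\to\fP$ of affine formal schemes extending $f$, together with a canonical closed immersion $X'\hto\fr{Q}$. Form the fibre product $\fr{R}:=\fr{Q}\times_\fP\fP'$ with its diagonal closed immersion $X'\hto\fr{R}$, and set
\[ Y'':=Y\times_\fP\fr{Q},\qquad Y''':=Y'\times_\fP\fr{Q}\cong Y'\times_{\fP'}\fr{R}, \]
which are closed subschemes of $\fr{Q}$ and $\fr{R}$ respectively, each containing $X'$. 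This produces a zigzag of morphisms of frames
\[ (X',Y',\fP')\;\stackrel{a}{\longleftarrow}\;(X',Y''',\fr{R})\;\stackrel{b}{\longrightarrow}\;(X',Y'',\fr{Q})\;\stackrel{c}{\longrightarrow}\;(X,Y,\fP) \]
satisfying $d\circ a=c\circ b$, where $d\from(X',Y',\fP')\to(X,Y,\fP)$ is the original morphism. The arrow $a$ is identity on $X'$ with $\fr{R}\to\fP'$ finite étale (in particular étale around $X'$); $b$ is identity on $X'$ with $\fr{R}\to\fr{Q}$ smooth of relative dimension $d$ (base change of $u$); and $c$ has both $f$ and $v$ finite étale, so in particular $v$ is proper and étale around $X'$, which is the ``reduced'' case of the theorem. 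The properness of $Y'''\to Y'$, $Y'''\to Y''$ and $Y''\to Y$ follows from the properness of $g$ and of $v$.

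Theorem \ref{theo: disoc indep} applies to both $a$ and $b$, yielding t-exact equivalences of triangulated categories
\[ \bD^b_\cons(X',Y',\fP')\;\cong\;\bD^b_\cons(X',Y''',\fr{R})\;\cong\;\bD^b_\cons(X',Y'',\fr{Q}), \]
in which the quasi-inverses are given by $\bR\tube{\id}_{\dR*}$ and by $\bR\tube{\id}_{\dR!}[2d']$ (with $d'=0$ for $a$ and $d'=d$ for $b$), the two being identified via the trace isomorphism of Corollary \ref{cor: trace isom from strong fibration}. By functoriality of pullback along compositions of morphisms of frames, the identity $d\circ a=c\circ b$ gives $a^*\circ d^*=b^*\circ c^*$, and hence
\[ d^*\;=\;(a^*)^{-1}\circ b^*\circ c^*, \]
exhibiting the original pullback $\tube{f}_u^*=d^*$ as a conjugate by equivalences of the reduced pullback $c^*$.

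Consequently, the entire content of Theorem \ref{theo: finite etale general cons} for $(f,g,u)$---the existence of simultaneous left and right adjoints to $\tube{f}_u^*$ in (\ref{num: fegc ii}), the isomorphism $\bR\tube{f}_{u\dR*}\cong\bR\tube{f}_{u\dR!}[2d]$ in (\ref{num: fegc i}), and the unit/counit identities in (\ref{num: fegc iii})---follows from the corresponding statements for $c$, which is precisely the case $u$ proper and étale around $X'$. The main technical point, in my view, will be to verify that the adjoints produced abstractly by conjugation along the zigzag agree canonically with $\bR\tube{f}_{u\dR*}$ and $\bR\tube{f}_{u\dR!}[2d]$ computed directly on the source frame, and in particular that the shift $[2d]$ appearing in the statement matches the one produced by the relative dimension of $b$; this should reduce to the standard compatibility of trace and base-change maps under composition of smooth morphisms.
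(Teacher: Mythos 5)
The proposal founders on its very first step: there is in general no finite \'etale morphism $v\from\fr{Q}\to\fP$ of formal schemes extending the finite \'etale cover $f\from X'\to X$, because $X$ is merely a \emph{locally closed} subscheme of the special fibre $P$. The lifting you invoke is available when $X$ is closed in $P$ (so that $X\hto P_n$ is a nilpotent thickening for each $n$, and finite \'etale covers lift uniquely along such thickenings), and in particular when $X=P$. But the hypotheses of Theorem \ref{theo: finite etale general cons} explicitly put $X$ as the complement of a hypersurface inside a closed subscheme $Y\subset P$, so typically $X$ is neither closed in $P$ nor equal to $P$. For a concrete failure: take $\fP=\widehat{\A}^1_\cV$, $Y=\A^1_k$, $X=\G_{m,k}$, and $f$ the $n$-th power map $\G_m\to\G_m$ with $p\nmid n$; this finite \'etale cover of $X$ does not extend to any finite \'etale cover of $\widehat{\A}^1_\cV$. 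Consequently the auxiliary frame $(X',Y'',\fr{Q})$ on which your whole zigzag rests cannot be constructed, and the reduction does not go through as written.

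The paper's actual proof avoids this by not attempting to produce a finite \'etale cover of $\fP$ at all. Using that $\fP$ is affine (hence so are $Y$ and $X$) and that $X$ is the complement of a hypersurface in $Y$, it realises $X'$ as a global complete intersection $V(s_1,\ldots,s_n)\subset\A^n_X$ via the conormal sequence and Nakayama, then homogenises and extends the $s_i$ to sections of line bundles on $\widehat{\P}^n_\fP$, and finally takes the maximal flat closed formal subscheme $\fP''$ of the lifted zero locus. The resulting $u'\from\fP''\to\fP$ is proper, and \'etale around $X'$ by the Jacobian criterion, but is emphatically \emph{not} finite \'etale over all of $\fP$ — that global constraint is exactly what cannot be met. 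Your subsequent mechanism (zigzag of frame morphisms, Theorem \ref{theo: disoc indep}, trace/shift bookkeeping) is the right one once a suitable auxiliary frame exists and does match the paper's reduction step; what the proposal is missing is the actual construction of the auxiliary morphism of formal schemes, which is the substantive content of the lemma.
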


\begin{proof}
Note that since $\fP$ is affine, so is $Y$, and since $X$ is the complement of a hypersurface in $Y$, so is $X$. Since $f$ is finite \'etale, it is proved in \cite[\href{https://stacks.math.columbia.edu/tag/00U9}{Lemma 00U9}]{stacks} that $f$ is a global complete intersection, for the reader's convenience I will recall the argument here. Choose a presentation of $X'$, that is, a closed immersion $X'\rightarrow \A^{n-1}_X$ for some $n$. Let $\cI$ denote the ideal of $X'$ inside $\A^{n-1}_X$. Since $f$ is \'etale, the conormal exact sequence then implies that
\[ \cI/\cI^2 \isomto \Omega^1_{\A^{n-1}_X/X} \otimes_{\cO_{\A^{n-1}_X}}\cO_{X'}, \]
thus the conormal sheaf of $X'$ inside $\A^{n-1}_X$ is free. Now choose functions $s_1,\ldots,s_{n-1}\in \cI$ lifting a basis of $\cI/\cI^2$. Then Nakayama's lemma implies that there exists some $t\in 1+\cI$ such that $\cI[1/t]=(s_1,\ldots,s_{n-1})[1/t]$. Letting $z_n$ be a new indeterminate, and setting $s_n=tz_n-1$, it now follows that $X'=V(s_1,\ldots,s_n)\subset \A^n_X$. 

The $s_i$ can now be successively lifted to provide a proper morphism $\fP''\to \fP$ lifting $f$, which \'etale around $X'$. First, homogenizing the $s_i$ makes $X'$ into a complete intersection inside $\P^n_X$, given by the zero locus of sections
\[ s_i\in\Gamma(\P^n_X,\cO_{\P^n_X}(n_i))\]
for some positive integers $n_i$. Chose $s\in \Gamma(Y,\cO_Y)$ so that $X=D(s)$. Now clear denominators to ensure that the $s_i$ extend to sections
\[ s'_i\in \Gamma(\P^n_Y,\cO_{\P^n_Y}(n_i)),
\]
and then lift the $s'_i$ to sections
\[ \tilde{s}'_i\in \Gamma(\widehat{\P}^n_\fP,\cO_{\widehat{\P}^n_\fP}(n_i)). \]
Note that the Jacobian criterion for smoothness implies that the zero locus
\[ V(\tilde{s}'_1,\ldots,\tilde{s}'_n) \subset \widehat{\P}^n_\fP \]
is \'etale over $\fP$ in a neighbourhood of $X'$.

I now let $\fP''$ denote the maximal closed formal subscheme of $V(\tilde{s}'_1,\ldots\tilde{s}'_n)$ which is flat over $\fP$. Thus the locally closed immersion $X'\to V(\tilde{s}'_1,\ldots\tilde{s}'_n)$ factors through $\fP''$. Let $Y''$ denote the closure of $X'$ inside $P''$. The upshot of all of this is that there exists morphism of frames
\[ \xymatrix{ X'\ar[r]\ar[d]^f & Y''\ar[r]\ar[d]^{g'} & \fP'' \ar[d]^{u'} \\ X \ar[r] & Y \ar[r] & \fP } \]
such that $u'$ is proper, and \'etale around $X'$. But now repeated applications of Theorem \ref{theo: disoc indep} imply, in the usual way, that proving the theorem for $(f,g',u')$ is equivalent to proving the theorem $(f,g,u)$.
\end{proof}

In the case where $u$ is \'etale, there is an obvious comparison map between $\bR\!\tube{f}_{u\dR*}$ and $\bR\!\tube{f}_{u\dR!}[2d]$, namely the `forget support' map.

\begin{lemma} \label{lemma: forget supports for fegc} In the situation of Theorem \ref{theo: finite etale general cons}, assume that $u$ is proper and that $d=0$. Then the natural `forget supports' map
\[ \bR\!\tube{f}_{u\dR!} \to \bR\!\tube{f}_{u\dR*} \]
is an isomorphism of functors $ \bD^b(\sD_{\tube{X'}_{\fP'}}) \to \bD^b(\sD_{\tube{X}_{\fP}})$.
\end{lemma}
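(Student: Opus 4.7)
The plan is to reduce the lemma to showing that $\tube{f}_u\colon \tube{X'}_{\fr{P}'}\to \tube{X}_\fr{P}$ is a proper morphism of germs. Since $u$ is \'etale around $X'$, the relative de\thinspace Rham complex of $\tube{f}_u$ is concentrated in degree zero and equal to $\mathcal{O}_{\tube{X'}_{\fr{P}'}}$, so the de\thinspace Rham functors $\mathbf{R}\tube{f}_{u\dR*}$ and $\mathbf{R}\tube{f}_{u\dR!}$ collapse to the ordinary sheaf-level functors $\mathbf{R}\tube{f}_{u*}$ and $\mathbf{R}\tube{f}_{u!}$. Properness of $\tube{f}_u$ will then give $\mathbf{R}\tube{f}_{u!}\cong \mathbf{R}\tube{f}_{u*}$ by the stated property that $\mathbf{R}f_!=\mathbf{R}f_*$ whenever $f$ is proper, and the natural forget-supports transformation realizes this isomorphism.

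To establish properness, I would first invoke Example~\ref{exa: properties of morphisms between tubes}: since both $f$ and $g$ are proper, we may (after replacing $Y'$ by a closed subscheme containing $X'$, which does not alter the relevant tubes) assume the diagram $(X',Y')\to(X,Y)$ is Cartesian, so that $\tube{X'}_{\fr{P}'} = \tube{g}_u^{-1}(\tube{X}_\fr{P})$. Second, observe that the conclusion of the lemma depends only on formal neighborhoods of $X\subset \fr{P}$ and $X'\subset \fr{P}'$; hence we may localize on $\fr{P}$. Picking an open formal subscheme $V\subset \fr{P}'$ containing $X'$ on which $u$ is \'etale, the set $u(\fr{P}'\setminus V)\subset \fr{P}$ is closed (since $u$ is proper) and disjoint from $X$ (since $u^{-1}(X)=X'\subset V$ after the Cartesian reduction), so we may restrict to the open complement and its preimage. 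After this shrinking, $u\colon \fr{P}'\to \fr{P}$ becomes finite \'etale, and hence so does the morphism on generic fibres $u_K\colon \fr{P}'_K\to \fr{P}_K$.

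With $u^{-1}(X) = X'$ set-theoretically inside $\fr{P}'$, compatibility of $u$ with the specialization map gives the identification $u_K^{-1}(\tube{X}_\fr{P}) = \tube{X'}_{\fr{P}'}$, which exhibits $\tube{f}_u$ as a base change of the finite morphism $u_K$ to the germ $\tube{X}_\fr{P}$. Since finite morphisms are proper and properness is preserved under this kind of base change to germs, $\tube{f}_u$ is itself proper, as required. The main technical obstacle is ensuring that the localization step on $\fr{P}$ is compatible with the identifications of tubes under passage to the \'etale open $V$ and that specialization behaves well enough to deduce $u_K^{-1}(\tube{X}_\fr{P})=\tube{X'}_{\fr{P}'}$; both are bookkeeping exercises with no genuine mathematical content beyond what is recalled in \S\ref{subsec: frames and tubes}.
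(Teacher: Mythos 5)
Your overall plan---reduce to showing $\tube{f}_u$ is proper, so that $\bR\!\tube{f}_{u\dR!}=\bR\!\tube{f}_{u\dR*}$ follows from the general property $\mathbf{R}f_!=\mathbf{R}f_*$ for proper $f$---is sound and is in fact what the paper is implicitly doing. However, there is a genuine gap in the key topological step.

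You assert that ``$u^{-1}(X)=X'\subset V$ after the Cartesian reduction,'' but this does not follow. The Cartesian square supplied by Example~\ref{exa: properties of morphisms between tubes} is the square $(X',Y')\to(X,Y)$, and Cartesianity there gives only $g^{-1}(X)=X'$ as subschemes of $Y'$. The set $u^{-1}(X)\subset P'$ (preimage under $u$, not under $g$) is in general strictly larger than $X'$: it need not lie inside $Y'$ at all. For instance, take $\fP=\widehat{\A}^1_\cV$, $\fP'=\widehat{\A}^1_\cV\sqcup\widehat{\A}^1_\cV$ with $u$ the fold map (finite \'etale, hence proper, $d=0$), $X=Y=\A^1_k$, $X'=\A^1_k$ the first copy, and $Y'=X'\sqcup\{pt\}$ with $pt$ a closed point in the second copy; then $g$ is finite, $f$ is an isomorphism, but $u^{-1}(X)=\A^1_k\sqcup\A^1_k\supsetneq X'$. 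Consequently your shrinking argument cannot make $u$ finite \'etale with $u^{-1}(X)=X'$: the extra component of $u^{-1}(X)$ surjects onto $X$, so there is no open subset of $\fP$ over which it disappears. This means the claimed identity $u_K^{-1}(\tube{X}_\fP)=\tube{X'}_{\fP'}$ fails, and $\tube{f}_u$ is not exhibited as a base change of $u_K$.

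The correct statement---and what the paper establishes---is that $X'$ is \emph{open and closed} in $u^{-1}(X)$, but not necessarily all of it. Closedness comes from properness of $f$ (the locally closed immersion $X'\hookrightarrow u^{-1}(X)$ is proper, hence a closed immersion), and openness comes from $u$ being \'etale around $X'$ (an \'etale closed immersion is an open immersion). From this clopen-ness one deduces $\tube{X'}_{\fP'}$ is clopen in $u_K^{-1}(\tube{X}_\fP)$, which is proper over $\tube{X}_\fP$ by properness of $u$; a clopen subset of a proper germ is still proper, so $\tube{f}_u$ is proper as you wanted. The paper packages this slightly differently---it applies forget-supports to $u_K^{-1}(\tube{X}_\fP)\to\tube{X}_\fP$ first and then restricts to the clopen summand---but the essential content is the same. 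What your argument is missing is precisely this clopen-ness step, which is the heart of the lemma.
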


\begin{proof}
The natural map $X'\rightarrow u^{-1}(X)$ is a closed immersion (since it is a proper, locally closed immersion), and since $u$ is \'etale around $X'$, there exists an open subscheme $U\subset u^{-1}(X)$ containing $X'$ which is \'etale over $X$. It follows that $X'\to U$ is both \'etale and a closed immersion, thus $X'$ is in fact open in $U$. Hence $X'$ is both open and closed in $u^{-1}(X)$. Hence $\tube{X'}_{\fP'}$ is both open and closed in $\tube{u^{-1}(X)}_{\fP'}=u^{-1}\tube{X}_\fP$. Since $u$ is proper, the `forget supports' map
\[ \bR u_{\dR!}\to \bR u_{\dR*}\]
is an isomorphism of functors from $ \bD^b(\sD_{u^{-1}\tube{X}_{\fP}})$ to $\bD^b(\sD_{\tube{X}_{\fP}})$. Since $\tube{X'}_{\fP'}$ is open and closed in $u^{-1}\tube{X}_\fP$, we therefore deduce the `forget supports' map
\[ \bR\!\tube{f}_{u\dR!}\to \bR\!\tube{f}_{u\dR*}\]
is an isomorphism as required.
\end{proof}

\begin{proof}[Proof of Theorem \ref{theo: finite etale general cons}\eqref{num: fegc i}]
We may assume that $u$ is proper, and that $d=0$, in which case
\[ \bR\!\tube{f}_{u\dR!} \to \bR\!\tube{f}_{u\dR*} \]
is an isomorphism. It remains to show that these functors induce t-exact functors 
\[ \bD^b_\cons(X',Y',\fP')\to \bD^b_\cons(X,Y,\fP),\]
or in other words that the functor $\bR\!\tube{f}_{u\dR*}$ sends any
\[ \sF\in \Isoc_\cons(X',Y',\fP')\subset \bD^b_\cons(X',Y',\fP') \]
to an object of
\[ \Isoc_\cons(X,Y,\fP)\subset \bD^b_\cons(X,Y,\fP). \]
This claim is additive over exact sequences in $\sF$, moreover, every stratification of $X'$ admits a refinement which is the pullback via $f$ of a stratification of $X$. We may therefore assume that there exists a locally closed subscheme $i\from Z\to X$, and a locally free isocrystal $\sG$ on $Z':=f^{-1}(X)$, such that, writing $i'$ for the induced immersion $i'\from Z'\to X'$, $\sF=i'_!\sG$. By Noetherian induction, I can always replace $Z$ by an open subscheme, hence I can ensure $Z$ shares the hypothesis with $X$ that it is the complement of a hypersurface inside its closure in $P$.

Writing $f'\from Z'\to Z$ for the induced morphism, transitivity of proper pushforwards implies that
\begin{align*}
 \bR\! \tube{f}_{u\dR!}\sF &= \bR\! \tube{f}_{u\dR!}i'_!\sG \\ 
 &= i_!\bR\! \tube{f'}_{u\dR!}\sG.
\end{align*}
Thus, replacing $X$ with $Z$, I can assume that $\sF$ itself is locally free on $\tube{X'}_{\fP'}$. Now, thanks to Proposition \ref{prop: finite etale special cons} above, there exists a locally free isocrystal $\sG$ on $(X,Y,\fP)$ such that $\sF$ is a direct summand of $\tube{f}_u^*\sG$. Thus it suffices to prove the claim for $\sF=\tube{f}_u^*\sG$, which, via the projection formula (Lemma \ref{lemma: proj form}), reduces to the case $\sF=\cO_{\tube{X'}_{\fP'}}$. 

To prove the result in this case, note that since $u$ is \'etale in a neighbourhood of $X'$, the non-\'etale locus of $u\from \fP'_K\to \fP_K$ is a closed analytic subspace of $\fP'_K$ disjoint from $\tube{X'}_{\fr{P}'}$. It follows that $\Omega^\bullet_{\tube{X'}_{\fr{P}'}/\tube{X}_\fP}=\cO_{\tube{X'}_{\fr{P}'}}$, thus $\bR\! \tube{f}_{u\dR!}\cO_{\tube{X'}_{\fP'}}=\bR\! \tube{f}_{u\dR*}\cO_{\tube{X'}_{\fP'}}= \bR\!\tube{f}_{u*}\cO_{\tube{X'}_{\fP'}}$, and I will show that $\bR \tube{f}_{u*}\cO_{\tube{X'}_{\fP'}}$ is a coherent $\cO_{\tube{X}_\fP}$-module sitting in degree $0$.

I first show that $\bR\! \tube{f}_{u*}\cO_{\tube{X'}_{\fP'}}$ has coherent cohomology sheaves. To see this, note that since $u$ is proper, $u:u^{-1}\tube{Y}_{\fr{P}} \rightarrow \tube{Y}_{\fP}$ is also proper, and hence by applying Proposition \ref{prop: proper base change} to the Cartesian diagram
\[ \xymatrix{ u^{-1}\tube{X}_{\fr{P}} \ar[r]\ar[d]_u & u^{-1}\tube{Y}_{\fr{P}} \ar[d]^u \\ \tube{X}_\fP \ar[r] & \tube{Y}_\fP } \]
shows that $\mathbf{R}u_*\cO_{u^{-1}\tube{X}_{\fr{P}}}$ has coherent cohomology sheaves on $\tube{X}_\fP$. Since $\tube{X'}_\fP$ is open and closed in $u^{-1}\tube{X}_\fP$ (as was shown during the proof of Lemma \ref{lemma: forget supports for fegc}), $\mathbf{R}\!\tube{f}_{u*}\cO_{\tube{X'}_{\fr{P}'}}$ is a direct summand of $\mathbf{R}u_*\cO_{u^{-1}\tube{X}_{\fr{P}}}$, and thus also has coherent cohomology sheaves.

I next show that the higher cohomology sheaves of $\bR\! \tube{f}_{u*}\cO_{\tube{X'}_{\fP'}}$ vanish. To see this, consider $\mathbf{R}^qu_*\cO_{u^{-1}\tube{Y}_{\fr{P}}}$ for some $q>0$, which is a coherent sheaf on $\tube{Y}_{\fP}$. After restricting to $\tube{X}_{\fP}$, this contains $\mathbf{R}^q\!\tube{f}_{u*}\cO_{\tube{X'}_{\fr{P}'}}$ as a direct summand. Now Proposition \ref{prop: coherent sheaves colimit neighbourhoods} implies that there exists an open neighbourhood $V$ of $\tube{X}_{\fP}$ in $\tube{Y}_{\fP}$, and a decomposition
\[ \left.\left(\mathbf{R}^qu_*\cO_{u^{-1}\tube{Y}_{\fr{P}}}\right)\right\vert_V \cong \mathscr{F}_1\oplus \mathscr{F}_2 \]
such that $\mathscr{F}_1|_{\tube{X}_{\fP}} = \mathbf{R}^q\!\tube{f}_{u*}\cO_{\tube{X'}_{\fr{P}'}}$. Let $\tube{X}_\fP^\circ$ and $\tube{X'}_{\fP'}^\circ$ denote the interiors of $\tube{X}_\fP$ and $\tube{X'}_{\fP'}$ respectively. Since $u$ is proper, and \'etale in a neighbourhood of $X'$, we know that the map
\[ u\from \tube{X'}^\circ_{\fP'}\to \tube{X}^\circ_{\fP} \]
of adic spaces is \'etale, and in fact every point $x\in \tube{X}^\circ_{\fP}$ has only finitely many preimages under this map. Hence, by Proposition \ref{prop: proper base change}, the stalk of $\mathbf{R}^q\!\tube{f}_{u*}\cO_{\tube{X'}_{\fr{P}'}}=\mathbf{R}^q\!\tube{f}_{u!}\cO_{\tube{X'}_{\fr{P}'}}$ at any maximal point of $\tube{X}^\circ_{\fP}$ is zero. The support of $\mathscr{F}_1$ is therefore a closed analytic subspace of $V$, not containing any maximal point of $\tube{X}^\circ_{\fP}$. It is therefore disjoint from $\tube{X}_{\fP}$, and so $\mathbf{R}^q\!\tube{f}_{u*}\cO_{\tube{X'}_{\fr{P}'}}=0$.

The last thing left to prove is that the connection on $\tube{f}_{u*}\cO_{\tube{X'}_{\fP'}}$ is overconvergent, which follows from \cite[Theorem 4.3.9]{LS07}, since any set of local co-ordinates on $(X,Y,\fP)$ is also a set of local co-ordinates on $(X',Y',\fP')$. Thus neither the derivations, nor the spaces of local sections $\Gamma(V^\lambda_{\delta},\ca{E})$, occurring in the statement of the Theorem, change on passing from $j^\dagger_{X'}\cO_{\tube{Y'}_{\fP'}}$ to $\tube{g}_{u*}j^\dagger_{X'}\cO_{\tube{Y'}_{\fP'}}$ (at least up to cofinality in $\lambda$).
\end{proof}

\begin{proof}[Proof of Theorem \ref{theo: finite etale general cons} \eqref{num: fegc ii} and  \eqref{num: fegc iii}]
Again, I can assume that $u$ is proper, and that $d=0$. Then $\bR\!\tube{f}_{u\dR*}$ is clearly a right adjoint to $\tube{f}_u^*$, since this is already true as a functor 
\[\bD(\sD_{\tube{X'}_{\fP'}}) \to \bD(\sD_{\tube{X}_{\fP}}). \]
I therefore need to prove that $\bR\!\tube{f}_{u\dR*}$ is also a left adjoint. To construct the counit
\[ \varepsilon_\sF\from \bR\! \tube{f}_{u\dR*}\tube{f}_u^*\mathscr{F}\rightarrow \sF, \]
I appeal to Proposition \ref{prop: weak proj formula for cons mod} (note that $\bR\! \tube{f}_{u\dR*}=\bR\! \tube{f}_{u\dR!}$ here) to show that
\[ \mathscr{F}\otimes^{\bL}_{\cO_{\tube{X}_\fP}} \bR\! \tube{f}_{u\dR*} \cO_{\tube{X'}_{\fP'}}  \isomto \bR\! \tube{f}_{u\dR*}\tube{f}_u^*\mathscr{F}. \]
Now, by uniqueness of (right) adjoints, I know that $\bR\!\tube{f}_{u\dR*}$ coincides on locally free objects with the abstract adjoint coming from Proposition \ref{prop: finite etale special cons}. The counit
\[ \varepsilon_{\cO_{\tube{X}_\fP}}\from \bR\! \tube{f}_{u\dR*} \cO_{\tube{X'}_{\fP'}} \to \cO_{\tube{X}_\fP} \]
may therefore be tensored with $\sF$ to provide the required morphism
\[ \varepsilon_\sF\from \bR\! \tube{f}_{u\dR*} \tube{f}_u^*\sF \to \sF. \]
I next construct the unit
\[ \eta_{\sG}\from \sG \to \tube{f}_u^*\bR\!\tube{f}_{u\dR*}\sG \]
in an entirely similar way. Indeed, I first construct a natural morphism
\begin{equation} 
\label{eqn: unit} 
 \tube{f}_u^*\bR\!\tube{f}_{u\dR*}\sG \to \sG \otimes^{\bL}_{\cO_{\tube{X'}_{\fP'}}} \tube{f}_u^*\bR\!\tube{f}_{u\dR*}\cO_{\tube{X'}_{\fP'}} 
\end{equation}
of endofunctors of $\bD^b_\cons(X',Y',\fP')$. To do this, note that by adjunction, it suffices to construct 
\[ 
\bR\!\tube{f}_{u\dR*}\sG \to \bR\tube{f}_{u\dR*}(\sG \otimes^{\bL}_{\cO_{\tube{X'}_{\fP'}}} \tube{f}_u^*\bR\tube{f}_{u\dR*}\cO_{\tube{X'}_{\fP'}} ).
\]
By the projection formula (note that $\bR\tube{f}_{u\dR*}\cO_{\tube{X'}_{\fP'}}$ is a locally free isocrystal) this amounts to producing a natural morphism
\[ \bR\!\tube{f}_{u\dR*}\sG \to \bR\!\tube{f}_{u\dR*}\sG \otimes^{\bL}_{\cO_{\tube{X}_{\fP}}} \bR\!\tube{f}_{u\dR*}\cO_{\tube{X'}_{\fP'}}.  \]
For this, we just take the natural map
\[ \cO_{\tube{X}_\fP}\to  \bR\!\tube{f}_{u\dR*}\cO_{\tube{X'}_{\fP'}} \]
and tensor with $\bR\!\tube{f}_{u\dR*}\sG$. Next, I claim that (\ref{eqn: unit}) is an isomorphism. Since $u^*$ commutes with extension by zero, as does $\bR\!\tube{f}_{u\dR!}=\bR\!\tube{f}_{u\dR*}$, the usual d\'evissage argument reduces to the case when $\sF$ is locally free. In this case, as remarked above, $\bR\!\tube{f}_{u\dR*}$ coincides with the functor $f_*$ from Proposition \ref{prop: finite etale special cons}, and the assertion then just reduces to elementary computations in the representation theory of finite groups. I can therefore define
\[ \eta_\sG\from \sG \to \tube{f}^*_u\bR\!\tube{f}_{u\dR*}\sG \]
by tensoring the unit
\[ \eta_{\cO_{\tube{X'}_{\fP'}}}\from \cO_{\tube{X'}_{\fP'}}\to \tube{f}^*_u\bR\!\tube{f}_{u\dR*}\cO_{\tube{X'}_{\fP'}} \]
from Proposition \ref{prop: finite etale special cons} with $\sG$.

Now, checking that $\varepsilon$ and $\eta$ give rise to an adjunction amount to checking that the natural transformations
\begin{align*}
\bR\!\tube{f}_{u\dR*}\sG &\overset{\eta}{\lto} \bR\!\tube{f}_{u\dR*}\tube{f}_u^*\bR\!\tube{f}_{u\dR*}\sG \overset{\varepsilon}{\lto} \bR\!\tube{f}_{u\dR*}\sG \\
\tube{f}_u^*\sF &\overset{\eta}{\lto} \tube{f}_u^*\bR\!\tube{f}_{u\dR*}\tube{f}_u^*\sF \overset{\varepsilon}{\lto}\tube{f}_u^*\sF
\end{align*}
are the identity. Unwinding the definitions, this follows from Proposition \ref{prop: finite etale special cons}.

The final claim, concerning the compositions of the units and counits giving the identity maps, reduces, by the construction, to the locally free case considered in Proposition \ref{prop: finite etale special cons}.
\end{proof}

\section{Overholonomic \texorpdfstring{$\sD^\dagger$}{Ddag}-modules} \label{sec: D-modules}

In this section, I will briefly recall the basic theory of overholonomic $\sD^\dagger$-modules on formal schemes and varieties, mostly following the exposition in \cite[\S1]{AC18a}. I will also introduce the dual constructible t-structure on the category of overholonomic complexes of $\sD^\dagger$-modules, which will be the one matching up with the natural t-structure on $\bD^b_\cons$ via the overconvergent Riemann--Hilbert correspondence.

\subsection{Cohomological formalism of overholonomic $\pmb{\sD^\dagger}$-modules: formal schemes}

Let $\fP$ be a smooth formal scheme, and $\sD^\dagger_{\fP\Q}$ Berthelot's ring of overconvergent differential operators of $\fP$. Then Caro has defined in \cite{Car09b} the notion of an overholonomic complex of $\sD^\dagger_{\fP\Q}$-modules, giving rise to a full, triangulated subcategory
\[ \bD^b_\hol(\fP)\subset \bD^b_\coh(\sD^\dagger_{\fP\Q}). \]
The category $\bD^b_\coh(\sD^\dagger_{\fP\Q})$ admits a $\sigma$-linear Frobenius pullback functor
\[ F^*\from \bD^b_\coh(\sD^\dagger_{\fP\Q}) \to \bD^b_\coh(\sD^\dagger_{\fP\Q}), \]
which is t-exact for the natural t-structure \cite[Th\'eor\'eme 4.2.4]{Ber00}. 

\begin{definition} An object $\cM\in \bD^b_\coh(\sD^\dagger_{\fP\Q})$ is said to be `of Frobenius type' if it's cohomology sheaves are iterated extensions of objects admitting some Frobenius structure. 
\end{definition}

Again, the terminology here is slightly non-standard. I will denote by
\[  \bD^b_{\hol,F}(\fP)\subset\bD^b_\hol(\fP)\]
the full subcategory on objects which are of Frobenius type. Thanks to work of Caro and Caro--Tsuzuki, this category admits a good formalism of cohomological operations, which I now describe. First of all, there are the duality and tensor product functors
\begin{align*}
 \mathbf{D}_{\fP} \from &\bD^b_{\mathrm{hol},F}(\fP)^{\mathrm{op}} \rightarrow \bD^b_{\mathrm{hol},F}(\fP) \\
 -\otimes_{\cO_\fP} - \from &\bD^b_{\mathrm{hol},F}(\fP) \times \bD^b_{\mathrm{hol},F}(\fP) \rightarrow \bD^b_{\mathrm{hol}}(\fP)
\end{align*}
defined in \cite[\S4.3.10]{Ber02} and \cite[\S2.1]{Car15b}. These preserve overholonomicity by \cite[Corollaire 3.4]{Car09b} and \cite[Th\'eor\`eme 4.2.4]{Car15b} respectively. Caro then defines
\[ \wotimes_{\cO_\fP} := \otimes_{\cO_\fP}[-\dim \fP] \]
which will turn out to match up better with the tensor product of constructible isocrystals.

For any locally closed subscheme $X\hookrightarrow P$, Caro has defined the idempotent endofunctor
\[ \bR\underline{\Gamma}^\dagger_X\from\bD^b_{\hol,F}(\fP) \to \bD^b_{\hol,F}(\fP) \]
of sections with support on $X$, see \cite[\S1.1.5]{AC18a} and the references given there. This works as follows. If $X$ is closed in $P$, then we can write it as an intersection of divisors $X=\cap_{i=1}^n D_i$. For each $D_i$ Berthelot defined functor $\bR\underline{\Gamma}^\dagger_{D_i}$ in \cite[\S4.4.4-4.4.5]{Ber02}, and Caro defines
\[ \bR\underline{\Gamma}^\dagger_X:= \bR\underline{\Gamma}^\dagger_{D_1} \circ \ldots \circ \bR\underline{\Gamma}^\dagger_{D_n}. \]
He shows that this doesn't depend on the choice of the $D_i$, and that the natural morphism
\[ \bR\underline{\Gamma}^\dagger_X\rightarrow \mathrm{id} \]
is an isomorphism when evaluated on overholonomic complexes supported (set theoretically) on $X$. He further shows that the morphism
\[ \bR\underline{\Gamma}^\dagger_X\rightarrow \mathrm{id} \]
has a functorial cone, denoted $(^\dagger X)$. If $X$ is locally closed, with closure $Y$ in $P$, Caro then defines
\[\bR\underline{\Gamma}^\dagger_X := (^\dagger Y\setminus X) \circ \bR\underline{\Gamma}^\dagger_Y. \]
The order of composition here can be reversed, and the result is this same if $Y$ and $Y\setminus X$ are replaced by arbitrary closed subschemes $Z$ and $T$ of $P$ such that $X=Z\setminus T$. The functor $\bR\underline{\Gamma}^\dagger_X$ only depends on $X_{\rm red}$.

If $u\from \fP\rightarrow \fQ$ is a morphism of smooth formal schemes, then Berthelot defined in \cite[\S4.3]{Ber02} the functors
\begin{align*}
u^!\from &\bD^b_{\coh}(\sD^\dagger_{\fQ\Q}) \to  \bD^b(\sD^\dagger_{\fP\Q})\\
u_+\from &\bD^b_{\coh}(\sD^\dagger_{\fP\Q}) \to  \bD^b(\sD^\dagger_{\fQ\Q}).
\end{align*}
Caro showed in \cite[Th\'eor\`eme 3.8]{Car09b} that $u^!$ preserves overholonomicity, and moreover in \cite[Th\'eor\'eme 3.9]{Car09b} that so does $u_+$ whenever $u$ is proper. In this case, there is the natural duality isomorphism
\[  u_+\circ \bD_\fP \isomto \bD_\fQ \circ u_+  \]
by \cite[Corollaire 7.3]{Vir04}, and $(u_+,u^!)$ form an adjoint pair by \cite[Th\'eor\`eme 7.4, and following N.B. i)]{Vir04}. For arbitrary $u$, $u^!$ is compatible with tensor product in the sense that 
\[ u^!(-\wotimes_{\cO_\fQ}-) \isomto u^!(-)\wotimes_{\cO_\fP} u^!(-), \]
see \cite[Proposition 2.1.9]{Car15b}. If
\[ \xymatrix{ \fP'\ar[r]^{f'}\ar[d]_{u'} & \fP \ar[d]^u \\ \fQ' \ar[r]^f & \fQ } \]
is a Cartesian diagram of smooth formal schemes, with $u$ proper, then there is a natural base change isomorphism
\[ f^!u_+ \isomto u'_+f'^!.\]
Indeed, the general case can be reduced to the two special cases where $f$ is either smooth or a closed immersion. When $f$ is smooth this is \cite[Proposition 3.1.8]{Car04}, and when $f$ is a closed immersion it amounts to proving that 
\[ \bR\underline{\Gamma}^\dagger_{Q'} \circ u_+ \isomto u_+\circ \bR\underline{\Gamma}^\dagger_{u^{-1}(Q')}  \]
which was shown in \cite[Th\'eor\`eme 2.2.17]{Car04}.

\subsection{Cohomological formalism of overholonomic $\pmb{\sD^\dagger}$-modules: pairs and varieties}

To obtain a theory for pairs and varieties, Caro makes the following definition.

\begin{definition} Let $(X,Y,\fP)$ be a frame, with $\fP$ smooth. We say that $\cM\in \bD^b_\hol(\fP)$ is supported on $X$ if there exists an isomorphism
\[ \cM \cong \mathbf{R}\underline{\Gamma}_X^\dagger\cM.\]
Let 
\[ \bD^b_{\hol,F}(X,Y,\fP)\subset \bD^b_{\hol,F}(\fP) \]
denote the full subcategory of objects supported on $X$. 
\end{definition}

\begin{remark} \begin{enumerate} \item Note that if $X$ is closed, then the natural direction of this isomorphism is $\mathbf{R}\underline{\Gamma}_X^\dagger\cM\to \cM$, and if $X$ is open it is $\cM\to  \mathbf{R}\underline{\Gamma}_X^\dagger\cM$.
\item If $\cM\in\bD^b_{\hol,F}(\fP)$ is any object, then $ \mathbf{R}\underline{\Gamma}_X^\dagger\cM$ is supported on $X$, since $\mathbf{R}\underline{\Gamma}_X^\dagger$ is idempotent. 
\end{enumerate}
\end{remark}

These categories admit similar cohomological operations to the versions for formal schemes, although for a theory that is provably independent of $\fP$, it is necessary to work with l.p. frames.

\begin{proposition}\label{prop: indep Ddag} Let
\[ \xymatrix{ & Y' \ar[d]^g \ar[r] & \fP' \ar[d]^u \\ X \ar[ur] \ar[r] & Y\ar[r] & \fP } \]
be a morphism of l.p. frames such that $g$ is proper and $u$ is smooth. Then the functors $u_+$ and $\mathbf{R}\underline{\Gamma}^\dagger_{X}\circ u^!$ induce inverse equivalences of categories
\[ \mathbf{R}\underline{\Gamma}^\dagger_{X} \circ u^! \from \bD^b_\hol(X,Y,\fP)\leftrightarrows \bD^b_\hol(X,Y',\fP')\from u_+.  \] 
\end{proposition}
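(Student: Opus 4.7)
The strategy is to combine the adjunction $(u_+, u^!)$ available for proper morphisms (Virrion) with the local and idempotent nature of $\mathbf{R}\underline{\Gamma}^\dagger_X$, using the l.p.\ frame structure to reduce to the proper case.

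First I would reduce to the case where $u\from \fP' \to \fP$ is proper. Using the l.p.\ frame hypothesis, embed $\fP \hookrightarrow \overline{\fP}$ and $\fP' \hookrightarrow \overline{\fP'}$ into smooth proper formal $\cV$-schemes, and (via Chow's lemma together with a strong-fibration-type correction to preserve smoothness around $X$) replace $\overline{\fP'}$ by a resolution of the closure of the graph of $u$ in $\overline{\fP'}\times_\cV \overline{\fP}$, so that $u$ extends to a proper morphism $\bar u\from \overline{\fP'} \to \overline{\fP}$. The point is that, thanks to the l.p.\ axiom and the support condition built into $\mathbf{R}\underline{\Gamma}^\dagger_X$, the category $\bD^b_\hol(X,Y,\fP)$ is canonically equivalent to the subcategory of $\bD^b_{\hol,F}(\overline{\fP})$ of objects supported on $X$, and similarly on the primed side. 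This reduces matters to the case where $u$ is proper, in which $u_+$ is left adjoint to $u^!$ by Virrion.

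Next, in the proper case, I would verify that both functors preserve the ``supported on $X$'' subcategories. For $\mathbf{R}\underline{\Gamma}^\dagger_X\circ u^!$ this follows from idempotency of $\mathbf{R}\underline{\Gamma}^\dagger_X$. For $u_+$, the base change isomorphism $\mathbf{R}\underline{\Gamma}^\dagger_X \circ u_+ \cong u_+ \circ \mathbf{R}\underline{\Gamma}^\dagger_{u^{-1}(X)}$, combined with the inclusion $X \subseteq u^{-1}(X)$ in $P'$, shows that $u_+$ of an object supported on $X \subset P'$ is supported on $X \subset P$. The unit $\mathrm{id} \to u^! u_+$ and counit $u_+ u^! \to \mathrm{id}$ of the proper adjunction, composed with the natural maps involving $\mathbf{R}\underline{\Gamma}^\dagger_X$, then give canonical morphisms $\cN \to \mathbf{R}\underline{\Gamma}^\dagger_X u^! u_+ \cN$ and $u_+\mathbf{R}\underline{\Gamma}^\dagger_X u^!\cM \to \cM$ on the subcategories in question.

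Finally, to show these are isomorphisms, I would devissage using a stratification of $X$ by smooth locally closed subschemes together with the triangles $\mathbf{R}\underline{\Gamma}^\dagger_Z \to \mathrm{id} \to (^\dagger Z)$, reducing to the case where $X$ is a single smooth stratum. Berthelot's strong fibration theorem then locally identifies $u$ around $X$ with the projection from a relative formal disc or affine space, and the claim becomes the standard computation that integration along smooth fibres is an equivalence on overholonomic modules supported on the base (Berthelot--Kashiwara). The main obstacle throughout is really the first reduction: it requires Caro's foundational result that $\bD^b_\hol(X,Y,\fP)$ is intrinsic to any formal neighbourhood of $X$ inside an l.p.\ frame, independent of the chosen proper compactification, and this is what makes it possible to enlarge $\fP'$ and $\fP$ to obtain the auxiliary proper morphism $\bar u$ without changing the subcategories under consideration.
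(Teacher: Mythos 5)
The paper's own proof is a one-line citation to Caro (\cite[Lemma 2.5]{Car12}), so your attempt to supply an actual argument is a genuinely different enterprise. Your final step — dévissage on the support and a local reduction to a projection via the strong fibration theorem, then Berthelot--Kashiwara — is the correct core of any honest proof. However, the first reduction has a real gap.

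You want to enlarge $\fP$ to a smooth proper $\overline{\fP}$ and $\fP'$ to $\overline{\fP'}$ so that $u$ extends to a proper $\bar u$, and you justify the passage from $\bD^b_\hol(X,Y,\fP)$ to the ``supported on $X$'' subcategory of $\bD^b_{\hol,F}(\overline{\fP})$ by appealing to ``Caro's foundational result'' that these categories only depend on a neighbourhood of $X$. But that independence is precisely (a piece of) the statement under proof, and worse, the specific case you need is \emph{not} an instance of it: for the morphism of l.p.\ frames $(X,Y,\fP)\to(X,\overline Y,\overline{\fP})$ the map $Y\to \overline Y$ is an open immersion, not a proper morphism, so it is outside the hypotheses of the proposition. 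That the restriction $j^{-1}$ from $\overline{\fP}$ to $\fP$ is fully faithful on complexes $\cM\cong\bR\underline{\Gamma}^\dagger_X\cM$ is itself nontrivial — the set-theoretic support of such an $\cM$ is contained in the closure of $X$ in $\overline P$, which may leave $P$ — and I don't see how to establish it without essentially redoing the argument. There is also a secondary problem: your $\bar u$ is proper but no longer smooth, so $\bar u^!$ is a genuinely different functor from $u^!$, and the compatibility of $\bR\underline{\Gamma}^\dagger_X\circ\bar u^!$ with $\bR\underline{\Gamma}^\dagger_X\circ u^!$ under your identification needs its own argument.

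The cleaner route, which avoids the proper-case detour entirely, is to factor $u$ through its graph $\fP'\hookrightarrow\fP'\times_\cV\fP\to\fP$ and treat the closed immersion and the smooth projection separately. For the closed immersion of smooth formal schemes, Berthelot--Kashiwara gives the equivalence on overholonomic complexes supported on the closed subscheme. For the projection, one works locally and invokes the relative Poincaré lemma for $\widehat{\A}^d$ (or the strong fibration theorem). The point that lets you use $u_+$ at all in the non-proper setting is not that $u$ can be made proper, but that any $\cM\in\bD^b_\hol(X,Y',\fP')$ has set-theoretic support contained in $Y'$, which \emph{is} proper over $\fP$ because $g$ is proper and $Y$ is closed in $P$; this makes $u_+$ well-defined and overholonomicity-preserving on this subcategory by the usual base-change arguments, no compactification of $u$ required.
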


\begin{proof}
This is \cite[Lemma 2.5]{Car12}.
\end{proof}

It follows that $\bD^b_{\hol,F}(X,Y,\fP)$ depends only on the pair $(X,Y)$, and not on the choice of l.p. frame $(X,Y,\fP)$ enclosing it, and may therefore be denoted $\bD^b_{\hol,F}(X,Y)$. Similarly, if $Y$ is proper, then $\bD^b_{\hol,F}(X,Y,\fP)$ only depends on $X$, and is this denoted $\bD^b_{\hol,F}(X)$. Again, let me repeat an earlier warning that $\bD^b_{\hol,F}(X)$ consists of `overconvergent' objects on $X$, the analogous category of `convergent' objects on $X$ would be denoted $\bD^b_{\hol,F}(X,X)$.

I now explain how the cohomological operations on $\bD^b_{\hol,F}(X,Y)$ are defined, following \cite[\S1]{AC18a}. First of all, there are the dual and tensor product functors
\begin{align*}
\mathbf{D}_{X} \from& \bD^b_{\mathrm{hol},F}(X,Y)^{\mathrm{op}} \rightarrow \bD^b_{\mathrm{hol},F}(X,Y) \\
 -\wotimes_{\cO_X} - \from & \bD^b_{\mathrm{hol},F}(X,Y) \times \bD^b_{\mathrm{hol},F}(X,Y) \rightarrow \bD^b_{\mathrm{hol},F}(X,Y)
 \end{align*}
which are defined by taking an l.p. frame $(X,Y,\mathfrak{P})$ and setting
\begin{align*} \mathbf{D}_{X} &:= \mathbf{R}\underline{\Gamma}^\dagger_X \circ \mathbf{D}_\mathfrak{P} \\
 \mathcal{M} {\wotimes}_{\cO_X} \mathcal{N} &:= \mathcal{M} {\wotimes}_{\mathcal{O}_{\mathfrak{P}}} \mathcal{N}.
 \end{align*}
The resulting functors only depend on $(X,Y)$ up to canonical isomorphism \cite[\S1.1.6]{AC18a}. Note that the notation here only refers to $X$ and not the full pair $(X,Y)$.

If $(f,g)\from (X',Y')\rightarrow (X,Y)$ is a morphism of (strongly realisable) pairs then there are functors
\[ f^!,f^+: \bD^b_{\mathrm{hol},F}(X,Y)\rightarrow \bD^b_{\mathrm{hol},F}(X',Y'), \]
and if $g$ is proper there are functors
\[ f_!,f_+: \bD^b_{\mathrm{hol},F}(X',Y') \rightarrow \bD^b_{\mathrm{hol},F}(X,Y). \]
These are defined as follows: choose a morphism
\[ (f,g,u)\from  (X',Y',\mathfrak{P}') \rightarrow (X,Y,\mathfrak{P}) \]
of l.p. frames extending $u$, and set
\begin{align*} f^! : = \mathbf{R}\underline{\Gamma}_{X'}^\dagger \circ u^!,\;\;&\;\; f^+ = \mathbf{D}_{X'} \circ  f^! \circ  \mathbf{D}_{X}\\
f_+=u_+ \;\;&\;\; f_! = \mathbf{D}_{X} \circ f_+ \circ  \mathbf{D}_{X'}.
\end{align*}
There is a morphism of functors
\[ f_! \rightarrow f_+ \]
which is an isomorphism whenever $f$ (as well as $g$) is proper \cite[\S1.1.9]{AC18a}. Again, let me emphasize that these functors are all defined for morphisms of (strongly realisable) pairs, even if the notation only refers to the morphism $f\from X'\to X$. There is also the compatibility
\[ f^!(-\wotimes_{\cO_X}-) \isomto f^!(-)\wotimes_{\cO_{X'}} f^!(-) \]
as in \cite[\S1.1.9]{AC18a}, as well as the projection formula
\[ f_+(f^!(-)\wotimes_{\cO_{X'}}(-)) \isomto (-)\wotimes_{\cO_X} f_+(-) \]
by \cite[Proposition A.6]{AC18a} Both $(f^+,f_+)$ and $(f_!,f^!)$ are adjoint pairs \cite[Lemma 1.1.10]{AC18a}, and if
\[\xymatrix{ (U',V') \ar[r]^-{(s',t')} \ar[d]_{(f',g')} & (X',Y')\ar[d]^{(f,g)} \\ (U,V) \ar[r]^-{(s,t)} & (X,Y) } \]
is a Cartesian morphism of pairs, with $g$ proper, then by \cite[Lemma 1.3.10]{AC18a} there is a natural isomorphism
\[ s^!f_+ \cong f'_+s'^! \]
of functors
\[ \bD^b_{\mathrm{hol},F}(X',Y') \rightarrow  \bD^b_{\mathrm{hol},F}(U,V) .\]
There is of course a similar formalism for strongly realisable varieties, obtained by choosing a strongly realisable pair $(X,Y)$ with $Y$ proper, and setting $\bD^b_{\mathrm{hol},F}(X)=\bD^b_{\mathrm{hol},F}(X,Y)$.

\subsection{Relation with locally free isocrystals}\label{sec: sp+}

If $(X,Y,\fP)$ is a frame, with $\fP$ smooth over $\cV$ and $X$ smooth over $k$, then Caro defined in \cite{Car11} a fully faithful functor
\[ \mathrm{sp}_{X+}\from \Isoc_F(X,Y) \to \bD^b(\sD^\dagger_{\fP\Q}), \]
and it is the main result of \cite{CT12} that this lands inside $\bD^b_{\hol,F}(\fP)$. In fact, for reasons explained in \cite{AL22} (see also \S\ref{sec: rigid cohomology} below) I will want to rename this functor $\widetilde{\sp}_{X+}$, and then define
\[ \mathrm{sp}_{X+}:=\widetilde{\sp}_{X+}[-\dim X] \]
to be the shifted version of Caro's functor. Caro also showed in \cite{Car11} that $\widetilde{\sp}_{X+}$ is compatible with duality and pullback, in the following sense.

For compatibility with duality, let $(-)^\vee$ denote the dual functor for locally free isocrystals. It was proved in \cite[Proposition 5.2.7]{Car11} that there is a natural isomorphism
\[  \widetilde{\sp}_{X+} \circ (-)^\vee \isomto \bR\underline{\Gamma}^\dagger_X \circ \bD_\fP\circ \widetilde{\sp}_{X+}.\]
Note that if $(X,Y,\fP)$ is an l.p. frame, then $\bR\underline{\Gamma}^\dagger_{X}\circ \bD_\fP$ is precisely the definition of $\bD_X$, but I do not want to make this assumption on the frame here. Compatibility with duality might therefore be slightly abusively written as 
\[  \widetilde{\sp}_{X+} \circ (-)^\vee \isomto  \bD_X\circ \widetilde{\sp}_{X+}.\]
For compatibility with pullback, suppose that
\[ \xymatrix{ X'\ar[r] \ar[d]^f & Y' \ar[d]^g \ar[r] & \fP' \ar[d]^u \\ X \ar[r] & Y\ar[r] & \fP } \]
is a morphism of frames, with $\fP,\fP',X$ and $X'$ all smooth. Thanks to the compatibility with duality already quoted, it was proved in \cite[Proposition 4.2.4]{Car11}, that there is a natural isomorphism
\[ \bR\underline{\Gamma}^\dagger_{X'}\circ u^! \circ \widetilde{\sp}_{X+}[\dim X]  \isomto \widetilde{\sp}_{X'+}  \circ f^*[\dim X']. \]
Again, if both of these frames are l.p. frames, then $\bR\underline{\Gamma}^\dagger_{X'}\circ u^!$ is precisely the definition of $f^!$, but, again, I don't want to make this assumption here.

The following definition of the `dual' of $\sp_+$ was given in \cite{AL22}.

\begin{definition} Let $(X,Y,\fP)$ be a frame with $\fP$ smooth over $\cV$ and $X$ smooth over $k$. Then define
\[ \mathrm{sp}_{X!}:=\widetilde{\sp}_{X+}[\dim X] = \bR\underline{\Gamma}^\dagger_X  \circ \bD_\fP \circ \sp_{X+} \circ (-)^\vee.  \]
\end{definition}

\begin{remark} it may seem rather odd to have defined both $\sp_+$ and $\sp_!$ separately as shifts of Caro's functor $\widetilde{\sp}_+$. The point is that the definitions are only this simple when $X$ is smooth. Both $\sp_+$ and $\sp_!$ generalise to the case when $X$ is not necessarily smooth, however, they are no longer just shifts of one another. Instead, it is the duality relation between $\sp_+$ and $\sp_!$ which persists. The analogy to bear in mind is that of a lisse $\ell$-adic sheaf $\sF$ on $X$. If $X$ is smooth, then the Verdier dual $\bD_X(\sF)$ is just a shift of $\sF^\vee$, however, this is no longer necessarily the case when $X$ is singular. 
\end{remark}

Thus compatibility with pullbacks can be (again, slightly abusively) written as
\[ f^! \circ \sp_{X!} \isomto \sp_{X'!}  \circ f^*. \]
As expected, everything in $\bD^b_{\hol,F}(X,Y,\fP)$ is generically in the essential image of $\widetilde{\sp}_{X+}$.

\begin{proposition} \label{prop: overhol complexes generically isocrystals} Let $\fP$ be a smooth formal scheme, and $\cM\in \bD^b_{\hol,F}(\fP)$ supported on a reduced closed subscheme $Y\hookrightarrow P$. Then there exists a divisor $D\subset P$ such that $X:=Y\setminus D$ is smooth, non-empty, and each $\mathcal{H}^q(\cM(^\dagger D))$ is in the essential image of
\[ \widetilde{\sp}_{X+} \from \Isoc_F(X,Y) \rightarrow \bD^b_{\hol,F}(\fP). \]
\end{proposition}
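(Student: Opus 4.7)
The plan is to combine a geometric reduction that sets the stage with a generic-structure theorem for overholonomic $F$-complexes.

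First I would reduce to the case where $Y$ is equidimensional: any lower-dimensional irreducible component of $Y$ can be absorbed into $D$ (after enlarging it), since the cohomology sheaves of $\cM(^\dagger D)$ are then set-theoretically killed on that component. Enlarging $D$ further to contain the singular locus of $Y$ arranges that $X := Y \setminus D$ is smooth, non-empty, and of the expected dimension. Since $P$ is smooth, local equations allow me to enlarge $D$ once more so that it is a divisor of $P$. Replacing $\cM$ by $\cM(^\dagger D)$, which is still overholonomic of Frobenius type (as $(^\dagger D)$ preserves both properties), I am reduced to showing the conclusion for $\cM = \cM(^\dagger D)$. In particular, each $\mathcal{H}^q(\cM)$ is an overholonomic $\sD^\dagger_{\fP\Q}$-module with Frobenius structure, supported set-theoretically on $X$, and satisfying $\mathcal{H}^q(\cM)(^\dagger D) \isomto \mathcal{H}^q(\cM)$.

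Next I would argue one cohomology sheaf at a time. Since $\cM$ is bounded, only finitely many $q$ are relevant, and for each such $q$ it suffices to find a divisor $D_q \supset D$ after which $\mathcal{H}^q(\cM)(^\dagger D_q)$ lies in the essential image of $\widetilde{\sp}_{X_q +}$ where $X_q = Y \setminus D_q$; the union of the $D_q \setminus D$ then gives a single divisor working for all $q$ simultaneously. For a fixed cohomology sheaf $\cN$, the statement to prove is that any overholonomic Frobenius $\sD^\dagger_{\fP\Q}$-module supported on the smooth locally closed subscheme $X$, with trivial sections along $D$, lies, after a further shrinking of $X$ by a divisor, in the essential image of $\widetilde{\sp}_{X+}$. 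This is essentially the characterization, due to Caro and Caro--Tsuzuki, of objects in the image of $\widetilde{\sp}_{X+}$ among overholonomic $F$-modules, applied generically on $X$.

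The main obstacle is this last step, namely the generic-structure statement for overholonomic $F$-complexes. The Frobenius hypothesis is crucial: without it, the module $\cN$, regarded generically on $X$ as a coherent module with integrable connection, need not be overconvergent along $Y \setminus X$, and hence need not come from an object of $\Isoc_F(X,Y)$. It is precisely Kedlaya's theorem on the $p$-adic local monodromy of Frobenius isocrystals, inserted through Caro--Tsuzuki's machinery, that provides the overconvergence of the generic connection and thereby the lift to $\Isoc_F(X,Y)$. Once this input is granted, the remainder of the argument is a routine Noetherian induction and d\'evissage.
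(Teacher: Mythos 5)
The paper's proof is a one-line citation of Caro's Lemme 6.2.1 in \cite{Car11}; your proposal is a reconstruction of that result. The geometric reductions you outline — shrinking so that $Y\setminus D$ is smooth, enlarging to a divisor of $P$, exploiting the t-exactness of $(^\dagger D)$ for divisors to handle one cohomology sheaf at a time — are all sound and correspond to the standard opening moves.

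The explanation you give for where Frobenius enters, however, mis-identifies the technical heart of the argument. You assert that without Frobenius the generic connection on the module ``need not be overconvergent along $Y\setminus X$'', and that Kedlaya's $p$-adic local monodromy theorem, via Caro--Tsuzuki, is what restores overconvergence. This has things backwards. Once a cohomology sheaf has been shrunk so that it is generically $\cO$-coherent with its $\sD^\dagger_{\fP\Q}(^\dagger D)$-action intact, overconvergence along $D$ is automatic: this is exactly what the Berthelot--Caro equivalence between $\cO(^\dagger D)$-coherent $\sD^\dagger(^\dagger D)$-modules and isocrystals on $(X,Y)$ asserts, and the very reference \cite{Car11} cited here is devoted to proving that equivalence \emph{without} Frobenius. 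The genuine inputs to Lemme 6.2.1 are that equivalence together with generic $\cO$-coherence for holonomic $\sD^\dagger$-modules, neither of which requires Frobenius. Kedlaya's semistable reduction theorem and Caro--Tsuzuki's machinery are what one invokes in the \emph{opposite} direction — to show that $\widetilde{\sp}_{X+}\sF$ is overholonomic when $\sF$ carries a Frobenius structure — not to extract an isocrystal from an overholonomic module. The Frobenius decoration in the proposition's statement serves mainly to place the output in $\Isoc_F(X,Y)$ rather than $\Isoc(X,Y)$, which is formal once the isocrystal has been produced.
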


\begin{remark} Implicit in the statement is the fact that $\widetilde{\sp}_{X+}$ lands in the category of $\sD^\dagger_{\fP\Q}$-modules, not just complexes. Also note that since $D$ is a divisor, the functor $(^\dagger D)$ is exact for the natural t-structure on $\bD^b_\hol(\fP)$, and in fact $\mathcal{H}^q(\cM(^\dagger D))\isomto \sD^\dagger_{\fP\Q}(^\dagger D)\otimes_{\sD^\dagger_{\fP\Q}}\cH^q(\cM)$. It's also worth pointing out that $Y$ is not assumed to be irreducible, but $X$ need not necessarily be dense in $Y$ (just non-empty).
\end{remark}

\begin{proof}
This follows from \cite[Lemme 6.2.1]{Car11}.
\end{proof}

This can be used to deduce the following important `conservativity' result for extraordinary stalks. 

\begin{proposition} \label{prop: i^! conservative} Let $\fP$ be a smooth formal scheme, and $\cM\in \bD^b_{\hol,F}(\fP)$. If $\bR\underline{\Gamma}^\dagger_x\cM=0$ for all closed points $x\in P$, then $\cM=0$.
\end{proposition}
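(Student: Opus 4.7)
The plan is to derive a contradiction from $\cM\neq 0$. Let $Y\subset P$ denote the reduced closed subscheme underlying $\mathrm{supp}(\cM)$. Applying Proposition~\ref{prop: overhol complexes generically isocrystals} produces a divisor $D\subset P$ and a smooth non-empty locally closed subscheme $X:=Y\setminus D$, together with isomorphisms $\cH^q(\cM(^\dagger D))\cong\widetilde{\sp}_{X+}(\sF_q)$ for certain $\sF_q\in\Isoc_F(X,Y)$. The fact that $X$ is non-empty while $\mathrm{supp}(\cM)=Y$ forces $\cM(^\dagger D)\neq 0$, so I may fix the largest index $n$ with $\sF_n\neq 0$.

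Since $\sF_n$ is a non-zero locally free isocrystal on the smooth variety $X$, there is a connected component of $X$ on which it has positive rank; I choose a closed point $x$ in such a component (it is then automatically also closed in $P$). The fibre $i_x^*\sF_n$ is therefore a non-zero isocrystal on $\{x\}$. Applying the compatibility of $\widetilde{\sp}_{+}$ with extraordinary pullback from \S\ref{sec: sp+} to the morphism of frames $(\{x\},\{x\},\fP)\to(X,Y,\fP)$ yields, for every $q$,
\[
\bR\underline{\Gamma}^\dagger_x\,\widetilde{\sp}_{X+}(\sF_q)\;\cong\;\widetilde{\sp}_{\{x\}+}(i_x^*\sF_q)[-\dim X],
\]
which is concentrated in cohomological degree $\dim X$, and non-zero for $q=n$ by full faithfulness of $\widetilde{\sp}_{\{x\}+}$.

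Now consider the truncation triangle
\[
\tau^{<n}\cM(^\dagger D)\;\to\;\cM(^\dagger D)\;\to\;\cH^n(\cM(^\dagger D))[-n]\;\to\;
\]
A straightforward induction on the number of non-zero cohomology sheaves of $\tau^{<n}\cM(^\dagger D)$, using its successive truncations together with the single-degree concentration above applied to each $\widetilde{\sp}_{X+}(\sF_q)$, bounds the cohomological amplitude of $\bR\underline{\Gamma}^\dagger_x\tau^{<n}\cM(^\dagger D)$ above by $\dim X+n-1$. The long exact sequence in degree $\dim X+n$ then isolates
\[
\cH^{\dim X+n}\!\left(\bR\underline{\Gamma}^\dagger_x\cM(^\dagger D)\right)\;\cong\;\widetilde{\sp}_{\{x\}+}(i_x^*\sF_n)\;\neq\; 0.
\]

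Finally, because $x\notin D$ one has $\bR\underline{\Gamma}^\dagger_x\circ\bR\underline{\Gamma}^\dagger_D=\bR\underline{\Gamma}^\dagger_{\{x\}\cap D}=0$, so the triangle $\bR\underline{\Gamma}^\dagger_D\cM\to\cM\to\cM(^\dagger D)\to$ gives $\bR\underline{\Gamma}^\dagger_x\cM(^\dagger D)\cong\bR\underline{\Gamma}^\dagger_x\cM$, and the latter vanishes by hypothesis, contradicting the previous display. The key technical input is the single-degree concentration of $\bR\underline{\Gamma}^\dagger_x$ on objects in the essential image of $\widetilde{\sp}_{X+}$, which reduces the entire argument to standard bookkeeping along the truncation tower; everything else is routine.
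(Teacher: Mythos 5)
Your proof is correct, and it uses the same core technical input as the paper's own argument — namely, the d\'evissage result of Proposition~\ref{prop: overhol complexes generically isocrystals} together with the compatibility of $\widetilde{\sp}_{+}$ with extraordinary pullback to closed points, which concentrates $\bR\underline{\Gamma}^\dagger_x\widetilde{\sp}_{X+}(\sF_q)$ in a single degree. The organization is genuinely different, though: the paper proceeds by Noetherian induction on the support $Y$, first reducing to the situation $\cM = \cM(^\dagger D)$, then restricting to the open formal subscheme $\mathfrak{U} = \fP\setminus D$ (where the support becomes smooth and closed and the $\sF_q$ become convergent isocrystals), and finally concluding $\sF_q = 0$ for all $q$. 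You instead avoid the induction entirely by a direct contradiction: after isolating the top cohomology $\cH^n(\cM(^\dagger D))$ and a suitable closed point $x\in X$, you observe that $\bR\underline{\Gamma}^\dagger_x\cM(^\dagger D)\cong\bR\underline{\Gamma}^\dagger_x\cM$ because $\bR\underline{\Gamma}^\dagger_x\circ\bR\underline{\Gamma}^\dagger_D = \bR\underline{\Gamma}^\dagger_{\{x\}\cap D} = 0$ for $x\notin D$, so the hypothesis directly forces the nonzero cohomology group you exhibited to vanish. This observation is what lets you dispense with the induction, and is a slight simplification. Both arguments quietly use that a nonzero locally free isocrystal has nonzero fibre at some closed point of $X$ (a density-of-residue-discs fact); you assert it via a connected component, the paper leaves it implicit — either way it is standard.
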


\begin{proof}
Let $Y$ denote the support of $\cM$ (in the set-theoretic sense), and choose $D$ as in Proposition \ref{prop: overhol complexes generically isocrystals} above. By Noetherian induction on $Y$, I can assume that $\mathbf{R}\underline{\Gamma}_D^\dagger\cM=0$, or in other words that $\cM=\cM(^\dagger D)$. 

Set $\mathfrak{U}:=\fP\setminus D$. Since $\mathbf{R}\underline{\Gamma}_D^\dagger\cM=0$, it suffices to show that $\cM\!\!\mid_\mathfrak{U}=0$. Indeed, if $\cM\!\!\mid_\mathfrak{U}=0$, then $\cM$ is supported on $D$, in which case $\cM=\mathbf{R}\underline{\Gamma}_D^\dagger\cM=0$. After replacing $\fP$ with $\mathfrak{U}$, I can therefore assume that the support $Y$ of $\cM$ is smooth, and that each $\cH^q(\cM)$ is in the essential image of 
\[  \widetilde{\sp}_{Y+} \from \Isoc_F(Y,Y) \rightarrow \bD^b_{\hol,F}(\fP), \]
say $\cH^q(\cM)=\widetilde{\sp}_{Y+}\sF_q$. 
If $i_x:x\rightarrow Y$ denotes the inclusion of a closed point of $Y$, then
\[ \bR\underline{\Gamma}^\dagger_{x} \circ \widetilde{\sp}_{Y+}[\dim Y]  \isomto \widetilde{\sp}_{x+}  \circ i_x^*.  \]
Hence the fact that $\bR\underline{\Gamma}^\dagger_{x}\cM=0$ implies that $i_x^*\sF_q=0$ for all $q$, and all closed points $x\in Y$. Therefore $\sF_q=0$, and so $\cH^q(\cM)=0$ as required.
\end{proof}

\subsection{t-structures on overholonomic $\pmb{\sD^\dagger}$-modules}

For any prime $\ell\neq p$, and any variety $X$, there are two well-known t-structures on the triangulated category ${\bf D}^b_c(X_\et,\Q_\ell)$ of bounded constructible complexes of $\ell$-adic sheaves on $X_\et$: the ordinary t-structure, with heart the category $\Con(X_\et,\Q_\ell)$ of constructible $\ell$-adic sheaves on $X$, and the perverse t-structure on ${\bf D}^b_c(X_\et,\Q_\ell)$, with heart the category $\Perv(X_\et,\Q_\ell)$ of perverse sheaves on $X$.

The perverse t-structure is self dual under the Verdier duality functor $\bD_X$, however, the constructible t-structure is not. This means that there is a third natural t-structure on ${\bf D}^b_c(X_\et,\Q_\ell)$, the dual constructible t-structure, whose heart $\DCon(X_\et,\Q_\ell)$ is canonically anti-equivalent to $\Con(X_\et,\Q_\ell)$, via $\mathbf{D}_X$. This third t-structure appears less often in the literature than the other two, and is, in a sense, of lesser importance, since most of its relevant properties can be deduced from that of the constructible t-structure by dualising. However, it will be important for us, since it is the analogue of the \emph{dual constructible} t-structure on $\bD^b_{\hol,F}$ that will match up with the natural t-structure on $\bD^b_{\cons,F}$. 

These three t-structures all have analogues in the world of overholonomic $\sD^\dagger$-modules, which I will now describe in the case of a smooth formal scheme $\fP$. 

\subsubsection{Holonomic t-structure} 

Let $\fP$ be a smooth formal scheme. The first t-structure on ${\bf D}^b_{\mathrm{hol},F}(\fP)$ is simply the natural one coming from the inclusion ${\bf D}^b_{\mathrm{hol},F}(\fP)\hookrightarrow {\bf D}^b(\mathscr{D}^\dagger_{\fr{P}\Q})$. That this is indeed a t-structure follows from the fact that a complex of $\sD^\dagger_{\fP\Q}$-modules is overholonomic iff its cohomology sheaves are.  We denote by ${\bf D}^{\geq q},{\bf D}^{\leq q}$ the full subcategories of objects concentrated in degrees $\geq q$ and $\leq q$ respectively, and by $\tau^{\geq q},\tau^{\leq q}$ the truncation functors. By combining \cite[Proposition 3.3 4)]{Car09b} with \cite[\S4]{Vir00}, we see that the holonomic t-structure is self-dual, in the sense that 
\begin{align*}
 \mathcal{M}\in {\bf D}^{\geq0} &\iff \mathbf{D}_{X}\mathcal{M}\in  {\bf D}^{\leq0}.
\end{align*}
I will denote the heart of the holonomic t-structure by $\mathbf{Hol}_F(\fP)$, and refer to its objects as holonomic modules on $\fP$. Cohomology functors will be denoted by
\[ \mathcal{H}^q: {\bf D}^b_{\mathrm{hol},F}(\fP) \rightarrow \mathbf{Hol}_F(\fP). \]
These are just the restriction to ${\bf D}^b_{\mathrm{hol},F}(\fP)$ of the natural cohomology functors
\[ \mathcal{H}^q\from \bD^b_\coh(\sD^\dagger_{\fP\Q}) \to \Mod(\sD^\dagger_{\fP\Q}). \]

\subsubsection{Constructible t-structure} The constructible t-structure on ${\bf D}^b_{\mathrm{hol},F}(\fP)$ is analogous to the t-structure on the derived category ${\bf D}^b_\mathrm{rh}(\mathscr{D}_X)$ of regular holonomic $\mathscr{D}$-modules on a smooth complex variety $X$ induced by the ordinary t-structure on ${\bf D}^b_c(X^\mathrm{an},\C)$ via the (covariant) Riemann--Hilbert correspondence
\[ \mathcal{M}\mapsto  \omega_{X^\mathrm{an}/\C} \otimes^{\mathbb{L}}_{\mathscr{D}_{X^\mathrm{an}}}\mathcal{M}^\mathrm{an} \cong \Omega^\bullet_{X^\mathrm{an}} \otimes_{\mathcal{O}_{X^\mathrm{an}}} \mathcal{M}^{\mathrm{an}}[\dim X].\]
For overholonomic $\sD^\dagger$-modules, this t-structure was defined for curves in \cite{LS14} and in general in \cite{Abe18a}. Concretely, if $\mathcal{M}\in \Hol_F(\fP)$, define the support $\mathrm{Supp}(\mathcal{M})$ of $\cM$ to be the smallest closed subscheme $Z\subset P$ such that $\mathcal{M}|_{P\setminus Z}=0$. Then define a pair of subcategories $({}^{\rm c}{\bf D}^{\geq0},{}^{\rm c}{\bf D}^{\leq0})$ of ${\bf D}^b_{\mathrm{hol},F}(\fP)$ as follows:
\begin{align*}
\mathcal{M} \in {}^\mathrm{c}{\bf D}^{\geq0} &\iff  \dim \mathrm{Supp}\, \mathcal{H}^n(\mathcal{M}) \leq n\;\;\forall n\geq 0, \;\;\mathcal{H}^n(\mathcal{M})  = 0 \;\;\forall n < 0 \\
\mathcal{M} \in {}^\mathrm{c}{\bf D}^{\leq0} &\iff \mathcal{H}^{-n}( \bR\underline{\Gamma}^\dagger_Z \bD_\fP\mathcal{M}) =0 \text{ for any closed subscheme }Z\hookrightarrow P\text{ with }\dim Z < n.
\end{align*}
I will postpone for now the proof that this is a t-structure, since I will deduce it from the corresponding claim for the dual constructible t-structure, which I will introduce next.

\subsubsection{Dual constructible t-structure} Whereas the holonomic t-structure is self-dual, the constructible t-structure is \emph{not}. There is therefore a third t-structure $({}^{\mathrm{dc}}{\bf D}^{\geq0} ,{}^{\mathrm{dc}}{\bf D}^{\leq0} )$ on $\bD^b_{\hol,F}(\fP)$  by setting
\begin{align*}
 \mathcal{M}\in {}^{\mathrm{dc}}{\bf D}^{\geq0} &\iff \mathbf{D}_{\fP}\mathcal{M}\in  {}^{{\rm c}}{\bf D}^{\leq0}  \\
  \mathcal{M}\in {}^{\mathrm{dc}}{\bf D}^{\leq0} &\iff \mathbf{D}_{\fP}\mathcal{M}\in  {}^{{\rm c}}{\bf D}^{\geq0}.
\end{align*}
Explicitly,
\begin{align*}
 \mathcal{M}\in {}^{\mathrm{dc}}{\bf D}^{\geq0} &\iff \mathcal{H}^{-n}(\bR\underline{\Gamma}^\dagger_Z\cM)=0,\;\;\forall Z\hookrightarrow P,\;\;\dim Z<n  \\
  \mathcal{M}\in {}^{\mathrm{dc}}{\bf D}^{\leq0} &\iff \dim \mathrm{Supp}\,\mathcal{H}^{-n}(\cM)\leq n\;\;\forall n\geq0,\;\;\mathcal{H}^{-n}(\cM)=0\;\;\forall n<0.
\end{align*}
Note that the condition for $\cM$ to lie in ${}^{\mathrm{dc}}{\bf D}^{\geq0}$ may be tested on irreducible $Z$. To prove that this is indeed a t-structure, I follow the approach of \cite[Proposition 1.3.3]{Abe18a}. 

\begin{lemma}\label{lemma: t exact sections with support}  Let $Y\to P$ be a closed subscheme. Then the functors $(^\dagger Y)$ and $\bR\underline{\Gamma}^\dagger_Y$  both preserve ${}^{\mathrm{dc}}{\bf D}^{\geq0}$ and ${}^{\mathrm{dc}}{\bf D}^{\leq0}$.
\end{lemma}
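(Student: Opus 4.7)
The plan is to prove the four claims in a specific order: first show $\bR\underline{\Gamma}^\dagger_Y$ preserves ${}^{\mathrm{dc}}{\bf D}^{\geq 0}$, next $(^\dagger Y)$ does, then handle the harder task of ${}^{\mathrm{dc}}{\bf D}^{\leq 0}$ for $\bR\underline{\Gamma}^\dagger_Y$, and finally deduce preservation of ${}^{\mathrm{dc}}{\bf D}^{\leq 0}$ by $(^\dagger Y)$ from the distinguished triangle. Throughout, the key formal inputs are: the idempotence $\bR\underline{\Gamma}^\dagger_Z \circ \bR\underline{\Gamma}^\dagger_Y \cong \bR\underline{\Gamma}^\dagger_{Y\cap Z}$, the commutation $\bR\underline{\Gamma}^\dagger_Z\circ (^\dagger Y)\cong (^\dagger Y)\circ \bR\underline{\Gamma}^\dagger_Z$, the vanishing $\bR\underline{\Gamma}^\dagger_Y\circ(^\dagger Y)=0$, and Mayer--Vietoris $\bR\underline{\Gamma}^\dagger_{Z_1\cap Z_2}\to \bR\underline{\Gamma}^\dagger_{Z_1}\oplus\bR\underline{\Gamma}^\dagger_{Z_2}\to\bR\underline{\Gamma}^\dagger_{Z_1\cup Z_2}$. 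One may freely assume test subschemes $Z$ are reduced since all constructions depend only on $Z_{\mathrm{red}}$.

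For $\cM\in{}^{\mathrm{dc}}{\bf D}^{\geq 0}$ and closed $Z\hookrightarrow P$ with $\dim Z<n$, the identity $\bR\underline{\Gamma}^\dagger_Z\bR\underline{\Gamma}^\dagger_Y\cM\cong\bR\underline{\Gamma}^\dagger_{Y\cap Z}\cM$ together with $\dim(Y\cap Z)\leq\dim Z<n$ immediately yields $\cH^{-n}(\bR\underline{\Gamma}^\dagger_Z\bR\underline{\Gamma}^\dagger_Y\cM)=0$, so $\bR\underline{\Gamma}^\dagger_Y$ preserves ${}^{\mathrm{dc}}{\bf D}^{\geq 0}$. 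For $(^\dagger Y)$, write $Z=Z_Y\cup Z_{\bar Y}$ where $Z_Y$ (resp.\ $Z_{\bar Y}$) is the union of irreducible components of $Z$ contained in $Y$ (resp.\ not contained in $Y$). Since $Z_Y\subset Y$ one has $\bR\underline{\Gamma}^\dagger_{Z_Y}(^\dagger Y)\cM=0$, and Mayer--Vietoris (applied to $(^\dagger Y)\cM$ using $Z=Z_Y\cup Z_{\bar Y}$ and $Z_Y\cap Z_{\bar Y}\subset Y$) reduces us to the case $Z=Z_{\bar Y}$. In this case every component of $Z$ is irreducible and not contained in $Y$, so $\dim(Y\cap Z)<\dim Z\leq n-1$, and the triangle $\bR\underline{\Gamma}^\dagger_{Y\cap Z}\cM\to\bR\underline{\Gamma}^\dagger_Z\cM\to\bR\underline{\Gamma}^\dagger_Z(^\dagger Y)\cM$ gives $\cH^{-n}(\bR\underline{\Gamma}^\dagger_Z(^\dagger Y)\cM)=0$ from the long exact sequence, using the vanishing of both neighbouring terms.

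The main obstacle is preservation of ${}^{\mathrm{dc}}{\bf D}^{\leq 0}$ by $\bR\underline{\Gamma}^\dagger_Y$. The strategy is to use the hypercohomology spectral sequence
\[ E_2^{p,q}=R^p\underline{\Gamma}^\dagger_Y(\cH^q(\cM))\Longrightarrow \cH^{p+q}(\bR\underline{\Gamma}^\dagger_Y\cM), \]
combined with the dimension-lowering estimate $\dim\mathrm{Supp}(R^p\underline{\Gamma}^\dagger_Y\sF)\leq \dim\mathrm{Supp}(\sF)-p$ for coherent $\sD^\dagger_{\fP\Q}$-modules $\sF$ of Frobenius type. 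This estimate I would establish by writing $Y$ locally as an intersection of divisors $D_1\cap\ldots\cap D_r$, iterating, and using that for a single divisor $D$ the quotient $\sF(^\dagger D)/\mathrm{image}(\sF)$ is supported on $\mathrm{Supp}(\sF)\cap D$ (whose dimension drops by one off components of $\sF$ sitting in $D$, which are absorbed into $R^0$). Granting this, for $\cM\in{}^{\mathrm{dc}}{\bf D}^{\leq 0}$ and $p+q=-n$ with $p\geq 0$, we have $\dim\mathrm{Supp}(\cH^q(\cM))\leq -q$ (and vanishing for $q>0$), whence $\dim\mathrm{Supp}(E_2^{p,q})\leq -q-p=n$; vanishing in positive total degree is immediate.

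Finally, preservation of ${}^{\mathrm{dc}}{\bf D}^{\leq 0}$ by $(^\dagger Y)$ follows from the triangle $\bR\underline{\Gamma}^\dagger_Y\cM\to\cM\to(^\dagger Y)\cM$: the long exact sequence gives
\[ \dim\mathrm{Supp}\,\cH^{-n}((^\dagger Y)\cM)\leq\max\bigl(\dim\mathrm{Supp}\,\cH^{-n}(\cM),\;\dim\mathrm{Supp}\,\cH^{-n+1}(\bR\underline{\Gamma}^\dagger_Y\cM)\bigr)\leq\max(n,n-1)=n, \]
using the hypothesis on $\cM$ and the previous step for $\bR\underline{\Gamma}^\dagger_Y\cM$, with the vanishing in positive degrees handled similarly. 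The only subtle step is the dimension-lowering estimate, which is the real heart of the proof.
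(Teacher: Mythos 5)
Your proof is correct and relies on the same essential ingredients as the paper's: the idempotence/composition formulas for $\bR\underline{\Gamma}^\dagger$, the reduction of $Y$ to a divisor (where $\bR\underline{\Gamma}^\dagger_Y$ has amplitude $[0,1]$), and — as the real content — the observation that applying $\cH^{1,\dagger}_Y$ to a holonomic module drops the support dimension by one, proved by splitting the support into components contained in $Y$ (absorbed into degree $0$) and components not contained in $Y$ (where intersecting with the divisor $Y$ strictly drops dimension). The differences are purely organizational: for the ${}^{\mathrm{dc}}\bD^{\geq 0}$ preservation by $(^\dagger Y)$ you use Mayer--Vietoris to kill the part of $Z$ inside $Y$, while the paper simply tests on irreducible $Z$; and for the ${}^{\mathrm{dc}}\bD^{\leq 0}$ preservation you package the support bound as a general estimate $\dim\mathrm{Supp}(R^p\underline{\Gamma}^\dagger_Y\sF)\leq \dim\mathrm{Supp}(\sF)-p$ (obtained by iterating the divisor case) and feed it into the hypercohomology spectral sequence, while the paper works directly with the two-term exact sequence in the divisor case. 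Your spectral-sequence formulation is somewhat cleaner in that it handles general closed $Y$ in one stroke without the preliminary reduction, at the cost of needing to justify the iterated estimate; the paper's version avoids the iteration entirely by composing the already-proven divisor case.
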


\begin{proof}
The claims for ${}^{\mathrm{dc}}{\bf D}^{\geq0}$ are relatively straightforward. Indeed, the fact that $\bR\underline{\Gamma}^\dagger_Y$ preserves ${}^{\mathrm{dc}}{\bf D}^{\geq0}$ simply follows from the fact that $\bR\underline{\Gamma}^\dagger_Z\bR\underline{\Gamma}^\dagger_Y=\bR\underline{\Gamma}^\dagger_{Y \cap Z}$, and $\dim Y\cap Z\leq \dim Z$. 

For $(^\dagger Y)$, take $Z\hookrightarrow P$ irreducible with $\dim Z<n$, $\cM\in  {}^{\mathrm{dc}}{\bf D}^{\geq0}$, and consider the exact sequence\[ \mathcal{H}^{-n}(\mathbf{R}\underline{\Gamma}^\dagger_Z\cM) \rightarrow \mathcal{H}^{-n}(\mathbf{R}\underline{\Gamma}^\dagger_Z\cM(^\dagger Y)) \rightarrow 
\mathcal{H}^{-n+1}(\mathbf{R}\underline{\Gamma}^\dagger_{Z\cap Y}\cM ).   \]
The left hand term here is zero, I need to show that so is the right hand term. If $\dim Z\cap Y<\dim Z$, then $\dim Z\cap Y<n-1$, and thus $\mathcal{H}^{-n}(\mathbf{R}\underline{\Gamma}^\dagger_Z\cM(^\dagger Y))=0$. Since $Z$ is irreducible, the only way that this can fail to happen is if $Z\subset Y$, in which case $\mathbf{R}\underline{\Gamma}^\dagger_Z\cM(^\dagger Y)=0$.

The hardest part is to prove that $\bR\underline{\Gamma}^\dagger_Y$ preserves ${}^{\mathrm{dc}}{\bf D}^{\leq0}$. To show this, write $Y$ as an intersection of divisors, this reduces to the case when $Y$ itself is a divisor. In this case, the functor $(^\dagger Y)$ is t-exact for the ordinary t-structure, and hence $\bR\underline{\Gamma}^\dagger_Y$ has cohomological amplitude $[0,1]$. Write $\cH^{i,\dagger}_Y$ for the cohomology sheaves of $\bR\underline{\Gamma}^\dagger$. If $\cM\in {}^{\mathrm{dc}}{\bf D}^{\leq0}$, it then follows that $\mathcal{H}^{-n}(\bR\underline{\Gamma}^\dagger_Y\cM)=0$ whenever $n<-1$. When $n\geq -1$, there is an exact sequence
\[ 0\rightarrow \mathcal{H}^{1,\dagger}_Y(\mathcal{H}^{-(n+1)}(\cM)) \rightarrow \mathcal{H}^{-n}(\bR\underline{\Gamma}^\dagger_X\cM) \rightarrow \mathcal{H}^{0,\dagger}_Y(\mathcal{H}^{-n}(\cM)) \to 0. \]
Since $\cM\in {}^{\rm dc}\bD^{\leq 0}$, we know that
\[ \dim\mathrm{Supp}\,\mathcal{H}^{-n}(\cM) \leq n,\;\;\;\; \dim\mathrm{Supp}\,\mathcal{H}^{-(n+1)}(\cM))\leq n+1 \]
and hence trivially 
\[\dim\mathrm{Supp}\,\mathcal{H}^{0,\dagger}_Y(\mathcal{H}^{-n}(\cM))\leq n,\;\;\;\;\dim\mathrm{Supp}\,\mathcal{H}^{1,\dagger}_Y(\mathcal{H}^{-(n+1)}(\cM)) \leq n+1.\]
What is needed is to show that
\[ \dim\mathrm{Supp}\,\mathcal{H}^{1,\dagger}_Y(\mathcal{H}^{-(n+1)}(\cM)) \leq n \]
(which should be interpreted as saying that $\mathcal{H}^{1,\dagger}_Y(\mathcal{H}^{0}(\cM))=0$ when $n=-1$). It is therefore enough to show that if $\cN\in \Hol_F(\fP)$, then
\[ \dim \mathrm{Supp}(\cN) \leq n+1\implies \dim \mathrm{Supp}\,\mathcal{H}^{1,\dagger}_Y(\cN)\leq n,\]
for all $n\geq -1$ (again, meaning that $\mathcal{H}^{1,\dagger}_Y(\cN)$=0 when $n=-1$). To prove this, write $\mathrm{Supp}(\cN)=Z_1\cup Z_2$, where $Z_1\subset Y$, and no irreducible component of $Z_2$ is contained in $Y$. Then $\dim Y\cap Z_2 \leq n$, and I claim that $\mathcal{H}^{1,\dagger}_Y(\cN)$ is supported on $Y\cap Z_2$. Indeed, $\mathcal{H}^{1,\dagger}_Y(\cN)$ is clearly supported on $Y$, so it will suffice to show that it is zero on $P\setminus Z_2$. But after restricting to $P\setminus Z_2$, we have $\mathrm{Supp}(\cN)\subset Y$, whence $\mathbf{R}\underline{\Gamma}^\dagger_Y\cN\isomto \cN$, and so $\mathcal{H}^{1,\dagger}_Y(\cN)=\mathcal{H}^1(\cN)=0$.

Finally, the fact that $(^\dagger Y)$ preserves ${}^{\mathrm{dc}}{\bf D}^{\leq0}$ now follows from taking cohomology sheaves of the exact triangle
\[ \mathbf{R}\underline{\Gamma}^\dagger_Y \to \mathrm{id} \to (^\dagger Y) \overset{+1}{\lto} \]
and using the already proved result for $\mathbf{R}\underline{\Gamma}^\dagger_Y$.
\end{proof}

\begin{theorem} The pair of full subcategories $({}^{\mathrm{dc}}{\bf D}^{\geq0},{}^{\mathrm{dc}}{\bf D}^{\leq0})$ defines a t-structure on $\bD^b_{\hol,F}(\fP)$.
\end{theorem}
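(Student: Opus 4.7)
The plan is to verify the three t-structure axioms directly. Stability under shift, namely that ${}^{\mathrm{dc}}\bD^{\leq 0} \subset {}^{\mathrm{dc}}\bD^{\leq 1}$, is immediate from the definitions; the real content is the orthogonality $\mathrm{Hom}(\cM, \cN) = 0$ for $\cM \in {}^{\mathrm{dc}}\bD^{\leq 0}$ and $\cN \in {}^{\mathrm{dc}}\bD^{\geq 1}$, together with the existence of truncation triangles. I would prove the truncation axiom by induction on $d := \dim \mathrm{Supp}\, \cM$, and then deduce orthogonality by dévissage along the same induction.

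For the base case $d = 0$, $\mathrm{Supp}\, \cM$ is a finite set of closed points, and the dual constructible t-structure restricted to such objects reduces to (a shift of) the holonomic t-structure on $\bD^b_{\hol,F}(\fP)$, whose truncation functors already exist. For the inductive step, let $Y := \mathrm{Supp}\, \cM$, and apply Proposition \ref{prop: overhol complexes generically isocrystals} to find a divisor $D \subset Y$ such that $X := Y \setminus D$ is smooth and nonempty, and each $\cH^q(\cM(^\dagger D))$ lies in the essential image of $\widetilde{\sp}_{X+}\colon \Isoc_F(X,Y) \to \bD^b_{\hol,F}(\fP)$.

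The localisation triangle
\[ \bR\underline{\Gamma}^\dagger_D \cM \to \cM \to \cM(^\dagger D) \overset{+1}{\to} \]
then exhibits $\cM$ as an extension of a complex $\bR\underline{\Gamma}^\dagger_D \cM$ whose support has dimension $\leq \dim D < d$ (so the inductive hypothesis applies), by a complex $\cM(^\dagger D)$ each of whose cohomology sheaves is in the image of $\widetilde{\sp}_{X+}$, hence corresponds to a genuine isocrystal on $(X,Y)$. Up to the dimension shift built into the conventions, the dual constructible t-structure on this subcategory coincides with the standard t-structure on isocrystals, so truncations of $\cM(^\dagger D)$ are available directly.

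The main obstacle is then the gluing step: assembling the truncations of $\bR\underline{\Gamma}^\dagger_D \cM$ and of $\cM(^\dagger D)$ into a global truncation of $\cM$. Lemma \ref{lemma: t exact sections with support} is indispensable here, as it ensures that both $\bR\underline{\Gamma}^\dagger_D$ and $(^\dagger D)$ are t-exact for the candidate t-structure — exactly the input required for a Beilinson--Bernstein--Deligne style recollement. The gluing then proceeds via the octahedral axiom, and the $\mathrm{Ext}$-vanishings between the two pieces that arise are controlled by this t-exactness together with the inductive hypothesis. The argument parallels, and can alternatively be deduced by duality from, the construction of the constructible t-structure in \cite{Abe18a}.
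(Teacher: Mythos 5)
Your overall strategy---induction on the support, Proposition \ref{prop: overhol complexes generically isocrystals} to produce a divisor $D$, the localisation triangle, Lemma \ref{lemma: t exact sections with support}, and a recollement-style gluing via the octahedral axiom---matches the paper's in outline. But there is a genuine gap in the gluing step, and a related one in the setup of the induction. The paper's gluing hinges on the specific vanishing
\[ {\rm Hom}\bigl(\tau_{\leq 0}\cM(^\dagger D),\,(\tau_{>0}\bR\underline{\Gamma}^\dagger_D\cM )[1] \bigr)=0, \]
which is \emph{not} a consequence of t-exactness of $(^\dagger D)$ and $\bR\underline{\Gamma}^\dagger_D$ alone: it follows from the strict dimension inequality $\dim Z < \dim X$, where $Z := (Y\cap D)_{\rm red}$ and $X := Y\setminus D$, since the first complex lies in the ordinary $\bD^{\leq -\dim X}$ while the second lies in the ordinary $\bD^{\geq -\dim Z}$. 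That inequality holds only when $Y := \mathrm{Supp}\,\cM$ is irreducible (it is exactly the statement that $D\cap Y$ is a proper closed subset of the irreducible $Y$), and the paper accordingly begins by reducing to irreducible $Y$ via a Mayer--Vietoris argument over a decomposition $Y = Y_1 \cup Y_2$.

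Your proposal omits this reduction, and without it the induction itself is unsound. Proposition \ref{prop: overhol complexes generically isocrystals} only guarantees that $X = Y\setminus D$ is non-empty, \emph{not} dense in $Y$---the remark after the proposition makes this explicit---so for reducible $Y$ the divisor $D$ may contain entire irreducible components, in which case $\mathrm{Supp}\,\bR\underline{\Gamma}^\dagger_D\cM$ can have dimension equal to $\dim Y$; your claim that this support has dimension $\leq \dim D < d$ is then false (and $\dim D$, being the dimension of a divisor in $P$, is anyway the wrong quantity). Relatedly, an induction on $\dim\mathrm{Supp}\,\cM$ alone cannot support the Mayer--Vietoris reduction, since the pieces $Y_1$ and $Y_2$ need not have strictly smaller dimension than $Y$; the paper uses a Noetherian induction on reduced closed subschemes of $P$ precisely for this reason, proving the stronger statement that $({}^{\mathrm{dc}}{\bf D}^{\geq0},{}^{\mathrm{dc}}{\bf D}^{\leq0})$ restricts to a t-structure on $\bD^b_{\hol,F}(Y,Y,\fP)$ for every such $Y$. (A minor point: in your base case, the dual constructible t-structure on complexes supported at closed points is the \emph{unshifted} holonomic t-structure, since $\dim$ of a point is $0$.)
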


\begin{proof} To set up a Noetherian induction, I will actually prove that $({}^{\mathrm{dc}}{\bf D}^{\geq0},{}^{\mathrm{dc}}{\bf D}^{\leq0})$ defines a t-structure on $\bD^b_{\hol,F}(Y,Y,\fP)\subset \bD^b_{\hol,F}(\fP)$ for any closed subscheme $Y\hookrightarrow P$, which I may as well assume to be reduced. 

I first reduce to the case that $Y$ is irreducible. Indeed, if $Y=Y_1\cup Y_2$ is a union of proper, non-empty closed subschemes, then any $\cM\in \bD^b_{\hol,F}(Y,Y,\fP)$ sits in an exact triangle
\[ \bR\underline{\Gamma}^\dagger_{Y_1\cap Y_2}\cM \to  \bR\underline{\Gamma}^\dagger_{Y_1}\cM\oplus\bR\underline{\Gamma}^\dagger_{Y_2}\cM  \to \cM\overset{+1}{\to}. \]
Assuming that $({}^{\mathrm{dc}}{\bf D}^{\geq0},{}^{\mathrm{dc}}{\bf D}^{\leq0})$ defines a t-structure on $\bD^b_{\hol,F}(Y_i,Y_i,\fP)$ for $i=1,2$, I can check the axioms  \cite[D\'efinition 1.3.1]{BBD82} explicitly as follows. First of all (ii) is clear, and (i) follows from the fact that if two out of the three morphisms in a morphism of exact triangles are zero, then so is the third. For (iii), I can appeal to \cite[Proposition 1.1.11]{BBD82} to form the commutative diagram
\[ \xymatrix{ \tau_{\leq0}\bR\underline{\Gamma}^\dagger_{Y_1\cap Y_2}\cM  \ar[r]\ar[d] & \tau_{\leq0}\bR\underline{\Gamma}^\dagger_{Y_1}\cM\oplus\tau_{\leq0}\bR\underline{\Gamma}^\dagger_{Y_2}\cM \ar[d] \ar[r] & \ca{L}\ar[d] \ar[r]^-{+1} & \\ 
\bR\underline{\Gamma}^\dagger_{Y_1\cap Y_2}\cM \ar[r]\ar[d]&  \bR\underline{\Gamma}^\dagger_{Y_1}\cM\oplus\bR\underline{\Gamma}^\dagger_{Y_2}\cM  \ar[r]\ar[d]& \cM \ar[d]\ar[r]^-{+1} &\\
\tau_{>0}\bR\underline{\Gamma}^\dagger_{Y_1\cap Y_2}\cM  \ar[r]\ar[d]^-{+1} & \tau_{>0}\bR\underline{\Gamma}^\dagger_{Y_1}\cM\oplus\tau_{>0}\bR\underline{\Gamma}^\dagger_{Y_2}\cM  \ar[r]\ar[d]^-{+1} & \ca{N} \ar[r]^-{+1}\ar[d]^-{+1} & \\ & & &  } \]
with all rows and columns exact triangles. It follows directly from the definitions that $\mathcal{L}\in {}^{\mathrm{dc}}{\bf D}^{\leq0}$. Since the condition to lie in ${}^{\mathrm{dc}}{\bf D}^{>0}$ can be checked on irreducible closed subschemes $Z\hookrightarrow P$, the fact that $\cN\in {}^{\mathrm{dc}}{\bf D}^{>0}$ follows from the fact that both $\underline{\bR}\Gamma^\dagger_{Y_1}\cN$ and $\underline{\bR}\Gamma^\dagger_{Y_2}\cN$ are in ${}^{\mathrm{dc}}{\bf D}^{>0}$.
 
I may therefore assume that $Y$ is irreducible. In this case, take a divisor $D\subset P$, with smooth, non-empty complement $X=Y\setminus D$ on $Y$, and define
\[ T(Y,D):= \left\{ \left.\cM\in\bD^b_{\hol,F}(Y,Y,\fP) \,\right\vert \,\mathcal{H}^q((^\dagger D)\cM) \in \widetilde{\sp}_+(\Isoc_F(X,Y))\;\;\forall q \right\}. \]
Then by Proposition \ref{prop: overhol complexes generically isocrystals},
\[ \bD^b_{\hol,F}(Y,Y,\fP) = \colim{D} T(Y,D),\]
and it therefore suffices to show that $({}^{\mathrm{dc}}{\bf D}^{\geq0},{}^{\mathrm{dc}}{\bf D}^{\leq0})$ defines a t-structure on $T(Y,D)$. Now set set $Z:=(Y \cap D)_\mathrm{red}$,  by Noetherian induction I can assume that $({}^{\mathrm{dc}}{\bf D}^{\geq0},{}^{\mathrm{dc}}{\bf D}^{\leq0})$ defines a t-structure on $T(Z):=\bD^b_{\hol,F}(Z,Z,\fP)$. I now set $T(X):=T(Y,D)\cap \bD^b_{\hol,F}(X,Y,\fP)$, thus
\begin{align*}
 (^\dagger D)=(^\dagger Z) &\from T(Y,D) \to T(X) \\
 \bR\underline{\Gamma}^\dagger_D = \bR\underline{\Gamma}^\dagger_Z &\from T(Y,D)\to T(Z),
\end{align*}
and the localisation triangle 
\[ \bR\underline{\Gamma}_D^\dagger  \to \mathrm{id}\to (^\dagger D) \overset{+1}{\to}, \] 
together with Lemma \ref{lemma: t exact sections with support}, shows that $\cM\in T(Y,D)$ is in ${}^{\mathrm{dc}}{\bf D}^{\geq0}$ or ${}^{\mathrm{dc}}{\bf D}^{\leq0}$ if and only if both $\bR\underline{\Gamma}_D^\dagger\cM$ and $\cM(^\dagger D)$ are. Moreover, since $D$ is a divisor, and $X$ is smooth, it follows that on $T(X)$, $({}^{\mathrm{dc}}{\bf D}^{\geq0},{}^{\mathrm{dc}}{\bf D}^{\leq0})$ is simply the shift of the ordinary t-structure $({\bf D}^{\geq0},{\bf D}^{\leq0})$ by the dimension of $X$, and therefore defines a t-structure on $T(X)$.

I now check the axioms \cite[D\'efinition 1.3.1]{BBD82}. Of course (ii) is straightforward. To prove (i), the fact that $({}^{\mathrm{dc}}{\bf D}^{\geq0},{}^{\mathrm{dc}}{\bf D}^{\leq0})$ defines a t-structure on both $T(X)$ and $T(Z)$ means that I can apply the localisation triangle
\[ \bR\underline{\Gamma}_D^\dagger  \to \mathrm{id}\to (^\dagger D) \overset{+1}{\to}, \] 
twice to reduce to showing that
\[ \mathrm{Hom}(\bR\underline{\Gamma}^\dagger_D\cM ,\cN(^\dagger D)) =0,\;\;\;\;\mathrm{Hom}(\cM(^\dagger D) ,\bR\underline{\Gamma}^\dagger_D\cN) =0\]
 whenever $\cM\in T(Y,D)\cap {}^{\mathrm{dc}}{\bf D}^{<0}$ and $\cN\in T(Y,D)\cap {}^{\mathrm{dc}}{\bf D}^{\geq0}$. The first is straightforward, since
 \[ \mathrm{Hom}(\bR\underline{\Gamma}^\dagger_D\cM ,\cN(^\dagger D))=\mathrm{Hom}(\bR\underline{\Gamma}^\dagger_D\cM ,\bR\underline{\Gamma}^\dagger_D\cN(^\dagger D)) = \mathrm{Hom}(\bR\underline{\Gamma}^\dagger_D\cM ,0)=0 .\]
For the second, note that $ \cM(^\dagger D)\in T(X)\cap {}^{\mathrm{dc}}{\bf D}^{<0}$, and hence $\cH^{n}(\cM(^\dagger D) )=0$ if $n \geq - \dim X$. On the other hand, $\bR\underline{\Gamma}^\dagger_D\cN\in T(Z)\cap {}^{\mathrm{dc}}{\bf D}^{\geq0}$, and hence $\cH^n(\bR\underline{\Gamma}^\dagger_D\cN)=0$ if $n< -\dim Z$. Thus $\mathrm{Hom}(\cM(^\dagger D) ,\bR\underline{\Gamma}^\dagger_D\cN)=0$ as required.

Finally, to prove (iii), I consider, for any $\cM\in T(Y,D)$, the shifted localisation triangle
\[ \cM \to \cM(^\dagger D) \to \bR\underline{\Gamma}^\dagger_D\cM[1] \overset{+1}{\to}. \]
Since $({}^{\mathrm{dc}}{\bf D}^{\geq0},{}^{\mathrm{dc}}{\bf D}^{\leq0})$ defines a t-structure on both $T(X)$ and $T(Z)$, I can extend this to a diagram
\begin{equation} \label{eqn: diagram for t-structures}
 \xymatrix{ & \tau_{\leq 0}\cM(^\dagger D) \ar[d] & \left(\tau_{\leq 0}\bR\underline{\Gamma}^\dagger_D\cM \right)[1] \ar[d] \\
\cM \ar[r] & \cM(^\dagger D) \ar[r] \ar[d] & \bR\underline{\Gamma}^\dagger_D\cM[1] \ar[d] \ar[r]^-{+1} & \\ & \tau_{>0}\cM(^\dagger D) \ar[d]^-{+1} & \left(\tau_{>0}\bR\underline{\Gamma}^\dagger_D\cM \right)[1]\ar[d]^-{+1} \\ & &  }
\end{equation}
Now consider the morphism 
\begin{equation}
\label{eqn: morphism for t structures}
 \tau_{\leq 0}\cM(^\dagger D) \to \left(\tau_{>0}\bR\underline{\Gamma}^\dagger_D\cM \right)[1] 
\end{equation}
Since $\tau_{\leq 0}\cM(^\dagger D)\in {}^{\mathrm{dc}}{\bf D}^{\leq 0}\cap T(X)$, it follows that
\[ \cH^n(\tau_{\leq 0}\cM(^\dagger D))=0,\;\;\;\;\forall n>-\dim X \]
On the other hand, since $(\tau_{>0}\bR\underline{\Gamma}^\dagger_D\cM)[1] \in {}^{\mathrm{dc}}{\bf D}^{\geq 0}\cap T(Z)$ it follows that 
\[ \cH^n(( \tau_{>0}\bR\underline{\Gamma}^\dagger_D\cM)[1]) =0,\;\;\;\;\forall n < -\dim Z.  \]
Since $Y$ is irreducible, $\dim Z<\dim X$, and it immediately follows that
\[ {\rm Hom}(\tau_{\leq 0}\cM(^\dagger D),(\tau_{>0}\bR\underline{\Gamma}^\dagger_D\cM )[1] )=0.\]
Hence the diagram (\ref{eqn: diagram for t-structures}) can be completed (and rotated) to obtain a diagram 
\[ \xymatrix{ \tau_{\leq 0}\bR\underline{\Gamma}^\dagger_D\cM \ar[r]\ar[d] & \ca{L} \ar[r]\ar[d] &  \tau_{\leq 0}\cM(^\dagger D)  \ar[d]  \ar[r]^-{+1}&\\
\bR\underline{\Gamma}^\dagger_D\cM\ar[r]\ar[d] & \cM \ar[d] \ar[r] & \cM(^\dagger D) \ar[r] \ar[d] \ar[r]^-{+1} & \\ \tau_{>0}\bR\underline{\Gamma}^\dagger_D\cM  \ar[r] \ar[d]^-{+1} & \cN \ar[d]^-{+1} \ar[r] & \tau_{>0}\cM(^\dagger D) \ar[d]^-{+1} \ar[r]^-{+1} & \\ & & & }\]
with all rows and columns exact triangles. Then $\ca{L}\in {}^{\mathrm{dc}}{\bf D}^{\leq0}$ and $\cN\in {}^{\mathrm{dc}}{\bf D}^{>0}$, completing the proof.
\end{proof}

It follows that the pair $({}^{\rm c}{\bf D}^{\geq0},{}^{\rm c}{\bf D}^{\leq0})$ also defines a t-structure on $\bD^b_{\hol,F}(\fP)$. I will denote the hearts by of these two t-structures by $\Con_F(\fP)$ and $\DCon_F(\fP)$, and cohomology functors by ${}^{\rm c}\cH^q$ and ${}^{\rm dc}\cH^q$. The truncation functors will be ${}^{\rm c}\tau$ and ${}^{\rm dc}\tau$, and objects of the hearts will be called constructible and dual constructible modules on $\fP$ respectively.

\begin{example} \label{exa: sp in hearts} Let $(X,Y,\fP)$ be a frame, with $\fP$ and $X$ smooth, and suppose $\sF\in \Isoc_F(X,Y)$. Then $\sp_{!}\sF\in \DCon_F(\fP)$. If there exists a divisor $D\subset \fP$ such that $X=Y\setminus D$, then $\widetilde{\sp}_+\sF\in \Hol_F(\fP)$. If $X=Y$ then $\sp_+\sF\in \Con_F(\fP)$.
\end{example}

As part of the proof that $({}^{\rm dc}{\bf D}^{\geq0},{}^{\rm dc}{\bf D}^{\leq0})$ is indeed a t-structure, we saw that the functors $\bR\underline{\Gamma}^\dagger_{Y}$ and $(^\dagger Y)$ for a closed subscheme $Y\hto P$ are t-exact for the dual constructible t-structure, I will abbreviate this as being `dct-exact'. If $u\from \fP\to\fQ$ is a closed immersion of smooth formal schemes, it is easy to check that $u_+$ is dct-exact.

\begin{proposition} \label{prop: u^! dct exact formal schemes} Let $u\from \fP\to \fQ$ be any morphism of smooth formal schemes. Then $u^!$ is dct-exact. 
\end{proposition}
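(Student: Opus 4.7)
The plan is to reduce to two tractable cases by factoring $u$. Specifically, I will write $u = p \circ i$ where $i = (\mathrm{id}, u) \from \fP \to \fP \times_{\spf \cV} \fQ$ is the graph of $u$ — a closed immersion, since $\fQ$ is separated — and $p$ is the second projection, which is smooth because $\fP$ is smooth over $\cV$. Using the composition isomorphism $u^! \cong i^! \circ p^!$, it will suffice to verify dct-exactness separately in the cases of a closed immersion and of a smooth morphism.

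For the closed immersion case, Berthelot--Kashiwara identifies $\bD^b_{\hol,F}(\fP)$, via $i_+$, with the full subcategory of $\bD^b_{\hol,F}(\fQ)$ consisting of complexes supported on $P$, and one has $i_+ i^! \cong \bR\underline{\Gamma}^\dagger_P$ on $\bD^b_{\hol,F}(\fQ)$. A direct inspection of the definitions shows that $i_+$ intertwines the dc-t-structures: the support-dimension condition transports since $P \hto Q$ is a closed immersion, and for any closed $Z \hto Q$ the identity $\bR\underline{\Gamma}^\dagger_Z \circ i_+ \cong i_+ \circ \bR\underline{\Gamma}^\dagger_{Z \cap P}$ (with $\dim(Z \cap P) \leq \dim Z$) converts the $\bR\underline{\Gamma}^\dagger$-vanishing condition on the target into the corresponding one on the source. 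Dct-exactness of $i^!$ then reduces to dct-exactness of $\bR\underline{\Gamma}^\dagger_P$, which is precisely Lemma \ref{lemma: t exact sections with support}.

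For the smooth case, let $p \from \fQ' \to \fQ$ be smooth of relative dimension $d$. The key input will be the identification $p^! \cong p^+[d]$, with $p^+$ an exact (for the holonomic t-structure) naive pullback satisfying $\dim \mathrm{Supp}(p^+ \sG) = \dim \mathrm{Supp}(\sG) + d$ for $\sG \in \Hol_F(\fQ)$. The ${}^{\rm dc}\bD^{\leq 0}$-inclusion is then immediate: one computes $\cH^{-n}(p^!\cM) \cong p^+ \cH^{-n+d}(\cM)$, whose support has dimension $\leq (n - d) + d = n$. For the ${}^{\rm dc}\bD^{\geq 0}$-inclusion, I will combine Proposition \ref{prop: overhol complexes generically isocrystals} with Noetherian induction on supports and the localisation triangle $\bR\underline{\Gamma}^\dagger_D \to \mathrm{id} \to (^\dagger D) \overset{+1}{\to}$ for a suitable divisor $D \subset \fQ$, reducing to objects of the form $\widetilde{\sp}_{X+}\sF$ for smooth $X \hto Q$. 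On such generators the compatibility $p^! \circ \widetilde{\sp}_{X+} \cong \widetilde{\sp}_{p^{-1}(X)+} \circ p^*[d]$ (where $p^{-1}(X)$ is smooth since both $X$ and $p$ are) produces an output whose position in the dc-t-structure can be read off from Example \ref{exa: sp in hearts}.

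The main obstacle will be the smooth case, and in particular the ${}^{\rm dc}\bD^{\geq 0}$-inclusion. Handling arbitrary closed $Z' \hto Q'$ (not merely those of the form $p^{-1}(Z)$) requires care: one approach is to set $Z = \overline{p(Z')}$ (so $\dim Z \leq \dim Z'$), write $\bR\underline{\Gamma}^\dagger_{Z'} = \bR\underline{\Gamma}^\dagger_{Z'} \circ \bR\underline{\Gamma}^\dagger_{p^{-1}(Z)}$, apply smooth base change $\bR\underline{\Gamma}^\dagger_{p^{-1}(Z)} \circ p^! \cong p^! \circ \bR\underline{\Gamma}^\dagger_Z$, and then finish via the d\'evissage outlined above. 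Verifying that all the dimension and cohomological-degree bookkeeping aligns correctly is the one genuinely technical step of the proof.
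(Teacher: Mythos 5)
Your proof is correct but takes a genuinely different route from the paper's. You factor $u = p\circ i$ via the graph and treat the closed immersion and smooth cases separately; the paper instead handles closed immersions exactly as you do, notes the trivial case of morphisms $\spf{\cV''}\to\spf{\cV'}$ for $\cV'\subset\cV''$ finite unramified over $\cV$ (hence the case of $\cV'$-valued points of $\fQ$), and then invokes the conservativity of extraordinary stalks at closed points (Proposition \ref{prop: i^! conservative}): for any $\cV'$-valued point $i$ of $\fP$ one has $i^!\,{}^{\rm dc}\tau_{<0}\,u^!\cM = u(i)^!\,{}^{\rm dc}\tau_{<0}\cM = 0$ when $\cM\in{}^{\rm dc}\bD^{\geq0}$, and similarly for ${}^{\rm dc}\tau_{>0}$. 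That reduction to points sidesteps the smooth case and all the accompanying dimension bookkeeping, which is what the paper's argument buys. In your smooth case, the ${}^{\rm dc}\bD^{\leq0}$ count is fine, but the ${}^{\rm dc}\bD^{\geq0}$ detour through $Z=\overline{p(Z')}$ and base change does not by itself improve the estimate — you are still left with an arbitrary $Z'$ in front of $p^!\bR\underline{\Gamma}^\dagger_Z\cM$. The clean route through the ingredients you cite is: show $p^!$ preserves the heart $\DCon_F$ by d\'evissage on the generators $\sp_{X!}\sF$, using $p^!\sp_{X!}\sF\cong\sp_{p^{-1}(X)!}p^*\sF$ from \S\ref{sec: sp+}; then since any $\cM\in{}^{\rm dc}\bD^{\geq0}\cap\bD^b$ is an iterated extension of $({}^{\rm dc}\cH^k(\cM))[-k]$ for $k\geq0$, the complex $p^!\cM$ is an iterated extension of heart objects shifted by $-k\leq0$, hence lies in ${}^{\rm dc}\bD^{\geq0}$.
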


\begin{remark} This is the dual of the fact that $u^+$ is t-exact for the constructible t-structure.
\end{remark}

\begin{proof}
If $u$ is a closed immersion, then $u_+u^!\isomto \bR{\underline{\Gamma}}^\dagger_P$. Hence the dct-exactness of $u^!$ follows from that of $\bR\underline{\Gamma}^\dagger_P$ and $u_+$. If $K'\subset K''$ are finite unramified extensions of $K$, with rings of integers $\cV'\subset \cV''$, then the proposition clearly holds for the morphism 
$u\from \spf{\cV''}\to \spf{\cV'}$. Hence the proposition holds whenever $u$ is a $\cV'$-valued point of $\fQ$, for any such $\cV'$.

But using Proposition \ref{prop: i^! conservative}, the general case follows from the particular case of $\cV'$-valued points for $\cV'/\cV$ unramified. Indeed, taking $\cM\in {}^{\rm dc}\bD^{\geq0}$ (resp. $\cM\in {}^{\rm dc}\bD^{\leq0}$), to prove that $u^!\cM\in {}^{\rm dc}\bD^{\geq0}$ (resp. $\cM\in {}^{\rm dc}\bD^{\leq0}$), it suffices to show that ${}^{\rm dc}\tau_{<0}u^!\cM=0$ (resp. ${}^{\rm dc}\tau_{>0}u^!\cM=0$). But now, taking any $\cV'$-valued point $i\from \spf{\cV'}\to \fP$, I can just calculate
\[ i^!{}^{\rm dc}\tau_{<0}u^!\cM= {}^{\rm dc}\tau_{<0}u(i)^!\cM =u(i)^!{}^{\rm dc}\tau_{<0}\cM = 0\]
(resp. $i^!{}^{\rm dc}\tau_{>0}u^!\cM=0$), and therefore conclude using Proposition \ref{prop: i^! conservative}.
\end{proof}

A similar method to Proposition \ref{prop: u^! dct exact formal schemes} can be used to prove that $\wotimes_{\cO_\fP}$ is dct-exact. Indeed, dct-exactness can be checked after taking extraordinary stalks, and $\wotimes_{\cO_\fP}$ commutes with extraordinary pullback. This therefore reduces to the trivial case when $\fP=\spf{\cV'}$ for $\cV'/\cV$ unramified.

If $u\from\fP\to \spf{\cV}$ is the structure morphism, then $\cO^{*}_\fP:=u^!\cO_{\cV\Q}\in \DCon_F(\fP)$. I will call this the \emph{constant} dual constructible module on $\fP$. Be warned that $\cO^{*}_{\fP}$ is \emph{not} the $\sD^\dagger_{\fP\Q}$-module $\cO_{\fP\Q}$, instead it is a shift of this module by the dimension of $\fP$. This shifted version of the `constant' module will instead match up with the constant locally free isocrystal on $\fP$ via the overconvergent Riemann--Hilbert correspondence.

\subsection{Dual constructible modules on pairs and varieties} \label{subsec: dc mod on pairs}

For pairs $(X,Y)$, I will only define the dual constructible t-structure. This is done by taking an l.p. frame $(X,Y,\fP)$ and simply restricting the dual constructible t-structure on $\bD^b_{\hol,F}(\fP)$ to $\bD^b_{\hol,F}(X,Y,\fP)$.The heart of this structure will be denoted $\DCon_F(X,Y)$, and referred to as the category of dual constructible modules on $(X,Y)$. 

\begin{remark}
The analogous definition is not the correct one for either the holonomic or constructible t-structures, and this fact largely explains why it is the dual constructible t-structure which matches up with the natural one on the category of constructible complexes. In fact, the increasing list of hypotheses in Example \ref{exa: sp in hearts} can be removed by replacing $\Hol_F(\fP)$ and $\Con_F(\fP)$ by appropriate analogues for the pair $(X,Y)$.
\end{remark} 

If $(f,g)\from (X,Y)\to (\spec{k},\spec{k})$ denotes the structure morphism, I define
\[ \cO_{(X,Y)}^{*}:=f^!\cO_{\spf{\cV}\Q}\]
to be the constant dual constructible module on $(X,Y)$. If $(X,Y,\fP)$ is an l.p. frame, then $\cO_{(X,Y)}^{*}=\mathbf{R}\underline{\Gamma}^\dagger_X\cO_{\fP}^{*}$. If $Y$ is proper over $k$, then $\cO^{*}_{(X,Y)}\in \bD^b_{\hol,F}(X,Y)=\bD^b_{\hol,F}(X)$ is independent of the choice of $Y$, in which case I will write it as $\cO^{*}_{X}$. Also be warned that this is \emph{not} the same as Caro's object defined in \cite[\S4.23]{Car09b}. Indeed, his is a constant holonomic module (that is, it lies in the heart of the holonomic t-structure on $\bD^b_{\hol,F}(X,Y,\fP)$, that we haven't defined), whereas ours is a constant dual constructible module. Our $\cO^{*}_{X}$ is the direct analogue of the Verdier dual $\bD_X(\underline{\Q}_{\ell,X})$ of the constant sheaf in $\ell$-adic \'etale cohomology.

\begin{lemma} The object $\cO^*_{(X,Y)}\in \bD^b_{\hol,F}(X,Y)$ is a unit for the tensor product $\wotimes_{\cO_{(X,Y)}}$.
\end{lemma}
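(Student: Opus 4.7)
The plan is to pass to an ambient l.p.\ frame $(X,Y,\fP)$ and reduce the claim to a computation of $\sD^\dagger$-modules on $\fP$. By the discussion at the end of \S\ref{subsec: dc mod on pairs}, the object $\cO^*_{(X,Y)}$ is realised by $\mathbf{R}\underline{\Gamma}^\dagger_X\cO^*_\fP$, and $\wotimes_{\cO_{(X,Y)}}$ is by definition $\wotimes_{\cO_\fP}$ restricted to $\bD^b_{\hol,F}(X,Y,\fP)\subset \bD^b_{\hol,F}(\fP)$. The desired isomorphism therefore amounts to
\[ \mathbf{R}\underline{\Gamma}^\dagger_X\cO^*_\fP\wotimes_{\cO_\fP}\cM \isomto \cM \]
functorially in $\cM\in \bD^b_{\hol,F}(X,Y,\fP)$. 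Independence from the choice of l.p.\ frame will follow from Proposition \ref{prop: indep Ddag}.

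First I would observe that $\cO^*_\fP$ is a unit for $\wotimes_{\cO_\fP}$ on all of $\bD^b_{\hol,F}(\fP)$. For the smooth structure morphism $u\from \fP\to \spf{\cV}$ of relative dimension $\dim\fP$, unwinding Berthelot's definition of $u^!$ identifies $\cO^*_\fP = u^!\cO_{\cV\Q}\cong \cO_{\fP\Q}[\dim\fP]$, so the shift in the definition $\wotimes_{\cO_\fP} = \otimes_{\cO_\fP}[-\dim\fP]$ is cancelled exactly, giving $\cO^*_\fP\wotimes_{\cO_\fP}\cN \cong \cO_{\fP\Q}\otimes_{\cO_\fP}\cN \cong \cN$ for any $\cN\in \bD^b_{\hol,F}(\fP)$.

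The main step is then to establish the commutation
\[ \mathbf{R}\underline{\Gamma}^\dagger_X(\cN_1)\otimes_{\cO_\fP}\cN_2 \cong \mathbf{R}\underline{\Gamma}^\dagger_X(\cN_1\otimes_{\cO_\fP}\cN_2) \]
for $\cN_1,\cN_2\in \bD^b_{\hol,F}(\fP)$, which of course implies the same statement with $\otimes_{\cO_\fP}$ replaced by $\wotimes_{\cO_\fP}$. For a divisor $D\hookrightarrow P$ this is immediate: the functor $(^\dagger D)$ is tensoring with the flat $\cO_\fP$-algebra $\cO_\fP(^\dagger D)$ and so commutes with $\otimes_{\cO_\fP}$ trivially, and the case of $\mathbf{R}\underline{\Gamma}^\dagger_D$ follows by tensoring the localisation triangle $\mathbf{R}\underline{\Gamma}^\dagger_D\to\id\to(^\dagger D)$ with $\cN_2$. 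The case of a closed subscheme $Y$ is then handled by writing $Y$ as an intersection of divisors and iterating, and a locally closed $X = Z\setminus T$ is handled using $\mathbf{R}\underline{\Gamma}^\dagger_X = (^\dagger T)\circ \mathbf{R}\underline{\Gamma}^\dagger_Z$ and the same argument once more.

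Combining the two steps, for $\cM$ supported on $X$ we compute
\[ \cO^*_{(X,Y)}\wotimes_{\cO_{(X,Y)}}\cM \cong \mathbf{R}\underline{\Gamma}^\dagger_X\cO^*_\fP \wotimes_{\cO_\fP}\cM \cong \mathbf{R}\underline{\Gamma}^\dagger_X(\cO^*_\fP\wotimes_{\cO_\fP}\cM) \cong \mathbf{R}\underline{\Gamma}^\dagger_X\cM \cong \cM, \]
as required. The main obstacle is the commutation in the second step: verifying it cleanly for locally closed immersions (rather than just divisors) requires some care with the compositions of the various truncation functors, but is a formal diagram chase once the divisor case is in hand. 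Everything else is bookkeeping with the shift conventions.
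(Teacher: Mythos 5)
Your argument is correct in outline, but it takes a genuinely different route from the paper's. The paper also begins with the observation that $\cO^*_\fP\cong\cO_{\fP\Q}[\dim\fP]$ is a unit for $\wotimes_{\cO_\fP}$ on the ambient formal scheme, but it then bypasses the commutation of $\mathbf{R}\underline{\Gamma}^\dagger_X$ with $\wotimes_{\cO_\fP}$ entirely. Instead it writes down the natural transformation $\cO^*_{(X,Y)}\wotimes_{\cO_{(X,Y)}}(-)\to(-)$ (and the analogous one in the other direction for the general pair), and checks that it is an isomorphism after taking extraordinary stalks $\mathbf{R}\underline{\Gamma}^\dagger_x$ at closed points $x$, invoking the conservativity result Proposition~\ref{prop: i^! conservative} and the already-known commutation of $u^!$ with $\wotimes$ (\cite[Proposition 2.1.9]{Car15b}); at the level of a point the claim is trivial. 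Your route instead establishes $\mathbf{R}\underline{\Gamma}^\dagger_X(\cN_1)\otimes_{\cO_\fP}\cN_2\cong\mathbf{R}\underline{\Gamma}^\dagger_X(\cN_1\otimes_{\cO_\fP}\cN_2)$ directly from the localisation triangle and d\'evissage on $X$. This does work, and it has the merit of producing a useful intermediate compatibility in its own right. One caveat: the claim that the divisor case is ``immediate'' because $(^\dagger D)$ is tensoring with a flat $\cO_\fP$-algebra is a little hasty --- Berthelot's $\otimes_{\cO_\fP}$ for coherent $\sD^\dagger_{\fP\Q}$-complexes is built from level-$m$ $p$-adically completed tensor products of $\widehat{\sD}^{(m)}$-modules, and $(^\dagger D)$ likewise involves completion of $\widehat{\cB}^{(m)}_\fP(D)\otimes\widehat{\sD}^{(m)}_\fP$, so the associativity you invoke is a genuine (though true and documentable) compatibility rather than a formal triviality. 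The paper's stalk-based argument is shorter precisely because it only needs the pointwise trivial case plus a conservativity theorem that has already been established.
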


\begin{proof} Let $(X,Y,\fP)$ be an l.p. frame. In the case $X=Y=P$, it is clear that $\cO^*_{\fP}$ is a unit for the tensor product $\wotimes_{\cO_{\fP}}$. If $X=Y$, then this provides a natural transformation
\[ \cO^*_{(Y,Y)}\wotimes_{\cO_{(Y,Y)}} (-)\to (-)\]
of endofunctors of $\bD^b_{\hol,F}(Y,Y)$. To check that this natural transformation is an isomorphism, it suffices to do so on extraordinary stalks, which reduces to the trivial case
\[ (Y,Y)=(\spec{k'},\spec{k'})\]
for $k'/k$ a finite extension. In general, this in turn induces a natural transformation
\[ (-)\to \cO^*_{(X,Y)}\wotimes_{\cO_{(X,Y)}} (-) \]
of endofunctors of $\bD^b_{\hol,F}(X,Y)$ which is proved to be an isomorphism in the same way.
\end{proof}

As in the case of formal schemes, the tensor product $\wotimes_{\cO_X}$ is dct-exact. If $(f,g)\from (X',Y')\to (X,Y)$ is a morphism of pairs, then $f^!$ is always dct-exact. If $g$ is a closed immersion, so $f$ is a locally closed immersion, then $f_+$ is dct-exact. In particular, if $(X,Y)$ is a pair,
\[ j\from U\to X \ot Z\from i \]
are complementary open and closed subschemes, and $\cM\in \DCon_F(X,Y)$, then the localisation triangle
\begin{equation}
\label{eqn: localisation triangle} i_+i^!\cM\to \cM\to j_+j^+ \cM \overset{+1}{\to} 
\end{equation}
can be viewed as a short exact sequence of dual constructible modules. There is then the following version of d\'evissage for dual constructible modules. 

\begin{proposition} Every $\cM\in \DCon_F(X,Y)$ admits a finite composition  series
\[ 0 =\cM_0 \subset \cM_1 \subset \ldots \subset \cM_n =\cM, \]
such that for each $1\leq \alpha \leq n$, there exists a smooth locally closed subscheme $i_\alpha\from X_\alpha\to X$, with closure $\overline{X}_\alpha$ in $P$, a locally free isocrystal $\sF_\alpha\in \Isoc_F(X_\alpha,\overline{X}_\alpha)$, and an isomorphism
\[ \cM_{\alpha}/\cM_{\alpha-1}\isomto i_{\alpha+}\mathrm{sp}_{X_\alpha!}\sF_\alpha. \] 
\end{proposition}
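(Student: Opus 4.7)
The plan is to use Noetherian induction on the support of $\cM$. The base case $\cM = 0$ is trivial, so suppose $\cM \neq 0$. Fix an l.p.\ frame $(X,Y,\fP)$, let $Z = \mathrm{Supp}(\cM)$ be the support of $\cM$ as a closed subset of $X$, and let $\overline{Z} \subset P$ denote its closure.

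Applying Proposition \ref{prop: overhol complexes generically isocrystals} to $\cM$ regarded as an object of $\bD^b_{\hol,F}(\fP)$ furnishes a divisor $D \subset P$ such that $\overline{Z} \setminus D$ is smooth and non-empty, and each $\mathcal{H}^q(\cM(^\dagger D))$ lies in the essential image of $\widetilde{\sp}_{(\overline{Z}\setminus D)+}$. After passing to a smooth open subscheme, and using the compatibility of $\widetilde{\sp}_+$ with further restriction, we may take a smooth open $X_0 \subset \overline{Z}\setminus D$ that is contained in $X$, so that $X_0$ is a smooth locally closed subscheme of $X$. Writing $d_0 = \dim X_0$, the key point is that the dct-t-structure on overholonomic complexes supported on $X_0$ agrees with the shift by $d_0$ of the natural t-structure, because $X_0$ is smooth. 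Since $\cM \in \DCon_F(X,Y)$ sits in dct-degree zero, its restriction to a neighbourhood of $X_0$ (which is supported on $X_0$) is concentrated in natural degree $-d_0$, with a single cohomology sheaf $\widetilde{\sp}_{X_0+}\sF$ for some $\sF \in \Isoc_F(X_0, \overline{X}_0)$; hence this restriction equals $\sp_{X_0!}\sF$.

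Set $Z' := Z \setminus X_0$, a closed subset of $X$ of strictly smaller dimension, and let $i\from X_0 \to X$ be the locally closed immersion. Writing $j\from X \setminus Z' \to X$ for the open immersion and $i'\from Z' \to X$ for its closed complement, the localisation sequence \eqref{eqn: localisation triangle} in $\DCon_F(X,Y)$ gives a short exact sequence
\[ 0 \to i'_+ i'^!\cM \to \cM \to j_+ j^+\cM \to 0. \]
Since $j^+\cM$ is supported on $X_0$ as a closed subscheme of $X \setminus Z'$, we may rewrite $j_+ j^+\cM \cong i_+(i^!\cM) \cong i_+ \sp_{X_0!}\sF$, using the identification from the previous paragraph together with the compatibility $f^! \circ \sp_{X!} \cong \sp_{X'!} \circ f^*$ of $\sp_!$ with pullback. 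The left-hand term $i'_+ i'^!\cM$ lies in $\DCon_F(X,Y)$ with support inside $Z'$, and the Noetherian induction hypothesis produces a filtration of it of the required shape. Prepending this filtration to the displayed short exact sequence, whose top quotient is $i_+\sp_{X_0!}\sF$, yields the desired composition series of $\cM$.

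The main obstacle is the identification in the second paragraph that the generic restriction $j^+\cM$ is actually of the form $\sp_{X_0!}\sF$ for a single locally free isocrystal, rather than merely a complex whose cohomology sheaves happen to be of this form. This rests on the precise relationship between the natural t-structure on $\bD^b_{\hol,F}(\fP)$ and the dct-t-structure when restricted to complexes supported on a smooth subscheme, namely that they differ by the shift $[\dim X_0]$. A secondary technical point is the arrangement that $X_0 \subset X$, which amounts to shrinking the $X_0$ produced by Proposition \ref{prop: overhol complexes generically isocrystals} to a smaller smooth open; once these are in hand the argument follows the standard generic-smoothness-plus-Noetherian-induction template familiar from the $\ell$-adic setting.
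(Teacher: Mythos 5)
Your argument is correct and takes essentially the same route as the paper: the paper's own proof simply cites the localisation exact sequence \eqref{eqn: localisation triangle} together with Proposition \ref{prop: overhol complexes generically isocrystals}, and your write-up fills in exactly the Noetherian induction and the key t-structure observation (that on a smooth locally closed $X_0$ the dct-t-structure is the shift of the natural one by $\dim X_0$, so that a dct-degree-zero object supported on $X_0$ with cohomology sheaves in the image of $\widetilde{\sp}_{X_0+}$ is concentrated in a single degree) that the paper leaves implicit. The only minor points worth flagging are that one should shrink $X_0$ to be connected (or of pure dimension) so that the shift is well-defined, and that after forming $X'=\overline{Z}\setminus D$ one may need to intersect with $X$ and shrink again so that $X_0\subset Z$; both are easily absorbed into the Noetherian induction.
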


\begin{proof}
Thanks to the localisation exact sequence (\ref{eqn: localisation triangle}), this follows from Proposition \ref{prop: overhol complexes generically isocrystals}.
\end{proof}

I will end this section with the following $\sD^\dagger$-module analogue of Theorem \ref{theo: finite etale general cons}.

\begin{proposition} \label{prop: finite etale adjoints dct exact} Let
\[ \xymatrix{ X'\ar[r] \ar[d]^f & Y' \ar[d]^g\\ X \ar[r] & Y } \] 
be a morphism of strongly realisable pairs, such that $g$ is proper, $f$ is finite \'etale, and $X$ is smooth. Then the natural morphism $f_!\to f_+$ of functors $\bD^b_{\hol,F}(X',Y')\to \bD^b_{\cons,F}(X,Y)$ is an isomorphism, and there exists a trace isomorphism
\[ f^+\isomto f^! \]
of functors $\bD^b_{\hol,F}(X,Y)\to \bD^b_{\cons,F}(X',Y')$. All four functors $(f^+,f_+,f_!,f^!)$ are dct-exact.
\end{proposition}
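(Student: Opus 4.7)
The plan is to reduce to a local situation where the morphism of pairs extends to a particularly nice morphism of l.p. frames, and then extract all four claims from standard properties of the six functors. All the asserted properties (dct-exactness and the isomorphisms between functors) are local on $\fP$ and hence on $Y$, so one may first assume $\fP$ is affine. Shrinking $X$ inside $Y$ via Noetherian induction and the localisation triangle $i_+i^!\to \mathrm{id}\to j_+j^+\overset{+1}{\to}$ then further reduces to the case in which $X$ is the complement of a hypersurface in $Y$. In this setting, the global complete intersection argument used in the proof of Theorem \ref{theo: finite etale general cons} produces a morphism of l.p. frames
\[ (f,g,u)\from (X',Y',\fP')\to (X,Y,\fP) \]
extending $(f,g)$ and such that $u\from \fP'\to \fP$ is proper and \'etale in a neighbourhood of $X'$. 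I fix such an extension throughout.

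With this lift, $f_+=u_+$ and $f_!=\bD_X\circ u_+\circ \bD_{X'}$, so the duality isomorphism $u_+\circ \bD_{\fP'}\isomto \bD_\fP\circ u_+$ (available because $u$ is proper) immediately yields $f_!\isomto f_+$, after observing that the various $\bR\underline{\Gamma}^\dagger$ implicit in $\bD_X$ and $\bD_{X'}$ are absorbed by the fact that $u_+$ sends objects supported on $X'$ to objects supported on $X$. For the trace isomorphism $f^+\isomto f^!$, I will exploit the \'etaleness of $u$ on an open neighbourhood $V\supset X'$ in $\fP'$: on such a neighbourhood there is a canonical isomorphism $u^+|_V\isomto u^!|_V$, since $u^+$ and $u^!$ differ only by a shift by twice the relative dimension, which vanishes for \'etale morphisms. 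Because $\bR\underline{\Gamma}^\dagger_{X'}$ depends only on data in any such neighbourhood of $X'$, applying it produces the desired isomorphism of functors $f^+\isomto f^!$.

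The dct-exactness of $f^!$ follows from Proposition \ref{prop: u^! dct exact formal schemes} combined with the dct-exactness of $\bR\underline{\Gamma}^\dagger_{X'}$ established in Lemma \ref{lemma: t exact sections with support}, and hence $f^+\cong f^!$ is also dct-exact. For the pushforwards, the adjunctions $(f_!,f^!)$ and $(f^+,f_+)$ give that $f_!$ is right dct-exact (as the left adjoint of the t-exact $f^!$) and $f_+$ is left dct-exact (as the right adjoint of the t-exact $f^+$); combined with $f_!\isomto f_+$, this forces $f_+=f_!$ to be both left and right dct-exact, hence dct-exact.

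The main obstacle is the construction of the l.p. frame extension with $u$ proper and \'etale around $X'$. This essentially repeats the argument from the proof of Theorem \ref{theo: finite etale general cons}, but with the additional demand that $\fP'$ itself embed in a smooth proper formal scheme over $\cV$; the extension is obtained by taking the closed formal subscheme of $\widehat{\P}^n_\fP$ cut out by homogenised lifts of the defining equations of $X'$ as a global complete intersection inside $\A^n_X$, and passing to its maximal $\cV$-flat closed formal subscheme, which inherits all the required properties.
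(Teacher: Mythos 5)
Your proposal takes a genuinely different route from the paper's, and the route has a gap in the construction of the trace isomorphism.

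The paper's proof of the trace isomorphism proceeds in two logically separate steps. First it \emph{constructs} a natural morphism $f^+\to f^!$ globally: by adjunction this reduces to exhibiting $f_+f^!\to\mathrm{id}$, which via the projection formula and the fact that $\cO^*_{(X,Y)}$ is a unit reduces to giving a map $f_+f^!\cO^*_{(X,Y)}\to\cO^*_{(X,Y)}$, and this is obtained by restricting Abe's trace morphism from \cite{Abe18a} from the overconvergent to the partially overconvergent setting. Only then does it \emph{check} this given morphism is an isomorphism, which \emph{is} a local question, and which it handles by passing to the convergent category via \cite[Lemma~1.2.3]{AC18a} and lifting $f$ to a genuinely finite \'etale cover of smooth formal schemes.

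Your proposal collapses these two steps. You first localise on $\fP$, then appeal to Noetherian induction to reduce to the complement-of-a-hypersurface case, and then build the isomorphism $f^+\cong f^!$ directly from $u^+|_V\cong u^!|_V$ on an \'etale neighbourhood $V\supset X'$ of the proper lift $u\colon\fP'\to\fP$. This constructs the isomorphism only locally and only after a non-canonical choice of frame extension; you never produce a global natural transformation $f^+\to f^!$ to glue against, so the gluing problem is not addressed. This matters downstream: Remark~\ref{rem: D module finite etale direct summand} uses the fact that the compositions of units and counits coming from the adjunctions $(f^+,f_+)$ and $(f_!,f^!)$ are the identity under the identifications $f^+\cong f^!$ and $f_!\cong f_+$, which is a property of the specific trace isomorphism, not of an arbitrary isomorphism constructed patchwise. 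The Noetherian-induction reduction is also questionable for a statement about functors: to shrink $X$ one must compare the functors on $(X,Y)$ and on an open $(U,Y)$ via the localisation triangle, which requires knowing in advance that the to-be-constructed trace morphism is compatible with that triangle. Finally, the justification ``$u^+$ and $u^!$ differ only by a shift by twice the relative dimension'' is imprecise: the relation runs through the duality functor $\bD$, and although the twist is indeed trivial for \'etale $u$, this is exactly the ``straightforward computation'' that the paper's lifted argument performs, not a purely numerological cancellation.

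Your dct-exactness argument (for $f^!$ via Proposition~\ref{prop: u^! dct exact formal schemes} and Lemma~\ref{lemma: t exact sections with support}, then for the pushforwards via double adjointness) matches the paper's and is fine as stated, but it presupposes the trace isomorphism $f^+\cong f^!$ which is the part that needs repair.
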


\begin{remark} The hypothesis that $X$ is smooth is almost certainly unnecessary, but I will only need the result under this assumption, which makes the proof marginally simpler.
\end{remark}

\begin{proof}
That $f_!\isomto f_+$ follows from the fact that $f$ is proper. To construct the trace morphism
\[ f^+\to f^!, \]
I will construct the adjoint
\[ \mathrm{id}\to f_+f^!. \]
Indeed, since $\cO^*_{(X,Y)}$ is a unit for the tensor product, the projection formula shows that
\[ f_+f^!\cM \isomto f_+f^!\cO^{*}_{(X,Y)} \wotimes_{\cO_{X}} \cM, \]
so it suffices to construct
\[ f_+f^!\cO^*_{(X,Y)} \to \cO^*_{(X,Y)}. \]
But now $\cO^*_{(X,Y)}$ clearly extends to an object of the \emph{overconvergent} category $\DCon_F(X)\subset \bD^b_{\hol,F}(X)$, and so I can just restrict the trace morphism constructed in \cite[\S1.5]{Abe18a} from the overconvergent to the partially overconvergent category. 

To prove that $f^+\to f^!$ is an isomorphism, \cite[Lemma 1.2.3]{AC18a} shows that I can replace $(X,Y)$ by $(X,X)$, in other words I can work in the convergent category. Thus, after localising on $X$, I can assume that it lifts to a smooth formal scheme $\fX$, and that finite \'etale cover $f\from X'\to X$ lifts to a finite \'etale cover $u\from  \fX'\to \fX$. In this case, it is a straightforward computation that $u^!\circ \bD_{\fX} \cong \bD_{\fX'}\circ u^!$, and the claim follows. 

For the dct-exactness claims, the case of $f^!\cong f^+$ was handled in Proposition \ref{prop: u^! dct exact formal schemes}, and the case of $f_+\cong f_!$ then follows because it is both a left and a right adjoint to $f^!\cong f^+$.
\end{proof}

\begin{remark} \label{rem: D module finite etale direct summand} Since $f^!=f^+$ and $f_!=f_+$, it follows that any $\cM\in \DCon_F(X,Y)$ is a direct summand of $f_+f^+\cM\in \DCon_F(X,Y)$, and any $\cN\in\DCon_F(X',Y')$ is a direct summand of $f^+f_+\cN\in \DCon_F(X',Y')$.
\end{remark}

Needless to say, there are analogues of all of the results in \S\ref{subsec: dc mod on pairs} with the strongly realisable pair $(X,Y)$ replaced by a strongly realisable variety $X$. 

\section{Quasi-coherent complexes and rigidification} \label{sec: rigidification of O-modules}

A key tool in the comparison between $\bD^b_{\cons}$ and $\bD^b_{\hol}$ will be a completed version of the module pullback functor along the specialisation morphism
\[ \sp \from  \fP_K \to \fP \]
for $\fP$ a flat formal scheme. This will only work for complexes which are quasi-coherent in the sense of Berthelot (I will recall the definition below), and the goal in this section is to describe this construction.

\subsection{Quasi-coherent complexes}  \label{subsec: quasi-coherent}

Let $\fr{P}$ be a flat formal scheme, and set $P_n:=\fr{P}\times_\mathcal{V} \mathcal{V}/\fr{m}^{n+1}$. Thus $P_0=P$. I will write ${\bf D}_\mathrm{qc}(\mathcal{O}_{\fr{P}})$ for the derived category of complexes of $\mathcal{O}_\fr{P}$-modules which are \emph{quasi-coherent} in the sense of \cite[\S3.2]{Ber02}. Thus a complex of $\cO_\fP$-modules $\mathcal{M}$ is quasi-coherent iff: 
\begin{enumerate}
\item $ \mathcal{O}_{P_0} \otimes^\mathbf{L}_{\mathcal{O}_{\fr{P}}} \mathcal{M}$ is a quasi-coherent complex of $\mathcal{O}_{P_0}$-modules;
\item the natural map $\mathcal{M} \rightarrow \mathbf{R}\lim{n} \mathcal{O}_{P_n} \otimes^\mathbf{L}_{\mathcal{O}_{\fr{P}}} \mathcal{M}$ is an isomorphism in ${\bf D}(\cO_\fr{P})$.
\end{enumerate}
It follows by induction on $n$ that $\mathcal{M}_n:=\mathcal{O}_{P_n} \otimes^\mathbf{L}_{\mathcal{O}_{\fr{P}}} \mathcal{M}$ is a quasi-coherent complex of $\mathcal{O}_{P_n}$-modules for all $n$. In fact, Berthelot phrases the definition in terms of the topos $\fP_\bullet$ of $\N$-indexed projective systems of sheaves on $\fP$. This is ringed via the projective system $\cO_{P_\bullet}=\{\cO_{P_n}\}_{n\in \N}$, and there is a morphism of ringed toposes
\[ l_\fP \from (\fP,\cO_{P_\bullet}) \to (\fP,\cO_\fP),\]
where the pushforward functor takes the inverse limit, and the (module) pullback functor tensors over $\cO_{\fP}$ with $\cO_{P_\bullet}$. A complex $\cM$ is then quasi-coherent iff $(\bL l^*_{\fP}\cM)_0 \in \bD(\cO_{P_0})$ is quasi-coherent, and the natural map $\cM\to \bR l_{\fP*}\bL l_{\fP}^*\cM$ is an isomorphism.

\begin{example} Any (possibly unbounded) complex with coherent cohomology sheaves is quasi-coherent. Thus ${\bf D}_\mathrm{coh}(\cO_\fr{P})\subset {\bf D}_\mathrm{qc}(\cO_\fr{P})$ as a full subcategory.
\end{example}

\begin{remark} If $\ca{A}$ is an $\cO_\fP$-algebra, a complex of $\ca{A}$-modules will be called quasi-coherent if it is so as a complex of $\cO_\fP$-modules. The (derived) category of quasi-coherent complexes of $\ca{A}$-modules will be denoted ${\bf D}_\mathrm{qc}(\ca{A})$, and is viewed as a full subcategory of $\bD(\ca{A})$. Note that I do not assume that $\ca{A}$ is itself quasi-coherent as a complex of $\cO_\fP$-modules. For example, if $\fP$ is smooth, I will later want to take $\ca{A}=\widehat{\sD}^{(m)}_\fP$ and $\ca{A}=\sD^{(m)}_{\fP}$, although of course in this case the forgetful functor ${\bf D}_\mathrm{qc}(\widehat{\sD}^{(m)}_\fP)\to {\bf D}_\mathrm{qc}(\sD^{(m)}_\fP)$ is an equivalence.
\end{remark}

One important way of constructing quasi-coherent complexes is the following lemma.

\begin{lemma} \label{lemma: inv qc} Suppose that $\left\{\mathcal{M}_n\right\}_{n\in \N}$ is an inverse system of complexes of $\mathcal{O}_\fr{P}$-modules, such that:
\begin{enumerate}
\item each $\mathcal{M}_n$ is quasi-coherent complex of $\mathcal{O}_{P_n}$-modules;
\item \label{num: inv qc 2} for each $n$, the induced map
\[ \mathcal{O}_{P_n} \otimes^\mathbf{L}_{\mathcal{O}_{P_{n+1}}} \mathcal{M}_{n+1}\rightarrow \mathcal{M}_n  \]
is an isomorphism in $\bD(\cO_{P_n})$.
\end{enumerate}
Then $\mathcal{M}:=\mathbf{R}\lim{n} \mathcal{M}_n\in {\bf D}(\mathcal{O}_\fr{P})$ is quasi-coherent. 
\end{lemma}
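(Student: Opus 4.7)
The plan is to establish the stronger claim that there is a natural isomorphism $\cO_{P_n}\otimes^\bL_{\cO_\fP}\cM\cong\cM_n$ for each $n$. This simultaneously yields both conditions for quasi-coherence: condition (1) becomes the assumed quasi-coherence of $\cM_0$, and condition (2) follows tautologically from $\bR\lim{n}\cM_n=\cM$.

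I would first exploit the flatness of $\fP$ over $\cV$: the two-term locally free resolution $[\cO_\fP\overset{\varpi^{n+1}}{\to}\cO_\fP]$ of $\cO_{P_n}$ realises $\cO_{P_n}\otimes^\bL_{\cO_\fP}(-)$ as a finite homotopy limit, and hence it commutes with $\bR\lim$:
\[ \cO_{P_n}\otimes^\bL_{\cO_\fP}\cM \;\cong\; \bR\lim{m}\bigl(\cO_{P_n}\otimes^\bL_{\cO_\fP}\cM_m\bigr). \]
For $m\geq n$, I would then compute the right-hand side by factoring through $\cO_{P_m}$: the inner tensor $\cO_{P_m}\otimes^\bL_{\cO_\fP}\cM_m$ is represented by the same Koszul double complex, but now the horizontal differential $\varpi^{m+1}$ acts as zero on $\cM_m$, so the total complex splits as $\cM_m\oplus\cM_m[1]$. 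Tensoring with $\cO_{P_n}$ over $\cO_{P_m}$ and iterating hypothesis (2) yields $\cO_{P_n}\otimes^\bL_{\cO_\fP}\cM_m\cong\cM_n\oplus\cM_n[1]$. A natural morphism onto the first summand arises from the chain map $[\cM_m\overset{\varpi^{n+1}}{\to}\cM_m]\to\cM_n$ given by the transition $\cM_m\to\cM_n$ in degree $0$ and zero in degree $-1$; this is well-defined precisely because $\varpi^{n+1}$ annihilates $\cM_n$.

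It remains to show that the spurious $\cM_n[1]$-summand dies in $\bR\lim{m}$. Chasing the transition $\cM_{m+1}\to\cM_m$ through the Koszul identification, the induced map on $H^{-1}\bigl(\cO_{P_n}\otimes^\bL_{\cO_\fP}\cM_m\bigr)=\ker(\varpi^{n+1}\colon\cM_m\to\cM_m)$ becomes, after the canonical isomorphism with $\cM_n$ given by multiplication by $\varpi^{m-n}$, simply multiplication by $\varpi$: the kernel element represented by $\varpi^{m+1-n}$ in $\cM_{m+1}$ projects to $\varpi\cdot\varpi^{m-n}$ in $\cM_m$. Composing $n+1$ consecutive transitions therefore gives multiplication by $\varpi^{n+1}$, which is zero on $\cM_n$. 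After passing to a cofinal subsystem (which preserves $\bR\lim$), the cohomological transitions vanish and Lemma \ref{lemma: pro zero} forces the $\bR\lim{m}$ of the $\cM_n[1]$-pro-system to be zero, while the $\cM_n$-summand has identity transitions. The main obstacle will be precisely this bookkeeping of transitions on the torsion summand, which forces one to work at the level of explicit Koszul complexes rather than with only their abstract splittings; the identification of the $H^{-1}$-transition as multiplication by $\varpi$ is where the flatness of $\fP$ over $\cV$ is used in an essential way.
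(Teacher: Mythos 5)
Your proof is correct and takes essentially the same route as the paper's: both reduce to showing $\cO_{P_n}\otimes^\bL_{\cO_\fP}\cM\to\cM_n$ is an isomorphism, use perfectness of $\cO_{P_n}$ to move the tensor past $\bR\lim$, and then kill the torsion part of the resulting pro-system by computing that its transitions in cohomology become zero and invoking Lemma~\ref{lemma: pro zero}. The one structural difference is that the paper first reduces to $n=0$ by tensoring over $\cO_{P_n}$ with $0\to\cO_{P_{n-1}}\overset{\varpi}{\to}\cO_{P_n}\to\cO_{P_0}\to0$ and inducting; once $\varpi$ annihilates $\cM_0$, the pro-system $\{\cO_{P_n}\otimes^\bL_{\cO_\fP}\cM_0\}_n$ splits \emph{compatibly} with the transitions $(\mathrm{id},0)$, which sidesteps precisely the ``bookkeeping of transitions on the torsion summand'' that you correctly identify as the delicate point in your more direct computation with general $n$ (where the abstract splittings of $\cO_{P_n}\otimes^\bL_{\cO_\fP}\cM_m\cong\cM_n\oplus\cM_n[1]$ are not transition-compatible, but your fallback to the natural map onto $\cM_n$ and the explicit $\times\varpi$ transition on $H^{-1}$ is sound).
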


\begin{proof}
I need to show that the map \[ \mathcal{O}_{P_n} \otimes^\mathbf{L}_{\mathcal{O}_{\fr{P}}} \mathcal{M} \rightarrow \mathcal{M}_n \]
is an isomorphism in ${\bf D}(\cO_{P_n}
)$. By tensoring both sides over $\mathcal{O}_{P_n}$ with the exact sequence
\[ 0 \rightarrow \mathcal{O}_{P_{n-1}} \overset{\times \varpi}{\longrightarrow} \mathcal{O}_{P_n} \rightarrow \mathcal{O}_{P_0} \rightarrow 0 \]
and using condition (2), I can argue by induction on $n$ to reduce to the case $n=0$. Since $\mathcal{O}_{P_0}$ is a perfect complex of $\mathcal{O}_{\fr{P}}$-modules, I can then calculate the LHS as
\begin{align*}   \mathcal{O}_{P_0} \otimes^\mathbf{L}_{\mathcal{O}_{\fr{P}}}\mathbf{R}\lim{n} \mathcal{M}_n 
&\cong \mathbf{R}\lim{n} \left( \mathcal{O}_{P_0} \otimes^\mathbf{L}_{\mathcal{O}_{\fr{P}}} \mathcal{M}_n \right)  \\
&\cong \mathbf{R}\lim{n} \left( \mathcal{O}_{P_0} \otimes^\mathbf{L}_{\mathcal{O}_{\fr{P}}} \mathcal{O}_{P_n} \otimes^\mathbf{L}_{\mathcal{O}_{P_n}}  \mathcal{M}_n \right) \\ &\cong \mathbf{R}\lim{n} \left( \mathcal{O}_{P_n}\otimes^\mathbf{L}_{\mathcal{O}_{\fr{P}}} \mathcal{M}_0 \right) . 
\end{align*}
Now each complex $\mathcal{O}_{P_n}\otimes^\mathbf{L}_{\mathcal{O}_{\fr{P}}} \mathcal{M}_0$ is quasi-isomorphic to the mapping cone of
\[ \mathcal{M}_0 \overset{0}{\rightarrow} \mathcal{M}_0,\]
and moreover the transition maps $\mathcal{O}_{P_{n+1}}\otimes^\mathbf{L}_{\mathcal{O}_{\fr{P}}} \mathcal{M}_0\rightarrow \mathcal{O}_{P_n}\otimes^\mathbf{L}_{\mathcal{O}_{\fr{P}}} \mathcal{M}_0$ are realised by the commutative diagram
\[ \xymatrix{ \mathcal{M}_0 \ar[r]^0\ar[d]_0 & \mathcal{M}_0 \ar[d]^{\mathrm{id}} \\ \mathcal{M}_0 \ar[r]^0 & \mathcal{M}_0.  }  \]
It therefore follows that
\[ \mathbf{R}\lim{n} \left( \mathcal{O}_{P_n}\otimes^\mathbf{L}_{\mathcal{O}_{\fr{P}}} \mathcal{M}_0 \right) \cong \mathcal{M}_0\]
as required.   
\end{proof}

Mapping complexes between quasi-coherent complexes have the following straightforward description.

\begin{lemma} \label{lemma: rhom} Suppose that $\mathcal{M},\mathcal{N}\in \bD_\mathrm{qc}(\mathcal{O}_\fr{P})$. Then the natural map
\[ \mathbf{R}\mathrm{Hom}_{\mathcal{O}_{\fr{P}}}(\mathcal{M},\mathcal{N}) \rightarrow \mathbf{R}\lim{n}\mathbf{R}\mathrm{Hom}_{\mathcal{O}_{P_n}}(\mathcal{M}_n,\mathcal{N}_n)   \]
is an isomorphism. 
\end{lemma}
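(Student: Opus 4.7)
The plan is a two-step calculation: use quasi-coherence of $\sN$ to rewrite the left side as a homotopy limit, and then use the tensor-hom adjunction to identify each term in that limit with the corresponding term on the right.

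First, since $\sN$ is quasi-coherent, the natural map $\sN\to \mathbf{R}\lim{n}\sN_n$ is an isomorphism in $\bD(\cO_\fP)$. The functor $\mathbf{R}\mathrm{Hom}_{\cO_\fP}(\sM,-)$, being a right adjoint (via $K$-injective resolutions it is the total derived functor of an honest right adjoint to $\sM\otimes_{\cO_\fP}-$ applied level-wise, and in any case $\mathbf{R}\Gamma$ of an internal-Hom right-adjoint), commutes with $\mathbf{R}\lim$. Hence
\[ \mathbf{R}\mathrm{Hom}_{\cO_\fP}(\sM,\sN) \isomto \mathbf{R}\lim{n}\mathbf{R}\mathrm{Hom}_{\cO_\fP}(\sM,\sN_n). \]

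Next I would apply the tensor-hom adjunction for the morphism of ringed spaces $(\fP,\cO_{P_n})\to (\fP,\cO_\fP)$, whose module pullback is $\cO_{P_n}\otimes^{\mathbf{L}}_{\cO_\fP}-$ and whose pushforward is the obvious forgetful functor. Since $\sN_n$ already has a natural structure of complex of $\cO_{P_n}$-modules (indeed, for quasi-coherent $\sN$ the derived tensor product $\cO_{P_n}\otimes^{\mathbf{L}}_{\cO_\fP}\sN$ can be computed by a $K$-flat resolution and the resulting complex automatically carries the $\cO_{P_n}$-action), adjunction gives a natural isomorphism
\[ \mathbf{R}\mathrm{Hom}_{\cO_\fP}(\sM,\sN_n) \isomto \mathbf{R}\mathrm{Hom}_{\cO_{P_n}}(\cO_{P_n}\otimes^{\mathbf{L}}_{\cO_\fP}\sM,\sN_n) = \mathbf{R}\mathrm{Hom}_{\cO_{P_n}}(\sM_n,\sN_n). \]
Taking $\mathbf{R}\lim{n}$ and combining with the first step concludes the proof.

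The only real subtlety, and the step I would pay most attention to, is the verification that the displayed adjunction isomorphism and the commutation with $\mathbf{R}\lim{n}$ are valid on \emph{unbounded} complexes. Both are standard once one has the formalism of $K$-injective and $K$-flat resolutions (as set up in the paper's conventions following \cite{Spa88}): the $K$-flat resolution of $\sM$ computes all the derived tensor products $\sM\mapsto \sM_n$ simultaneously, and a $K$-injective resolution of $\sN$ (equivalently, compatible $K$-injective resolutions of the $\sN_n$) simultaneously computes $\mathbf{R}\mathrm{Hom}_{\cO_\fP}(\sM,\sN)$, each $\mathbf{R}\mathrm{Hom}_{\cO_{P_n}}(\sM_n,\sN_n)$ and the homotopy limit. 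With that in hand, the above chain of isomorphisms is natural and requires no further hypotheses.
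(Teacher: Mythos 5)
Your proof is correct and follows essentially the same route as the paper's: rewrite $\mathcal{N}$ as $\mathbf{R}\lim_n \mathcal{N}_n$ using quasi-coherence, pull $\mathbf{R}\lim$ out of $\mathbf{R}\mathrm{Hom}$, and apply the tensor-hom adjunction for $\mathcal{O}_\fr{P}\to\mathcal{O}_{P_n}$ termwise. The paper simply presents the chain of isomorphisms without elaborating on the unbounded-complex technicalities you flag, but those are the same points implicitly covered by the conventions set up earlier in the paper.
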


\begin{proof}
This is a simple calculation:
\begin{align*} 
 \mathbf{R}\mathrm{Hom}_{\mathcal{O}_{\fr{P}}}(\mathcal{M},\mathcal{N}) &= \mathbf{R}\mathrm{Hom}_{\mathcal{O}_{\fr{P}}}(\mathcal{M},\mathbf{R}\lim{n}\mathcal{N}_n) \\
 &= \mathbf{R}\lim{n}\mathbf{R}\mathrm{Hom}_{\mathcal{O}_{\fr{P}}}(\mathcal{M},\mathcal{N}_n) \\
  &= \mathbf{R}\lim{n}\mathbf{R}\mathrm{Hom}_{\mathcal{O}_{P_n}}(\mathcal{O}_{P_n}\otimes^\mathbf{L}_{\mathcal{O}_{\fr{P}}}\mathcal{M},\mathcal{N}_n) \\
  &= \mathbf{R}\lim{n}\mathbf{R}\mathrm{Hom}_{\mathcal{O}_{P_n}}(\mathcal{M}_n,\mathcal{N}_n). \qedhere
\end{align*}
\end{proof}

Following \cite[\S3.4]{Ber02}, I can define a completed tensor product
\begin{align*}
 -\widehat{\otimes}^\mathbf{L}_{\mathcal{O}_\fr{P}}- &:\bD_\mathrm{qc}(\mathcal{O}_{\fr{P}}) \times \bD_\mathrm{qc}(\mathcal{O}_{\fr{P}}) \rightarrow \bD_\mathrm{qc}(\mathcal{O}_{\fr{P}}) \\
  \mathcal{M} \widehat{\otimes}_{\mathcal{O}_\fr{P}} \mathcal{N} &:= \mathbf{R}\lim{n} \left(  \mathcal{M}_n \otimes^\mathbf{L}_{\mathcal{O}_{P_n}} \mathcal{N}_n \right),
\end{align*} 
the result is a quasi-coherent complex by Lemma \ref{lemma: inv qc}. Formally, the definition is
\[ \cM \widehat{\otimes}^\mathbf{L}_{\mathcal{O}_\fr{P}} \cN := \bR l_{\fP*}(\bL l_\fP^* \cM \otimes^{\bL}_{\cO_{P_\bullet}} \bL l_\fP^*\cN). \]
Similarly, if $\pi:\fr{P}'\rightarrow \fr{P}$ is a morphism of flat formal schemes, with induced maps $\pi_n:P'_n\rightarrow P_n$ for each $n$, there is a functor
\begin{align*}  \mathbf{L}\hat{\pi}^*&: \bD_\mathrm{qc}(\mathcal{O}_{\fr{P}}) \rightarrow \bD_\mathrm{qc}(\mathcal{O}_{\fr{P}'}) \\
\mathbf{L}\hat{\pi}^*\mathcal{M}&:= \mathbf{R}\lim{n}\mathbf{L}\pi_n^* \mathcal{M}_n,
\end{align*}
again, it follows from Lemma \ref{lemma: inv qc} that this is indeed a quasi-coherent complex. The formal definition is given by extending $\pi$ to a morphism
\[ \pi_\bullet \from (\fP',\cO_{P_\bullet'}) \to (\fP,\cO_{P_\bullet}) \]
and then defining
\[ \bL\hat{\pi}^*:=\bR l_{\fP'*} \circ \bL\pi^*_\bullet\circ \bL l_{\fP}^*. \]
Note that
\[ \mathbf{L}\hat{\pi}^*\mathcal{M} \widehat{\otimes}^{\mathbf{L}}_{\mathcal{O}_{\fr{P}'}} \mathbf{L}\hat{\pi}^*\mathcal{N}\isomto \mathbf{L}\hat{\pi}^*(\mathcal{M} \widehat{\otimes}^{\mathbf{L}}_{\mathcal{O}_\fr{P}} \mathcal{N}). \]
Moreover, letting $\mathbf{L}\pi^*$ denote abstract module pullback along $\pi$, then the natural maps 
\[ \mathbf{L}\pi^*\mathcal{M} \rightarrow \mathbf{L}\pi^*_n\mathcal{M}_n\]
for $n\geq 0$ induce a map
\[ \mathbf{L}\pi^*\mathcal{M} \rightarrow \mathbf{L}\hat{\pi}^*\mathcal{M}\]
in ${\bf D}(\mathcal{O}_{\fr{P}'})$. Of course, $\bL\pi^*$ won't preserve quasi-coherence in general.

\begin{lemma}\label{lemma: qc adj}  The functor $\mathbf{R}\pi_*:{\bf D}(\mathcal{O}_{\fr{P}}) \rightarrow {\bf D}(\mathcal{O}_{\fr{P}'})$ preserves quasi-coherence, and 
\[ \mathbf{L}\hat{\pi}^*: {\bf D}_\mathrm{qc}(\mathcal{O}_{\fr{P}}) \leftrightarrows {\bf D}_\mathrm{qc}(\mathcal{O}_{\fr{P}'}): \mathbf{R}\pi_*  \]
form an adjoint pair.
\end{lemma}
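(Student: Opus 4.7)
The plan is twofold: first, verify that $\bR\pi_*$ sends $\bD_{\mathrm{qc}}(\cO_{\fP'})$ into $\bD_{\mathrm{qc}}(\cO_\fP)$; then, deduce the adjunction level-by-level and pass to the derived inverse limit using Lemma \ref{lemma: rhom}.

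The key observation for preservation of quasi-coherence is the identification
\[ \cO_{P_n} \otimes^\bL_{\cO_\fP} \bR\pi_*\cN \;\cong\; \bR\pi_*\cN_n \]
for $\cN\in\bD_\mathrm{qc}(\cO_{\fP'})$, where $\cN_n=\cO_{P_n'}\otimes^\bL_{\cO_{\fP'}}\cN$. I would establish this via the projection formula, which applies because $\cO_{P_n}$ is a perfect complex of $\cO_\fP$-modules: since $\fP$ is flat over $\cV$, the element $\varpi^{n+1}$ is a non-zero-divisor in $\cO_\fP$, and $\cO_{P_n}$ is resolved locally by the two-term free complex $[\cO_\fP \overset{\varpi^{n+1}}{\to}\cO_\fP]$. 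The same resolution, applied over $\cO_{\fP'}$ (which is also $\cV$-flat), shows $\bL\pi^*\cO_{P_n}\cong\cO_{P'_n}$, whence the projection formula. Given this, condition (ii) for quasi-coherence of $\bR\pi_*\cN$ follows by taking $\bR\lim_n$ and using the quasi-coherence of $\cN$ itself; condition (i), namely quasi-coherence of $\bR\pi_{0*}\cN_0$ on the scheme $P_0$, is the standard classical result for quasi-compact and quasi-separated morphisms of schemes, and our separatedness/finite-type hypotheses guarantee these assumptions on $\pi_0$.

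For the adjunction, I would first note that the same perfect-complex/projection-formula argument (or equivalently an unwinding of the definition of $\bL\hat\pi^*$ through the diagram topos formalism) yields
\[ (\bL\hat\pi^*\cM)_n \;\cong\; \bL\pi_n^*\cM_n \]
for $\cM\in\bD_\mathrm{qc}(\cO_\fP)$. Combined with the level-wise adjunction $(\bL\pi_n^*,\bR\pi_{n*})$ on the schemes $P_n, P'_n$, and two applications of Lemma \ref{lemma: rhom}, I would compute
\begin{align*}
\bR{\rm Hom}_{\cO_{\fP'}}(\bL\hat\pi^*\cM,\cN)
&\;\cong\; \bR\lim_n \bR{\rm Hom}_{\cO_{P'_n}}(\bL\pi_n^*\cM_n,\cN_n) \\
&\;\cong\; \bR\lim_n \bR{\rm Hom}_{\cO_{P_n}}(\cM_n,\bR\pi_{n*}\cN_n) \\
&\;\cong\; \bR\lim_n \bR{\rm Hom}_{\cO_{P_n}}(\cM_n,(\bR\pi_*\cN)_n) \\
&\;\cong\; \bR{\rm Hom}_{\cO_{\fP}}(\cM,\bR\pi_*\cN),
\end{align*}
which is the desired adjunction on $\bD_{\mathrm{qc}}$.

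The main obstacle will be the careful identification $(\bL\hat\pi^*\cM)_n \cong \bL\pi_n^*\cM_n$ and a clean verification that the chain of isomorphisms above is the natural one (i.e.~refines to an isomorphism of $\bR\Hom$-complexes and not merely of their cohomologies). The other steps are essentially bookkeeping, because unbounded derived pushforwards and $K$-injective/$K$-flat resolutions have already been set up globally in the paper, and the finite cohomological dimension hypothesis ensures $\bR\pi_*$ and $\bR\lim_n$ commute as needed.
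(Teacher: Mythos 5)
Your proposal is correct and follows essentially the same route as the paper's proof: both reduce to the level-wise identification $(\bR\pi_*\cN)_n\cong\bR\pi_{n*}\cN_n$ and then deduce the adjunction via two applications of Lemma~\ref{lemma: rhom}, the level-wise scheme-theoretic adjunction $(\bL\pi_n^*,\bR\pi_{n*})$, and commutation of $\bR\pi_*$ with $\bR\lim$. The one small variation is in justifying that level-wise identification---the paper proves the base change $\cO_{P_n}\otimes^{\bL}_{\cO_{P_{n+1}}}\bR\pi_{n+1*}\cN_{n+1}\isomto\bR\pi_{n*}\cN_n$ via SGA6 and then invokes Lemma~\ref{lemma: inv qc}, whereas you obtain $(\bR\pi_*\cN)_n\cong\bR\pi_*\cN_n$ directly from the projection formula with the perfect complex $\cO_{P_n}$ over $\cO_\fP$ (using $\cV$-flatness of $\fP$ and $\fP'$ for $\bL\pi^*\cO_{P_n}\cong\cO_{P'_n}$)---both are valid and amount to the same thing.
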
 

\begin{proof}
Suppose that $\mathcal{N}\in {\bf D}_\mathrm{qc}(\mathcal{O}_{\fr{P}'})$. Then
\begin{align*}
 \mathbf{R}\pi_*\mathcal{N}  &\cong  \mathbf{R}\pi_* \mathbf{R}\lim{n} \mathcal{N}_n \\
 &\cong   \mathbf{R}\lim{n} \mathbf{R}\pi_{n*}\mathcal{N}_n.
\end{align*} 
Now, since each $\mathcal{N}_n$ is quasi-coherent as an $\mathcal{O}_{P'_n}$-module, each $\mathbf{R}\pi_{n*}\mathcal{N}_n$ is quasi-coherent as an  $\mathcal{O}_{P_n}$-module. Since $\mathbf{R}\pi_{n*}$ has finite cohomological dimension, the base change formula
\[  \mathcal{O}_{P_n}\otimes^\mathbf{L}_{\mathcal{O}_{P_{n+1}}}\mathbf{R}\pi_{n+1*} \mathcal{N}_{n+1} \isomto \mathbf{R}\pi_{n*} \mathcal{N}_n  \]
can be proved by reducing to the case of bounded complexes and applying \cite[IV, Proposition 3.1.0]{SGA6}. Thus I can apply Lemma \ref{lemma: inv qc} to deduce that $\mathbf{R}\pi_*\mathcal{N}$ is quasi-coherent. Now, thanks to Lemma \ref{lemma: rhom}, the chain of identifications
\begin{align*}
\mathbf{R}\mathrm{Hom}_{\mathcal{O}_{\fr{P}'}}(\mathbf{L}\hat{\pi}^*\mathcal{M},\mathcal{N}) &= \mathbf{R}\lim{n}\mathbf{R}\mathrm{Hom}_{\mathcal{O}_{P'_n}}(\mathbf{L}\pi_n^*\mathcal{M}_n,\mathcal{N}_n)  \\
&=\mathbf{R}\lim{n}\mathbf{R}\mathrm{Hom}_{\mathcal{O}_{P_n}}(\mathcal{M}_n,\mathbf{R}\pi_{n*}\mathcal{N}_n) \\
 &= \mathbf{R}\mathrm{Hom}_{\mathcal{O}_{\fr{P}}}(\mathcal{M},\mathbf{R}\pi_*\mathcal{N})
\end{align*} 
shows that $(\bL\hat{\pi}^*,\bR\pi_*)$ do indeed form an adjoint pair as claimed.
\end{proof}

For open immersions, the abstract module pullback is already complete.

\begin{lemma} \label{lemma: open} Let $j:\fr{U}\rightarrow \fr{P}$ be an open immersion of flat formal schemes. Then  the natural morphism of functors $j^{-1}\rightarrow \mathbf{L}\hat{j}^*$ is an
isomorphism.
\end{lemma}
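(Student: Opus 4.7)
The plan is to unwind the definition of $\bL\hat{j}^*$ and use two elementary facts: for an open immersion of ringed spaces, module pullback coincides with sheaf restriction $j^{-1}$; and for an open immersion, $j^{-1}$ commutes with arbitrary derived limits. Combined with the quasi-coherence of $\cM$, this gives a chain of isomorphisms identifying $\bL\hat{j}^*\cM$ with $j^{-1}\cM$.

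First, for each $n$ the induced morphism of schemes $j_n\from U_n\to P_n$ is an open immersion, and so $\cO_{U_n}=j_n^{-1}\cO_{P_n}$. Module pullback along $j_n$ is therefore exact and coincides with sheaf restriction, i.e. $\bL j_n^*\cM_n = j_n^{-1}\cM_n$. Since $j_n$ and $j$ have the same underlying continuous map of topological spaces (the schemes $P_n$ and the formal scheme $\fP$ all share the same underlying space, and likewise for $U_n$ and $\fU$), this equals $j^{-1}\cM_n$. Plugging into the definition,
\[ \bL\hat{j}^*\cM = \bR\lim{n} \bL j_n^*\cM_n = \bR\lim{n} j^{-1}\cM_n. \]

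Next I would observe that for an open immersion $j$ of (ringed) topological spaces, the restriction functor $j^{-1}$ has both a left adjoint (extension by zero $j_!$) and a right adjoint (pushforward $j_*$). As a right adjoint it preserves all limits, and in particular commutes with $\bR\lim{n}$; this can be checked directly using that $j^{-1}$ is exact and preserves $K$-injective resolutions. Hence
\[ \bR\lim{n} j^{-1}\cM_n \isomto j^{-1}\bR\lim{n}\cM_n. \]
Finally, since $\cM$ is quasi-coherent, the natural map $\cM\to \bR\lim{n}\cM_n$ is an isomorphism, giving $j^{-1}\bR\lim{n}\cM_n \isomfrom j^{-1}\cM$. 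Composing everything produces an isomorphism $j^{-1}\cM\isomto \bL\hat{j}^*\cM$.

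The only thing left is to check that this isomorphism agrees with the natural transformation $j^{-1}\to \bL\hat{j}^*$ described just before the statement (induced from the level-wise maps $\bL j^*\cM\to \bL j_n^*\cM_n$ via the universal property of $\bR\lim$). This is a diagram chase: both morphisms are induced by the same system of compatible maps $j^{-1}\cM\to j^{-1}\cM_n=\bL j_n^*\cM_n$ coming from the structure maps $\cM\to \cM_n$ of the quasi-coherent complex. No part of the argument is a serious obstacle; the whole content is the (standard) interplay between open immersions, restriction, and derived limits.
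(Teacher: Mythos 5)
Your proof is correct and follows essentially the same route as the paper's: identify $\bL j_n^*$ with restriction, note that $j^{-1}$ commutes with $\bR\lim$ because it has the exact left adjoint $j_!$ and hence preserves $K$-injectives, and then use quasi-coherence of $\cM$. The additional check at the end that the resulting isomorphism agrees with the given natural transformation is a sensible (if routine) addition that the paper leaves implicit.
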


\begin{proof}
First note that $j^{-1}$ has a left adjoint $j_!$, and thus commutes with limits. The fact that $j_!$ is exact means that $j^{-1}$ moreover commutes with derived limits, since $j^{-1}$ preserves $K$-injective complexes in the category of inverse systems. I then simply compute
\[ j^{-1}\mathcal{M} \isomto j^{-1}\mathbf{R}\lim{n} \mathcal{M}_n = \mathbf{R}\lim{n} j_n^{-1}\mathcal{M}_n \isomto \mathbf{R}\lim{n} \mathbf{L}j_n^*\mathcal{M}_n = \mathbf{L}\hat{j}^*\mathcal{M} \]
as required. 
\end{proof} 

\begin{remark} \label{rem: qc local} The observation that $j^{-1}$ commutes with derived limits (together with the analogous assertion that $j^{-1}$ commutes with derived tensor products) implies that quasi-coherence of a complex $\mathcal{M}\in {\bf D}(\cO_\fr{P})$ can be checked locally on $\fr{P}$. 
\end{remark}

Another situation in which there is no need to complete is the following. 

\begin{lemma} \label{lemma: finite morphisms derived complete} Let $\pi\from \fP'\to \fP$ be a finite morphism of flat formal schemes, which is an isomorphism on the underlying topological spaces. Then the morphism
$\bL\pi^*\to \bL\hat{\pi}^*$ of functors $\bD(\cO_\fP)\to \bD(\cO_{\fP'})$ is an isomorphism.
\end{lemma}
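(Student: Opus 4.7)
The plan is to identify the natural map $\bL\pi^*\cM\to\bL\hat\pi^*\cM$ as the canonical comparison between $\bL\pi^*\cM$ and its derived $\varpi$-adic completion, then to show the latter is vacuous. Since the formation of $\bL\hat\pi^*$ and the notion of quasi-coherence are local on $\fP$ (Remark~\ref{rem: qc local}), I would first reduce to the affine case, writing $\fP=\spf{A}$, $\fP'=\spf{B}$ with $B$ a finite $A$-algebra and both $A,B$ flat over $\cV$. The $\cV$-flatness of $B$ shows that $[A\overset{\varpi^{n+1}}{\to}A]$ is a flat resolution of $\cO_{P_n}$ over $A$ whose tensor product with $B$ stays exact, giving $\cO_{P'_n}\cong B\otimes^\bL_A \cO_{P_n}$ in $\bD(A)$; combined with associativity of derived tensor products this yields $\bL\pi_n^*\cM_n\cong B\otimes^\bL_A \cM_n$, and therefore $\bL\hat\pi^*\cM\cong \bR\lim{n}(B\otimes^\bL_A\cM_n)$.

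Applying the same $2$-term resolution to $\cM$ itself yields, for each $n$, a distinguished triangle
\[ B\otimes^\bL_A\cM\overset{\varpi^{n+1}}{\lto}B\otimes^\bL_A\cM\lto B\otimes^\bL_A\cM_n\overset{+1}{\lto}, \]
which is assembled into a tower with transition maps $(\varpi,\id)$ on the first two slots, chosen so that the left squares commute. Taking $\bR\lim{n}$, the middle term collapses to $B\otimes^\bL_A\cM=\bL\pi^*\cM$, the right term to $\bL\hat\pi^*\cM$, and the left term to the derived limit of the tower with $\varpi$-transitions, which is canonically identified with $\bR\mathrm{Hom}_\cV(\cV[\varpi^{-1}],B\otimes^\bL_A\cM)$. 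This obstruction precisely measures the failure of derived $\varpi$-completeness of $B\otimes^\bL_A\cM$, and the lemma is equivalent to its vanishing.

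The hypothesis $\cM\in\bD_\qc(\cO_\fP)$ is exactly the condition $\cM\cong\bR\lim{n}\cM_n$, i.e.\ that $\cM$ is itself derived $\varpi$-complete. To propagate this property through the tensor with $B$, I would use that $B$ is a coherent $A$-module: locally it admits a resolution by finite-rank free $A$-modules, and tensoring $\cM$ with each such free term gives a finite direct sum of copies of $\cM$, which is derived $\varpi$-complete; the subcategory of derived $\varpi$-complete complexes is closed under finite direct sums, cones, and derived limits. The main technical obstacle is controlling the argument when a free resolution of $B$ over $A$ is unbounded: when $\pi$ is additionally flat (the case most relevant for later applications to Frobenius lifts on smooth formal schemes) $B$ is locally free over $A$, tensoring with it is exact and commutes with $\bR\lim$, and the result is immediate; in general, one realises $B\otimes^\bL_A\cM$ as a homotopy limit of finite truncations of the resolution tensored with $\cM$, each of which inherits derived completeness from $\cM$ by the finite-length case, and then invokes the closure of derived complete complexes under $\bR\lim$ to conclude.
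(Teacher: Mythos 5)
Your reformulation of the lemma as a derived $\varpi$-completeness statement is correct and a nice way to see what's at stake: the comparison $\bL\pi^*\cM\to\bL\hat\pi^*\cM$ is an isomorphism exactly when $B\otimes^\bL_A\cM$ is derived $\varpi$-complete, and the preliminary reductions (affine case, the identification $\bL\pi_n^*\cM_n\cong B\otimes^\bL_A\cM_n$, the tower of Koszul triangles and the identification of its left-hand limit with $\bR\mathrm{Hom}_\cV(\cV[\varpi^{-1}],B\otimes^\bL_A\cM)$) are all sound.

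The gap is in the final step. You assert that ``one realises $B\otimes^\bL_A\cM$ as a homotopy limit of finite truncations of the resolution tensored with $\cM$'' and then invoke closure of derived complete complexes under $\bR\lim$. But the brutal truncations $\sigma^{\geq -k}P^\bullet$ of a bounded-above free resolution $P^\bullet$ of $B$ are \emph{subcomplexes}, and what one gets is a filtered \emph{colimit}: $B\otimes^\bL_A\cM \cong \colim_k\bigl(\sigma^{\geq -k}P^\bullet\otimes_A\cM\bigr)$. The derived complete objects form a subcategory closed under products, cones, and limits, but emphatically \emph{not} under filtered colimits (already $\Q_p=\colim(\Z_p\overset{p}{\to}\Z_p\overset{p}{\to}\cdots)$ is a filtered colimit of derived complete complexes that fails to be derived complete). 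There is no chain-level projection $P^\bullet\otimes_A\cM\to \sigma^{\geq -k}P^\bullet\otimes_A\cM$ turning the truncations into a tower with limit $B\otimes^\bL_A\cM$ — the attempted projection fails to commute with the horizontal differential at column $-k$ — so the homotopy-limit realisation you want does not exist in the form you describe, and the closure property you invoke does not apply.

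The paper's proof uses exactly the same affine reduction and the same finite free resolution of $B$, but handles the unboundedness differently: rather than trying to propagate a completeness property through the resolution, it exploits that $\bR\lim_n$ has \emph{finite cohomological dimension} (a Mittag--Leffler-type bound for towers of quasi-coherent complexes on the affine truncations $P_n$). With that bound in hand, checking the comparison map degree by degree, the error introduced by replacing $P^\bullet$ with a sufficiently long bounded truncation $\sigma^{\geq -k}P^\bullet$ is pushed out of the degree under consideration even after applying $\bR\lim_n$, so one reduces cleanly to the case $\cO_{\fP'}=\cO_\fP^{\oplus m}$. This cohomological-dimension input is the essential ingredient that your argument is missing, and without it the passage from the bounded-truncation case to the general one does not go through. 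Your approach could be salvaged by importing this same Mittag--Leffler bound, but at that point it coincides with the paper's argument rather than replacing it.
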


\begin{remark} It seems reasonable to suppose that the lemma holds for more general finite morphisms, but I will only need this special case.
\end{remark}

\begin{proof}
The lemma amounts to showing that, for any $\cM\in \bD_\qc(\cO_\fP)$, the natural morphism
\[ \cO_{\fP'} \otimes^{\bL}_{\cO_{\fP}} \bR\lim{n} \cM_n \rightarrow \bR\lim{n} (\cO_{P_n'} \otimes^{\bL}_{\cO_{P_n}}\cM_n ) \]
is an isomorphism. This can be checked locally on $\fP$, so I can assume that both $\fP$ and $\fP'$ are affine. In this case, $\cO_{\fP'}$ admits a (possibly infinite, but bounded above) resolution by finite free $\cO_{\fP}$-modules. By truncating, and using the fact that $\bR\lim{n}$ has finite cohomological dimension (at least for systems $\{\cM_n\}_{n\in \N}$ with each $\cM_n$ quasi-coherent) I can therefore replace $\cO_{\fP'}$ by $\cO_\fP^{\oplus m}$, and each $\cO_{P'_n}$ by $\cO_{P_n}^{\oplus m}$, in which case the claim is clear.
\end{proof}

Quasi-coherent complexes satisfy the following version of the projection formula.

\begin{lemma} \label{lemma: proj} Let $\pi:\fr{P}'\rightarrow \fr{P}$ be a morphism of flat formal schemes. Then for any $\mathcal{M}\in \bD_\mathrm{qc}(\mathcal{O}_{\fr{P}})$ the map
\[ \mathcal{M} \widehat{\otimes}^\mathbf{L}_{\mathcal{O}_{\fr{P}}} \mathbf{R}\pi_* \mathcal{O}_{\fr{P}'} \rightarrow \mathbf{R}\pi_*\mathbf{L}\hat{\pi}^*\mathcal{M} \]
is an isomorphism.
\end{lemma}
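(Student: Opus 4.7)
The plan is to reduce the completed statement to the classical projection formula applied level by level, and then use that $\mathbf{R}\pi_*$ commutes with $\mathbf{R}\lim{n}$. The key input is that for each $n$, we have a morphism $\pi_n \from P'_n \to P_n$ of actual (Noetherian) schemes, for which the classical projection formula
\[ \mathcal{M}_n \otimes^{\mathbf{L}}_{\mathcal{O}_{P_n}} \mathbf{R}\pi_{n*} \mathcal{O}_{P'_n} \isomto \mathbf{R}\pi_{n*} \mathbf{L}\pi_n^* \mathcal{M}_n \]
holds for any $\mathcal{M}_n \in \mathbf{D}_\mathrm{qc}(\mathcal{O}_{P_n})$.

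First I would observe that $\mathbf{R}\pi_* \mathcal{O}_{\fr{P}'}$ is quasi-coherent by Lemma \ref{lemma: qc adj}, and that the proof of that lemma (together with Lemma \ref{lemma: inv qc}) identifies its $n$-th truncation $(\mathbf{R}\pi_* \mathcal{O}_{\fr{P}'})_n = \mathcal{O}_{P_n} \otimes^{\mathbf{L}}_{\mathcal{O}_{\fr{P}}} \mathbf{R}\pi_* \mathcal{O}_{\fr{P}'}$ with $\mathbf{R}\pi_{n*} \mathcal{O}_{P'_n}$. Then by the very definition of $\widehat{\otimes}^{\mathbf{L}}$ we have
\[ \mathcal{M} \widehat{\otimes}^{\mathbf{L}}_{\mathcal{O}_{\fr{P}}} \mathbf{R}\pi_* \mathcal{O}_{\fr{P}'} \;\cong\; \mathbf{R}\lim{n}\bigl( \mathcal{M}_n \otimes^{\mathbf{L}}_{\mathcal{O}_{P_n}} \mathbf{R}\pi_{n*} \mathcal{O}_{P'_n} \bigr). \]

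Next I would apply the classical projection formula in each degree $n$, rewriting the inner term as $\mathbf{R}\pi_{n*}\mathbf{L}\pi_n^* \mathcal{M}_n$. On the other side, unfolding the definition of $\mathbf{L}\hat{\pi}^*$ and using the standard fact that $\mathbf{R}\pi_*$ commutes with $\mathbf{R}\lim{n}$ (both being right derived functors), one obtains
\[ \mathbf{R}\pi_* \mathbf{L}\hat{\pi}^* \mathcal{M} \;=\; \mathbf{R}\pi_* \mathbf{R}\lim{n} \mathbf{L}\pi_n^* \mathcal{M}_n \;\cong\; \mathbf{R}\lim{n} \mathbf{R}\pi_{n*} \mathbf{L}\pi_n^* \mathcal{M}_n. \]
It would then remain to check that, under these identifications, the natural map in the statement coincides with the map induced on $\mathbf{R}\lim{n}$ by the level-wise classical projection morphisms; this is a matter of chasing the definitions of $\widehat{\otimes}^{\mathbf{L}}$, $\mathbf{L}\hat{\pi}^*$ and the unit/counit of the adjunction $(\mathbf{L}\pi_n^*, \mathbf{R}\pi_{n*})$.

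The only potentially delicate point is the level-wise projection formula for $\mathcal{M}_n$ possibly unbounded, but since $\pi_n$ is a morphism of Noetherian schemes (thus of finite cohomological dimension on quasi-coherent complexes) this is standard. Everything else is purely formal manipulation of the adjunctions and of $\mathbf{R}\lim$, so no genuine obstacle is expected.
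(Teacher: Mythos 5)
Your proof is correct and follows essentially the same route as the paper's: both reduce the completed statement to the level-wise projection formula for the scheme morphisms $\pi_n\from P'_n\to P_n$ by unwinding the definitions of $\widehat{\otimes}^{\mathbf{L}}$ and $\mathbf{L}\hat{\pi}^*$ and using that $\mathbf{R}\pi_*$ commutes with $\mathbf{R}\lim{n}$. The only difference is that you invoke the level-wise (unbounded, quasi-coherent) projection formula as classical, whereas the paper gives a self-contained proof of that step (localize to affine $P_n$, take a $K$-flat resolution by free modules, use finite cohomological dimension of $\mathbf{R}\pi_{n*}$ and its commutation with filtered colimits to reduce to the tautological case $\mathcal{M}_n=\mathcal{O}_{P_n}$).
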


\begin{proof}
It suffices to show that the map
\[ \mathcal{M}_n \otimes^\mathbf{L}_{\mathcal{O}_{P_n}} \mathbf{R}\pi_{n*} \mathcal{O}_{P_n'} \rightarrow \mathbf{R}\pi_{n*}\mathbf{L}\pi_n^*\mathcal{M}_n  \]
is an isomorphism, for each $n$. This question is local on $P_n$, which I can therefore assume to be affine. Now, the standard construction of $K$-flat resolutions gives a resolution of $\cM_n$ with terms direct sums of sheaves of the form $j_!\cO_{U}$ for an open subscheme $j\from U\to P_n$. But since $\cM_n$ is quasi-coherent, and $P_n$ is affine, I can actually find a $K$-flat resolution whose terms are all free $\cO_{P_n}$-modules. Now using the fact that $\bR\pi_{n*}$ has finite cohomological dimension and commutes with filtered colimits, I can reduce to the tautological case $\cM_n=\cO_{P_n}$.
\end{proof}

I will let ${\bf D}_{\mathrm{qc},\Q}(\cO_\fr{P})$ denote the isogeny category of quasi-coherent complexes, and ${\bf D}_{\mathrm{coh},\Q}^b(\cO_\fr{P})$ its full subcategory spanned by bounded, coherent complexes. There is a functor
\[ {\bf D}_{\mathrm{qc},\Q}(\cO_\fr{P}) \rightarrow {\bf D}(\cO_{\fr{P}\Q}) \]
defined on objects by $\cM\mapsto \cM_{\Q}$. In general, this need not be fully faithful, but it will be after restricting to ${\bf D}_{\mathrm{coh},\Q}^b(\cO_\fr{P})$.

\begin{lemma}
 Let $\fr{P}$ be a flat formal scheme, and $\cM,\cN\in \mathbf{Coh}(\cO_\fr{P})$ coherent $\cO_\fr{P}$-modules. Then the natural map
\[ \mathrm{Ext}^q_{\cO_\fr{P}}(\cM,\cN) \otimes_{\Z}\Q \rightarrow  \mathrm{Ext}^q_{\cO_{\fr{P}\Q}}(\cM_{\Q},\cN_{\Q}) \]
is an isomorphism, for all $q\geq 0$.
\end{lemma}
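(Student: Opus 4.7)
The plan is to reduce the statement to a local computation on an affine open $\spf A \subset \fP$, via the local-to-global spectral sequence for Ext, and then exploit that coherent modules on Noetherian affine formal schemes admit finite free resolutions, so that everything in sight commutes with $-\otimes_\Z\Q$.

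More precisely, I will use the two convergent spectral sequences
\[ E_2^{p,q} = H^p(\fP, \mathcal{E}xt^q_{\cO_\fP}(\cM,\cN)) \Rightarrow \mathrm{Ext}^{p+q}_{\cO_\fP}(\cM,\cN) \]
and
\[ 'E_2^{p,q} = H^p(\fP, \mathcal{E}xt^q_{\cO_{\fP\Q}}(\cM_\Q,\cN_\Q)) \Rightarrow \mathrm{Ext}^{p+q}_{\cO_{\fP\Q}}(\cM_\Q,\cN_\Q). \]
Since $\fP$ is Noetherian (in particular, qcqs), $H^p(\fP,-)$ commutes with filtered colimits of abelian sheaves, and in particular with $-\otimes_\Z\Q$. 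Hence tensoring the first spectral sequence with $\Q$ is compatible with $H^p$ on the $E_2$-page, and the natural map of spectral sequences reduces the problem to the sheaf-level claim
\[ \mathcal{E}xt^q_{\cO_\fP}(\cM,\cN)\otimes_\Z\Q \isomto \mathcal{E}xt^q_{\cO_{\fP\Q}}(\cM_\Q,\cN_\Q). \]

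This is a local statement on $\fP$, so I may assume $\fP=\spf A$ is affine, with $\cM=\widetilde M$, $\cN=\widetilde N$ for finite $A$-modules $M,N$. Since $A$ is Noetherian, $M$ admits a resolution $P_\bullet \to M$ by finite free $A$-modules, which sheafifies to a resolution of $\cM$ by finite free $\cO_\fP$-modules; the latter is adapted to computing $\mathcal{E}xt^q_{\cO_\fP}(\cM,\cN)$, and one identifies the $q$-th cohomology sheaf with $\widetilde{\mathrm{Ext}^q_A(M,N)}$. Because each term $\mathrm{Hom}_A(P_i,N)=N^{n_i}$ is a finite $A$-module, the Hom complex commutes with $-\otimes_\Z\Q$: we obtain $\Hom_A(P_\bullet,N)\otimes_\Z\Q = \Hom_{A_\Q}(P_{\bullet,\Q},N_\Q)$. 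Exactness of $-\otimes_\Z\Q$ preserves the resolution property, and finite free $A_\Q$-modules are projective, so $H^q$ of this complex computes $\mathrm{Ext}^q_{A_\Q}(M_\Q,N_\Q)$. Sheafifying yields the desired identification of Ext sheaves.

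The main obstacle is a bookkeeping one: verifying that the comparison of spectral sequences really is induced by the natural map in the statement, and that on an affine formal scheme $\spf A$ one has $\mathcal{E}xt^q_{\cO_{\fP\Q}}(\widetilde M_\Q,\widetilde N_\Q)\cong \widetilde{\mathrm{Ext}^q_{A_\Q}(M_\Q,N_\Q)}$ (as opposed to the pre-sheafified version). The latter follows by the same finite free resolution argument performed on the $\cO_{\fP\Q}$-side, using that $\cO_{\fP\Q}$-modules of the form $\widetilde F_\Q$ for $F$ a finite free $\cO_\fP$-module are locally free, hence their internal Hom is computed sheaf-theoretically without derived corrections. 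No serious analytical input is needed beyond the Noetherian hypothesis on $A$ and the flatness of $\Q$ over $\Z$.
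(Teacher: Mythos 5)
Your proof is correct and takes essentially the same approach as the paper: both pass through finite free resolutions of $\cM$ on affine opens and use quasi-compactness of $\fP$ to commute cohomology with $-\otimes_\Z\Q$. Your explicit use of the local-to-global spectral sequence for $\mathrm{Ext}$ merely makes precise the paper's compressed assertion that ``the question is local on $\fP$,'' which on its face cannot be taken literally for global $\mathrm{Ext}$ groups; this is a helpful clarification rather than a different route.
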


\begin{proof}
Since $\fP$ is quasi-compact, the question is local on $\fP$, which I may therefore assume to be affine. In this case, $\cM$ admits a resolution (possibly unbounded below) by finite free $\cO_\fr{P}$-modules. This reduces to the case $\cM=\cO_{\fP}$, which is clear.
\end{proof}

\begin{corollary} \label{cor: rhom tensor Q} Let $\fr{P}$ be a flat formal scheme. Then the functor
\[ {\bf D}^b_{\Q,\mathrm{coh}}(\cO_\fr{P})\rightarrow {\bf D}^b(\cO_{\fr{P}\Q}), \]
defined on objects by $\cM\mapsto \cM_{\Q}$, is fully faithful.
\end{corollary}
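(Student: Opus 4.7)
The plan is to reduce to the case handled by the preceding lemma via a standard dévissage on the cohomological amplitudes of the two complexes. Concretely, for $\cM,\cN\in\bD^b_\coh(\cO_\fP)$, what needs to be shown is that the natural map
\[ \mathrm{Hom}_{\bD^b(\cO_\fP)}(\cM,\cN[q])\otimes_\Z\Q \;\longrightarrow\; \mathrm{Hom}_{\bD^b(\cO_{\fP\Q})}(\cM_\Q,\cN_\Q[q]) \]
is an isomorphism for every $q\in\Z$; since $(-)\otimes_\Z\Q$ is exact and $\mathrm{Hom}$ in the derived category is computed as $\mathrm{Ext}^q$, this is precisely the comparison that gives full faithfulness after passing to isogeny categories.

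I would proceed by a double induction on the length of the bounded coherent complexes involved. Fix $\cN$ and induct on the cohomological amplitude of $\cM$. If $\cM$ is concentrated in a single degree (so, up to shift, is a coherent sheaf), choose $b$ maximal with $\cH^b(\cN)\neq 0$ and use the exact triangle
\[ \tau^{<b}\cN \to \cN \to \cH^b(\cN)[-b]\overset{+1}{\to}, \]
which induces two long exact sequences of $\mathrm{Ext}$-groups — one on the formal-scheme side and one on the generic-fibre side. Since $(-)\otimes_\Z\Q$ is exact, a five-lemma argument reduces the claim (for this $\cM$ and varying $\cN$) to the case where both $\cM$ and $\cN$ are single coherent sheaves, at which point the preceding lemma applies verbatim. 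The analogous truncation argument on $\cM$, using the triangle $\tau^{<b'}\cM\to\cM\to\cH^{b'}(\cM)[-b']\overset{+1}{\to}$ for the top cohomology of $\cM$, then handles general $\cM$.

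There is no real obstacle here: the only subtlety is checking that the comparison map in question is compatible with the connecting morphisms of the two long exact sequences, which is immediate from the functoriality of $\cM\mapsto\cM_\Q$ applied to distinguished triangles (and from the fact that localisation at $\Z\setminus\{0\}$ commutes with finite limits and colimits of abelian groups). Once the five-lemma is applied twice, the base case is exactly the statement of the lemma immediately preceding the corollary, so the proof reduces cleanly to what has already been established.
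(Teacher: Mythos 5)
Your proposal is correct, and it is exactly the dévissage argument the paper leaves implicit (no proof is given for the corollary; it is stated as an immediate consequence of the preceding lemma, which handles the base case of two coherent sheaves). The two-step induction on cohomological amplitude via truncation triangles, combined with the five-lemma on the morphism of long exact Ext-sequences, is the standard and correct way to promote the lemma from sheaves to bounded coherent complexes. One cosmetic remark: the lemma only asserts the isomorphism for $q\geq 0$, but for $\cM,\cN$ single coherent sheaves both $\mathrm{Ext}^q$-groups vanish for $q<0$, so the comparison is trivially an isomorphism there, and your dévissage does produce the full statement for all $q\in\Z$ as needed.
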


I can now deduce an invariance result for quasi-coherent complexes under admissible blowups.

\begin{proposition} \label{prop: adm bl ff} Let $\pi:\fr{P}'\rightarrow \fr{P}$ be an admissible blowup of flat formal schemes. Then, for any $\mathcal{M}\in {\bf D}_\mathrm{qc}(\mathcal{O}_{\fr{P}})$, the map
\[  \mathcal{M} \rightarrow \mathbf{R}\pi_*\mathbf{L}\hat{\pi}^*\mathcal{M} \]
is an isogeny. That is, it becomes invertible in ${\bf D}_{\mathrm{qc},\Q}(\mathcal{O}_{\fr{P}})$. 
\end{proposition}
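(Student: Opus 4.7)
The plan is first to reduce to the case $\cM=\cO_\fP$ via the projection formula (Lemma \ref{lemma: proj}). That lemma identifies the natural map $\cM \to \bR\pi_*\bL\hat{\pi}^*\cM$ with the map
\[ \cM \isomto \cM \widehat{\otimes}^\bL_{\cO_\fP}\cO_\fP \longrightarrow \cM \widehat{\otimes}^\bL_{\cO_\fP}\bR\pi_*\cO_{\fP'} \isomto \bR\pi_*\bL\hat{\pi}^*\cM \]
obtained by tensoring $\cM$ with the counit $\cO_\fP \to \bR\pi_*\cO_{\fP'}$. If the cone $\ca{C}$ of this counit is killed by some $\varpi^k$, then so is $\cM\widehat{\otimes}^\bL_{\cO_\fP}\ca{C}$: indeed, by definition this is $\bR\lim{n}(\cM_n \otimes^\bL_{\cO_{P_n}}\ca{C}_n)$, each term is killed by $\varpi^k$, and $\bR\lim{n}$ preserves this. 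Hence it is enough to show the counit $\cO_\fP \to \bR\pi_*\cO_{\fP'}$ is an isogeny.

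The next step is to verify that $\bR\pi_*\cO_{\fP'}\in \bD^b_\coh(\cO_\fP)$. Since $\pi$ is an admissible blowup, it is proper, and so each $\pi_n\from P'_n\to P_n$ is a proper morphism of Noetherian schemes. Hence each $\bR\pi_{n*}\cO_{P'_n}$ is a bounded complex of coherent $\cO_{P_n}$-modules (uniformly bounded in $n$ since $\pi$ has fixed relative dimension), and the system satisfies the compatibility hypothesis of Lemma \ref{lemma: inv qc}, so the derived limit $\bR\pi_*\cO_{\fP'} = \bR\lim{n}\bR\pi_{n*}\cO_{P'_n}$ is a quasi-coherent complex. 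Coherence of its cohomology sheaves over $\cO_\fP$ then follows from formal GAGA (the formal version of Grothendieck's finiteness theorem for proper direct images).

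Finally, since both $\cO_\fP$ and $\bR\pi_*\cO_{\fP'}$ lie in $\bD^b_\coh(\cO_\fP)$, by Corollary \ref{cor: rhom tensor Q} showing that the counit is an isogeny is equivalent to showing that its image under $(-)_\Q$ is an isomorphism in $\bD^b(\cO_{\fP\Q})$. Passing to the rigid generic fibre, this in turn amounts to showing that $\cO_{\fP_K} \to \bR\pi_{K*}\cO_{\fP'_K}$ is an isomorphism in $\bD^b(\cO_{\fP_K})$. But admissible blowups become isomorphisms on the rigid analytic generic fibre -- the blown-up ideal contains a power of $\varpi$, which is invertible on the generic fibre -- so $\pi_K$ is an isomorphism of analytic varieties, and the claim is immediate.

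The main obstacle is the coherence of $\bR\pi_*\cO_{\fP'}$: one must carefully interchange the derived inverse limit defining $\bR\pi_*$ on the formal scheme with the finite-level proper pushforwards, and control the resulting system well enough to invoke formal GAGA (or equivalently, reduce to the affine-base case and apply Grothendieck's theorem to the corresponding algebraic blowup of $\spec{\cV}$-schemes).
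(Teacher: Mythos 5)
Your argument takes essentially the same route as the paper's: project-formula reduction to $\cM=\cO_\fP$, coherence of $\bR\pi_*\cO_{\fP'}$, and a comparison on the generic fibre via Corollary \ref{cor: rhom tensor Q}. The only place where the paper is tighter is your final step. The claim that ``passing to the rigid generic fibre, this in turn amounts to showing that $\cO_{\fP_K}\to\bR\pi_{K*}\cO_{\fP'_K}$ is an isomorphism'' silently uses both conservativity of the generic-fibre functor on bounded coherent complexes and its compatibility with $\bR\pi_*$; neither is proved in the proposal. The paper sidesteps both by a direct computation on $\fP$: identifying $\cO_{\fP\Q}\cong\bR\sp_{\fP*}\cO_{\fP_K}$ and $\bR\pi_*\cO_{\fP'\Q}\cong\bR\pi_*\bR\sp_{\fP'*}\cO_{\fP'_K}\cong\bR\sp_{\fP*}\bR\pi_*\cO_{\fP'_K}$, so that $\pi\colon\fP'_K\isomto\fP_K$ being an isomorphism makes the $\otimes_\Z\Q$ map visibly an isomorphism already on $\fP$, and Corollary \ref{cor: rhom tensor Q} finishes immediately. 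Two minor errata: the map $\cO_\fP\to\bR\pi_*\cO_{\fP'}$ is the adjunction \emph{unit}, not the counit; and the finiteness fact you invoke is Grothendieck's finiteness theorem for proper morphisms of Noetherian formal schemes (EGA III), not what is usually called formal GAGA.
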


\begin{proof}
Thanks to the projection formula (Lemma \ref{lemma: proj} above), the given map can be identified with
\[  \mathcal{M} \rightarrow \mathcal{M} \widehat{\otimes}^\mathbf{L}_{\mathcal{O}_{\fr{P}}} \mathbf{R}\pi_*\mathcal{O}_{\fr{P}'}. \]
Since isogenies are preserved by applying the functor $\mathcal{M}\widehat{\otimes}^\mathbf{L}_{\mathcal{O}_{\fr{P}}}-$, it suffices to treat the case $\mathcal{M}=\cO_\fr{P}$. In this case, after tensoring with $\Q$,
\begin{align*}
 \cO_{\fr{P}\Q}&=\mathbf{R}\mathrm{sp}_{\fr{P}*}\cO_{\fr{P}_K} \\
  \mathbf{R}\pi_*\cO_{\fr{P}'\Q} &=\mathbf{R}\pi_*\mathbf{R}\mathrm{sp}_{\fr{P}'}\cO_{\fr{P}'_K}=\mathbf{R}\mathrm{sp}_{\fr{P}*}\mathbf{R}\pi_{*}\cO_{\fr{P}'_K}.
\end{align*}
Since $\pi:\fr{P}'_K\rightarrow \fr{P}_K$ is an isomorphism, it follows that 
\[ \mathcal{O}_\fr{P} \rightarrow \mathbf{R}\pi_*\mathcal{O}_{\fr{P}'} \]
is a morphism of bounded complexes on $\fr{P}$, with coherent cohomology sheaves, which becomes a quasi-isomorphism after applying $-\otimes_{\Z}\Q$. It therefore follows from Corollary \ref{cor: rhom tensor Q} that it is an isogeny.
\end{proof}

%As well as a projection formula, there is also a base change formula for quasi-coherent complexes.  
%
%\begin{lemma}  \label{lemma: bc hat} Let 
%\[ \xymatrix{  \fr{P}'\ar[r]\ar[d]_{f'} \ar[r]^{\pi'} & \fr{P}\ar[d]^{f} \\ \fr{Q}'\ar[r]^{\pi} & \fr{Q}  } \]
%be a Cartesian square of flat formal schemes, with $f$ flat. Then for any $\cM\in {\bf D}^-_\mathrm{qc}(\mathcal{O}_\fr{P})$ the base change map
%\[ \mathbf{L}\hat{\pi}^*\mathbf{R}f_*\mathcal{M}\rightarrow \mathbf{R}f'_*\mathbf{L}\hat{\pi}'^*\mathcal{M} \]
%is an isomorphism. 
%\end{lemma}
%
%\begin{proof}
%By reducing modulo $\fr{m}^{n+1}$, it suffices to prove the corresponding result for the Cartesian square
%\[ \xymatrix{  P'_n \ar[d]_{f'} \ar[r]^{\pi'} & P_n \ar[d]^{f} \\ Q'_n\ar[r]^{\pi} & Q_n,  } \]
%with $f$ again flat, and $\mathcal{M}_n\in {\bf D}^-_\mathrm{qc}(\mathcal{O}_{P_n})$. We may assume that $Q_n$ and $Q'_n$ are affine, and by taking a finite affine open covering of $P_n$ and using \v{C}ech cohomology we may assume that $P_n$ and $P'_n$ are affine. Now, we may reduced to the case $\mathcal{M}_n=\mathcal{O}_{P_n}$ exactly as in the proof of Lemma \ref{lemma: proj}, in which case the result is clear.
%\end{proof} 

I will also need to consider inductive systems of quasi-coherent complexes, the formalism of which works exactly as in \cite[\S4.2]{Ber02}. Thus $\fP^{(\bullet)}$ (resp. $\fP^{(\bullet)}_\bullet$) will denotes the topos of $\N$-indexed inductive systems of sheaves on $\fP$ (resp. on $\fP_\bullet$), as in \S\ref{subsec: diagram sheaves}, which is ringed via the constant ind-object $\cO_{\fP}$ (resp.\ $\cO_{P_\bullet}$). Berthelot then defines a double localisation $\underrightarrow{{\bf LD}}_{\Q}(\mathcal{O}_{\fr{P}})$ of the derived category of $\cO_{\fP}$-modules on $\fP^{(\bullet)}$. Roughly speaking this corresponds to tensoring with $\Q$ and then taking the colimit over the inductive system. If I need to emphasize the fact that I am considering categories of inductive systems of complexes, rather than just complexes, I will write the rings on $\fP^{(\bullet)}$ and $\fP_\bullet^{(\bullet)}$ as $\cO_{\fP^{(\bullet)}}$ and $\cO_{P^{(\bullet)}_\bullet}$ respectively, thus $\underrightarrow{{\bf LD}}_{\Q}(\mathcal{O}_{\fr{P}})$ is a localisation of the category $\bD(\cO_{\fP^{(\bullet)}})$.

As in \cite[\S4.2]{Ber02}, I will denote by $\underrightarrow{{\bf LD}}_{\Q,\mathrm{qc}}(\mathcal{O}_{\fr{P}})\subset \LD_{\Q}(\cO_\fP)$ the full subcategory on objects which are levelwise quasi-coherent. Exactly as above, any morphism $\pi\from \fP'\to \fP$ of flat formal schemes gives rise to a commutative square
\[ \xymatrix{  (\fP'^{(\bullet)}_{\bullet},\cO_{P'_{\bullet}})  \ar[r]^-{l_{\fP'^{(\bullet)}}}\ar[d]_{\pi^{(\bullet)}_\bullet} & (\fP'^{(\bullet)},\cO_{\fP'}) \ar[d]^{\pi^{(\bullet)}}  \\
(\fP^{(\bullet)}_{\bullet},\cO_{P_{\bullet}}) \ar[r]^-{l_{\fP^{(\bullet)}}} & (\fP^{(\bullet)},\cO_{\fP})     } \]
of ringed toposes, and $\bR l_{\fP'^{(\bullet)}*} \circ \bL\pi^{(\bullet),*}_\bullet\circ \bL l^{*}_{\fP^{(\bullet)}}$ descends to a functor
\[ \bL\hat{\pi}^*\from \underrightarrow{{\bf LD}}_{\Q,\mathrm{qc}}(\mathcal{O}_{\fr{P}}) \to \underrightarrow{{\bf LD}}_{\Q,\mathrm{qc}}(\mathcal{O}_{\fr{P}'})\]
on the localised categories. Informally, this just applies $\bL\hat{\sp}^*$ levelwise, and then passes to the localisation. There is of course a similar definition of the completed tensor product 
\[ -\widehat{\otimes}^\mathbf{L}_{\mathcal{O}_{\fr{P}}}- \colon \underrightarrow{{\bf LD}}_{\Q,\mathrm{qc}}(\mathcal{O}_{\fr{P}}) \times\underrightarrow{{\bf LD}}_{\Q,\mathrm{qc}}(\mathcal{O}_{\fr{P}}) \rightarrow \underrightarrow{{\bf LD}}_{\Q,\mathrm{qc}} (\mathcal{O}_{\fr{P}}). \]
Note that the formula
\[ \mathbf{L}\hat{\pi}^*\mathcal{M} \widehat{\otimes}^{\mathbf{L}}_{\mathcal{O}_{\fr{P}'}} \mathbf{L}\hat{\pi}^*\mathcal{N}\isomto \mathbf{L}\hat{\pi}^*(\mathcal{M} \widehat{\otimes}^{\mathbf{L}}_{\mathcal{O}_{\fr{P}}} \mathcal{N}) \]
still holds for objects of $\underrightarrow{{\bf LD}}_{\Q,\mathrm{qc}}$. 

\subsection{The rigidification functor} \label{subsec: rigidification O-mod}

The categories $\bD_{\Q,\qc}$ and $\LD_{\Q,\qc}$ are now the natural source of the functor of completed pullback along the specialisation map. Roughly speaking, for any flat formal scheme $\fr{P}$, I define
\begin{align*}
 \mathbf{L}\hat{\mathrm{sp}}^*=\mathbf{L}\hat{\mathrm{sp}}_{\fr{P}}^*&:{\bf D}_\mathrm{qc}(\mathcal{O}_{\fr{P}}) \rightarrow {\bf D}(\mathcal{O}^+_{\fr{P}_K})  \\
 \mathbf{L}\hat{\mathrm{sp}}_{\fr{P}}^*\mathcal{M}&:= \colim{\pi:\fr{P}'\rightarrow \fr{P}} \mathrm{sp}_{\fr{P}'}^{-1}\mathbf{L}\hat{\pi}^*\mathcal{M}
\end{align*}
where the colimit is over all admissible blowups $\pi:\fr{P}'\rightarrow \fr{P}$. Tensoring with $\Q$ and then passing to the colimit gives a functor
\begin{equation}
\label{eqn: Lsp for O-modules}
 \mathbf{L}\hat{\mathrm{sp}}^*:\underrightarrow{{\bf LD}}_{\Q,\mathrm{qc}}(\mathcal{O}_{\fr{P}}) \rightarrow {\bf D}(\mathcal{O}_{\fr{P}_K}).
\end{equation}
As above, the formal definition of $\bL\hat{\sp}^*$ requires the use of sheaves on diagrams of spaces. I first let $I_\fP$ denote the category of admissible blowups of $\fP$, and
\[ \ca{B}_\fP \from I_\fP\to {\bf FSch}\]
the tautological diagram in the category of formal schemes. I then consider:
\begin{itemize}
\item the topos ${\ca{B}}^{(\bullet)}_\fP$ of $\N$-indexed inductive systems of sheaves on the diagram $\ca{B}_\fP$, ringed via the sheaf $\cO_{{\ca{B}}_\fP}$ whose restriction to each $\fP'\in I_\fP$ is the constant inductive system $\cO_{\fP'}$;
\item the topos ${\ca{B}}^{(\bullet)}_{\fP_\bullet}$ of $\N$-indexed inductive systems of $\N$-indexed projective systems of sheaves on $\ca{B}_\fP$, ringed via the sheaf $\cO_{{\ca{B}}_{P_\bullet}}$ whose restriction to each $\fP'\in I_\fP$ is the constant inductive system $\cO_{P'_\bullet}$.
\end{itemize}
As in the case of a single formal scheme $\fP$, if I need to emphasize the fact that I am considering categories of inductive systems, I will sometimes write these two rings as $\cO_{{\ca{B}}^{(\bullet)}_\fP}$ and $\cO_{{\ca{B}}^{(\bullet)}_{P_\bullet}}$ respectively, for example in considering the derived category $\bD(\cO_{{\ca{B}}^{(\bullet)}_\fP})$ (and in particular to distinguish it from $\bD(\cO_{{\ca{B}}_\fP})$). There are then natural morphisms
\begin{align*}
l_{\ca{B}^{(\bullet)}_{\fP}}&\from (\ca{B}^{(\bullet)}_{\fP_\bullet},\cO_{\ca{B}_{P_\bullet}}) \to (\ca{B}^{(\bullet)}_{\fP},\cO_{\ca{B}_{\fP}}) \\
\pi & \from ({\ca{B}}_{\fP},\cO_{{\ca{B}}_{\fP}}) \to (\fP,\cO_\fP) \\
\pi^{(\bullet)}_\bullet &\from (\ca{B}^{(\bullet)}_{\fP_\bullet},\cO_{\ca{B}_{P_\bullet}}) \to (\fP_\bullet^{(\bullet)},\cO_{P_\bullet}).
\end{align*}
Just as in the case of a single admissible blowup $\fP'\to \fP$, I can then define the completed pullback functor
\[ \bL\hat{\pi}^*\from \bD(\cO_{\fP^{(\bullet)}})\to  \bD(\cO_{{\ca{B}}^{(\bullet)}_\fP})  \]
as the composite $\bR l_{\ca{B}^{(\bullet)}_{\fP}*} \circ \bL\pi^{(\bullet)*}_\bullet \circ \bL l^*_{\fP^{(\bullet)}}$. Finally, there is a morphism
 \[
\sp_{{\ca{B}}^{(\bullet)}_\fP} \from (\fP_K,\cO_{\fP_K}) \to ({\ca{B}}^{(\bullet)}_\fP,\cO_{{\ca{B}}_\fP})
\]
whose inverse image functor takes the colimit over both $I_\fP^{\rm op}$ and $\N$. The functor (\ref{eqn: Lsp for O-modules}) I am after is then defined by (easily!) checking that the composite functor
\begin{align*}
\bL\hat{\sp}^*=\bL\hat{\sp}^*_\fP &\from \bD(\cO_{\fP^{(\bullet)}}) \to \bD(\cO_{\fP_K}) \\
\bL\hat{\sp}^*&:=\bL \sp_{{\ca{B}}^{(\bullet)}_\fP}^*\circ \bL\hat{\pi}^*
\end{align*} 
descends to the quotient $\underrightarrow{{\bf LD}}_{\Q,\qc}(\mathcal{O}_{\fr{P}})$ of $\bD_{\qc}(\cO_{\fP^{(\bullet)}})$. Note that the functor $\bL \sp_{{\ca{B}}^{(\bullet)}_\fP}^*$ appearing in the definition applies $\sp_{\fP'}^{-1}$ on each admissible blowup $\fP'\to \fP$, then takes the colimit over both $I_\fP^{\rm op}$ and $\N$, and then finally tensors with $\Q$. In particular, all of the composite factors of $\bL \sp_{{\ca{B}}^{(\bullet)}_\fP}^*$ are exact and derive trivially. The natural maps
\[ \bL\pi^* \to \bL\hat{\pi}^* \]
defined for each $\pi\from \fP'\to \fP$ fit together to give a morphism of functors
\begin{equation}
\label{eqn: sp^* to completed sp^*} \bL\sp^* \to \bL\hat{\sp}^* 
\end{equation}
with source the abstract module pullback along $\sp\from (\fP_K,\cO_{\fP_K})\to (\fP,\cO_\fP)$.  

\begin{example} If $\cM$ is a coherent $\cO_\fP$-module such that $\cM_{\Q}$ is a locally projective $\cO_{\fP\Q}$-module, then the morphism (\ref{eqn: sp^* to completed sp^*}) induces an isomorphism
\[ \sp^*\cM \isomto \bL\hat{\sp}^*\cM \]
in $\bD(\cO_{\fP_K})$. 
\end{example}

\subsection{Functoriality of $\bm{\bL\hat{\sp}^*_\fP}$}

Now suppose that $u\from\fP\to\fQ$ is a morphism of flat formal schemes over $\cV$. Then taking the strict transform induces a functor
\[ I_\fQ\to I_\fP, \]
and it is straightforward to check that the hypotheses of Lemma \ref{lemma: morphism of sites} are satisfied. This therefore gives rise to a morphism of (ringed) sites
\[ u \from  \ca{B}_{\fP} \to \ca{B}_{\fQ}. \]
Of course, there are analogous morphisms for the sites of inductive and projective systems of sheaves on $\fP$ and $\fQ$. The commutative squares
\[ \xymatrix{ \fP_K \ar[r]^-{\sp}\ar[d]^-{u} &  \ca{B}^{(\bullet)}_{\fP} \ar[r]^-{\pi}\ar[d]^-{u} & \fP \ar[d]^-{u} \\ \fQ_K \ar[r]^{\sp} & \ca{B}^{(\bullet)}_{\fQ}  \ar[r]^-{\pi} & \fQ } \]
now give rise to a base change morphism
\begin{equation}
\label{eqn: base change sp^* u_*} \bL\hat{\sp}_\fQ^*\circ \bR u_* \to \bR u_* \circ\bL\hat{\sp}^*_\fP
\end{equation}
of functors
\[ \LD_{\Q,\qc}(\cO_{\fP}) \to \bD(\cO_{\fQ_K}). \]
When $\fQ=\spf{\cV}$ is a point, $u\from \fP\to \spf{\cV}$ is the structure morphism, and $\cM\in\LD_{\Q,\qc}(\cO_{\fP})$, I then define
\[ \bR\Gamma(\fP,\cM):= \bL\hat{\sp}^*_{\spf{\cV}}\bR u_*\cM \in \bD(K). \]
Concretely, if $\cM=\{\cM^{(m)}\}_{m\in\N}\in \LD_{\Q,\qc}(\cO_{\fP})$, then
\[ \bR\Gamma(\fP,\cM)= \mathbf{R}\Gamma(\fr{P},\underset{m}{\mathrm{colim}}\,\mathcal{M}^{(m)}\otimes_{\Z}\Q)= \underset{m}{\mathrm{colim}}\,\mathbf{R}\Gamma(\fr{P},\mathcal{M}^{(m)})\otimes_{\Z}\Q. \]
Before proving that (\ref{eqn: base change sp^* u_*}) is an isomorphism, I need to show that $\bL\hat{\sp}^*$ is compatible with open immersions. 

\begin{lemma} \label{lemma: Lsp^* and open immersions} Let $j:\fr{U}\rightarrow \fr{P}$ denote an open immersion of flat formal schemes. Then the natural map
\[ j^{-1}\mathbf{L}\hat{\mathrm{sp}}_{\fr{P}}^* \rightarrow \mathbf{L}\hat{\mathrm{sp}}_{\fr{U}}^*j^{-1} \]
is an isomorphism of functors $\LD_{\Q,\qc}(\cO_{\fP})\to \bD(\cO_{\fr{U}_K}^+)$.
\end{lemma}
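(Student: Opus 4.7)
The plan is to unwind the definition of $\bL\hat{\sp}^*_\fP$ as a colimit over the category $I_\fP$ of admissible blowups, construct the natural transformation on each blowup, and then pass to the colimit. Concretely, for each admissible blowup $\pi\from \fP'\to \fP$, restricting over $\fU$ yields an admissible blowup $\pi'\from \fU'\to \fU$ fitting into a Cartesian square
\[ \xymatrix{ \fU' \ar[r]^-{j'} \ar[d]_-{\pi'} & \fP' \ar[d]^-{\pi} \\ \fU \ar[r]^-{j} & \fP } \]
in which $j'$ is again an open immersion. This gives a functor $I_\fP\to I_\fU$, and the natural morphism in the lemma is built from the base-change maps $j'^{-1}\bL\hat{\pi}^*\to \bL\hat{\pi'}^*j^{-1}$ together with the induced $j_K^{-1}\sp_{\fP'}^{-1}\to \sp_{\fU'}^{-1}j'^{-1}$.

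The first substantive step is to show that for each $\pi$, the map
\[ j'^{-1}\bL\hat{\pi}^*\cM \to \bL\hat{\pi'}^*j^{-1}\cM \]
is an isomorphism in $\bD(\cO_{\fU'})$. Writing $\bL\hat{\pi}^*\cM = \bR\lim{n}\bL\pi_n^*\cM_n$, this reduces to two facts: first, $j'^{-1}$ commutes with derived limits, which follows exactly as in Lemma \ref{lemma: open} from the existence and exactness of the left adjoint $j'_!$ (and the same for $j^{-1}$); and second, $j'^{-1}\bL\pi_n^*\cM_n \isomto \bL\pi'^*_n j_n^{-1}\cM_n$ levelwise, which is just ordinary flat base change for module pullbacks along the Cartesian square of ringed spaces obtained by reducing modulo $\mathfrak{m}^{n+1}$.

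Applying $\sp_{\fP'}^{-1}$ and tensoring with $\Q$ then gives an isomorphism $j_K^{-1}\sp_{\fP'}^{-1}\bL\hat{\pi}^*\cM \isomto \sp_{\fU'}^{-1}\bL\hat{\pi'}^*j^{-1}\cM$ for each admissible blowup $\pi$ of $\fP$, functorially in $\pi$. Taking the colimit over $\pi\in I_\fP^{\rm op}$ (and over the level index of the inductive system), the left-hand side reproduces $j_K^{-1}\bL\hat{\sp}^*_\fP\cM$, while the right-hand side gives the colimit over the image of $I_\fP^{\rm op}\to I_\fU^{\rm op}$ of the system computing $\bL\hat{\sp}^*_\fU j^{-1}\cM$.

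The main obstacle, and final step, is to identify this latter colimit with the full colimit defining $\bL\hat{\sp}^*_\fU j^{-1}\cM$. For this I will invoke the standard fact from formal/rigid geometry (see e.g.\ \cite{FK18}) that every admissible blowup of $\fU$ is dominated by the restriction to $\fU$ of some admissible blowup of $\fP$. This amounts to extending an admissible ideal sheaf on $\fU$ to an admissible ideal sheaf on $\fP$, for instance by taking its scheme-theoretic image along $j$ and saturating so as to remain open-isomorphic on the rigid generic fibre; the resulting admissible blowup of $\fP$ has restriction over $\fU$ dominating the original one. This cofinality implies that the functor $I_\fP\to I_\fU$ is initial on opposite categories for the relevant filtered colimits, and the lemma follows.
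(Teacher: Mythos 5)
Your proof is correct and follows essentially the same route as the paper's: both rest on the cofinality of the restriction functor $I_\fP\to I_\fU$ (which the paper also asserts without detail) together with the compatibility of $j^{-1}$ with the completed pullback $\bL\hat{\pi}^*$ on each admissible blowup. The only difference is cosmetic: you re-derive this compatibility via levelwise flat base change and commutation of $j^{-1}$ with derived limits, whereas the paper invokes the already-established Lemma~\ref{lemma: open} (that $j^{-1}\cong\bL\hat{\jmath}^*$ for open immersions) and functoriality of $\bL\hat{(-)}^*$; these are two phrasings of the same observation.
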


\begin{proof}
It suffices to prove the analogous statement with $\underrightarrow{{\bf LD}}_{\Q,\mathrm{qc}}(\mathcal{O}_{\fr{P}})$ replaced by ${\bf D}_{\mathrm{qc}}(\mathcal{O}_{\fr{P}})$. The functor $\fr{P}'\mapsto \fr{P}'\times_\fr{P}\fr{U}$ from admissible blowups of $\fr{P}$ to those of $\fr{U}$ is cofinal, so I can compute
\begin{align*}
\mathbf{L}\hat{\mathrm{sp}}_\fr{U}^*\mathbf{L}\hat{j}^*\mathcal{M} &= \underset{\pi\colon \fr{U}'\rightarrow \fr{U}}{\mathrm{colim}}\, \mathrm{sp}_{\fr{U}'}^{-1}\mathbf{L}\hat{\pi}^*\mathbf{L}\hat{j}^*\mathcal{M} \\
&= \underset{\pi\colon \fr{U}'\rightarrow \fr{U}}{\mathrm{colim}}\, \mathrm{sp}_{\fr{U}'}^{-1}j^{-1}\mathbf{L}\hat{\pi}^*\mathcal{M} \\
&=\underset{\pi\colon \fr{P}'\rightarrow \fr{P}}{\mathrm{colim}}\, \mathrm{sp}_{\pi^{-1}(\fr{U})}^{-1}j^{-1}\mathbf{L}\hat{\pi}^*\mathcal{M}  \\
&=j^{-1}\underset{\pi\colon \fr{P}'\rightarrow \fr{P}}{\mathrm{colim}}\,\mathrm{sp}_{\fr{P}'}^{-1}\mathbf{L}\hat{\pi}^*\mathcal{M} \\
&=j^{-1}\mathbf{L}\hat{\mathrm{sp}}^*_{\fr{P}}\mathcal{M}
\end{align*}
using Lemma \ref{lemma: open} for the second (canonical) isomorphism.
\end{proof}

\begin{theorem} \label{theo: base change sp^* u_*}
Let $u\from \fP\to \fQ$ be a flat morphism of flat formal schemes. Then
\[\bL\hat{\sp}_\fQ^*\circ \bR u_* \to \bR u_* \circ\bL\hat{\sp}^*_\fP \]
is an isomorphism of functors $\LD_{\Q,\qc}(\cO_{\fP}) \to \bD(\cO_{\fQ_K})$.
\end{theorem}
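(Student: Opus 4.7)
The claim is local on $\fQ$, and then local on $\fP$ (using Lemma \ref{lemma: Lsp^* and open immersions} and a straightforward variant for the pushforward side), so I may assume that both $\fP$ and $\fQ$ are affine. The plan is to decompose the base change morphism according to the factorisation $\bL\hat{\sp}^*_\fP=\bL\sp^*_{\ca{B}^{(\bullet)}_\fP}\circ\bL\hat{\pi}^*$ induced by the commutative diagram of ringed sites
\[ \xymatrix{ \fP_K \ar[r]^-{\sp}\ar[d]_u & \ca{B}^{(\bullet)}_\fP \ar[r]^-{\pi}\ar[d]_u & \fP \ar[d]^u \\ \fQ_K \ar[r]^-{\sp} & \ca{B}^{(\bullet)}_\fQ \ar[r]^-{\pi} & \fQ } \]
where the middle vertical is induced by strict transform (which, by flatness of $u$, coincides with ordinary base change $\fQ'\mapsto \fP\times_\fQ\fQ'$). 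It is then enough to verify base change separately for each square.

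For the right square, I would proceed by reducing to term-by-term flat base change on truncations. Given an admissible blowup $\pi_\fQ\colon\fQ'\to\fQ$, set $\fP':=\fP\times_\fQ\fQ'$; then $\pi_\fP\colon\fP'\to \fP$ is an admissible blowup and $u'\colon\fP'\to\fQ'$ is flat. For each $n\geq 0$ the Noetherian square $P'_n\to P_n\to Q_n \leftarrow Q'_n$ is cartesian with flat vertical arrows, so classical flat base change gives
\[ \bL\pi^*_{\fQ,n}\bR u_{n*}\cM_n \isomto \bR u'_{n*}\bL\pi^*_{\fP,n}\cM_n. \]
Using that $\bR u_{n*}$ preserves quasi-coherence and has bounded cohomological dimension in the affine setting (exactly as in the proof of Lemma \ref{lemma: qc adj}), the identifications assemble under $\bR\lim{n}$ to an isomorphism $\bL\hat{\pi}^*_\fQ\bR u_*\cM\isomto \bR u'_*\bL\hat{\pi}^*_\fP\cM$ in $\bD_\qc(\cO_{\fQ'})$, functorially in $\fQ'\in I_\fQ$.

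For the left square, the functor $\bL\sp^*_{\ca{B}^{(\bullet)}}$ is a composition of restriction to the open generic fibres, filtered colimits over $I^{\rm op}$ and over $\N$, and tensoring with $\Q$. All of these are exact; $\bR u_{K*}$ commutes with filtered colimits because $u_K\colon\fP_K\to\fQ_K$ is quasi-compact and quasi-separated; and restriction to the generic fibre commutes with $\bR u_*$ by (the proof of) Lemma \ref{lemma: Lsp^* and open immersions} applied to each open immersion $\fP'_K\hookrightarrow \fP'$. These combine to give $\bL\sp^*_\fQ\bR u_*\isomto \bR u_*\bL\sp^*_\fP$ on $\LD_{\Q,\qc}(\cO_{\ca{B}_\fP})$.

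The main obstacle, and the step I would expect to require the most care, is reconciling the fact that the colimit defining $\bL\hat{\sp}^*_\fP$ is indexed by all of $I_\fP^{\rm op}$, while the left square above only produces the subdiagram of admissible blowups of the form $\fP\times_\fQ\fQ'$ with $\fQ'\in I_\fQ$. I would handle this by invoking Proposition \ref{prop: adm bl ff}: since every admissible blowup of $\fP$ becomes isomorphic to $\fP$ in $\bD_{\qc,\Q}(\cO_\fP)$ after applying $\bR\pi_*$, the transition morphisms in the system defining $\bL\hat{\sp}^*_\fP$ become isogenies after $\bR\sp_*$, and so the colimit is insensitive (up to the localisation implicit in $\LD_\Q$) to restriction to any subdiagram of $I_\fP^{\rm op}$ that dominates every object in $I_\fP$ up to further admissible blowup. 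Since admissible blowups of $\fP$ of the form $\fP\times_\fQ\fQ'$ enjoy exactly this property (via Raynaud's theorem on the existence of formal models), the two colimits give the same object of $\bD(\cO_{\fQ_K})$, and combining the two base change isomorphisms yields the theorem.
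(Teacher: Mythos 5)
Your proposal takes a genuinely different route from the paper's, but it contains two real gaps.

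The paper first reduces to $\bD_\qc(\cO_\fP)$, then proves the point case $\fQ=\spf{\cV}$ directly: it expresses the generic fibre topologically as $\fP_K=\lim_{I_\fP}\fP'$, invokes the Fujiwara--Kato coherence result \cite[Chapter 0, Proposition 3.1.19]{FK18} to identify $\bR\Gamma(\fP_K,\bL\hat{\sp}^*\cM)$ with $\colim_{\fP'}\bR\Gamma(\fP',\bL\hat{\pi}^*\cM)$, and then applies Proposition \ref{prop: adm bl ff} to each term. The general case then follows by restricting to quasi-compact opens $V\subset\fQ_K$, replacing $\fQ$ by a blowup so that $V$ becomes the generic fibre of an open formal subscheme, applying Lemma \ref{lemma: Lsp^* and open immersions}, and running a diagram chase with the already-proved point case. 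You instead try to decompose the base change morphism along the factorisation $\bL\hat{\sp}^*=\bL\sp^*\circ\bL\hat{\pi}^*$ and check each square. The right square (levelwise flat base change along $\fQ'\mapsto\fP\times_\fQ\fQ'$) is fine, but the left square and the indexing step contain gaps.

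\textbf{Gap 1 (indexing mismatch).} The colimit defining $\bL\hat{\sp}^*_\fP$ runs over all of $I_\fP$, whereas your base change only produces the subdiagram indexed by $I_\fQ$ via $\fQ'\mapsto\fP\times_\fQ\fQ'$. You claim this subdiagram dominates every object of $I_\fP$ up to further admissible blowup; this is false. Take $\fQ=\spf{\cV}$: then $I_\fQ$ is a single object, and the image of $I_\fQ\to I_\fP$ is just $\fP$ itself, which dominates essentially no other admissible blowup of $\fP$. So the two colimits are genuinely different at the sheaf level. Proposition \ref{prop: adm bl ff} does not fix this, because it is a statement about $\bR\pi_*\bL\hat{\pi}^*$, i.e.\ about pushforwards back to $\fP$, not about the transition maps in the system $\{\sp^{-1}_{\fP'}\bL\hat{\pi}^*\cM\}$ on $\fP_K$, which are not isogenies.

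\textbf{Gap 2 (left square base change).} You assert that restriction to the generic fibre commutes with $\bR u_*$, citing Lemma \ref{lemma: Lsp^* and open immersions} ``applied to each open immersion $\fP'_K\hookrightarrow\fP'$''. But $\fP'_K$ is the adic generic fibre of the formal scheme $\fP'$, and the specialisation map $\sp\colon\fP'_K\to\fP'$ is \emph{not} an open immersion, so that lemma does not apply. The commutation $\sp^{-1}_{\fQ'}\bR u'_*\isomto\bR u_{K*}\sp^{-1}_{\fP'}$ is a base change across the specialisation maps, and establishing it requires precisely the sort of coherence argument (via $\fP_K=\lim\fP'$ and the result in \cite{FK18}) that the paper's proof uses. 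Your decomposition does not avoid this input; it just displaces where the hard part lies, and then omits it.
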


\begin{proof}
Since both $\fr{P}$ and $\fr{P}_K$ are quasi-compact, cohomology commutes with filtered colimits. It therefore suffices to prove the analogous statement with $\underrightarrow{{\bf LD}}_{\Q,\mathrm{qc}}(\cO_{\fP})$ replaced by ${\bf D}_{\mathrm{qc}}(\mathcal{O}_{\fr{P}})$.

First suppose that $\fQ=\spf{\cV}$ is a point. Then the base change morphism amounts to a morphism 
\[ \mathbf{R}\Gamma(\fr{P},\mathcal{M}) \rightarrow \mathbf{R}\Gamma(\fr{P}_K,\mathbf{L}\hat{\mathrm{sp}}^*_\fP\mathcal{M})  \]
in ${\bf D}(K)$. To prove that this is an isomorphism, recall from \cite[\S4]{Hub94} that, as topological spaces,
\[ \fr{P}_K=\lim{I_\fP}\fr{P}'.\]
In this case, since each topological space $\fr{P}'$ is coherent and sober, and the transition maps are quasi-compact, it follows from \cite[Chapter 0, Proposition 3.1.19]{FK18} that the map
\[ \underset{\fr{P}'}{\mathrm{colim}}\,\mathbf{R}\Gamma(\fr{P}',\mathbf{L}\hat{\pi}^*\mathcal{M}) \rightarrow \mathbf{R}\Gamma(\fr{P}_K,\mathbf{L}\hat{\mathrm{sp}}^*\mathcal{M}) \]
is an isomorphism. In fact, the result in \cite{FK18} is stated for sheaves, rather than complexes, but it is straightforward to extend it to complexes using $K$-injective resolutions. It now simply suffices to observe that, by Proposition \ref{prop: adm bl ff}, the map
\[\mathbf{R}\Gamma(\fr{P},\mathcal{M}) \rightarrow \mathbf{R}\Gamma(\fr{P}',\mathbf{L}\hat{\pi}^*\mathcal{M}) \]
is an isomorphism for all $\fr{P}'$.

For the general case, it suffices to show that the given map induces an isomorphism on derived global sections for any quasi-compact open $V\subset \fQ_K$. To prove this, I can replace $\fQ$ by an admissible blowup without changing either side, thus I can assume that $V=\fr{V}_K$ for an open subscheme $\fr{V}\subset \fQ$. By Lemma \ref{lemma: Lsp^* and open immersions}, I can then replace $\fQ$ by $\fr{V}$, and hence assume that $V=\fQ_K$. In this case, the base change map fits into the following  commutative diagram.
\[ \xymatrix{ & \mathbf{R}\Gamma(\fQ_K,\bR u_*\bL \hat{\sp}^*_\fP\cM) \ar[r] & \bR\Gamma(\fP_K,\bL \hat{\sp}^*_\fP\cM)  \\ \mathbf{R}\Gamma(\fQ_K,\bL\hat{\sp}^*_\fQ \bR u_* \cM )\ar[ur]   & & \\ \bR\Gamma(\fQ,\bR u_*\cM) \ar[rr]\ar[u] & & \bR\Gamma(\fP,\cM)\ar[uu] } \]
The two horizontal maps are clearly isomorphisms, and the two vertical maps are isomorphisms by the already treated case $\fQ=\spf{\cV}$. Hence the top left diagonal map is an isomorphism. 
\end{proof}

It's worth explicitly stating the special case $\fQ=\spf{\cV}$.

\begin{corollary} Let $\fP$ be a flat formal scheme, and $\cM\in \LD_{\Q,\qc}(\cO_\fP)$. Then the natural map
\[ \bR\Gamma(\fP,\cM)\to \bR\Gamma(\fP_K,\bL\hat{\sp}_\fP^*\cM)  \]
is an isomorphism in $\bD(K)$.
\end{corollary}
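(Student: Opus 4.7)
The corollary is the special case $\fQ = \spf{\cV}$ of Theorem \ref{theo: base change sp^* u_*}, applied to the structure morphism $u\from \fP\to \spf{\cV}$, which is flat since $\fP$ was assumed to be a flat formal scheme. My plan is therefore to simply observe that the map in the statement is definitionally the base change map (\ref{eqn: base change sp^* u_*}) for $u$, and then invoke the theorem.

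More concretely, I would first unwind the definitions on both sides. By definition,
\[ \bR\Gamma(\fP,\cM) := \bL\hat{\sp}^*_{\spf{\cV}}\,\bR u_*\cM, \]
while the right-hand side $\bR\Gamma(\fP_K, \bL\hat{\sp}_\fP^*\cM)$ is (via the equivalence $\LD_{\Q,\qc}\to \bD$ under $\bL\hat{\sp}^*$) the derived pushforward $\bR u_{K*}\bL\hat{\sp}^*_\fP \cM$ along the induced morphism $u_K\from \fP_K\to \spa{K,\cV}$. The natural map in the statement is precisely the base change morphism (\ref{eqn: base change sp^* u_*}) for $u$, which is an isomorphism by Theorem \ref{theo: base change sp^* u_*}.

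There is essentially no obstacle here, since all of the real content is already in the theorem. The only minor point to verify is that on the target $\spf{\cV}$ the functor $\bL\hat{\sp}^*$ reduces (after evaluation at the single admissible blowup $\spf{\cV}$ of itself) to tensoring an inductive system with $\Q$ and passing to the colimit, so that $\bL\hat{\sp}^*_{\spf{\cV}}\bR u_*\cM$ agrees with the global sections formula $\colim{m} \bR\Gamma(\fP, \cM^{(m)})\otimes_\Z\Q$ spelled out in the definition of $\bR\Gamma(\fP,\cM)$. This is immediate from the construction.
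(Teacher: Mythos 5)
Your proposal is correct and matches the paper exactly. The paper introduces this corollary with the sentence ``It's worth explicitly stating the special case $\fQ=\spf{\cV}$'' and gives no further proof, so your observation that the map in question is precisely the base change morphism $\bL\hat{\sp}^*_{\spf{\cV}}\bR u_*\cM\to\bR u_*\bL\hat{\sp}^*_\fP\cM$ for the flat structure morphism $u\from\fP\to\spf{\cV}$, which is an isomorphism by Theorem~\ref{theo: base change sp^* u_*}, is exactly the intended argument; your extra remark unwinding $\bL\hat{\sp}^*_{\spf{\cV}}$ on a point is a harmless sanity check (though note that $\colim{m}\bR\Gamma(\fP,\cM^{(m)})\otimes_{\Z}\Q$ is a consequence of the definition $\bR\Gamma(\fP,\cM):=\bL\hat{\sp}^*_{\spf{\cV}}\bR u_*\cM$ rather than the definition itself, as you word it).
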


Eventually, I will want to upgrade Theorem \ref{theo: base change sp^* u_*} to include $\mathscr{D}$-module structures, and in order to do so I will need a slightly different description of the functor $\bR u_*$ that works under the additional assumption that $u$ is flat. Restricting $\ca{B}_\fr{P}$ along the functor $I_\fr{Q}\to I_\fr{P}$ gives rise to a diagram $\ca{B}_{\fr{P}/\fr{Q}}\from I_\fr{Q} \to \bf{FSch}$ which by the flatness hypothesis on $u$ sends an admissible blowup $\fr{Q}'\to \fr{Q}$ to the admissible blowup $\fr{Q}'\times_\fr{Q}\fr{P}\to\fr{P}$. This gives rise to a category $\LD_{\Q,\qc}(\cO_{\mathcal{B}_{\fr{P}/\fr{Q}}})$ of inductive systems of quasi-coherent complexes on $\ca{B}_{\fr{P}/\fr{Q}}$ as above, and restriction along $I_{\fr{Q}}\to I_{\fr{P}}$ induces a `forgetful' functor
\[ \LD_{\Q,\qc}(\cO_{\mathcal{B}_{\fr{P}}}) \to \LD_{\Q,\qc}(\cO_{\mathcal{B}_{\fr{P}/\fr{Q}}})  \]
Of course, the morphism of diagrams $u\from \mathcal{B}_{\fr{P}/\fr{Q}}\to \mathcal{B}_{\fr{Q}}$ induces a functor
\[ \bR u_* \from\LD_{\Q,\qc}(\cO_{\mathcal{B}_{\fr{P}/\fr{Q}}}) \to \LD_{\Q,\qc}(\cO_{\mathcal{B}_{\fr{Q}}}).  \]

\begin{lemma} \label{lemma: alternative Ru*} The diagram
\[ \xymatrix{  \LD_{\Q,\qc}(\cO_{\mathcal{B}_{\fr{P}}})\ar[r] \ar[dr]_{\bR u_*} & \LD_{\Q,\qc}(\cO_{\mathcal{B}_{\fr{P}/\fr{Q}}}) \ar[d]^{\bR u_*} \\ & \LD_{\Q,\qc}(\cO_{\mathcal{B}_{\fr{Q}}})  } \]
commutes upto $2$-isomorphism.
\end{lemma}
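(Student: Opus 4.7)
The plan is to first identify both functors at the underived level and then lift to the derived setting via an exactness argument.

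At the underived level, the general formula for pushforward along a morphism of diagram sites (dual to Lemma \ref{lemma: morphism of sites}) yields, for any sheaf $\sF$ on $\ca{B}_\fP$ and any $\fQ'\in I_\fQ$,
\[ (u_*\sF)_{\fQ'} = u'_*\bigl(\sF_{\fQ'\times_\fQ\fP}\bigr), \]
where $u'\from \fQ'\times_\fQ\fP\to \fQ'$ is the base change of $u$. The flatness of $u$ enters here to ensure that $\fQ'\times_\fQ\fP\to \fP$ is an admissible blowup of $\fP$, so that the functor $I_\fQ\to I_\fP$ defining $\ca{B}_{\fP/\fQ}$ is literally fibre product with $\fP$. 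This formula exhibits $u_*$ on $\ca{B}_\fP$ as the composition of the restriction functor $\mathrm{res}\from \Sh(\ca{B}_\fP)\to \Sh(\ca{B}_{\fP/\fQ})$ with the level-wise pushforward on $\ca{B}_{\fP/\fQ}$, so the diagram commutes before deriving. Passing to inductive systems of quasi-coherent complexes and to the isogeny category is compatible with both functors, since quasi-coherence is preserved by the level-wise analogue of Lemma \ref{lemma: qc adj}.

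To lift to the derived setting, I will use that $\mathrm{res}$ is exact, since restriction of a diagram of sheaves along a functor between indexing categories commutes with all (co)limits computed pointwise. Hence $\mathrm{res}=\bL\mathrm{res}=\bR\mathrm{res}$ trivially, and the asserted 2-isomorphism reduces to showing that $\mathrm{res}$ carries $K$-injective resolutions (or more generally, $u_*$-acyclic complexes) on $\ca{B}_\fP$ to complexes on $\ca{B}_{\fP/\fQ}$ whose components at each $\fQ'$ are $u'_*$-acyclic. I plan to prove this either by exhibiting an exact left adjoint to $\mathrm{res}$ given by left Kan extension along $I_\fQ\to I_\fP$ (whose exactness is fed by cofiltered compatibility of admissible blowups under flat base change), or more concretely by constructing a Godement-type level-wise flasque resolution on $\ca{B}_\fP$ which is manifestly preserved by $\mathrm{res}$. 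Either route then allows one to compute $\bR u_*$ on $\ca{B}_\fP$ by applying $u_*$ to the resolution and identifies the result with the composite $\bR u_*\circ \mathrm{res}$.

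The main obstacle is precisely this preservation of acyclicity; once that is settled, the claim follows by formal manipulation of total derived functors. In particular, the passage from $\bD_{\qc}$ to the localisation $\LD_{\Q,\qc}$ is straightforward, since rationalisation and filtered colimits of inductive systems are both exact and compatible with level-wise $u_*$ on each admissible blowup of $\fQ$.
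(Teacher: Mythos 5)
Your decomposition of $u_*$ as level-wise pushforward composed with restriction is exactly what the paper has in mind when it says the lemma is ``simply a matter of unwinding the definitions'', and the formula $(u_*\sF)_{\fQ'} = u'_*\bigl(\sF_{\fQ'\times_\fQ\fP}\bigr)$ is correct. The gap is that the one non-formal step --- showing that the underived identity of functors survives derivation --- is only sketched: you say you ``plan'' to prove preservation of $u_*$-acyclicity by one of two routes, but you don't carry either out. In fact your first route is immediate from what the paper already provides. The restriction functor is precisely the pushforward $h_*$ along a morphism of sites $h\from\ca{B}_\fP\to\ca{B}_{\fP/\fQ}$ in the sense of Lemma \ref{lemma: morphism of sites}, with underlying functor $h^{-1}\from I_\fQ\to I_\fP$ given by fibre product with $\fP$ and the identity on each component space $\fQ'\times_\fQ\fP$. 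Lemma \ref{lemma: morphism of sites} already shows the left adjoint $h^{-1}$ on abelian sheaves is exact, because each comma category $\fP'/I_\fQ$ is cofiltered (any two admissible blowups of $\fQ$ are dominated by a third, and the flat base change to $\fP$ is compatible with the structure map from $\fP'$). A pushforward with exact left adjoint preserves $K$-injective complexes, so the usual composite-derived-functor formula $\bR(u\circ h)_*\isomto \bR u_*\circ\bR h_*=\bR u_*\circ\mathrm{res}$ applies; the passage to $\cO$-modules and then to the localisations $\LD_{\Q,\qc}$ is formal, as you note.

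One small imprecision worth flagging: the exactness of the left adjoint is not really ``fed by cofiltered compatibility of admissible blowups under flat base change''. Flatness of $u$ is used only to guarantee that $\fQ'\times_\fQ\fP$ is itself an admissible blowup of $\fP$ (so that $\ca{B}_{\fP/\fQ}$ is well-defined as a restriction of $\ca{B}_\fP$). The exactness of $h^{-1}$ is instead the cofilteredness of $\fP'/I_\fQ$ required by Lemma \ref{lemma: morphism of sites}, which holds for any two admissible blowups of $\fQ$ irrespective of base change.
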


\begin{proof} This is simply a matter of unwinding the definitions.
\end{proof}

I end this section with a brief discussion of the compatibility of $\bL\hat{\sp}^*$ with tensor products. If $\fr{P}$ is a flat formal scheme, and $\pi:\fr{P}'\rightarrow \fr{P}$ is an admissible blowup, there is a natural map
\[ \mathbf{L}\hat{\pi}^*\mathcal{M}\otimes_{\mathcal{O}_{\fr{P}'}}^\mathbf{L} \mathbf{L}\hat{\pi}^*\mathcal{N} \rightarrow \mathbf{L}\hat{\pi}^*\mathcal{M}\widehat{\otimes}_{\mathcal{O}_{\fr{P}'}}^\mathbf{L} \mathbf{L}\hat{\pi}^*\mathcal{N} = \mathbf{L}\hat{\pi}^*(\mathcal{M}\widehat{\otimes}_{\mathcal{O}_{\fr{P}}}^\mathbf{L} \mathcal{N}).  \]
Applying $\mathrm{sp}^{-1}_{\fr{P}'}$ gives
\[ \mathrm{sp}^{-1}_{\fr{P}'}\mathbf{L}\hat{\pi}^*\mathcal{M}\otimes_{\mathrm{sp}^{-1}_{\fr{P}'}\mathcal{O}_{\fr{P}'}}^\mathbf{L} \mathrm{sp}^{-1}_{\fr{P}'}\mathbf{L}\hat{\pi}^*\mathcal{N} \rightarrow   \mathrm{sp}^{-1}_{\fr{P}'}\mathbf{L}\hat{\pi}^*(\mathcal{M}\widehat{\otimes}_{\mathcal{O}_{\fr{P}}}^\mathbf{L} \mathcal{N}), \]
and passing to the colimit in $\fr{P}'$ gives a map
\[ \mathbf{L}\hat{\mathrm{sp}}^*\mathcal{M}\otimes_{\mathcal{O}_{\fr{P}^+_K}}^\mathbf{L} \mathbf{L}\hat{\mathrm{sp}}^*\mathcal{N} \rightarrow  \mathbf{L}\hat{\mathrm{sp}}^*(\mathcal{M}\widehat{\otimes}_{\mathcal{O}_{\fr{P}}}^\mathbf{L} \mathcal{N}) \]
in ${\bf D}(\mathcal{O}^+_{\fr{P}_K})$. Tensoring with $\Q$ therefore gives, for any $\mathcal{M},\mathcal{N}\in {\bf D}_{\Q,\mathrm{qc}}(\mathcal{O}_\fr{P})$, a map
\begin{equation} \label{eqn: tensor product} \mathbf{L}\hat{\mathrm{sp}}^*\mathcal{M}\otimes_{\mathcal{O}_{\fr{P}_K}}^\mathbf{L} \mathbf{L}\hat{\mathrm{sp}}^*\mathcal{N} \rightarrow   \mathbf{L}\hat{\mathrm{sp}}^*(\mathcal{M}\widehat{\otimes}_{\mathcal{O}_{\fr{P}}}^\mathbf{L} \mathcal{N}) 
\end{equation}
in ${\bf D}(\mathcal{O}_{\fr{P}_K})$. Passing to the colimit gives a similar map whenever $\mathcal{M},\mathcal{N}\in  \underrightarrow{{\bf LD}}_{\Q,\mathrm{qc}}(\mathcal{O}_\fr{P})$, I leave it to the reader to give a precise construction of this map. In general (\ref{eqn: tensor product}) won't be an isomorphism, however, I will show later on that this will be the case whenever $\mathcal{M},\mathcal{N}$ underly overholonomic $\mathscr{D}^\dagger_{\fr{P}\Q}$-modules.

\subsection{Functions with overconvergent singularities}

Now suppose that $\fP$ is smooth, and consider a divisor $D\subset P$. In this situation, Berthelot has defined in \cite[\S4.4]{Ber02} the $\cO_\fP$-algebra $\widehat{\cB}_\fP(D,r)$ for any $r\in \N$. Locally, if there exists some $f\in \cO_\fP$ such that $D=P\cap V(f)$, then
\[ \widehat{\cB}_\fP(D,r) = \frac{\cO_\fP\tate{X}}{(f^rX-p)}. \]

\begin{lemma} $\widehat{\mathcal{B}}_{\fr{P}}(D,r)$ is quasi-coherent as a complex of $\mathcal{O}_{\fr{P}}$-modules.
\end{lemma}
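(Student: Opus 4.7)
The plan is to verify the two defining conditions of quasi-coherence directly, by working locally on $\fP$ and exhibiting $\widehat{\cB}_\fP(D,r)$ as a nice quotient of the restricted power series ring. Since quasi-coherence is a local property by Remark \ref{rem: qc local}, I may assume $\fP$ is affine and $D = V(f)$ for a single $f \in \Gamma(\fP,\cO_\fP)$, which is a non-zerodivisor because $D$ is an effective Cartier divisor. Setting $\cA := \cO_\fP\tate{X}$ and $g := f^rX - p$, Berthelot's local description gives $\widehat{\cB}_\fP(D,r) = \cA/(g)$.

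The first key step is to exhibit a length-one locally free resolution
\[ 0 \to \cA \xrightarrow{g} \cA \to \widehat{\cB}_\fP(D,r) \to 0, \]
for which I need $g$ to be a nonzerodivisor on $\cA$. This I would prove directly by coefficient comparison: if $h = \sum a_i X^i \in \cA$ satisfies $gh = 0$, then $pa_0 = 0$ and $f^r a_{i-1} = p a_{i}$ for $i \geq 1$; since $p$ is a nonzerodivisor on $\cO_\fP$ (by flatness over $\cV$), an induction on $i$ forces all $a_i = 0$. Flatness of $\cA$ over $\cO_\fP$ (it is the $\varpi$-adic completion of the free $\cO_\fP$-module $\cO_\fP[X]$) then promotes this to a flat resolution, which will make it easy to compute derived tensor products with $\cO_{P_n}$.

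For condition (1) of Berthelot's definition, I would simply tensor the resolution with $\cO_{P_0}$ to obtain
\[ \cO_{P_0} \otimes^{\bL}_{\cO_\fP}\widehat{\cB}_\fP(D,r) = \bigl[ \cO_{P_0}[X] \xrightarrow{\bar g} \cO_{P_0}[X]\bigr], \]
whose cohomology sheaves are quasi-coherent since $\cO_{P_0}[X]$ is a free (in particular quasi-coherent) $\cO_{P_0}$-module. For condition (2), the same computation yields $\cO_{P_n}\otimes^{\bL}_{\cO_\fP}\widehat{\cB}_\fP(D,r) = [\cO_{P_n}[X] \xrightarrow{\bar g_n} \cO_{P_n}[X]]$, and I would use that the inverse system $\{\cO_{P_n}[X]\}_n$ has surjective transition maps and quasi-coherent terms on the affine $\fP$ (so that the Mittag--Leffler condition applies sectionwise), giving $\bR^i\lim_n \cO_{P_n}[X] = 0$ for $i>0$ and $\lim_n \cO_{P_n}[X] = \cO_\fP\tate{X} = \cA$ by the very definition of the $\varpi$-adic completion. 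Hence $\bR\lim_n(\cO_{P_n}\otimes^{\bL}_{\cO_\fP}\widehat{\cB}_\fP(D,r)) = [\cA \xrightarrow{g} \cA] = \widehat{\cB}_\fP(D,r)$, as desired.

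There is no real obstacle in this argument: the nonzerodivisor check is an elementary induction, and the rest is a routine application of Berthelot's criterion once the flat resolution is in hand. The only point requiring any care is the verification that $g$ is a nonzerodivisor, and even there the inductive argument is transparent.
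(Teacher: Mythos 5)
Your proof is correct, and it takes a somewhat different technical route than the paper's. The paper observes that $\widehat{\cB}_\fP(D,r)$ is $p$-torsion-free (and hence $\cV$-flat), so that each $\cO_{P_n}\otimes^{\bL}_{\cO_\fP}\widehat{\cB}_\fP(D,r)$ is already concentrated in degree $0$ and equals the underived tensor product $\cO_{P_n}[X]/(f^rX-p)$; it then uses surjectivity of the transition maps to conclude $\bR\lim_n = \lim_n$. You instead prove that $g = f^rX-p$ is a nonzerodivisor on $\cA = \cO_\fP\tate{X}$, obtain the length-one free resolution $[\cA\xrightarrow{g}\cA]$, and compute the derived tensor products as two-term complexes throughout, never needing to know that these are concentrated in degree $0$. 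Both proofs converge on the same Mittag--Leffler step at the end. Your approach has the virtue of replacing the paper's unproved assertion of $p$-torsion-freeness by a short, transparent coefficient-comparison computation; the paper's has the virtue of keeping everything in the abelian category of modules rather than two-term complexes. (For what it's worth, your nonzerodivisor fact does imply the paper's: since $p,g$ is visibly a regular sequence on $\cA$ and $g$ is a nonzerodivisor, the standard permutation lemma shows $g,p$ is also regular, which is exactly $p$-torsion-freeness of $\cA/(g)$; but you don't need to take this detour.)
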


\begin{proof}
Quasi-coherence can be checked locally, by Remark \ref{rem: qc local}, so I can assume that $\fr{P}$ is affine and that $D \subset P$ is the vanishing locus of a single function $f\in\mathcal{O}_\fr{P}$, which is a non-zero divisor in $\mathcal{O}_{P}$. In this case 
\[ \widehat{\mathcal{B}}_\fP(D,r)=\frac{\mathcal{O}_\fr{P}\tate{X}}{(f^rX-p)} \]
is $p$-torsion-free, and hence flat over $\mathcal{O}_\fr{P}$. Thus
\[ \mathcal{O}_{P_n} \otimes^\mathbf{L}_{\mathcal{O}_{\fr{P}}}\frac{\mathcal{O}_\fr{P}\tate{X}}{(f^rX-p)} \cong \mathcal{O}_{P_n} \otimes_{\mathcal{O}_{\fr{P}}}\frac{\mathcal{O}_\fr{P}\tate{X}}{(f^rX-p)} = \frac{\mathcal{O}_{P_n}[X]}{(f^rX-p)}  \]
is quasi-coherent, and the transition maps $ \frac{\mathcal{O}_{P_{n+1}}[X]}{(f^rX-p)}  \rightarrow  \frac{\mathcal{O}_{P_n}[X]}{(f^rX-p)} $ are surjective. Hence
\[ \mathbf{R}\underset{n}{\mathrm{lim}}\,  \frac{\mathcal{O}_{P_n}[X]}{(f^rX-p)}  \cong \underset{n}{\mathrm{lim}}\,\frac{\mathcal{O}_{P_n}[X]}{(f^rX-p)}  \cong \frac{\mathcal{O}_\fr{P}\tate{X}}{(f^rX-p)} \]
as required.
\end{proof}

Berthelot then defines $\widehat{\cB}^{(m)}_\fP(D):=\widehat{\cB}_\fP(D,p^{m+1})$, which form an inductive system as $m$ varies. This gives rise to an object 
\[ \widehat{\cB}^{(\bullet)}_\fP(D)\in \LD_{\Q,\qc}(\cO_{\fP}).\]

\begin{theorem} \label{theo: jdag in terms of LDqc}
Let $\fP$ be a smooth formal scheme, $D\subset P$ a divisor, and $U:=P\setminus D$. Suppose that $\cM\in \LD_{\Q,\qc}(\cO_{\fP})$. Then the morphism
\[ \cM\to \widehat{\cB}^{(\bullet)}_\fP(D)\widehat{\otimes}^{\bL}_{\cO_{\fP}}\cM \]
induces an isomorphism
\[ j_U^\dagger \bL\hat{\sp}^* \cM  \isomto  \bL\hat{\sp}^*(\widehat{\cB}^{(\bullet)}_\fP(D)\widehat{\otimes}^\bL_{\cO_{\fP}}\cM) \]
in $\bD^(\cO_{\fP_K})$.
\end{theorem}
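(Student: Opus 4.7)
The proof will proceed in three stages: constructing the factorization, reducing to the case $\cM = \cO_\fP$, and computing in that case.

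First, I will construct the factorization. The unit morphism $\cO_\fP \to \widehat{\cB}^{(\bullet)}_\fP(D)$, after the operation $(-)\widehat{\otimes}^\bL_{\cO_\fP}\cM$ and then $\bL\hat{\sp}^*$, gives a map $\bL\hat{\sp}^*\cM \to \bL\hat{\sp}^*(\widehat{\cB}^{(\bullet)}_\fP(D)\widehat{\otimes}^\bL_{\cO_\fP}\cM)$. Since $j\colon \tube{U}_\fP \hookrightarrow \fP_K$ is the inclusion of a closed subset, $j^\dagger_U = j_*j^{-1}$ is right adjoint to restriction-to-$\tube{U}_\fP$, and any map whose target is supported on $\tube{U}_\fP$ (i.e., lies in the essential image of $j_*$) factors uniquely through the unit $\bL\hat{\sp}^*\cM \to j_U^\dagger\bL\hat{\sp}^*\cM$. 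The factorization will follow once the target is shown to be so supported, which I verify via the local computation in step three (concretely, one sees that $\bL\hat{\sp}^*\widehat{\cB}^{(m)}_\fP(D)$ is supported on the closed strict neighbourhood $\{|f|\geq |p|^{1/p^{m+1}}\}$ of $\tube{U}_\fP$).

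Second, I will reduce to the case $\cM = \cO_\fP$. The goal is to produce a natural isomorphism
\[ \bL\hat{\sp}^*\widehat{\cB}^{(\bullet)}_\fP(D) \otimes^\bL_{\cO_{\fP_K}} \bL\hat{\sp}^*\cM \isomto \bL\hat{\sp}^*(\widehat{\cB}^{(\bullet)}_\fP(D)\widehat{\otimes}^\bL_{\cO_\fP}\cM), \]
which combined with the $\cM = \cO_\fP$ case and the evident identity $j_U^\dagger\cO_{\fP_K}\otimes^\bL_{\cO_{\fP_K}}(-) = j_U^\dagger(-)$ (valid since $j_U^\dagger = j_*j^{-1}$ is a flat localisation along the closed inclusion $\tube{U}_\fP\subset\fP_K$) yields the result. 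The general compatibility map of \eqref{eqn: tensor product} is not an isomorphism, but for the specific factor $\widehat{\cB}^{(\bullet)}_\fP(D)$ it will be, because on each admissible blowup $\pi_m$ of step three the pullback $\bL\hat{\pi}_m^*\widehat{\cB}^{(m)}_\fP(D)$ becomes locally isomorphic to $\cO_{\fU_m}$ (a flat factor), making the interchange of $\widehat{\otimes}$ with $\sp^{-1}$ legitimate after passing to the colimit.

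Third — and the technical heart — I will compute $\bL\hat{\sp}^*\widehat{\cB}^{(\bullet)}_\fP(D) \cong j_U^\dagger\cO_{\fP_K}$. The question is local on $\fP$ by Remark \ref{rem: qc local} and Lemma \ref{lemma: Lsp^* and open immersions}, so I assume $\fP = \spf{A}$ and $D = V(f)$. For each $m$, let $\pi_m\colon \fP_m\to \fP$ be the admissible blowup of the ideal $(p, f^{p^{m+1}})$, with open subscheme $\fU_m\subset \fP_m$ on which $f^{p^{m+1}}$ generates the pulled-back ideal. On $\fU_m$ the element $p/f^{p^{m+1}}$ is regular, and the map $X\mapsto p/f^{p^{m+1}}$ yields an isogeny $\bL\hat{\pi}_m^*\widehat{\cB}^{(m)}_\fP(D)\to (j_m)_*\cO_{\fU_m}$, where $j_m\colon \fU_m\hookrightarrow \fP_m$ (the complement contribution vanishes in the isogeny category $\LD_\Q$, as on it $p$ divides $f^{p^{m+1}}$ and the defining equation forces $X$ outside the unit disc). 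Passing to generic fibres, $(\fU_m)_K = \{x\in \fP_K : |f(x)|\geq |p|^{1/p^{m+1}}\}$; these form a cofinal decreasing system of closed strict neighbourhoods of $\tube{U}_\fP$. Applying Proposition \ref{prop: adm bl ff} to pass to the colimit over admissible blowups, Theorem \ref{theo: base change sp^* u_*} to push forward, and finally the colimit over $m$ identifies $\bL\hat{\sp}^*\widehat{\cB}^{(\bullet)}_\fP(D)$ with $\colim_m (i_m)_*\cO_{(\fU_m)_K} = j_*\cO_{\tube{U}_\fP} = j_U^\dagger\cO_{\fP_K}$, where $i_m\colon (\fU_m)_K\hookrightarrow \fP_K$.

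The main obstacle is precisely step three and the associated compatibility in step two: one must rigorously control the behaviour of $\bL\hat{\pi}_m^*\widehat{\cB}^{(m)}_\fP(D)$ off the open $\fU_m$ and ensure that passage to the colimit over $m$ and tensoring with $\Q$ correctly identifies the overconvergent sections. The cofinality of the blowups $\pi_m$ among all admissible blowups of $\fP$ (for the purpose of computing $\bL\hat{\sp}^*$) is the key structural fact that makes the whole argument tick.
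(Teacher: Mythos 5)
Your step three — the blowup $\mathrm{Bl}_{(p,f^{p^{m+1}})}\fP$, the open $\fU_m$ on which $p/f^{p^{m+1}}$ is regular, and the explicit isogeny via $X\mapsto p/f^{p^{m+1}}$ — is exactly the local computation that the paper carries out, and your step one (factoring through the unit of $j^{-1}\dashv j_*$ once the target is shown to be supported on $\tube{U}_\fP$) is sound. However there are two real gaps in the reduction.

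First, the closing claim that ``the cofinality of the blowups $\pi_m$ among all admissible blowups of $\fP$ is the key structural fact'' is incorrect: the $\pi_m$ are \emph{not} cofinal in $I_\fP$ (generic admissible blowups are not dominated by any $\pi_m$), and no cofinality is needed. What is actually used is the tautological pullback-invariance of $\bL\hat{\sp}^*$: the functor $\bL\hat{\sp}^*_\fP$ factors as $\bL\hat{\sp}^*_{\fP'}\circ\bL\hat{\pi}^*$ for \emph{any} single admissible blowup $\pi\from\fP'\to\fP$, so one is free to pick the one convenient blowup $\mathrm{Bl}_{(p,f^{p^{m+1}})}\fP$ for the fixed level $m$ and establish the statement there. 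The paper then transports the statement across $j_m\from\fr{V}_m'\to\fP'$ via Theorem~\ref{theo: base change sp^* u_*} (applied to the \emph{open immersion} $j_m$, not to the whole system of blowups).

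Second, and more seriously, your step two --- the isomorphism $\bL\hat{\sp}^*\widehat{\cB}^{(\bullet)}_\fP(D)\otimes^\bL_{\cO_{\fP_K}}\bL\hat{\sp}^*\cM\isomto\bL\hat{\sp}^*(\widehat{\cB}^{(\bullet)}_\fP(D)\widehat{\otimes}^\bL_{\cO_\fP}\cM)$ --- is where the genuine work lies, and the justification offered (``making the interchange of $\widehat{\otimes}$ with $\sp^{-1}$ legitimate after passing to the colimit'') is not a proof. The general compatibility morphism (\ref{eqn: tensor product}) is \emph{not} an isomorphism, and showing that it is in this case is precisely as hard as the theorem itself. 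The paper avoids this by staying in the world of completed tensor products on the formal side: working on the blowup $\fP'$, it applies Lemma~\ref{lemma: proj} (the projection formula for $\widehat{\otimes}$ along $j_m$) to rewrite $\bR j_{m*}j_m^{-1}\cM'$ as $\cM'\widehat{\otimes}^\bL_{\cO_{\fP'}}\bR j_{m*}\cO_{\fr{V}_m'}$, reducing the whole claim for general $\cM'$ to the single isogeny $\bR j_{m*}\cO_{\fr{V}_m'}\to\widehat{\cB}^{(m)}_{\fP'}(D)$. Your route, which tensors over $\cO_{\fP_K}$ \emph{after} applying $\bL\hat{\sp}^*$, needs an unproven projection formula along $\sp$; to fill the gap one would essentially re-derive the argument via Lemma~\ref{lemma: proj} on the blowup, so it is better to perform the reduction there directly.
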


\begin{proof}
Set $r_m=p^{m+1}$, $\lambda_m:=p^{-1/r_m}$, let $\tube{D}_{\lambda_m}\subset \fr{P}_K$ be the open tube of radius $\lambda_m$, $\overline{D}_m$ its closure in $\fP_K$, and write $V_{m}:= \fr{P}_K\setminus \overline{D}_m$. Let $j_{m}:V_{m}\rightarrow \fr{P}_K$ denote the inclusion. Thus, if $D$ is defined by $f=0$ inside $P$, for some $f\in \mathcal{O}_\fr{P}$, then $V_{m}$ is defined inside $\fP_K$ by $\norm{f}\geq \lambda_m$. Therefore, for any complex $\mathscr{K}$ on $\fP_K$, 
\[ j_U^\dagger\mathscr{K} \isomto \colim{m}\bR j_{m*}j_m^{-1}\mathscr{K}. \]
The map 
\[ \bL\hat{\sp}^*\cM\to \bL\hat{\sp}^*(\widehat{\cB}^{(\bullet)}_\fP(D)\widehat{\otimes}^{\bL}_{\cO_{\fP}}\cM) \]
is the colimit of the maps
\[ \bL\hat{\sp}^*\cM\to \bL\hat{\sp}^*(\widehat{\cB}^{(m)}_\fP(D)\widehat{\otimes}^{\bL}_{\cO_{\fP}}\cM). \]
It therefore suffices to show that
\[\bL\hat{\sp}^*\cM\to \bL\hat{\sp}^*(\widehat{\cB}^{(m)}_\fP(D)\widehat{\otimes}^{\bL}_{\cO_{\fP}}\cM)  \]
factors through an isomorphism
\[ \bR j_{m*}j_m^{-1}\bL\hat{\sp}^*\cM\to \bL\hat{\sp}^*(\widehat{\cB}^{(m)}_\fP(D)\widehat{\otimes}^{\bL}_{\cO_{\fP}}\cM) \]
(which is then necessarily unique). To prove this, consider the admissible blowup
\[ \pi\from \fP':=\mathrm{Bl}_{(p,f^{r_m})}\fP \to \fP.\]
There exists a unique open subset $j_m\from \mathfrak{V}_m'\to \fP'$ such that $V_m=\sp^{-1}_{\fP'}(\mathfrak{V}_m')$. Now set $\cM':=\bL\hat{\pi}^*\cM$ and $\widehat{\cB}^{(m)}_{\fP'}(D):=\bL\hat{\pi}^*\widehat{\cB}^{(m)}_\fP(D)$, thus what I need to show is that the map
\[\bL\hat{\sp}^*_{\fP'}\cM'\to \bL\hat{\sp}^*_{\fP'}(\widehat{\cB}^{(m)}_{\fP'}(D)\widehat{\otimes}^{\bL}_{\cO_{\fP'}}\cM'),  \]
factors through an isomorphism
\[ \bR j_{m*}j_m^{-1}\bL\hat{\sp}^*_{\fP'}\cM'\to \bL\hat{\sp}^*_{\fP'}(\widehat{\cB}^{(m)}_{\fP'}(D)\widehat{\otimes}^{\bL}_{\cO_{\fP'}}\cM')\]
(again, this isomorphism is then necessarily unique). Applying Theorem \ref{theo: base change sp^* u_*}, it therefore suffices to prove that the map
\[ \cM'\to \widehat{\cB}^{(m)}_{\fP'}(D)\widehat{\otimes}^{\bL}_{\cO_{\fP'}}\cM' \]
factors through an isomorphism
\[ \bR j_{m*}j_m^{-1}\cM'\to \widehat{\cB}^{(m)}_{\fP'}(D)\widehat{\otimes}^{\bL}_{\cO_{\fP'}}\cM'\]
in $\LD_{\Q,\qc}(\cO_{\fP'})$. By Lemma \ref{lemma: proj} I can reduce to the case $\cM'=\cO_{\fP'}$, in other words I need to show that
\[ \cO_{\fP'}\rightarrow \widehat{\cB}^{(m)}_{\fP'}(D) \]
factors through an isomorphism
\[  \bR j_{m*}\cO_{\fr{V}_m'}\rightarrow \widehat{\cB}^{(m)}_{\fP'}(D) \]
in $\bD_{\Q,\qc}(\cO_{\fP'})$. To see this, I will construct the inverse isogeny
\[ \widehat{\cB}^{(m)}_{\fP'}(D) \rightarrow \bR j_{m*}\cO_{\fr{V}_m'}. \]
Locally on $\fP$,
\[ \fP' = \Proj{\cO_{\fP}}{\frac{\cO_{\fP}[X,Y]}{(f^{r_m}X-pY)}}, \]
where $D=P\cap V(f)$. This is covered by the two open formal subschemes
\begin{align*}
\fr{V}_m' &= \spf{\frac{\cO_{\fP}\tate{X}}{(f^{r_m}X-p)}} \\
\fr{U} &= \spf{\frac{\cO_{\fP}\tate{Y}}{(f^{r_m}-pY)}}
\end{align*}
In particular, locally on $\fP$, $\fr{V}_m'=D(Y)$ is the complement of a hypersurface in $\fP'$, and hence $\bR j_{m*}\cO_{\fr{V}'_m}=j_{m*}\cO_{\fr{V}'_m}$ is a module (and not just a complex). Thus I am allowed to work locally on $\fP$, and therefore assume that there is indeed such an $f$.

I can then calculate
\[ \widehat{\cB}^{(m)}_{\fP'}(D)\!\mid_{\fr{V}'_m} = \bR\lim{n} \frac{\cO_{P_n}[T]}{(f^{r_m}T-p)} \otimes^{\bL}_{\cO_{P_n}} \frac{\cO_{P_n}[X]}{(f^{r_m}X-p)}.  \]
Since $f$ is not a zero divisor in $\cO_{P_n}$, a direct calculation then shows that
\[ \cO_{P_n}[X] \overset{f^{r_m}X-p}{\longrightarrow} \cO_{P_n}[X] \]
is a flat resolution of $\frac{\cO_{P_n}[X]}{(f^{r_m}X-p)}$. Thus
\[ \frac{\cO_{P_n}[T]}{(f^{r_n}T-p)} \otimes^{\bL}_{\cO_{P_n}} \frac{\cO_{P_n}[X]}{(f^{r_m}X-p)} = \left[   \frac{\cO_{P_n}[X,T]}{(f^{r_m}T-p)} \overset{f^{r_m}X-p}{\longrightarrow} \frac{\cO_{P_n}[X,T]}{(f^{r_m}T-p)} \right]. \]
Since each $\frac{\cO_{P_n}[X,T]}{(f^{r_m}T-p)}$ is a quasi-coherent $\cO_{P_n}$-module, and the transition maps 
\[ \frac{\cO_{P_{n+1}}[X,T]}{(f^{r_m}T-p)}\to \frac{\cO_{P_n}[X,T]}{(f^{r_m}T-p)}\]
are surjective, 
\[ \bR\lim{n} \frac{\cO_{P_n}[X,T]}{(f^{r_m}T-p)} = \lim{n}\frac{\cO_{P_n}[X,T]}{(f^{r_m}T-p)} = \frac{\cO_{\fP}\tate{X,T}}{(f^{r_m}T-p)}.\]
Thus
\[ \widehat{\cB}^{(m)}_{\fP'}(D)\!\mid_{\fr{V}'_m} = \left[   \frac{\cO_{\fP}\tate{X,T}}{(f^{r_m}T-p)} \overset{f^{r_m}X-p}{\longrightarrow} \frac{\cO_{\fP}\tate{X,T}}{(f^{r_m}T-p)} \right]. \]
A direct calculation shows that this map is injective, thus
\[ \widehat{\cB}^{(m)}_{\fP'}(D)\!\mid_{\fr{V}'_m} = \frac{\cO_{\fP}\tate{X,T}}{(f^{r_m}T-p,f^{r_m}X-p)}. \]
On $\fr{V}'_m$, the factorisation
\[ \cO_{\fP'}\to \bR j_{m*}\cO_{\fr{V}'_m} \to \widehat{\cB}^{(m)}_{\fP'}(D) \]
I am after is simply the inclusion 
\[ \frac{\cO_{\fP}\tate{X}}{(f^{r_m}X-p)} \to \frac{\cO_{\fP}\tate{T,X}}{(f^{r_m}T-p,f^{r_m}X-p)}.\]
The inverse map
 \[ \widehat{\cB}^{(m)}_{\fP'}(D) \to \bR j_{m*}\cO_{\fr{V}'_m} \]
 is then the multiplication 
\[ \frac{\cO_{\fP}\tate{T,X}}{(f^{r_m}T-p,f^{r_m}X-p)} \to \frac{\cO_{\fP}\tate{X}}{(f^{r_m}X-p)}. \]
which is surjective, with kernel $(T-X)$ annihilated by $p$. Similarly, on $\fr{U}$, 
\begin{align*} \widehat{\cB}^{(m)}_{\fP'}(D)\!\mid_{\fr{U}} &= \bR\lim{n} \frac{\cO_{P_n}[T]}{(f^{r_m}T-p)} \otimes^{\bL}_{\cO_{P_n}} \frac{\cO_{P_n}[Y]}{(f^{r_m}-pY)} \\
 &= \frac{\cO_{\fP}\tate{T,Y}}{(f^{r_m}T-p,f^{r_m}-pY)}.
 \end{align*}
The inverse map
 \[ \widehat{\cB}^{(m)}_{\fP'}(D) \to \bR j_{m*}\cO_{\fr{V}'_m} \]
 is therefore the map
 \[ \frac{\cO_{\fP}\tate{T,Y}}{(f^{r_m}T-p,f^{r_m}-pY)} \to  \frac{\cO_{\fP}\tate{Y,Y^{-1}}}{(f^{r_m}-pY)} \]
given by $T \mapsto Y^{-1}$. Again, this map is surjective, with kernel $(1-TY)$ annihilated by $p$. 

Putting this together, I have constructed a map
\[ \widehat{\cB}^{(m)}_{\fP'}(D) \to j_{m*}\cO_{\fr{V}'_m} \]
which is surjective, and whose kernel is annihilated by $p$. It follows that it is an isogeny, and hence induces an isomorphism
\[ \widehat{\cB}^{(m)}_{\fP'}(D) \isomto \bR j_{m*}\cO_{\fr{V}'_m} \]
in $\bD_{\Q,\qc}(\cO_{\fP'})$. To identify this with the inverse of an isomorphism
\[ \bR j_{m*}\cO_{\fr{V}'_m}  \isomto \widehat{\cB}^{(m)}_{\fP'}(D) \]
factoring the natural map
\[ \cO_{\fr{P}'}  \isomto \widehat{\cB}^{(m)}_{\fP'}(D) \]
I can therefore restrict to $\fr{V}'_m$, in which case everything has already been worked out explicitly.
\end{proof}

\section{Rigidification of \texorpdfstring{$\mathscr{D}^\dagger$}{Ddag}-modules and constructibility} \label{sec: rigidification of D-modules}

Suppose now that the formal scheme $\fP$ is smooth over $\mathcal{V}$. Thus Berthelot defined in \cite[\S4.2]{Ber02} the category
\[ \LD_{\Q,\qc}(\widehat{\sD}^{(\bullet)}_{\fP}) \]
of inductive systems of quasi-coherent complexes of $\widehat{\sD}^{(\bullet)}_{\fP}$-modules, up to isogeny. In this section, my goal is to upgrade $\bL\hat{\sp}^*$ to a functor
\[ \bL\hat{\sp}^*\from \LD_{\Q,\qc}(\widehat{\sD}_{\fP}^{(\bullet)}) \to \bD(\sD_{\fP_K}),\]
and prove that if $\cM$ is an overholonomic complex of Frobenius type, then $\bL\hat{\sp}^*\cM$ is a constructible complex on $\fP$.

\subsection{$\pmb{\sD^{(k,0)}}$-modules on admissible blowups}

The functor $\bL\hat{\sp}^*_\fP$ passes through the category of quasi-coherent complexes of $\cO$-modules on the diagram of all admissible blowups of $\fP$, and the same will be therefore need to be true for its $\sD$-module enhancement. In trying to do this directly, one quickly runs into the following problem: if $\pi\from \fP'\to\fP$ is an admissible blowup of a smooth formal scheme $\fP$, then $\pi^{-1}\sD_{\fP}^{(0)}$ does not necessarily act on $\cO_{\fP'}$. Thus $\pi^*\sD_{\fP}^{(0)}$ does not have any natural ring structure over which the pullback $\pi^*\cM$ of a $\sD^{(0)}_\fP$-module $\cM$ could be a module. 

To get around this problem, I will need to use the rings of differential operators with congruence level from \cite{HSS21}. In that article, the authors define, for any $k\geq 0$, a subring
\[  \sD^{(k,0)}_\fP\subset \sD^{(0)}_\fP  \] 
consisting of `differential operators of congruence level $k$'. In terms of local co-ordinates $z_1,\ldots,z_d$, with corresponding derivations $\partial_1,\ldots,\partial_d$, there is the usual description
\[ \sD^{(0)}_\fP = \left\{\left.\sum_{\underline{i}=(i_1,\ldots,i_d)\in \N^d } a_{\underline{i}} \partial_1^{i_1}\ldots \partial_d^{i_d} \;\right\vert\; a_{\underline{i}} \in \cO_\fP,\;\text{only finitely many }\neq0 \right\}, \]
and then $\sD^{(k,0)}_\fP$ consists of those elements for which $a_{\underline{i}}\in \fr{m}^{k\norm{\underline{i}}}=\fr{m}^{k(i_1+\ldots+i_d)}$. In particular, $\sD^{(k,0)}_{\fP\Q}=\sD^{(0)}_{\fP\Q}$. 

If $\pi\from \fP'\to \fP$ is an admissible blowup, then $\pi^{-1}\sD^{(k,0)}_{\fP\Q}=\pi^{-1}\sD^{(0)}_{\fP\Q}$ acts on $\cO_{\fP'\Q}$. Since $\fP'$ is flat over $\cV$, it therefore makes sense to ask whether or not this induces an action of $\pi^{-1}\sD^{(k,0)}_{\fP}$ on $\cO_{\fP'}$. In \cite[Corollary 2.1.15]{HSS21} the authors show that this will be the case for all sufficiently large $k$. In particular, this will be true if $\fP'={\rm Bl}_{\cI}(\fP)$ and $\varpi^k\in \cI$. 

\begin{definition} If $\pi\from \fP'\to \fP$ is an admissible blowup, with $\fP$ smooth, define $k_{\fP'}:=\min\{k \mid \pi^{-1}\sD^{(k,0)}_{\fP}\text{ acts on }\cO_{\fP'}\}$. If $\rho\from \fP''\to \fP'$ is a morphism of admissible blowups, define $k_\rho:=\max\{k_{\fP'},k_{\fP''}\}$.
\end{definition}

Thus, for any $k\geq k_{\fP'}$ it is possible to form the sheaf of rings $\sD^{(k,0)}_{\fP'}:=\pi^*\sD^{(k,0)}_{\fP}$. If $\cM$ is a $\sD^{(0)}_\fP$-module, then $\pi^*\cM$ is naturally a $\sD^{(k,0)}_{\fP'}$-module, for all $k\geq k_{\fP'}$. More generally, if $\rho\from \fP''\to \fP'$ is a morphism of admissible blowups, and $\cM$ is a $\sD^{(k_{\fP'},0)}_{\fP'}$-module, then $\rho^*\cM$ is naturally a $\sD^{(k,0)}_{\fP'}$-module, for all $k\geq k_\rho$. 

\begin{definition} \label{defn: D-mods on blowups} Let $\cB_\fP$ denote the diagram of admissible blowups of $\fP$. A (left) $\sD^{(0)}_{\cB_{\fP}}$-module consists of:
\begin{itemize}
\item for each admissible blowup $\fP'\to \fP$, a (left) $\sD^{(k_{\fP'},0)}_{\fP'}$-module $\cM_{\fP'}$;
\item for each morphism $\rho\from \fP'\to \fP$, a morphism $\chi_\rho\from \rho^*\cM_{\fP'} \to \cM_{\fP''}$ of (left) $\sD^{(k_{\rho},0)}_{\fP''}$-modules,
\end{itemize}
subject to the condition that, for any commutative triangle
\[ \xymatrix{ \fP''' \ar[rr]^-\tau  \ar[dr]_-{\rho\circ \tau}&& \fP'' \ar[dl]^-\rho  \\ & \fP' }\]
of admissible blowups, $\chi_{\rho\circ \tau}=\chi_\tau\circ \tau^*\chi(\rho)$. 
\end{definition}

Note that the condition $\chi_{\rho\circ \tau}=\chi_\tau\circ \tau^*\chi(\rho)$ can be checked on the underlying $\cO$-modules, hence does not depend on the congruence level of any of the terms. There is, of course, an analogous category of $\sD^{(0)}_{\ca{B}_{P_{\bullet}}}$-modules, consisting of projective systems of $\sD^{(k_{\fP'})}_{P'_n}$-modules on each $\fP'$, 

The major difficulty in working with this category comes from the fact that  the varying rings $\sD^{(k_\fP',0)}_{\fP'}$ do \emph{not} form a sheaf of rings on $\ca{B}_{\fP}$, thus a $\sD^{(0)}_{\cB_{\fP}}$-module is not strictly speaking a module over a sheaf of rings on a site in any natural way. Despite this, it is nevertheless easy to extend the formalism of $K$-flat and $K$-injective resolutions to the category of $\sD^{(0)}_{\cB_{\fP}}$-modules, and therefore do homological algebra as if we were just working in the category of modules over a sheaf of rings on $\cB_\fP$. For example:

\begin{lemma} The category of left $\sD^{(0)}_{\cB_{\fP}}$-modules is an abelian category with enough injectives.
\end{lemma}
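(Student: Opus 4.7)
The plan is to verify the abelian structure directly, then deduce the existence of enough injectives from Grothendieck's theorem that any abelian category satisfying AB5 with a set of generators has enough injectives.

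First I would show the category is abelian, with kernels and cokernels computed componentwise. Given a morphism $f\colon\cM\to\cN$, set $(\ker f)_{\fP'}:=\ker(f_{\fP'})$ and construct the transition morphism $\rho^*\ker(f_{\fP'})\to \ker(f_{\fP''})$ via the universal property of the kernel, using that the composite $\rho^*\ker(f_{\fP'})\to \rho^*\cM_{\fP'}\xrightarrow{\chi^\cM_\rho}\cM_{\fP''}\xrightarrow{f_{\fP''}}\cN_{\fP''}$ vanishes by the compatibility of $f$ with the transition maps. Cokernels are constructed dually, using right-exactness of each $\rho^*$. The cocycle identities on the new transition maps are inherited from those of $\cM$ and $\cN$, and the relevant universal properties in $\Mod(\sD^{(0)}_{\cB_\fP})$ are then formal.

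Next I would verify AB5 and exhibit a set of generators. Filtered colimits are computed componentwise and are exact, since each $\rho^*$ commutes with colimits (being a left adjoint) and filtered colimits are exact at each level. For generators, for each admissible blowup $\fP_0\in I_\fP$ the evaluation functor $E_{\fP_0}\colon \cM\mapsto \cM_{\fP_0}$ is exact, and I claim it admits a left adjoint $L_{\fP_0}$ given on a $\sD^{(k_{\fP_0},0)}_{\fP_0}$-module $\cN$ by
\[ L_{\fP_0}(\cN)_\fQ := \bigoplus_{\rho\in\mathrm{Hom}_{I_\fP}(\fQ,\fP_0)} \sD^{(k_\fQ,0)}_\fQ \otimes_{\sD^{(k_\rho,0)}_\fQ} \rho^*\cN. \]
Here $\rho^*\cN$ is naturally a $\sD^{(k_\rho,0)}_\fQ$-module, obtained by restricting scalars in $\cN$ from $\sD^{(k_{\fP_0},0)}_{\fP_0}$ to $\sD^{(k_\rho,0)}_{\fP_0}$ (valid since $k_\rho\geq k_{\fP_0}$) and pulling back along $\rho$, using that $\rho^*\sD^{(k_\rho,0)}_{\fP_0}=\sD^{(k_\rho,0)}_\fQ$ from the definition $\sD^{(k,0)}_{\fP'}:=\pi^*\sD^{(k,0)}_\fP$; the extension of scalars along $\sD^{(k_\rho,0)}_\fQ\subset \sD^{(k_\fQ,0)}_\fQ$ then produces the required module structure. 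The transition morphisms are defined on each summand by the canonical identification $\sigma^*\rho^*\cN\cong (\rho\sigma)^*\cN$ and reindexing, and the adjunction bijection sends $g\colon \cN\to\cM_{\fP_0}$ to the family whose $\rho$-component is $\chi^\cM_\rho\circ\rho^* g$ (extended by scalars). Letting $\fP_0$ range over a set of representatives of isomorphism classes in $I_\fP$, and for each $\fP_0$ letting $\cN$ range over a generating family of $\Mod(\sD^{(k_{\fP_0},0)}_{\fP_0})$ (for example $\{j_!(\sD^{(k_{\fP_0},0)}_{\fP_0}|_U)\}$ for $U\subset\fP_0$ open), the objects $L_{\fP_0}(\cN)$ form a set of generators of $\Mod(\sD^{(0)}_{\cB_\fP})$.

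The main obstacle will be the careful bookkeeping of congruence levels throughout: the rings $\sD^{(k_{\fP'},0)}_{\fP'}$ vary with $\fP'$, so $\Mod(\sD^{(0)}_{\cB_\fP})$ is not literally a category of modules over a single sheaf of rings on a site, and pullback $\rho^*$ only transfers $\sD^{(k_{\fP'},0)}_{\fP'}$-modules into $\sD^{(k_\rho,0)}_{\fP''}$-modules, necessitating the extension-of-scalars step above to reach the correct module structure at the target. Once this bookkeeping is done, Grothendieck's theorem yields enough injectives.
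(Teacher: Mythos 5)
Your overall strategy — componentwise abelian structure, AB5 via componentwise filtered colimits, and then a set of generators plus Grothendieck's theorem — is the right framework, and the abelian-category and AB5 steps go through as you describe (with the usual care that $\rho^*\ker(f_{\fP'})\to\ker(f_{\fP''})$ comes from the universal property of the kernel, as $\rho^*$ is only right exact). The gap is in the generator construction: the transition morphisms on $L_{\fP_0}(\cN)$ do not exist in general. You say they come "by the canonical identification $\sigma^*\rho^*\cN\cong(\rho\sigma)^*\cN$ and reindexing", but that identification is a map of $\sD^{(k_{\rho\sigma},0)}_{\fQ'}$-modules, whereas your formula twists $\rho^*\cN$ by extension of scalars along $\sD^{(k_\rho,0)}_\fQ\subset\sD^{(k_\fQ,0)}_\fQ$. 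After pullback along $\sigma\from\fQ'\to\fQ$ this factor becomes $\sigma^*\sD^{(k_\fQ,0)}_\fQ=\sD^{(k_\fQ,0)}_{\fQ'}$, and to land in the $(\rho\sigma)$-summand $\sD^{(k_{\fQ'},0)}_{\fQ'}\otimes_{\sD^{(k_{\rho\sigma},0)}_{\fQ'}}(\rho\sigma)^*\cN$ of $L_{\fP_0}(\cN)_{\fQ'}$ one needs $\sD^{(k_\fQ,0)}_{\fQ'}\subset\sD^{(k_{\fQ'},0)}_{\fQ'}$, i.e.\ $k_\fQ\geq k_{\fQ'}$.

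This inequality fails, because the congruence level $k_{(-)}$ is not monotone along morphisms of $I_\fP$. Take $\fP=\widehat{\A}^1_\cV$ with coordinate $z$, $\fP_0=\mathrm{Bl}_{(\varpi^2,z^2)}\fP$ and $\fQ=\mathrm{Bl}_{(\varpi,z)}\fP$. Then $(\varpi^2,z^2)\cO_\fQ$ equals $(\varpi^2)$ on one chart and $(z^2)$ on the other, hence is invertible, so there is a morphism $\fQ\to\fP_0$ in $I_\fP$. But $k_\fQ=1$ (the derivation $\varpi\partial_z$ preserves $\cO_\fQ$), while $k_{\fP_0}=2$: on the $\fP_0$-chart containing $s:=z^2/\varpi^2$ one has $\varpi\partial_z(s)=2z/\varpi\notin\cO_{\fP_0}$, since $z/\varpi$ is integral over $\cO_{\fP_0}$ but not in it (that chart is non-normal, and $\fQ$ is precisely its normalization). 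So a dominating blowup can have strictly smaller congruence level, the direct-sum formula does not produce an object of the category $\Mod(\sD^{(0)}_{\ca{B}_\fP})$, and the adjunction $\mathrm{Hom}(L_{\fP_0}(\cN),\cM)\cong\mathrm{Hom}(\cN,\cM_{\fP_0})$ has not been established. This step needs a genuinely different construction — one whose transition data is manifestly compatible with the non-monotone variation of $k_{\fP'}$ over $I_\fP$ — before Grothendieck's theorem can be invoked.
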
 

\begin{proof}

\end{proof}

I will denote by $\bD(\sD^{(0)}_{\cB_{\fP}})$ the derived category of $\sD^{(0)}_{\cB_{\fP}}$-modules, and $\bD_\qc(\sD^{(0)}_{\cB_{\fP}})$ its full subcategory on quasi-coherent objects, that is, objects $\cM$ such that each $\cM_{\fP'}$ is quasi-coherent as a complex of $\cO_{\fP'}$-modules. For such objects, the maps $\chi_\rho$ extend to morphisms
\[ \bL\hat{\rho}^*\cM_{\fP'}\to \cM_{\fP''} \]
of quasi-coherent complexes of $\sD^{(k_\rho,0)}$-modules. There is of course a similar category $\bD_\qc(\sD^{(0)}_{\cB^{(\bullet)}_{\fP}})$ of inductive systems of such objects, as well as its localisation $\LD_{\Q,\qc}(\sD^{(0)}_{\cB_{\fP}})$. Exactly as in the case of $\cO_\fP$-modules, I can then define a functor
\[ \bL\hat{\pi}^* \from \bD_\qc(\sD^{(0)}_{\fP^{(\bullet)}}) \to \bD_\qc(\sD^{(0)}_{\cB^{(\bullet)}_{\fP}}). \]
Since $\sD^{(k_\fP',0)}_{\fP'\Q}= \sp_{\fP'*}\sD_{\fP_K}$, I can also define a functor
\[ \bL\sp^*\from \bD_\qc(\sD^{(0)}_{\cB^{(\bullet)}_{\fP}}) \to \bD(\sD_{\fP}), \]
which, roughly speaking, applies $\sp^{-1}_{\fP'}$ on each admissible blowup $\fP'$, then takes the colimit over both $I_{\fP}^{\rm op}$ and $\N$, and then finally tensors with $\Q$.

\begin{definition} The functor
\[  \bL\hat{\sp}^*=\bL\hat{\sp}^*_\fP \from \LD_{\Q,\qc}(\widehat{\sD}^{(\bullet)}_{{\fP}})  \to \bD(\sD_{\fP_K})\]
is defined to be the composite
\[ \bD_\qc(\widehat{\sD}^{(\bullet)}_{\fP})\overset{\rm forget}{\lto} \bD_\qc(\sD^{(0)}_{\fP^{(\bullet)}}) \overset{\bL\hat{\pi}^*}{\lto} \bD_\qc(\sD^{(0)}_{\cB^{(\bullet)}_{\fP}}) \overset{\bL\sp^*}{\lto} \bD(\sD_{\fP_K}). \]
I then define the shifted functor $\sp^!=\sp_\fP^!:=\bL\hat{\sp}^*[-\dim \fP]$. 
\end{definition} 

It is straightforward to check that $\bL\hat{\sp}^*$ (and therefore the shifted version $\sp^!$) descends to the localisation $\LD_{\Q,\qc}(\widehat{\sD}^{(\bullet)}_{{\fP}})$ of $\bD_\qc(\widehat{\sD}^{(\bullet)}_{\fP})$.

\subsection{Compatibility with de\thinspace Rham pushforwards}

Now let $u\from \fP\to\fQ$ be a smooth morphism of smooth formal $\cV$-schemes. Since $u$ is in particular flat, the strict transform of an admissible blowup $\fQ'\to\fQ$ is just the fibre product $\fQ'\times_{\fQ}\fP$. 

\begin{lemma} \label{lemma: k p' q'} Let $u\from \fP\to \fQ$ be a smooth morphism of smooth formal schemes, and $\fQ'\to \fQ$ an admissible blowup. Then $k_{\fQ'\times_{\fQ}\fP}\leq k_{\fQ'}$. 
\end{lemma}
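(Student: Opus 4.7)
The goal is to show that $\pi^{-1}\sD^{(k_{\fQ'},0)}_{\fP}$ acts on $\cO_{\fQ'\times_{\fQ}\fP}$, where $\pi\from \fQ'\times_{\fQ}\fP\to \fP$ denotes the projection. Since the claim is local on $\fP$, and since $u$ is smooth, I may work étale-locally on $\fP$ and thereby reduce to the case where there exist global coordinates $y_1,\ldots,y_s,z_1,\ldots,z_r$ on $\fP$ such that $y_1,\ldots,y_s$ are the pullback via $u$ of étale coordinates on $\fQ$. In these coordinates, $\sD^{(k,0)}_\fP$ is the subsheaf of $\sD^{(0)}_\fP$ consisting of operators $\sum_{\underline{a},\underline{b}} c_{\underline{a},\underline{b}}\,\partial_{\underline{y}}^{\underline{a}}\partial_{\underline{z}}^{\underline{b}}$ with $c_{\underline{a},\underline{b}}\in \fr{m}^{k(|\underline{a}|+|\underline{b}|)}\cO_\fP$, and analogously $\sD^{(k,0)}_\fQ$ is described in terms of the $\partial_{\underline{y}}^{\underline{a}}$ alone.

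Using this local description, I plan to split the verification into the contributions of the two groups of derivations. The ``vertical'' derivations $\partial_{z_1},\ldots,\partial_{z_r}$ annihilate $u^{-1}\cO_\fQ$, hence annihilate any pulled-back ideal $u^{-1}\cJ\cdot\cO_\fP$; consequently they preserve the structure sheaf $\cO_{\fQ'\times_\fQ\fP}$ of the blowup with respect to this pulled-back ideal (for any defining ideal $\cJ$ of $\fQ'$). So the subring of $\sD^{(k,0)}_\fP$ generated by $\cO_\fP$ and $\varpi^k\partial_{z_j}$ acts on $\cO_{\fQ'\times_\fQ\fP}$ for \emph{all} $k\geq 0$. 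For the ``horizontal'' derivations $\partial_{y_1},\ldots,\partial_{y_s}$, smoothness of $u$ gives a natural ring homomorphism $u^{-1}\sD^{(k,0)}_\fQ\to \sD^{(k,0)}_\fP$ for every $k$, and hence the given action of $\pi_{\fQ'}^{-1}\sD^{(k_{\fQ'},0)}_{\fQ}$ on $\cO_{\fQ'}$ pulls back to an action of $\pi^{-1}u^{-1}\sD^{(k_{\fQ'},0)}_{\fQ}$ on $u^*\cO_{\fQ'}=\cO_{\fQ'\times_\fQ\fP}$.

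To finish, I need to check that these two partial actions glue to an action of the whole ring $\sD^{(k_{\fQ'},0)}_\fP$. The clean way to do this, rather than verifying all commutation relations by hand, is to observe that both partial actions already arise from the tautological action of $\sD^{(0)}_{\fP\Q}=\sD^{(k_{\fQ'},0)}_{\fP\Q}$ on $\cO_{(\fQ'\times_\fQ\fP)_K}$ (which exists because differential operators on the generic fibre always act on functions on the generic fibre). Since $\cO_{\fQ'\times_\fQ\fP}$ is a $\varpi$-torsion-free $\cV$-subalgebra of $\cO_{(\fQ'\times_\fQ\fP)_K}$, it suffices to check that $\sD^{(k_{\fQ'},0)}_\fP\cdot\cO_{\fQ'\times_\fQ\fP}\subset \cO_{\fQ'\times_\fQ\fP}$, which now follows from the two partial integrality statements above together with the local description of $\sD^{(k,0)}_\fP$, since every element of $\sD^{(k,0)}_\fP$ is an $\cO_\fP$-linear combination of composites of the $\varpi^k\partial_{y_i}$ and $\varpi^k\partial_{z_j}$.

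The main obstacle is the last integrality argument: the local description of $\sD^{(k,0)}_\fP$ as a filtered union of $\cO_\fP$-modules spanned by ordered monomials in the $\varpi^k\partial_{y_i}$ and $\varpi^k\partial_{z_j}$ needs to be exploited carefully to reduce the global integrality statement to the two partial ones, and this relies on the basic structural results for $\sD^{(k,0)}_\fP$ from \cite[\S2.1]{HSS21}.
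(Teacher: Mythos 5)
Your proof plan is correct and fills in what the paper dismisses as a ``straightforward calculation'': working in local coordinates $y_1,\ldots,y_s,z_1,\ldots,z_r$ adapted to $u$, splitting the generators of $\sD^{(k_{\fQ'},0)}_\fP$ into the horizontal $\varpi^{k_{\fQ'}}\partial_{y_i}$ (handled via the hypothesis on $\fQ'$ together with the Leibniz rule on the completed tensor product $\cO_{\fQ'}\widehat{\otimes}_{\cO_\fQ}\cO_\fP$) and the vertical $\varpi^{k_{\fQ'}}\partial_{z_j}$ (which are $u^{-1}\cO_\fQ$-linear and hence preserve the blowup charts), and then appealing to the action of $\pi^{-1}\sD^{(k_{\fQ'},0)}_{\fP\Q}$ on $\cO_{\fQ'\times_\fQ\fP,\Q}$ to avoid verifying by hand that the partial actions assemble into a ring action. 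Two small imprecisions worth cleaning up: the vertical derivations do not \emph{annihilate} the pulled-back ideal $u^{-1}\cJ\cdot\cO_\fP$ (they do not even act trivially on $\cO_\fP$), but rather they are $u^{-1}\cO_\fQ$-linear and hence annihilate the local generators $u^*(f_\ell/f_i)$ of the blowup, which is what you actually use; and $\cO_{\fQ'\times_\fQ\fP}$ is not literally a subalgebra of $\cO_{(\fQ'\times_\fQ\fP)_K}$ (the two live on different topological spaces), so the integrality check should be phrased in terms of the injection $\cO_{\fQ'\times_\fQ\fP}\hookrightarrow\cO_{\fQ'\times_\fQ\fP,\Q}$, which is where the paper places the tautological $\sD^{(0)}_{\fP\Q}$-action.
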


\begin{proof}
Straightforward calculation.
\end{proof}

I now consider the functor $I_\fr{Q}\to I_\fr{P}$ given by taking the fibre product over $\fr{Q}$ with $\fr{P}$, and restrict the diagram $\ca{B}_\fr{P}$ along this functor to produce a diagram $\ca{B}_{\fr{P}/\fr{Q}}\from I_\fr{Q}\to {\bf FSch}$ taking $\fr{Q}'$ to $\fr{Q'}\times_\fr{Q} \fr{P}$. I then consider the following variant of Definition \ref{defn: D-mods on blowups}. 

\begin{definition} \label{defn: D-mods on blowups 2} A (left) $\sD^{(0)}_{\cB_{\fP/\fQ}}$-module consists of:
\begin{itemize}
\item for each admissible blowup $\fQ'\to \fQ$, a (left) $\sD^{(k_{\fQ'},0)}_{\fQ'\times_\fQ \fP}$-module $\cM_{\fQ'}$;
\item for each morphism $\rho\from \fQ''\to \fQ'$, a morphism $\chi_\rho\from \rho^*\cM_{\fQ'} \to \cM_{\fQ''}$ of (left) $\sD^{(k_{\rho},0)}_{\fQ''\times_\fQ\fP}$-modules,
\end{itemize}
subject to the condition that, for any commutative triangle
\[ \xymatrix{ \fQ''' \ar[rr]^-\tau  \ar[dr]_-{\rho\circ \tau}&& \fQ'' \ar[dl]^-\rho  \\ & \fQ' }\]
of admissible blowups, $\chi_{\rho\circ \tau}=\chi_\tau\circ \tau^*\chi(\rho)$. 
\end{definition}

I can then define the transfer module $\sD^{(0)}_{\ca{B}_\fP\to\ca{B}_\fQ}$ as a left $\sD^{(0)}_{\ca{B}_{\fP/\fQ}}$-module by taking its restriction to each $\pi\from \fQ'\times_\fQ\fP \to\fP$ to be $\pi^*\sD^{(k_{\fQ'},0)}_{\fP\to\fQ}$. The usual side switching operations therefore give a right $\sD^{(0)}_{\ca{B}_{\fP/\fQ}}$-module $\sD^{(0)}_{\ca{B}_\fQ\ot \ca{B}_\fP}$, which also comes with the usual structure of a `left $u^{-1}\sD^{(0)}_{\ca{B}_\fQ}$-module on $\ca{B}_{\fP/\fQ}$', in a sense that I leave to the reader to make precise. I can therefore define a functor
\begin{align*}
\label{eqn: u+ blowups} u_+ \from \bD(\sD^{(0)}_{\ca{B}^{(\bullet)}_{\fP/\fQ}} ) &\to \bD(\sD^{(0)}_{\ca{B}^{(\bullet)}_{\fQ}} ) \\
\cM &\mapsto \bR u_* (  \sD^{(0)}_{\ca{B}_\fQ\ot \ca{B}_\fP} \otimes^{\bL}_{\sD^{(0)}_{\ca{B}_{\fP}}} \cM) \nonumber
\end{align*} 
in the usual way. Finally, I define
\[ u_+\from  \bD(\sD^{(0)}_{\ca{B}^{(\bullet)}_{\fP}} ) \to \bD(\sD^{(0)}_{\ca{B}^{(\bullet)}_{\fQ}} ) \]
by composing with the`restriction' functor $ \bD(\sD^{(0)}_{\ca{B}^{(\bullet)}_{\fP}} )\to  \bD(\sD^{(0)}_{\ca{B}^{(\bullet)}_{\fP/\fQ}} )$, which exists by Lemma \ref{lemma: k p' q'}. 

Since $u$ is smooth, the Spencer resolutions for $\sD^{(0)}_{\fQ\ot \fP}$ and $\widehat{\sD}^{(m)}_{\fQ\ot \fP}$ show that the natural map
\[ \sD^{(0)}_{\fQ\ot\fP}\otimes_{\sD^{(0)}_\fP} \widehat{\sD}^{(m)}_\fP \rightarrow \widehat{\sD}^{(m)}_{\fQ\ot\fP}\]
is an isogeny \cite[\S3.5.5]{Ber02}. Hence the diagram
\[ \xymatrix{  \LD_{\Q,\qc}(\widehat{\sD}^{(\bullet)}_{\fP}) \ar[d]_{u_+} \ar[r]& \LD_{\Q,\qc}(\sD^{(0)}_{\fP})\ar[d]^{u_+^{(0)}} \\ \LD_{\Q,\qc}(\widehat{\sD}^{(\bullet)}_{\fQ}) \ar[r] & \LD_{\Q,\qc}(\sD^{(0)}_{\fQ}) } \]
commutes up to natural isomorphism. Now using the facts that $\sD^{(0)}_{\ca{B}_\fQ\ot \ca{B}_\fP,\Q}=\pi^*\sD^{(0)}_{\fQ\ot \fP,\Q}$, and $\sp^{-1}_{\ca{B}_{\fP/\fQ}} \sD^{(0)}_{\ca{B}_\fQ\ot \ca{B}_\fP,\Q}  =\sD_{\fQ_K \ot \fP_K}$, I obtain a natural base change map
\begin{equation*}
 \bL\hat{\sp}_{\fQ}^* \circ u^{(0)}_+ \to u_+ \circ \bL\hat{\sp}_\fP^* \end{equation*}
of functors $\LD_{\Q,\qc}(\sD^{(0)}_{\fP}) \to \bD(\sD_{\fQ_K})$. Note that since $u$ is smooth, say of relative dimension $d$, the functor $u_+$ on the RHS here is what I have previously called $\bR u_{\dR*}[d]$.

\begin{proposition} \label{prop: base change sp^* u_* de Rham} Let $u\from \fP\to\fQ$ be a smooth morphism of smooth formal schemes of relative dimension $d$, and $\cM\in\LD_{\Q,\qc}(\sD^{(0)}_{\fP})$. Then the base change map
\[ \bL\hat{\sp}^*_\fQ u_{+}^{(0)}\cM \rightarrow \mathbf{R}u_{\dR*}\bL\hat{\sp}^*_\fP\cM [d]  \]
is an isomorphism. Thus the diagram
\[\xymatrix{ \LD_{\Q,\qc}(\widehat{\sD}^{(\bullet)}_{\fP}) \ar[d]_{u_+}\ar[r]^-{\sp_\fP^!} &  \bD(\sD_{\fP_K}) \ar[d]^{\bR u_{\dR*}[2d]} \\ \LD_{\Q,\qc}(\widehat{\sD}^{(\bullet)}_{\fQ}) \ar[r]^-{\sp_\fQ^!} & \bD(\sD_{\fQ_K}) } \]
commutes up to natural isomorphism.
\end{proposition}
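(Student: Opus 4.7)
The plan is to reduce the assertion to the $\cO$-module base change theorem already proved in Theorem \ref{theo: base change sp^* u_*}, using the Spencer resolution to replace the $\sD$-module transfer $\sD^{(0)}_{\fQ\ot \fP}$ by the relative de\thinspace Rham complex. It clearly suffices to treat a single quasi-coherent complex $\cM\in \bD_\qc(\sD^{(0)}_\fP)$, the statement for $\LD_{\Q,\qc}$ following by tensoring with $\Q$ and passing to the colimit, so I will work at that level.

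First, since $u$ is smooth of relative dimension $d$, the relative Spencer complex gives a locally free resolution of $\sD^{(0)}_{\fQ\ot \fP}$ as a right $\sD^{(0)}_\fP$-module (up to a shift by $[d]$) by sheaves of the form $\wedge^\bullet\sT_{\fP/\fQ}\otimes_{\cO_\fP}\sD^{(0)}_\fP$. Consequently one obtains the familiar identification
\[ u_+^{(0)}\cM \cong \bR u_{*}\bigl(\Omega^\bullet_{\fP/\fQ}\otimes_{\cO_\fP}\cM\bigr)[d], \]
where the differential is the relative de\thinspace Rham differential. An entirely analogous computation in the analytic setting identifies $\bR u_{\dR*}\bL\hat{\sp}^*_\fP\cM[d]$ with $\bR u_*\bigl(\Omega^\bullet_{\fP_K/\fQ_K}\otimes_{\cO_{\fP_K}} \bL\hat{\sp}^*_\fP\cM\bigr)[d]$. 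All of this takes place coherently on the diagram $\ca{B}_{\fP/\fQ}$, because on any admissible blowup $\fQ'\to \fQ$ the sheaf $\Omega^p_{\fQ'\times_\fQ \fP/\fQ'}$ is simply the pullback of the locally free $\cO_\fP$-module $\Omega^p_{\fP/\fQ}$, and the congruence levels $k_{\fQ'\times_\fQ \fP}\leq k_{\fQ'}$ controlled by Lemma \ref{lemma: k p' q'} ensure that the Spencer resolution is defined compatibly across the diagram.

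Next, since each $\Omega^p_{\fP/\fQ}$ is a locally free coherent $\cO_\fP$-module of finite rank, the completed pullback $\bL\hat{\sp}^*_\fP\Omega^p_{\fP/\fQ}$ agrees with the ordinary pullback $\Omega^p_{\fP_K/\fQ_K}$, and the comparison map \eqref{eqn: tensor product} evaluated on a locally free finite-rank factor is an isomorphism. This produces a natural quasi-isomorphism
\[ \bL\hat{\sp}^*_\fP\bigl(\Omega^\bullet_{\fP/\fQ}\otimes_{\cO_\fP}\cM\bigr)\isomto \Omega^\bullet_{\fP_K/\fQ_K}\otimes_{\cO_{\fP_K}}\bL\hat{\sp}^*_\fP\cM \]
respecting the de\thinspace Rham differential, the latter point following from the fact that the derivations in $\sD^{(0)}_\fP$ act on $\cO_{\fP_K}$ via their images in $\sD_{\fP_K}$. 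Finally, applying Theorem \ref{theo: base change sp^* u_*} to the flat morphism $u$, applied to the quasi-coherent complex $\Omega^\bullet_{\fP/\fQ}\otimes_{\cO_\fP}\cM$, yields
\[ \bL\hat{\sp}^*_\fQ \bR u_*\bigl(\Omega^\bullet_{\fP/\fQ}\otimes_{\cO_\fP}\cM\bigr) \isomto \bR u_*\bL\hat{\sp}^*_\fP\bigl(\Omega^\bullet_{\fP/\fQ}\otimes_{\cO_\fP}\cM\bigr), \]
and composing with the isomorphism of the previous display and the Spencer identifications on either side gives the desired base change isomorphism.

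The main obstacle is the last piece of Step 2: producing the de\thinspace Rham-differential-respecting isomorphism $\bL\hat{\sp}^*_\fP(\Omega^\bullet_{\fP/\fQ}\otimes_{\cO_\fP}\cM)\isomto \Omega^\bullet_{\fP_K/\fQ_K}\otimes_{\cO_{\fP_K}}\bL\hat{\sp}^*_\fP\cM$ coherently across the diagram $\ca{B}_\fP$, where on each admissible blowup $\fP'\to \fP$ one must keep track of an action by $\sD^{(k_{\fP'},0)}_{\fP'}$ and of the fact that these actions are only compatible after tensoring with $\Q$. The cleanest way to handle this is to assemble the complex $\Omega^\bullet_{\ca{B}_{\fP/\fQ}/\ca{B}_\fQ}\otimes_{\cO_{\ca{B}_{\fP/\fQ}}}\bL\hat{\pi}^*\cM$ as a single object in the appropriate diagram category before applying $\bL\sp^*$, so that the required comparisons of differentials reduce to statements about locally free sheaves on a single formal scheme, where everything is visibly compatible.
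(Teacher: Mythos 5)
Your strategy coincides with the paper's: use the Spencer resolution to identify $u_+^{(0)}$ with a relative de\thinspace Rham pushforward, commute $\bL\hat{\sp}^*$ past the locally free factors $\Omega^p_{\fP/\fQ}$, and reduce everything to the $\cO$-module base change result of Theorem \ref{theo: base change sp^* u_*}. There is, however, an imprecision at the final step, and it sits exactly where you flag ``the main obstacle.'' You invoke Theorem \ref{theo: base change sp^* u_*} ``applied to the quasi-coherent complex $\Omega^\bullet_{\fP/\fQ}\otimes_{\cO_\fP}\cM$,'' but that complex, equipped with its de\thinspace Rham differential, is \emph{not} a complex of $\cO_\fP$-modules --- the differential is only $u^{-1}\cO_\fQ$-linear --- so it does not lie in the domain of the theorem, and $\bL\hat{\sp}^*$ as defined for quasi-coherent $\cO$-complexes does not literally act on it. The device that makes this precise, and which the paper uses, is the stupid filtration on $\Omega^\bullet_{\fP/\fQ}\otimes_{\cO_\fP}\cM$: the base change morphism is compatible with it, so it suffices to check isomorphy on the associated graded pieces $\Omega^n_{\fP/\fQ}\widehat{\otimes}^{\bL}_{\cO_\fP}\cM$ for each fixed $n$, each of which genuinely is a quasi-coherent complex of $\cO_\fP$-modules (and for which the completed and uncompleted tensor products agree, since $\Omega^n_{\fP/\fQ}$ is finite locally free). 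This entirely sidesteps the bookkeeping you were worried about concerning compatibility of differentials across the diagram $\ca{B}_{\fP/\fQ}$. One further omission: the passage from $\bR u_*$ computed over $\ca{B}_\fP$ to $\bR u_*$ computed over the restricted diagram $\ca{B}_{\fP/\fQ}$ is exactly Lemma \ref{lemma: alternative Ru*} and should be cited explicitly rather than left implicit in ``all of this takes place coherently on the diagram.''
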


\begin{proof}
By taking the stupid filtration on
\[ \sD^{(0)}_{\fQ\ot\fP}\otimes^\bL_{\sD_{\fP}^{(0)}}\cM = \Omega^\bullet_{\fP/\fQ}\otimes_{\cO_{\fP}}\cM,\]
I cam reduce to showing that the base change map
\[ \bL\hat{\sp}^*_\fQ\bR u_*(\Omega^n_{\fP/\fQ}\widehat{\otimes}^\bL_{\cO_{\fP}}\cM )\rightarrow \mathbf{R}u_{*}\bL\hat{\sp}^*_\fP(\Omega^n_{\fP/\fQ}\widehat{\otimes}^\bL_{\cO_{\fP}}\cM) \]
is an isomorphism for each fixed $n$. Thanks to Lemma \ref{lemma: alternative Ru*}, this follows from Theorem \ref{theo: base change sp^* u_*}.   
\end{proof}

For example, taking $\fQ=\spf{\cV}$ shows that the natural map
\[ \bR\Gamma_\dR(\fP,\cM)\to \bR\Gamma_\dR(\fP_K,\bL\hat{\sp}^*_\fP\cM) \]
is an isomorphism in $\bD(K)$. Of course, this case could equally well be deduced slightly more directly from Theorem \ref{theo: base change sp^* u_*}.

\subsection{Constructibility of $\bm{\sp^!}$}

I now come to the first main result concerning $\sp^!$, namely that it sends dual constructible overholonomic complexes on $\fP$ to constructible isocrystals on $\fP_K$.

\begin{theorem} \label{theo: sp^! cons}
Let $\fP$ be a smooth formal scheme, and $\cM\in \DCon_F(\fP)\subset \bD^b_{\hol,F}(\fP)$. Then $\sp^!\cM\in \Isoc_{\cons,F}(\fP)\subset \bD(\sD_{\fP_K})$ is a constructible isocrystal on $\fP$. If $\cM$ is supported on some locally closed subscheme $X\hookrightarrow P$, then so is $\sp^!\cM$.
\end{theorem}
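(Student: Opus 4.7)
The plan is to d\'evissage using the dual constructible t-structure on $\bD^b_{\hol,F}(\fP)$. By the composition series result established at the end of \S\ref{subsec: dc mod on pairs}, every $\cM\in\DCon_F(\fP)$ admits a finite filtration whose graded pieces have the form $i_{\alpha+}\sp_{X_\alpha!}\sF_\alpha$, where $i_\alpha\from X_\alpha\hto P$ is a smooth locally closed immersion with closure $\overline{X}_\alpha\subset P$, and $\sF_\alpha\in\Isoc_F(X_\alpha,\overline{X}_\alpha)$ is a locally free isocrystal. Since the subcategory $\DCon_F(X_\alpha,\overline{X}_\alpha)$ embeds inside $\DCon_F(\fP)$ as the full subcategory of objects supported on $X_\alpha$, the functor $i_{\alpha+}$ acts at the level of objects simply as inclusion; the graded pieces are therefore just $\sp_{X_\alpha!}\sF_\alpha$ viewed inside $\DCon_F(\fP)$.

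The triangulated structure of $\sp^!$ then reduces the theorem to the case of a single piece. A short exact sequence $0\to\cM'\to\cM\to\cM''\to 0$ in $\DCon_F(\fP)$ induces a distinguished triangle $\sp^!\cM'\to\sp^!\cM\to\sp^!\cM''\overset{+1}{\to}$ in $\bD(\sD_{\fP_K})$. If both outer terms are constructible isocrystals of Frobenius type concentrated in degree zero, the associated long exact sequence of cohomology forces all higher cohomology of $\sp^!\cM$ to vanish and realizes $\cH^0(\sp^!\cM)$ as an extension of $\sp^!\cM'$ by $\sp^!\cM''$ in $\Mod(\sD_{\fP_K})$. By Corollary \ref{cor: cons isoc weak Serre}, $\Isoc_\cons(\fP)$ is closed under such extensions inside $\Mod(\sD_{\fP_K})$, and the Frobenius type property is preserved by extension by definition. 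Inducting on the length of the composition series thus reduces the theorem to the single task of checking, for each smooth locally closed $X\hto P$ and each locally free $\sF\in\Isoc_F(X,\overline{X})$, that $\sp^!\sp_{X!}\sF$ is a constructible isocrystal of Frobenius type concentrated in degree zero and supported on $\tube{X}_\fP$.

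For this base case I would aim to prove the explicit formula $\sp^!\sp_{X!}\sF\cong i_!\sF$, where $i_!$ denotes the extension by zero functor from $\Isoc(X,\overline{X})$ to $\Isoc_\cons(\fP)$ of Lemma \ref{lemma: cons isoc locally closed}. Writing $\sp_{X!}\sF=\widetilde{\sp}_{X+}\sF[\dim X]$ and $\sp^!=\bL\hat{\sp}^*[-\dim\fP]$, the task becomes computing $\bL\hat{\sp}^*\widetilde{\sp}_{X+}\sF$ up to the shift $\dim\fP-\dim X$. Working locally on $\fP$ I would split this into two independent ingredients: the overconvergent singularities of $\widetilde{\sp}_{X+}\sF$ along the divisor $\overline{X}\setminus X$ inside $\overline{X}$, which Theorem \ref{theo: jdag in terms of LDqc} translates under $\bL\hat{\sp}^*$ into the topological functor $j_X^\dagger$; and the closed immersion $\overline{X}\hto P$, which a Kashiwara-type local computation should convert into topological extension by zero from $\tube{\overline{X}}_\fP$, absorbing exactly the codimension shift required. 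The main obstacle will be this last Kashiwara-type calculation in positive codimension: one must verify that $\sp^!$ matches Caro's $\sD^\dagger$-module pushforward along $\overline{X}\hto P$ with topological pushforward on tubes, up to the precise shift predicted by $\dim\fP-\dim\overline{X}$. Once this is in hand, the support assertion is immediate: if $\cM$ is supported on $X$ the d\'evissage can be chosen with all $X_\alpha\subset X$, each base-case output $i_{\alpha!}\sF_\alpha$ is supported on $\tube{X_\alpha}_\fP\subset\tube{X}_\fP$, and this support condition is preserved under extensions inside $\Isoc_\cons(\fP)$.
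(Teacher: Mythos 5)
Your plan is the right one in outline and matches the paper's starting point: d\'evissage over the composition series for $\DCon_F(\fP)$, plus the fact that $\Isoc_\cons(\fP)$ is closed under extensions (Corollary~\ref{cor: cons isoc weak Serre}), reduces everything to the base case $\cM=\sp_{X!}\sF$ and the explicit formula $\sp^!\sp_{X!}\sF\cong i_!\sF$. You have also correctly identified where the real content lies; but the ``Kashiwara-type local computation in positive codimension'' that you flag and then leave open is not a detail --- it is the theorem, and as you have set it up it would not go through.

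The first problem is that for your proposed factorisation (first handle the closed immersion $\overline X\hto P$, then the open immersion $X\hto\overline X$) you implicitly need $Y:=\overline X$ to lift to a smooth closed formal subscheme $\fY\hto\fP$, so that $\widetilde{\sp}_{X+}$ can be factored through a $\sD^\dagger$-module on $\fY$ and compared with a tube inclusion. But $Y$ is just the closure of a smooth locally closed subscheme inside $P$, and has no reason to be smooth. The paper deals with this by first invoking de\thinspace Jong's alterations to produce a projective, generically \'etale $g\from Y'\to Y$ with $Y'$ smooth, shrinking $X$ (Noetherian induction) until $g$ is finite \'etale over $X$, and then transferring the problem along this cover using the finite-\'etale reduction lemma (Lemma~\ref{lemma: reduce sp^! cons finite etale}), which rests on Theorem~\ref{theo: finite etale general cons} and the fact that $\cM$ is a direct summand of $f_+f^!\cM$.

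The second, deeper problem is that even after arranging $Y$ smooth and lifting $Y\hto P$ to a closed immersion $\fY\hto\fP$, there is no available ``Kashiwara for $\bL\hat{\sp}^*$'' comparison that turns the $\sD^\dagger$-module pushforward along $\fY\hto\fP$ into topological extension by zero on tubes with the predicted shift. The paper deliberately sidesteps this: after applying Lemma~\ref{lemma: reduce sp^! cons finite etale} again with the product frame $\fY\times_\cV\fP$, it reduces to the situation where $\fY\hto\fP$ is a \emph{section} of a smooth projection $\pi\from\fP\to\fY$. Then, choosing local equations $t_1,\ldots,t_c$ for $\fY$ in $\fP$, it sets $\cN:=\pi^!\pi_+\cM=\sp_{\pi^{-1}(X)!}\pi^*\sF$ and builds the exact \v{C}ech-type sequence
\[
0\to\cM\to\cN\to\bigoplus_i\bR\underline{\Gamma}^\dagger_{U_i}\cN\to\cdots\to\bR\underline{\Gamma}^\dagger_{U_{\{1,\ldots,c\}}}\cN\to 0
\]
in $\DCon_F(\fP)$. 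Every term except $\cM$ is a localisation of an object with overconvergent singularities along a divisor, so Theorem~\ref{theo: jdag in terms of LDqc} together with Lemma~\ref{lemma: sp_* and sp^! for locally free isocrystals} identifies its image under $\sp^!$ explicitly as $j^\dagger$-functors applied to $\sG:=j_*\pi^*\sF$. Comparing with the analogous \v{C}ech sequence for $i_!i^*\sG$ on the analytic side then yields $\sp^!\cM\cong i_!i^*\sG$. So the codimension shift is never ``absorbed'' by a Kashiwara isomorphism; it is cancelled exactly by the $[2d]$ in the de\thinspace Rham pushforward comparison when one moves up and down the smooth projection $\pi$. Your proof cannot be completed without supplying something playing these two roles (smoothness of $Y$, and a substitute for Kashiwara), and the argument you sketch for the second has no obvious route to closure.
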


Before diving into the proof, I need a preparatory lemma, and a special case.

\begin{lemma} \label{lemma: reduce sp^! cons finite etale} Let 
\[ \xymatrix{ X'\ar[r]\ar[d]^f & Y'\ar[r]\ar[d]^g & \fP'\ar[d]^u \\ X \ar[r] & Y \ar[r] & \fP } \]
be a morphism of l.p. frames such that:
\begin{itemize}
\item $u$ is smooth and proper, of relative dimension $d$;
$g$ is proper, and $f$ is finite \'etale;
\item $X$ is locally (on $Y$) the complement of a hypersurface in $Y$.
\end{itemize} 
Let $\cM\in \DCon_F(\fP)$ be a dual constructible module supported $X$. If $\sp^!_{\fP'}\mathbf{R}\underline{\Gamma}^\dagger_{X'}u^!\cM$ is a constructible isocrystal supported on $X'$, then $\sp_{\fP}^!\cM$ is a constructible isocrystal supported on $X$.
\end{lemma}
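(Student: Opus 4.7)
The plan is to exhibit $\sp^!_\fP\cM$ as a direct summand of a smooth proper de Rham pushforward of an object already known (by hypothesis) to be a constructible isocrystal supported on $X'$, and then to invoke Theorem~\ref{theo: finite etale general cons} on the constructible side to conclude.

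Via the given morphism of l.p.\ frames, the functors $f_+$ and $f^!$ are identified with $u_+$ and $\mathbf{R}\underline{\Gamma}^\dagger_{X'}\circ u^!$ respectively, as functors between $\bD^b_{\hol,F}(X,Y)$ and $\bD^b_{\hol,F}(X',Y')$. In the setting where Proposition~\ref{prop: finite etale adjoints dct exact} applies (so that the trace provides $f^+\cong f^!$), Remark~\ref{rem: D module finite etale direct summand} exhibits $\cM$ as a direct summand of
\[ f_+ f^+\cM \;\cong\; f_+ f^!\cM \;\cong\; u_+\mathbf{R}\underline{\Gamma}^\dagger_{X'}u^!\cM \]
inside $\DCon_F(X,Y)\subset \bD^b_{\hol,F}(\fP)$. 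Applying the additive functor $\sp^!_\fP$ and invoking Proposition~\ref{prop: base change sp^* u_* de Rham} (which gives $\sp^!_\fP\circ u_+\cong \mathbf{R}u_{\dR*}[2d]\circ\sp^!_{\fP'}$ for $u$ smooth of relative dimension $d$), I obtain $\sp^!_\fP\cM$ as a direct summand of
\[ \mathbf{R}u_{\dR*}[2d]\bigl(\sp^!_{\fP'}\mathbf{R}\underline{\Gamma}^\dagger_{X'}u^!\cM\bigr). \]

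By hypothesis the object in parentheses is a constructible isocrystal on $\fP'_K$ supported on $X'$, i.e.\ lies in the essential image of the extension-by-zero inclusion $\bD^b_\cons(X',Y',\fP')\hookrightarrow \bD(\sD_{\fP'_K})$. Since $u$ is proper and sends $\tube{X'}_{\fP'}$ into $\tube{X}_\fP$, and since tubes are stable under generalisation, Proposition~\ref{prop: proper base change} identifies the restriction of $\mathbf{R}u_{\dR*}$ to this subcategory with the germ-level pushforward $\mathbf{R}\tube{f}_{u\dR*}$ followed by extension by zero along $\tube{X}_\fP\to\fP_K$. The hypotheses on $(f,g,u)$ and on $X$ being locally a hypersurface complement in $Y$ then allow Theorem~\ref{theo: finite etale general cons} to apply, so $\mathbf{R}\tube{f}_{u\dR*}[2d]\cong \mathbf{R}\tube{f}_{u\dR!}[2d]$ sends $\bD^b_\cons(X',Y',\fP')$ into $\bD^b_\cons(X,Y,\fP)$. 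Thus $\sp^!_\fP\cM$ is a direct summand of a constructible isocrystal on $\fP_K$ supported on $X$, and therefore is itself one.

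The main obstacles I anticipate are (i) ensuring the direct-summand decomposition is available at the level of generality of the lemma: Proposition~\ref{prop: finite etale adjoints dct exact} is proved only under the hypothesis that $X$ is smooth, so in practice the lemma may be applied only in that case, or one needs to verify the trace/splitting by an independent argument leveraging the invertibility of $\deg f$ in the $\Q$-linear category $\DCon_F(X,Y)$; and (ii) the proper-base-change verification identifying $\mathbf{R}u_{\dR*}$ on $\sD_{\fP'_K}$-modules supported on $\tube{X'}_{\fP'}$ with the germ-level $\mathbf{R}\tube{f}_{u\dR*}$ on $\bD^b_\cons(X',Y',\fP')$, compatibly with the shifts dictated by the definition of $\sp^!$.
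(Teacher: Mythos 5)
Your proposal is correct and follows essentially the same route as the paper: localise on $\fP$, exhibit $\cM$ as a direct summand of $u_+\cN$ with $\cN=\mathbf{R}\underline{\Gamma}^\dagger_{X'}u^!\cM$ via Remark \ref{rem: D module finite etale direct summand}, push through Proposition \ref{prop: base change sp^* u_* de Rham}, and apply Theorem \ref{theo: finite etale general cons}. On your anticipated obstacle (i): you are right that the splitting via Remark \ref{rem: D module finite etale direct summand} rests on Proposition \ref{prop: finite etale adjoints dct exact}, which carries a smoothness hypothesis on $X$ not present in the lemma's statement --- but the paper's proof shares exactly this dependency, and in the only places the lemma is invoked (inside the proof of Theorem \ref{theo: sp^! cons}) the subvariety $X$ is indeed smooth, so nothing is lost. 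On (ii): the paper obtains the identification $\bR u_{\dR*}\sp^!_{\fP'}\cN[2d]\cong i_!\bR\!\tube{f}_{\dR!}\sF[2d]$ more directly, using $\bR u_{\dR!}=\bR u_{\dR*}$ for $u$ proper together with the compatibility of $\bR(-)_{\dR!}$ with composition of partially proper morphisms (applied to $u\circ j'=i\circ\tube{f}_u$, where $j'\from\tube{X'}_{\fP'}\to\fP'_K$ is the inclusion), rather than via Proposition \ref{prop: proper base change}; your base-change route can be made to work, but only because the support condition on $\sp^!_{\fP'}\cN$ compensates for the fact that $\tube{X'}_{\fP'}\to u^{-1}(\tube{X}_\fP)$ need not be an equality.
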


\begin{remark} Of course, if I am considering $\cM$ as an object of $\DCon_F(X,Y)$, then $\mathbf{R}\underline{\Gamma}^\dagger_{X'}u^!\cM$ is simply $f^!\cM\in \DCon_F(X',Y')$.
\end{remark}

\begin{proof} The question is local on $\fP$, so I can therefore suppose that $\fP$ is affine and $X=D(s)$ for some $s\in \Gamma(Y,\cO_Y)$. I am therefore in the situation of Theorem \ref{theo: finite etale general cons}. Write $\cN:=\mathbf{R}\underline{\Gamma}^\dagger_{X'}u^!\cM$, this is a dual constructible module on $\fP'$. By Proposition \ref{prop: base change sp^* u_* de Rham},
\[ \sp^!_\fP u_+\cN = \bR u_{\dR*} \sp^!_{\fP'}\cN[2d] \]
where $\sp^!_{\fP'}\cN$ is a constructible isocrystal on $\fP'$, supported on $X'$. I then consider the commutative diagram
\[\xymatrix{ \tube{X'}_{\fP'}  \ar[r] \ar[d]_{\tube{f}} & \fP'_K \ar[d]^u \\ \tube{X}_\fP \ar[r]^i & \fP_K } \]
Since $u$ is proper, I can identify $\bR u_{\dR!}=\bR u_{\dR*}$ as functors
\[ \bD^b(\sD_{\fP'_K})\to \bD^b(\sD_{\fP_K}).\]
Setting $\sF:=\sp^!_{\fP'}\cN|_{\tube{X'}_{\fP'}} \in\Isoc_\cons(X',Y',\fP')$, I therefore have
\[ \bR u_{\dR*}\sp^!_{\fP'}\cN[2d] = i_!\bR \!\tube{f}_{\dR !} \sF[2d]. \]
Now applying Theorem \ref{theo: finite etale general cons} I deduce that $\sp^!_\fP u_+\cN$ is a constructible isocrystal supported on $X$. Since $\cM$ is a direct summand of $u_+\cN$ (see Remark \ref{rem: D module finite etale direct summand}), I can conclude. 
\end{proof}

\begin{lemma} \label{lemma: sp_* and sp^! for locally free isocrystals} Let $\fP$ be a smooth formal scheme, $D\subset P$ a divisor, and let $j\from U :=P\setminus D\rightarrow P$ be the complementary open immersion. Suppose that $\sF \in \Isoc_F(U,P)$, and let $\cM=\sp_{U!}\sF \in \DCon_F(\fP)$.\footnote{See \S\ref{sec: sp+} and Example \ref{exa: sp in hearts}.} Then $\sp^!\cM \cong j_*\sF\in \Isoc_\cons(\fP)$. 
\end{lemma}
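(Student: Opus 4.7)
The proof begins by unwinding the definitions. Since $U$ is open in the smooth formal scheme $\fP$, we have $\dim U=\dim\fP=:d$, so $\cM=\sp_{U!}\sF=\widetilde{\sp}_{U+}\sF[d]$ and therefore $\sp^!\cM=\bL\hat{\sp}^*\widetilde{\sp}_{U+}\sF$. The first key point is that $\widetilde{\sp}_{U+}\sF\in\LD_{\Q,\qc}(\widehat{\sD}^{(\bullet)}_\fP)$ is by construction a $(^\dagger D)$-module, i.e.\ the natural map $\widetilde{\sp}_{U+}\sF\to\widehat{\cB}^{(\bullet)}_\fP(D)\widehat{\otimes}^{\bL}_{\cO_\fP}\widetilde{\sp}_{U+}\sF$ is an isomorphism, and applying Theorem \ref{theo: jdag in terms of LDqc} yields $\sp^!\cM\cong j^\dagger_U\sp^!\cM$. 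Since $\tube{U}_\fP$ is topologically closed in $\fP_K$ (being the tube of an open subscheme of $P$), $j^\dagger_U=j_*j^{-1}=j_!j^{-1}$, and so $\sp^!\cM\cong j_*j^{-1}\sp^!\cM$. It remains to identify $j^{-1}\sp^!\cM$ with $\sF$ as a constructible isocrystal on $(U,P,\fP)$.

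I first handle the case in which $\sF$ extends to a convergent isocrystal $\widetilde{\sF}\in\Isoc_F(P,P)$. Caro's construction of $\widetilde{\sp}_+$ then gives a canonical isomorphism $\widetilde{\sp}_{U+}\sF\cong\widetilde{\sp}_{P+}\widetilde{\sF}(^\dagger D)\cong\widehat{\cB}^{(\bullet)}_\fP(D)\widehat{\otimes}^{\bL}_{\cO_\fP}\widetilde{\sp}_{P+}\widetilde{\sF}$, and a second application of Theorem \ref{theo: jdag in terms of LDqc} reduces the lemma to showing $\bL\hat{\sp}^*\widetilde{\sp}_{P+}\widetilde{\sF}\cong\widetilde{\sF}$ in $\bD(\sD_{\fP_K})$. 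Since the claim is local on $\fP$, I may assume that $\widetilde{\sF}$ lifts to a coherent, $\cV$-flat $\cO_\fP$-module $\cE$ equipped with convergent integrable connection; then $\widetilde{\sp}_{P+}\widetilde{\sF}$ is represented in $\LD_{\Q,\qc}(\widehat{\sD}^{(\bullet)}_\fP)$ by the constant inductive system at $\cE$, and because $\cE_\Q$ is locally projective, the comparison map (\ref{eqn: sp^* to completed sp^*}) identifies $\bL\hat{\sp}^*\cE$ with $\sp^*\cE=\widetilde{\sF}$. Hence $\sp^!\cM\cong j^\dagger_U\widetilde{\sF}=j_*j^{-1}\widetilde{\sF}=j_*\sF$ in this case.

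For the general case, I reduce to the extendable one by a finite \'etale descent argument in the spirit of Lemma \ref{lemma: reduce sp^! cons finite etale}. Using Kedlaya's semistable reduction theorem for overconvergent $F$-isocrystals, combined with a further finite \'etale cover of $U$ to eliminate any surviving logarithmic residues, I can find, locally on $\fP$, a morphism of l.p.\ frames $(f,g,u)\from(U',P',\fP')\to(U,P,\fP)$ with $u$ proper \'etale (of relative dimension $0$), $g$ proper, and $f$ finite \'etale, such that $f^*\sF$ extends to a convergent isocrystal on $P'$. The extendable case applied on $(U',P',\fP')$ gives $\sp^!_{\fP'}f^!\cM\cong j'_*f^*\sF$, and Proposition \ref{prop: finite etale adjoints dct exact} together with Theorem \ref{theo: finite etale general cons} exhibits $\cM$ and $\sF$ as direct summands of $u_+f^!\cM$ and $\bR\!\tube{f}_{\dR*}f^*\sF$ respectively. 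Applying Proposition \ref{prop: base change sp^* u_* de Rham} with $d=0$, together with the base change $\bR u_{\dR*}\circ j'_*\cong j_*\circ\bR\!\tube{f}_{\dR*}$ for the topologically Cartesian square of tubes, then identifies $\sp^!_\fP\cM$ with the summand of $j_*\bR\!\tube{f}_{\dR*}f^*\sF$ corresponding to $j_*\sF$, completing the proof. The main obstacle is securing the finite \'etale cover with the required convergent-extension property; a technically simpler (if less conceptual) alternative would be a direct local computation of $\widetilde{\sp}_{U+}\sF$ via Caro's explicit construction, bypassing Kedlaya's theorem entirely.
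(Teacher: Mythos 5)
Your opening reduction (using Theorem \ref{theo: jdag in terms of LDqc} to show $\sp^!\cM\cong j^\dagger_U\sp^!\cM$, and then reducing to identifying $j^{-1}\sp^!\cM$ with $\sF$) is correct, as is your treatment of the case where $\sF$ extends to a convergent isocrystal $\widetilde{\sF}$ on $P$. In that case your observation that $\widetilde{\sp}_{P+}\widetilde{\sF}$ is $\cO_{\fP\Q}$-coherent and locally projective, so that the map (\ref{eqn: sp^* to completed sp^*}) reduces $\bL\hat{\sp}^*$ to ordinary sheaf pullback, is a clean argument and genuinely different from the paper's route, which proceeds by a direct local computation with Caro's explicit level-$m$ models $\sp_{\fP*}j_{m*}\sF_m$ and the auxiliary admissible blowup used in the proof of Theorem \ref{theo: jdag in terms of LDqc}. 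If the lemma only concerned convergent $\sF$, your argument would be a simplification.

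However, the reduction of the general case to the extendable one does not work, and this is a genuine gap rather than a fixable detail. Kedlaya's semistable reduction theorem produces an alteration $g\from Y'\to Y$ (generically \'etale, not finite \'etale) such that $f^*\sF$ extends to a \emph{log-convergent} $F$-isocrystal with \emph{nilpotent} residues along the boundary divisor; it does not produce an extension to an honest convergent isocrystal. The assertion that "a further finite \'etale cover of $U$ eliminates any surviving logarithmic residues" is false: a finite \'etale cover of $U$ (possibly ramified along $D$) rescales residues by a ramification index but does not reduce the nilpotent part, so a nonzero nilpotent residue persists under any such cover. Concretely, an overconvergent $F$-isocrystal with genuinely nilpotent monodromy at the boundary, such as the relative $H^1$ of a degenerating family of elliptic curves, has no convergent extension over any finite \'etale cover of the open part. (This is the exact $p$-adic analogue of the fact that a unipotent $\ell$-adic local system does not extend to a lisse sheaf on the compactification.) Since your argument in the extendable case relies crucially on $\widetilde{\sp}_{P+}\widetilde{\sF}$ being $\cO_{\fP\Q}$-coherent, and the log-convergent extension only provides $\cO_{\fP\Q}$-coherence for the \emph{twisted} module with \emph{log} connection (whose non-log de\thinspace Rham realization is definitely not $j_*\sF$), salvaging your reduction in the log setting would require exactly the vanishing theorem in log rigid cohomology (Theorem \ref{theo: vanishing in log-rigid cohom}) which the paper reserves for the proof of the main Riemann--Hilbert theorem, not for this lemma. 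You correctly flag the convergent-extension step as the weak point, and your proposed alternative --- the direct local computation via Caro's explicit construction --- is indeed the route the paper takes, and is the one that actually works in the required generality.

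There are also two smaller points worth noting. First, the compatibility $\widetilde{\sp}_{U+}\sF\cong\widetilde{\sp}_{P+}\widetilde{\sF}(^\dagger D)$ that you use in the extendable case needs a citation; it follows from Caro's construction but is not trivial. Second, the morphism of frames you want in the reduction step, with $u$ proper \'etale of relative dimension $0$ and $g$ proper, is overconstrained: Kedlaya's alteration $g\from P'\to P$ is only projective and generically \'etale, so there is no reason to expect a lift to a proper \'etale $u\from\fP'\to\fP$, and the machinery of Theorem \ref{theo: finite etale general cons} (which additionally requires $\fP$ affine and $X$ the complement of a hypersurface in $Y$) would need to be set up more carefully.
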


\begin{proof}
Given the shifts involved, an equivalent way of stating the conclusion is that
\[ \bL\hat{\sp}_{\fP}^*\widetilde{\sp}_{U+}\sF \cong j_*\sF\]
in $\bD^b(\sD_{\fP_K})$. Also, since $j_*\sF$ is a module (not just a complex) the claim is local on $\fP$, which I may therefore assume to be affine. I can also assume that there exists $f\in \cO_\fP$ which is a non-zero divisor in $\cO_P$, such that $D=V(f)\cap P$. 

Let $j_m\from V_m\to \fP_K$ be as in the proof of Theorem \ref{theo: jdag in terms of LDqc}, recall that $V_m$ is affinoid. For $m$ large enough, $\sF$ extends to a vector bundle with integrable connection $\sF_m$ on $V_m$, and
\[ j_*\sF \isomto \colim{m}j_{m*}\sF_m \isomto \colim{m} \bR j_{m*} \sF_m.\]
In this case, $\widetilde{\sp}_{U+}\sF$ is explicitly defined in \cite[\S4.4]{Ber96a} by showing that each
\[ \sp_{\fP*} j_{m*} \sF_m \isomto \bR \sp_{\fP*} \bR j_{m*} \sF_m\]
is a coherent $\hat{\ca{B}}_{\fP}^{(m)}(D)$-module, for which the $\sD_\fP^{(0)}$-module structure extends uniquely to a continuous $\widehat{\sD}_\fP^{(m)}$-module structure compatible with that on $\hat{\ca{B}}_{\fP}^{(m)}(D)$. Then 
\[ \widetilde{\sp}_{U+}\sF = \{\sp_{\fP*} j_{m*} \sF_m \}_{m\in \N}\in\LD_{\Q,\qc}(\widehat{\sD}^{(\bullet)}_{\fP}).  \]
It therefore suffices to show that 
\begin{equation} \label{eqn: sp_+ sp^! level m} \bL\hat{\sp}_{\fP}^*\bR\sp_{\fP*}\bR j_{m*} \sF_m\isomto  \bR j_{m*}\sF_m = j_{m*}\sF_m
\end{equation}
in $\bD^b(\sD_{\fP_K})$. Indeed, if this is the case then everything in sight is just a module, rather than a complex, and so showing compatibility in $m$ is straightforward. To prove (\ref{eqn: sp_+ sp^! level m}), let $\pi\from \fP'\to \fP$ be an admissible blowup such that the open immersion $j_m\from V_m\to \fP_K$ arises from an open immersion $j_m\from \fr{V}_m'\to \fP'$ of formal schemes. I then claim that the natural morphism
\[ \bL\hat{\pi}^*\bR\sp_{\fP*}\bR j_{m*} \sF_m \rightarrow \bR\sp_{\fP'*}\bR j_{m*} \sF_m \]
is an isogeny. To see this, since $V_m$ is affinoid, and $\sF_m$ is a vector bundle on $V_m$, it is in fact a direct summand of a trivial vector bundle. Hence I can reduce to the case $\sF_m=\cO_{V_m}$. In this case, the given map can be identified with
\[ \bL\hat{\pi}^*\widehat{\cB}^{(m)}_{\fP}(D)  \rightarrow \bR j_{m*}\cO_{\fr{V}_m'}, \]
which was shown to be an isogeny during the proof of Theorem \ref{theo: jdag in terms of LDqc}. I can now compute
\begin{align*}
\bL\hat{\sp}_{\fP}^*\bR\sp_{\fP*}\bR j_{m*} \sF_m &\isomto \bL\hat{\sp}_{\fP'}^*\bL\hat{\pi}^*\bR\sp_{\fP*}\bR j_{m*} \sF_m \\
&\isomto \bL\hat{\sp}_{\fP'}^*\bR\sp_{\fP'*}\bR j_{m*} \sF_m \\
&\isomto \bR j_{m*}\bL\hat{\sp}_{\fr{V}_m'}^*\bR\sp_{\fr{V}'_m*} \sF_m \\
&\isomto \bR j_{m*} \sF_m \\
\end{align*}
where the first isomorphism follows simply from the definitions, the second is the isogeny proved above, the third follows from Theorem \ref{theo: base change sp^* u_*}, and the last from the fact that  $\sF_m$ is a locally free $\cO_{V_m}$-module.
\end{proof}

\begin{remark} The particular isomorphism $\sp^!\cM\cong j_*\sF$ constructed can be identified as follows: if we set $\fr{U}=\fP\setminus D$, then on $\fr{U}_K$ it is simply the shift of the counit $\sp^*_\fr{U}\sp_{\fr{U}*}\sF\to\sF$  of the adjunction between $\sp^*$ and $\sp_*$ (note that since $\cM\!\!\mid_\fr{U}$ is locally projective, $\bL\hat{\sp}^*_\fr{U} = \sp^*_\fr{U}$ in this case).
\end{remark}

\begin{proof}[Proof of Theorem \ref{theo: sp^! cons}]
The question is local on $\fP$, which I can therefore assume to be affine, in particular $\fP$ admits a locally closed immersion into a smooth and proper formal $\cV$-scheme. This then means that every frame encountered during the proof will be an l.p. frame. By d\'evissage, that is, by Proposition \ref{prop: overhol complexes generically isocrystals}, every $\cM\in \DCon_F(\fP)$ is an iterated extension of objects of the form $\sp_{X!}\sF$ with $X\hookrightarrow P$ a smooth locally closed subscheme, with closure $Y$ in $P$, and $\sF\in \Isoc_F(X,Y)$. I can therefore assume that $\cM$ is of this form.

Now, by de\thinspace Jong's alterations \cite{dJ96}, there exists a projective, generically \'etale morphism $g\from Y'\to Y$ with $Y'$ smooth. By Noetherian induction on $X$, I can therefore assume that $g^{-1}(X)\to X$ is finite \'etale, and that $X$ is the complement of a hypersurface in $Y$. I then have a morphism of frames of the form
\[ \xymatrix{ X'\ar[r]\ar[d]^f & Y'\ar[r]\ar[d]^g & \fP'=\widehat{\P}^d_\fP \ar[d]^u \\ X \ar[r] & Y \ar[r] & \fP } \]
with left hand square Cartesian, $Y'$ smooth, $f$ finite \'etale, and $u$ the projection. Appealing to Lemma \ref{lemma: reduce sp^! cons finite etale}, I can replace $(X,Y,\fP)$ by $(X',Y',\fP')$, in other words I can also assume that $Y$ is smooth. Of course, I have now lost the assumption that $\fP$ is affine, but I can then further localise on $\fP$ to restore it.

Since $\fP$ is affine, and $Y$ is smooth, there exists a closed immersion of smooth formal schemes $\fY\hto\fP$ lifting the closed immersion $Y\hto P$. Now applying Lemma \ref{lemma: reduce sp^! cons finite etale} to the morphism of frames
\[ \xymatrix{ & & \fY\times_\cV \fP \ar[d] \\ X \ar[r]^j & Y \ar[r]^i\ar[ur] & \fP } \]
(where the diagonal map $Y\to \fr{Y}\times_\cV\fr{P}$ is the product of the two given immersions, and the right hand vertical map is the second projection), it suffices to prove the claim with $\fP$ replaced by $\fY\times_\cV \fP$. In other words, I can assume that the closed immersion $\fY\rightarrow \fP$ is a section of a smooth morphism $\pi\from \fP\to \fY$.

Let $c$ be the codimension of $\fY$ in $\fP$. By further localising on $\fP$ if necessary, I can choose functions $t_1\ldots,t_c\in \Gamma(\fP,\cO_\fP)$ such that $\fY=V(t_1,\ldots,t_c)$. For any subset $I\subset \{1,\ldots,c\}$ set $\mathfrak{U}_I:=\cap_{i\in I}D(t_i)$. I also set $\mathcal{N}:=\pi^!\pi_+\mathcal{M}\in \DCon_F(\fr{P})$, thus, by the discussion in \S\ref{sec: sp+}, $\cN=\sp_{\pi^{-1}(X)!}\pi^*\sF$ where $\pi^*\sF\in \Isoc_F(\pi^{-1}(X),P)$. Let $j$ also denote the inclusion $\pi^{-1}(X)\to P$, and write $\sG:=j_*\pi^*\sF\in \Isoc_{\cons,F}(\fP)$.

There is then the following exact sequence of dual constructible $\mathscr{D}^\dagger$-modules on $\fr{P}$:
\[ 0\rightarrow \mathcal{M} \rightarrow \mathcal{N} \rightarrow \bigoplus_i \mathbf{R}\underline{\Gamma}_{U_i}^\dagger \mathcal{N} \rightarrow \ldots \rightarrow \mathbf{R}\underline{\Gamma}_{U_{\{1,\ldots,c\}}}^\dagger \mathcal{N} \rightarrow 0,  \]
whose exactness can be seen by applying the t-exact and conservative family of functors $\mathbf{R}\underline{\Gamma}^\dagger_x$, for $x\in P$ a closed point, together with the inclusion-exclusion principle.

It follows from Theorem \ref{theo: jdag in terms of LDqc} and Lemma \ref{lemma: sp_* and sp^! for locally free isocrystals} that applying $\mathrm{sp}_\fr{P}^!$ to all except the first term in this exact sequence results in the complex
\[ \sG \rightarrow \bigoplus_i j_{U_i}^\dagger \sG \rightarrow \ldots \rightarrow j_{U_{\{1,\ldots,c\}}}^\dagger \sG  \]
of $\sD_{\fP_K}$-modules. I therefore deduce that
\[ \sp_\fP^!\cM\isomto \left[ \sG \rightarrow \bigoplus_i j_{U_i}^\dagger \sG \rightarrow \ldots \rightarrow j_{U_{\{1,\ldots,c\}}}^\dagger \sG\right] \isomto i_!i^*\sG, \]
by using \cite[Proposition 2.1.8]{Ber96b}. This completes the proof.
\end{proof}

\subsection{Consequences} \label{subsec: consequences}

I now gather several important consequences of Theorem \ref{theo: sp^! cons}, starting with:

\begin{corollary} Let $\fP$ be a smooth formal scheme. Then $\sp^!$ induces a functor
\[ \sp^!\from \bD^b_{\hol,F}(\fP)\to \bD^b_{\cons,F}(\fP) \]
which is t-exact for the dual constructible t-structure on the source, and the natural t-structure on the target.
\end{corollary}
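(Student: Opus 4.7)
The plan is to bootstrap directly from Theorem \ref{theo: sp^! cons}, which already does the heavy lifting by showing that $\sp^!$ carries the heart $\DCon_F(\fP)$ of the dual constructible t-structure into the heart $\Isoc_{\cons,F}(\fP)$ of the natural t-structure. First I would check that $\sp^!$ makes sense on $\bD^b_{\hol,F}(\fP)$: since $\bD^b_{\hol,F}(\fP)\subset \bD^b_\coh(\sD^\dagger_{\fP\Q})$ embeds naturally into $\LD_{\Q,\qc}(\widehat{\sD}^{(\bullet)}_\fP)$ by Berthelot's formalism, the functor constructed in the previous subsection restricts to a triangulated functor
\[ \sp^!\from \bD^b_{\hol,F}(\fP)\lto \bD(\sD_{\fP_K}). \]

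Next, I would invoke the following general (and formal) principle: a triangulated functor $F\from \mathcal{D}\to \mathcal{D}'$ between triangulated categories equipped with bounded t-structures which sends the heart of $\mathcal{D}$ into the heart of $\mathcal{D}'$ is automatically t-exact, because $\mathcal{D}^{\leq 0}$ (resp.\ $\mathcal{D}^{\geq 0}$) is generated under extensions by the shifts $\mathcal{A}[n]$ for $n\geq 0$ (resp.\ $n\leq 0$) of the heart $\mathcal{A}$, and ${\mathcal{D}'}^{\leq 0}$ and ${\mathcal{D}'}^{\geq 0}$ are closed under extensions. The dual constructible t-structure on $\bD^b_{\hol,F}(\fP)$ is bounded (by construction it is a t-structure on a bounded derived category, and it is easy to bound $^{\mathrm{dc}}\tau^{\geq q}$ and $^{\mathrm{dc}}\tau^{\leq q}$ in terms of the ordinary cohomological amplitude together with the amplitudes of $\bR\underline{\Gamma}^\dagger_Y$ and $(^\dagger Y)$), and so is the natural t-structure on $\bD^b_{\cons,F}(\fP)$. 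Combined with Theorem \ref{theo: sp^! cons}, the principle gives t-exactness of $\sp^!$.

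As a consequence, for any $\cM\in \bD^b_{\hol,F}(\fP)$, writing $\cM$ as a finite iterated extension of shifts of its dual constructible cohomology sheaves ${}^{\mathrm{dc}}\cH^q(\cM)\in \DCon_F(\fP)$, the image $\sp^!\cM$ is a finite iterated extension of shifts of $\sp^!({}^{\mathrm{dc}}\cH^q(\cM))\in \Isoc_{\cons,F}(\fP)\subset \bD^b_{\cons,F}(\fP)$. Since $\bD^b_{\cons,F}(\fP)$ is triangulated (Lemma \ref{lemma: basic properties Db cons}), the image lies there, as required. The Frobenius-type condition is preserved automatically, because $\sp^!$ intertwines the Frobenius pullback functors on source and target and is triangulated, so it preserves iterated extensions of objects admitting a Frobenius structure.

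There is no real obstacle: the only nontrivial input is Theorem \ref{theo: sp^! cons}, and the rest is a purely formal consequence of the boundedness of both t-structures together with the fact that $\sp^!$ is a triangulated functor sending heart to heart.
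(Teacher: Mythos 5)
Your argument is correct, and it is the natural way to fill in the proof that the paper leaves implicit (the corollary is stated without proof, immediately following Theorem \ref{theo: sp^! cons}). The formal bootstrap is exactly right: since the dual constructible t-structure on $\bD^b_{\hol,F}(\fP)$ is bounded, any $\cM$ is a finite iterated extension of $({}^{\mathrm{dc}}\cH^q(\cM))[-q]$, and the triangulated functor $\sp^!$, sending $\DCon_F(\fP)$ into $\Isoc_{\cons,F}(\fP)\subset \Mod(\sD_{\fP_K})$ by Theorem \ref{theo: sp^! cons}, is therefore t-exact into $\bD(\sD_{\fP_K})$ with its ordinary t-structure, since the two half-categories of a t-structure are extension-closed.

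One small cleanup is worth making. You appeal to Lemma \ref{lemma: basic properties Db cons} to conclude that the image lands in $\bD^b_{\cons,F}(\fP)$, but that lemma concerns $\bD^b_{\cons}$ (no Frobenius condition), and whether $\bD^b_{\cons,F}$ is a \emph{triangulated} subcategory is a separate (true but unneeded) claim about Frobenius-type objects forming a weak Serre subcategory of $\Isoc_\cons$. You can avoid this entirely: once t-exactness into $(\bD(\sD_{\fP_K}),\text{ordinary})$ is established, the formula $\cH^q(\sp^!\cM)\cong \sp^!({}^{\mathrm{dc}}\cH^q(\cM))$ exhibits every ordinary cohomology sheaf of $\sp^!\cM$ as an object of $\Isoc_{\cons,F}(\fP)$, so $\sp^!\cM\in \bD^b_{\cons,F}(\fP)$ by definition of that category, with no triangularity needed. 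Likewise the closing remark about $\sp^!$ intertwining Frobenius pullbacks is not required here, since the Frobenius-type condition is already built into the heart-to-heart statement of Theorem \ref{theo: sp^! cons}.
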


Examining the proof of Theorem \ref{theo: sp^! cons}, I have also shown the following. 

\begin{corollary} \label{cor: sp^! sp_! locally free isocrystals} Let $\fP$ be a smooth formal scheme, $i\from X\to P$ be a smooth, locally closed subscheme, with closure $Y$, $\sF\in \Isoc_F(X,Y)$, and $\cM=\sp_{X!}\sF\in \DCon_F(\fP)$. Then $\sp^!\cM \isomto i_!\sF \in \Isoc_{\cons,F}(\fP)$.
\end{corollary}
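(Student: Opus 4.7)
The plan is to revisit the proof of Theorem \ref{theo: sp^! cons} and extract the explicit isomorphism that was implicitly constructed there. By that theorem, we already know $\sp^!\cM \in \Isoc_{\cons,F}(\fP)$ is supported on $X$, so by Lemma \ref{lemma: cons isoc locally closed} it suffices to produce an isomorphism $i^{-1}\sp^!\cM \isomto \sF$, naturally in $\sF$. The question is local on $\fP$, so we may assume $\fP$ is affine.

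First, I would reduce to the case where $Y$ is itself smooth. Using de\thinspace Jong's alterations together with Noetherian induction on $X$, we find a morphism of l.p.\ frames as in the proof of Theorem \ref{theo: sp^! cons}, in which $f\colon X'\to X$ is finite \'etale, $Y'$ is smooth, and $X$ is locally the complement of a hypersurface in $Y$. By Lemma \ref{lemma: reduce sp^! cons finite etale} (and its proof) the dual constructible module $\cM$ is a direct summand of $u_+ f^!\cM = u_+ \sp_{X'!}f^*\sF$; combining Theorem \ref{theo: finite etale general cons} with Proposition \ref{prop: base change sp^* u_* de Rham}, one checks that if the desired isomorphism holds for the pullback $(X',Y',\fP')$ then it descends to $(X,Y,\fP)$. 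Localising further on $\fP$, I may then assume that the closed immersion $\fY\hookrightarrow \fP$ lifting $Y\hookrightarrow P$ exists and is a section of a smooth projection $\pi\colon \fP\to \fY$, and pick local coordinates $t_1,\ldots,t_c\in \Gamma(\fP,\cO_\fP)$ with $\fY = V(t_1,\ldots,t_c)$.

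In this situation, following the end of the proof of Theorem \ref{theo: sp^! cons}, set $\mathfrak{U}_I := \bigcap_{i\in I} D(t_i)$ and $\cN := \pi^!\pi_+\cM = \sp_{\pi^{-1}(X)!}\pi^*\sF$. The Koszul-type exact sequence
\[ 0\to \cM\to \cN\to \bigoplus_i \bR\underline{\Gamma}^\dagger_{U_i}\cN\to\cdots\to \bR\underline{\Gamma}^\dagger_{U_{\{1,\ldots,c\}}}\cN\to 0 \]
of dual constructible modules becomes, after applying $\sp^!$ and invoking Theorem \ref{theo: jdag in terms of LDqc} together with Lemma \ref{lemma: sp_* and sp^! for locally free isocrystals}, the complex
\[ \sG\to \bigoplus_i j^\dagger_{U_i}\sG\to \cdots\to j^\dagger_{U_{\{1,\ldots,c\}}}\sG, \]
where $\sG = j_*\pi^*\sF\in \Isoc_{\cons,F}(\fP)$ with $j\colon \pi^{-1}(X)\to P$ the open immersion. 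By the canonical resolution of \cite[Proposition 2.1.8]{Ber96b}, this complex is quasi-isomorphic to $i_!i^{-1}\sG$; since $i$ is a section of $\pi$, $i^{-1}\sG = \sF$, yielding the desired isomorphism $\sp^!\cM\isomto i_!\sF$.

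The main obstacle is ensuring that the isomorphism produced is truly canonical, i.e., independent of the choices made (alteration, lift $\fY$, local coordinates) and compatible with the direct-summand step from Lemma \ref{lemma: reduce sp^! cons finite etale}. The cleanest way to handle this is to observe that at each stage the identification comes from a unit or counit of a functorial adjunction, so the resulting map $\sp^!\sp_{X!}\sF \to i_!\sF$ is characterised by its restriction to $\tube{X}_\fP$, where it is simply the counit of the adjunction between $i_!$ and $i^{-1}$ applied to an isomorphism arising from Lemma \ref{lemma: sp_* and sp^! for locally free isocrystals}.
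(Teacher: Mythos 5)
Your proposal is correct and takes essentially the same approach as the paper: the paper's entire argument is the one-line observation that the corollary follows by ``examining the proof of Theorem \ref{theo: sp^! cons}'', together with the remark that making the isomorphism explicit requires following the reduction steps back to Lemma \ref{lemma: sp_* and sp^! for locally free isocrystals}. Your write-up just carries out that examination in full, reproducing the same chain of reductions (de\thinspace Jong's alterations and Noetherian induction via Lemma \ref{lemma: reduce sp^! cons finite etale}, localising to obtain a section $\fY\hookrightarrow\fP$ of a smooth projection $\pi$, the Koszul/\v{C}ech complex of $\cN=\pi^!\pi_+\cM$, and applying Theorem \ref{theo: jdag in terms of LDqc} and Lemma \ref{lemma: sp_* and sp^! for locally free isocrystals}) and then noting that the resulting complex computes $i_!i^{-1}\sG=i_!\sF$. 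The one thing you add beyond the paper is a concrete description of why the isomorphism is canonical (characterisation by restriction to $\tube{X}_\fP$ after knowing $\sp^!\cM$ is supported there, with the counit of the $(i_!,i^{-1})$-adjunction doing the work), which the paper's remark flags but deliberately does not spell out.
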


\begin{remark} It's not straightforward to do explicitly, but the particular isomorphism $\sp^!\cM \isomto i_!\sF \in \Isoc_{\cons,F}(\fP)$ thus constructed can be identified by following through the various steps in the proof of Theorem \ref{theo: sp^! cons} and reducing to the case considered in Lemma \ref{lemma: sp_* and sp^! for locally free isocrystals}.
\end{remark}

The functor $\sp^!$ is compatible with many of the cohomological functors introduced so far. Explicitly, if $ U\hto P\hookleftarrow Z$ are complementary open and closed immersions, and $\cM\in \bD^b_{\hol,F}(\fP)$, then repeatedly applying Theorem \ref{theo: jdag in terms of LDqc} gives rise to an isomorphism
\[ j_U^\dagger \sp^! \cM \isomto  \sp^! \cM(^\dagger Z) \]
in $\bD^b_{\cons,F}(\fP)$, which is natural in $\cM$. Moreover, this is compatible with the unit of the two adjunctions, in the sense that the diagram
\[ \xymatrix{ \sp^! \cM \ar[r] \ar[dr] & j_U^\dagger \sp^!\cM \ar[d] \\ &  \sp^! \cM(^\dagger Z) }  \]
commutes. Similarly, there exists an isomorphism
\[ \bR\underline{\Gamma}_Z^\dagger \sp^!\cM\isomto \sp^!\bR\underline{\Gamma}_Z^\dagger \cM,  \]
natural in $\cM$, which is again compatible with the counit of the two adjunctions in the sense that the diagram
\[ \xymatrix{ \bR\underline{\Gamma}_Z^\dagger \sp^!\cM \ar[r] \ar[d] &  \sp^!\cM \\ \sp^!\bR\underline{\Gamma}_Z^\dagger \cM \ar[ur] &  }  \]
commutes. This implies that for any locally closed subscheme $i\from X\to P$, there is a canonical isomorphism
\[ \sp^!\bR\underline{\Gamma}_X^\dagger \cM \cong i_!i^{-1}\sp^!\cM  \]
in $\bD^b_{\cons,F}(\fP)$.

If now $u\from \fP\to \fQ$ is a smooth and proper morphism of smooth formal schemes, of relative dimension $d$, and $\cM\in \bD^b_{\hol,F}(\fP)$, then Proposition \ref{prop: base change sp^* u_* de Rham} provides a canonical isomorphism
\[ \sp_{\fQ}^! u_+ \cM \isomto \bR u_{\dR*}\sp_\fP^!\cM[2d] \] 
inside $\bD^b(\sD_\fQ)$, in particular the RHS is in fact in $\bD^b_{\cons,F}(\fQ)$.

Compatibility of $\sp^!$ with $u^!$ (and therefore with tensor product) will need to wait until later on, but I can at least record a special case for now. Consider a morphism of l.p. frames
\[ \xymatrix{ X'\ar[d]^f \ar[r] & Y' \ar[r]\ar[d]^g & \fP'\ar[d]^u \\ X \ar[r] & Y \ar[r] & \fP } \]
such that $u$ is smooth and proper, of relative dimension $d$, $g$ is proper, and $f$ is finite \'etale. By the support claim in Theorem \ref{theo: sp^! cons}, I can view $\sp^!_\fP$ as a functor
\[ \bD^b_{\hol,F}(X,Y,\fP)\to \bD^b_{\cons,F}(X,Y,\fP),\]
and similarly for $\sp^!_{\fP'}$. Thanks to Theorem \ref{theo: finite etale general cons}, the isomorphisms
\[ \sp_{\fP}^! u_+ \cM \isomto \bR u_{\dR*}\sp_{\fP'}^!\cM[2d] \]
induce an isomorphism
\[ \chi_f \from \sp_{\fP}^! \circ f_+  \isomto \bR\! \tube{f}_{\dR*} \circ \sp_{\fP'}^!  \]
of functors
\[ \bD^b_{\hol,F}(X',Y',\fP')\to \bD^b_{\cons,F}(X,Y,\fP). \]
The adjunctions $(f^!,f_+)$ and $(f^*,\bR\!\tube{f}_{\dR*})$ mean that I can define a morphism
\[ \psi_f\from f^* \circ \sp_{\fP}^! \to \sp_{\fP'}^! \circ f^! \]
of functors 
\[\bD^b_{\hol,F}(X,Y,\fP)\to \bD^b_{\cons,F}(X',Y',\fP') \]
by taking the adjoint of the morphism
\begin{align*}
\sp_{\fP}^! &\to \bR\!\tube{f}_{\dR*} \circ \sp_{\fP'}^!\circ f^! \\
&\overset{\chi_f^{-1}}{\lto}  \sp_{\fP}^! \circ f_+ \circ f^!
\end{align*}   
obtained by applying $\sp_\fP^!$ to the morphism ${\rm id} \to f_+\circ f^!$.

\begin{proposition} \label{prop: special case sp^! and f^*} The morphism $\psi_f$ is an isomorphism, and gives rise to a commutative diagram
\[ \xymatrix{  {\rm Hom}(f^!\cM,\cN) \ar[r]^-{\cong} \ar[d]_-{\psi_f} & {\rm Hom}(\cM,f_+\cN) \ar[d]^-{\chi_f} \\ {\rm Hom}(f^*\sp_\fP^! \cM,\sp_{\fP'}^!\cN) \ar[r]^-{\cong} & {\rm Hom}(\sp_\fP^! \cM , \bR\!\tube{f}_{\dR*}\sp_{\fP'}^! \cN) } \]
for any $\cM\in \bD^b_{\hol,F}(X,Y)$, $\cN\in\bD^b_{\hol,F}(X',Y')$. 
\end{proposition}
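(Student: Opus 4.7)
The plan is to recognise $\psi_f$ as the mate of $\chi_f$ under the adjunctions $(f^!,f_+)$ on the $\sD^\dagger$-side and $(\tube{f}_u^*,\bR\!\tube{f}_{\dR*})$ on the constructible side. With that observation in hand, the commutativity of the diagram in the statement is a formal consequence of the triangle identities: a morphism $\varphi\from f^!\cM\to\cN$ is transported across the bottom row as the composite $f^*\sp_\fP^!\cM\overset{\psi_f}{\to}\sp_{\fP'}^!f^!\cM\overset{\sp_{\fP'}^!\varphi}{\to}\sp_{\fP'}^!\cN$, whose adjoint is obtained by applying $\bR\!\tube{f}_{\dR*}$ and precomposing with the unit; unfolding the definition of $\psi_f$ and $\chi_f$ shows this equals the image of the adjoint $\cM\to f_+\cN$ of $\varphi$ under $\sp_\fP^!$, composed with $\chi_f(\cN)$. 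So the two parts of the proposition collapse to the single statement that $\psi_f$ is an isomorphism.

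To prove that, I would argue by d\'evissage. Both $f^*\circ\sp_\fP^!$ and $\sp_{\fP'}^!\circ f^!$ are triangulated functors $\bD^b_{\hol,F}(X,Y,\fP)\to\bD^b_{\cons,F}(X',Y',\fP')$, so by Proposition \ref{prop: overhol complexes generically isocrystals} combined with the dct-exactness of $i_+i^!$ and $j_+j^+$ on the heart $\DCon_F(X,Y,\fP)$, it suffices to check that $\psi_f(\cM)$ is an isomorphism when $\cM=\sp_{W!}\sG$ for $W\hto X$ a smooth locally closed subscheme and $\sG\in\Isoc_F(W,\overline{W})$. For such $\cM$, Corollary \ref{cor: sp^! sp_! locally free isocrystals} identifies $\sp_\fP^!\cM$ with $i_!\sG\in\Isoc_{\cons,F}(X,Y,\fP)$; likewise, writing $W':=f^{-1}(W)$ with inclusion $i'\from\tube{W'}_{\fP'}\to\tube{X'}_{\fP'}$, base change on the $\sD^\dagger$-side (together with compatibility of $\sp^!$ with $\bR\underline{\Gamma}^\dagger$ from \S\ref{subsec: consequences}) identifies $\sp_{\fP'}^!f^!\cM$ with $i'_!(f\!\mid_{W'})^*\sG$. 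The base-change isomorphism $\tube{f}_u^*i_!\isomto i'_!\tube{f}_u^*$ of Lemma \ref{lemma: u^*i_!} then produces a concrete isomorphism $f^*\sp_\fP^!\cM\isomto\sp_{\fP'}^!f^!\cM$, and one is left to verify that this concrete isomorphism agrees with the abstractly defined $\psi_f(\cM)$.

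The main obstacle will be this last naturality check. It requires unwinding the construction of $\chi_f$ through Proposition \ref{prop: base change sp^* u_* de Rham}, Corollary \ref{cor: sp^! sp_! locally free isocrystals}, and the trace identifications $\bR\!\tube{f}_{\dR!}\cong\bR\!\tube{f}_{\dR*}$ of Theorem \ref{theo: finite etale general cons} and $f_+\cong f_!$, $f^!\cong f^+$ of Proposition \ref{prop: finite etale adjoints dct exact}, and tracking them through the unit $\mathrm{id}\to f_+f^!$ of $(f^!,f_+)$. A convenient way to organise this is to note that, by the same Remark \ref{rem: D module finite etale direct summand}, every $\cM$ is canonically a direct summand of $f_+f^!\cM$, so it is enough to check the compatibility on objects of the form $f_+\cN$, where $\chi_f^{-1}$ reduces the claim to the evident statement that the counit of $(\tube{f}_u^*,\bR\!\tube{f}_{\dR*})$ agrees via $\sp^!$ with the counit of $(f^!,f_+)$ on generators. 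Once this is in place both parts of the proposition follow: $\psi_f$ is an isomorphism on each term of a composition series and hence on $\cM$, and the commutative diagram is then the tautological description of the mate.
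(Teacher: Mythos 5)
Your proof takes a genuinely different route from the paper's. The paper disposes of the isomorphy of $\psi_f$ in one line: using the already-established compatibility of $\sp^!$ with $\bR\underline{\Gamma}^\dagger$ (from \S\ref{subsec: consequences}) to check after taking extraordinary stalks at each closed point, thereby reducing to the case where $X$ is a finite separable point, where the statement is immediate. That is a conservativity argument, and it avoids entirely the need to compare the abstractly defined $\psi_f$ with any concrete identification.

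Your argument, by contrast, runs d\'evissage down to $\cM = \sp_{W!}\sG$, identifies source and target of $\psi_f(\cM)$ via Corollary \ref{cor: sp^! sp_! locally free isocrystals} and Lemma \ref{lemma: u^*i_!}, and then---correctly recognising that an abstract identification of source and target proves nothing---tries to salvage the argument with the direct-summand reduction from Remark \ref{rem: D module finite etale direct summand}. The direct-summand step is fine as stated: if $\psi_f(f_+f^!\cM)$ is an isomorphism, then so is $\psi_f(\cM)$. But the claim that the resulting verification on objects of the form $f_+\cN$ is ``evident'' is not. Unwinding the definition, $\psi_f(f_+\cN)$ is the composite $\varepsilon'\circ f^*\chi_{f,f^!f_+\cN}\circ f^*\sp_\fP^!(\eta_{f_+\cN})$, where $\eta$ is the unit of $(f^!,f_+)$ and $\varepsilon'$ is the counit of $(f^*,\bR\tube{f}_{\dR*})$. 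Structurally this is (split mono) $\circ$ (iso) $\circ$ (split epi), and such a composite need not be an isomorphism; the fact that it is one here is precisely the Beck--Chevalley condition you are trying to prove, so this reorganisation does not by itself resolve anything. To close this gap you would still need a direct computation that the particular unit and counit match up under $\chi_f$, and this is essentially the ``naturality check'' you flagged as the main obstacle at the $\sp_{W!}\sG$ level. The paper's reduction-to-a-point avoids this entirely and is the more economical route; your d\'evissage would work if the final compatibility were actually worked out, but at present that step is a genuine gap rather than an evident fact.
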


\begin{proof}
The fact that the diagram commutes just follows from the definition of $\psi_f$ via adjunction, the key point is to show that $\psi_f$ is an isomorphism. But again, by compatibility of $\sp^!$ with $\underline{\bR\Gamma}^\dagger$, this reduces to the case when $X$ is a point, which is straightforward.
%But via d\'evissage this reduces to evaluating $\psi_f$ on overholonomic modules of the form $\sp_{Z!}\sF$ where $\sF$ is a locally free isocrystal on a locally closed subscheme $Z\hookrightarrow X$, with closure $\overline{Z}$ in $Y$. In this case, $\sp_{Z!}$ is an equivalence of categories from $\Isoc_F(Z,\overline{Z})$ onto its image in $\bD^b_{\hol}(\fP)$, and hence the claim follows by uniqueness of adjoints.
\end{proof}

\section{Logarithmic variations} \label{sec: logarithmic}

Before I can prove my main result, I will need some elementary logarithmic versions of some of the results proved in previous sections. I have not tried especially hard to frame these results in the most general possible content, but have instead concentrated on giving quick proofs of the specific results that I need.

\subsection{Log convergent isocrystals}

The general theory of log convergent isocrystals was developed in \cite{Shi02}, I will recall here some of the fundamental definitions and results that I need. Throughout I will equip the bases $\spec{k}$ and $\spf{\cV}$ with the trivial log structure.

A fine log variety (over $k$) will mean a fine log scheme over $k$ whose underlying $k$-scheme is a variety. If $X$ is smooth variety over $k$, and $D\subset X$ is a normal crossings divisor, I will let $M(D)$ denote the log structure on $X$ associated to $D$, and $(X,M(D))$ the corresponding log scheme. I will use similar notation $(\fX,M(\fr{D}))$ in the case of a relative crossings divisor $\fr{D}\subset \fX$ inside a smooth formal scheme over $\cV$. 

\begin{definition} Let $(X,M)$ be a fine log variety. Then an \emph{enlargement} of $(X,M)$ is a fine log scheme $(Z,M_Z)\to (X,M)$, together with an exact closed immersion $(Z,M_Z)\to (\fr{T},M_\fr{T})$ into a fine log formal scheme over $\cV$, such that $T_{\rm red}\subset Z$. 
\end{definition}

There is the obvious notion of a morphism of enlargements of $(X,M)$.

\begin{definition} A convergent log isocrystal on $(X,M)$ consists of the following data:
\begin{enumerate}
\item for every enlargement $(\fr{T},M_\fr{T})$ of $(X,M)$, a coherent $\cO_{\fr{T}\Q}$-module $\sF_\fr{T}$;
\item for every morphism of enlargements $g\from (\fr{T}',M_{\fr{T}'})\to (\fr{T},M_\fr{T})$ a `transition' isomorphism $g^*\sF_\fr{T}\to\sF_{\fr{T}'}$ of $\cO_{\fr{T}'\Q}$-modules,
\end{enumerate}
The transition isomorphisms are required to satisfy an appropriate cocycle condition. In fact, Shiho considers two variations, where $\sF$ is considered as a sheaf on $\fr{T}$ with respect to the Zariski and \'etale topologies. He then shows in \cite[Proposition 2.1.21]{Shi02} these give rise to equivalent theories. 
\end{definition}

There is an obvious notion of a morphism of convergent log isocrystals on $(X,M)$, and I will denote the resulting category by $\Isoc^\circ(X,M)$. 

\begin{remark} My choice of notation here reflects the fact that I am using $\Isoc(X)$, and not $\Isoc^\dagger(X)$, to denote the category of overconvergent isocrystals on a variety $X$. Thus, when the log structure $M$ is trivial, $\Isoc^\circ(X)$ is the category of convergent isocrystals on $X$, which has previously been denoted $\Isoc(X,X)$.
% In case $(X,M)$ is the special fibre of a fine log formal scheme $(\fr{X},N)$, I will also use the notation $\Isoc^\circ(\fr{X},N)$, and talk about log convergent isocrystals on $(\fr{X},N)$. 
\end{remark}

As with the case of ordinary (that is, non-logarithmic) convergent isocrystals, these objects can be described in terms of their `realisations' in the world of rigid analytic geometry. Following Shiho, suppose that there is an exact closed immersion 
\[(X,M)\to (\fP,L) \]
of fine log formal schemes over $\cV$, such that $(\fP,L)$ is log smooth over $\cV$.\footnote{In fact, Shiho works in much greater generality than this, but this special case is technically simpler and will suffice for me.} Then the tube $\tube{X}_\fP$ admits a natural log structure coming from the log structure induced by $L$ on $\fP_K$, and is log smooth over $K$. In \cite[\S2.1,\S2.2]{Shi02} Shiho constructs a functor
\begin{align*} \Isoc^\circ(X,M)&\to  \Coh_\nabla(\tube{X}_\fP,L)
\\ \sF&\mapsto \sF_{\tube{X}_\fP} 
\end{align*}
from the category of log convergent isocrystals on $(X,M)$ to the category $ \Coh_\nabla(\tube{X}_\fP,L)$ of coherent $\cO_{\tube{X}_\fP}$-modules with integrable log connection. In general, it is not clear whether or not this `realisation on $(\fP,L)$' functor is fully faithful, in order to obtain full faithfulness it is necessary to impose local freeness conditions on convergent isocrystals. 

\begin{definition} Let $(X,M)$ be a fine log variety over $k$, and $\sF$ a convergent log isocrystals on $(X,M)$. Then $\sF$ is locally free if, for all enlargements $(\fr{T},M_\fr{T})$ of $(X,M)$, the coherent $\cO_{\fr{T}\Q}$-module $\sF_\fr{T}$ is locally projective. 
\end{definition}

It is then a consequence of \cite[Corollary 2.3.9]{Shi02} that the restriction of the realisation functor to locally free isocrystals is fully faithful. Moreover, if we are given $\sF\in  \Isoc^\circ(X,M)$ we can actually test whether or not it is locally free by taking its realisation on $(\fP,L)$ (this follows from \cite[Proposition 2.2.7]{Shi02}).

\subsubsection{Frobenius structures}

Since log convergent isocrystals are clearly functorial in $(X,M)$, there is a well-defined Frobenius pullback functor $F^*\from \Isoc^\circ(X,M)\to\Isoc^\circ(X,M)$, which makes it possible to talk about Frobenius structures on convergent log isocrystals. The category of convergent log $F$-isocrystals on $(X,M)$, that is, the category of convergent log isocrystals on $(X,M)$ equipped with a Frobenius structure, will be denoted $F\text{-}\Isoc^\circ(X,M)$. In certain cases, the presence of a Frobenius structure is enough to guarantee local freeness.

\begin{lemma} \label{lemma: F structure locally free} Let $X$ be a smooth variety, and $D\subset X$ a normal crossings divisor. Then every convergent log $F$-isocrystal $\sF$ on $(X,M(D))$ is locally free.
\end{lemma}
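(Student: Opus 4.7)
The assertion is local on $X$, and local projectivity of a coherent module can be checked after any faithfully flat base change of the enlargements in question. Working étale-locally near a closed point $x\in X$, we may assume that $X=\spec{A}$ admits étale coordinates $t_1,\ldots,t_d$ with $D=V(t_1\cdots t_r)$, and we may further lift this situation to a smooth formal scheme $\fP$ over $\cV$, together with a relative normal crossings divisor $\fr{D}=V(\tilde t_1\cdots \tilde t_r)$ reducing to $D$ and a Frobenius lift $\Phi\from \fP\to\fP$ satisfying $\Phi^*\tilde t_i=\tilde t_i^q u_i$ for units $u_i\in\cO_\fP^\times$. The closed immersion $(X,M(D))\hookrightarrow (\fP,M(\fr{D}))$ is then exact, and $(\fP,M(\fr{D}))$ is log smooth over $\cV$.

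Next I would take the realisation of $\sF$ on $(\fP,M(\fr{D}))$. The data of the Frobenius structure translates, via functoriality of the realisation functor, into an isomorphism $\Phi^*\cE\isomto \cE$ of $\cO_{\fP_K}$-modules with integrable log connection, where $\cE:=\sF_{(\fP,M(\fr{D}))}$. By Shiho's criterion (\cite[Proposition 2.2.7]{Shi02}), local projectivity of $\sF$ on every enlargement is equivalent to local projectivity of $\cE$ as a coherent $\cO_{\fP_K}$-module. The problem is therefore reduced to showing that this one coherent $\cO_{\fP_K}$-module, equipped with its integrable log connection and its Frobenius structure, is locally free.

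On the open $\fP_K\setminus \fr{D}_K$ the log structure is trivial, so $\cE$ there is a coherent $\cO$-module with integrable (non-log) connection on a smooth $K$-analytic space, and hence locally free by \cite[Lemma 3.3.4]{Ked06a}. The remaining (and main) task is to handle a point of $\fr{D}_K$. Near such a point I would work on an affinoid chart with coordinates $(t_1,\ldots,t_d)$ in which the log connection is controlled by the derivations $t_i\partial_{t_i}$ for $i\le r$ and $\partial_{t_i}$ for $i>r$. The compatibility $\Phi^*\tilde t_i=\tilde t_i^q u_i$ and the isomorphism $\Phi^*\cE\cong\cE$ then forces the residues of $\nabla$ along each component $\tilde t_i=0$ to satisfy a relation of the form $\Phi(\rho_i)=q\rho_i+\nu_i$ (with $\nu_i$ nilpotent and $\rho_i$ the residue endomorphism), so that their eigenvalues are automatically $p$-adically non-Liouville. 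This is precisely the condition under which the theory of log $(\varphi,\nabla)$-modules in one variable (Kedlaya, Christol--Mebkhout) produces a canonical Frobenius-equivariant trivialisation, and one obtains local freeness of $\cE$ by iterating this one variable at a time.

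The main obstacle is this last step: controlling $\cE$ along $\fr{D}_K$. I would organise the induction on the number $r$ of log components through the chosen point, using the fact that after pulling back by a sufficiently high power of $\Phi$ one may assume each residue $\rho_i$ has eigenvalues in a fixed set of representatives in $\cV$ modulo $\Z$, and then applying a Dwork rotation to split off the $t_i\partial_{t_i}$-eigenspace decomposition. This reduces each inductive step to the already handled case of one fewer log components, ultimately landing in the setting of Kedlaya's Lemma 3.3.4.
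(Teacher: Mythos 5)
Your proposal begins the same way the paper does (étale-local coordinates, lifting the pair $(X,D)$ to a smooth $(\fX,\fr{D})$ with a compatible Frobenius lift, and reducing via Shiho's criterion to local freeness of the single realisation $\cE$ on $\fX_K$), but from that point on it takes a genuinely different and substantially heavier route. You propose to analyse the residues of the log connection component by component, argue that Frobenius compatibility forces nilpotent residues and $p$-adically non-Liouville exponents, and then build a Frobenius-equivariant decomposition one log direction at a time, in the spirit of Christol--Mebkhout and Kedlaya's semistable reduction machinery. The paper's actual proof never touches the connection at all: it is a purely algebraic Fitting-ideal argument. Because $\sigma^*\cE\cong\cE$ and Fitting ideals commute with base change, every Fitting ideal $I\subset R_K$ of $\cE$ satisfies $\sigma(I)R_K=I$; intersecting with $R$, using flatness of $\sigma$ to see that $\sigma(I^+)R$ is $\pi$-saturated, and reducing modulo $\varpi$ produces a Frobenius-stable ideal $I_0\subset R_k$ with $F(I_0)R_k=I_0$. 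Since $F(a)=a^q$, this forces $I_0=I_0^n$ for all $n$, so $I_0\in\{0,R_k\}$ by Krull intersection (as $R_k$ is a smooth connected Noetherian domain), hence $\cE$ is locally free.

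The comparison is therefore quite one-sided in practical terms: the Fitting-ideal argument dispenses with all analysis of residues, exponents, and decompositions, and in particular completely sidesteps the two parts of your proposal that are not yet convincing. First, the relation you assert for the residue endomorphisms (``$\Phi(\rho_i)=q\rho_i+\nu_i$ with $\nu_i$ nilpotent'') is not obviously correct as stated, and even the precise formulation of how Frobenius acts on residues needs care. Second, and more seriously, the phrase ``iterating this one variable at a time'' hides what is really the hard content of the semistable reduction literature: patching one-variable decompositions in several log directions is delicate, and producing a ``canonical Frobenius-equivariant trivialisation'' from which local freeness of the coherent module would follow is a substantial project, not a routine induction. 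I would not call your proposal wrong --- an expert could probably push it through via Shiho's exponent machinery --- but there is a genuine gap between the level of detail you give for the key step and what it would take to close it, whereas the Fitting-ideal route is short, self-contained, and requires no $p$-adic differential equation theory at all. If you see a Frobenius structure and need local freeness of a coherent sheaf, reach for Fitting ideals before reaching for exponents.
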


\begin{proof}
It suffices to prove the claim over some \'etale cover of $X$, so I can therefore assume that the pair $(X,D)$ lifts to a pair $(\fX,\fr{D})$ consisting of a smooth formal scheme $\fX$ over $\cV$ and a strict relative normal crossings divisor $\fr{D}\subset \fX$. In fact, I can assume that $\fX=\spf{R}$ is affine and connected, and that there exists an \'etale morphism $\fX\to \widehat{\A}^d_\cV$, with co-ordinates $z_1,\ldots,z_d$ on the target, such that $\fr{D}$ is (the pullback of) the strict normal crossings divisor $V(z_1\ldots z_c)$ for some $c\leq d$. In particular, the standard lift of Frobenius on $\widehat{\A}^d_\cV$ extends to an endomorphism of $\fX$ lifting the absolute ($q$-power) Frobenius on $X$. Let $\sigma:\from R\to R$ denote the ring homomorphism induced by the Frobenius lift. It is then enough to prove the local freeness and of the realisation of $\sF$ on $\fX_K$. 

The proof of this is a standard fitting ideal argument: it suffices to show that the only ideals $I\subset R_K$ stable under $\sigma$, in the sense that $\sigma(I)R_K=I$, are the zero and unit ideals. Setting $I^+=I\cap R$, this is a $\pi$-adically saturated ideal of $R$, such that $\sigma(I^+)R\subset I^+$. Since $\sigma$ is finite flat, it follows that $\sigma(I_0)R$ is also saturated, hence in fact $\sigma(I^+)R= I^+$. Again, it suffices to show that the only any such ideals are the zero and unit ideals. Let $I_0= I^+\otimes_\cV k$ be the reduction of $I^+$ modulo $\pi$, this is therefore a Frobenius stable ideal in $R_k$, in the sense that $F(I_0)R_k=I_0$, and it is enough to show that the only such ideals are the zero and unit ideals. 

But if $F(I_0)R_k=I_0$ then $I_0=I_0^n$ for all $n\geq 1$, and so $I_0=\bigcap_n I_0^n$. Since $R_k$ is smooth, connected and Noetherian, it follows from Krull's intersection theorem \cite[Corollary 5.4]{Eis95} that either $I_0=\{0\}$ or $I_0=R$.
\end{proof}

\subsection{A vanishing result in log rigid cohomology}

The main result I will need in log rigid cohomology is a certain vanishing result, appearing as Theorem \ref{theo: vanishing in log-rigid cohom} below. It will take a little bit of effort to setup and formulate this result, but this is mostly an issue of keeping track of notation, the underlying geometry is actually rather simple. 

Anyway, Theorem \ref{theo: vanishing in log-rigid cohom} will apply in the situation when I have a smooth formal scheme $\fP$, together with a strict relative normal crossings divisor  $\fr{D}\subset \fP$. In this case, I will let $\fP^\sharp$ denote the log scheme $(\fP,M(\fr{D}))$. It's generic fibre $\fP_K^\sharp$ is therefore a logarithmic analytic variety over $K$, and I will let $\sD_{\fP^\sharp_K}\subset \sD_{\fP_K}$ denote the corresponding sheaf of logarithmic (algebraic) differential operators. For example, if $z_1,\ldots,z_d$ are local co-ordinates on $\fP$, such that $\fr{D}$ is defined by $z_1\ldots z_c=0$, and $\partial_1,\ldots,\partial_d$ are the corresponding derivations, then $\sD_{\fP^\sharp_K}$ is generated as a sub-$\cO_{\fP_K}$-algebra of $\sD_{\fP_K}$ by $z_1\partial_1,\ldots,z_c\partial_c,\partial_{c+1},\ldots,\partial_d$. In particular, any constructible isocrystal on $\fP$ can be viewed a $\sD_{\fP^\sharp_K}$-module via restriction of scalars along $\sD_{\fP_K^\sharp}\to \sD_{\fP_K}$. 

Let $\fr{D}_1,\ldots,\fr{D}_c$ denote the irreducible components of $\fr{D}$, with special fibres $D_1,\ldots,D_c$. For each $J\subset \{1,\ldots,c\}$, set
\begin{align*}
\fr{D}_J=\cap_{j\in J} \fr{D}_j,\;\;&\;\;D_J=\fr{D}_{J,k}=\cap_{j\in J} D_j\\
\fr{D}^{(J)}=\cup_{j\notin J} \fr{D}_j,\;\;&\;\;D^{(J)}=\fr{D}_k^{(J)}=\cup_{j\notin J} D_j.
\end{align*}
The log structure $M\fr{D})$ on $\fP$ can be pulled back to each $\fr{D}_J$, giving rise to a log formal scheme $\fr{D}_J^\sharp$. Similarly, I let $\tube{D_J}_{\fP^\sharp}$ denote the tube of $D_J$ equipped with the log structure obtained by pulling back the log structure $M(\fr{D}_K)$ from $\fP_K$ of the log structure $M(\fr{D}_K)$. I will let $\sD_{\tube{D_J}_\fP^\sharp}$ denote the ring of (algebraic) log differential operators on $\tube{D_J}_\fP^\sharp$, or in other words the restriction of $\sD_{\fP_K^\sharp}$ to $\tube{D_J}_\fP$.

There is also a second is natural log structure on $\fr{D}_J$, namely that induced by the strict normal crossings divisor
\[ \fr{D}_J\cap \fr{D}^{(J)}\subset \fr{D}_J. \]
I will denote the resulting log formal scheme by $\fr{D}_J^\flat=(\fr{D}_J,M(\fr{D}_J\cap \fr{D}^{(J)}))$, and denote the ring of (algebraic) log differential operators on its generic fibre $\fr{D}_{J,K}^\flat$ by $\sD_{\fr{D}_{J,K}^\flat}$.

Now, if $\sF$ is a locally free log convergent isocrystal on the special fibre $P^\sharp$ of $\fP^\sharp$, I can take its realisation as a locally free $\cO_{\fP_K}$-module with integrable log connection, which I will also denote by $\sF$. There is then an induced integrable log connection on the twist $\sF(\fr{D}_K)$ of $\sF$, coming from the canonical log connection on the line bundle $\cO_{\fP_K}(\fr{D}_K)$. In particular, $\sF(\fr{D}_K)$ is a $\sD_{\fP_K^\sharp}$-module, which can then be restricted to any tube $\tube{D_J}_\fP$.

\begin{theorem} \label{theo: vanishing in log-rigid cohom} Let $\sF\in F\text{-}\Isoc^\circ(P^\sharp)$ be a log convergent $F$-isocrystal on $P^\sharp$, and let $\sG\in \Isoc_\cons(\fP)$ a constructible isocrystal on $\fP$. Then, for any $J\subset \{1,\ldots,c\}$,
\[ \bR{\rm Hom}_{\sD_{\tube{D_J}^{\sharp}_\fP}}(\sF(\fr{D}_K),\sG)=0. \]
\end{theorem}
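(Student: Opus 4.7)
The plan is to proceed by d\'evissage in $\sG$, reducing to the case of a locally free isocrystal supported on a single smooth stratum, and then use the Frobenius structure on $\sF$ together with a residue argument to obtain the vanishing.

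First, by Corollary \ref{cor: cons isoc iterated extension}, $\sG$ admits a finite filtration whose subquotients are of the form $i_{\alpha!}\sH_\alpha$, where $i_\alpha\from Z_\alpha\to P$ is a locally closed immersion with $Z_\alpha$ smooth, and $\sH_\alpha\in\Isoc(Z_\alpha,\overline{Z}_\alpha)$ is a locally free isocrystal. The long exact sequence of $\bR\mathrm{Hom}$ reduces the problem to the case $\sG=i_!\sH$ for such data. By further refining the stratification, I would arrange that $Z$ is in general position with respect to $\fr{D}$, so that $Z\cap\fr{D}$ is a strict normal crossings divisor in $Z$. If $Z\cap D_J=\emptyset$, then $\tube{Z}_\fP\cap\tube{D_J}_\fP=\emptyset$ and the vanishing is trivial; one may therefore assume $Z\cap D_J\neq\emptyset$.

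Next, using Lemma \ref{lemma: u^*i_!} and the adjunction $(i_!,i^{-1})$, combined with the local freeness of $\sF(\fr{D}_K)$ (which follows from Lemma \ref{lemma: F structure locally free}), I would rewrite
\[
\bR\mathrm{Hom}_{\sD_{\tube{D_J}_\fP^\sharp}}(\sF(\fr{D}_K),i_!\sH)
\]
as the log de Rham cohomology of $\sH\otimes i^*\sF^\vee(-\fr{D}_K)$ on the tube $\tube{Z\cap D_J}_\fP$ equipped with its induced log structure pulled back from $\fP_K^\sharp$. This reduces the question to a purely local computation on a single log stratum.

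The hard part, and where I expect the main obstacle to lie, is the Frobenius-theoretic vanishing of this log de Rham complex. The $F$-structure on $\sF$ forces the eigenvalues of the residue of $\sF$ along each component $D_{i,K}$ to lie in a Frobenius-invariant subset of $\Z_p\cap\Q$; after twisting by $\cO(\fr{D}_K)$, these eigenvalues are shifted by $-1$, giving a residue for $\sF(\fr{D}_K)$ whose spectrum avoids $0$. On the other hand, $\sG$ is a $\sD_{\fP_K}$-module rather than merely a log module, so the logarithmic derivation $z_i\partial_{z_i}$ acts on $\sG$ via $z_i\cdot\partial_{z_i}$ and therefore has trivial (zero) residue along each $D_{i,K}$. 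A spectral analysis exploiting this eigenvalue mismatch, combined with careful bookkeeping of the contributions from components in $J$ versus those in $\{1,\ldots,c\}\setminus J$, then yields the acyclicity of the log de Rham complex. Making this precise, in particular controlling the behaviour of the non-$J$ components of the log structure simultaneously with the Frobenius constraint on residues, is where I anticipate the technical difficulty.
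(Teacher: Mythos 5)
Your proposal shares the paper's core insight -- the Frobenius structure forces nilpotent residues on $\sF$, so that after twisting by $\cO(\fr{D}_K)$ the residues become invertible, and this is what drives the vanishing -- but the route you take is genuinely different, and I think it has real gaps that the paper's strategy is designed precisely to avoid.

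The paper does \emph{not} d\'evissage $\sG$ at all. Instead it localises on $\fP$ and uses the weak fibration theorem to identify $\tube{D_J}_\fP^\sharp$ with the product $\fr{D}_{J,K}^\flat\times_K \D^b_K(0;1^-)$, so that the log structure splits into a ``base'' piece coming from the non-$J$ components and a ``fibre'' piece $\mathscr{T}=V(z_1\cdots z_b)$. It then uses Shiho's nilpotent residue theorem \cite[Proposition 2.12]{Shi10} to write $\sF|_{\tube{D_J}_\fP}$ as an iterated extension of pullbacks $\pi^*\sF_0$, and Theorem \ref{theo: disoc indep} to write $\sG|_{\tube{D_J}_\fP}=\pi^*\sG_0$ for a \emph{general} constructible isocrystal $\sG_0$ on $\fr{D}_{J,K}$. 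The projection formula (Proposition \ref{prop: weak proj formula for cons mod}, which applies exactly because constructible locally free modules are flat) then pulls $\sG_0$ entirely out of the way, and the whole problem collapses to $\bR\pi_{\dR*}\cO_{\tube{D_J}_\fP}(-\mathscr{T})=0$, which by further d\'evissage is the elementary one-variable computation that $z\partial_z\from zR\open{\rho^{-1}z}\to zR\open{\rho^{-1}z}$ is an isomorphism.

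The gaps in your version are the following. First, the ``general position'' refinement is not available: the stratification witnessing constructibility of $\sG$ is just some finite decomposition of $P$ into locally closed subschemes, and there is no way to refine it so that each stratum meets $D$ in a normal crossings divisor -- in characteristic $p$ you cannot invoke generic transversality, and altering $Z$ or blowing up breaks the locally closed immersion $i\from Z\to P$ that you are relying on. Second, even granting a well-behaved $Z$, the log structure on $\tube{Z\cap D_J}_\fP$ pulled back from $\fP_K^\sharp$ involves \emph{all} components of $D$, not just the $J$-components, and $Z$ may sit inside some non-$J$ component, making that pullback degenerate; you have not isolated the direction in which the twist by $\cO(-\fr{D}_K)$ actually acts. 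Third -- and this is the point you yourself flag -- the ``spectral analysis'' of the residue mismatch is exactly the hard part, and the paper's fibration setup is what makes it tractable: once the tube is a product and $\sG$ has been factored out by the projection formula, the residue argument reduces to a single explicit calculation on the open polydisc. Without the product structure, you would need a residue exact sequence or spectral sequence argument for the log de Rham complex of $i_!\sH'$ that is not readily available. I would also flag a minor imprecision: Frobenius forces the residues of $\sF$ to be \emph{nilpotent} (so their only eigenvalue is $0$), not merely to have eigenvalues in a Frobenius-stable subset of $\Z_p\cap\Q$; it is this nilpotence that Shiho's extension result requires.
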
  

\begin{remark} The crucial observation of the proof is that the presence of a Frobenius structure forces $\sF$ to have nilpotent residues along $\fr{D}_K$, which then implies that the residues of the twist $\sF(\fr{D}_K)$ along $\fr{D}_K$ are in fact \emph{isomorphisms}. 
\end{remark}

\begin{proof}
The question is local on $\fP$, so I can assume that there exists an \'etale map $\fP\to \widehat{\A}^d_\cV$, with co-ordinates $z_1,\ldots,z_d$ on the target, such that $\fr{D}=V(z_1\ldots z_c)$ for some $c\leq d$, and $\fr{D}_J=V(z_1,\ldots,z_b)$ for $b=\#J \leq c$. The functions $z_{b+1},\ldots,z_d$ induce an \'etale map $\fr{D}_J\to \widehat{\A}^{d-b}_\cV$, and adding in the remaining functions $z_1,\ldots,z_b$ gives an \'etale map $\fr{D}_J\times_\cV \widehat{\A}^b_\cV \to \widehat{\A}^d_\cV$. I now let $\fP'$ be the fibre product
\[ \xymatrix{ \fP'\ar[r] \ar[d]_{u} & \fr{D}_J\times_\cV \widehat{\A}^b_\cV  \ar[d] \\ \fP \ar[r] & \widehat{\A}^d_\cV. } \] 
The closed immersion $\fr{D}_J\to \fP$ canonically extends to a closed immersion $\fr{D}_J\to \fP'$ whose composition with the projection $\fP'\to \fr{D}_J\times_\cV \widehat{\A}^b_\cV$ is the identity on the first factor, and the zero section on the second. This gives rise to a section of the \'etale map $u^{-1}(\fr{D}_J)\to \fr{D}_J$, so there exists an open subspace of $\fP'$ on which $u^{-1}(\fr{D}_J)\isomto \fr{D}_J$. 

The upshot of all of this is that after replacing $\fP$ by this open subspace of $\fP'$, I can assume that there exists a smooth morphism $\fP\to \fr{D}_J$ of which the canonical inclusion is a section. Hence by the weak fibration theorem, there exists an isomorphism
\[ \tube{D_J}_\fP\isomto \D^b_{\fr{D}_{J,K}}(0;1^-) \]
identifying $\fr{D}_{J,K}\to \tube{D_J}_\fP$ is identified with the zero section of $\D^b_{\fr{D}_{J,K}}(0;1^-)$. In fact, this isomorphism can be upgraded to include log structures as follows. As above, let $\fr{D}_J^\flat$ denote the log formal scheme given by $\fr{D}_J$ equipped with the log structure coming from the strict normal crossings divisor $\fr{D}_{J,K}\cap \fr{D}_K^{(J)}$, and let $\fr{D}_{J,K}^\flat$ denote its special fibre. If $z_1,\ldots,z_b$ are co-ordinates on $\D^b_{K}(0;1^-)$, then  $\D^b_{K}(0;1^-)$ is equipped with the log structure coming from the strict normal crossings divisor $V(z_1\ldots z_b)$. Then the above isomorphism can be upgraded to an isomorphism
\[  \tube{D_J}^\sharp_\fP \isomto \fr{D}_{J,K}^\flat\times_K \D^b_{K}(0;1^-) \]
of log analytic varieties. Let $\pi$ denote the first projection $\tube{D_J}_\fr{P}^\sharp\to \fr{D}_{J,K}^\flat$.

I now want to apply \cite[Proposition 2.12]{Shi10} to show that $\sF|_{\tube{D_J}_\fP}$ is an iterated extension of pullbacks of locally free log integrable connections on $\fr{D}_{J,K}^\flat$ along $\pi$, at least after possibly localising to ensure that $\fr{D}_J$ is affine and connected. For Shiho's result to apply, I need to check two conditions: firstly, that the supremum norm on $\Gamma(\fr{D}_{J,K},\cO_{\fr{D}_{J,K}})$ is multiplicative, and secondly, that $\sF$ has nilpotent residues along $\fr{D}_K$. The first of these simply follows from the fact that $\fr{D}_J$ is smooth over $\cV$, and the second from the fact that $\sF$ has a Frobenius structure.

Therefore, I can assume that there exists a locally free log integrable connection $\sF_0$ on $\fr{D}_{J,K}^\flat$ such that $\sF|_{\tube{D_J}_\fP}=\pi^*\sF_0$. In a similar vein, it follows from Theorem \ref{theo: disoc indep} there exists a constructible isocrystal $\sG_0$ on $\fr{D}_J$ such that 
\[ \sG|_{\tube{D_J}_\fP}= \pi^*\sG_0. \]
I now set $\sH_0:=\sG_0 \otimes_{\cO_{\fr{D}_{J,K}}} \sF_0^\vee(-\fr{D}_{J,K}\cap \fr{D}_K^{(J)})$, which is thus a $\sD_{\fr{D}^\flat_{J,K}}$-module, and write $\mathscr{T}$ for the divisor $V(z_1\ldots z_b)\subset \tube{D_J}_\fr{P}$ arising via pullback from $\D^b_K(0;1^-)$. Now I can calculate
\begin{align*}
\bR{\rm Hom}_{\sD_{\tube{D_J}^\sharp_\fP}}(\sF(\fr{D}_K),\sG) &= \bR{\rm Hom}_{\sD_{\tube{D_J}^\sharp_\fP}}(\cO_{\tube{D_J}_\fP}(\mathscr{T}),\pi^{*}\sH_0) \\
&= \bR{\rm Hom}_{\sD_{\tube{D_J}^\sharp_\fP}}(\cO_{\tube{D_J}_\fP},\pi^{*}\sH_0 \otimes_{\cO_{\tube{D_J}_\fP}} \cO_{\tube{D_J}_\fP}(-\mathscr{T}) ) \\
&= \bR{\rm Hom}_{\fr{D}_{J,K}^\flat}(\cO_{\fr{D}_{J,K}},\bR\pi_{\dR*}(\pi^{*}\sH_0 \otimes_{\cO_{\tube{D_J}_\fP}} \cO_{\tube{D_J}_\fP}(-\mathscr{T})) ) .
\end{align*}
I will show that in fact
\[ \bR\pi_{\dR*}(\pi^{*}\sH_0 \otimes_{\cO_{\tube{D_J}_\fP}} \cO_{\tube{D_J}_\fP}(-\mathscr{T})) =0\]
which clearly suffices. To show this, note that since $\sH_0$ is a constructible locally free $\cO_{\fr{D}_{J,K}}$-module, Proposition \ref{prop: weak proj formula for cons mod} implies that, after replacing
\[ \pi\from \tube{D_J}_\fP = \D^b_{\fr{D}_{J,K}}(0;1^-) \to \fr{D}_{J,K}\]
by the projection
\[ \pi_\rho\from \D^b_{\fr{D}_{J,K}}(0;\rho) \to \fr{D}_{J,K}\]
from the closed polydisc of radius $\rho<1$, the projection formula
\[ \bR\pi_{\rho\dR*}(\pi^{*}\sH_0 \otimes_{\cO_{\tube{D_J}_\fP}} \cO_{\tube{D_J}_\fP}(-\mathscr{T})) = \sH_0 \otimes^{\bL}_{\cO_{\fr{D}_{J,K}}}  \bR\pi_{\rho\dR*}\cO_{\tube{D_J}_\fP}(-\mathscr{T}).  \]
holds. Since
\[ \bR\pi_{\dR*}(\pi^{*}\sH_0 \otimes_{\cO_{\tube{D_J}_\fP}} \cO_{\tube{D_J}_\fP}(-\mathscr{T})) = \bR\lim{\rho} \bR\pi_{\rho\dR*}(\pi^{*}\sH_0 \otimes_{\cO_{\tube{D_J}_\fP}} \cO_{\tube{D_J}_\fP}(-\fr{T}_K)), \] it is enough to show that the pro-system
\[ \bR\pi_{\rho\dR*}\cO_{\tube{D_J}_\fP}(-\mathscr{T}) \] 
has zero transition maps. Letting
\[ \pi_{\rho^-}\from \D^b_{\fr{D}_{J,K}}(0;\rho^-) \to \fr{D}_{J,K}\]
denote the projection from the open polydisc, it is then enough to show that
\[ \bR\pi_{\rho^-\dR*}\cO_{\tube{D_J}_\fP}(-\mathscr{T}) =0. \]
To see this, I can replace the log structure on $\fr{D}_{J,K}$ by the trivial one, and the log structure on $\D^b_{\fr{D}_{J,K}}(0;\rho^-)$ by that coming from the strict normal crossings divisor $\mathscr{T}=V(z_1\ldots z_b)$. Replacing $\fr{D}_{J,K}$ be an arbitrary smooth analytic variety $V$ makes it possible to argue via induction and reduce to the case $b=1$, that is, the case of the projection
\[ \pi_{\rho^-} \from \D^1_{V}(0;\rho^-) \to V.\]
for a smooth analytic variety $V$. The target here has the trivial log structure, and the source the log structure coming from the zero section. Thus, arguing locally on $V$, and appealing to Theorems A and B for non-Archimedean quasi-Stein spaces \cite[Satz 2.4]{Kie67b}, I can reduce to calculating the cohomology of the complex
\[ zR\open{\rho^{-1}z} \overset{z\partial_z}{\lto} zR\open{\rho^{-1}z} \]
where $R$ is an affinoid algebra over $K$, and $R\open{\rho^{-1}z}$ is the ring of functions on the open disc of radius $\rho$ over $R$. It is then a straightforward calculation that this cohomology vanishes. 
\end{proof}

\subsection{Rigidification of log $\pmb{\sD^\dagger}$-modules}

For the logarithmic analogue of the $\sD^\dagger$-module side of the picture, I will stick to log formal schemes coming from strict normal crossings divisors on smooth formal schemes. Thus I will let $\fP$ be a smooth formal scheme, $\fr{D}\subset \fP$ a strict normal crossings divisor relative to $\cV$, and $\fP^\sharp=(\fP,M(\fr{D}))$ the corresponding log formal scheme. There is therefore the logarithmic analogue $\sD^\dagger_{\fP^\sharp\Q}$ of Berthelot's ring of overconvergent differential operators, as defined in \cite{Mon02} and \cite{Car09c}. As in the non-logarithmic case,
\[ \sD^\dagger_{\fP^\sharp\Q} := \colim{m} \widehat{\sD}^{(m)}_{\fP^\sharp} \otimes_{\Z} \Q \]
where $\widehat{\sD}^{(m)}_{\fP^\sharp}$ is the $p$-adic completion of the sheaf of level $m$ differential operators on $\fP^\sharp$. The isogeny category $\LD_{\Q,\qc}(\widehat{\sD}^{(\bullet)}_{\fP^\sharp})$ of ind-complexes of $\widehat{\sD}^{(\bullet)}_{\fP^\sharp}$-modules is defined exactly as in the non-logarithmic case, and actually taking the colimit induces an equivalence
\[ \LD^b_{\Q,\coh}(\widehat{\sD}^{(\bullet)}_{\fP^\sharp})  \isomto \bD^b_\coh(\sD^\dagger_{\fP^\sharp\Q}),\]
see \cite[\S1.2]{CT12}. The method of \S\ref{sec: rigidification of D-modules} works \emph{mutatis mutandis} in the logarithmic case to define
\[ \sp^!= \sp^!_{\fP^\sharp} = \bL\hat{\sp}_{\fP^\sharp}^*[-\dim \fP] \from \LD_{\Q,\qc}(\widehat{\sD}^{(\bullet)}_{\fP^\sharp})  \to  \bD(\sD_{\fP_K^\sharp}), \]
where $\fP_K^\sharp$ is the generic fibre of $\fP^\sharp$, considered as a log analytic variety. It follows from the construction that $\sp^!$ commutes with pullback along the morphism $\fP^\sharp\to\fP$. The approach of \S\ref{sec: rigidification of D-modules} can also be followed through in the logarithmic case to prove the following. 

\begin{proposition} \label{prop: base change sp^* u_* log de Rham} For any $\cM\in\LD_{\Q,\qc}(\widehat{\sD}^{(\bullet)}_{\fP^\sharp})$, $\bL\hat{\sp}^*_{\fP^\sharp}$ induces a base change isomorphism
\[  \bR\Gamma_{{\rm log}\text{-}\dR}(\fP^\sharp,\cM) \isomto \bR\Gamma_{{\rm log}\text{-}\dR}(\fP_K^\sharp,\bL\hat{\sp}_\fP^*\cM) \]
which is natural in $\cM$.
\end{proposition}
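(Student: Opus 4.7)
The plan is to mimic the proof of Proposition~\ref{prop: base change sp^* u_* de Rham}, using the fact that the log structure $\fP^\sharp$ coming from a strict relative normal crossings divisor has locally free sheaves of log differentials. More precisely, $\Omega^{n,\sharp}_{\fP/\cV}$ is a locally free $\cO_\fP$-module of finite rank for each $n$, and $\Omega^{n,\sharp}_{\fP_K/K} = \sp^*\Omega^{n,\sharp}_{\fP/\cV}$. Forgetting the $\widehat{\sD}^{(\bullet)}_{\fP^\sharp}$-module structure, the log de\thinspace Rham complex of $\cM$ can be written as
\[ \Omega^{\bullet,\sharp}_{\fP/\cV}\widehat{\otimes}^{\bL}_{\cO_\fP}\cM \in \LD_{\Q,\qc}(\cO_\fP),\]
and its image under $\bL\hat{\sp}^*_\fP$ is canonically identified with $\Omega^{\bullet,\sharp}_{\fP_K/K}\otimes^{\bL}_{\cO_{\fP_K}}\bL\hat{\sp}^*_\fP\cM$, since local freeness of $\Omega^{n,\sharp}_{\fP/\cV}$ makes the comparison map (\ref{eqn: tensor product}) an isomorphism termwise.

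First I would reduce, via the stupid filtration on $\Omega^{\bullet,\sharp}_{\fP/\cV}\widehat{\otimes}^{\bL}_{\cO_\fP}\cM$, to showing that for each fixed $n\geq 0$ the natural base change morphism
\[ \bL\hat{\sp}^*_{\spf{\cV}}\bR\Gamma\bigl(\fP,\Omega^{n,\sharp}_{\fP/\cV}\widehat{\otimes}^{\bL}_{\cO_\fP}\cM\bigr) \lto \bR\Gamma\bigl(\fP_K,\bL\hat{\sp}^*_\fP(\Omega^{n,\sharp}_{\fP/\cV}\widehat{\otimes}^{\bL}_{\cO_\fP}\cM)\bigr) \]
is an isomorphism in $\bD(K)$. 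This filtration argument works just as in the proof of Proposition~\ref{prop: base change sp^* u_* de Rham}: both sides are computed levelwise, and the comparison is compatible with the differentials of the log de\thinspace Rham complex (naturality in $\cM$ is built in).

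The displayed statement is then exactly the content of Theorem~\ref{theo: base change sp^* u_*} applied to the flat structure morphism $u\from \fP\to \spf{\cV}$, evaluated on the object $\Omega^{n,\sharp}_{\fP/\cV}\widehat{\otimes}^{\bL}_{\cO_\fP}\cM$ of $\LD_{\Q,\qc}(\cO_\fP)$; this was recorded as the special case $\fQ=\spf{\cV}$ of that theorem, giving $\bR\Gamma(\fP,\cN)\isomto \bR\Gamma(\fP_K,\bL\hat{\sp}^*_\fP\cN)$ for any $\cN\in\LD_{\Q,\qc}(\cO_\fP)$. Assembling these isomorphisms over $n$ through the stupid filtration, and using that $\bL\hat{\sp}^*_\fP$ sends $\Omega^{n,\sharp}_{\fP/\cV}$ to $\Omega^{n,\sharp}_{\fP_K/K}$, yields the desired comparison
\[ \bR\Gamma_{{\rm log}\text{-}\dR}(\fP^\sharp,\cM) \isomto \bR\Gamma_{{\rm log}\text{-}\dR}(\fP_K^\sharp,\bL\hat{\sp}_\fP^*\cM). \]

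There is no genuine obstacle here once the log de\thinspace Rham complex has been identified as a quasi-coherent complex (levelwise) and the tensor product compatibility of $\bL\hat{\sp}^*$ has been checked on locally free finite rank modules; both of these are formal. The only mild bookkeeping is to track the ind-structure through the filtration, but since cohomology on the quasi-compact spaces $\fP$ and $\fP_K$ commutes with filtered colimits (exactly as was used in the proof of Theorem~\ref{theo: base change sp^* u_*}), one can work termwise in the ind-system and pass to the colimit at the end.
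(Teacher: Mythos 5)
Your proof is correct and follows essentially the same route the paper intends: the paper supplies no explicit argument for this proposition, but rather asserts that the method of \S\ref{sec: rigidification of D-modules} goes through \emph{mutatis mutandis}, and that method in the non-log case (Proposition \ref{prop: base change sp^* u_* de Rham}) is precisely what you reproduce. The only point worth spelling out is that the ordinary log de~Rham complex $\Omega^{\bullet,\sharp}_{\fP/\cV}\otimes_{\cO_\fP}\cM$ agrees with the completed derived version $\Omega^{\bullet,\sharp}_{\fP/\cV}\widehat{\otimes}^{\bL}_{\cO_\fP}\cM$ because each $\Omega^{n,\sharp}_{\fP/\cV}$ is finite locally free, so the stupid filtration really does reduce you to Theorem \ref{theo: base change sp^* u_*}; you implicitly use this, and it is the same silent step as in the non-log proof. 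The reduction to $\fQ=\spf{\cV}$, the identification $\bL\hat{\sp}^*_\fP\Omega^{n,\sharp}_{\fP/\cV}\cong\Omega^{n,\sharp}_{\fP_K/K}$ via local projectivity, and the passage through the ind-system by commuting $\bR\Gamma$ with filtered colimits on quasi-compact spaces are all correct and match the paper's implicit argument.
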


\begin{remark} The definition of $\bR\Gamma_{{\rm log}\text{-}\dR}(\fP^\sharp,\cM)$ is analogous to the non-logarithmic case, namely if $\cM=\{\cM^{(m)}\}_{m\in \N}$ then
\[\bR\Gamma_{\log\text{-}\dR}(\fP^\sharp,\cM) = \colim{m} \bR\Gamma_{\log\text{-}\dR}(\fP^\sharp,\cM^{(m)}) \otimes_{\Z}\Q. \]
\end{remark}

In will also need a very rudimentary logarithmic version of the specialisation functors considered in \S\ref{sec: sp+}. Suppose, then, that $\sF$ is a locally free convergent log isocrystal on the special fibre $P^\sharp$ of $\fr{P}^\sharp$, viewed as a module with integrable logarithmic connection on $\fr{P}_K^\sharp$. Then $\sp_{\fP*}\sF=\bR\sp_{\fP*}\sF$ is naturally an $\cO_{\fP\Q}$-coherent $\sD^\dagger_{\fP^\sharp\Q}$-module, and is moreover coherent as an $\sD^\dagger_{\fP^\sharp\Q}$-module by \cite[Th\'eor\`eme 4.15]{Car09c}. As in the non-logarithmic case, it is straightforward to verify that there is a canonical isomorphism
\[  \bL\hat{\sp}^*_{\fP^\sharp}\sp_{\fP*}\sF \isomto \sF\]
of $\sD_{\fP_K^\sharp}$-modules.

\section{The overconvergent Riemann--Hilbert correspondence} \label{sec: oc RH}

I can now finally prove the main result of this article. 

\begin{theorem} \label{theo: riemann-hilbert}
Let $\fP$ be a smooth formal scheme. Then the functor
\[ \sp^!\from \bD^b_{\hol,F}(\fP) \to \bD^b_{\cons,F}(\fP) \]
is an equivalence of categories. 
\end{theorem}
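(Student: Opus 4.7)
The plan is to show $\sp^!$ is both essentially surjective and fully faithful, organising the argument around the t-exactness already established in Theorem \ref{theo: sp^! cons}. The essentially surjectivity on hearts $\sp^!\from \DCon_F(\fP)\to \Isoc_{\cons,F}(\fP)$ is essentially for free: by Corollary \ref{cor: cons isoc iterated extension} every constructible isocrystal of Frobenius type is an iterated extension of objects $i_!\sF$ with $i\from X\to P$ a smooth locally closed immersion and $\sF\in \Isoc_F(X,Y)$, and Corollary \ref{cor: sp^! sp_! locally free isocrystals} identifies each such $i_!\sF$ with $\sp^!\sp_{X!}\sF$. Since $\DCon_F(\fP)$ is closed under extensions in $\bD^b_{\hol,F}(\fP)$ and $\sp^!$ is exact on hearts, the image is closed under extensions, hence surjective. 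Once full faithfulness is known, essential surjectivity on the whole bounded derived category follows by induction on cohomological amplitude, lifting the boundary maps in the truncation triangles via the full faithfulness on hearts.

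For full faithfulness I need to show that for all $\cM,\cN\in \bD^b_{\hol,F}(\fP)$ the natural map
\[ \bR\mathrm{Hom}_{\sD^\dagger_{\fP\Q}}(\cM,\cN) \to \bR\mathrm{Hom}_{\sD_{\fP_K}}(\sp^!\cM,\sp^!\cN) \]
is an isomorphism. Both sides are cohomological in each variable, so by a two-variable dévissage driven by Proposition \ref{prop: overhol complexes generically isocrystals} together with Noetherian induction on the support (using the localisation triangles $\bR\underline{\Gamma}^\dagger_D\cM\to \cM\to \cM(^\dagger D)$), it suffices to treat $\cM=\sp_{X!}\sF$ and $\cN=\sp_{X'!}\sG$ for smooth locally closed $X,X'\hookrightarrow P$ and $\sF,\sG$ locally free $F$-isocrystals on the corresponding pairs. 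The compatibility of $\sp^!$ with $\bR\underline{\Gamma}^\dagger$ and $(^\dagger Z)$ established in \S\ref{subsec: consequences}, plus the matching localisation triangles on the constructible side, further reduce me to the case $X=X'$. Then de Jong's alterations combined with the finite étale descent encoded in Theorem \ref{theo: finite etale general cons}, Proposition \ref{prop: finite etale adjoints dct exact} and Proposition \ref{prop: special case sp^! and f^*} reduce me to the geometrically favourable situation where $Y$ is smooth, $D:=Y\setminus X$ is a strict normal crossings divisor, and $(Y,D)$ lifts to a smooth formal scheme together with a relative strict normal crossings divisor $\fr{D}\subset \fP$, so that $\fP^\sharp:=(\fP,M(\fr{D}))$ makes sense.

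In this final setup, both Hom complexes can be computed as log de Rham cohomology groups on $\fP^\sharp$ and its analytic generic fibre respectively, after rewriting $\sp_{X!}\sF$ and $\sp_{X!}\sG$ in terms of their logarithmic extensions. The comparison between the two is governed by Proposition \ref{prop: base change sp^* u_* log de Rham}; however, the two sides differ a priori by boundary contributions coming from the difference between the overconvergent complex $\sF^\vee\otimes \sG$ on $\tube{X}_\fP$ and its log convergent extension across $\fr{D}_K$. By Lemma \ref{lemma: F structure locally free}, this extension is a locally free log convergent $F$-isocrystal on $P^\sharp$, so its residues are nilpotent; consequently the twisted boundary Homs are precisely of the form appearing in Theorem \ref{theo: vanishing in log-rigid cohom}, which forces them to vanish. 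The base change isomorphism then gives the required identification.

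The main obstacle in this argument is the last step, where one must match the overconvergent $\mathrm{Hom}$ computation on the $\sD^\dagger$-module side with the bare de Rham $\mathrm{Hom}$ on the tube side. This is the whole reason for the preparatory log vanishing result, and it is the only point in the proof where the Frobenius-type hypothesis is actually used in an essential way, via Lemma \ref{lemma: F structure locally free} to guarantee nilpotent residues along $\fr{D}_K$.
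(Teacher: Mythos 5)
Your overall strategy — reduce to the heart by t-exactness, de\thinspace Jong alterations plus finite \'etale descent, logarithmic extension via semistable reduction, and then the vanishing Theorem \ref{theo: vanishing in log-rigid cohom} — is essentially the paper's, and you've correctly located the one place where the Frobenius hypothesis bites (nilpotent residues via Lemma \ref{lemma: F structure locally free}). But there are two concrete gaps in how you set up the final $\bR\mathrm{Hom}$ comparison.

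First, the d\'evissage should reduce only $\cM$ to the special form $\sp_{X!}\sF$, while $\cN$ should be kept as an \emph{arbitrary} object of the heart $\DCon_F(\fP)$. Reducing both variables to the shape $\sp_{X!}\sF$ and $\sp_{X'!}\sG$ and then claiming a further reduction to ``$X = X'$'' doesn't work: $\bR\mathrm{Hom}$ between objects supported on different locally closed subschemes is not zero, and the localisation triangles don't let you simply equalise the two supports. More seriously, after applying the de\thinspace Jong alteration adapted to $\sF$, there is no reason for $f^*\sG$ to extend to a log convergent $F$-isocrystal on the alteration — you would need a \emph{common} alteration for $\sF$ and $\sG$ simultaneously, and this is not accounted for. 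The whole reason for keeping $\cN$ general is precisely to avoid this: the d\'evissage operations $u^!$, $f^!$, $\bR\underline{\Gamma}^\dagger_Z$, $(^\dagger Z)$ all preserve $\DCon_F$, so the class ``arbitrary dual constructible module'' is stable under the reductions, whereas ``$\sp_{X'!}\sG$ with $\sG$ locally free'' is not.

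Second, ``both Hom complexes can be computed as log de Rham cohomology groups \ldots after rewriting $\sp_{X!}\sF$ and $\sp_{X!}\sG$ in terms of their logarithmic extensions'' applies the log rewriting symmetrically to $\cM$ and $\cN$, which is not what the argument does and not what it can do. The actual mechanism only applies the log extension to $\cM$: the Caro–Tsuzuki isomorphism $\sD^\dagger_{\fP\Q}\otimes^{\bL}_{\sD^\dagger_{\fP^\sharp\Q}}\cM^\sharp(\fr{D})\cong\cM$ converts $\bR\mathrm{Hom}_{\sD^\dagger_{\fP\Q}}(\cM,\cN)$ into $\bR\mathrm{Hom}_{\sD^\dagger_{\fP^\sharp\Q}}(\cM^\sharp(\fr{D}),\cN)$ by adjunction, with $\cN$ untouched. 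This is then matched to its analytic analogue by Proposition \ref{prop: base change sp^* u_* log de Rham}, using that $\cM^\sharp(\fr{D})$ is $\cO_{\fP\Q}$-locally projective. The discrepancy with the original $\bR\mathrm{Hom}_{\sD_{\fP_K}}(j_*\sF,\sp^!\cN)$ is a boundary term of the form $\bR\mathrm{Hom}_{\sD_{\tube{D_J}^\sharp_\fP}}(\sF^\sharp(\fr{D}_K),\sp^!\cN)$ along the components of the divisor. Theorem \ref{theo: vanishing in log-rigid cohom} is stated exactly for this: a log convergent $F$-isocrystal twisted by $\fr{D}_K$ (so with \emph{invertible} rather than nilpotent residues) against a completely general constructible isocrystal $\sG$. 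If you insist on also replacing $\sG$ by a log extension, you need a different vanishing result, and you've introduced a dependence on a second alteration that the proof doesn't have.

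So: keep $\cN$ general, apply the alteration and logarithmic machinery only to the side carrying $\sF$, and interpret Theorem \ref{theo: vanishing in log-rigid cohom} as killing the boundary correction for an arbitrary constructible coefficient. Then the base change isomorphism closes the argument as you intended.
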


As in \S\ref{subsec: independence of the frame}, this reduces, via d\'evissage, to the following derived full faithfulness assertion.

\begin{theorem} Let $\fP$ be a smooth formal scheme, $j\from X \to P$ a locally closed subscheme, smooth over $k$, with closure $Y$ in $P$. Suppose $\sF\in F\text{-}\Isoc(X,Y)$, and set $\cM:=\sp_{X!}\sF\in \DCon_F(\fP)$. Then, for any $\cN\in \DCon_F(\fP)$, the map
\[ \bR\mathrm{Hom}_{\sD^\dagger_{\fP\Q}}(\cM,\cN)\to\bR\mathrm{Hom}_{\sD_{\fP_K}}(\sp^!\cM,\sp^!\cN) \]
is an isomorphism in $\bD(K)$. 
\end{theorem}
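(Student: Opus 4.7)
The plan is to perform a sequence of d\'evissage reductions and finally reduce to a comparison that is accessible via the logarithmic machinery of \S\ref{sec: logarithmic}.

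First, since both sides are triangulated functors in $\cN$, I would perform a d\'evissage on $\cN$. Using the localisation triangle and Proposition \ref{prop: overhol complexes generically isocrystals}, I reduce to the case $\cN=\sp_{X'!}\sG$ for some smooth locally closed immersion $i'\from X'\to P$ with closure $Y'$ and some $\sG\in F\text{-}\Isoc(X',Y')$. Proceeding by Noetherian induction on $\dim(X\cap X')$ and exploiting the commutation of $\sp^!$ with $\bR\underline{\Gamma}^\dagger_{Z}$ and $j^\dagger_{U}$ from \S\ref{subsec: consequences}, together with the localisation triangles on both the $\sD^\dagger$-module and constructible sides, I can reduce to comparing $\bR\Hom$ after restricting everything to the open stratum where $X = X'$. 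After localising further on $\fP$, I may assume that the common (smooth) closure $Y = Y'$ in $P$ is irreducible, and that $X$ is the complement of a divisor $D\subset Y$.

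The next step is a geometric simplification. Apply de Jong's alterations to find a projective generically \'etale cover $g\colon Y''\to Y$ such that $Y''$ is smooth and the preimage of $D$ is a strict normal crossings divisor $D''\subset Y''$. Combining Proposition \ref{prop: finite etale adjoints dct exact}, Theorem \ref{theo: finite etale general cons}, and Proposition \ref{prop: special case sp^! and f^*}, the comparison for $(X,Y,\fP)$ becomes a direct summand of the comparison for $(g^{-1}(X),Y'',\fP'')$, where $\fP''$ is a suitable lift. After further localisation, I may assume that $Y''$ lifts to a smooth closed formal subscheme $\fY\subset\fP$ (via a section of a smooth projection) and that $D''$ lifts to a strict relative normal crossings divisor $\fr{D}\subset \fY$. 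Kedlaya's semistable reduction theorem for overconvergent $F$-isocrystals on curves and surfaces, or its higher-dimensional refinements, then guarantees that, possibly after further finite \'etale cover, both $\sF$ and $\sG$ extend to locally free log convergent $F$-isocrystals $\widetilde{\sF}$, $\widetilde{\sG}$ on $\fY^\sharp := (\fY,M(\fr{D}))$.

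In this situation, I compute both sides in terms of the logarithmic specialisation $\sp_*\widetilde{\sF}$ and $\sp_*\widetilde{\sG}$ constructed in \S\ref{sec: logarithmic}. Using that $\sp_{X!}\sF$ differs from the naive log pushforward only by a twist along $\fr{D}$ and the duality of $\sp_{X!}$ versus $\sp_{X+}$, the LHS $\bR\Hom_{\sD^\dagger_{\fP\Q}}$ becomes a log de\thinspace Rham cohomology computation on $\fY^\sharp$ with coefficients in $\widetilde{\sF}^\vee\wotimes\widetilde{\sG}(\fr{D}_K)$, up to boundary terms indexed by the strata $\fr{D}_J$. The RHS becomes the analogous log de\thinspace Rham cohomology on the generic fibre $\fY_K^\sharp$, which is identified with the LHS via Proposition \ref{prop: base change sp^* u_* log de Rham}. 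The boundary contributions on each stratum $\fr{D}_J$ take precisely the form
\[ \bR\Hom_{\sD_{\tube{D_J}^\sharp_\fP}}(\widetilde{\sF}(\fr{D}_K),\widetilde{\sG}|_{\tube{D_J}_\fP}) \]
and an analogous $\sD^\dagger$-module expression, both of which vanish by Theorem \ref{theo: vanishing in log-rigid cohom}.

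The main obstacle will be the bookkeeping in the last paragraph: untangling the relationship between the constructible objects $i_!\sF$, $i'_!\sG$ and the log extensions $\widetilde{\sF}$, $\widetilde{\sG}$ by successively stripping off boundary strata, and matching the resulting boundary Ext-groups on the $\sD^\dagger$-module side with their analytic counterparts so that Theorem \ref{theo: vanishing in log-rigid cohom} can be applied uniformly on both sides. The Frobenius structure is used crucially twice: to ensure the log extensions $\widetilde{\sF},\widetilde{\sG}$ are locally free (Lemma \ref{lemma: F structure locally free}) and to control the residues so that the vanishing theorem applies.
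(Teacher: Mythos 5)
Your proposal correctly identifies the essential ingredients that the paper uses: semistable reduction to a strict normal crossings situation, local freeness of the log extension via the Frobenius structure (Lemma \ref{lemma: F structure locally free}), the log de\thinspace Rham base change of Proposition \ref{prop: base change sp^* u_* log de Rham}, and the vanishing theorem \ref{theo: vanishing in log-rigid cohom}. The overall shape of the argument is the right one, but there are two substantive problems.

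First, the d\'evissage of $\cN$ to the form $\sp_{X'!}\sG$, followed by ``Noetherian induction on $\dim(X\cap X')$'' to reach the case $X=X'$, is both unnecessary and not clearly justified. There is no well-defined inductive step: after applying a localisation triangle, the second argument ceases to be of the special form $\sp_{X'!}\sG$, and it is not evident how the invariant being inducted on decreases. The paper avoids this entirely by keeping $\cN$ general; the only property of $\cN$ that is ever used is that it is dual constructible, so that $\sp^!\cN$ is a constructible isocrystal by Theorem \ref{theo: sp^! cons}, which is exactly the hypothesis the vanishing theorem requires. Reducing $\cN$ buys nothing and creates a gap.

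Second, and more importantly, the technical engine that converts the $\sD^\dagger$-module side into a log de\thinspace Rham computation is the Caro--Tsuzuki change-of-rings isomorphism
\[ \sD^\dagger_{\fP\Q}\otimes^\bL_{\sD^\dagger_{\fP^\sharp\Q}}\cM^\sharp(\fr{D})\isomto\cM \]
from \cite[Theorem 2.2.9]{CT12}, where $\cM^\sharp=\sp_{\fP*}\sF^\sharp[\dim\fP]$. You allude to this only obliquely as a ``twist along $\fr{D}$ and the duality of $\sp_{X!}$ versus $\sp_{X+}$''. But this identity is precisely what lets one replace $\bR\mathrm{Hom}_{\sD^\dagger_{\fP\Q}}(\cM,\cN)$ with $\bR\mathrm{Hom}_{\sD^\dagger_{\fP^\sharp\Q}}(\cM^\sharp(\fr{D}),\cN)$. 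Once there, local projectivity of $\cM^\sharp(\fr{D})$ over $\cO_{\fP\Q}$ allows it to be moved into the second slot as a tensor factor, and the log de\thinspace Rham base change applies directly, with no boundary corrections. The boundary analysis occurs only on the analytic side, via the exact sequence $0\to i_!i^*\sF^\sharp(\fr{D}_K)\to\sF^\sharp(\fr{D}_K)\to j_*\sF\to 0$, and it is these contributions, and only these, that are killed by Theorem \ref{theo: vanishing in log-rigid cohom}. Your picture of matching boundary Ext-groups ``on both sides'' is not how the argument goes, and the omission of the CT12 isomorphism is the crucial gap: without it, the passage from $\bR\mathrm{Hom}_{\sD^\dagger_{\fP\Q}}$ to a log computation is unsupported.
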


\begin{proof} To begin with, the question is local on $\fP$, which I can therefore assume to be affine. As in the proof of Theorem \ref{theo: sp^! cons}, one consequence of this is that every formal scheme appearing in the proof will admit a locally closed embedding into a smooth and proper formal scheme. 

The fact that $\cM$ is supported on $Y$, together with compatibility of $\sp^!$ with the natural morphism
\[ \bR\underline{\Gamma}_{Y}^\dagger \rightarrow \mathrm{id} \]
means that there is a commutative diagram
\[ \xymatrix{  \bR\mathrm{Hom}_{\sD^\dagger_{\fP\Q}}(\cM,\cN) \ar[r] & \bR\mathrm{Hom}_{\sD_{\fP_K}}(\sp^!\cM,\sp^!\cN)  \\ 
&  \bR\mathrm{Hom}_{\sD_{\fP_K}}(\sp^!\cM,\bR\underline{\Gamma}_{Y}^\dagger\sp^!\cN) \ar[u]_-{\cong}
\\ \bR\mathrm{Hom}_{\sD^\dagger_{\fP\Q}}(\cM,\bR\underline{\Gamma}_{Y}^\dagger\cN) \ar[r]\ar[uu]^-{\cong} & \bR\mathrm{Hom}_{\sD_{\fP_K}}(\sp^!\cM, \sp^!\bR\underline{\Gamma}_{Y}^\dagger\cN)\ar[u]_-{\cong} . } \]
Hence, replacing $\cN$ with $\underline{\bR\Gamma}^\dagger\cN$, I can assume that $\cN$ is supported on $Y$.

Now, the semistable reduction theorem for $F$-isocrystals \cite[Theorem 2.4.4]{Ked11} shows that there exists a morphism of pairs $(f,g)\from (\widetilde{X},\widetilde{Y})\rightarrow (X,Y)$ such that:
\begin{itemize}
\item $\widetilde{Y}$ is smooth;
\item $g$ is projective and generically \'etale;
\item $\widetilde{X}=g^{-1}(X)$;
\item $\widetilde{D}:=\widetilde{Y}\setminus \widetilde{X}$ is a strict normal crossings divisor in $\widetilde{Y}$;
\item $f^*\sF\in F\text{-}\Isoc(\widetilde{X},\widetilde{Y})$ extends to a convergent log $F$-isocrystal on the log smooth log scheme $(\widetilde{Y},M(\widetilde{D}))$.
\end{itemize}
Since $g$ is projective, I can extend $(f,g)$ to a morphism of frames
\[ \xymatrix{ \widetilde{X} \ar[r]\ar[d]^f & \widetilde{Y} \ar[r]\ar[d]^g & \widetilde{\fP} \ar[d]^u  \\ X \ar[r] & Y \ar[r] & \fP } \]
where $u$ is smooth and projective. By Noetherian induction on $X$, I am free to replace $X$ with an open subscheme, hence I can assume moreover that $f$ is finite \'etale.

Now, by Proposition \ref{prop: finite etale adjoints dct exact}, $\cN\in \bD^b_{\hol,F}(X,Y,\fP)\subset \bD^b_{\hol,F}(\fP)$ is a direct summand of $f_+f^!\cN$, so I can replace $\cN$ by $f_+f^!\cN$. Then, by Proposition \ref{prop: special case sp^! and f^*}, combined with Proposition \ref{prop: base change sp^* u_* de Rham} and the preceding discussion, there is a commutative diagram
\[ \xymatrix@C=6pt{  \bR\mathrm{Hom}_{\sD^\dagger_{\fP\Q}}(\cM,f_+f^!\cN) \ar[r] & \bR\mathrm{Hom}_{\sD_{\fP_K}}(\sp^!_\fP\cM,\sp^!_\fP f_+f^!\cN)   \ar[r]^-\cong&  \bR\mathrm{Hom}_{\sD_{\fP_K}}(\sp^!_\fP\cM,\bR\tube{f}_{\dR*}\sp^!_{\widetilde{\fP}}f^!\cN) \\ \bR\mathrm{Hom}_{\sD^\dagger_{\widetilde{\fP}\Q}}(f^!\cM,f^!\cN) \ar[r]\ar[u]^-{\cong} & \bR\mathrm{Hom}_{\sD_{\widetilde{\fP}_K}}(\sp^!_{\widetilde{\fP}}f^!\cM,\sp^!_{\widetilde{\fP}}f^!\cN) \ar[r]^-\cong& \bR\mathrm{Hom}_{\sD_{\widetilde{\fP}_K}}(f^*\sp^!_{\fP}\cM,\sp^!_{\widetilde{\fP}}f^!\cN) \ar[u]_-{\cong}. } \]
I can therefore replace $(X,Y,\fP)$ by $(\widetilde{X},\widetilde{Y},\widetilde{\fP})$, in other words, I can assume that $Y$ is smooth, that $D:=Y\setminus X$ is a strict normal crossings divisor, and that $\sF$ extends to a convergent log$F$- isocrystal $\sF^{\sharp}$ on the log scheme $Y^\sharp:=(Y,M(D))$. 

Further localising on $\fP$, I can assume that $Y\hookrightarrow P$ lifts to a closed immersion $i\from \fY\to\fP$ of smooth formal schemes. Set $\fr{Q}=\fr{P}\times_\cV\fY$, and let $\pi \from \fr{Q}\to\fr{P}$ denote the projection, thus $\cM\cong \pi_+\underline{\bR\Gamma}^\dagger_Y\pi^!\cM$ and similarly for $\cN$. Since $\sp^!$ is compatible with de\thinspace Rham pushforward, by Proposition \ref{prop: base change sp^* u_* de Rham}, there is a commutative diagram
\[ \xymatrix{  \bR\mathrm{Hom}_{\sD^\dagger_{\fP\Q}}(\cM,\cN) \ar[r] & \bR\mathrm{Hom}_{\sD_{\fP_K}}(\sp^!_\fP\cM,\sp^!_\fP \cN)    \\ \bR\mathrm{Hom}_{\sD^\dagger_{\fQ\Q}}(\underline{\bR\Gamma}^\dagger_Y\pi^!\cM,\underline{\bR\Gamma}^\dagger_Y\pi^!\cN) \ar[r]\ar[u]^-{\cong} & \bR\mathrm{Hom}_{\sD_{\fQ_K}}(\sp^!_{\fQ}\underline{\bR\Gamma}^\dagger_Y\pi^!\cM,\sp^!_{\fQ}\underline{\bR\Gamma}^\dagger_Y\pi^!\cN)  \ar[u]_-{\cong} } \]
where the vertical maps are isomorphisms by Proposition \ref{prop: indep Ddag} and Theorem \ref{theo: disoc indep} respectively. Hence I can replace $\fr{P}$ by $\fr{Q}$, and by the same argument I can then replace $\fr{Q}$ by $\fr{Y}$. In other words, I can assume that $Y=P$.

With one final localisation on $\fP$ if necessary, I can lift $D\subset P$ to a strict normal crossings divisor $\fr{D}\subset \fP$. I now let $\fr{P}^\sharp:=(\fr{P},M(\fr{D}))$ denote the corresponding formal log-scheme, and set $d=\dim \fP$. Note that in this case $\sF$ is a locally free $\cO_{\tube{X}_\fP}$-module with connection, and $\cM=\sp_{\fP*}j_*\sF[d]$, where $j\from X\to P$ is the given open immersion. The extension $\sF^\sharp$ of $\sF$ is a coherent $\cO_{\fP_K}$-module with integrable log connection, and I set $\cM^{\sharp}:=\sp_{\fP*}\sF^{\sharp}[d]$ to be the shift of the $\cO_{\fP\Q}$-coherent $\sD^{\dagger}_{\fP^\sharp\Q}$-module associated to $\sF^{\sharp}$. Thanks to Lemma \ref{lemma: F structure locally free}, $\sF^{\sharp}$ is locally free. Hence $\cM^{\sharp}[-d]$ is a locally projective $\cO_{\fP\Q}$-module.

The natural morphism
\[ \sF^{\sharp}(\fr{D}_K)\rightarrow j_*\sF \]
of $\sD_{\fP^{\sharp}_K}$-modules induces a morphism
\[ \cM^{\sharp}(\fr{D})\rightarrow \cM \]
of $\sD^{\dagger}_{\fP^\sharp\Q}$-modules, which in turn induces an \emph{isomorphism}
\[\sD^{\dagger}_{\fP\Q} \otimes^{\bL}_{\sD^\dagger_{\fP^{\sharp}\Q}} \cM^{\sharp}(\fr{D})\rightarrow \cM \]
of $\sD^\dagger_{\fP\Q}$-modules by \cite[Theorem 2.2.9]{CT12}. This gives rise to the following commutative diagram
\[ \xymatrix@C=0.9em{  \bR\mathrm{Hom}_{\sD^\dagger_{\fP\Q}}(\cM,\cN) \ar[r]\ar[d]_{\cong} & \bR\mathrm{Hom}_{\sD_{\fP_K}}(\sp^!_\fP\cM,\sp^!_\fP\cN) \ar@{=}[r] & \bR\mathrm{Hom}_{\sD_{\fP_K}}(j_*\sF,\sp^!_\fP\cN)\ar[d] \\ \bR\mathrm{Hom}_{\sD^{\dagger}_{\fP^{\sharp}\Q}}(\cM^{\sharp}(\fr{D}),\cN) \ar[r] & \bR\mathrm{Hom}_{\sD_{\fP^{\sharp}_K}}(\sp^!_\fP\cM^{\sharp}(\fr{D}),\sp^!_\fP\cN) \ar@{=}[r] & \bR\mathrm{Hom}_{\sD_{\fP^{\sharp}_K}}(\sF^{\sharp}(\fr{D}_K),\sp^!_\fP\cN) . } \]
It therefore suffices to show that the two maps
\begin{align*}
\bR\mathrm{Hom}_{\sD_{\fP_K}}(j_*\sF,\sp^!_\fP\cN) &\to \bR\mathrm{Hom}_{\sD_{\fP^{\sharp}_K}}(\sF^{\sharp}(\fr{D}_K),\sp^!_\fP\cN) \\
\bR\mathrm{Hom}_{\sD^{\dagger}_{\fP^{\sharp}\Q}}(\cM^{\sharp}(\fr{D}),\cN) &\to \bR\mathrm{Hom}_{\sD_{\fP^\sharp_K}}(\sp^!_\fP\cM^{\sharp}(\fr{D}),\sp^!_\fP\cN)
\end{align*}
are isomorphisms. The second is a relatively straightforward consequence of the fact that the shift $\cM^{\sharp}(\fr{D})[-d]$ is a locally projective $\cO_{\fP\Q}$-module. Indeed, letting $\cM^*$ denote the $\cO_{\fP\Q}$-dual of $\cM^{\sharp}(\fr{D})$, then $\cM^*[d]$ is a locally projective $\cO_{\fP\Q}$-module, and there is a commutative diagram
\[ \xymatrix{  \bR\mathrm{Hom}_{\sD^{\dagger}_{\fP^{\sharp}\Q}}(\cM^{\sharp}(\fr{D}),\cN) \ar[r]\ar[d]_{\cong} & \bR\mathrm{Hom}_{\sD_{\fP^{\sharp}_K}}(\sp_\fP^!\cM^{\sharp}(\fr{D}),\sp^!_\fP\cN)\ar[d]^{\cong} \\ \bR\mathrm{Hom}_{\sD^{\dagger}_{\fP^{\sharp}\Q}}(\cO_{\fP\Q},\cN \otimes_{\cO_{\fP\Q}} \cM^*) \ar[r] & \bR\mathrm{Hom}_{\sD_{\fP^{\sharp}_K}}(\cO_{\fP_K},\sp^!_\fP\cN \otimes_{\cO_{\fP_K}} \sp_\fP^! \cM^* ) . } \]
The bottom arrow is an isomorphism by Proposition \ref{prop: base change sp^* u_* log de Rham}, and therefore the top arrow is also an isomorphism.

Finally, to see that 
\[ \bR\mathrm{Hom}_{\sD_{\fP_K}}(j_*\sF,\sp^!_\fP\cN) \to \bR\mathrm{Hom}_{\sD_{\fP^{\sharp}_K}}(\sF^{\sharp}(\fr{D}_K),\sp^!_\fP\cN) \]
is an isomorphism, I will (after some set-up) apply Theorem \ref{theo: vanishing in log-rigid cohom}. First, note that the support $\tube{X}_\fP$ of $j_*\sF$ is disjoint from the divisor $\fr{D}_K$. It therefore follows that the `restriction' map
\[ \bR\mathrm{Hom}_{\sD_{\fP_K}}(j_*\sF,\sp^!_\fP\cN) \to \bR\mathrm{Hom}_{\sD_{\fP^{\sharp}_K}}(j_*\sF,\sp^!_\fP\cN) \]
is an isomorphism. Now write $i:D\rightarrow P$ for the natural inclusion, thus the localisation exact sequence
\[ 0 \rightarrow i_!i^*\sF^{\sharp}(\fr{D}_K) \rightarrow \sF^{\sharp}(\fr{D}_K)\rightarrow j_*\sF\rightarrow 0 \]
reduces to proving that
\[ \bR\mathrm{Hom}_{\sD_{\fP^{\sharp}_K}}(i_!i^*\sF^{\sharp}(\fr{D}_K),\sp^!_\fP\cN)= \bR\mathrm{Hom}_{\sD_{\tube{D}^{\sharp}_\fP}}(\sF^{\sharp}(\fr{D}_K),\sp^!_\fP\cN)=0, \]
where I have written $\tube{D}^\sharp_\fP$ for $\tube{D}_\fP$ equipped with the log structure given by pulling back the given log structure on $\fP_K^\sharp$. Let $\fr{D}_1,\ldots,\fr{D}_n$ denote the irreducible components of $\fr{D}$, with special fibres $D_1,\ldots,D_n$. For each $J\subset \{1,\ldots,n\}$ set
\begin{align*}
\fr{D}_J=\cap_{j\in J} \fr{D}_j,\;\;&\;\;D_J=\fr{D}_{J,k}=\cap_{j\in J} D_j \\
\fr{D}^{(J)}=\cup_{j\notin J} \fr{D}_j,\;\;&\;\;D^{(J)}=\fr{D}_k^{(J)}=\cup_{j\notin J} D_j.
\end{align*}
Using the open cover of $\tube{D}_{\fP}$ given by the various tubes $\tube{D_j}_{\fP}$, I can therefore reduce to showing that
\[ \bR\mathrm{Hom}_{\sD_{\tube{D_J}^{\sharp}_\fP}}(\sF^{\sharp}(\fr{D}_K),\sp^!_\fP\cN)=0 \]
for each $J\subset \{1,\ldots,n\}$. Theorem \ref{theo: sp^! cons} says that $\sp^!_\fP\cN$ is a constructible isocrystal on $\fP$, and $\sF^\sharp$ is a log convergent $F$-isocrystal on $P^\sharp$ by assumption. Hence the required vanishing is precisely the content of Theorem \ref{theo: vanishing in log-rigid cohom}.
\end{proof}

Combining this with Proposition \ref{prop: base change sp^* u_* de Rham}, I obtain the following.

\begin{corollary} \label{cor: smooth and proper preserves cons} Let $u\from \fP\to \fQ$ be a smooth and proper morphism of smooth formal schemes. Then the functor $\bR u_{\dR*}$ maps $\bD^b_{\cons,F}(\fP)$ into $\bD^b_{\cons,F}(\fQ)$. 
\end{corollary}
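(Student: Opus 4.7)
The plan is to deduce this as a clean consequence of Theorem \ref{theo: riemann-hilbert} combined with Proposition \ref{prop: base change sp^* u_* de Rham} and Caro's theorem that $u_+$ preserves overholonomicity for proper $u$. The key insight is that once we can rewrite $\bR u_{\dR*}$ (up to shift) as the $\sp^!$-transform of the $\sD^\dagger$-module pushforward $u_+$, the result follows from the fact that $\bD^b_{\hol,F}$ is preserved by $u_+$ for proper $u$.

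More concretely, given $\sF \in \bD^b_{\cons,F}(\fP)$, I would first appeal to Theorem \ref{theo: riemann-hilbert}, which furnishes an object $\cM \in \bD^b_{\hol,F}(\fP)$, unique up to canonical isomorphism, with $\sp^!_\fP \cM \cong \sF$. Since $u$ is proper, Caro's theorem (\cite[Th\'eor\`eme 3.9]{Car09b}, as recalled in \S\ref{sec: D-modules}) implies that $u_+ \cM \in \bD^b_{\hol}(\fQ)$. Moreover, the Frobenius-type condition is preserved: a Frobenius structure on $\cM$ induces one on $u_+ \cM$ via the Frobenius compatibility of $u_+$, and $u_+$ being exact for the natural t-structure (it is the derived functor of a well-defined functor on bounded complexes with coherent cohomology) commutes with taking iterated extensions in the appropriate sense, so $u_+ \cM \in \bD^b_{\hol,F}(\fQ)$.

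Next, I would invoke Proposition \ref{prop: base change sp^* u_* de Rham}, which provides a canonical isomorphism
\[ \sp^!_\fQ u_+ \cM \isomto \bR u_{\dR*} \sp^!_\fP \cM\, [2d] = \bR u_{\dR*} \sF\, [2d], \]
where $d$ is the relative dimension of $u$. By Theorem \ref{theo: sp^! cons} (or the enhanced version in Theorem \ref{theo: riemann-hilbert}), the left hand side lies in $\bD^b_{\cons,F}(\fQ)$. Since $\bD^b_{\cons,F}(\fQ)$ is triangulated, hence stable under shift, we conclude that $\bR u_{\dR*} \sF \in \bD^b_{\cons,F}(\fQ)$.

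There is essentially no hard step here: all the real work has been done by the preceding theorems. The only minor point to verify is that $u_+$ preserves the Frobenius-type condition, but this is immediate from the functoriality of $u_+$ under Frobenius pullback together with the fact that $u_+$ preserves exact triangles (so sends iterated extensions to iterated extensions).
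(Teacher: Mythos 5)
Your proposal is correct and matches the paper's own proof, which simply cites Theorem \ref{theo: riemann-hilbert} together with Proposition \ref{prop: base change sp^* u_* de Rham} (the paper's entire justification is the phrase ``Combining this with Proposition \ref{prop: base change sp^* u_* de Rham}\ldots''). You have spelled out the steps in more detail, including the reduction $\sF \cong \sp_\fP^! \cM$ via essential surjectivity, the application of $u_+$, and the return trip via $\sp_\fQ^!$.

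One small slip worth fixing: your parenthetical justification that $u_+$ preserves the Frobenius-type condition is incorrect, since $u_+$ is \emph{not} t-exact for the natural t-structure on $\bD^b_{\hol,F}$, even when $u$ is proper --- its cohomological amplitude is governed by the relative dimension of $u$. The correct (brief) argument is rather that $u_+$ is a triangulated functor that sends complexes admitting a Frobenius structure to complexes admitting a Frobenius structure, and $\bD^b_{\hol,F}(\fP)$ is generated as a triangulated subcategory of $\bD^b_{\hol}(\fP)$ by such objects; this preservation is part of the standing framework the paper assumes from \cite{CT12} and \cite{AC18a}.
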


\section{Cohomological operations for constructible isocrystals} \label{sec: cohomological operations}

I can now show compatibility of $\sp^!$ with the various cohomological functors for $\sD^\dagger$-modules and constructible isocrystals. This will then enable me to deduce an analogue of Theorem \ref{theo: riemann-hilbert} for pairs and for varieties.

\subsection{The case of formal schemes}

I have already proved the compatibility of $\sp^!$ with de\thinspace Rham pushfoward along smooth morphisms $u\from \fP\to\fQ$ of smooth formal schemes in Proposition \ref{prop: base change sp^* u_* de Rham}. Similarly, if $\fP$ is a smooth formal scheme, and $i\from X\to P$ is a locally closed immersion, I have already shown in \S\ref{subsec: consequences} that the diagram
\[ \xymatrix{ \bD^b_{\hol,F}(\fP) \ar[r]^-{\sp_{\fP}^!} \ar[d]_{\underline{\bR\Gamma}^\dagger_X} & \bD^b_{\cons,F}(\fP) \ar[d]^{\underline{\bR\Gamma}^\dagger_X=i_!i^{-1}}\\
\bD^b_{\hol,F}(\fP) \ar[r]^-{\sp_\fP^!} & \bD^b_{\cons,F}(\fP). }  \]
commutes up to natural isomorphism, and that this is moreover compatible with the natural `adjunctions' when $X$ is either open or closed in $P$.

There are similar results for pullbacks and tensor products. For the case of pullbacks, suppose that $u\from \fP\to \fQ$ is a (not necessarily smooth) morphism of smooth formal schemes, and that $\cM\in \bD^b_{\hol,F}(\fQ)$. Then I can construct a natural (in $\cM$) morphism
\[ \psi_u\from u^*\sp_{\fQ}^!\cM\to \sp_{\fP}^!u^!\cM \]
as follows. I first use the usual trick of taking the graph two divide into two separate cases: the first when $u$ is smooth, and the second when $u$ is a section of a smooth morphism $\pi\from \fQ\to \fP$. 

In the first case, when $u$ is smooth, of relative dimension $d$, say, giving a morphism  
\[ u^*\sp_{\fQ}^!\cM\to \sp_{\fP}^!u^!\cM \]
is equivalent to giving a morphism
\[ \sp_{\fQ}^!\cM\to \bR u_{\dR*}\sp_{\fP}^!u^!\cM = \sp_{\fQ}^!u_+u^!\cM[-2d].  \]
Since I am not assuming that $u$ is proper, the functor $u_+$ appearing on the RHS should be viewed as taking values in $\LD^b_{\Q,\qc}(\widehat{\sD}^{(\bullet)}_{\fQ})$
It therefore suffices to construct a morphism
\[ \cM \to u_+u^!\cM[-2d] \]
in $\LD^b_{\Q,\qc}(\widehat{\sD}^{(\bullet)}_{\fQ})$. Via the projection formula (which in the generality required here follows from \cite[Proposition 1.2.27]{Car04} as in \cite[\S2.1.4]{Car04}) it suffices to consider the case $\cM=\cO_{\fQ\Q}$. In this case $u^!\cM=\cO_{\fP\Q}[d]$ and the morphism I seek is one of the form
\[ \cO_{\fQ\Q}\to u_+\cO_{\fP\Q}[-d]. \]
After actually taking the colimit in the level $m$, and tensoring with $\Q$ to land inside $\bD(\sD^\dagger_{\fQ\Q})$, this will simply the morphism
\[ \cO_{\fQ\Q} \to \bR u_{\dR*}\cO_{\fP\Q} \]
into the zeroeth relative de\thinspace Rham cohomology group of $\cO_{\fP\Q}$. To see that this morphism lifts to $\LD^b_{\Q,\qc}(\widehat{\sD}^{(\bullet)}_{\fQ})$ simply amounts to making the same construction levelwise in $m$. 

In the second case, suppose that $u$ is a section of a smooth morphism $\pi\from \fQ\to \fP$, of relative dimension $d$, say. Then I can calculate
\begin{align*} \sp_{\fP}^!u^!\cM &=\sp_{\fP}^!\pi_+u_+u^!\cM  \\
&= \sp_\fP^!\pi_+\underline{\bR\Gamma}^\dagger_P \cM \\
&= \bR\pi_{\dR*}\sp_\fQ^!\underline{\bR\Gamma}^\dagger_P \cM [2d] \\
&= \bR\pi_{\dR*}\underline{\bR\Gamma}^\dagger_P \sp_\fQ^! \cM [2d] 
\end{align*}
by compatibility of $\sp^!$ with both de\thinspace Rham pushfowards and the functor $\underline{\bR\Gamma}^\dagger_P$ of sections with overconvergent support. It therefore suffices to construct a morphism 
\[ u^* \to \bR\pi_{\dR*}\underline{\bR\Gamma}^\dagger_P[2d]\]
of functors from $\bD^b_{\cons,F}(\fQ)$ to $\bD^b_{\cons,F}(\fP)$. Let
\[ \tube{\rm id}_u\from \fP_K \to \tube{P}_\fQ,\;\;\;\;\tube{u}_{\rm id}\from \tube{P}_\fQ\to\fQ_K \]
denote the natural inclusions, and
\[ \tube{\rm id}_\pi\from \tube{P}_\fQ\to \fP_K \]
the restriction of $\pi$. Thus $\underline{\bR\Gamma}^\dagger_P=\tube{u}_{\rm id!}\tube{u}^*_{\rm id}$.

\begin{lemma} The `forget supports' map $\bR\tube{\rm id}_{\pi!} \to \bR\pi_* \circ \tube{u}_{\rm id!}$ is an isomorphism. 
\end{lemma}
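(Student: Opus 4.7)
The map in question factors the forget-supports morphism $\bR\tube{\mathrm{id}}_{\pi!}\to \bR\tube{\mathrm{id}}_{\pi*} = \bR\pi_*\tube{u}_{\mathrm{id}*}$ through the canonical inclusion $\bR\pi_*\tube{u}_{\mathrm{id}!}\hookrightarrow \bR\pi_*\tube{u}_{\mathrm{id}*}$; this uniquely characterises it. To show it is an isomorphism, the plan is to reduce to a local model in which both sides admit a common description as a filtered colimit.

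The claim is local on $\fP$. Since $u\from \fP\to\fQ$ is a section of the smooth morphism $\pi$, I will choose local coordinates $z_1,\ldots,z_d\in \cO_\fQ$ vanishing on $u(P)$, producing an \'etale morphism $\fQ\to \widehat{\A}^d_\fP$ of which $u$ is the zero section. Since a section of an \'etale morphism is an open immersion, after Zariski-localising $\fQ$ around $u(P)$ I may assume that $\fQ\subset \widehat{\A}^d_\fP$ is an open formal subscheme containing the zero section. Then $V := \tube{P}_\fQ$ is canonically identified with the open polydisc $\D^d_{\fP_K}(0;1^-)$, sitting inside $\fQ_K\subset \A^{d,\an}_{\fP_K}$ as an open subspace.

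Now write $V = \bigcup_{\rho<1}V_\rho$ with $V_\rho := \D^d_{\fP_K}(0;\rho)$ the closed polydisc of radius $\rho$, and let $f_\rho\from V_\rho\to \fP_K$ be the (proper) restriction of $\tube{\mathrm{id}}_\pi$. By the construction of partially proper pushforward in \cite[\S5]{AL20}, I have
\[ \bR\tube{\mathrm{id}}_{\pi!}\sF \;\cong\; \colim{\rho<1}\bR f_{\rho*}(\sF|_{V_\rho}). \]
On the other hand, each $V_\rho$ is closed in $\A^{d,\an}_{\fP_K}$ and hence in the open subspace $\fQ_K$, giving closed inclusions $i_{V_\rho}\from V_\rho\to \fQ_K$. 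A stalk-wise check gives $\tube{u}_{\mathrm{id}!}\sF \cong \colim{\rho<1}i_{V_\rho,*}(\sF|_{V_\rho})$ as sheaves on $\fQ_K$. Since $\bR\pi_*$ commutes with filtered colimits on the coherent spaces $\fP_K$ and $\fQ_K$,
\[ \bR\pi_*\tube{u}_{\mathrm{id}!}\sF \;\cong\; \colim{\rho<1}\bR\pi_*i_{V_\rho,*}(\sF|_{V_\rho}) \;=\; \colim{\rho<1}\bR f_{\rho*}(\sF|_{V_\rho}), \]
matching the previous colimit via the natural forget-supports comparison map.

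The main obstacle I anticipate is justifying the Zariski-local reduction on $\fQ$, since a priori $\bR\pi_*\tube{u}_{\mathrm{id}!}\sF$ depends on the full ambient $\fQ_K$ and not merely on a Zariski neighbourhood of $u(P)$. This will be handled by testing the isomorphism stalk-wise at points $y\in \fP_K$: since $\tube{u}_{\mathrm{id}!}\sF$ is supported on $V\subset \fQ_K$, and $V = \tube{P}_{\fQ'}$ for any Zariski-open $\fQ'\supset u(P)$ of $\fQ$, both sides of the map can be localised around $u(y)$ via excision, reducing to the model case treated above.
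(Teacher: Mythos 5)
Your approach is genuinely different from the paper's: you attempt a local reduction to the polydisc model $\fQ\subset\widehat{\A}^d_\fP$ and then compare both sides as filtered colimits over closed polydiscs, whereas the paper works directly with injective sheaves, identifies $\tube{\rm id}_{\pi!}\sI$ and $\pi_*\tube{u}_{\rm id!}\sI$ as sections with proper support over $\fP_K$ and over $\fQ_K$ respectively, observes that these proper-support conditions coincide, and then invokes the flasqueness of $\tube{u}_{\rm id!}\sI$ (from \cite{AL20}) to promote the underived identity to a derived one. Your route runs into genuine difficulties, the most serious of which is the assertion that ``each $V_\rho$ is closed in $\A^{d,\an}_{\fP_K}$''. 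In Huber's adic spaces the closed polydisc $\D^d_K(0;\rho)$ is a quasi-compact rational \emph{open} subset and is \emph{not} closed: its closure contains rank-2 specialisations such as $\eta_{\rho^+}$, which lie outside it. Consequently $i_{V_\rho}$ is an open (not closed) immersion, $i_{V_\rho,*}$ is only left exact, and the step $\bR\pi_*i_{V_\rho,*}(\sF|_{V_\rho})=\bR f_{\rho*}(\sF|_{V_\rho})$ is unjustified: one has $\bR\pi_*\circ\bR i_{V_\rho,*}=\bR f_{\rho*}$, which is not the same thing. The colimit you ultimately want is $\colim_\rho\bR i_{V_\rho,*}(\sF|_{V_\rho})\cong\tube{u}_{\rm id!}\sF$ (a claim that does hold at the level of stalks, since the specialisation-closure of $V_\rho$ sits inside $V_{\rho'}$ for $\rho'>\rho$), but your proof as written conflates $i_{V_\rho,*}$ with $\bR i_{V_\rho,*}$. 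For the same reason the ``construction'' formula $\bR\tube{\rm id}_{\pi!}\sF\cong\colim_\rho\bR f_{\rho*}(\sF|_{V_\rho})$ is not what \cite[\S5]{AL20} gives you and needs a separate argument.

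There are two further issues. First, the reduction to the local model misfires: $u$ is a section of $\pi$, not of the \'etale morphism $\fQ\to\widehat{\A}^d_\fP$, so ``a section of an \'etale morphism is an open immersion'' does not apply, and an \'etale morphism is not Zariski-locally an open immersion. What the weak fibration theorem (Theorem \ref{theo: strong fibration theorem}(1)) provides is an isomorphism of germs $\tube{P}_\fQ\cong\D^d_{\fP_K}(0;1^-)$ locally on $\fP$, which controls the tube but not the ambient $\fQ_K$ on which $\bR\pi_*\tube{u}_{\rm id!}\sF$ a priori depends. Second, the ``excision'' paragraph is too thin to carry the weight you place on it: $\bR\pi_*$ does not commute with Zariski restriction on $\fQ_K$, and the point at issue is whether the closure of $\tube{P}_\fQ$ in $\fQ_K$ lands inside $\fQ'_K$ (it need not, a priori, since the tube is open but not quasi-compact, so its closure is larger than its set of specialisations). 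What actually rescues the reduction is writing $\tube{u}_{\rm id!}\sF$ as a filtered colimit of extensions by zero from the closed tubes $[P]_{\fQ\eta}$, $\eta<1$, which \emph{are} quasi-compact and whose closures stay inside $\tube{P}_\fQ\subset\fQ'_K$; this is exactly the structural input that the paper's argument exploits in a cleaner way by working with sections with proper support directly.
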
 

\begin{proof} Explicitly, if $\sI$ is an injective sheaf on $\tube{P}_\fQ$, then $\tube{\rm id}_{\pi!}\sI$ consists of sections of $\sI$ with compact support over $\fP_K$, and $\pi_* \circ \tube{u}_{\rm id!}\sI$ consists of sections with compact support over $\fQ_K$.  To show that $\tube{\rm id}_{\pi!}\sI=\pi_* \circ \tube{u}_{\rm id!}\sI$ therefore amounts to showing that if $V\subset \fP_K$ is open, and $S\subset \tube{\rm id}_{\pi}^{-1}(V)$ is a closed subset, then $S\to V$ is proper if and only if $S\to \pi^{-1}(V)$ is proper. Since $S\to V$ is always partially proper, and $\pi^{-1}(V)\to V$ is quasi-compact, this is clear. 

To complete the proof, then, I need to show that $\tube{u}_{\rm id!}\sI$ is a flasque sheaf on $\fQ_K$. But since $\tube{u}_{\rm id!}$ is partially proper, this was demonstrated in the proof of \cite[Corollary 3.4.7]{AL20} (note that the additional hypothesis there that the second morphism $g$ is partially proper is not used to show that $f_!\sI$ is flasque).
\end{proof}

I am therefore looking to construct a morphism
\[ u^* \to \bR\tube{\rm id}_{\pi\dR!}\tube{u}^*_{\rm id}[2d], \]
and in fact an isomorphism between these two functors is provided by Remark \ref{rem: v^* quasi-inverse to u^*}. This therefore gives rise to an \emph{isomorphism}
\[ u^*\sp_{\fQ}^!\cM\isomto \sp_{\fP}^!u^!\cM, \]
which is natural in $\cM$.
%
%
%Hence compatibility of $\sp^!$ with de\thinspace Rham pushfoward and with the functor $\underline{\bR\Gamma}^\dagger_P$ gives rise to an isomorphism
%\[  \]
%as required. 
%. Hence giving a morphism
%\[ u^*\sp_{\fQ}^!\cM\to \sp_{\fP}^!u^!\cM \]
%is equivalent to giving a morphism
%\[ u^*\sp_{\fQ}^!u_+u^!\cM\to \sp_{\fP}^!\pi_+u_+u^!\cM = \sp_{\fP}^!\pi_+u_+u^!\cM= \bR\pi_{\dR*}\sp_{\fQ}^!u_+u^!\cM[2d]. \]
%Setting $\sF= \sp_{\fQ}^!u_+u^!\cM \in \bD^b_{\cons,F}(\fQ)$, then $\sF$ is supported on $\tube{P}_{\fQ}$, and we need to construct a morphism
%\[ u^*\sF \to \bR\pi_{\dR*}\sF[2d]. \]
%Consider the commutative triangle
%\[ \xymatrix{  \tube{P}_\fQ \ar[dr]_-{\tube{\rm id}_\pi} \ar[r]^-{\tube{u}_\fQ} & \fQ_K \ar[d]^-{\pi}  \\ & \fP_K . }\]
%Then by Remark \ref{rem: v^* quasi-inverse to u^*} there is a natural (in $\sF$) isomorphism
%\[ u^*\sF \isomto \bR\!\tube{\rm id}_{\pi\dR!}\tube{u}_\fQ^*\sF[2d]. \]
%of constructible isocrystals on $\fP$. Since $\sF$ is supported on $\tube{P}_\fQ$, a direct calculation from the definitions shows that the `forget supports' map
%\[ \bR\!\tube{\rm id}_{\pi\dR!}\tube{u}_\fQ^*\sF \to \bR\!\tube{\rm id}_{\pi\dR*}\tube{u}_\fQ^*\sF =  \bR\pi_{\dR*}\bR\tube{u}_{\fQ*}\tube{u}_\fQ^*\sF \]
%factors through an isomorphism
%\[ \bR\!\tube{\rm id}_{\pi\dR!}\tube{u}_\fQ^*\sF \isomto \bR\pi_{\dR*}\sF = \bR\pi_{\dR*}\tube{u}_{\fQ!}\tube{u}_\fQ^*\sF, \]
%thus giving the required map
%\begin{equation} \label{eqn: comm f^* sp closed}
%u^*\sF \to \bR\pi_{\dR*}\sF[2d],
%\end{equation} 
%which is in fact an isomorphism.

\begin{proposition} \label{prop: u^!, u^* and sp^!} Let $u\from \fP\to \fQ$ be a morphism of smooth formal schemes, and $\cM\in \bD^b_{\hol,F}(\fQ)$. Then the morphism
\[ \psi_u\from u^*\sp_{\fQ}^!\cM\to \sp_{\fP}^!u^!\cM \]
is an isomorphism in $\bD^b_{\hol,F}(\fP)$.
\end{proposition}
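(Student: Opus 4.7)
My plan proceeds in two main steps. By construction, $\psi_u$ is defined via the graph factorisation $u = \pi_\fQ \circ \Gamma_u$, where $\Gamma_u \colon \fP \to \fP \times_\cV \fQ$ is a section of the first projection and $\pi_\fQ \colon \fP \times_\cV \fQ \to \fQ$ is smooth. Since the section case already produces $\psi_{\Gamma_u}$ as an isomorphism, the proposition reduces to proving the claim whenever $u$ is smooth, of relative dimension $d$, say. Under this reduction, both $u^*\sp_\fQ^!$ and $\sp_\fP^! u^!$ are triangulated functors $\bD^b_{\hol,F}(\fQ) \to \bD^b_{\cons,F}(\fP)$ (using Proposition \ref{prop: u^! dct exact formal schemes} and Theorem \ref{theo: sp^! cons}), and $\psi_u$ is natural in $\cM$. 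Applying Proposition \ref{prop: overhol complexes generically isocrystals}, together with the compatibility of $\sp^!$ with $\bR\underline{\Gamma}^\dagger$ from \S\ref{subsec: consequences}, I may assume by d\'evissage that $\cM = \sp_{X!}\sF$ for a smooth locally closed subscheme $i \colon X \hookrightarrow Q$ with closure $Y$ and a locally free $F$-isocrystal $\sF \in \Isoc_F(X,Y)$.

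For such $\cM$, let $X' := u^{-1}(X)$, which is smooth locally closed in $P$, with inclusion $i' \colon X' \hookrightarrow P$, and write $f \colon \tube{X'}_\fP \to \tube{X}_\fQ$ for the induced morphism of tubes. Corollary \ref{cor: sp^! sp_! locally free isocrystals} gives $\sp_\fQ^!\cM \cong i_!\sF$, so Lemma \ref{lemma: u^*i_!} produces $u^*\sp_\fQ^!\cM \cong i'_!(f^*\sF)$. On the other side, the compatibility of $\widetilde{\sp}_+$ with $u^!$ recalled in \S\ref{sec: sp+} yields $u^!\cM \cong \sp_{X'!}(f^*\sF)$, and a second application of Corollary \ref{cor: sp^! sp_! locally free isocrystals} gives $\sp_\fP^! u^!\cM \cong i'_!(f^*\sF)$. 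Hence both sides of $\psi_u(\cM)$ are canonically identified with $i'_!(f^*\sF)$.

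The main obstacle is then to verify that $\psi_u(\cM)$ realises this canonical identification: this amounts to chasing the adjunction unit $\cM \to u_+ u^!\cM[-2d]$ (from which $\psi_u$ is built in the smooth case) through Proposition \ref{prop: base change sp^* u_* de Rham}, the explicit description of $\sp^!$ on $\sp_{X!}\sF$ furnished by Lemma \ref{lemma: sp_* and sp^! for locally free isocrystals}, and the trace isomorphism of Corollary \ref{cor: trace isom from strong fibration}. This coherence verification is essentially a long but routine diagram chase, which I expect to localise further on $\fP$ and $\fQ$ to put $u$ and $X, X'$ into a morphism of frames before carrying out.
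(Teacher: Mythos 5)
Your route is a genuine departure from the paper's, and it has a real gap. The paper's argument is very short: since both sides of $\psi_u$ lie in $\bD^b_{\cons,F}(\fP)$, which by Theorem \ref{theo: riemann-hilbert} is equivalent to $\bD^b_{\hol,F}(\fP)$, one can invoke the conservativity of extraordinary pullback along closed points (Proposition \ref{prop: i^! conservative}). Pulling back along a $\cV'$-valued point $\spf{\cV'}\to\fP$ turns $u$ into a $\cV'$-point of $\fQ$, which --- locally on $\fQ$, after an unramified base change --- is a section of a smooth morphism; and in the section case $\psi$ was \emph{constructed} as an isomorphism, via Remark \ref{rem: v^* quasi-inverse to u^*}. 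There is nothing left to check.

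Your reduction via graph factorisation and d\'evissage to $\cM=\sp_{X!}\sF$ with $X$ smooth is fine, and your identification of both $u^*\sp_\fQ^!\cM$ and $\sp_\fP^!u^!\cM$ with $i'_!(f^*\sF)$ is correct. But this only shows the two sides are \emph{isomorphic objects}, which is strictly weaker than what you need: you must show the specific morphism $\psi_u(\cM)$ is an isomorphism. The step that closes this gap --- verifying that $\psi_u(\cM)$ realises your abstract identification --- is exactly what you defer as a ``long but routine diagram chase'', and there is no evidence that it is routine. The isomorphism $\sp^!\sp_{X!}\sF\cong i_!\sF$ of Corollary \ref{cor: sp^! sp_! locally free isocrystals} is itself produced in the proof of Theorem \ref{theo: sp^! cons} by a chain of reductions through Lemma \ref{lemma: reduce sp^! cons finite etale}, Theorem \ref{theo: disoc indep}, and Lemma \ref{lemma: sp_* and sp^! for locally free isocrystals}, and feeding the adjunction unit, the projection formula, and Proposition \ref{prop: base change sp^* u_* de Rham} through that chain is substantial bookkeeping --- precisely what the paper's pointwise argument is designed to sidestep. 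As written, your proposal establishes $u^*\sp_\fQ^!\cM\cong\sp_\fP^!u^!\cM$, but leaves the actual claim unproved.
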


\begin{proof}
To show that $\psi_u$ is an isomorphism, I can check that it is so after taking the pullback along any closed immersion $\spf{\cV'}\to \fP$ where $\cV'$ is the ring of integers in a finite extension of $K$. After possibly making a finite base change (for which the result can be easily checked), I can in fact take such closed immersions with $\cV'=\cV$. Thus the general case reduces to the special case when $u$ is a section of a smooth morphism, in which case the given morphism which has already been shown to be an isomorphism.
\end{proof}

To show compatibility of $\sp^!$ with tensor product, recall that in \S\ref{subsec: rigidification O-mod} I constructed, for $\cM,\cN\in \underrightarrow{{\bf LD}}_{\Q,\mathrm{qc}}(\widehat{\sD}^{(\bullet)}_\fr{P})$, a morphism
\[  \mathbf{L}\hat{\mathrm{sp}}_\fr{P}^*\mathcal{M}\otimes_{\mathcal{O}_{\fr{P}_K}}^\mathbf{L} \mathbf{L}\hat{\mathrm{sp}}_\fr{P}^*\mathcal{N} \rightarrow   \mathbf{L}\hat{\mathrm{sp}}_\fr{P}^*(\mathcal{M}\widehat{\otimes}_{\mathcal{O}_{\fr{P}}}^\mathbf{L} \mathcal{N}) \]
in ${\bf D}(\mathcal{O}_{\fr{P}_K})$. It is straightforward, though rather tedious, to upgrade this to a morphism in ${\bf D}(\sD_{\fr{P}_K})$. For $\cM,\cN\in \bD^b_{\hol,F}(\fP)$, I can therefore view it as a morphism
\begin{equation}
\label{eqn: otimes sp^!}
\mathrm{sp}^!\mathcal{M}\otimes_{\mathcal{O}_{\fr{P}_K}} \mathrm{sp}_\fr{P}^!\mathcal{N} \rightarrow   \mathrm{sp}^!(\mathcal{M}\wotimes_{\mathcal{O}_{\fr{P}}} \mathcal{N})
\end{equation} 
in $\bD^b_{\cons,F}(\fP)$. 

\begin{corollary} \label{cor: otimes sp^!} The morphism (\ref{eqn: otimes sp^!}) is an isomorphism.
\end{corollary}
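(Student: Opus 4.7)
The plan is to use conservativity of extraordinary pullbacks to closed points, together with the equivalence of Theorem \ref{theo: riemann-hilbert}, to reduce to the trivial case $\fP = \spf{\cV'}$.

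First, I would verify that the morphism (\ref{eqn: otimes sp^!}) is compatible with pullback along any morphism $u\from \fQ\to\fP$ of smooth formal schemes. Concretely, combining Proposition \ref{prop: u^!, u^* and sp^!}, the compatibility $u^!(-\wotimes_{\cO_\fP}-)\cong u^!(-)\wotimes_{\cO_\fQ} u^!(-)$ recalled in \S\ref{sec: D-modules}, and the obvious compatibility of $\otimes_{\cO_{\fQ_K}}$ with $u^*$ on constructible isocrystals (which are flat by Lemma \ref{lemma: isoc loc free}), I would check that the diagram
\[ \xymatrix{ u^*(\sp_\fP^!\cM \otimes_{\cO_{\fP_K}} \sp_\fP^!\cN) \ar[r]\ar[d]_{\cong} & u^*\sp_\fP^!(\cM \wotimes_{\cO_\fP}\cN) \ar[d]^{\cong} \\ \sp_\fQ^!u^!\cM \otimes_{\cO_{\fQ_K}} \sp_\fQ^!u^!\cN \ar[r] & \sp_\fQ^!(u^!\cM \wotimes_{\cO_\fQ} u^!\cN) } \]
commutes, where both horizontal arrows are instances of (\ref{eqn: otimes sp^!}). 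This is a naturality check that follows by tracing through the construction of the map in \S\ref{subsec: rigidification O-mod}--\S\ref{sec: rigidification of D-modules}, using that $\bL\hat{\sp}^*$ respects pullbacks along admissible blowups by construction.

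Next, by Theorem \ref{theo: riemann-hilbert}, $\sp^!$ is an equivalence of categories, so combining Proposition \ref{prop: i^! conservative} (applied to the cone of $\varphi$) with the isomorphism $i_x^*\sp_\fP^! \cong \sp_{\spf{\cV_x}}^! \circ i_x^!$ of Proposition \ref{prop: u^!, u^* and sp^!}, the family of ordinary pullback functors $\{i_x^*\}$, as $x$ ranges over closed points of $\fP$ with residue ring $\cV_x$ finite unramified over $\cV$, is conservative on $\bD^b_{\cons,F}(\fP)$. Since the source and target of (\ref{eqn: otimes sp^!}) both lie in $\bD^b_{\cons,F}(\fP)$, and this is a triangulated subcategory of $\bD(\sD_{\fP_K})$ (Lemma \ref{lemma: basic properties Db cons}), it therefore suffices to verify that (\ref{eqn: otimes sp^!}) becomes an isomorphism after applying $i_x^*$ for each such $x$. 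By the compatibility established above, this reduces to the corresponding statement for $\fP=\spf{\cV_x}$.

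In this remaining case, $\bD^b_{\hol,F}(\spf{\cV_x})$ and $\bD^b_{\cons,F}(\spf{\cV_x})$ are both equivalent to the bounded derived category of finite-dimensional $K_x$-vector spaces of Frobenius type, $\sp^! = \bL\hat{\sp}^*$ (no shift, as $\dim \spf{\cV_x} = 0$) is essentially the identity, and both $\wotimes_{\cO_{\spf{\cV_x}}}$ and $\otimes_{\cO_{\spa{K_x,\cV_x}}}$ reduce to the ordinary derived tensor product over $K_x$. The morphism (\ref{eqn: otimes sp^!}) in this case is manifestly the canonical isomorphism. The main obstacle in this plan is the compatibility with pullbacks in the first step: while both sides of the square are well understood functorially, tracing the morphism itself through all the intermediate sites and blowup diagrams underlying $\bL\hat{\sp}^*$ requires some careful bookkeeping.
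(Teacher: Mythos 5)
Your proof is correct and takes essentially the same approach as the paper's: reduce to the trivial base case $\fP=\spf{\cV_x}$ by checking on extraordinary stalks, using the pullback compatibility from Proposition \ref{prop: u^!, u^* and sp^!}. The one cosmetic difference is your justification of the conservativity of $\{i_x^*\}$ on $\bD^b_{\cons,F}(\fP)$ via Theorem \ref{theo: riemann-hilbert} and Proposition \ref{prop: i^! conservative} — the paper leaves this implicit, and it can alternatively be argued directly by d\'evissage of constructible complexes without invoking the Riemann--Hilbert equivalence, but since that theorem is already available at this point your route is equally valid.
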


\begin{proof}
Again, this can be checked on stalks, in which case it follows from Proposition \ref{prop: u^!, u^* and sp^!} together with the fact that both pullback functors $u^!$ and $u^*$ commute with the relevant tensor products.
\end{proof}

\subsection{The case of pairs and varieties} \label{sec: coh ops for vars}

In Corollary \ref{cor: smooth and proper preserves cons}, the fact that $u$ is proper implies that $\bR u_{\dR*}=\bR u_{\dR!}$, so I could have equally well phrased the result in terms of compactly supported de\thinspace Rham cohomology. It turns out that this version is the one that generalises to the case of pairs. In fact, I	can use the extra flexibility of constructible isocrystals to remove the need for the formal schemes involved to be immersible in smooth and proper ones. To obtain a functor that is independent of the frame, I also need to include a shift. 

\begin{corollary} \label{cor: cons preserved by morphism of pairs} Let
\[ \xymatrix{  X'\ar[d]^f \ar[r] & Y' \ar[d]^g \ar[r] & \fP'\ar[d]^u \\ X\ar[r] & Y\ar[r] & \fP}  \]
be a morphism of frames, such that $g$ is proper, $\fP$ is smooth around $X$, and $u$ is smooth around $X'$, of relative dimension $d$.

Then the functor $\bR\! \tube{f}_{\dR!}[2d]$ maps $\bD^b_{\cons,F}(X',Y')$ into $\bD^b_{\cons,F}(X,Y)$. The resulting functor only depends on the morphism of pairs $g\from (X',Y')\to (X,Y)$. If $Y$ is proper, then it only depends on the morphism of varieties $f\from X'\to X$.
\end{corollary}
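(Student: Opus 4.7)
The plan is to deduce this from the overconvergent Riemann--Hilbert correspondence (Theorem \ref{theo: riemann-hilbert}) by identifying $\bR\tube{f}_{\dR!}[2d]$ with the $\sD^\dagger$-module pushforward $f_+$ transported across $\sp^!$.

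First I would reduce to the case of l.p.\ frames where moreover $u$ is smooth and proper. Preservation of $\bD^b_{\cons,F}$ is local on $\fP$, so I may assume $\fP$ is affine, and after shrinking to a smooth neighbourhood of $X$, that $\fP$ is smooth. Using Theorem \ref{theo: disoc indep}, I can enlarge $\fP$ to a smooth and proper formal scheme $\widetilde{\fP}$ (e.g.\ into a suitable $\widehat{\P}^N_\cV$) without altering $\bD^b_{\cons,F}(X,Y,\fP)$. On the source side, since $g$ is proper and $u$ is smooth around $X'$, a Chow's-lemma style construction similar to the one used at the end of the proof of Theorem \ref{theo: disoc indep} lets me replace $\fP'$ by a new formal scheme $\widetilde{\fP'}$ which is smooth and proper over $\widetilde{\fP}$, with $(X',Y',\widetilde{\fP'})$ again an l.p.\ frame enclosing the same pair $(X',Y')$, and again without altering $\bD^b_{\cons,F}(X',Y',\fP')$.

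In this l.p.\ setting, I would then construct a natural isomorphism of functors
\[ \sp^!_{\widetilde{\fP}}\circ f_+ \;\cong\; i_!\circ \bR\tube{f}_{\dR!}[2d]\circ \sp^!_{\widetilde{\fP'}} \]
from $\bD^b_{\hol,F}(X',Y',\widetilde{\fP'})$ to $\bD^b_{\cons,F}(\widetilde{\fP})$, where the target of $\bR\tube{f}_{\dR!}[2d]$ is embedded into $\bD^b_{\cons,F}(\widetilde{\fP})$ via the extension-by-zero $i_!$ along $i\colon \tube{X}_{\widetilde{\fP}}\hookrightarrow \widetilde{\fP}_K$. The isomorphism combines three ingredients. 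First, the decomposition $f_+ = u_+ \circ \bR\underline{\Gamma}^\dagger_{X'}$. Second, Proposition \ref{prop: base change sp^* u_* de Rham} applied to $u$ gives $\sp^!\circ u_+ \cong \bR u_{\dR*}[2d]\circ \sp^!$, and the compatibility of $\sp^!$ with $\bR\underline{\Gamma}^\dagger_{X'}$ recorded in \S\ref{subsec: consequences} identifies $\sp^!\circ \bR\underline{\Gamma}^\dagger_{X'}$ with $i'_!\circ i'^{-1}\circ\sp^!$ where $i'\colon \tube{X'}_{\widetilde{\fP'}}\hookrightarrow \widetilde{\fP'}_K$. Third, the fact that $u$ is now proper gives $\bR u_{\dR*} = \bR u_{\dR!}$, and transitivity of compactly supported pushforwards then yields $\bR u_{\dR!}\circ i'_! \cong i_!\circ \bR\tube{f}_{\dR!}$ at the level of $\sD$-modules.

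The preservation statement follows at once, since $f_+$ preserves $\bD^b_{\hol,F}$ and $\sp^!$ is an equivalence by Theorem \ref{theo: riemann-hilbert}. For the independence claims, the transported functor is identified with $f_+\colon \bD^b_{\hol,F}(X',Y')\to \bD^b_{\hol,F}(X,Y)$, which by the construction recalled in \S\ref{sec: D-modules} depends only on the morphism of pairs $g$, and depends only on the morphism of varieties $f$ when $Y$ is proper. The main obstacle will be carrying out the l.p.\ reduction in a way that is functorial enough to make the final comparison independent of all auxiliary choices, and checking the $\sD$-linearity of the base change identification $\bR u_{\dR!}\circ i'_!\cong i_!\circ \bR\tube{f}_{\dR!}$ in the third step above.
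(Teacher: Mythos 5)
Your plan for the preservation statement is essentially the paper's: after localizing and using Theorem~\ref{theo: disoc indep} and Chow's lemma to reduce to a frame morphism where $u$ is smooth and proper (the paper uses $\widehat{\P}^N_{\fP}\to\fP$), one applies Corollary~\ref{cor: smooth and proper preserves cons}, using properness of $u$ to turn $\bR u_{\dR*}$ into $\bR u_{\dR!}$ and transitivity to compare with $\bR\tube{f}_{\dR!}$. You effectively re-derive Corollary~\ref{cor: smooth and proper preserves cons} from Proposition~\ref{prop: base change sp^* u_* de Rham} and Theorem~\ref{theo: riemann-hilbert} rather than citing it, which is redundant but correct. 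One small point: the paper localizes on $X'$ (to make $f$ quasi-projective and apply Chow to $Y'\to Y$) rather than on $\fP$; your variant of localizing on $\fP$ also works, since constructibility is local on $\fP$ and $\bR\tube{f}_{\dR!}$ commutes with restriction to the opens $\tube{X}_{\fP}\cap\fP_{\alpha K}$ by Proposition~\ref{prop: proper base change}.

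For the two independence statements, your route is genuinely different from the paper's and, as you yourself flag, has a real loose end. The paper deduces independence purely at the level of constructible complexes: given two choices of $(Y',\fP')$ and $u$, form a product frame dominating both and use that $\bR\tube{f}_{\dR!}[2d]$ is inverse to pullback in the situation of Theorem~\ref{theo: disoc indep} (together with transitivity). This is self-contained, works without any l.p.\ hypothesis, and involves no auxiliary choices. Your route instead transports the independence of $f_+$ across $\sp^!$, which requires first replacing every frame in sight by an l.p.\ one; but those enlargements are themselves choices, and to conclude that the resulting functor on $\bD^b_{\cons,F}(X',Y')\to\bD^b_{\cons,F}(X,Y)$ is independent of those choices you would need the comparison isomorphism $\sp^!\circ f_+\cong i_!\circ\bR\tube{f}_{\dR!}[2d]\circ\sp^!$ to be compatible with all the frame changes used in the l.p.\ reduction. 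You name this as ``the main obstacle,'' and it is: checking it amounts to re-proving the very independence statement by the paper's product-of-frames method. I would therefore recommend replacing your transport-through-RH argument for the independence claims with the paper's direct argument via Theorem~\ref{theo: disoc indep}, and reserving the RH correspondence for the preservation claim (or, more efficiently, just citing Corollary~\ref{cor: smooth and proper preserves cons} there).
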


\begin{proof}
Let me first consider the claim that $\bR\! \tube{f}_{\dR!}[2d]$ preserves constructible isocrystals. The key point is that I can use Theorem \ref{theo: disoc indep} repeatedly to replace our morphism of frames by one that is covered by Corollary \ref{cor: smooth and proper preserves cons}. Indeed, the question is local on $X'$, which I can therefore assume to be affine, in particular $f$ is quasi-projective. By Chow's lemma, there is a morphism of varieties $g'\from Y''\to Y'$ such that $g'^{-1}(X')\isomto X'$, and $g\circ g'$ is projective (therefore $g'$ is also projective). Alternately embedding $Y''$ into either $\widehat{\P}_{\fP}^N $ or $\widehat{\P}_{\fP'}^{N'} $, and using Theorem \ref{theo: disoc indep} repeatedly, I can replace the original morphism of frames by one of the form
\[ \xymatrix{  X'\ar[d]^f \ar[r] & Y'' \ar[d] \ar[r] & \widehat{\P}_{\fP}^N \ar[d] \\ X\ar[r] & Y\ar[r] & \fP,}  \]
and therefore apply Corollary \ref{cor: smooth and proper preserves cons}. Now the independence of $u$ (and $g$ when $Y$ is proper) is a simple consequence of Theorem \ref{theo: disoc indep}, using transitivity of $\bR\!\tube{f}_{\dR!}[2d]$ and the usual trick of taking products of frames.
\end{proof}

\begin{definition} If $(f,g)\from (X',Y') \to (X,Y)$ is a morphism of weakly realisable pairs, I denote by 
\[ \bR f_{\flat} \from \bD^b_{\cons,F}(X',Y')\to \bD^b_{\cons,F}(X,Y)  \]
the functor implicitly given by Corollary \ref{cor: cons preserved by morphism of pairs}. If $f\from X'\rightarrow X$ is a morphism of weakly realisable varieties, I similarly denote
\[\bR f_{\flat} \from \bD^b_{\cons,F}(X')\to \bD^b_{\cons,F}(X)  \]
the functor obtained by choosing a suitable compactification of $f$. 
\end{definition}

\begin{remark} Note that whenever $g$ is a closed immersion, and $Y\hookrightarrow \fP$ is a closed immersion into a formal scheme which is smooth around $X$, then $\bR f_\flat$ can be identified with the functor $f_!$ of extension by zero along the induced morphism of tubes
\[ f\from \tube{X'}_\fP \to \tube{X}_\fP. \]
\end{remark}

If $(X,Y)$ is a strongly realisable pair, and $(X,Y,\fP)$ is an l.p. frame enclosing it, I can also deduce from Proposition \ref{prop: base change sp^* u_* de Rham} and Theorem \ref{theo: riemann-hilbert} that the composite (equivalence!)
\[ \bD^b_{\hol,F}(X,Y)\subset \bD^b_{\hol,F}(\fP)\overset{\sp_{\fP}^!}{\longrightarrow} \bD^b_{\cons,F}(\fP) \overset{{\rm res}}{\lto} \bD^b_{\cons,F}(X,Y,\fP)=\bD^b_{\cons,F}(X,Y) \]
does not depend on $\fP$, and can therefore be denoted
\[ \sp_X^!\from \bD^b_{\hol,F}(X,Y) \isomto \bD^b_{\cons,F}(X,Y).\]
Of course, there is a similar functor
\[ \sp_X^!\from \bD^b_{\hol,F}(X) \isomto \bD^b_{\cons,F}(X). \]
whenever $X$ is a strongly realisable variety. Moreover, these functors are t-exact for the dual constructible structure on the source, and the natural t-structure on the target.

\begin{corollary}\label{cor: del kash}
For any strongly realisable pair $(X,Y)$, $\sp_X^!$ induces an equivalence of categories
\[
\sp_X^!\from \DCon_F(X,Y)\isomto \Isoc_{\cons,F}(X,Y). 
\]
For any strongly realisable variety $X$, $\sp_X^!$ induces an equivalence of categories
\[
\sp_X^!\from \DCon_F(X)\isomto \Isoc_{\cons,F}(X). 
\]
\end{corollary}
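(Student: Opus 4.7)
The plan is to deduce both assertions directly from Theorem \ref{theo: riemann-hilbert} by passing through the t-exactness of $\sp^!$ and restricting to the subcategories of complexes supported on $X$. Since the two statements reduce to one another via the choice of a proper compactification $Y$ of $X$ (and the independence of $\bD^b_{\hol,F}(X,Y)$ and $\bD^b_{\cons,F}(X,Y)$ of such a choice when $Y$ is proper), it suffices to handle the pair case.

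First I would fix an l.p.\ frame $(X,Y,\fP)$ enclosing $(X,Y)$, and recall that by definition $\sp_X^!$ is the composite
\[
\bD^b_{\hol,F}(X,Y)\subset \bD^b_{\hol,F}(\fP)\xrightarrow{\sp_\fP^!}\bD^b_{\cons,F}(\fP)\xrightarrow{\mathrm{res}}\bD^b_{\cons,F}(X,Y,\fP),
\]
and that this composite is an equivalence of triangulated categories by Theorem \ref{theo: riemann-hilbert} together with the independence statements for each factor. The dual constructible t-structure on $\bD^b_{\hol,F}(X,Y)$ is by construction the restriction of the dual constructible t-structure on $\bD^b_{\hol,F}(\fP)$ along the full embedding $\bD^b_{\hol,F}(X,Y)\hookrightarrow \bD^b_{\hol,F}(\fP)$; dually, the natural t-structure on $\bD^b_{\cons,F}(X,Y,\fP)$ is the restriction of the natural t-structure on $\bD^b_{\cons,F}(\fP)$ via the fully faithful functor $i_!\colon \bD^b_{\cons,F}(X,Y,\fP)\hookrightarrow \bD^b_{\cons,F}(\fP)$, where $i\from X\hookrightarrow P$ denotes the given locally closed immersion.

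Next I would invoke the compatibility of $\sp_\fP^!$ with $\bR\underline{\Gamma}_X^\dagger$ established in \S\ref{subsec: consequences}, namely the natural isomorphism $\sp_\fP^!\bR\underline{\Gamma}_X^\dagger\cong i_!i^{-1}\sp_\fP^!$. This says precisely that $\sp_\fP^!$ sends $\bD^b_{\hol,F}(X,Y,\fP)$ (objects supported on $X$) into the essential image of $i_!$, and the restriction of $\sp_\fP^!$ to these subcategories is carried by $i^{-1}$ to $\sp_X^!$. Because $\sp_\fP^!$ is t-exact for the dual constructible t-structure on $\bD^b_{\hol,F}(\fP)$ and the natural t-structure on $\bD^b_{\cons,F}(\fP)$, and because both subcategories in question are stable under the truncations $({}^{\mathrm{dc}}\tau^{\leq 0},{}^{\mathrm{dc}}\tau^{\geq 0})$ and $(\tau^{\leq 0},\tau^{\geq 0})$ respectively, it follows that $\sp_X^!$ is t-exact too.

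Finally, taking hearts of a t-exact equivalence of triangulated categories gives an equivalence of abelian categories, which yields
\[
\sp_X^!\from \DCon_F(X,Y)\isomto \Isoc_{\cons,F}(X,Y).
\]
The main thing to check carefully, and what I would expect to be the only nontrivial point, is the stability of $\bD^b_{\hol,F}(X,Y,\fP)$ under the dual constructible truncations $({}^{\mathrm{dc}}\tau^{\leq 0},{}^{\mathrm{dc}}\tau^{\geq 0})$: this however is immediate from Lemma \ref{lemma: t exact sections with support}, which asserts that $\bR\underline{\Gamma}_Y^\dagger$ is dct-exact, combined with the idempotency of $\bR\underline{\Gamma}_X^\dagger$. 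The variety case then follows by choosing $Y$ proper and applying the already-proved pair case.
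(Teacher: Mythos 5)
Your proposal is correct and takes the same route as the paper: the corollary is obtained by taking hearts of the t-exact equivalence $\sp_X^!$, with the t-exactness and the equivalence both coming from the formal-scheme case via the compatibility of $\sp_\fP^!$ with $\bR\underline{\Gamma}_X^\dagger$. One small imprecision: Lemma~\ref{lemma: t exact sections with support} literally treats $\bR\underline{\Gamma}_Y^\dagger$ and $(^\dagger Y)$ for \emph{closed} $Y$, so the dct-exactness of $\bR\underline{\Gamma}_X^\dagger$ for locally closed $X$ (and hence the stability of $\bD^b_{\hol,F}(X,Y,\fP)$ under the dual constructible truncations) is obtained by composing these two cases, not quite ``immediately'' from the lemma as stated.
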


\begin{remark} \label{rem: duality loc cons} Given that the category $\Isoc_F(X)$ of locally free (overconvergent) isocrystals on $X$ is the $p$-adic analogue of the category $\Loc(X_\et,\Q_\ell)$ of $\ell$-adic local systems on $X$, it may seem a little odd to see it appearing as a full subcategory of $\Isoc_{\cons,F}(X)$, which I am promoting here as a $p$-adic analogue of the category $\DCon(X_\et,\Q_\ell)$ of \emph{dual} constructible sheaves on $X$ (not the category of constructible sheaves on $X$). But something similar does indeed happen in the $\ell$-adic theory. There is a fully faithful embedding
\begin{align*}
\Loc(X_\et,\Q_\ell) &\to \DCon(X_\et,\Q_\ell) \\
\sF &\mapsto \bD_X(\sF^\vee)
\end{align*}
where $(-)^\vee$ is the dual functor for local systems, and $\bD_X$ is the Verdier dual functor. If $X$ is smooth, then $\bD_X(\sF^\vee)$ is simply a shift of $\sF$ by $2\dim X$, but if $X$ is singular this will generally \emph{not} be a shift of a constructible sheaf. This at least goes some way to explaining why the proof of Theorem \ref{theo: rigid and D-module cohomology} below \emph{has} to proceed via duality.
\end{remark}

\begin{corollary} Let $(f,g)\from (X',Y') \to (X,Y)$ be a morphism of strongly realisable pairs. Then the diagram
\[ \xymatrix{ \bD^b_{\hol,F}(X,Y) \ar[r]^{\sp_X^!} \ar[d]_{f^!} & \bD^b_{\cons,F}(X,Y) \ar[d]^{f^*}  \\  \bD^b_{\hol,F}(X',Y') \ar[r]^{\sp_{X'}^!} & \bD^b_{\cons,F}(X',Y') }  \]
commutes up to natural isomorphism. If $g$ is proper, then so does 
\[ \xymatrix{ \bD^b_{\hol,F}(X',Y') \ar[r]^{\sp_{X'}^!} \ar[d]_{f_+} & \bD^b_{\cons,F}(X',Y') \ar[d]^{\bR f_\flat}\\
\bD^b_{\hol,F}(X,Y) \ar[r]^{\sp_X^!} & \bD^b_{\cons,F}(X,Y) }  \]
\end{corollary}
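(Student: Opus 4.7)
The plan is to reduce both compatibilities to the analogous results at the level of formal schemes, namely Propositions~\ref{prop: u^!, u^* and sp^!} and~\ref{prop: base change sp^* u_* de Rham}, by realising $(f,g)$ inside a morphism of l.p.\ frames $(f,g,u)\from (X',Y',\fP')\to(X,Y,\fP)$ and then restricting to the relevant tubes. Recall that, for such a frame, $\sp_X^!$ is the composition of $\sp_\fP^!$ with restriction along $i\from \tube{X}_\fP\to \fP_K$, and similarly for $\sp_{X'}^!$ and $i'\from \tube{X'}_{\fP'}\to \fP'_K$; the maps satisfy $i\circ\tube{f}_u=u\circ i'$.

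For the first diagram, I would use $f^!=\bR\underline{\Gamma}^\dagger_{X'}\circ u^!$ and the realisation of $f^*$ on constructible complexes as $\tube{f}_u^*$. Chaining the three isomorphisms already available—compatibility of $\sp^!$ with $\bR\underline{\Gamma}^\dagger$ (see \S\ref{subsec: consequences}), Proposition~\ref{prop: u^!, u^* and sp^!} giving $u^*\sp_\fP^!\isomto \sp_{\fP'}^!u^!$, and the tautology $i'^{-1}\bR\underline{\Gamma}^\dagger_{X'}=i'^{-1}$—one computes
\[
\sp_{X'}^!f^!\cM = i'^{-1}\sp_{\fP'}^!\bR\underline{\Gamma}^\dagger_{X'}u^!\cM \cong i'^{-1}\sp_{\fP'}^!u^!\cM \cong i'^{-1}u^*\sp_\fP^!\cM = \tube{f}_u^*i^{-1}\sp_\fP^!\cM = f^*\sp_X^!\cM,
\]
with naturality in $\cM$ built in at every step. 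Independence of the chosen frame follows from Theorem~\ref{theo: disoc indep} and Proposition~\ref{prop: indep Ddag}.

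For the second diagram I would arrange the morphism of l.p.\ frames so that $u$ is smooth and proper, so that $f_+=u_+$, and so that $\bR f_\flat$ is modelled by $\bR\tube{f}_{u\dR!}[2d]$ (Corollary~\ref{cor: cons preserved by morphism of pairs}), where $d$ is the relative dimension of $u$. Proposition~\ref{prop: base change sp^* u_* de Rham} then gives $\sp_\fP^!u_+\cong \bR u_{\dR*}[2d]\sp_{\fP'}^!$, and since $u$ is proper, $\bR u_{\dR*}=\bR u_{\dR!}$. Because $\cM\in \bD^b_{\hol,F}(X',Y')$ is supported on $X'$, the complex $\sp_{\fP'}^!\cM$ is supported on $\tube{X'}_{\fP'}$, i.e.\ $\sp_{\fP'}^!\cM\cong i'_!i'^{-1}\sp_{\fP'}^!\cM$. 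Applying $i^{-1}$ and invoking the topological proper base change
\[
i^{-1}\bR u_{\dR!}\,i'_!\,(-) \;\cong\; \bR\tube{f}_{u\dR!}\,i'^{-1}i'_!\,(-) \;=\; \bR\tube{f}_{u\dR!}(-)
\]
around the Cartesian square relating the two tube inclusions with $u$ and $\tube{f}_u$ yields
\[
\sp_X^! f_+\cM = i^{-1}\sp_\fP^! u_+\cM \cong \bR\tube{f}_{u\dR!}[2d]\,i'^{-1}\sp_{\fP'}^!\cM = \bR f_\flat\,\sp_{X'}^!\cM.
\]

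The main technical obstacle is arranging a morphism of l.p.\ frames $(f,g,u)$ with $u$ smooth and proper; a priori, starting from two independently chosen l.p.\ frames, no such $u$ need exist. This is resolved by the standard device of replacing $\fP'$ with (a suitable projective compactification of) the graph of the composite $Y'\to Y\to \fP$ inside $\fP\times_\cV\fP'$, landing us in the framework where $u$ factors as a projection of a smooth and proper morphism. By Theorem~\ref{theo: disoc indep}, Proposition~\ref{prop: indep Ddag}, and Corollary~\ref{cor: cons preserved by morphism of pairs}, this replacement changes neither side of the diagram, and the argument above then applies verbatim.
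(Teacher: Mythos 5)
Your proof is essentially correct and takes the same approach as the paper, which simply cites Proposition~\ref{prop: base change sp^* u_* de Rham} for the pushforward statement and Proposition~\ref{prop: u^!, u^* and sp^!} together with the compatibility of $\sp^!$ with $\bR\underline{\Gamma}^\dagger$ for the pullback statement, leaving the reader to fill in the reduction to these propositions. Your write-up usefully makes that reduction explicit: the chain of isomorphisms for $f^!$, and the use of $\bR u_{\dR*}=\bR u_{\dR!}$ for proper $u$ together with Proposition~\ref{prop: proper base change} to descend from ambient spaces to tubes for $f_+$.

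Two small imprecisions worth flagging. First, in constructing a morphism of l.p.\ frames with $u$ smooth and proper, ``the graph of $Y'\to Y\to\fP$ inside $\fP\times_\cV\fP'$'' is not quite the right object, since $\fP'$ need not be proper: you should instead take $\fP'':=\fP\times_\cV\fr{R}'$, where $\fr{R}'$ is a smooth and proper formal scheme into which $\fP'$ admits a locally closed immersion (such an $\fr{R}'$ exists by the l.p.\ hypothesis); then $Y'\to\fP''$, the product of $Y'\to Y\to\fP$ and $Y'\to\fP'\to\fr{R}'$, is a proper locally closed immersion, hence closed, and $u\from\fP''\to\fP$ is smooth and proper. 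Second, the square relating $(i,i',u,\tube{f}_u)$ is not literally Cartesian — one has $\tube{X'}_{\fP'}\subsetneq u^{-1}(\tube{X}_\fP)$ in general — so the base change has to be run in two steps: Proposition~\ref{prop: proper base change}(1) applied to the genuinely Cartesian square with $u^{-1}(\tube{X}_\fP)$ in the top-left, followed by transitivity of $\bR(-)_{\dR!}$ along $\tube{X'}_{\fP'}\hto u^{-1}(\tube{X}_\fP)\to\tube{X}_\fP$. With these adjustments the argument goes through as you describe.
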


\begin{proof}
The statement for pushforwards follows from Proposition \ref{prop: base change sp^* u_* de Rham}. For pullbacks, it follows from Proposition \ref{prop: u^!, u^* and sp^!} together with the fact that, for any smooth formal scheme $\fP$, and any locally closed immersion $i\from X\rightarrow \fP$, there is an isomorphism
\[ i_!i^{-1}\sp_\fP^! \isomto \sp_\fP^!\bR\underline{\Gamma}^\dagger_X.  \qedhere \]
\end{proof}

There is a similar statement for morphisms of strongly realisable varieties. Finally, I can show compatibility with tensor product using Corollary \ref{cor: otimes sp^!}.

\begin{corollary} Let $(X,Y)$ be a strongly realisable pair. Then there is a natural isomorphism 
\[  \sp_X^!(-) \otimes_{\cO_{X}} \sp_X^! \isomto \sp_X^!(-\wotimes_{\cO_X}-) \]
of functors
\[ \bD^b_{\hol,F}(X,Y)\times \bD^b_{\hol,F}(X,Y) \to \bD^b_{\cons,F}(X,Y). \]
\end{corollary}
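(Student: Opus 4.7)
The plan is to deduce this from the formal-scheme version, Corollary \ref{cor: otimes sp^!}, together with the fact that both tensor products for pairs are defined via restriction from a chosen l.p.\ frame. Choose an l.p.\ frame $(X,Y,\fP)$ enclosing the pair. By the definitions recalled in \S\ref{subsec: dc mod on pairs} and \S\ref{subsec: independence of the frame}, the functor $\sp_X^!$ is the restriction of $\sp_\fP^!$ to the full subcategories
\[ \bD^b_{\hol,F}(X,Y) \subset \bD^b_{\hol,F}(\fP), \qquad \bD^b_{\cons,F}(X,Y) \subset \bD^b_{\cons,F}(\fP), \]
the tensor product $\wotimes_{\cO_X}$ is by definition $\wotimes_{\cO_\fP}$, and the tensor product $\otimes_{\cO_X}$ on constructible complexes is the restriction of $\otimes_{\cO_{\tube{X}_\fP}}$ from $\bD^b_{\cons}(\fP)$.

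Given $\cM, \cN \in \bD^b_{\hol,F}(X,Y)$, the formal-scheme version Corollary \ref{cor: otimes sp^!} then supplies a natural isomorphism
\[ \sp_\fP^!(\cM) \otimes_{\cO_{\fP_K}} \sp_\fP^!(\cN) \isomto \sp_\fP^!(\cM \wotimes_{\cO_\fP} \cN) \]
in $\bD^b_{\cons,F}(\fP)$. I would then check that both sides lie in $\bD^b_{\cons,F}(X,Y)$, so that this isomorphism descends to the required one for the pair. On the right, $\cM$ and $\cN$ are supported on $X$, and because $\wotimes_{\cO_\fP}$ commutes with $\bR\underline{\Gamma}^\dagger_X$ (which follows, for instance, from dct-exactness of $\bR\underline{\Gamma}^\dagger_X$ and the usual resolution of $\bR\underline{\Gamma}^\dagger_X$ in terms of divisor cohomologies), the tensor product $\cM \wotimes_{\cO_\fP} \cN$ is again supported on $X$. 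On the left, each factor $\sp_\fP^!(\cM)$ and $\sp_\fP^!(\cN)$ is a constructible isocrystal supported on $X$ by Theorem \ref{theo: sp^! cons}, and the tensor product of two $\cO_{\tube{X}_\fP}$-modules (extended by zero) is still supported on $\tube{X}_\fP$.

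Finally, I would verify that the isomorphism thus obtained is independent of the choice of l.p.\ frame $(X,Y,\fP)$, so that it genuinely defines a natural transformation between functors on the pair. This is the only part that requires any extra work: given a morphism $u \from (X,Y,\fP') \to (X,Y,\fP)$ of l.p.\ frames with $u$ smooth, Proposition \ref{prop: indep Ddag} identifies the two categories of $\sD^\dagger$-modules via $(u_+,\bR\underline{\Gamma}^\dagger_X u^!)$, while Theorem \ref{theo: disoc indep} identifies the two categories of constructible complexes via $u^*$, and Proposition \ref{prop: base change sp^* u_* de Rham} intertwines these with $\sp_\fP^!$ and $\sp_{\fP'}^!$. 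The required compatibility then reduces to checking that $u^!$ and $u^*$ are symmetric monoidal (with appropriate shifts absorbed into $\wotimes$), which is the content of Proposition \ref{prop: u^!, u^* and sp^!} combined with the standard compatibility of $u^!$ with $\wotimes_{\cO_\fP}$ recalled in \S\ref{sec: D-modules}. The main obstacle is really only this bookkeeping with shifts and frames; the substantive content is already contained in Corollary \ref{cor: otimes sp^!}. The case of a strongly realisable variety $X$ follows by taking any $Y$ proper containing $X$.
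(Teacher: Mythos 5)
The paper gives no explicit proof of this corollary beyond the one-line remark that it follows from the formal-scheme version (Corollary~\ref{cor: otimes sp^!}), and your proposal supplies exactly the implied argument: choose an l.p.\ frame, unwind the definitions of the pair-level functors as restrictions of the formal-scheme ones, apply the formal-scheme isomorphism, check that both sides land in the full subcategory of objects supported on $X$, and note frame-independence. This is correct and follows the same route the author has in mind; the only slight imprecision is your parenthetical justification that $\wotimes_{\cO_\fP}$ commutes with $\bR\underline{\Gamma}^\dagger_X$ via dct-exactness — the cleaner reason is that $\bR\underline{\Gamma}^\dagger_X$ is itself given by $\wotimes$ with $\bR\underline{\Gamma}^\dagger_X\cO^*_\fP$ (or, equivalently, that $\wotimes_{\cO_X}$ is defined so as to land in $\bD^b_{\hol,F}(X,Y)$ in the first place, cf.\ \S\ref{sec: D-modules}), but the conclusion is the same.
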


Naturally, there is an analogous statement for varieties.

\begin{remark} Via the analogy between $\Isoc_{\cons,F}(X,Y)$ and $\DCon(X_\et,\Q_\ell)$, the standard tensor product on constructible isocrystals corresponds to the dual tensor product
\[ \sF\wotimes_{\Q_\ell}\sG:= \bD_X(\bD_X(\sF)\otimes_{\Q_\ell}\bD_X(\sG)) \]
for dual constructible $\ell$-adic sheaves. Note that this is t-exact for the dual constructible t-structure on $\bD^b_c(X_\et,\Q_\ell)$.
\end{remark}

\subsection{Verdier duality}

Of the six cohomological functors $f_+,f^+,f^!,f_!,\bD_X,\wotimes_{\cO_X}$ defined on $\bD^b_{\hol,F}$, we have seen how to interpret three in terms of $\bD^b_{\cons,F}$, namely $f_+$, $f^!$ and $\wotimes_{\cO_X}$. This begs the question of how to interpret $f^+,f_!$ and $\bD_X$ in the theory of constructible isocrystals. Via the usual relations amongst $f_!,f_+$, $f^!$ and $f^+$, this amounts to describing the duality functor $\bD_X$.

This is a subtle question that I don't know how to answer at the moment. The main reason that I don't yet have a good answer is the following: if $\fP$ is a smooth and proper $\cV$-scheme, say, then the composite functor
\[ \bD^b_{\cons,F}(\fP) \isomfrom \bD^b_{\hol,F}(\fP) \overset{\bD_{\fP}}{\lto} \bD^b_{\hol,F}(\fP)\isomto \bD^b_{\cons,F}(\fP) \]
\emph{cannot} be of a local nature on $\fP_K$. Indeed, if I denote by
\[ \bD_{\fP} \from \bD^b_{\cons,F}(\fP) \to \bD^b_{\cons,F}(\fP) \]
the above composition, then clearly $\bD_{\fP}(\cO_{\fP_K})=\cO_{\fP_K}[-2\dim \fP]$. On the other hand, if $i\from X\to P$ is a smooth closed subscheme, then similarly $\bD_{\fP}(i_!\cO_{\tube{X}_\fP})\cong  i_!\cO_{\tube{X}_\fP}[-2\dim X]$. Since $\cO_{\fP_K}\!\!\mid_{\tube{X}_\fP}=\cO_{\tube{X}_\fP}$, it follows that $\bD_{\fP}$ is not local on $\fP_K$. For example, this rules out any construction of the form $\mathbf{R}\underline{\mathrm{Hom}}_{\mathcal{A}}(-,\mathscr{K})$ where $\ca{A}$ is a sheaf of rings on $\fP_K$ and $\scr{K}$ is a complex of $\ca{A}$-modules. 

\section{Rigid cohomology of varieties} \label{sec: rigid cohomology}

I will finish this article with a comparison theorem between rigid and $\sD^\dagger$-module cohomology, at least for varieties $X$ which admit an immersion $X\hookrightarrow \fP$ into a smooth and proper formal scheme. In order to be able to state it properly, I need to extend (the shifted version of) Caro's functor
\[ \sp_{X+}\from \Isoc_F(X) \to \bD^b_{\hol,F}(X)\subset \bD^b_{\hol,F}(\fP) \]
from the case where $X$ is smooth to the case of arbitrary $X$. In fact, this was achieved in \cite[\S3.8]{Abe19}, by combining descent with the commutation of $\sp_{+}$ with pullback. Note that the functor that I have been denoting $\sp_+$ is denoted $\rho$ in \cite{Abe19}. I then define
\[\sp_{X!}:=\bD_X \circ \sp_{X+} \circ (-)^\vee \from \Isoc_F(X)\to \DCon_F(X)\subset \DCon(\fP)\]
where the functor $(-)^\vee$ is simply the ordinary dual of locally free isocrystals.\footnote{Note that, since I am only considering locally free isocrystals here, this functor coincides with that of the same name define in \cite{AL22} by \cite[Theorem 7.1.1]{AL22}.} Again, by using the fact that $\sp^!$ is compatible with pullback, I can then remove the smoothness hypothesis on $X$ appearing in Corollary \ref{cor: sp^! sp_! locally free isocrystals}. 

\begin{lemma} \label{cor: sp^! sp_! locally free isocrystals 2} Let $\fP$ be a smooth formal scheme, $i\from X\to P$ be a locally closed subscheme, with closure $Y$, $\sF\in \Isoc_F(X,Y)$, and $\cM=\sp_{X!}\sF\in \DCon_F(\fP)$. Then $\sp^!\cM \isomto i_!\sF \in \Isoc_{\cons,F}(\fP)$.
\end{lemma}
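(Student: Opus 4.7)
The plan is to reduce to the smooth case already handled in Corollary \ref{cor: sp^! sp_! locally free isocrystals} by means of a finite \'etale alteration and descent, in much the same way that Abe's construction of $\sp_{X+}$ for singular $X$ in \cite[\S3.8]{Abe19} itself proceeded via descent along a finite \'etale cover with smooth source. Since the statement is local on $\fP$, I may assume that $\fP$ is affine, so that every frame appearing below is an l.p.\ frame. By de\thinspace Jong's alterations, there is a projective, generically \'etale morphism $g\from Y'\to Y$ with $Y'$ smooth, and by Noetherian induction on $X$ (using compatibility of $\sp^!$ with $\bR\underline{\Gamma}^\dagger_Z$ for $Z\hookrightarrow X$ closed, the localisation triangle, and the analogous localisation on the constructible side) I may assume that $X':=g^{-1}(X)\to X$ is finite \'etale and that $X$ is locally the complement of a hypersurface in $Y$. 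After possibly shrinking $\fP$, I can extend $g$ to a morphism of l.p.\ frames
\[
\xymatrix{ X'\ar[r]\ar[d]_f & Y'\ar[r]\ar[d]^g & \fP'\ar[d]^u \\ X\ar[r] & Y\ar[r] & \fP }
\]
in which $u$ is smooth and projective, with $X'\to f^{-1}(X)$ open and closed, so that we are in the situation of Theorem \ref{theo: finite etale general cons} and Lemma \ref{lemma: reduce sp^! cons finite etale}.

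First I would treat the case where $\sF=f^*\sG$ for some locally free $\sG\in\Isoc_F(X,Y)$. In that case Abe's construction of $\sp_{X+}$ gives a canonical isomorphism $\sp_{X+}\sF\cong f_+\sp_{X'+}f^*\sG$, and hence (after dualising the input and applying $\bD_X$) a canonical isomorphism $\sp_{X!}\sF\cong f_+\sp_{X'+}f^*\sG$ whose proof uses only the commutation of Caro's $\widetilde{\sp}_+$ with pullback recalled in \S\ref{sec: sp+}. Applying $\sp^!_\fP$ and using Proposition \ref{prop: base change sp^* u_* de Rham} together with the smooth-source case of Corollary \ref{cor: sp^! sp_! locally free isocrystals} gives
\[
\sp^!_\fP\,\sp_{X!}\sF \;\cong\; \bR u_{\dR*}\bigl(\sp^!_{\fP'}\,\sp_{X'!}f^*\sG\bigr)[2d]
\;\cong\; \bR u_{\dR*}\bigl(i'_!f^*\sG\bigr)[2d],
\]
where $d$ is the relative dimension of $u$ and $i'\from X'\to P'$ is the given immersion. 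Finally, Theorem \ref{theo: finite etale general cons} identifies $\bR u_{\dR*}(i'_!f^*\sG)[2d]$ with $i_!\bR\tube{f}_{\dR*}f^*\sG[2d]\cong i_!(f_*f^*\sG)$, and the trace splitting of Proposition \ref{prop: finite etale special cons} exhibits $i_!\sG$ as a direct summand of the right-hand side, giving the desired isomorphism in the case $\sF=f^*\sG$.

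For a general $\sF\in\Isoc_F(X,Y)$, Proposition \ref{prop: finite etale special cons} applied to the finite \'etale morphism $f\from X'\to X$ exhibits $\sF$ as a direct summand of $f_*f^*\sF$. Applying $\sp_{X!}$, and using the functoriality of the descent construction in $\sF$, realises $\sp_{X!}\sF$ as a direct summand of $f_+\sp_{X'!}f^*\sF$; applying $\sp^!_\fP$ and repeating the computation of the previous paragraph (replacing $f^*\sG$ by $f^*\sF$) exhibits $\sp^!_\fP\sp_{X!}\sF$ as the corresponding direct summand of $i_!(f_*f^*\sF)$, which is none other than $i_!\sF$. Tracing through the construction shows that the resulting isomorphism is independent of the choice of alteration and is compatible with the one produced in Corollary \ref{cor: sp^! sp_! locally free isocrystals} when $X$ is already smooth.

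The main technical point, and the step I expect to be the most delicate, is the verification that the isomorphism $\sp_{X!}\sF\cong f_+\sp_{X'!}f^*\sF$ coming from Abe's descent is genuinely natural in $\sF$ and compatible with the trace splitting $\sF\hookrightarrow f_*f^*\sF\twoheadrightarrow\sF$; once this is in hand everything else reduces to the already proved smooth case, the finite \'etale formalism of Theorem \ref{theo: finite etale general cons}, and the de\thinspace Rham pushforward compatibility of Proposition \ref{prop: base change sp^* u_* de Rham}.
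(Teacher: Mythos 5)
Your overall strategy --- reduce to the smooth case via a finite \'etale cover coming from de~Jong's alterations and Noetherian induction, then use compatibility of the various functors with the cover --- is exactly what the paper intends, since its entire proof of this lemma is the one-line remark ``by using the fact that $\sp^!$ is compatible with pullback.'' However, there are two issues worth flagging.

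First, there is a genuine notational error at the start of your second paragraph: with $f\from X'\to X$ and $\sG\in\Isoc_F(X,Y)$ living on $X$, the object $f^*\sG$ lives on $X'$, so ``$\sF=f^*\sG$'' cannot hold for $\sF\in\Isoc_F(X,Y)$. What you presumably mean is $\sF = f_*\sG'$ for some $\sG'\in\Isoc_F(X',Y')$, or else you intend to apply the argument to $f_*f^*\sF$ directly. As written, the displayed isomorphism $\sp_{X+}\sF\cong f_+\sp_{X'+}f^*\sG$ doesn't typecheck.

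Second, your route runs through the pushforward $f_+$ and the trace splitting $\sF\hookrightarrow f_*f^*\sF\twoheadrightarrow\sF$, which is a detour. Abe's construction of $\sp_{X+}$ (and hence the paper's $\sp_{X!}$) is built directly around compatibility with the \emph{pullbacks} $f^!$ and $f^*$, not with $f_+$; the relation $\sp_{X!}\circ f_* \cong f_+\circ\sp_{X'!}$ that you invoke has to be deduced from the pullback compatibility by adjunction, and that is precisely where the ``naturality and compatibility with the trace splitting'' question you flag at the end lives. The more economical route is to use the two pullback compatibilities directly: Proposition \ref{prop: u^!, u^* and sp^!} (and its corollary for pairs) gives $f^*\circ\sp_\fP^!\cong\sp_{\fP'}^!\circ f^!$, the compatibility $f^!\circ\sp_{X!}\cong\sp_{X'!}\circ f^*$ is built into the descent definition, and Corollary \ref{cor: sp^! sp_! locally free isocrystals} handles the smooth target $X'$. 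Then $f^*\sp^!_\fP\sp_{X!}\sF\cong f^*(i_!\sF)$, and one concludes by \'etale descent for constructible isocrystals (in the sense of Proposition \ref{prop: finite etale special cons}). You have correctly identified that the remaining content in either approach is the compatibility of these canonical isomorphisms with the descent/trace data --- the paper, being content with the one-line remark, does not spell this out either, but your formulation via $f_+$ makes that step look harder than it needs to be.
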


I can now state and prove my comparison result between rigid and $\sD^\dagger$-module cohomology.
 
\begin{theorem} \label{theo: rigid and D-module cohomology} Let $f:X\rightarrow \spec{k}$ be a strongly realisable $k$-variety,\footnote{Recall that this means $X$ admits an immersion into a smooth and proper formal $\cV$-scheme.} and $\sF\in \Isoc_F(X)$ a locally free isocrystal on $X$ of Frobenius type.\footnote{See Definition \ref{defn: F type}.} Then $\sp_X^!$ induces an isomorphism
\[  \mathbf{R}\Gamma_\rig(X,\sF) \cong f_+\mathrm{sp}_{X+}\sF \]
in $\bD^b(K)$.
\end{theorem}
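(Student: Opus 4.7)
The plan is to apply the Riemann--Hilbert equivalence $\sp^!_{\spec k}$ of Corollary \ref{cor: del kash}, which over the zero-dimensional base is essentially the identity functor on $\bD^b(K)$. It therefore suffices to produce a natural isomorphism
\[ \sp^!_{\spec k}(f_+\sp_{X+}\sF)\cong \bR\Gamma_\rig(X,\sF) \]
in $\bD^b_{\cons,F}(\spec k)$. Using the compatibility of $\sp^!$ with $f_+$ established in \S\ref{sec: cohomological operations}, applied to the morphism of strongly realisable pairs $(f,\bar f)\from (X,Y)\to(\spec k,\spec k)$ for any proper compactification $Y$ of $X$ (so that $\bar f$ is automatically proper), the left hand side is identified with $\bR f_\flat(\sp^!_X\sp_{X+}\sF)$. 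The problem then reduces to computing $\sp^!_X\sp_{X+}\sF$ on the constructible side and then taking its geometric pushforward along $\bR f_\flat$.

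I would first treat the case in which $X$ is smooth. The shift identities $\sp_{X+}=\widetilde{\sp}_{X+}[-\dim X]$ and $\sp_{X!}=\widetilde{\sp}_{X+}[\dim X]$ from \S\ref{sec: sp+} give $\sp_{X!}\sF\cong\sp_{X+}\sF[2\dim X]$, so Lemma \ref{cor: sp^! sp_! locally free isocrystals 2} yields $\sp^!_X\sp_{X+}\sF\cong \sF[-2\dim X]$ viewed in $\Isoc_{\cons,F}(X)\subset\bD^b_{\cons,F}(X)$. To compute $\bR f_\flat\sF[-2\dim X]$, choose an l.p.\ frame $(X,Y,\fP)$ with $Y$ proper, so that Corollary \ref{cor: cons preserved by morphism of pairs} represents $\bR f_\flat$ as $\bR\tube{f}_{\dR!}[2\dim\fP]$ for $\tube{f}\from \tube{X}_\fP\to \spa{K,\cV}$. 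After passing (as in the proof of Theorem \ref{theo: riemann-hilbert}) to an alteration that makes $Y$ smooth and further localising on $\fP$, I may assume that $Y$ lifts to a smooth closed formal subscheme $\fY\hookrightarrow\fP$ of dimension $\dim X$. The strong fibration theorem (Theorem \ref{theo: strong fibration theorem}) then realises $\tube{X}_\fP$ locally as an open polydisc bundle over $\tube{X}_\fY$ of relative dimension $\dim\fP-\dim X$; combining the projection formula of Lemma \ref{lemma: proj form} with the explicit computation (item (\ref{num: iso}) of \S1.5) that the compactly supported de\thinspace Rham cohomology of the open polydisc of dimension $d$ is $K$ placed in degree $2d$, together with the properness of $\tube{X}_\fY\to\spa{K,\cV}$, yields
\[ \bR\tube{f}_{\dR!}\sF\cong \bR\Gamma_\rig(X,\sF)[-2(\dim\fP-\dim X)]. \]
The various shifts then combine exactly to produce the required identification.

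For a general strongly realisable $X$ I would reduce to the smooth case by means of de\thinspace Jong's alterations together with cohomological descent. Choosing a proper, generically étale alteration $\pi\from \tilde X\to X$ with $\tilde X$ smooth and extending to a morphism of l.p.\ frames, Abe's definition of $\sp_{X+}=\rho_X$ in \cite{Abe19} realises this functor as the descent of $\sp_{\tilde X+}$ along $\pi$ on the $\sD^\dagger$ side, while rigid cohomology satisfies the analogous proper cohomological descent by Tsuzuki's theorem. The finite étale compatibilities from Proposition \ref{prop: finite etale adjoints dct exact} and Theorem \ref{theo: finite etale general cons}, together with a Noetherian induction on $\dim X$ to handle the boundary locus where $\pi$ fails to be étale, then allow the statement for $X$ to be bootstrapped from the smooth case. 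The main technical obstacle will be the book-keeping of shifts in the strong fibration reduction --- ensuring that the local isomorphisms glue to a globally defined one in spite of having to localise on $\fP$ --- together with the verification that the descent machinery on the two sides matches canonically under $\sp^!$.
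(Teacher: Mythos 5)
Your overall plan --- transfer everything to the constructible side via $\sp^!_{\spec k}$ and the compatibility with $f_+$, compute $\sp_X^!\sp_{X+}\sF$ explicitly, and then identify $\bR f_\flat$ of the result with rigid cohomology --- is a natural one, and your identification $\sp_X^!\sp_{X+}\sF\cong \sF[-2\dim X]$ in the smooth case is correct (it is exactly Lemma \ref{cor: sp^! sp_! locally free isocrystals 2} plus the shift relation between $\sp_{X+}$ and $\sp_{X!}$). However, the route you take diverges considerably from the paper's, and the two obstacles you flag at the end are not merely technical: they are genuine gaps.

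The first gap is in the smooth case. You want to reduce $\bR\tube{f}_{\dR!}\sF$ to a computation on a tube $\tube{X}_\fY$ that is \emph{proper} over $K$, so that $\bR\Gamma_c=\bR\Gamma$ applies. This requires a smooth \emph{proper} formal lift $\fY$ of $Y$ mapping into your l.p.\ frame, and the strong fibration theorem only produces such a lift after localising on $\fP$. But the object you are trying to compute, $\bR\Gamma_\rig(X,\sF)$, is a global invariant: once you localise on $\fP$ you no longer have derived global sections of $\tube{X}_\fP$ at all. Corollary \ref{cor: trace isom from strong fibration} gives a global trace isomorphism, but only for a global morphism of frames; and not every smooth proper $Y/k$ admits a smooth proper formal lift at all, let alone one embeddable in the given $\fP$. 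What you actually need in this step is rigid-analytic Poincar\'e duality (for the relevant tubes, in a shifted form that is not simply a citation), and this must be formulated and proved globally, not patched from local trace maps.

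The second, larger, gap is in the passage to general $X$. The paper's Proposition \ref{prop: finite etale adjoints dct exact} and Theorem \ref{theo: finite etale general cons} concern \emph{finite \'etale} $f$. De\thinspace Jong alterations are only generically \'etale; Abe's definition of $\rho_X$ is a descent along the \v{C}ech nerve of a proper surjective alteration, which involves non-\'etale morphisms. Verifying that this descent datum matches Tsuzuki's cohomological descent for rigid cohomology \emph{under} $\sp^!$ is not supplied by anything in the paper, and working it out would essentially require reproving a version of the theorem for each face of the \v{C}ech nerve.

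The paper sidesteps all of this by a much shorter argument that works uniformly for all strongly realisable $X$ and never localises. The key move is to rewrite rigid cohomology as a mapping complex,
\[
\bR\Gamma_\rig(X,\sF)=\bR\mathrm{Hom}_{\sD_{\tube{X}_\fP}}(\sF^\vee,\cO_{\tube{X}_\fP})
=\bR\mathrm{Hom}_{\bD^b_{\cons,F}(\fP)}\bigl(i_!\sF^\vee,\,i_!i^*\cO_{\fP_K}\bigr),
\]
then use the \emph{full faithfulness} of $\sp^!_\fP$ from Theorem \ref{theo: riemann-hilbert} (not just the equivalence of categories) together with $i_!\sF^\vee=\sp^!_\fP(\bD_X\cM)$, $i_!i^*\cO_{\fP_K}=\sp^!_\fP\bR\underline{\Gamma}^\dagger_X u^!\cO_{\spf\cV\Q}$, to carry the whole mapping complex over to $\bD^b_{\hol,F}(\fP)$. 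From there the computation is purely formal: the adjunction $(f^+,f_+)$, the duality identities of \cite{AC18a}, and the fact that $\cO^\dagger_{\spec k}$ is the unit give $f_+\cM$ directly. In particular no trace isomorphism, no alteration, and no descent is needed: the duality that you were trying to run through explicit Poincar\'e duality is already packaged into the $\sD^\dagger$-module six-operations formalism.

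To salvage your approach you would need, at minimum, a frame-independent statement that $\bR\tube{f}_{\dR!}[2\dim\fP]\cong \bR\tube{f}_{\dR*}[2\dim X]$ on $\Isoc_F(X)$ for smooth $X$, and a separate argument matching descent data for general $X$; neither is available off the shelf in this paper, and the second is substantial. I would recommend instead following the paper's strategy of reformulating $\bR\Gamma_\rig$ as an $\bR\mathrm{Hom}$ and applying full faithfulness of $\sp^!$.
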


\begin{proof} 
Choose an immersion $i:X\hookrightarrow \fr{P}$ of $X$ into a smooth and proper formal scheme. Set $d=\dim \fP$, and let $u:\fr{P}\rightarrow \spf{\mathcal{V}}$ denote the structure morphism. Note that the claimed result is \emph{not} an immediate consequence of Proposition \ref{prop: base change sp^* u_* de Rham}, since the functor $i_!\from \bD^b_{\mathrm{cons},F}(X)\hookrightarrow \bD^b_{\mathrm{cons},F}(\fr{P})$ does not preserve rigid cohomology. Instead I will need to argue via duality.\footnote{Another explanation for why this is necessary is given in Remark \ref{rem: duality loc cons}.}

To start with, a direct calculation shows that $u^!\mathcal{O}_{\spf{\cV}\Q} = \mathcal{O}_{\fr{P}\Q}[d]$. Setting $\cM=\sp_{X+}\sF\in \bD^b_{\hol,F}(X)\subset \bD^b_{\hol,F}(\fP)$, then $\sp_\fP^!(\bD_X\cM) = i_!\sF^\vee\in \bD^b_{\cons,F}(\fP)$ by Lemma \ref{cor: sp^! sp_! locally free isocrystals 2}. I can therefore calculate 
\begin{align*}
\mathbf{R}\Gamma_\rig(X,\sF) &= \mathbf{R}\mathrm{Hom}_{\bD^b_{\cons,F}(X)}(\sF^\vee,\mathcal{O}_{\tube{X}_\fP}) \\
&= \mathbf{R}\mathrm{Hom}_{\bD^b_{\cons,F}(\fP)}(i_!\sF^\vee,i_!i^*\mathcal{O}_{\fP_K}) \\
&= \mathbf{R}\mathrm{Hom}_{\bD^b_{\hol,F}(\fr{P})}(\bD_X\cM,\mathbf{R}\underline{\Gamma}_X^\dagger\mathcal{O}_{\fr{P}\Q}[d]) \\
&= \mathbf{R}\mathrm{Hom}_{\bD^b_{\hol,F}(\fr{P})}(\bD_X\cM,\mathbf{R}\underline{\Gamma}_X^\dagger u^! \mathcal{O}_{\spf{\cV}\Q})
\end{align*}
using Theorem \ref{theo: riemann-hilbert}. Now using the six operations formalism for overholonomic $\sD^\dagger$-modules on varieties, I can rewrite this as
\begin{align*}
\mathbf{R}\mathrm{Hom}_{\bD^b_{\hol,F}(\fr{P})}(\bD_X\cM,\mathbf{R}\underline{\Gamma}_X^\dagger u^! \mathcal{O}_{\spf{\cV}\Q}) &= \mathbf{R}\mathrm{Hom}_{\bD^b_{\hol,F}(X)}(\bD_X\cM,f^!\cO_{\spec{k}}^\dagger) \\
&= \mathbf{R}\mathrm{Hom}_{\bD^b_{\hol,F}(X)}(f^+\cO^\dagger_{\spec{k}},\cM) \\
&= \mathbf{R}\mathrm{Hom}_{\bD^b_{\hol,F}(\spec{k})}(\cO_{\spec{k}}^\dagger,f_+\cM) \\
&= f_+\cM. \qedhere
\end{align*}
\end{proof}

\begin{remark} The analogous calculation for a lisse $\ell$-adic sheaf $\sF\in \Loc(X_\et,\Q_\ell)$ would be that
\begin{align*}
\bR\Gamma_\et(X_{\bar k},\sF) &= \bR{\rm Hom}_{X_{\bar k}}(\sF^\vee,\Q_{\ell,X}) \\
&= \bR{\rm Hom}_{X_{\bar k}}(\bD_X(\Q_{\ell,X}),\bD_X(\sF^\vee)),
\end{align*} 
see Remark \ref{rem: duality loc cons}.
\end{remark}

\bibliographystyle{../Templates/bibsty}
\bibliography{../Templates/lib.bib}

\end{document}